\documentclass[11pt,reqno]{amsart}
\usepackage[T1]{fontenc}
\usepackage{amsmath,amssymb,amsthm,mathtools,mathrsfs}
\usepackage[margin=1.1in]{geometry}
\usepackage{booktabs,array,longtable,enumitem,microtype,needspace}
\usepackage[all]{xy}
\usepackage{symmetric_branching}
\usepackage[hidelinks]{hyperref}
\hypersetup{colorlinks,	linkcolor={red!50!black},	citecolor={blue!50!black},	urlcolor={blue!80!black}}
\allowdisplaybreaks

\numberwithin{equation}{section}
\newtheorem{theorem}{Theorem}[section]
\newtheorem{proposition}[theorem]{Proposition}
\newtheorem{lemma}[theorem]{Lemma}
\newtheorem{corollary}[theorem]{Corollary}

\theoremstyle{definition}
\newtheorem{definition}[theorem]{Definition}
\newtheorem{example}[theorem]{Example}
\theoremstyle{remark}
\newtheorem{remark}[theorem]{Remark}

\DeclareMathOperator{\SL}{SL}
\DeclareMathOperator{\GL}{GL}
\DeclareMathOperator{\Sp}{Sp}
\DeclareMathOperator{\Spin}{Spin}
\DeclareMathOperator{\Spec}{Spec}
\DeclareMathOperator{\Frac}{Frac}
\DeclareMathOperator{\Hom}{Hom}
\DeclareMathOperator{\ord}{ord}

\DeclareMathOperator{\wt}{wt}
\DeclareMathOperator{\Ad}{Ad}
\DeclareMathOperator{\Sk}{Sk}
\DeclareMathOperator{\proj}{proj}
\DeclareMathOperator{\ind}{ind}
\DeclareMathOperator{\Ch}{Ch}
\DeclareMathOperator{\Pf}{Pf}
\DeclareMathOperator{\bwt}{bwt}
\DeclareMathOperator{\add}{add}
\DeclareMathOperator{\rad}{rad}
\DeclareMathOperator{\Irr}{Irr}
\DeclareMathOperator{\coker}{coker}
\DeclareMathOperator{\diag}{diag}

\newcommand{\op}[1]{\operatorname{#1}}
\newcommand{\C}{\mathbb C}
\newcommand{\Q}{\mathbb Q}
\newcommand{\Z}{\mathbb Z}
\newcommand{\N}{\mathbb N}
\newcommand{\R}{\mathbb R}
\newcommand{\B}{\mathcal B}
\newcommand{\UB}{\operatorname{UB}}
\newcommand{\ULP}{\mathcal U_{\mathrm{LP}}}
\newcommand{\cF}{\mathcal F}

\newcommand{\Id}{\mathrm{Id}}

\newcommand{\OO}{\mathcal O}
\newcommand{\rank}{\operatorname{rank}}
\newcommand{\supp}{\operatorname{supp}}
\newcommand{\lead}{\operatorname{lead}}
\newcommand{\CT}{\operatorname{CT}}
\newcommand{\Tr}{\operatorname{Tr}}
\newcommand{\trop}{\operatorname{trop}}

\newcommand{\pair}[2]{\langle #1,#2\rangle}
\newcommand{\Newt}{\operatorname{Newt}}
\newcommand{\Ageom}{\mathcal A_{\mathrm{geom}}}
\newcommand{\T}{\mathsf t}

\title[Symmetric branching via LP algebras]{Symmetric Branching via Laurent Phenomenon Algebras}
\author{Jiarui Fei}
\address{School of Mathematical Sciences, Shanghai Jiao Tong University, Shanghai, China}
\email{jiarui@sjtu.edu.cn}
\date{}
\thanks{The author was supported in part by the National Natural Science Foundation of China (Nos.~12131015 and 12571038).}
\subjclass[2020]{13F60, 14M15, 17B10, 20G05}
\keywords{branching algebra, Laurent phenomenon algebra, symmetric pair, Pfaffian, theta basis, polyhedral model}
\hypersetup{pdftitle={Symmetric Branching via Laurent Phenomenon Algebras},pdfsubject={Laurent phenomenon algebras, mirror symmetry, and branching bases}}

\begin{document}
\begin{abstract}
We construct Laurent phenomenon structures on reductive branching algebras.
For $\SL_{2n}\downarrow\Sp_{2n}$, $n\ge2$, and the exceptional inclusions
$\Spin_8\downarrow G_2$, $E_6\downarrow F_4$, and
$F_4\downarrow\Spin_9$, we identify the branching algebras with upper
Laurent phenomenon algebras, keeping the frozen coefficients polynomial.
Degree-fibred categories of projective presentations give a common construction.
It also gives cluster seeds of type $A_1$ for
$\Spin_7\downarrow G_2$ and $G_2\downarrow A_2$.
We give uniform sufficient conditions identifying a
specialized Keel--Yu mirror algebra with an upper Laurent phenomenon
algebra admitting a suitable binomial seed. The resulting theta
basis is simultaneously adapted to the frozen boundary valuations.
For $\SL_{2n}\downarrow\Sp_{2n}$ with $2\le n\le5$ and for the other
exceptional inclusions, we obtain homogeneous theta bases parametrized
by lattice points of explicit rational polyhedral cones, whose weight
fibres compute every branching multiplicity. We also prove mutation
invariance of finite upper LP bounds over UFD.
\end{abstract}
\maketitle
\begingroup
\linespread{0.95}\selectfont
\tableofcontents
\endgroup

\clearpage
\section*{Introduction}
\label{sec:introduction}
For connected complex reductive groups $K\subset G$, the branching
algebra
\[ \B(G,K)=\C[G]^{U_G^-\times U_K}\]
has homogeneous components
\[ \B(G,K)_{\lambda,\mu} \simeq\Hom_K\bigl(V_K(\mu),V_G(\lambda)\bigr).\]
We write $G\downarrow K$ for restriction of representations from $G$
to $K$. Here the maximal unipotent subgroups are compatible, and the torus
action is fixed by \eqref{eq:torus-convention}. In the cases specified
below, we construct homogeneous theta bases parametrized by lattice points
in rational polyhedral cones. The weight fibres of each cone give
formulas for all restriction multiplicities.

We use Laurent phenomenon algebras (LPAs), introduced by Lam and
Pylyavskyy \cite{LamPylyavskyy}. Like cluster algebras, they have the
Laurent phenomenon, but their exchange data need not be governed by a
skew-symmetrizable matrix. A square matrix $C$ is skew-symmetrizable
if $DC$ is skew-symmetric for some positive diagonal matrix $D$.
The natural seeds for $A_{2n-1}\downarrow C_n$, $n\ge4$, and for
$D_4\downarrow G_2$, $E_6\downarrow F_4$, and $F_4\downarrow B_4$
have binomial exchanges, yet their principal matrices cannot be made
skew-symmetrizable. These seeds therefore lie outside the cluster
algebra setting. We identify their upper LP algebras with branching
algebras. For suitable binomial seeds, a comparison with Keel--Yu's
mirror algebra gives theta bases compatible with the frozen boundary
divisors. Regularity along these divisors gives the inequalities of
the polyhedral models.

Our examples cover the non-diagonal branching pairs listed in
\cite[Theorem~5]{PasquierRessayre}:
\[
 A_{2n-1}\downarrow C_n,\quad D_n\downarrow B_{n-1},\quad
 B_3\downarrow G_2,\quad G_2\downarrow A_2,\quad
 E_6\downarrow F_4,\quad F_4\downarrow B_4.
\]
We also treat $D_4\downarrow G_2$, arising from triality
\cite{Triality}. The $D_n\downarrow B_{n-1}$ branching algebras
are polynomial, while the two cases involving $G_2$ in the displayed
list are hypersurfaces with cluster seeds of type $A_1$.
They illustrate the same categorical rule in its simplest nontrivial
form; the triality seed exhibits non-skew-symmetrizability.
We use simply connected groups and work with the integral grading
lattices. The $\SL_{2n}\downarrow\Sp_{2n}$ upper-algebra realization
holds for every $n\ge2$, and its theta-basis construction is
established here for $2\le n\le5$. The geometric comparison also
applies to $D_4\downarrow G_2$, $E_6\downarrow F_4$, and
$F_4\downarrow B_4$; the two hypersurface algebra identifications
are proved in Section~\ref{subsec:hypersurface-cases}, and their cluster
theta bases and closed branching formulas are given in
Section~\ref{subsec:hypersurface-models}.

\subsection*{Categorical seeds beyond skew-symmetrizability}
The initial LPA seeds admit a common construction from categories of
projective presentations.  For $A_{2n-1}\downarrow C_n$ with $n\ge4$
we use the presentation category of type $A_{2n-1}$. The other
constructions use types $D_4,E_6$ and hereditary species of valued
types $F_4,B_3,G_2$. In each case
an additive branching degree partitions the indecomposable
presentations into full degree fibres, and the branching subcategory
is the full additive subcategory generated by the relevant fibres.
Admissibility is determined intrinsically from the complete solid and
translation neighbourhood of a presentation and the degrees of the
two resulting products.  The mutable fibres are precisely those
containing admissible presentations, and all admissible presentations
in the same fibre give the same unordered exchange polynomial.
Thus the exchange polynomial is attached to the fibre itself; a
choice of mark serves only to order its two monomials, and hence to
fix the sign of the corresponding row of the exchange matrix.

Theorems~\ref{prop:categorical-B} and~\ref{prop:exceptional-fibre-B}
identify these products with the branching exchanges in the
non-skew-symmetrizable cases, including the frozen factors; the two
hypersurface constructions are given at the end of Section~\ref{sec:exceptional-categories}.
In matrix form the construction is
\[ B=D_\chi R E P.\]
Here $E$ is the signed categorical incidence matrix, with species
valuations in types $F_4,B_3,G_2$; $R$ selects the marked presentations; $P$
collects whole degree fibres; and $D_\chi$ records the orders of the
two exchange terms. Thus the source and target operations are different.
Objects of the same degree are not identified, and replacing marked
sources by full fibres would change the exchange monomials. This is
not ordinary orbit folding.

This asymmetry explains why the natural seeds need not be cluster
seeds, even though their initial exchanges are binomial. We prove that
no choice of exchange-term orders makes the principal matrix
skew-symmetrizable for the $A_{2n-1}\downarrow C_n$ seeds with $n\ge4$ or for any of
the three exceptional seeds
(Corollaries~\ref{cor:symplectic-noncluster} and~\ref{cor:noncluster-seeds}).
The obstruction concerns these seeds, not every possible cluster
structure on the underlying algebra. Indeed, Yan's thesis
\cite{YanBranching} records a different cluster structure on
$\B(\Spin_8,G_2)$. 
The interpretation concerns the initial seeds, not every LP mutation.

\subsection*{Upper LP realizations and finite upper bounds}
The categorical exchange data are realized by normalized highest-weight
functions. For $\SL_{2n}\downarrow\Sp_{2n}$, the folded-minor seeds $\Sigma_n$
satisfy
\[ \B(\SL_{2n},\Sp_{2n})
   =\UB_{\C[\mathbf f]}(\Sigma_n)
   =\ULP(\Sigma_n),\qquad n\ge2,
\]
with the separate seed for $n=3$ specified in
Theorem~\ref{thm:a2c-algebra-equality}.
Corollary~\ref{thm:d4-upper-equality} and
Theorems~\ref{thm:e6-upper-equality} and~\ref{thm:f4-explicit-seed}
give the corresponding equalities for
$D_4\downarrow G_2$, $E_6\downarrow F_4$, and $F_4\downarrow B_4$.
Propositions~\ref{prop:b3-g2-hypersurface} and~\ref{prop:g2-a2-hypersurface}
give the two hypersurface cases. In every case the frozen variables
are polynomial coefficients. 

To prove these equalities, we show that every element of the upper LP
algebra is regular at the generic point of every prime divisor of
$\Spec\B(G,K)$. For
$\SL_{2n}\downarrow\Sp_{2n}$, a global Pfaffian frame gives the
normalized exchange identities, and Pfaffian--Lagrangian coordinates
give birational charts. Primality of the seed functions, the adjacent
charts, and a weighted Jacobian identity deduced from the frozen-weight
sum prove regularity in codimension one. Normality then gives the
algebra equality. The proofs for the exceptional inclusions use the
same codimension-one comparison after constructing and normalizing the
highest-weight functions. None of these algebraic realizations depends
on the mirror comparison.

The passage from finitely many charts to the entire LP pattern follows
from a result valid independently of branching. For a seed $t$, write
$L(t)$ for its Laurent ring, $\UB(t)$ for the intersection of $L(t)$
and its adjacent Laurent rings, and $\ULP(t)$ for the intersection over
all seeds. Theorem~\ref{thm:upper-invariance} proves that, for every LP
pattern over a unique factorization domain (UFD),
\[  \UB(t)=\UB(\mu_k t)=\ULP(t). \]
Thus the initial chart and its adjacent charts determine the upper LP
algebra. Du--Li prove upper-bound invariance under an additional
exchange-normalization hypothesis \cite[Theorem~4.9 and Corollary~4.14]{DuLi}; our theorem
requires no such hypothesis. In particular, it does not require the
ordinary exchange polynomials to agree with their associated exchange
Laurent polynomials throughout the mutation pattern, and the
coefficient ring need not be localized.

These results extend the use of Laurent phenomenon structures in
branching beyond the cluster constructions of Fei \cite{FeiTensor}
and Francone \cite{FranconeBranching}. Francone treats Levi restriction
and diagonal embeddings by minimal monomial lifting, also with
noninvertible frozen variables. In none of our inclusions is the smaller
group a Levi subgroup or the derived subgroup of one
\cite[Proposition~21.90]{MilneAlgebraicGroups}.
LPAs already occur in representation-theoretic geometry: Daisey--Ducat \cite{DaiseyDucat}
construct one on the homogeneous coordinate ring of the Cayley plane.
To our knowledge, the present work is the first systematic application
of LPAs to branching algebras and their restriction multiplicities.

\subsection*{Mirror algebras and boundary-compatible bases}
Keel--Yu's Frobenius structure theorem \cite{KeelYu} constructs a
mirror algebra with a distinguished theta basis for an affine log
Calabi--Yau variety containing a torus. We give sufficient conditions
for its specialization to be an upper LP algebra and determine which
theta functions extend across the frozen boundary. The resulting basis
is simultaneously adapted to all frozen boundary valuations.

Let $R_0$ be an
upper LP algebra with polynomial frozen variables $f_1,\ldots,f_s$,
and put $R=R_0[(f_1\cdots f_s)^{-1}]$. Starting from a binomial seed, we form a toric blowup model from the
transposed seed data. Suppose it embeds in a smooth affine log
Calabi--Yau variety $V$ as specified in Section~\ref{sec:binomial}.
Theorem~\ref{thm:comparison},
Proposition~\ref{prop:binomial}, and Corollary~\ref{cor:application}
give, under their geometric, finiteness, and boundary hypotheses,
\[ \Ageom(V)_1\xrightarrow{\ \sim\ }R. \]
The subscript $1$ denotes specialization of every curve-class
coefficient to $1$. The hypotheses include a saturated exchange
lattice and copositivity of the principal matrix,
$a^{\mathsf t}B_0a\ge0$ for $a\in\R_{\ge0}^r$, rather than
skew-symmetrizability. They are conditions on a chosen binomial seed
and its geometric model; later LP exchanges need not remain binomial.

The toric-boundary construction follows Gross--Hacking--Keel
\cite[Construction~3.4]{GHKBirational}. Regular functions on $V$ bound
the theta expansions, while Laurent charts of the original LP pattern
give the reverse inequalities for boundary orders. One finiteness
condition makes each theta expansion a Laurent polynomial; another
bounds the possible leading exponents at fixed degree and fixed
boundary orders. Finite leading-term subtraction then proves the
isomorphism and preserves every boundary condition. An alternative
comparison uses Newton polytopes (Proposition~\ref{thm:newton}). 

More precisely, let $\nu_f$ be the orders along the frozen divisors,
and let $\phi_f$ be the tropicalizations of the corresponding regular
boundary functions on $V$. For arbitrary integers $d_f$, the comparison
gives
\[ \{g\in R:\nu_f(g)\ge d_f\text{ for all }f\}
   =\bigoplus_{\substack{m\in L\\\phi_f(m)\ge d_f\ \forall f}}
        \C\Theta_m,
\]
where $L$ is the lattice of theta indices in the chosen torus
coordinates. In particular,
\[ \nu_f\!\left(\sum_m c_m\Theta_m\right)
       =\min_{c_m\ne0}\phi_f(m)
\]
for every finite nonzero sum. Setting all $d_f=0$ selects a basis of
$R_0$; positive bounds impose vanishing orders along the boundary.
Thus every simultaneous boundary-vanishing condition selects a subset
of the same basis.

Mirror constructions for LPAs already appear in Ducat's work on the
three-dimensional Lyness map \cite{DucatLyness}. Our criterion treats a class of seeds, identifies their upper LP
algebras with specialized mirror algebras, and determines the bases
selected by the boundary divisors. To our knowledge, this is the
first uniform construction of theta bases of a class of upper LP
algebras from Keel--Yu's theory without requiring skew-symmetrizable
seed matrices. For $\SL_6\downarrow\Sp_6$, the seed is of cluster type
$A_3$, so the cluster theta construction \cite{GHKK} gives the boundary
basis directly (Corollary~\ref{thm:a5c3-canonical-model}). Related logarithmic,
non-archimedean, and scattering constructions are developed in
\cite{ArguzGross,Johnston,KeelWhiteYu,CMMM}; our comparison uses the
specified variety, coefficient specialization, and torus coordinates
of Section~\ref{sec:marking}.

\begin{table}[!t]
\centering\small
\renewcommand{\arraystretch}{1.16}
\begin{tabular}{@{}p{.29\linewidth}p{.67\linewidth}@{}}
\toprule
Notation & Meaning\\
\midrule
$\B(G,K),\ \B_n$ & Branching algebra for $K\subset G$; $\B_n=\B(\SL_{2n},\Sp_{2n})$\\
$\UB(\Sigma),\ \ULP(\Sigma)$ & Finite upper bound and upper Laurent phenomenon algebra\\
$(\lambda;\mu),\ \wt$ & Source and target highest weights; the branching bidegree\\
$\omega_i,\ \varpi_j$ & Source and target fundamental weights\\
$B=(B_0\mid B_f),\ B^\T$ & Oriented seed matrix and its transpose\\
$B_p$ & Chosen unimodular completion of the rows of $B$\\
$W,\ H$ & Branching-weight matrix and matrix of boundary inequalities\\
$\Ageom(V)_\xi$ & Mirror algebra of $V$ after coefficient specialization $\xi$\\
$\lead g$ & Least exponent in the order fixed in Section~\ref{sec:finite-control}\\
$\vartheta_m,\ \Theta_m$ & Theta function in the mirror algebra and its Laurent expansion\\
$\nu_f,\ \Psi_f,\ \phi_f$ & Boundary valuation, regular boundary function, and its tropicalization\\
$\psi_\ell,\ J_{\rm bnd},\ \zeta$ & Regular functions, their tropical gradient matrix (Section~\ref{sec:binomial}), and the positive vector in \eqref{eq:balance}\\
$\mathcal S_n,\ \deg_\circ$ & Section of $U_G\to U_G/U_K$ and positive integer grading\\
$C_Q,\ \Phi_Q$ & Projective Cartan matrix and Coxeter transformation\\
$\mathscr C_Q$ & Category $\Ch_2(\proj\C Q)$ of projective presentations\\
$\mathscr P_{Q_F}$ & Category $\Ch_2(\proj\mathbb S_{Q_F})$ for the valued-$F_4$ species\\
$X^{\mathrm l}_{a,\ell},\ X^{\mathrm r}_{a,\ell}$ & Presentations in the degree fibre of $F_{a,\ell}$ for $A_{2n-1}\downarrow C_n$\\
\bottomrule
\end{tabular}
\caption{Notation used throughout the paper.}
\label{tab:notation}
\end{table}

\subsection*{Polyhedral branching rules}
We apply the geometric criterion to the five seeds for
\[ A_7\downarrow C_4,\qquad A_9\downarrow C_5,\qquad D_4\downarrow G_2,
 \qquad E_6\downarrow F_4,\qquad F_4\downarrow B_4.
\]
Theorem~\ref{thm:geometric-models} verifies the required affine models,
regular functions, and boundary inequalities. Together with the
polynomial case $n=2$ and the finite-type cluster case $n=3$
(Corollary~\ref{thm:a5c3-canonical-model}), this gives
branching theta bases for $A_{2n-1}\downarrow C_n$ with $2\le n\le5$ and
for all three exceptional inclusions.

For each of these applications there is a fixed rational polyhedral
cone
\[ \mathcal C_H=\{m\in\R^{r+s}:Hm\ge0\}\]
and a homogeneous basis
\[ \B(G,K)=\bigoplus_{m\in\mathcal C_H\cap\Z^{r+s}}\C\Theta_m,
 \qquad \wt(\Theta_m)=W^{\mathsf t}m.
\]
Here $r$ and $s$ are the numbers of mutable and frozen variables,
$W$ records their branching weights, and $H$ is constructed from the
supports of the regular boundary functions. The inequalities express
regularity along the frozen divisors. The homogeneous basis therefore gives
\[ \dim\Hom_K\bigl(V_K(\mu),V_G(\lambda)\bigr)
   =\#\bigl\{m\in\Z^{r+s}:Hm\ge0,\;
                   W^{\mathsf t}m=(\lambda;\mu)\bigr\}.
\]
The real weight fibres are bounded, and the grading lattice is
$\op{im}W^{\mathsf t}$. The same cone parametrizes a basis
of the entire branching algebra and computes every restriction
multiplicity. Its inequalities come from the LP geometry; the
branching weights enter through $W$.

There are earlier polyhedral descriptions of non-diagonal symmetric
branching. Schumann--Torres \cite[Sections~6--7]{SchumannTorres} give
polytopes for symplectic branching in the stable range, and
Kumar--Torres \cite[Remark~4.8]{KumarTorres} give a model using disjoint
unions of flagged hive polytopes. For $F_4\downarrow B_4$, Gornitskii
\cite{GornitskiiBranching} constructs an explicit branching semigroup.
Our models are obtained instead from the boundary valuations of a
theta basis. Their common feature is a single cone of theta
indices for each branching algebra, with all multiplicity formulas
arising from its weight fibres. In particular, the polyhedral parametrization describes both the
homogeneous theta basis and its simultaneous compatibility with all frozen
boundary valuations.

For $A_{2n-1}\downarrow C_n$, the upper-LP realization holds for every
$n\ge2$, while the geometric comparison and theta-basis construction
are proved here for $2\le n\le5$; see Remark~\ref{rem:higher-rank-scope}.
The geometric seeds used here are obtained from the initial categorical
seeds by the LP mutations and exchange-term orders specified in
Section~\ref{sec:applications}.

\subsection*{Organization}
Part~I constructs the $A_{2n-1}\downarrow C_n$ seeds, proves their upper-algebra
realization, and establishes the geometric comparison criteria.
Section~\ref{sec:upper-bounds} contains the general upper-bound
invariance theorem. Part~II gives the categorical seed constructions in
Sections~\ref{sec:categorical}--\ref{sec:exceptional-categories},
including the two hypersurface cases at the end of Section~\ref{sec:exceptional-categories},
then the remaining algebraic realizations in Section~\ref{sec:verification},
and the theta bases and branching formulas in Section~\ref{sec:applications}.
The exceptional inequality matrices are included with these applications.
Appendix~\ref{app:part-I} contains the auxiliary affine-model lemmas
and the boundary-support construction. Appendix~\ref{app:part-II}
records the categorical degree matrices, exchange data, affine covers,
regular functions, mutation sequences, and electronic supplement.
The principal notation is collected in Table~\ref{tab:notation}.

\part{The family \texorpdfstring{$A_{2n-1}\downarrow C_n$}{A2n-1 to Cn} and theta bases}
\section{Branching algebras and Laurent phenomenon seeds}
\label{sec:setup}

\subsection{The grading and the coefficient ring}
Fix opposite maximal unipotent subgroups $U_G^-,U_G$, a maximal torus
$T_G$, and compatible $U_K,T_K$. For the simply connected groups used
below, $P(G)=X^*(T_G)$ and $P(K)=X^*(T_K)$ are their weight lattices,
and $\rho_G,\rho_K$ are the half-sums of their positive roots. We write
$\omega_i,\varpi_j$ for the respective fundamental weights and
$\pi:P(G)\to P(K)$ for restriction. The same notation for weight
lattices is used with Dynkin types in place of groups.
We use the grading characterized by
\begin{equation} f(tgs)=\lambda(t)\mu(s)f(g) \quad(t\in T_G,\ s\in T_K). \label{eq:torus-convention} \end{equation}
Here left $U_G^-$-invariance means $f(ug)=f(g)$, and right
$U_K$-invariance means $f(gv)=f(g)$. With this convention, a matrix
coefficient $\langle\xi,gv\rangle$, where $\xi$ is the $U_G^-$-fixed
covector in $V_G(\lambda)^*$ and $v$ is $U_K$-highest of weight $\mu$,
has degree $(\lambda;\mu)$. Thus Peter--Weyl gives
\[ \B(G,K)_{\lambda,\mu} \simeq \Hom_K(V_K(\mu),V_G(\lambda)). \]
See \cite{GoodmanWallach} for the matrix-coefficient decomposition.
In each $V_G(\omega_i)$, choose a highest vector $v_i^+$
and the $U_G^-$-fixed covector $\xi_i$ normalized by $\xi_i(v_i^+)=1$.
The \emph{principal function} is $p_i(g)=\langle\xi_i,gv_i^+\rangle$,
of degree $(\omega_i;\pi\omega_i)$. For
$\lambda=\sum_i\lambda_i\omega_i$, set $p^\lambda=\prod_i p_i^{\lambda_i}$.

For $\SL_{2n}\downarrow\Sp_{2n}$, we put
\[ \B_n:=\B(\SL_{2n},\Sp_{2n}),\qquad
   \omega_0=\omega_{2n}=0,\quad\varpi_0=0. \]

\begin{proposition}\label{prop:branching-ufd}
The algebra $\B_n$ is a finitely generated normal UFD, and
$\B_n^\times=\C^\times$.
\end{proposition}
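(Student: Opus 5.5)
We prove the three assertions in turn: finite generation and normality from invariant theory, units from the grading, and the UFD property from factoriality of $\C[G]$ together with the absence of characters of unipotent groups.

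\emph{Finite generation and normality.} Write $\B_n=\C[G]^{U_G^-\times U_K}$ with $G=\SL_{2n}$ and $K=\Sp_{2n}$. We use the standard fact that $\C[G]^{U_G^-}$ is finitely generated; by Peter--Weyl it is the Cox-type ring $\bigoplus_\lambda V_G(\lambda)$. It carries a rational $G$-action on the right. Taking invariants under the maximal unipotent subgroup $U_K$ of the reductive group $K$ preserves finite generation by Hadziev's theorem, which rests on Grosshans' transfer principle
\[
A^{U_K}\simeq \bigl(A\otimes \C[K/U_K]\bigr)^{K}
\]
together with finite generation of $\C[K/U_K]$. Normality is immediate, because invariants of a group acting on a normal domain form a normal domain: $\C[G]$ is normal, and $\B_n=\C[G]\cap \Frac(\C[G])^{U_G^-\times U_K}$.

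\emph{Units.} $\B_n$ is graded by the monoid of pairs $(\lambda;\mu)$ of dominant weights, which is pointed. Its degree-$(0;0)$ component is $\Hom_K(\C,\C)=\C$. In a domain graded by a pointed monoid with one-dimensional degree-zero part, every unit is homogeneous of degree $0$. Indeed, if $u$ is a unit, compare the lowest and highest terms of $u$ and $u^{-1}$ with respect to a generic linear order refining the grading. Hence $\B_n^\times=\C^\times$. Alternatively, $\C[G]^\times=\C^\times$ because $\SL_{2n}$ is simply connected and semisimple.

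\emph{UFD.} $\C[\SL_{2n}]$ is a UFD, since $\operatorname{Pic}(G)=0$ for simply connected $G$ and $G$ has no nontrivial characters. We then invoke the classical lemma (Popov, Kraft--Popov) that if $H$ is a connected group with no nontrivial characters acting on a factorial affine domain $A$, then $A^H$ is factorial. Apply it with $H=U_G^-\times U_K$, which is unipotent and so has trivial character group.

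The argument for this lemma goes as follows. Let $f\in A^H$ be nonzero and factor $f=\prod g_i^{e_i}$ in $A$. The group $H$ permutes the prime factors up to units. Because $H$ is connected and there are finitely many factors, each class $g_iA$ is $H$-stable, so $h\cdot g_i=\chi_i(h)g_i$ for some $\chi_i(h)\in A^\times$. Since $A^\times=\C^\times$, $\chi_i$ is a character of $H$, hence trivial, so every $g_i$ lies in $A^H$. Each $g_i$ is prime in $A^H$: if $g_i\mid ab$ in $A^H$, then $g_i\mid a$ in $A$, say $a=g_ic$, and $c=a/g_i$ is $H$-invariant. Finally, $A^H$ is Noetherian by the first step, so factorizations exist, and we conclude that $A^H$ is a UFD.

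The main obstacle is finite generation after the non-reductive $U_K$-quotient. This is handled by the transfer principle, and we would cite it rather than reprove it.
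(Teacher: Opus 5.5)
Your proof is correct and follows essentially the same route as the paper: Grosshans-type finite generation for the two maximal-unipotent actions, factoriality of $\C[\SL_{2n}]$ with only constant units, and descent of factorization because the connected unipotent group $U_G^-\times U_K$ stabilizes each of the finitely many prime factors of an invariant and acts on it by a character, which must be trivial. The differences are cosmetic. You get normality directly from $\B_n=\C[G]\cap\Frac(\C[G])^{U_G^-\times U_K}$ rather than as a consequence of factoriality, and you add a grading argument for the units alongside the ambient-ring one the paper uses.
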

\begin{proof}
Finite generation is the Grosshans finite-generation theorem applied to
the two maximal-unipotent actions \cite{Grosshans}. The ring
$\C[\SL_{2n}]$ is factorial and has only constant units
\cite{PopovVinberg}. A connected unipotent group acting on a factorial
ring fixes every prime factor of an invariant: it preserves each of the
finitely many prime ideals in the factorization, and its scalar action
on a generator is a character, hence trivial. Factorization therefore
descends to the invariant ring. This proves factoriality and hence
normality; the assertion about units follows from the ambient ring.
\end{proof}

We will also use the following consequence of the same factorization
argument. If a torus acts on a factorial domain with only constant
units, every prime factor of a semi-invariant is a semi-invariant. The
torus cannot nontrivially permute the finite set of prime factors,
because it is connected. Consequently factorization in $\B_n$ respects
the dominant branching grading.

For an extended seed, let $I_{\rm mut}$ and $I_{\rm fr}$ be its mutable
and frozen index sets. Put
\[ S=\C[x_f:f\in I_{\rm fr}],\qquad L(t)=S[x_i^{\pm1}:i\in I_{\rm mut}]. \]
Frozen variables are polynomial coefficients, not units. We explicitly
write $H^{-1}$ whenever some of them are inverted.

\subsection{Exchange polynomials and upper bounds}
We use LP mutation over a UFD as in \cite{LamPylyavskyy}. An ordinary
exchange polynomial $E_i\in S[x_j:j\in I_{\rm mut}]$ is irreducible,
independent of $x_i$, and not divisible by any mutable variable. Its
associated exchange Laurent polynomial has the form
\begin{equation} \widehat E_i=E_i\prod_{j\ne i}x_j^{-a_{ij}},\qquad a_{ij}\ge0, \label{eq:lp-normalization} \end{equation}
where the exponents are determined by the LP substitution and
nondivisibility rule. Mutation replaces $x_i$ by
$x_i'=\widehat E_i/x_i$ and changes the other exchange polynomials by
the same LP rule. In particular, equality $\widehat E_i=E_i$ in one
seed does not justify imposing that equality at all mutated seeds.

Define
\begin{equation} \UB(t)=L(t)\cap\bigcap_{i\in I_{\rm mut}}L(\mu_i t), \qquad \ULP(t)=\bigcap_{s\sim t}L(s). \label{eq:upper-definitions} \end{equation}
All intersections take place in the common fraction field. We do not
assume that every mutation of a binomial initial seed remains binomial.

For $a\in\R$, write $[a]_+=\max(a,0)$, and apply this notation
coordinatewise to vectors. For an integer matrix
$C\in\Z^{r\times(r+s)}$ with $C_{ii}=0$, the notation $\Sigma(C)$
means the seed on ordered variables $z_1,\ldots,z_{r+s}$, the first
$r$ mutable and the last $s$ frozen, with ordinary exchanges $\prod_jz_j^{[C_{ij}]_+}+\prod_jz_j^{[-C_{ij}]_+}$,
whenever these satisfy the LP seed axioms over the specified frozen
coefficient ring $S$. We abbreviate $\UB_S(\Sigma(C))$ to $\UB_S(C)$.
The associated exchanges are always determined by the LP rule.

\subsection{Mutation invariance of the finite upper bound}
\label{sec:upper-bounds}
The caterpillar intersection below proves mutation invariance of the
finite upper bound.

\begin{lemma}[Caterpillar]\label{lem:caterpillar}
Consider an alternating path $t_0\xrightarrow{i}t_1\xrightarrow{j}
 t_2\xrightarrow{i}t_3$, with $i\ne j$. Then
\[ L(t_1)\cap L(t_3)\subseteq L(t_0),\qquad L(t_0)\cap L(t_2)\subseteq L(t_3). \]
\end{lemma}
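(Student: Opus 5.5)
We follow the standard caterpillar argument of Berenstein--Fomin--Zelevinsky, adapted to LP mutation over a UFD as in Lam--Pylyavskyy. By symmetry of the path (reversing it exchanges the roles of $t_0,t_3$ and of $t_1,t_2$), it suffices to prove $L(t_1)\cap L(t_3)\subseteq L(t_0)$; the second inclusion is the same statement for the reversed path $t_3\xrightarrow{i}t_2\xrightarrow{j}t_1\xrightarrow{i}t_0$.

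\emph{Setting up coordinates.} Write the cluster of $t_0$ as $(x_i,x_j,x_k)_{k\ne i,j}$. Then $t_1$ has variables $(x_i',x_j,\ldots)$ with $x_i x_i'=\widehat E_i$, and $t_2$ has $(x_i',x_j',\ldots)$ with $x_jx_j'=\widehat E_j^{(1)}$, where $\widehat E_j^{(1)}$ is the exchange Laurent polynomial of $j$ in $t_1$. Finally $t_3$ has $(x_i'',x_j',\ldots)$ with $x_i'x_i''=\widehat E_i^{(2)}$. Every ring here contains $A:=S[x_k^{\pm1}:k\ne i,j]$, so we work over $A$. We first show that $L(t_1)\cap L(t_3)\subseteq L(t_1)\cap L(t_1)[x_j'^{-1}]\cap\cdots$, reducing to a two-variable problem over the UFD $A$: \[ L(t_1)=A[x_i'^{\pm1},x_j^{\pm1}],\qquad L(t_3)=A[x_i''^{\pm1},x_j'^{\pm1}]. \] The target is $L(t_0)=A[x_i^{\pm1},x_j^{\pm1}]$.

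\emph{Key steps.} (1) Let $y\in L(t_1)\cap L(t_3)$ and write $y=P/(x_i'^{a}x_j^{b})$ with $P\in A[x_i',x_j]$. Since $x_i=\widehat E_i/x_i'$ and $\widehat E_i$ does not involve $x_i$, the only possible obstruction to $y\in L(t_0)$ is the denominator $x_i'^{a}$; equivalently, we must show that $y$ has no pole along $x_i'=0$ after rewriting $x_i'$ as $\widehat E_i/x_i$ in $\C(x_i,x_j,\ldots)$. Since $\widehat E_i$ is (up to a monomial in $x_j$ and $A$-units) the irreducible $E_i$, this amounts to showing that $E_i$ divides $P$ to order $a$ in the appropriate localization. (2) Use $y\in L(t_3)$: expand in $x_i''=\widehat E_i^{(2)}/x_i'$ and $x_j'=\widehat E_j^{(1)}/x_j$. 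The LP mutation rule implies that $E_i^{(2)}$ is obtained from $E_i$ by substituting $x_j\mapsto \widehat E^{(1)}_j/x_j'$ and removing common factors, so that modulo $x_j'$-adic considerations it is coprime to $x_i'$ and relates to $E_i$. (3) Compare the two expansions in the localization of $A[x_i',x_j]$ at the prime $(x_i')$, or rather at the prime generated by $E_i$ pulled back. The standard argument: $L(t_1)\cap L(t_3)\subseteq A[x_i'^{\pm1},x_j^{\pm1}]\cap A[x_i'^{\pm1},x_j'^{\pm1}]\cap(\text{ring where }x_i''\text{ is a polynomial variable})$. Coprimality of $x_j$ and $\widehat E_j^{(1)}$ gives $A[x_i'^{\pm1},x_j^{\pm1}]\cap A[x_i'^{\pm1},x_j'^{\pm1}]=A[x_i'^{\pm1},x_j,x_j']$. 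The question then becomes the cancellation of negative powers of $x_i'$, which is controlled by $x_i''$.

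\emph{Main obstacle.} The hard step is the coprimality input in step (3): showing that $E_i$ (equivalently $x_i x_i'$ up to monomials) and the relevant exchange polynomial at $t_2$ generate a height-two ideal, i.e.\ are coprime in $A[x_i',x_j]$-type rings, without assuming $\widehat E=E$ along the pattern. I would first try to derive this directly from the LP axioms (irreducibility, independence of $x_i$, nondivisibility by mutable variables) together with the explicit LP rule $E_i^{(2)}\propto E_i|_{x_j\leftarrow \widehat E_j^{(1)}/x_j'}$ from \cite{LamPylyavskyy}. With that in hand, the argument concludes by noting that in the UFD $A[x_i',x_j]$ the denominators $x_i'^{a}$ must cancel against powers of $E_i$. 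This gives $y\in L(t_0)$.
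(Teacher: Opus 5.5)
The reduction to the first inclusion by reversing the path is correct. So is step (1): the only obstruction to $h\in L(t_0)$ is the power of $x_i'$ in the denominator, and $x_i'$ is associate in $L(t_0)$ to the irreducible $E_i$. The gap is that the proposal never establishes the two facts the conclusion depends on. First, (a): the new variables $x_j'$ of $t_2$ and $x_i''$ of $t_3$ lie in $L(t_0)$. Second, (b): $x_i'$ is coprime in $L(t_0)$ to each of them. You explicitly leave (b) as something you ``would first try to derive'', and (a) is not stated at all. Step (2), with its ``modulo $x_j'$-adic considerations \dots\ relates to $E_i$'', is not an argument for either. Both facts are exactly the Lam--Pylyavskyy caterpillar lemmas \cite[Lemmas~5.2 and~5.4]{LamPylyavskyy}. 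These hold for LP mutation with the associated exchange Laurent polynomials and need no assumption $\widehat E=E$ along the pattern. The paper imports them rather than reproving them.

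Step (3) also contains two claims that would not go through as written. The containment of $L(t_1)\cap L(t_3)$ in $A[(x_i')^{\pm1},(x_j')^{\pm1}]=L(t_2)$ is unjustified, and it is not needed: elements of $L(t_3)$ may involve $(x_i'')^{-1}$, which need not be Laurent in $t_2$. The final cancellation should be carried out in $L(t_0)$, where $x_i'=\widehat E_i/x_i$ is a prime element, not in $A[x_i',x_j]$. With (a) and (b) in hand, the argument closes in one line in the UFD $L_0=L(t_0)$. Write $h=f/(x_i')^{a}$ with $f\in L_0$, which is possible because $x_i'\in L_0$. Write also $h=g/\bigl((x_j')^{b}(x_i'')^{c}\bigr)$ with $g\in L_0$, using (a). Then $f\,(x_j')^{b}(x_i'')^{c}=g\,(x_i')^{a}$. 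Primality of $x_i'$ together with (b) forces $(x_i')^{a}\mid f$, so $h\in L_0$.
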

\begin{proof}
Write the changing coordinates as $(x,y),(z,y),(z,u),(v,u)$ and the
unchanged ones as $\mathbf w$. The caterpillar lemmas
\cite[Lemmas~5.2 and~5.4]{LamPylyavskyy} give
\[ u,v\in L_0:=S[\mathbf w^{\pm1},x^{\pm1},y^{\pm1}],\qquad \gcd_{L_0}(z,u)=\gcd_{L_0}(z,v)=1. \]
Moreover $z$ is prime in $L_0$: it is associated to the irreducible
exchange polynomial for $x$, and none of the inverted variables
divides that polynomial. For $h\in L(t_1)\cap L(t_3)$ write
$h=f/z^a=g/(u^bv^c)$ with $f,g\in L_0$ and $a,b,c\ge0$.
The identity $fu^bv^c=gz^a$ implies $z^a\mid f$, hence $h\in L_0$.
Reversing the path proves the second inclusion.
\end{proof}

\begin{theorem}[LP starfish]\label{thm:upper-invariance}
For an LP pattern over a UFD, the upper bound is mutation invariant.
Consequently $\UB(t)=\ULP(t)$.
\end{theorem}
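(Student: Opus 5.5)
The approach follows the Berenstein--Fomin--Zelevinsky starfish argument, with the caterpillar inclusions of Lemma~\ref{lem:caterpillar} replacing the coprimality hypothesis. Since the exchange graph of an LP pattern is connected by single mutations, it suffices to prove $\UB(t)=\UB(t')$ for $t'=\mu_k t$. The equality $\UB(t)=\ULP(t)$ then follows formally. The inclusion $\ULP(t)\subseteq\UB(t)$ is immediate from the definitions in \eqref{eq:upper-definitions}. Conversely, if $\UB$ is constant along the connected exchange graph, then $\UB(t)=\UB(s)\subseteq L(s)$ for every seed $s\sim t$, so $\UB(t)\subseteq\bigcap_s L(s)=\ULP(t)$.

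By symmetry ($t=\mu_k t'$) it is enough to show $\UB(t)\subseteq\UB(t')$. Take $h\in\UB(t)$. Because $t'=\mu_k t$ and $\mu_k t'=t$, we already have $h\in L(t')\cap L(t)$. It remains to show $h\in L(\mu_j t')$ for every mutable $j\ne k$. Set $t_0=\mu_j t$, $t_1=t$, $t_2=t'=\mu_k t$, and $t_3=\mu_j t'$. This is an alternating path
\[ t_0\xrightarrow{j}t_1\xrightarrow{k}t_2\xrightarrow{j}t_3,\qquad j\ne k. \]
The second inclusion of Lemma~\ref{lem:caterpillar}, with the roles $(i,j)$ of the lemma taken as $(j,k)$, gives $L(t_0)\cap L(t_2)\subseteq L(t_3)$. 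We know $h\in L(t_0)=L(\mu_j t)$ because $h\in\UB(t)$, and $h\in L(t_2)=L(t')$ as shown above. Hence $h\in L(\mu_j t')$, so $h\in\UB(t')$.

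The main obstacle is making sure the lemma applies verbatim to this path. The mutation $\mu_j$ at $t'$ uses the exchange polynomial of $j$ as modified by the LP rule after $\mu_k$, not the original one. So the path must really be a path in the LP pattern with the LP-updated exchange data at each step. It must also satisfy the caterpillar hypotheses of \cite{LamPylyavskyy}: irreducibility, and that $x_j$ does not divide the relevant exchange polynomials. I would check that the proof of Lemma~\ref{lem:caterpillar} uses only these LP-pattern axioms, over an arbitrary UFD coefficient ring $S$ that is not localized at the frozen variables. In particular it must not use $\widehat E_i=E_i$ or any normalization at the mutated seeds, which is exactly where the argument departs from Du--Li. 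I also need the symmetric degenerate case, with $j$ and $k$ exchanged, which is covered by the first inclusion of the lemma. With these checks, the single-step argument above completes the proof.
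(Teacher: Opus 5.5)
Your proposal is correct and is essentially the paper's proof. You use the same alternating path $\mu_j t\xrightarrow{j}t\xrightarrow{k}t'\xrightarrow{j}\mu_j t'$ and the inclusion $L(\mu_j t)\cap L(t')\subseteq L(\mu_j t')$ from Lemma~\ref{lem:caterpillar}, and your appeal to involutivity $t=\mu_k t'$ does the job of the paper's reverse-path inclusion $L(t)\cap L(\mu_j t')\subseteq L(\mu_j t)$; the deduction $\UB(t)=\ULP(t)$ from constancy of $\UB$ over the pattern is the same as well.
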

\begin{proof}
Let $t'=\mu_k t$. For $j\ne k$, apply
Lemma~\ref{lem:caterpillar} to
\[ \mu_jt\xrightarrow{j}t\xrightarrow{k}t' \xrightarrow{j}\mu_jt'. \]
It gives $L(\mu_jt)\cap L(t')\subseteq L(\mu_jt')$ and the
reverse-path inclusion $L(t)\cap L(\mu_jt')\subseteq L(\mu_jt)$.
Intersecting with $L(t)\cap L(t')$ proves
$\UB(t)=\UB(t')$. The cases with no mutable variable or one mutable variable are immediate. The common
upper bound is contained in every Laurent ring of the pattern, and
therefore equals their intersection.
\end{proof}

The coprimality used in Lemma~\ref{lem:caterpillar} follows
from LP mutation, not an extra hypothesis. Neither pairwise coprimality
of the ordinary exchanges nor $E_i=\widehat E_i$ throughout the pattern
is required. The coefficient ring $S$ has not been localized.

\subsubsection*{Upper bounds versus adjacent generators.}
Theorem~\ref{thm:upper-invariance} does not say that the initial and
adjacent cluster variables generate the upper bound. For example, over
$S=\C[a]$ take $E_x=y+a$ and $E_y=x+a$, and put
\[ z=\frac{y+a}{x},\qquad w=\frac{x+a}{y},\qquad u=\frac{x+y+a}{xy}=\frac{1+z}{y}=\frac{1+w}{x}. \]
These three expressions show that $u\in\UB(t)$. The ring generated by
$x,y,z,w$ is
\[ A=S[x,y,z,w]/(xz-y-a,\;yw-x-a). \]
Indeed, eliminating $a$ identifies this domain with
$\C[x,y,z,w]/(y(w+1)-x(z+1))$, and localization at $xy$ gives
$S[x^{\pm1},y^{\pm1}]$. But
$A/xA\simeq\C[a,z,w]/(a(w+1))$, where $1+w\ne0$. Since
$xu=1+w$, it follows that $u\notin A$. Thus the initial and adjacent variables need not generate the upper
bound over polynomial coefficients.

\subsection{Frozen boundary and height-one comparison}
Let $D\subseteq I_{\rm fr}$, set $H=\prod_{q\in D}x_q$, and denote
by $\nu_q$ the exponent valuation of $x_q$ in the initial coordinates.
For a valuation $\nu$, write $\mathcal O_\nu=\{h:\nu(h)\ge0\}$,
including zero.

\begin{lemma}[Frozen descent]\label{lem:frozen-descent}
Suppose that $\nu_q(x_i')=0$ for every $q\in D$ and every mutable $i$.
Then
\[ \UB(t)=\UB(t)[H^{-1}]\cap\bigcap_{q\in D}\mathcal O_{\nu_q}. \]
\end{lemma}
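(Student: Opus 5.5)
The inclusion $\subseteq$ is immediate once each element of $\UB(t)$ has nonnegative $\nu_q$. Since $\UB(t)\subseteq L(t)=S[x_i^{\pm1}]$ and $x_q$ is a polynomial coefficient variable in $S$, every element of $L(t)$ has $\nu_q\ge0$. Here $\nu_q$ is the exponent valuation of $x_q$ in the initial coordinates, i.e.\ the $x_q$-adic valuation on $S[x_i^{\pm1}]$ extended to the fraction field. The main task is the reverse inclusion. Let $h\in\UB(t)[H^{-1}]$ with $\nu_q(h)\ge0$ for all $q\in D$. We must show that $h\in L(t)$ and $h\in L(\mu_it)$ for every mutable $i$.

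\emph{Initial chart.} Write $h=g/H^N$ with $g\in\UB(t)\subseteq L(t)$. In the Laurent ring $L(t)$ each $x_q$ ($q\in D$) is prime, because $L(t)$ is a localization of a polynomial ring at monomials not involving $x_q$. Moreover $\nu_q$ is exactly the $x_q$-adic valuation there. Thus $\nu_q(g)\ge N$ for all $q\in D$, and since the $x_q$ are pairwise non-associate primes, $H^N\mid g$ in $L(t)$. Hence $h\in L(t)$.

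\emph{Adjacent chart $\mu_it$.} Here $g\in L(\mu_it)=S[x_i'^{\pm1},x_j^{\pm1}:j\ne i]$, and in this ring $x_q$ is again prime, with associated valuation $\nu_q'$, the $x_q$-adic valuation in the coordinates of $\mu_it$. I will show that $\nu_q'=\nu_q$ on the common fraction field. Both are discrete valuations of the function field, determined by their values on a transcendence basis together with the residue field. The coordinates of $\mu_it$ are obtained from those of $t$ by $x_i=\widehat E_i/x_i'$. The hypothesis $\nu_q(x_i')=0$ gives $\nu_q(\widehat E_i)=\nu_q(x_ix_i')=0$, so the substitution is a unit at $q$ in both directions. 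Concretely, the divisor $\{x_q=0\}$ in the torus-times-affine chart $L(t)$ and the divisor $\{x_q=0\}$ in $L(\mu_it)$ correspond under the birational map. Indeed, $x_i'$ restricts to a nonzero rational function on $\{x_q=0\}$ of the first chart, so the map restricts to a birational map of the two divisors. Therefore the two valuations agree. Granting this, $\nu_q(g)\ge N$ forces $H^N\mid g$ in $L(\mu_it)$, so $h\in L(\mu_it)$.

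The main obstacle is making the identification $\nu_q'=\nu_q$ rigorous. I would argue via the local rings: $L(t)_{(x_q)}$ is a DVR with uniformizer $x_q$. Since $\nu_q(x_i')=0$, $x_i'$ is a unit of $L(t)_{(x_q)}$, and $x_i=\widehat E_i/x_i'$ with $\widehat E_i\in L(t)$ a unit there as well. The generators $x_i'^{\pm1}$ of $L(\mu_it)$ then lie in this DVR, as do the other $x_j^{\pm1}$ and $S$. So $L(\mu_it)\subseteq L(t)_{(x_q)}$, with $x_q$ mapping into the maximal ideal. Hence $L(t)_{(x_q)}$ dominates the localization $L(\mu_it)_{(x_q)}$, which is itself a DVR with the same fraction field. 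A DVR dominating another with the same fraction field equals it, so the valuations coincide. Applying this for each $q\in D$ and each mutable $i$ completes the proof.
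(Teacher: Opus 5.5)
Your reduction matches the paper's. You show that on each adjacent Laurent ring $L(\mu_it)$ the ambient $\nu_q$ is the intrinsic $x_q$-adic valuation, and then clear $H^N$ chart by chart; writing $h=g/H^N$ with $g\in\UB(t)$ neatly avoids commuting localization with the intersection. The initial chart and the final divisibility step are fine.

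The gap is in the one step that carries the content, the identification $\nu_q'=\nu_q$.

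\begin{itemize}
\item From $L(\mu_it)\subseteq L(t)_{(x_q)}$ and $x_q\in\mathfrak m$ you only learn that $\mathfrak p=\mathfrak m\cap L(\mu_it)$ is a prime containing $x_q$.
\item To conclude that $L(t)_{(x_q)}$ dominates $L(\mu_it)_{(x_q)}$ you need $\mathfrak p=x_qL(\mu_it)$. Equivalently, every $f\in L(\mu_it)$ not divisible by $x_q$ must have $\nu_q(f)=0$. Nothing you list gives this.
\end{itemize}

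Here is a counterexample. Take $S=\C[a]$, $L(t)=\C[a,x^{\pm1}]$ and $x'=1+ax$.
\begin{itemize}
\item Then $xx'=x(1+ax)\in L(t)$ has $\nu_a=0$, and $\nu_a(x')=0$, so every fact you use holds.
\item Also $\C[a,x'^{\pm1}]\subseteq\C[a,x^{\pm1}]_{(a)}$ and $a\in\mathfrak m$.
\item Yet $x'-1=ax$ lies in $\mathfrak p$ but not in $a\,\C[a,x'^{\pm1}]$.
\item In the new chart $x=(x'-1)/a$ has $a$-adic order $-1$, whereas $\nu_a(x)=0$.
\end{itemize}
Your geometric paragraph has the same hole. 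Nonvanishing of $\overline{x_i'}$ on $\{x_q=0\}$ gives a rational map between the two divisors, but not a dominant, hence birational, one.

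What is missing is the LP requirement that $\widehat E_i$ does not involve $x_i$; your argument never uses it. With it, the argument goes as follows.
\begin{itemize}
\item Reducing modulo $x_q$ gives $\overline{x_i'}=\overline{\widehat E_i}/\bar x_i$.
\item Here $\overline{\widehat E_i}\neq0$, because $\nu_q(\widehat E_i)=\nu_q(x_ix_i')=0$.
\item Moreover $\overline{\widehat E_i}$ lies in $(S/x_qS)[\bar x_j^{\pm1}:j\ne i]$.
\item Hence $\overline{x_i'}$ is transcendental over $\Frac(S/x_qS)(\bar x_j:j\ne i)$.
\item So $\overline{x_i'}$ and the $\bar x_j$ with $j\ne i$ are algebraically independent over $\Frac(S/x_qS)$, and they generate the residue field $\Frac(S/x_qS)(\bar x_j:j\in I_{\rm mut})$ of $\nu_q$.
\item Therefore the residue map on $L(\mu_it)/x_qL(\mu_it)=(S/x_qS)[x_i'^{\pm1},x_j^{\pm1}:j\ne i]$ is injective. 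This is exactly $\mathfrak p=x_qL(\mu_it)$.
\end{itemize}
Your DVR-domination argument then finishes the proof. This residue-field computation is precisely the content of the paper's proof.
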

\begin{proof}
In the initial Laurent ring the valuation is the ordinary $x_q$-adic
valuation. In an adjacent seed, reduction modulo $x_q$ gives
$\overline{x_i'}=\overline{\widehat E_i}/\overline{x_i}$ and $\overline{x_i}=\overline{\widehat E_i}/\overline{x_i'}$. The numerator is nonzero by the hypothesis and is independent of
$x_i$. Thus the residues of the adjacent extended variables other than
$x_q$ are algebraically independent and generate the same residue
field as before. The ambient valuation is also the intrinsic
$x_q$-adic valuation of this adjacent Laurent ring. Each Laurent ring
in the finite intersection \eqref{eq:upper-definitions} is therefore
recovered from its $H$-localization by these nonnegative-order
conditions. Localization commutes with a finite intersection: the
product of finitely many denominator-clearing powers clears all
rings at once. Intersecting proves the claim.
\end{proof}

\begin{proposition}\label{prop:comparison}
Let $A$ be a normal finitely generated domain with the same fraction
field as an LP seed $t$. Suppose the selected frozen functions $x_q$,
$q\in D$, are pairwise nonassociate prime elements of $A$. Assume
\[
\begin{gathered}
 \nu_q(x_i')=0\quad(q\in D,\ i\in I_{\rm mut}),\qquad
 A[H^{-1}]=\UB(t)[H^{-1}],\\
 \nu_q=\ord_{V(x_q)}\quad\hbox{on }\Frac(A)
 \quad(q\in D).
 \end{gathered}
\]
Then $A=\UB(t)=\ULP(t)$.
\end{proposition}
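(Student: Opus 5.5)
The plan is to compare $A$ and $\UB(t)$ as subrings of the common fraction field, using Lemma~\ref{lem:frozen-descent} on the $\UB$ side and normality of $A$ on the other side. The second equality $\UB(t)=\ULP(t)$ is immediate from Theorem~\ref{thm:upper-invariance}, since the frozen coefficient ring $S$ is a polynomial ring and hence a UFD. So only $A=\UB(t)$ needs proof.

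First, by Lemma~\ref{lem:frozen-descent}, whose hypothesis $\nu_q(x_i')=0$ is assumed,
\[ \UB(t)=\UB(t)[H^{-1}]\cap\bigcap_{q\in D}\mathcal O_{\nu_q}=A[H^{-1}]\cap\bigcap_{q\in D}\mathcal O_{\nu_q}. \]
Second, I will describe $A$ the same way. Since $A$ is a normal noetherian domain, it is the intersection of its localizations at height-one primes. The height-one primes of $A$ split into two groups.
\begin{itemize}
\item Those not containing $H$ correspond to the height-one primes of $A[H^{-1}]$.
\item Those containing $H$ contain some prime element $x_q$. Since $(x_q)$ is itself a height-one prime, these are exactly the primes $(x_q)$, $q\in D$, and the pairwise nonassociate hypothesis makes them distinct.
\end{itemize}
Hence $A=A[H^{-1}]\cap\bigcap_{q\in D}A_{(x_q)}$. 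By the hypothesis $\nu_q=\ord_{V(x_q)}$, we have $A_{(x_q)}=\mathcal O_{\nu_q}$ inside $\Frac(A)$. Here I use that $A[H^{-1}]$ is again normal, so it is itself the intersection over its own height-one primes. Comparing the two displayed descriptions gives $A=\UB(t)$.

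The main point of care is the identification of the DVR $A_{(x_q)}$ with $\mathcal O_{\nu_q}$. Two valuations with the same value on all of $\Frac(A)$ have the same valuation ring, so the assumed equality $\nu_q=\ord_{V(x_q)}$ on the whole fraction field gives this directly. I expect no serious obstacle beyond checking that the intersection formula for normal noetherian domains is applied in the form
\[ A=\bigcap_{\mathfrak p\not\ni H}A_{\mathfrak p}\cap\bigcap_{q\in D}A_{(x_q)}, \]
and that the first intersection equals $A[H^{-1}]$. Both hold because $A[H^{-1}]$ is a normal noetherian domain whose height-one primes are exactly the $\mathfrak pA[H^{-1}]$ with $H\notin\mathfrak p$.
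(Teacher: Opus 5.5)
Your proposal is correct and follows essentially the same route as the paper. The paper also writes $A=A[H^{-1}]\cap\bigcap_{q\in D}\mathcal O_{\ord_{V(x_q)}}$ by normality, notes that the prime divisors containing $H$ are the $V(x_q)$, and then concludes with Lemma~\ref{lem:frozen-descent} and Theorem~\ref{thm:upper-invariance}; you simply make explicit the details it leaves implicit, namely that the height-one primes over $H$ are exactly the $(x_q)$, that $A[H^{-1}]$ is the intersection over the remaining height-one primes, and that $A_{(x_q)}=\mathcal O_{\nu_q}$.
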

\begin{proof}
Normality and the height-one intersection description give
\[ A=A[H^{-1}]\cap\bigcap_{q\in D}\mathcal O_{\ord_{V(x_q)}}. \]
The prime divisors removed by $H$ are $V(x_q)$, $q\in D$.
Now use Lemma~\ref{lem:frozen-descent} and
Theorem~\ref{thm:upper-invariance}.
\end{proof}

The inclusion $\UB(t)\subseteq A$ follows from these intersection
descriptions. For upper cluster algebras obtained by minimal monomial
lifting, Francone gives related criteria in terms of equality between
frozen-coordinate valuations and geometric divisor orders
\cite[Propositions~4.1.3 and~4.1.10]{FranconeBranching}.

For later use, a separate height-one argument gives the familiar
inclusion criterion. If $A$ is normal, contains all initial and first
mutated functions, no height-one prime contains two distinct mutable
functions, and no height-one prime contains both $x_i$ and $x_i'$, then
$\UB(t)\subseteq A$. Indeed, at a height-one prime either all mutable
functions are units, or exactly one is not and its first mutation is
a unit. The initial or the corresponding adjacent Laurent ring is
then contained in the local ring. Intersect over height-one primes.

\section{The folded-minor seed}
\label{sec:folded-seed}
Throughout Sections~\ref{sec:folded-seed}--\ref{sec:algebra-equality},
we put $N=2n$, $V=\C^N$, and $\bar i=N+1-i$.
We construct regular functions in $\B_n$ and define the corresponding
LP seed in algebraically independent symbols. Proposition~\ref{thm:initial-chart}
identifies its rational function field with $\Frac(\B_n)$.

The seed functions lie in one-dimensional highest-weight spaces. We
give Pfaffian formulas for them and fix their scalars by evaluation at
integral matrices. Their primality allows us to remove the spherical
denominators in the exchange identities. The boundary formulas then
specify the polynomial exchanges.

\subsection{Bordered Pfaffians and their signs}
Let $J_{i,\bar i}=1$ and $J_{\bar i,i}=-1$ for $1\le i\le n$, all
other entries being zero. Thus
\[ \Omega=\sum_{i=1}^n e_i\wedge e_{\bar i},\qquad \Pf(J)=1. \]
Take $K=\Sp(V)=\{g:gJg^{\mathsf t}=J\}$ with the compatible upper
triangular Borel. For $g\in\SL_N$, set
\[
A(g)=gJg^{\mathsf t},\qquad
 \mathcal K(g)=\begin{pmatrix}A(g)&g\\-g^{\mathsf t}&0\end{pmatrix}.
\]
If $R$ is an increasing row list and $I$ an increasing column list,
write $I^*$ for the corresponding indices in the second block and put
\begin{equation} \tau(R;I)=(-1)^{\binom{|I|}{2}} \Pf\mathcal K(g)_{(R,I^*)}. \label{eq:tau-definition} \end{equation}
An odd-size Pfaffian is zero. We abbreviate $\tau(R;[m])$ to
$\tau(R;m)$, where $[m]=\{1,\ldots,m\}$.

\begin{lemma}\label{lem:tau-coefficient}
For $|R|=|I|+2p$ with $p\ge0$,
\begin{equation} \tau(R;I)=\left\langle e_R^*, g\left(e_I\wedge\frac{\Omega^p}{p!}\right)\right\rangle. \label{eq:tau-coefficient} \end{equation}
In particular, $\tau(R;I)=\Delta_{R,I}(g)$ if $|R|=|I|$, and
$\tau(R;0)=\Pf A(g)_{R,R}$.
\end{lemma}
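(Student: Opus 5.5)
The plan is to derive \eqref{eq:tau-coefficient} from the standard identity expressing a Pfaffian of a block matrix of this shape as a pairing of exterior powers. We use the Pfaffian expansion in the exterior algebra: for a skew matrix $M$ of size $2k$ with associated 2-form $\omega_M=\sum_{a<b}M_{ab}\,\epsilon_a\wedge\epsilon_b$, one has
\[ \frac{\omega_M^{k}}{k!}=\Pf(M)\,\epsilon_1\wedge\cdots\wedge\epsilon_{2k}. \]

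\textbf{Step 1: the 2-form of the submatrix.} Order the rows and columns of $\mathcal K(g)_{(R,I^*)}$ as $R$ followed by $I^*$. Introduce formal generators $\epsilon_r$ for $r\in R$ and $\eta_c$ for $c\in I$. The 2-form splits as $\alpha+\beta$ with
\[ \alpha=\sum_{r<r'}A(g)_{rr'}\,\epsilon_r\wedge\epsilon_{r'},\qquad \beta=\sum_{r\in R,\,c\in I}g_{rc}\,\epsilon_r\wedge\eta_c. \]
The lower-right block is zero, so no $\eta\wedge\eta$ terms occur.

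\textbf{Step 2: extracting the top coefficient.} Set $k=p+|I|$. Expanding $(\alpha+\beta)^k/k!$, and using that $\alpha$ and $\beta$ commute as even elements, the only contribution to the top degree, which must contain every $\eta_c$ exactly once, is $\frac{\alpha^p}{p!}\cdot\frac{\beta^{|I|}}{|I|!}$. Next,
\[ \frac{\beta^{|I|}}{|I|!}=\pm\prod_{c\in I}\Bigl(\sum_r g_{rc}\epsilon_r\Bigr)\wedge\eta_I, \]
where the sign comes from reordering the factors $\epsilon\wedge\eta$ into the form $\epsilon\cdots\epsilon\,\eta\cdots\eta$. This reordering sign is exactly $(-1)^{\binom{|I|}{2}}$, which cancels the prefactor in \eqref{eq:tau-definition}.

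\textbf{Step 3: identification with $g\,(e_I\wedge\Omega^p/p!)$.} Identify $\epsilon_r$ with $e_r$, so that $\sum_r g_{rc}\epsilon_r$ is the $R$-part of $ge_c$. Then observe that $\alpha$ is the restriction to $R$-coordinates of $g\Omega$. Indeed, $\Omega$ corresponds to the skew matrix $J$, and $g\Omega$ corresponds to $gJg^{\mathsf t}=A(g)$. Hence $\frac{\alpha^p}{p!}\wedge\prod_c(\cdots)$ is the projection of $g\bigl(\Omega^p/p!\wedge e_I\bigr)$ onto $\bigwedge^{|R|}$ of the $R$-coordinates, and its coefficient of $e_R$ is $\langle e_R^*,\cdot\rangle$. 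Since $\Omega^p$ has even degree, $\Omega^p\wedge e_I=e_I\wedge\Omega^p$.

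\textbf{Step 4: special cases.} When $p=0$ we get $\langle e_R^*,g e_I\rangle=\Delta_{R,I}(g)$. When $I=\emptyset$ we get $\Pf A(g)_{R,R}$, directly from the expansion formula.

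\textbf{Main obstacle.} The only delicate point is the sign bookkeeping in Step 2: reordering $\epsilon_{r_1}\eta_{c_1}\cdots\epsilon_{r_m}\eta_{c_m}$ into $\epsilon_{r_1}\cdots\epsilon_{r_m}\eta_{c_1}\cdots\eta_{c_m}$ gives $(-1)^{\binom m2}$. This must be checked against the ordering convention of the rows $R$ before the columns $I^*$. A quick check at $g=\mathrm{Id}$ with $|R|=|I|=2$ should confirm it.
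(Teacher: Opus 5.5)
Your proof is correct and follows the paper's argument. The paper expands the Pfaffian over matchings, notes that each starred index must pair with an ordinary index (giving $g(e_I)$), takes the remaining pairs to give $g(\Omega^p/p!)$, and cancels the reordering sign $(-1)^{\binom{|I|}{2}}$ against the prefactor, which is exactly your Steps 2--3 written with the exterior-algebra form $(\alpha+\beta)^k/k!$ of the Pfaffian.
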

\begin{proof}
In the Pfaffian expansion, each starred index must pair with an
ordinary index. Summing these matchings gives the determinant from
$g(e_I)$; the remaining pairs give $g(\Omega^p/p!)$. Moving the
starred indices to the exterior order contributes
$(-1)^{\binom{|I|}{2}}$, canceled by the prefactor in
\eqref{eq:tau-definition}. This proves the formula with its stated
sign, including both special cases.
\end{proof}

For $0\le m\le n$ and $|R|=m+2p$, the useful expansion is
\begin{equation}
\tau(R;m)=
 \sum_{\substack{I\subseteq\{m+1,\ldots,n\}\\|I|=p}}
 \Delta_{R,([m],I,\bar I)}(g),
 \qquad
 \bar I=(\bar i_p,\ldots,\bar i_1)
 \quad\hbox{if }I=(i_1<\cdots<i_p).
 \label{eq:tau-minor-sum}
\end{equation}
Indeed, expand $\Omega^p/p!$ and discard the terms meeting $[m]$.
The order of $\bar I$ in this formula is part of the normalization.
The vector $e_{[m]}\wedge\Omega^p/p!$ is $U_K$-highest of weight
$\varpi_m$. Thus $\tau([m+2p];m)\in\B_n$, of degree
$(\omega_{m+2p};\varpi_m)$. An arbitrary row list $R$ gives a
right $U_K$-invariant coefficient, but not in general a left
$U_G^-$-invariant function.

Set $\mu(d)=\min(d,N-d)$. The sum in
\eqref{eq:tau-minor-sum} has exactly one term when
$|R|=d$ and $m=\mu(d)$, so
\begin{equation} \tau(R;\mu(d))=\Delta_{R,[d]}(g). \label{eq:flag-specialization} \end{equation}
In particular, the principal and spherical functions are
\[
\begin{aligned}
p_d&=\Delta_{[d],[d]}=\tau([d];\mu(d)),&&1\le d\le 2n-1,
 \\
 s_{2r}&=\tau([2r];0),&&1\le r\le n-1.
\end{aligned}
\]
Their degrees are $(\omega_d;\varpi_{\mu(d)})$ and
$(\omega_{2r};0)$, respectively; $s_{2n}=1$.
They are nonzero and prime in $\B_n$: their left degrees are
fundamental, and a factor of left degree zero is constant.

\subsection{The index set and the weight formula}
For odd $a$ and $1\le\ell\le n$, define
\begin{equation} r=1+|a-1|,\qquad m=\min(\ell,\ell+a-1,2n+1-a-\ell). \label{eq:folded-parameters} \end{equation}
Let $\widetilde I(n)$ consist of the pairs $(a,\ell)$ with $m\ge1$.
For such a pair put
\begin{equation} \lambda_{a,\ell}=\sum_{j=0}^{m-1}\omega_{r+2j},\qquad q=\frac{\ell-m}{2},\quad R_0=\frac{r-1}{2},\quad D_0=n-R_0-m. \label{eq:folded-degree} \end{equation}
These are integers and
\begin{equation} 0\le q\le\min(R_0,D_0). \label{eq:q-bounds} \end{equation}
For example, before either boundary is crossed $m=\ell$ and $q=0$.
At a left crossing $q=R_0$, and at a right crossing $q=D_0$.
These three cases prove \eqref{eq:q-bounds} directly from
\eqref{eq:folded-parameters}.

We shall also use the following expression for the weight. Define a
$4n$-periodic sequence by
\[ \epsilon_j=\omega_j\ (1\le j<2n),\quad \epsilon_0=\epsilon_{2n}=0,\quad \epsilon_{2n+j}=-\omega_{2n-j}\ (1\le j<2n). \]
Then, for every $(a,\ell)\in\widetilde I(n)$,
\begin{equation} \lambda_{a,\ell}=\sum_{s=0}^{\ell-1}\epsilon_{a+2s}. \label{eq:antiperiodic-dictionary} \end{equation}
To check this, cancel the terms with indices paired by $j\leftrightarrow-j$
or by $j\leftrightarrow4n-j$, using periodicity in the first case.
The interval from $a$ to $a+2\ell-2$ has length less than $2n$,
so it cannot cross both $0$ and $2n$. The remaining indices are
$r,r+2,\ldots,r+2m-2$, which gives \eqref{eq:folded-degree}.

For $n\ge4$, the mutable variables are indexed by
\begin{align}
 I_{<}(n)&=\{(a,\ell):1\le\ell<n,\ a\text{ odd},
                      \ 3-\ell\le a\le2n-\ell-1\}, \notag\\
 I_{=}(n)&=\{(a,n):a\text{ odd},\ 3\le a\le n-1\},\qquad
 I_{\rm mut}=I_<\sqcup I_=.
 \label{eq:mutable-chamber}
\end{align}
The additional pair $(1,n)$ is frozen. For fixed odd $\ell<n$ there are $n-2$ mutable variables, for fixed even $\ell<n$ there are $n-1$, and at $\ell=n$ there are
$\lfloor(n-2)/2\rfloor$. Hence
\[ |I_{\rm mut}|=n(n-2),\qquad |I_{\rm fr}|=3n-1, \qquad |I_{\rm mut}|+|I_{\rm fr}|=n^2+n-1. \]

\subsection{Multiplicity one}
The partition corresponding to $\lambda_{a,\ell}$ is
\[ \Lambda=(m^r,(m-1)^2,(m-2)^2,\ldots,1^2,0^{d_0}), \qquad d_0=2n-r-2m+2=2D_0+1. \]
For $m=1$, the intermediate pairs of equal parts are absent. Let $A_k(\Lambda)$
count partitions $\eta$ with at most $2n$ rows such that
$\eta/\Lambda$ is a vertical $k$-strip and every column of $\eta$ has
even length. Then
\begin{equation} A_k(\Lambda)= \#\{(u,v)\in\Z_{\ge0}^2: u\le R_0,\ v\le D_0, \ k=m+2(u+v)\}. \label{eq:strip-count} \end{equation}
Indeed, a vertical strip increases an initial segment of each
group of rows of the same length. If $t_j$ is the number of rows of length $j$
increased, even multiplicity of each positive row length forces
$t_m=2u$, $t_{m-1}=\cdots=t_1=1$, and $t_0=2v+1$.
The allowed ranges give exactly \eqref{eq:strip-count}.
By \eqref{eq:q-bounds}, $A_\ell=q+1$ and $A_{\ell-2}=q$,
with negative subscripts interpreted as zero.

\begin{proposition}\label{thm:folded-multiplicity}
For every $(a,\ell)\in\widetilde I(n)$,
$\dim(\B_n)_{\lambda_{a,\ell},\varpi_\ell}=1$.
\end{proposition}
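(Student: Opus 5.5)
We will reduce the statement to the vertical-strip count \eqref{eq:strip-count}: we express $V_K(\varpi_\ell)$ virtually through exterior powers, then apply Pieri's rule and the classical description of $\Sp_{2n}$-spherical representations of $\GL_{2n}$. By the Peter--Weyl identification of Section~\ref{sec:setup}, $\dim(\B_n)_{\lambda_{a,\ell},\varpi_\ell}=\dim\Hom_K(V_K(\varpi_\ell),V_G(\lambda_{a,\ell}))$. We regard $V_G(\lambda_{a,\ell})$ as the $\GL_{2n}$-module of highest weight given by the partition $\Lambda$ above; columns of full length $2n$ do not arise here, and would only twist by the determinant, which is trivial on $\Sp_{2n}$.

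\emph{Step 1 (virtual formula for fundamentals).} Use the classical decomposition $\Lambda^kV|_{\Sp_{2n}}\simeq\bigoplus_{j\ge0,\,k-2j\ge0}V_K(\varpi_{k-2j})$ for $0\le k\le n$, with $\varpi_0=0$; it comes from contraction by $\Omega$ and the Lefschetz $\mathfrak{sl}_2$. This gives the identity in the representation ring
\[ [V_K(\varpi_\ell)]=[\Lambda^\ell V]-[\Lambda^{\ell-2}V], \]
with $\Lambda^{-1}V=0$. Hence
\[ \dim\Hom_K(V_K(\varpi_\ell),V_G(\Lambda))=M_\ell-M_{\ell-2},\qquad M_k:=\dim\Hom_K(\Lambda^kV,V_G(\Lambda)). \]

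\emph{Step 2 (Pieri plus sphericity).} Since $V\simeq V^*$ as $K$-modules via $\Omega$, we have $M_k=\dim\bigl(\Lambda^kV\otimes V_G(\Lambda)\bigr)^K$, up to replacing $\Lambda^kV$ by its dual, which is again isomorphic over $K$. By Pieri's rule for $\GL_{2n}$, $\Lambda^kV\otimes V_{\GL}(\Lambda)=\bigoplus_\eta V_{\GL}(\eta)$. Here $\eta$ ranges over partitions with at most $2n$ rows such that $\eta/\Lambda$ is a vertical $k$-strip, each occurring once. The symmetric pair $(\GL_{2n},\Sp_{2n})$ is spherical, and $V_{\GL}(\eta)^{\Sp_{2n}}$ has dimension one if every column of $\eta$ has even length and is zero otherwise (Helgason / Howe; see \cite{GoodmanWallach}). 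Therefore $M_k=A_k(\Lambda)$.

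\emph{Step 3 (conclusion).} By \eqref{eq:strip-count} and \eqref{eq:q-bounds}, already recorded after the strip count, $A_\ell=q+1$ and $A_{\ell-2}=q$, so the multiplicity is $1$. The main point requiring care is Step 1 together with the duality bookkeeping in Step 2. For $\ell\le n$ the exterior-power decomposition is standard, and for $\ell=1,2$ the subtracted term is handled by the convention that negative subscripts give zero. We would also double-check that the identification of $\Hom_K(V_K(\mu),V_G(\lambda))$ with $K$-invariants does not secretly require $V_G(\Lambda)^*$ in place of $V_G(\Lambda)$. This is harmless: $V_G(\Lambda)^*$ is again a $\GL$-module whose spherical constituents under Pieri are governed by the same even-column condition after conjugating by $w_0$ and the determinant. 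More simply, the whole computation may be run with $V_G(\Lambda)^*$ and $\Lambda^kV^*$, where Pieri applies verbatim to the dual partitions.
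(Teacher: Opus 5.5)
Your proposal is correct and follows the paper's proof essentially step for step. Exterior Pieri plus the even-column criterion give $\dim\Hom_{\Sp_{2n}}(\wedge^kV,\mathbb S_\Lambda V)=A_k(\Lambda)$, and the contraction splitting $\wedge^\ell V\simeq V_{\Sp_{2n}}(\varpi_\ell)\oplus\wedge^{\ell-2}V$ then yields $(q+1)-q=1$. Your closing hedge about $V_G(\Lambda)^*$ is unnecessary: because $\wedge^kV$ is self-dual over $\Sp_{2n}$, as the paper notes, you already have $\Hom_K(\wedge^kV,W)\cong(\wedge^kV\otimes W)^K$ with $W$ the polynomial module itself, so Pieri applies directly.
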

\begin{proof}
Exterior Pieri and the even-column criterion for symplectic invariants
in polynomial $\GL_{2n}$-modules give
\[ \dim\Hom_{\Sp_{2n}}(\wedge^k V,\mathbb S_\Lambda V) =A_k(\Lambda). \]
Here the exterior power is self-dual as an $\Sp_{2n}$-module. Both
statements are finite-dimensional with the row bound $2n$ imposed;
see \cite{FultonYoungTableaux,GoodmanWallach}. For $\ell\le n$,
symplectic contraction gives
$\wedge^\ell V\simeq V_{\Sp_{2n}}(\varpi_\ell)\oplus\wedge^{\ell-2}V$.
Subtracting the two multiplicities yields $(q+1)-q=1$.
\end{proof}

\subsection{The normalized column--Pfaffian coefficient}
Set $t=2q+1=\ell-m+1$, $h_j=r+2(j-1)$, $k_1=t$, and
$k_j=1$ for $j>1$. Let $\op{Sh}_{\mathbf k}([\ell])$
be the ordered partitions $(I_1,\ldots,I_m)$ of $[\ell]$ into
increasing lists of sizes $k_j$. Write $\varepsilon(\mathbf I)$
for the sign of their concatenation. Define
\begin{equation} F_{a,\ell}(g)= \sum_{\mathbf I\in\op{Sh}_{\mathbf k}([\ell])} \varepsilon(\mathbf I)\prod_{j=1}^m\tau([h_j];I_j). \label{eq:folded-function} \end{equation}
All Pfaffians and signs on the right have already been fixed.
Equivalently, the right vector is the exterior coproduct of
$e_{[\ell]}$, with its $j$-th component wedged with
$\Omega^{(h_j-k_j)/2}/((h_j-k_j)/2)!$; the left covector is
$e_{[h_1]}^*\otimes\cdots\otimes e_{[h_m]}^*$.
Consequently $F_{a,\ell}$ is a regular semi-invariant of the claimed
bidegree, provided it is nonzero.

The following explicit construction proves nonvanishing with a fixed
normalization.

\begin{proposition}\label{prop:unit-frame}
For every $(a,\ell)\in\widetilde I(n)$ there is an explicitly
specified $g_{a,\ell}\in\SL_{2n}(\Z)$ such that
\begin{equation} F_{a,\ell}(g_{a,\ell})=1. \label{eq:unit-evaluation} \end{equation}
\end{proposition}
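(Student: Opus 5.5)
We prove \eqref{eq:unit-evaluation} by taking $g_{a,\ell}$ to be a signed permutation matrix and reading $F_{a,\ell}(g_{a,\ell})$ as the matrix coefficient described after \eqref{eq:folded-function}. For such $g$, each factor $\tau([h_j];I_j)(g)$ is computed by Lemma~\ref{lem:tau-coefficient}: it is the coefficient of $e_{[h_j]}$ in
\[ g\bigl(e_{I_j}\wedge\Omega^{p_j}/p_j!\bigr),\qquad p_j=(h_j-k_j)/2. \]
Expanding $\Omega^{p_j}/p_j!$ as the sum over $p_j$-element sets of hyperbolic pairs $(i,\bar i)$ disjoint from $I_j$, this coefficient is a signed count of pair-sets $P$ with
\[ g\bigl(I_j\cup P\cup\bar P\bigr)=[h_j] \]
as a set of rows. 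The goal is a permutation $\sigma$ for which exactly one shuffle $\mathbf I$ and exactly one pair-set per factor survive, so that $F_{a,\ell}(g)=\pm1$. The sign is then corrected at the end.

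\textbf{Step 1: the permutation.} Write $\sigma$ for the underlying permutation of $g$.
\begin{itemize}[leftmargin=2em]
\item Send the columns $1,\ldots,t$ to rows $1,\ldots,t$.
\item Fill the remaining $h_1-t=r-1-2q$ rows of $[h_1]$ with $R_0-q$ hyperbolic pairs $(i,\bar i)$, $i>\ell$. This is possible because $q\le R_0$ by \eqref{eq:q-bounds}.
\item For $j\ge2$, send column $t+j-1$ to row $h_j-1$ or $h_j$.
\item Send one further hyperbolic pair to the other new row of $[h_j]$, so that $[h_j]\setminus[h_{j-1}]$ is the image of exactly one ordinary column and one pair.
\item Send all remaining columns (the unused pairs and the leftover indices in $\{\ell+1,\ldots,n\}$) to rows beyond $h_m$.
\end{itemize}
The total count is controlled by $D_0=n-R_0-m$ and $q\le D_0$: we need $R_0-q+(m-1)$ pairs with both members outside $[\ell]$, and there are $n-\ell$ such pairs.

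\textbf{Step 2: uniqueness of the surviving term.} I would argue by induction on $j$ using the nested filtration $[h_1]\subset[h_2]\subset\cdots$. For a nonzero coefficient, row $h_j$ must be hit by an element of $I_j$ or by a pair inside the $j$-th factor. The pairs available to every factor are among the same fixed hyperbolic pairs, and each factor must cover all of $[h_j]$ with exactly $|I_j|$ ordinary columns. Hence $I_1=[t]$, and then $I_j=\{t+j-1\}$ successively, with pair-set $P$ forced to be the pairs mapped into $[h_j]$. A wrong choice would require an ordinary column $i\le\ell$ to land in $[h_j]$ from a factor where it is not allowed, or a pair whose image leaves $[h_j]$.

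This uniqueness is the main obstacle, specifically the interaction between the shuffle sum and the pair sums near the crossings $q=R_0$ and $q=D_0$. There the counts are tight, and the formula \eqref{eq:tau-minor-sum} (with its reversed order $\bar I$) must be matched exactly. If the naive assignment admits a second term, I would first try reordering which of rows $h_j-1,h_j$ receives the ordinary column, so that a triangularity argument in the row order kills the alternatives.

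\textbf{Step 3: normalization.} Once a single term survives, $F_{a,\ell}(g)=\pm1$. Its sign is the product of:
\begin{itemize}[leftmargin=2em]
\item the shuffle sign $\varepsilon(\mathbf I)$, which is $+1$ for the initial-segment shuffle;
\item the permutation signs of the minors in \eqref{eq:tau-minor-sum};
\item the sign of $\det$ of the permutation.
\end{itemize}
To land in $\SL_{2n}(\Z)$ with value $+1$, I would choose the signs on two columns. One column is unused by every factor (such a column exists since $D_0\ge q$ leaves rows beyond $h_m$), and its sign fixes $\det g=1$ without affecting $F$. The other is column $1$, whose sign flips $F$ if needed. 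Both adjustments only require the surviving term to be unique, so the explicit $g_{a,\ell}$ can be written down case by case for the three regimes $q=0$, $q=R_0$, and $q=D_0$ singled out after \eqref{eq:q-bounds}.
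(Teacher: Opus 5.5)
\textbf{The signed-permutation approach cannot work.} The obstacle is the opposite of the one you anticipate in Step~2: when $q\ge1$ and $m\ge2$, no term of \eqref{eq:folded-function} survives at all.

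Let $\sigma$ be the underlying permutation and put $S_h=\sigma^{-1}([h])$. For a signed permutation the pair-set in each factor is forced, so $\tau([h];I)(g)\ne0$ exactly when $I\subseteq S_h$ and $S_h\setminus I$ is stable under $i\mapsto\bar i$. Suppose the first two factors of some shuffle term are nonzero, with $I_2=\{c\}$, and take $i\in I_1$.
\begin{itemize}
\item Since $i\in S_{h_1}\subseteq S_{h_2}$ and $i\ne c$, stability of $S_{h_2}\setminus\{c\}$ gives $\bar i\in S_{h_2}$.
\item But $\bar i\notin S_{h_1}$: otherwise $\bar i\in S_{h_1}\setminus I_1$, and stability would put $i$ outside $I_1$.
\end{itemize}
Hence the $2q+1$ distinct columns $\bar i$, $i\in I_1$, all lie in $S_{h_2}\setminus S_{h_1}$, which has only $h_2-h_1=2$ elements. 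So $F_{a,\ell}$ vanishes at every signed permutation matrix whenever $q\ge1$ and $m\ge2$. An example is $(a,\ell)=(-1,4)$, where $R_0=q=1$ and $m=2$; this is the vertex $F_{-1,4}$ of Figure~\ref{fig:a9c5-seed}. The argument uses no property of $\sigma$, so reassigning rows $h_j-1,h_j$ cannot repair it.

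The same count shows Step~1 is inconsistent even when $q=0$. The two new rows $[h_j]\setminus[h_{j-1}]$ cannot receive an ordinary column and a whole pair; they must be $\sigma(c_j)$ and $\sigma(\bar c_{j-1})$, where $I_j=\{c_j\}$. Separately, your pair count $R_0-q+m-1\le n-\ell$ would require $q+m-1\le D_0$, which \eqref{eq:q-bounds} does not give.

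\textbf{The missing idea} is to leave permutation matrices and move the hyperbolic pairing into row space. The paper takes an integral $g$ with $gJg^{\mathsf t}=J_0$, the sum of adjacent $2\times2$ blocks. Then $g\Omega=\sum_u e_{2u-1}\wedge e_{2u}$, every factor becomes a computation with adjacent row pairs, and in particular $\tau([2u-1];\{i\})(g)=g_{2u-1,i}$. The construction is as follows.
\begin{itemize}
\item Put $\alpha_j=e_{2j-1}$, $\beta_j=e_{2j}$ and $j_i=R_0+m+i$.
\item The first $2q$ columns are the mixed vectors $\alpha_i+\alpha_{j_i}$ and $\beta_i-\beta_{j_i}$, with symplectic partners $\beta_i$ and $\alpha_{j_i}$.
\item Column $2q+k$ is $\alpha_{R_0+k}$.
\end{itemize}
On the first $2R_0+1$ rows the mixed columns restrict to $e_1,\dots,e_{2q}$. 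They therefore fill whole row pairs of $g\Omega$ although no two of them are $J$-partners, which is exactly what a permutation cannot do.

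Each row $h_j$ with $j\ge2$ then has a single nonzero entry among the first $\ell$ columns. This forces the shuffle $I_1=[2q+1]$, $I_j=\{2q+j\}$. The remaining factor $\tau([2R_0+1];[2q+1])$ equals $1$ by the adjacent-pair calculation. Finally $\det g=1$ follows from $\Pf J=\Pf J_0=1$, so no sign correction is needed.
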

\begin{proof}
Put $\alpha_j=e_{2j-1}$ and $\beta_j=e_{2j}$, and abbreviate
$R_0$ to $R$. The bounds \eqref{eq:q-bounds} give
$q\le R$ and $R+m+q\le n$.
For $1\le i\le q$, put $j_i=R+m+i$ and choose the following two
ordered pairs:
\[
\begin{aligned}
 a_{2i-1}&=\alpha_i+\alpha_{j_i},& b_{2i-1}&=\beta_i,\\
 a_{2i}&=\beta_i-\beta_{j_i},& b_{2i}&=\alpha_{j_i}.
 \end{aligned}
\]
Next take $a_{2q+k}=\alpha_{R+k}$ and
$b_{2q+k}=\beta_{R+k}$ for $1\le k\le m$, and append the unused
standard pairs $(\alpha_j,\beta_j)$ in increasing order of $j$.
This gives $n$ pairs in total. Each mixed pair of pairs satisfies
\[ a_{2i-1}\wedge b_{2i-1}+a_{2i}\wedge b_{2i} =\alpha_i\wedge\beta_i+\alpha_{j_i}\wedge\beta_{j_i}. \]
Therefore the integral matrix
$g=(a_1\mid\cdots\mid a_n\mid b_n\mid\cdots\mid b_1)$
satisfies $gJg^{\mathsf t}=J_0$, where $J_0$ is the direct sum of
$n$ copies of $\left(\begin{smallmatrix}0&1\\-1&0\end{smallmatrix}\right)$.
Since $\Pf J=\Pf J_0=1$, it follows that $\det g=1$.

For this matrix, a singleton bordered coefficient is particularly
simple:
\[ \tau([2u-1];\{i\})(g)=g_{2u-1,i}. \]
In fact, after deleting the row paired with the starred index, the
adjacent-pair matrix $J_0$ has nonzero Pfaffian only when that row is
$2u-1$, and the resulting sign is positive. For $j\ge2$, among the
first $\ell=2q+m$ columns the row $h_j=2(R+j)-1$ has a single
nonzero entry, namely $1$ in column $2q+j$. Thus the only shuffle
which can contribute to \eqref{eq:folded-function} is
\[ I_1=[2q+1],\qquad I_j=\{2q+j\}\quad(2\le j\le m). \]
Its sign and all singleton factors are $1$.

In the first $2R+1$ rows, the first $2q+1$ columns of $g$ restrict to
$e_1,\ldots,e_{2q},e_{2R+1}$. In the exterior formula for
$\tau([2R+1];[2q+1])$, the remaining rows are filled by the adjacent
pairs $q+1,\ldots,R$ in $J_0$. Moving $e_{2R+1}$ across these
$2(R-q)$ entries contributes sign $+1$. This last factor is also $1$,
proving \eqref{eq:unit-evaluation}.
\end{proof}

\begin{corollary}\label{cor:normalized-folded-functions}
The function \eqref{eq:folded-function} spans
$(\B_n)_{\lambda_{a,\ell},\varpi_\ell}$. Its integral polynomial
expression has no nontrivial common integer factor. The evaluation
in \eqref{eq:unit-evaluation} fixes this primitive integral normalization.
\end{corollary}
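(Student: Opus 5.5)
The corollary combines three facts already established: the multiplicity-one statement (Proposition~\ref{thm:folded-multiplicity}), the regularity and semi-invariance of $F_{a,\ell}$ noted after \eqref{eq:folded-function}, and the unit evaluation (Proposition~\ref{prop:unit-frame}).

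\emph{Spanning.} By the discussion following \eqref{eq:folded-function}, $F_{a,\ell}$ is a matrix coefficient. Its left covector is $e_{[h_1]}^*\otimes\cdots\otimes e_{[h_m]}^*$, a tensor product of $U_G^-$-fixed covectors, and its right vector is $U_K$-highest of weight $\varpi_\ell$. So $F_{a,\ell}$ lies in $(\B_n)_{\lambda_{a,\ell},\varpi_\ell}$. Here I should check that the left covector has weight $\lambda_{a,\ell}$, i.e.\ that $\sum_j\omega_{h_j}=\sum_{j=0}^{m-1}\omega_{r+2j}$. This holds because $h_j=r+2(j-1)$. Proposition~\ref{prop:unit-frame} gives $F_{a,\ell}(g_{a,\ell})=1$, so $F_{a,\ell}\neq0$. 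Proposition~\ref{thm:folded-multiplicity} says this space is one-dimensional, so $F_{a,\ell}$ spans it.

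\emph{Primitivity.} Expand $F_{a,\ell}$ via \eqref{eq:folded-function}, \eqref{eq:tau-definition} and Lemma~\ref{lem:tau-coefficient}. Each $\tau(R;I)$ is an integer polynomial in the entries of $g$: Pfaffian entries of $\mathcal K(g)$ are entries of $g$ or of $gJg^{\mathsf t}$, and $J$ is integral. Hence $F_{a,\ell}\in\Z[g_{ij}]$. Suppose an integer $c$ divides every coefficient. Evaluating at the integral matrix $g_{a,\ell}\in\SL_{2n}(\Z)$ then gives $c\mid F_{a,\ell}(g_{a,\ell})=1$, so $c=\pm1$.

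\emph{Normalization.} Any other primitive integral spanning vector of the line is $cF_{a,\ell}$ for a scalar $c$. Since $F_{a,\ell}$ is primitive and the vector is also primitive integral, we get $c\in\{\pm1\}$. The evaluation \eqref{eq:unit-evaluation} fixes the sign. One subtlety is that $F_{a,\ell}$ is only defined as a function on $\SL_{2n}$, modulo the ideal $(\det-1)$. Primitivity should therefore be read for the specific polynomial expression given, or equivalently for all integral representatives. The evaluation argument works for any integral representative, because $\det g_{a,\ell}=1$.

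There is no real obstacle here. The only point needing care is that the bidegree of the coefficient matches $(\lambda_{a,\ell};\varpi_\ell)$ under the convention \eqref{eq:torus-convention}. That check amounts to the weight bookkeeping already recorded after \eqref{eq:folded-function}.
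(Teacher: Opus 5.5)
Your proposal is correct and follows essentially the same route as the paper: $F_{a,\ell}$ lies in $(\B_n)_{\lambda_{a,\ell},\varpi_\ell}$ by the $\Sp(V)$-equivariant coproduct construction and the weights of the flag covectors; nonvanishing and primitivity come from the integral unit evaluation of Proposition~\ref{prop:unit-frame}; and spanning comes from the multiplicity-one Proposition~\ref{thm:folded-multiplicity}. Your extra remark that the evaluation argument holds for any integral representative modulo $\det-1$ is a harmless refinement of the paper's one-line argument.
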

\begin{proof}
The exterior coproduct and multiplication by powers of $\Omega$ are
$\Sp(V)$-equivariant, and the chosen left flag covectors have total
weight $\lambda_{a,\ell}$. Apply Proposition~\ref{prop:unit-frame}
and Proposition~\ref{thm:folded-multiplicity}. Integral evaluation at a
point with value $1$ proves the assertion about common integer factors.
\end{proof}

Set
\[ \mathsf P_{u,j}=\tau([2u-1];\{j\}),\quad 1\le u,j\le n, \qquad c_n=F_{1,n}. \]
When $a=2b-1\ge1$ and $b+\ell\le n+1$, we have $m=\ell$ and
all $k_j=1$. Hence the shuffle is a determinant:
\begin{equation} F_{2b-1,\ell}= \det\mathsf P_{[b,b+\ell-1],[\ell]},\qquad c_n=\det\mathsf P. \label{eq:solid-minor-formula} \end{equation}
In particular,
\[ \wt(c_n)=(\omega_1+\omega_3+\cdots+\omega_{2n-1};\varpi_n). \]
The initial extended functions are the $F_{a,\ell}$ in
\eqref{eq:mutable-chamber}, together with $p_1,\ldots,p_{2n-1}$,
$s_2,s_4,\ldots,s_{2n-2}$ and $c_n$.

\begin{proposition}\label{prop:initial-primes}
Every initial extended function is prime in $\B_n$, and distinct
initial variables are nonassociate primes. The sum of the frozen degrees
is $(2\rho_{A_{2n-1}};2\rho_{C_n})$.
\end{proposition}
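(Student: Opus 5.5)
The plan is to use the factoriality of $\B_n$ (Proposition~\ref{prop:branching-ufd}) together with the remark after it: every prime factor of a bihomogeneous element is bihomogeneous, so any factorization of an initial function respects the dominant bigrading. Primality of $F$ will follow once we show that $\wt(F)=(\lambda;\mu)$ admits no decomposition $(\lambda';\mu')+(\lambda'';\mu'')$ into two nonzero bidegrees, each supporting a nonconstant element of $\B_n$, with the two factors multiplying to a nonzero element of the one-dimensional space $(\B_n)_{\lambda,\mu}$ (Proposition~\ref{thm:folded-multiplicity}). Since units are constants, a factor of left degree zero lies in $\B(\SL_{2n},\Sp_{2n})_{0,\mu'}=\Hom_K(V_K(\mu'),\C)$. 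This space is nonzero only for $\mu'=0$, so such a factor is constant. Hence any nontrivial factorization splits $\lambda$ into two nonzero dominant parts.

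\textbf{Principal, spherical and $c_n$.} For $p_d$ and $s_{2r}$ the left degree is fundamental, and this is already noted in the text. For $c_n=F_{1,n}$ the left degree is $\omega_1+\omega_3+\cdots+\omega_{2n-1}$ and the right degree is $\varpi_n$.

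\textbf{Mutable $F_{a,\ell}$.} Here $\lambda_{a,\ell}=\sum_{j<m}\omega_{r+2j}$. A factorization would give $\lambda'=\sum_{j\in J}\omega_{r+2j}$ and $\lambda''$ over the complement, with $\mu'+\mu''=\varpi_\ell$, so one of $\mu',\mu''$ vanishes, say $\mu''=0$. Then the factor of degree $(\lambda'';0)$ is an $\Sp_{2n}$-invariant in $V_G(\lambda'')^*$-coefficients. By the even-column criterion used in Proposition~\ref{thm:folded-multiplicity}, this forces $\lambda''$ to involve only even fundamental weights, i.e.\ $r$ even. But $r=1+|a-1|$ is odd, so every $\omega_{r+2j}$ has odd index. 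Thus $\dim(\B_n)_{\lambda'',0}=0$ for all nonzero $\lambda''$, a contradiction. For $c_n$ the same argument works because all left indices are odd.

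\textbf{Pairwise nonassociation.} Associate primes differ by a unit, i.e.\ a scalar, and so have equal bidegree. It therefore suffices to check that the bidegrees of the initial variables are pairwise distinct:
\begin{itemize}
\item the $p_d$ have degree $(\omega_d;\varpi_{\mu(d)})$;
\item the $s_{2r}$ have degree $(\omega_{2r};0)$;
\item the $F_{a,\ell}$ have degree $(\lambda_{a,\ell};\varpi_\ell)$.
\end{itemize}
The pair $(a,\ell)$ is recovered from $\ell$, $r$ and $m$, and $r$ determines $|a-1|$. The main point to verify carefully is that $a$ and $2-a$ never both lie in $\widetilde I(n)$ with the same $m$. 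This is where I expect the only real bookkeeping: using \eqref{eq:folded-parameters}, $m=\min(\ell,\ell+a-1,2n+1-a-\ell)$ distinguishes the left-crossing and right-crossing cases. Alternatively, one can read off from \eqref{eq:antiperiodic-dictionary} that the interval $a,\dots,a+2\ell-2$ is determined by the sum. Within the chamber \eqref{eq:mutable-chamber} only $a\ge3-\ell$ occur, so this should be a finite check.

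\textbf{Frozen degree sum.} For the frozen sum we add
\[
\sum_d(\omega_d;\varpi_{\mu(d)})+\sum_r(\omega_{2r};0)+(\omega_1+\omega_3+\cdots;\varpi_n).
\]
The left total is $2\sum_d\omega_d=2\rho_{A_{2n-1}}$. The right total is $2\sum_{k<n}\varpi_k+\varpi_n+\varpi_n=2\rho_{C_n}$, since $\mu(d)=k$ for exactly $d=k$ and $d=2n-k$ when $k<n$, and $d=n$ occurs once.
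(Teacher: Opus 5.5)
Your proposal is correct and is essentially the paper's own argument. It uses the same ingredients: homogeneous prime factors, a single factor carrying the fundamental right degree, the even-fundamental spherical monoid (your even-column criterion) forcing the odd-supported spherical factors to be constant, distinct bidegrees for nonassociation, and the same frozen-weight count.

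The bookkeeping you defer closes at once. For $a=1+d>1$ and $\ell<n$, the indices $(a,\ell)$ and $(2-a,\ell)$ have lengths $\min(\ell,2n-d-\ell)$ and $\ell-d$, which differ since $\ell<n$. All initial indices at level $n$ have $a\ge1$, and the principal degrees (which require $m=1$) occur only at indices outside the chamber. Do drop your ``alternative'' via \eqref{eq:antiperiodic-dictionary}: it is false at $\ell=n$, where $F_{3,n}$ and $F_{-1,n}$ have the same degree.
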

\begin{proof}
The claim for $p_d,s_{2r}$ was proved above. A factorization of
$F_{a,\ell}$ into semi-invariants has exactly one factor with nonzero
right degree, since $\varpi_\ell$ is fundamental. All other factors
are spherical. The spherical weight monoid for $\SL_{2n}/\Sp_{2n}$
is generated by the even fundamental weights
\cite{GoodmanWallach}, whereas $\lambda_{a,\ell}$ is supported on
odd fundamental weights. Such a spherical factor is therefore
constant. Proposition~\ref{prop:branching-ufd} gives primality.
The initial degrees are distinct: the right degree separates levels,
and the support $\{r,r+2,\ldots,r+2m-2\}$ of the source weight
distinguishes the initial variables within a level. Finally, the principal degrees contain each left fundamental
weight once and each right fundamental weight twice except $\varpi_n$,
which occurs once. The spherical degrees supply the remaining even
left weights, and $c_n$ supplies the odd ones and $\varpi_n$.
\end{proof}

\subsection{A complete boundary convention}
Regard the initial seed variables as independent symbols in this subsection.
If $(a,\ell)\in\widetilde I(n)$ occurs as a neighbouring index, we
identify $F_{a,\ell}$ with the unique initial seed variable of degree
$(\lambda_{a,\ell};\varpi_\ell)$. For two such indices with the
same degree, their $r,m,\ell$ agree, so \eqref{eq:folded-function}
gives the same normalized function. A principal identification follows from
\eqref{eq:flag-specialization}.

Define rational neighbor symbols by
\begin{equation}
\mathfrak f_{a,\ell}=
 \begin{cases}
 1,&\ell=0,\\
 F_{a,\ell},&m(a,\ell)\ge1,\\
 p_\ell/s_\ell,&\ell\text{ even},\ a=1-\ell,\\
 p_{2n-\ell}/s_{2n-\ell},&\ell\text{ even},\ a=2n+1-\ell.
 \end{cases}
 \label{eq:neighbor-resolver}
\end{equation}
The only neighbours with $m<1$ have $\ell=0$, or have even $\ell$
and $a=1-\ell$ or $a=2n+1-\ell$. These are exactly the cases in
\eqref{eq:neighbor-resolver}; all other neighbours are
folded-minor indices. At an even
boundary, the numerator is a principal function and the denominator
is the corresponding spherical function.

For $(a,\ell)\in I_<$, set
\begin{equation} R_{a,\ell}= \mathfrak f_{a-2,\ell+1}\mathfrak f_{a,\ell-1}\mathfrak f_{a+2,\ell} +\mathfrak f_{a-2,\ell}\mathfrak f_{a,\ell+1}\mathfrak f_{a+2,\ell-1}. \label{eq:lower-stencil} \end{equation}
For $(a,n)\in I_=$, set
\begin{equation} R_{a,n}= \mathfrak f_{a+2,n}\mathfrak f_{a,n-1}\mathfrak f_{4-a,n-1} +\mathfrak f_{a-2,n}\mathfrak f_{a+2,n-1}\mathfrak f_{2-a,n-1}. \label{eq:middle-stencil} \end{equation}
Let $E_{a,\ell}=\kappa_{a,\ell}R_{a,\ell}$, where
\begin{equation}
\kappa_{a,\ell}=
 \begin{cases}
 s_\ell,&\ell<n\text{ even},\ a=3-\ell,\\
 s_{2n-\ell},&\ell<n\text{ even},\ a=2n-\ell-1,\\
 s_n,&n\text{ even},\ \ell=n,\ a=n-1,\\
 1,&\text{otherwise}.
 \end{cases}
 \label{eq:clearing-factor}
\end{equation}
These are polynomial binomials. For the boundary cases, write $a_L=3-\ell$ and $a_R=2n-\ell-1$. For lower even $\ell$,
\begin{align}
 E_{a_L,\ell}
 &=s_\ell p_{\ell-1}p_{\ell+1}F_{5-\ell,\ell}
   +p_\ell F_{3-\ell,\ell+1}F_{5-\ell,\ell-1},
 \label{eq:left-even-exchange}\\
 E_{a_R,\ell}
 &=F_{2n-\ell-3,\ell+1}F_{2n-\ell-1,\ell-1}p_{2n-\ell}
   +s_{2n-\ell}p_{2n-\ell-1}p_{2n-\ell+1}F_{2n-\ell-3,\ell}.
 \label{eq:right-even-exchange}
\end{align}
For even $n$, at $F_{n-1,n}$,
\begin{equation} E_{n-1,n}=s_n p_{n-1}p_{n+1}F_{n-3,n} +p_nF_{n-1,n-1}F_{5-n,n-1}. \label{eq:corrected-middle-boundary} \end{equation}
Indeed, $\mathfrak f_{n+1,n}=p_n/s_n$,
$F_{n+1,n-1}=p_{n+1}$ and $F_{3-n,n-1}=p_{n-1}$.
When $n$ is odd, $F_{n,n}=p_n$.

\begin{proposition}\label{prop:formal-seed}
The polynomials $E_{a,\ell}$ define an LP seed
$\Sigma_n^{\mathrm{for}}$ in the independent extended symbols.
They are homogeneous, and their associated exchange Laurent
polynomials satisfy $\widehat E_{a,\ell}=E_{a,\ell}$ in this seed.
\end{proposition}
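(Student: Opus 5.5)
We have to check the LP seed axioms of Lam--Pylyavskyy for the binomials $E_{a,\ell}$ in the polynomial ring of independent symbols over $S=\C[p_d,s_{2r},c_n]$. The axioms are: (LP1) each $E_{a,\ell}$ is irreducible in $S[x_j]$ and not divisible by any mutable variable; (LP2) $E_{a,\ell}$ does not involve $x_{a,\ell}$. Then we compute the associated exchange Laurent polynomials. The plan is to work directly from the stencils \eqref{eq:lower-stencil}, \eqref{eq:middle-stencil}, with the boundary resolutions \eqref{eq:neighbor-resolver} and the clearing factors \eqref{eq:clearing-factor}.

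\emph{Step 1: explicit monomials.} For each $(a,\ell)$ in $I_<$ and $I_=$, list the two monomials of $E_{a,\ell}$ after substitution. Interior cases are already monomials in seed variables. The boundary cases are \eqref{eq:left-even-exchange}, \eqref{eq:right-even-exchange} and \eqref{eq:corrected-middle-boundary}; there the single spherical denominator is cleared by $\kappa_{a,\ell}$. For odd $\ell$ at the boundary, a neighbour such as $\mathfrak f_{1-\ell,\ell+1}$ has even level, and I will check that after clearing no denominator survives. The neighbour indices are $(a\pm2,\cdot)$ and $(a,\ell\pm1)$, so $x_{a,\ell}$ itself never occurs, which gives (LP2). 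Homogeneity follows from \eqref{eq:antiperiodic-dictionary}. Each monomial of $R_{a,\ell}$ has degree
\[\sum_s(\epsilon_{a-2+2s}\text{-terms})\]
with the same total: the telescoping sums over $\ell+1$, $\ell-1$ and $\ell$ terms match in both products, and the right degrees are $\varpi_{\ell+1}+\varpi_{\ell-1}+\varpi_\ell$ in both. The middle stencil is handled using the reflection $a\mapsto 2-a$ or $4-a$ and $4n$-antiperiodicity.

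\emph{Step 2: irreducibility and non-divisibility.} A binomial $M_1+M_2$ in a polynomial ring is irreducible exactly when the two monomials are coprime and the exponent vector $M_1/M_2$ is primitive (gcd of exponents $1$). I will verify from the lists that the two monomials share no variable. They involve disjoint index sets, and in the boundary cases $s_\ell$ and $p_\ell$ lie in different terms. I will also verify that some variable appears with exponent $1$ in exactly one term; this is immediate because all exponents are $0$ or $1$, with possible coincidences only at boundaries, such as $F_{5-\ell,\ell}$, which need a check. Coprimality also implies no mutable variable divides $E_{a,\ell}$.

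\emph{Step 3: $\widehat E=E$.} In the LP rule, $\widehat E_i=x_i^{-a_i}\cdots$ involves the exponents $a_{ij}$, the largest powers of $x_j$ dividing $E_i$ after substituting $x_j\mapsto \widehat E_j|_{x_i\leftarrow E_i/x}/x_j$. This is nonzero only when $x_j$ occurs in $E_i$ and $E_i|_{x_j\leftarrow \widehat E_j/x_j}$ gains a factor. For binomials with coprime monomials, this happens only if $E_j$, evaluated appropriately with $x_i$ substituted, is divisible by $E_i$, or a term vanishes. I will argue as follows. Substituting $x_j\mapsto E_j/x_j$, with $E_j$ a binomial not involving $x_j$, into $E_i$ yields $E_j$ in the numerator. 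The resulting expression is not divisible by $E_i|_{x_j=0}$ as a polynomial, since setting $x_i$-dependence aside, $E_i|_{x_j=0}$ is a monomial. Hence $a_{ij}=0$ and $\widehat E_i=E_i$.

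The main obstacle is Step 1 at the boundaries together with Step 3's divisibility check: ensuring no hidden coincidence in which $E_j$ contains $E_i|_{x_j=0}$ as a factor. I would first try a uniform weight argument. A common factor would force equal degrees, and the degrees are distinct by Proposition~\ref{prop:initial-primes}. Failing that, I would treat the finitely many boundary configuration types case by case.
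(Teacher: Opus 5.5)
Your Steps 1 and 2 follow the paper's route. The two monomials are squarefree with disjoint supports, so the exponent difference has entries $0,\pm1$, and the binomial is a primitive irreducible binomial. Homogeneity comes from cancelling the $\epsilon$-lists via \eqref{eq:antiperiodic-dictionary}. The gap is in Step 3.

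First, you misstate the LP rule. In \eqref{eq:lp-normalization}, $a_{ij}$ is the largest power of $E_j$ (not of $x_j$) dividing $E_i|_{x_j\leftarrow E_j/X}$ in the Laurent ring. $\widehat E_j$ plays no role at the initial seed.

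Second, and more seriously, you assert that $a_{ij}$ can be nonzero only when $x_j$ occurs in $E_i$. That is false. If $x_j$ does not occur, the substitution leaves $E_i$ unchanged, and $a_{ij}=1$ exactly when $E_j\mid E_i$, i.e.\ when the two irreducible binomials are associate. Both are polynomials with coprime, coefficient-one monomials, so the only possible unit relating them is $1$. What must be verified is therefore that the $n(n-2)$ unordered monomial pairs are pairwise distinct. This is the extra inspection the paper records: ``no two unordered pairs of monomial supports coincide.'' Nothing in your Steps 1--3 supplies it. Your fallback via Proposition~\ref{prop:initial-primes} does not close it either, because that result concerns the degrees of the seed variables, not of the exchange polynomials.

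Your treatment of the case where $x_j$ does occur also aims at the wrong divisor. What matters is divisibility by $E_j$, not by $E_i|_{x_j=0}$. Write $E_i=A+x_jB$ with monomials $A,B$. The substitution gives $(XA+E_jB)/X$, and the numerator reduces modulo the prime $E_j$ to the nonzero monomial $XA$, which $E_j$ cannot divide. You did notice that $E_i|_{x_j=0}=A$ is a monomial, and that fact is the whole argument once it is stated correctly. With both cases handled this way, and the pairwise distinctness of the exchange pairs added to your Step 1 inspection, the proof goes through as in the paper.
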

\begin{proof}
Substitution of \eqref{eq:neighbor-resolver} into the two exchange formulas,
with \eqref{eq:clearing-factor}, gives two squarefree monomials with
disjoint supports in each exchange polynomial. Neither monomial contains the variable being mutated. These statements also hold at the three even boundaries by
\eqref{eq:left-even-exchange}--\eqref{eq:corrected-middle-boundary}.
Away from them they follow by separating the three levels and then
the consecutive indices within each level. The same inspection shows
that no two unordered pairs of monomial supports coincide.

The exponent difference of the two monomials is a nonzero primitive
integer vector (its nonzero entries are $1$ or $-1$). After Laurent
localization the binomial is associated to $1+z$ for a primitive
lattice monomial $z$, hence irreducible. The absence of a monomial
common factor proves irreducibility in the polynomial ring.
Homogeneity follows by substituting
\eqref{eq:antiperiodic-dictionary}; the two lists of $\epsilon$-terms
cancel pairwise. At an even boundary the added spherical factor
supplies exactly its missing even fundamental weight.

For LP normalization, fix two distinct mutable indices $i,j$. If
$x_j$ occurs in $E_i$, write $E_i=A+x_jB$ with $A,B$ monomials.
Substituting $x_j=E_j/X$ and clearing $X$ gives $XA+E_jB$.
Modulo the prime $E_j$ this is the nonzero monomial $XA$, so no
factor of $E_j$ can be removed. If $x_j$ does not occur, distinctness
of the unordered supports makes $E_i,E_j$ nonassociate irreducibles,
again excluding divisibility. Thus all exponents in
\eqref{eq:lp-normalization} are zero.
\end{proof}

Write $E_x=M_{x,+}+M_{x,-}$. We order the terms as in the exchange formulas,
\emph{except} at $F_{n-1,n}$ for even $n$, where we use the
order in \eqref{eq:corrected-middle-boundary}. Define
\begin{equation} b_{xy}=\exp_y(M_{x,+})- \exp_y(M_{x,-}). \label{eq:row-sign-convention} \end{equation}
Here $\exp_y(M)$ denotes the exponent of $y$ in the monomial $M$.
Thus this row has the opposite sign to the term order in
\eqref{eq:middle-stencil}. The exchange polynomial is unchanged. This explicit choice is
needed for the categorical out-minus-in convention of
Section~\ref{sec:categorical}; a polynomial binomial alone does not
choose a row sign. If $W$ lists the extended weights by rows, then
$BW=0$.

\begin{figure}[tbp]
\centering
\resizebox{.8\textwidth}{!}{\AnineCfive}
\caption{The initial exchange matrix for $A_9\downarrow C_5$.
A rounded vertex $(a,\ell)$ denotes $F_{a,\ell}$; rectangular
vertices are frozen. The blue vertex $(-1,4)$ denotes $F_{-1,4}$,
mutated in Section~\ref{subsec:a9-geometric}. At a mutable vertex,
an incident tail contributes $+1$ and an arrowhead
$-1$ to its exchange row. Thus a headless edge has signs $(+,+)$ and
a double-headed edge signs $(-,-)$. The two red edges record these
exceptional sign pairs. }
\label{fig:a9c5-seed}
\end{figure}

\subsubsection*{The $A_9\downarrow C_5$ seed.}
In Figure~\ref{fig:a9c5-seed}, the blue vertex $(-1,4)$ denotes
$F_{-1,4}$. Its two exchange monomials, with the conventions above,
give
\begin{equation}
 E_{-1,4}=F_{1,4}s_4p_5p_3+F_{1,3}F_{3,5}p_4.
 \label{eq:a9-running-exchange}
\end{equation}
Section~\ref{subsec:a9-geometric} mutates at this variable to obtain
the geometric seed.

\subsection{The seeds for \texorpdfstring{$n=2$ and $n=3$}{n=2 and n=3}}
For $n=2$, there are no mutable variables. The five frozen functions
are $p_1,p_2,p_3,s_2,c_2$. This is the case $D_3\downarrow B_2$
under $\Spin_6\simeq\SL_4$ and $\Spin_5\simeq\Sp_4$.

For $n=3$, use the three $2\times2$ minors of the first two columns
of $\mathsf P$:
\[ x_1=\det\mathsf P_{\{2,3\},[2]},\qquad x_2=\det\mathsf P_{\{1,3\},[2]},\qquad x_3=\det\mathsf P_{\{1,2\},[2]}. \]
Their degrees are respectively
$(\omega_3+\omega_5;\varpi_2)$,
$(\omega_1+\omega_5;\varpi_2)$,
and $(\omega_1+\omega_3;\varpi_2)$.
The frozen functions are $p_1,\ldots,p_5,s_2,s_4,c_3$, and the
separate formal exchange polynomials are
\begin{equation} E_1=x_2s_4p_3+p_4c_3,\qquad E_2=x_3p_5+x_1p_1,\qquad E_3=p_2c_3+x_2s_2p_3. \label{eq:rank-three-seed} \end{equation}
The seeds for $n=2,3$ are defined separately from
\eqref{eq:mutable-chamber}. Their exchange identities and initial charts
are proved in Section~\ref{sec:algebra-equality}.

Section~\ref{sec:exchanges} proves regularity of the first mutations.
Section~\ref{sec:algebra-equality} proves algebraic independence,
fraction-field generation, and the ring equality.

\section{The Pfaffian frame and the exchange identities}
\label{sec:exchanges}
The frame identities hold on the spherical open set of the whole group,
not only on a unitriangular section. A character of the branching
grading absorbs the contragredient signs, while preserving the
normalization of Section~\ref{sec:folded-seed}.

\subsection{Construction of the Pfaffian frame}
Write $g_i$ for row $i$ of $g$, set $s_0=s_{2n}=1$, and put
\[ H_{\rm sph}=\prod_{r=1}^{n-1}s_{2r},\qquad G^\circ=\{g\in\SL_{2n}:H_{\rm sph}(g)\ne0\}. \]
For $1\le r\le n$, define the vector
\[ \rho_r=\bigl(\tau([2r-1];\{j\})\bigr)_{j=1}^{2n}. \]
Expansion along the starred index gives
\begin{equation} \rho_r=\sum_{i=1}^{2r-1}(-1)^{i+1} \Pf A_{[2r-1]\setminus\{i\}}\,g_i =s_{2r-2}g_{2r-1}+\sum_{i<2r-1}c_i g_i. \label{eq:global-pfaffian-row} \end{equation}
On $G^\circ$, define the vectors
\[
r_s=\begin{cases}
 -g_{2(1-s)}/s_{2(1-s)},&1-n\le s\le0,\\
 \rho_s,&1\le s\le n,
 \end{cases}
 \qquad X_{b,d}=\det(r_b,\ldots,r_{b+d-1})_{[d]}.
\]
We require $1-n\le b$ and $b+d-1\le n$, and set $X_{b,0}=1$.
The entries of these vectors have denominators that are products of the spherical functions.

\begin{lemma}\label{lem:global-row-dictionary}
Let $1\le d\le n$, $b+d\le n+1$, and
$b\ge1-\lfloor d/2\rfloor$. Then
\[ X_{b,d}=\mathfrak f_{2b-1,d}\quad\text{on }G^\circ. \]
In particular, $X_{1-q,2q}=p_{2q}/s_{2q}$.
\end{lemma}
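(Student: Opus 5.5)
We split according to the sign of $b$. The key is that $X_{b,d}$ is the determinant of the first $d$ columns of a matrix whose rows are consecutive $r_s$. The rows with $s\ge1$ are the Pfaffian rows $\rho_s$, whose entries are $\mathsf P_{s,j}$. The rows with $s\le0$ are rescaled even rows of $g$.

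\emph{Case $b\ge1$.} All rows are $\rho_s$ with $b\le s\le b+d-1\le n$, so
\[ X_{b,d}=\det\mathsf P_{[b,b+d-1],[d]}. \]
Put $a=2b-1\ge1$. Then $b+d\le n+1$ places us in the regime $m=\ell=d$ of \eqref{eq:solid-minor-formula}, which gives $F_{2b-1,d}=\det\mathsf P_{[b,b+d-1],[d]}$. Since $m\ge1$, we have $\mathfrak f_{2b-1,d}=F_{2b-1,d}$, and this case is immediate.

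\emph{Case $b\le0$, setting up the column reduction.} Let $q=1-b$, with $1\le q\le\lfloor d/2\rfloor$. The rows are now
\[ -g_{2q}/s_{2q},\ \ldots,\ -g_2/s_2,\ \rho_1,\ \ldots,\ \rho_{d-q}. \]
Using \eqref{eq:global-pfaffian-row}, each $\rho_s$ equals $s_{2s-2}g_{2s-1}$ plus a combination of earlier rows $g_i$ with $i<2s-1$. We row-reduce, starting from the smallest $s$. Whenever $2s-2\le 2q$, the even rows $g_i$ with $i\le 2q$ occurring in $\rho_s$ are already present as rows, up to scalars, and can be removed. The odd rows $g_{2s'-1}$ for $s'<s$ are removed inductively. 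This leaves $s_{2s-2}g_{2s-1}$ as long as $2s-1\le 2q+1$.

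\emph{The special subcase $d=2q$.} Here the rows $-g_{2q}/s_{2q},\ldots,-g_2/s_2$ and $\rho_1,\ldots,\rho_q$ reduce to the rows $g_1,\ldots,g_{2q}$ up to scalars. Collecting scalars, the factor $\prod_{s\le q}s_{2s-2}$ from the odd rows cancels against $\prod_{j\le q}s_{2j}^{-1}$ from the even rows, leaving $s_{2q}^{-1}$ times a sign. Reordering the rows into increasing order accounts for the sign, and the $q$ minus signs should compensate it. This gives $X_{1-q,2q}=\Delta_{[2q],[2q]}/s_{2q}=p_{2q}/s_{2q}$, matching \eqref{eq:neighbor-resolver} with $a=1-2q$ and $\ell=2q$. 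I would check the sign bookkeeping on $q=1$ first.

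\emph{The general subcase $d>2q$ (main obstacle).} Here $a=2b-1=1-2q$ satisfies $a\ge 1-d$, so $m\ge1$ and we must prove $X_{b,d}=F_{a,d}$, with $r=1+2q$, $m=d-q$, $q=(\ell-m)/2$, and $t=2q+1$. The rows $\rho_s$ with $s>q+1$ retain genuine Pfaffian content. Consequently the determinant does not simply reduce to a flag minor.

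I would prove this by a representation-theoretic rigidity argument rather than by direct expansion. First, $X_{b,d}\cdot\prod s_{2j}$ is regular on $\SL_{2n}$. Next, $X_{b,d}$ is right $U_K$-invariant, because each row is a covector evaluated against $e_1,\ldots,e_d$, and the determinant of the first $d$ columns is a coefficient against $e_{[d]}$, which is $U_K$-highest of weight $\varpi_d$. Then, to show left $U_G^-$-invariance, I would check that lower-unitriangular row operations change each $r_s$ only by adding earlier rows in the list, after clearing the spherical scalars, which themselves are invariant. This makes $X_{b,d}$ a semi-invariant of bidegree $(\lambda_{a,d};\varpi_d)$ in the localization $\B_n[H_{\rm sph}^{-1}]$.

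By primality of $F_{a,d}$ and of the $s_{2j}$ (Proposition~\ref{prop:initial-primes}), together with the one-dimensionality in Proposition~\ref{thm:folded-multiplicity} transported to spherical localizations, the ratio $X_{b,d}/F_{a,d}$ is a homogeneous unit times a monomial in the $s$'s. Its left degree must vanish, because the spherical weights are even while both degrees agree. Hence $X_{b,d}=cF_{a,d}$ for a scalar $c$.

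Finally, I would fix $c=1$ by evaluating at the frame $g_{a,d}$ of Proposition~\ref{prop:unit-frame}. At that point $J_0$ makes every $s_{2j}=1$, the rows $\rho_u$ become the odd rows $g_{2u-1}$, and the determinant can be read off directly. Matching the sign conventions at this evaluation is where I expect the real work to lie.
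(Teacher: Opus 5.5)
Your case $b\ge1$ is exactly the paper's argument, and your $d=2q$ subcase is right: the reordering permutation $(2q,2q-2,\ldots,2,1,3,\ldots,2q-1)$ has $q^2$ inversions, so the $q$ minus signs cancel it.

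The gap is in the case $d>2q$, where you drop the reduction and switch to a rigidity argument. The step ``one-dimensionality transported to spherical localizations'' is false. The $(\lambda_{a,d};\varpi_d)$-component of $\B_n[H_{\rm sph}^{-1}]$ contains $f/s_{2k}$ for every $f$ of degree $(\lambda_{a,d}+\omega_{2k};\varpi_d)$, and such shifted components need not be one-dimensional.

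For example, take $n=4$ and $(a,d)=(-1,3)$, so $\lambda_{a,d}=\omega_3$. For $\Lambda=(2,2,2,1,1,1)$, the partition of $\omega_3+\omega_6$, the strip count gives $A_3(\Lambda)=3$ and $A_1(\Lambda)=1$. Hence $\dim(\B_4)_{\omega_3+\omega_6,\varpi_3}=2$, and dividing by $s_6$ gives a two-dimensional subspace of the localized $(\omega_3;\varpi_3)$-component.

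Primality does not rescue the argument. Factoring $X_{b,d}\prod_{k\le q}s_{2k}$ in $\B_n$ only shows that its non-spherical prime factor has degree $\lambda_{a,d}+\sum_{k\in S}\omega_{2k}$ for some $S\subseteq[q]$. Proposition~\ref{thm:folded-multiplicity} says nothing about those shifted degrees. A separate strip count shows these particular components are one-dimensional, but that is extra work. You would also still have to fix the scalar at the unit frame, which you leave open. Separately, note that $m=d-2q$, not $d-q$.

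The missing observation is that your own column reduction already finishes the proof. Carry it through $s=q+1$. The scalars $s_2\cdots s_{2q}$ from $\rho_2,\ldots,\rho_{q+1}$ cancel the even-row denominators, and the signs cancel as for $d=2q$. This gives
\[
r_{1-q}\wedge\cdots\wedge r_{d-q}=g_1\wedge\cdots\wedge g_{2q+1}\wedge\rho_{q+2}\wedge\cdots\wedge\rho_{d-q}.
\]
The ``genuine Pfaffian content'' of the remaining rows is no obstacle: it is exactly what $F_{a,d}$ is built from.

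Laplace-expand the first $d$ columns along the block of the first $2q+1$ rows. This gives $\sum\varepsilon(\mathbf I)\,\Delta_{[2q+1],I_1}\prod_{j\ge2}\mathsf P_{q+j,i_j}$. Since $\Delta_{[2q+1],I_1}=\tau([2q+1];I_1)$ and $\mathsf P_{q+j,i}=\tau([2q+2j-1];\{i\})$, this is term by term \eqref{eq:folded-function}. The parameters are $h_j=2q+2j-1$, $k_1=2q+1$, $k_j=1$ for $j\ge2$, and $m=d-2q$, with the same shuffle sign.

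This is the paper's proof. It makes regularity manifest and involves no undetermined scalar.
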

\begin{proof}
For $b\ge1$, this is \eqref{eq:solid-minor-formula}.
For $b=1-q\le0$ and $d\ge2q+1$, the triangular expansion
\eqref{eq:global-pfaffian-row} gives
\begin{equation} r_{1-q}\wedge\cdots\wedge r_{d-q} =g_1\wedge\cdots\wedge g_{2q+1} \wedge\rho_{q+2}\wedge\cdots\wedge\rho_{d-q}. \label{eq:global-row-wedge} \end{equation}
The $q$ negative even rows cancel the sign of the permutation
$(2q,2q-2,\ldots,2,1,3,\ldots,2q+1)$. Their denominators cancel
$s_2\cdots s_{2q}$, the leading coefficients of
$\rho_2,\ldots,\rho_{q+1}$.
For the folded index $1-2q$, the first subset in the shuffle has size
$2q+1$ and its column height is $2q+1$. Thus Laplace expansion of
\eqref{eq:global-row-wedge} in the first $d$ columns is exactly
\eqref{eq:folded-function}, with the same shuffle sign.
For $d=2q$, the same calculation stops one odd row earlier and gives
$s_{2q}^{-1}g_1\wedge\cdots\wedge g_{2q}$, proving the boundary
formula. No triangularity assumption on $g$ was used.
\end{proof}

\begin{lemma}\label{lem:global-top-determinant}
For odd $a$ with $3\le a\le n+1$, we have
\[ X_{(3-a)/2,n+1}=F_{a,n-1}\quad\text{on }G^\circ. \]
For even $n$ and $a=n+1$, the right side is $p_{n+1}$.
\end{lemma}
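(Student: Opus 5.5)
The plan is to reduce $X_{(3-a)/2,n+1}$ to a single bordered coefficient by the triangular elimination already used in Lemma~\ref{lem:global-row-dictionary}, and then to identify it with $F_{a,n-1}$ through multiplicity one (Proposition~\ref{thm:folded-multiplicity}) and the unit frame of Proposition~\ref{prop:unit-frame}.

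\emph{Elimination.} Write $a=2q+3$ with $q\ge0$, so $b=(3-a)/2=-q$ and $b+d-1=n-q\le n$. The rows entering $X_{-q,n+1}$ are
\begin{itemize}
\item[] the negative even rows $-g_{2q+2}/s_{2q+2},\ldots,-g_2/s_2$, and
\item[] the Pfaffian rows $\rho_1,\ldots,\rho_{n-q}$.
\end{itemize}
By \eqref{eq:global-pfaffian-row}, $\rho_k=s_{2k-2}g_{2k-1}$ plus a combination of $g_i$ with $i<2k-1$. For $k\le q+2$, all such $g_i$ already lie in the span of $g_1$, the even rows $g_2,\ldots,g_{2q+2}$, and the earlier $\rho$'s. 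Exactly as in \eqref{eq:global-row-wedge}, this gives
\[ r_{-q}\wedge\cdots\wedge r_{n-q}=\pm\, g_1\wedge\cdots\wedge g_{2q+3}\wedge\rho_{q+3}\wedge\cdots\wedge\rho_{n-q}. \]
The leading coefficients $s_2,\ldots,s_{2q+2}$ of $\rho_2,\ldots,\rho_{q+2}$ cancel the spherical denominators. The sign is the product of the $q+1$ minus signs and the sign of the reordering permutation; I expect it to be $+1$, as in the earlier lemma. The row count is $2q+3+(n-2q-2)=n+1$. Consequently $X_{-q,n+1}$ is a regular function on all of $\SL_{2n}$, given by the first $n+1$ columns of this wedge.

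\emph{Degree and invariance.} Next I check that this function lies in $\B_n$ with bidegree $(\lambda_{a,n-1};\varpi_{n-1})$.
\begin{itemize}
\item[] On the right, the column set $[n+1]=\{1,\ldots,n,\bar n\}$ corresponds to $e_1\wedge\cdots\wedge e_n\wedge e_{\bar n}\in\wedge^{n+1}V$. This vector is $U_K$-highest for the upper triangular Borel of $\Sp_{2n}$, of weight $\varpi_{n-1}$.
\item[] On the left, $g_1\wedge\cdots\wedge g_{2q+3}$ is the flag covector $e^*_{[2q+3]}$. Each $\rho_k$ is, by Lemma~\ref{lem:tau-coefficient}, a coefficient against $e^*_{[2k-1]}$, and the relevant Laplace expansion uses only these flag covectors. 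Hence left $U_G^-$-invariance follows as in Corollary~\ref{cor:normalized-folded-functions}, with source weight $\omega_{2q+3}+\omega_{2q+5}+\cdots+\omega_{2n-2q-1}$.
\end{itemize}
From \eqref{eq:folded-parameters}, with $\ell=n-1$ one gets $r=a$ and $m=n+2-a$, so this source weight is $\lambda_{a,n-1}$. Proposition~\ref{thm:folded-multiplicity} then gives $X_{-q,n+1}=c\,F_{a,n-1}$ for some scalar $c$.

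\emph{Normalization and main obstacle.} To fix $c$, evaluate at the integral frame $g_{a,n-1}$ of Proposition~\ref{prop:unit-frame}, where $F_{a,n-1}=1$. That frame satisfies $gJg^{\mathsf t}=J_0$, so each $\rho_k$ reduces to row $2k-1$ of $g$, as in the singleton computation of that proof. The value of $X$ is then an explicit $(n+1)\times(n+1)$ minor of $g$ formed by rows $1,\ldots,2q+3$ and $2q+5,2q+7,\ldots,2(n-q)-1$. Computing this minor, together with the sign bookkeeping in the elimination step, is the step I expect to be the main obstacle. The delicate part is the interaction of the mixed pairs $(\alpha_i+\alpha_{j_i},\ \beta_i-\beta_{j_i})$ with the extra column $\bar n$, which meets the $b$-columns of the frame. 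An alternative that bypasses this evaluation is to Laplace-expand directly in the first $n-1$ columns: $e_{[n+1]}$ is paired against the $\Omega$-part of $e_{[n-1]}\wedge\Omega^{q+1}/(q+1)!$ and compared termwise with the shuffle formula \eqref{eq:folded-function}, where the first block has size $t=2q+1$.

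\emph{The case $a=n+1$.} For even $n$ and $a=n+1$, one has $m=1$, $r=n+1$ and $\ell=n-1=\mu(n+1)$. Then \eqref{eq:flag-specialization} gives $F_{n+1,n-1}=\Delta_{[n+1],[n+1]}=p_{n+1}$, which is the stated specialization.
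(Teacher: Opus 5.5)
Your route is the paper's: the triangular elimination \eqref{eq:global-row-wedge} in $n+1$ columns, then $e_{[n+1]}=e_{[n-1]}\wedge\Omega$ to get a regular branching coefficient of degree $(\lambda_{a,n-1};\varpi_{n-1})$, then multiplicity one, then evaluation at the frame of Proposition~\ref{prop:unit-frame}. The gap is that you stop before the one step that carries the lemma's content. You leave open both the elimination sign (you only ``expect'' it to be $+1$) and the value of the frame minor. As written, you have proved only $X_{(3-a)/2,n+1}=c\,F_{a,n-1}$, with $c$ undetermined. The statement is precisely $c=1$, and the paper's proof spends its second paragraph on exactly this computation.

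Both pieces are short, and the difficulty you anticipate does not arise.

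\emph{Elimination sign.} The $q+1$ factors $-1$ from the negative even rows cancel the sign $(-1)^{(q+1)^2}$ of the permutation $(2q+2,2q,\ldots,2,1,3,\ldots,2q+3)$, as in Lemma~\ref{lem:global-row-dictionary}.

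\emph{Frame parameters.} The frame for $(a,n-1)$ has $R_0=q+1$, $q_0=(\ell-m)/2=q$ (your $q$), and $m=n-1-2q$. So its $2q_0+m=n-1$ prescribed pairs leave exactly one standard pair unused, namely $(\alpha_{q+1},\beta_{q+1})$, placed in position $n$. Hence columns $n$ and $n+1$ are $a_n=e_{2q+1}$ and $b_n=e_{2q+2}$; the extra column never meets a mixed pair.

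\emph{Remaining columns.} The mixed columns $1,\ldots,2q$ have their second entries in rows $2j_i-1,2j_i\ge 2n-2q+1$, which lie below every row of your minor. Column $2q+k$ is $e_{2q+2k+1}$ for $1\le k\le m$.

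\emph{The minor.} It is therefore a permutation matrix with entries $+1$:
\begin{itemize}
\item the singleton rows $2q+5,\ldots,2n-2q-1$ force columns $2q+2,\ldots,n-1$;
\item rows $1,\ldots,2q+3$ meet columns $1,\ldots,2q+1,n,n+1$ in $e_1,\ldots,e_{2q},e_{2q+3},e_{2q+1},e_{2q+2}$, with determinant $+1$;
\item moving columns $n,n+1$ past the $n-2q-2$ singleton columns has even sign.
\end{itemize}
Thus $X=1=F_{a,n-1}$ at $g_{a,n-1}$, and $c=1$.

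Your proposed alternative would not bypass this evaluation. The factor $\tau([2q+3];I_1)=\langle e^*_{[2q+3]},g(e_{I_1}\wedge\Omega)\rangle$ involves column pairs $\{i,\bar i\}$ for every $i\notin I_1$. Meanwhile the expansion of $e_{[n+1]}$ sends $e_n$ or $e_{\bar n}$ into singleton slots. The two shuffle sums agree only after a cancellation, and establishing that cancellation is what the frame evaluation does.
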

\begin{proof}
Write $a=2q+1$ and $m=n+1-2q$. Formula
\eqref{eq:global-row-wedge}, now in $n+1$ columns, writes the left
side as an exterior shuffle with column heights
$2q+1,2q+3,\ldots,2q+2m-1$ and subset sizes
$2q+1,1,\ldots,1$. Since
$e_{[n+1]}=e_{[n-1]}\wedge\Omega$, 
this is a regular branching coefficient of degree
$(\lambda_{a,n-1};\varpi_{n-1})$. It is a scalar multiple of
$F_{a,n-1}$ by Proposition~\ref{thm:folded-multiplicity}.

Evaluate at the integral frame in Proposition~\ref{prop:unit-frame}
for the index $(a,n-1)$. Its parameters are $R=q$, $q_0=q-1$,
and $m=n+1-2q$. All spherical pivots equal $1$. The $m-1$ singleton
rows force columns $2q,2q+1,\ldots,n-1$. The first subset in the ordered partition therefore consists of the column indices $\{1,\ldots,2q-1,n,n+1\}$.
In the first $2q+1$ rows these columns are
$e_1,\ldots,e_{2q-2},e_{2q+1},e_{2q-1},e_{2q}$.
Their determinant is $1$. Moving $n,n+1$ past the singleton subsets
has even sign. Hence the top determinant is $1$ at the same point
where $F_{a,n-1}=1$, and the scalar is exactly $1$.

\end{proof}

\subsection{Condensation and regular quotients}
For a matrix with ordered rows $v_i$, write
\[ T(i_1,\ldots,i_d)=\det(v_{i_1},\ldots,v_{i_d})_{[d]}. \]
For consecutive vectors $v_0,\ldots,v_{d+1}$, set $\Theta_1=T(0,2)$,
and, for $d\ge2$, set
\[ \Theta_d=T(2,\ldots,d)T(0,1,\ldots,d-1,d+1) -T(2,\ldots,d-1,d+1)T(0,1,\ldots,d). \]
Empty row lists have determinant $1$.

\begin{lemma}\label{lem:condensation}
One has
\begin{align}
 T(1,\ldots,d)\Theta_d
 &=T(0,\ldots,d)T(1,\ldots,d-1)T(2,\ldots,d+1)\notag\\
 &\quad+T(0,\ldots,d-1)T(1,\ldots,d+1)T(2,\ldots,d).
 \label{eq:condensation}
\end{align}
\end{lemma}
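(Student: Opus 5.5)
The identity \eqref{eq:condensation} is polynomial in the entries of $v_0,\ldots,v_{d+1}$, so it suffices to prove it on a Zariski-dense set; I would then reduce it to Plücker relations. The case $d=1$ should be checked first as a model. There it reads
\[ T(1)\,T(0,2)=T(0,1)\,T(2)+T(0)\,T(1,2), \]
which is the three-term Grassmann--Plücker relation for three vectors in $\C^2$ (expand $\det(v_0,v_2)$ against the first coordinate of $v_1$).

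\textbf{Step 1: a uniform Grassmannian.} Put all the minors on one Grassmannian. Restrict every vector to its first $d+1$ coordinates. For $|S|=k\le d+1$, observe that
\[ T(S)=\pm\det(v_S,e_{k+1},\ldots,e_{d+1}), \]
a maximal minor of rows taken from $\{v_0,\ldots,v_{d+1},e_d,e_{d+1}\}$, with a sign depending only on $k$. Concretely:
\begin{itemize}
\item $T(1,\ldots,d)$ and $T(2,\ldots,d)$-type factors become brackets containing $e_{d+1}$;
\item $T(1,\ldots,d-1)$ and $T(2,\ldots,d)$ become brackets containing $e_d,e_{d+1}$ or $e_{d+1}$ together with the remaining rows;
\item the $(d+1)$-row minors contain no $e$'s.
\end{itemize}
I would fix these signs once for all six bracket types in \eqref{eq:condensation}, checking them against the case $d=1$.

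\textbf{Step 2: the two short Plücker relations.} I would derive two three-term Plücker relations. The first is among the $d$-sets $\{v_1,\ldots,v_{d-1}\}\cup\{x,y\}$ with $x,y\in\{v_0,v_d,v_{d+1}\}$ and the padding $e_{d+1}$. It expresses
\[ T(2,\ldots,d-1,d+1)\,T(1,\ldots,d) \]
through $T(2,\ldots,d)\,T(1,\ldots,d-1,d+1)$ and a term involving $T(1,\ldots,d-1)$.

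The second is the Desnanot--Jacobi (Dodgson) identity for the $(d+1)\times(d+1)$ minors on rows $\{0,\ldots,d+1\}$ omitting one row at a time. It relates
\[ T(0,\ldots,d-1,d+1),\quad T(0,\ldots,d),\quad T(1,\ldots,d+1) \]
to a common $d$-row core. The core is $\{v_1,\ldots,v_d\}$, so the factor $T(1,\ldots,d)$ is produced.

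\textbf{Step 3: assembly.} Multiply $\Theta_d$ by $T(1,\ldots,d)$ and substitute the second relation into the term $T(2,\ldots,d)\,T(0,\ldots,d-1,d+1)\,T(1,\ldots,d)$ and the first into $T(2,\ldots,d-1,d+1)\,T(1,\ldots,d)\,T(0,\ldots,d)$. The terms containing the mixed minor $T(1,\ldots,d-1,d+1)$ should cancel pairwise, leaving exactly the right side of \eqref{eq:condensation}.

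\textbf{Main obstacle and fallback.} The delicate point is that the minors use different initial column sets $[k]$. Sign bookkeeping in Step 1, and choosing exactly which Plücker relations close up without the mixed minor surviving, is where I expect the work to be.

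If the direct assembly fails, I would use instead that every $T(S)$ is invariant under $g\mapsto gu$ for upper unitriangular $u$, and equivariant for the diagonal torus. Both sides then have the same weight. For generic data I would normalize so that the rows $v_1,\ldots,v_d$, restricted to the first $d$ columns, form a unit lower-triangular matrix. Then $T(1,\ldots,k)=1$ for $k\le d$, and the identity reduces to a linear relation in the entries of $v_0$ and $v_{d+1}$. Expanding the $(d+1)$-minors along those rows should verify that relation.
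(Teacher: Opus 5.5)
Your main route is correct, and it is genuinely different from the paper's.

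\emph{The paper's route.} The paper argues by normalization. On the dense open set where rows $1,\ldots,d-1$ are invertible in the first $d-1$ columns, flag-preserving column operations turn those rows into $e_1,\ldots,e_{d-1}$ with zeros in columns $d,d+1$. Every minor in \eqref{eq:condensation} then collapses to a $1\times1$ or $2\times2$ minor of $v_0,v_d,v_{d+1}$ in columns $1,d,d+1$. A small determinant check, column homogeneity and polynomiality finish the proof. Your fallback is this method with a weaker normalization.

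\emph{Your route.} You obtain \eqref{eq:condensation} formally from two three-term Pl\"ucker relations, with the mixed minor $T(1,\ldots,d-1,d+1)$ as an auxiliary quantity that cancels. This needs no density or normalization step. It also makes precise the paper's remark that this is ``the flag-minor form of the three-term Pl\"ucker identity'': the cubic identity is a combination of two quadratic ones. The price is the sign bookkeeping you flagged, and it does close up.

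\emph{Corrections to your write-up.}
\begin{itemize}
\item The padding sign is actually $+1$, because appending $e_{k+1},\ldots,e_{d+1}$ as last rows gives a block upper-triangular matrix. In any case, every term of each identity used here contains exactly one minor of each size, so size-dependent signs would cancel.
\item Your verbal description of the first relation is mis-indexed. Its core is $\{v_2,\ldots,v_{d-1},e_{d+1}\}$ with moving elements $v_1,v_d,v_{d+1},e_d$.
\item The second relation has core $\{v_1,\ldots,v_{d-1}\}$ with moving elements $v_0,v_d,v_{d+1},e_{d+1}$. Your Desnanot--Jacobi description is also right once the $(d+2)\times(d+2)$ matrix of $v_0,\ldots,v_{d+1}$ in columns $[d+1]$ is bordered by a column with a single $1$ in the row of $v_d$.
\end{itemize}

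\emph{The two relations with signs.} From $[X,a,b][X,c,e]-[X,a,c][X,b,e]+[X,a,e][X,b,c]=0$ they read
\begin{align*}
T(1,\ldots,d)\,T(2,\ldots,d-1,d+1)&=T(2,\ldots,d)\,T(1,\ldots,d-1,d+1)\\
&\quad-T(1,\ldots,d-1)\,T(2,\ldots,d+1),\\
T(1,\ldots,d)\,T(0,\ldots,d-1,d+1)&=T(0,\ldots,d)\,T(1,\ldots,d-1,d+1)\\
&\quad+T(0,\ldots,d-1)\,T(1,\ldots,d+1).
\end{align*}
The first is the second at level $d-1$ applied to $v_1,\ldots,v_{d+1}$, and for $d=1$ the second is the lemma itself. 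For $d\ge2$, substitute both into $T(1,\ldots,d)\Theta_d$. The two terms containing $T(1,\ldots,d-1,d+1)$ cancel, leaving exactly the right side of \eqref{eq:condensation}. So the fallback is not needed.
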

\begin{proof}
This is the flag-minor form of the three-term Pl\"ucker identity.
For a direct verification, on the dense open where the first $d-1$
columns in rows $1,\ldots,d-1$ are invertible, use block upper-triangular column operations preserving the spans
of the first $d-1$, $d$, and $d+1$ columns to make this block the
identity and to clear the last two columns in those rows. Expanding along the resulting identity
rows reduces \eqref{eq:condensation} to the $2\times2$ determinant
identity on the three remaining rows. Both sides have the same degree
in every column, so the normalizing column scalings cancel. The identity
extends to every matrix because it is polynomial. For $d=1$ it is
$T(1)T(0,2)=T(0,1)T(2)+T(0)T(1,2)$.
\end{proof}

\begin{proposition}\label{prop:one-sided-exchanges}
For $n\ge4$, suppose either $(a,\ell)\in I_<$ with
$a=2b-1$ and $b+\ell\le n$, or $(a,\ell)\in I_=$. Then
$E_{a,\ell}/F_{a,\ell}$ is a regular element of $\B_n$.
\end{proposition}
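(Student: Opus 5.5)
We realize $E_{a,\ell}$ as the right side of the condensation identity in Lemma~\ref{lem:condensation} for consecutive rows of the global Pfaffian frame. Then $E_{a,\ell}/F_{a,\ell}$ equals the polynomial expression $\Theta_d$ in frame minors. We read off regularity on $G^\circ$ and remove the spherical denominators by primality.

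\emph{Case $(a,\ell)\in I_<$ with $a=2b-1$, $b+\ell\le n$.} Take $d=\ell$ and the vectors $v_i=r_{b-1+i}$, $0\le i\le d+1$. Then $T(1,\ldots,d)=X_{b,\ell}$. The six other minors are $X_{b-1,\ell+1}$, $X_{b,\ell-1}$, $X_{b+1,\ell}$, $X_{b-1,\ell}$, $X_{b,\ell+1}$ and $X_{b+1,\ell-1}$. Each satisfies $b'+d'\le n+1$, since $b+\ell\le n$. The condition $b'\ge1-\lfloor d'/2\rfloor$ follows from $a\ge3-\ell$. At the lower edge one of these is $X_{1-q,2q}=p_{2q}/s_{2q}$, which matches the resolver \eqref{eq:neighbor-resolver}.

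By Lemma~\ref{lem:global-row-dictionary}, every minor equals the corresponding $\mathfrak f$. Matching the two products in \eqref{eq:condensation} with \eqref{eq:lower-stencil} gives $F_{a,\ell}\Theta_\ell=R_{a,\ell}$ on $G^\circ$. Multiplying by $\kappa_{a,\ell}$ yields
\[ E_{a,\ell}/F_{a,\ell}=\kappa_{a,\ell}\Theta_\ell \quad\text{on } G^\circ. \]
The right side is a polynomial in the entries of the $r_s$. Hence the quotient is regular on $G^\circ$, with denominators only powers of $s_2,\ldots,s_{2n-2}$. At the boundary $a=3-\ell$, the factor $\kappa=s_\ell$ cancels the only genuine denominator.

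\emph{Case $(a,n)\in I_=$.} The middle stencil \eqref{eq:middle-stencil} involves $F_{a,n-1}$, $F_{4-a,n-1}$ and $F_{2-a,n-1}$. The indices $4-a$ and $2-a$ are reflections of $a$. We use Lemma~\ref{lem:global-top-determinant}: the $(n+1)$-row top determinant $X_{(3-a)/2,n+1}$ equals $F_{a,n-1}$. Apply Lemma~\ref{lem:condensation} with $d=n$ to the consecutive frame vectors starting at $r_{(1-a)/2}$, or the range shifted by one. Then $T(1,\ldots,n)$ is $F_{a,n}$, and the four $n$-row minors are $\mathfrak f_{a\pm2,n}$ and related ones. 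The two minors of size $n+1$ are, by Lemma~\ref{lem:global-top-determinant}, the reflected $F_{4-a,n-1}$ and $F_{2-a,n-1}$. The two minors of size $n-1$ are $F_{a,n-1}$ and $F_{a+2,n-1}$. We must check that the index bookkeeping matches \eqref{eq:middle-stencil} and that the condensation sign agrees. For even $n$ and $a=n-1$, the sign must agree with the reordered form \eqref{eq:corrected-middle-boundary}, where $\mathfrak f_{n+1,n}=p_n/s_n$ and $\kappa=s_n$ clears it.

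\emph{Descent to $\B_n$.} In both cases $h:=E_{a,\ell}/F_{a,\ell}$ is $U_G^-\times U_K$-invariant, since $E$ and $F$ are. It is a regular function on $G^\circ$, so $H_{\rm sph}^Nh\in\C[\SL_{2n}]$ for large $N$, and therefore $H_{\rm sph}^Nh\in\B_n$. We need each $s_{2r}\nmid$ obstruction, that is, that $h$ has no pole along the prime $s_{2r}$. Equivalently, $s_{2r}^k F_{a,\ell}$ divides $s_{2r}^kE_{a,\ell}$ in the UFD $\B_n$ (Proposition~\ref{prop:branching-ufd}). The prime $F_{a,\ell}$ is nonassociate to $s_{2r}$ by Proposition~\ref{prop:initial-primes}. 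Hence $F_{a,\ell}\mid s_{2r}^kE$ implies $F_{a,\ell}\mid E$. This works directly provided $h\in\Frac(\B_n)$ is written as $(H_{\rm sph}^N h)/H_{\rm sph}^N$ with numerator in $\B_n$. The relation $F\cdot(H^Nh)=H^N E$ in the UFD, with $F$ prime and coprime to $H$, gives $F\mid E$, so $h\in\B_n$.

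The main obstacle is the $I_=$ case: correctly aligning the window of frame vectors, the reflected indices, and the condensation sign with the middle stencil, particularly the even-$n$ boundary term.
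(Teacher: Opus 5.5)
Your treatment of $I_<$ and the descent step are exactly the paper's proof. You use the window $r_{b-1},\dots,r_{b+\ell}$, identify the seven contiguous minors by Lemma~\ref{lem:global-row-dictionary}, obtain $E_{a,\ell}/F_{a,\ell}=\kappa_{a,\ell}\Theta$ on $G^\circ$, clear a power of $H_{\rm sph}$, and cancel the prime $F_{a,\ell}$, which is nonassociate to the spherical primes, in the UFD $\B_n$. For $\ell=n$ the paper uses your first window $v_i=r_{1-b+i}$, $0\le i\le n+1$, where $a=2b-1$. However, your identification of the minors in the $I_=$ case is wrong as stated, and that is the step you left unchecked.

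Lemma~\ref{lem:global-top-determinant} covers only odd indices in $[3,n+1]$, so it cannot produce $F_{4-a,n-1}$ or $F_{2-a,n-1}$. For this window it gives $T(0,\dots,n)=X_{1-b,n+1}=F_{a+2,n-1}$ and $T(1,\dots,n+1)=X_{2-b,n+1}=F_{a,n-1}$. The reflected functions are the size-$(n-1)$ minors: $T(1,\dots,n-1)=\mathfrak f_{2-a,n-1}$ and $T(2,\dots,n)=\mathfrak f_{4-a,n-1}$, by Lemma~\ref{lem:global-row-dictionary}. Only two non-central size-$n$ minors occur, namely $T(0,\dots,n-1)=\mathfrak f_{-a,n}$ and $T(2,\dots,n+1)=\mathfrak f_{4-a,n}$. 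At level $n$, indices with the same $r$ give the same normalized function, so these are $\mathfrak f_{a+2,n}$ and $\mathfrak f_{a-2,n}$; likewise the central minor is $\mathfrak f_{2-a,n}=F_{a,n}$.

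With these labels, the two condensation products are the two terms of \eqref{eq:middle-stencil} in reverse order. No sign needs checking: both lemmas are exact equalities of normalized functions, and $R_{a,n}$ is an unordered sum. For even $n$ and $a=n-1$ one has $\mathfrak f_{1-n,n}=p_n/s_n$, and multiplying by $\kappa=s_n$ gives \eqref{eq:corrected-middle-boundary}.

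Your alternative ``range shifted by one'' is not an option, because its central minor is $X_{3-b,n}=F_{a-2,n}$. Your swapped labels happen to give the same two products, since each stencil term contains one factor from each pair. As written, though, the step applies Lemma~\ref{lem:global-top-determinant} outside its range.
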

\begin{proof}
In the lower case apply \eqref{eq:condensation} to
$r_{b-1},\ldots,r_{b+\ell}$. Lemma~\ref{lem:global-row-dictionary}
identifies its seven contiguous minors and gives
\[ F_{a,\ell}\Theta=R_{a,\ell},\qquad \frac{E_{a,\ell}}{F_{a,\ell}}=\kappa_{a,\ell}\Theta \in\C[G][H_{\rm sph}^{-1}]. \]
For $\ell=n$, use centre $c=2-b$ and the vectors $r_{c-1},\ldots,r_{c+n}$. The central minor is
$F_{2-a,n}=F_{a,n}$. The two size-$n+1$ minors are
$F_{a+2,n-1}$ and $F_{a,n-1}$ by
Lemma~\ref{lem:global-top-determinant}. The remaining four neighbors
are the ones in \eqref{eq:middle-stencil}; their two products occur
in the reverse order, which does not change their sum. The same
conclusion follows, including the factor $s_n$ in $E_{n-1,n}$ for even $n$.

The quotient is a rational invariant. Multiplying it by a sufficiently
large power of the invariant $H_{\rm sph}$ gives a regular invariant,
so it belongs to $\B_n[H_{\rm sph}^{-1}]$. The prime $F_{a,\ell}$ is
nonassociate to every spherical pivot. Clearing the denominator and
canceling these prime factors in the UFD $\B_n$ proves
$F_{a,\ell}\mid E_{a,\ell}$ in $\B_n$ itself.
\end{proof}

The possible denominators are products of spherical functions,
which are relatively prime to $F_{a,\ell}$. Their removal therefore
follows from primality in $\B_n$.

\subsection{Reflection as a character of the grading}
Let $\delta(g)=Jg^{-\mathsf t}J^{-1}$. This involution preserves
$U_G^-$ and fixes $K$ pointwise. It therefore acts on $\B_n$, sending
$(\lambda;\mu)$ to $(\lambda^\vee;\mu)$, where
$\omega_j^\vee=\omega_{N-j}$.
Define
\begin{equation} \alpha_j=\lfloor j/2\rfloor+\max(0,j-n),\qquad \beta_\ell=\binom\ell2, \qquad \varepsilon(\lambda;\mu) =(-1)^{\sum_j\alpha_j\lambda_j+\sum_\ell\beta_\ell\mu_\ell}. \label{eq:reflection-character} \end{equation}
This is a character of the full branching weight lattice.

\begin{lemma}[Bordered-Pfaffian duality]\label{lem:exact-hodge}
For $I\subseteq[n]$, $k=|I|$, and $h=k+2p$ with
$0\le p\le n-k$,
\begin{equation} \tau([h];I)(\delta g)=(-1)^{\theta_n(h,k)}\tau([N-h];I)(g), \quad \theta_n(h,k)=\binom{k+1}{2}+\binom h2+h(N-h)+\max(0,h-n). \label{eq:exact-hodge} \end{equation}
\end{lemma}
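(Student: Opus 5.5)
Both sides are matrix coefficients, so I would prove the identity at the level of exterior algebra and then track signs. By Lemma~\ref{lem:tau-coefficient},
\[ \tau([h];I)(\delta g)=\bigl\langle e_{[h]}^*,\,\delta(g)\bigl(e_I\wedge\Omega^p/p!\bigr)\bigr\rangle ,\qquad \delta(g)=Jg^{-\mathsf t}J^{-1}. \]
The element $g^{-\mathsf t}$ acts on $\wedge^h V$ as the contragredient of $g$. Use the Hodge-type isomorphism $\star:\wedge^h V\to(\wedge^{N-h}V)^*$, $\star(x)(y)=\text{coefficient of }e_{[N]}$ in $x\wedge y$. This identifies $\wedge^h$ of the contragredient with the dual of $\wedge^{N-h}g$, using $\det g=1$. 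Applying it, the pairing becomes
\[ \pm\bigl\langle \star\bigl(J^{-\mathsf t}e_{[h]}\bigr),\; g^{-1}\text{-side}\bigr\rangle , \]
which I would rewrite as the pairing of $g$ applied to the complementary vector $\star^{-1}\bigl(J^{-1}(e_I\wedge\Omega^p/p!)\bigr)$ against the covector $e_{[N-h]}^*$. The latter comes from transporting $e_{[h]}^*$ through $J$.

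Concretely, the steps run in order. First, $J$ maps $e_{[h]}^*$ to $\pm e^*_{\overline{[h]}}$. Its Hodge complement is $\pm e_{[N-h]}$ up to reordering, since the complement of $\{\bar1,\dots,\bar h\}$ is $[N-h]$. This is where $\binom h2$, $h(N-h)$, and the count of indices with $i>n$ (giving $\max(0,h-n)$) enter. Second, $J$ fixes $\Omega$, and symplectic Hodge duality (Lefschetz $\mathfrak{sl}_2$) shows that $\star(e_I\wedge\Omega^p/p!)$ equals, up to sign, $e_I\wedge\Omega^{p'}/p'!$ with $k+2p'=N-h$. The complement of $e_I\wedge\Omega^p$ must contain $e_I$ and a complementary $\Omega$-power, because $I\subseteq[n]$ pairs with $\bar I$. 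Third, assemble these pieces into $\tau([N-h];I)(g)$ via Lemma~\ref{lem:tau-coefficient} again.

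I would not compute the general sign abstractly. Both sides are, up to sign, the same function: they have the same degree and are nonzero matrix coefficients. The sign can therefore be fixed by a single evaluation. I would evaluate at $g=\mathrm{id}$, or at a permutation-type integral matrix with $J$-compatible columns, as in Proposition~\ref{prop:unit-frame}. On the left, $\delta(\mathrm{id})=\mathrm{id}$, so both sides reduce to explicit Pfaffian coefficients of $J$: the coefficient of $e_{[h]}$ in $e_I\wedge\Omega^p/p!$ and of $e_{[N-h]}$ in $e_I\wedge\Omega^{p'}/p'!$. These are nonzero only when $I=[k]$ and the remaining indices pair up. For general $I$, I would use a permutation of $[n]$ commuting with $J$ to reduce to $I=[k]$. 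That reduction contributes equal signs to both sides.

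The main obstacle is the bookkeeping of the sign $\theta_n(h,k)$. The contributions come from four sources:
\begin{itemize}
\item reordering $\bar{[h]}$ into increasing order;
\item the Hodge sign $h(N-h)$;
\item the $(-1)^{\binom{|I|}{2}}$ prefactor in \eqref{eq:tau-definition}, which produces the $\binom{k+1}{2}$ term;
\item the signs of $J$ on indices $>n$, which produce $\max(0,h-n)$.
\end{itemize}
I expect to verify the final formula by checking small cases such as $k=0$, $h=2$ and $h=n$, and then the parity of each term separately.
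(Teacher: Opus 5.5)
Your setup agrees with the paper's. With $D_h=\star_h\circ\wedge^h(J^{-1})$ one has $D_h\wedge^h\delta(g)=\wedge^{N-h}g\,D_h$ because $\det g=1$. Your reduction from arbitrary $I$ to $I=[k]$ by pair-permuting permutation matrices is also sound, since these lie in $K$ and are fixed by $\delta$.

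The gap is that the lemma \emph{is} the sign $(-1)^{\theta_n(h,k)}$, and you never determine it. The evaluation you propose does not work. At $g=\mathrm{id}$ both sides vanish unless $p=0$ or $p'=n-k-p=0$, because the coefficient of $e_{[h]}$ in $e_{[k]}\wedge\Omega^p/p!$ is nonzero only if $[h]\setminus[k]$ is a union of pairs $\{i,\bar i\}$. For instance, with $n=2$, $k=0$, $h=2$ you get the coefficient of $e_1\wedge e_2$ in $e_1\wedge e_4+e_2\wedge e_3$, which is $0$. Evaluating instead at a frame with $gJg^{\mathsf t}=J_{\rm ad}$ means computing $\delta(g)$ and two bordered coefficients, which is exactly the bookkeeping you defer. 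Checking small cases proves nothing for general $(n,h,k)$. Note also that ``same degree'' forces proportionality only for $I=[k]$, via multiplicity one of $V_K(\varpi_k)$ in $\wedge^{N-h}V$. For other $I$ the functions are not $U_K$-invariant.

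Your list of sign sources is also wrong in one entry. The prefactor $(-1)^{\binom{k}{2}}$ of \eqref{eq:tau-definition} is the same on both sides and is already absorbed in \eqref{eq:tau-coefficient}, so it cannot produce $\binom{k+1}{2}$. That term comes from the vector side, which is where the missing computation lies.

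For $i\le n$ we have $J^{-1}e_i=e_{\bar i}$ and $J^{-1}e_{\bar i}=-e_i$. So $J^{-1}$ fixes each $e_s\wedge e_{\bar s}$ and sends $e_I$ to $e_{\bar i_1}\wedge\cdots\wedge e_{\bar i_k}$. Fix a set $S$ of selected pairs and put $S'=[n]\setminus(I\cup S)$. Wedging the image term with $e_I\wedge\bigwedge_{t\in S'}(e_t\wedge e_{\bar t})$ gives
$e_{\bar i_1}\wedge\cdots\wedge e_{\bar i_k}\wedge e_I\wedge\bigwedge_{t\notin I}(e_t\wedge e_{\bar t})=(-1)^{\binom{k+1}{2}}\Omega^n/n!=(-1)^{\binom{k+1}{2}}e_{[N]}$,
using $\Pf J=1$. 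This is independent of $S$, since the pairs are even and commute. Hence $D_h(e_I\wedge\Omega^p/p!)=(-1)^{\binom{k+1}{2}}e_I\wedge\Omega^{p'}/p'!$ exactly; the Lefschetz argument alone only gives proportionality.

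On the covector side, $J^{-1}e_{[h]}=(-1)^{\max(0,h-n)}e_{\bar 1}\wedge\cdots\wedge e_{\bar h}$. Reordering into increasing order costs $(-1)^{\binom h2}$, and complementing $\{N-h+1,\ldots,N\}$ against $[N-h]$ costs $(-1)^{h(N-h)}$. So $(D_h^{-1})^*e^*_{[h]}=(-1)^{\binom h2+h(N-h)+\max(0,h-n)}e^*_{[N-h]}$.

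These two direct computations are the paper's proof, and they make the evaluation step unnecessary.
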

\begin{proof}
Let $\star_h(e_I)=\op{sgn}(I,I^c)e_{I^c}$ and
$D_h=\star_h\circ\wedge^h(J^{-1})$. Complementing symplectic
pairs in the exterior basis gives
\[ D_h\left(e_I\wedge\frac{\Omega^p}{p!}\right) =(-1)^{\binom{k+1}{2}} e_I\wedge\frac{\Omega^{n-k-p}}{(n-k-p)!}. \]
The divided powers make each admissible pair subset occur with
coefficient $1$. The complement sign is independent of that subset:
interchanging a selected and an unselected pair moves two entries past
two entries, while the $k$ unpaired entries give
$(-1)^{k(k+1)/2}$. For the prefix covector, direct complementation gives
\[ (D_h^{-1})^*e_{[h]}^* =(-1)^{\binom h2+h(N-h)+\max(0,h-n)}e_{[N-h]}^*. \]
Here the three contributions are reversal of the $h$ barred indices,
exchange of the two complementary lists, and the negative signs of
$J^{-1}$ on the last $\max(0,h-n)$ prefix vectors.
Finally $D_h\wedge^h\delta(g)=\wedge^{N-h}g\,D_h$.
Applying \eqref{eq:tau-coefficient} proves the formula.
\end{proof}

\begin{proposition}\label{thm:sign-free-reflection}
On a homogeneous element define
\[ \mathfrak r(f)=\varepsilon(\wt(f))\,\delta^*f. \]
Then $\mathfrak r$ is an algebra involution and
\begin{equation} \mathfrak r(p_j)=p_{N-j},\quad \mathfrak r(s_{2r})=s_{N-2r},\quad \mathfrak r(F_{a,\ell})=F_{\rho_\ell(a),\ell},\qquad \rho_\ell(a)=2n-2\ell+2-a. \label{eq:sign-free-reflection} \end{equation}
It acts on rational neighbors by the same sign-free rule. For every
mutable index with $\ell<n$,
\begin{equation} \mathfrak r(E_{a,\ell})=E_{\rho_\ell(a),\ell}. \label{eq:sign-free-exchange-reflection} \end{equation}
\end{proposition}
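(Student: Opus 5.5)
We first check that $\mathfrak r$ is an algebra involution, then compute it on the seed functions, and finally compare the exchange polynomials.

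\emph{Involution.} The map $\delta$ is an involutive automorphism of $\SL_{2n}$. It preserves $U_G^-$, and it fixes $K$ pointwise, hence fixes $U_K$ and $T_K$. Therefore $\delta^*$ is an involutive algebra automorphism of $\B_n$, sending degree $(\lambda;\mu)$ to $(\lambda^\vee;\mu)$. Since $\varepsilon$ is a character of the grading lattice, the twist $\mathfrak r(f)=\varepsilon(\wt f)\delta^*f$ is multiplicative on homogeneous elements. We extend it linearly. Then
\[
\mathfrak r^2(f)=\varepsilon(\lambda;\mu)\,\varepsilon(\lambda^\vee;\mu)\,f .
\]
To see that this product of signs is $1$, it suffices to check $\alpha_j+\alpha_{N-j}\equiv0\pmod 2$. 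This is a short parity computation from $\alpha_j=\lfloor j/2\rfloor+\max(0,j-n)$: for $j\le n$ the sum is $\lfloor j/2\rfloor+\lfloor (N-j)/2\rfloor+(n-j)$, which is even.

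\emph{Generators.} By multiplicity one (Proposition~\ref{thm:folded-multiplicity}, and the fundamental left degrees for $p_j,s_{2r}$), the element $\mathfrak r(F_{a,\ell})$ is a scalar multiple of the unique normalized function in degree $(\lambda_{a,\ell}^\vee;\varpi_\ell)$. We must do two things here.
\begin{enumerate}
\item Identify that degree. From \eqref{eq:antiperiodic-dictionary}, $\omega_j^\vee=\omega_{N-j}$ corresponds to $\epsilon_j\mapsto\epsilon_{N-j}$ (up to the antiperiodic signs). The interval $a,a+2,\ldots,a+2\ell-2$ is therefore sent to the interval starting at $2n-(a+2\ell-2)=\rho_\ell(a)$, so the degree is $\lambda_{\rho_\ell(a),\ell}$.
\item Fix the scalar. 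For $p_j$ and $s_{2r}$ apply Lemma~\ref{lem:exact-hodge} with $I=[\mu(j)]$ or $I=\emptyset$, and check that $(-1)^{\theta_n}$ equals $\varepsilon$ of the weight. For $F_{a,\ell}$, apply Lemma~\ref{lem:exact-hodge} factorwise in \eqref{eq:folded-function}. This shows that $\delta^*F_{a,\ell}$ is again a shuffle sum of bordered Pfaffians with heights $N-h_j$, up to the product of the signs $(-1)^{\theta_n(h_j,k_j)}$. Reordering the heights into increasing order and re-indexing the shuffle yields $\pm F_{\rho_\ell(a),\ell}$.
\end{enumerate}

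\emph{The sign.} This is the step I expect to be the main obstacle. An alternative to tracking the sign through Lemma~\ref{lem:exact-hodge} directly is to evaluate both sides at integral points. Using the frame $g_{a,\ell}$ of Proposition~\ref{prop:unit-frame}, one evaluates $F_{\rho_\ell(a),\ell}$ at $\delta(g_{\rho_\ell(a),\ell})$. Since $\delta$ preserves $\SL_{2n}(\Z)$, the scalar is then an integer. The same holds after applying the inverse, so the scalar is $\pm1$. The remaining task is to show that
\[
\sum_j\theta_n(h_j,k_j)+\text{(reordering sign)}\equiv\sum_j\alpha_{h_j}+\beta_\ell \pmod 2 .
\]
Here $\beta_\ell=\binom\ell2$ should absorb $\sum_j\binom{k_j+1}2$ together with the shuffle reversal sign. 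The terms $\binom h2+h(N-h)+\max(0,h-n)$ combine to give $\alpha_h$ modulo $2$. I would verify this congruence separately in the three regimes $q=0$, left crossing, and right crossing.

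\emph{Exchanges.} Since $\mathfrak r$ is multiplicative, it acts on the rational neighbours $\mathfrak f$ by the same index rule. At the even boundaries this uses $p_\ell/s_\ell\mapsto p_{N-\ell}/s_{N-\ell}$, which corresponds to $a=1-\ell\mapsto 2n+1-\ell$. Applying $\mathfrak r$ to \eqref{eq:lower-stencil} and using $\rho_\ell(a\pm2)=\rho_\ell(a)\mp2$ and $\rho_{\ell\pm1}(a)=\rho_\ell(a)\mp2$, the two monomials of $R_{a,\ell}$ are carried to the two monomials of $R_{\rho_\ell(a),\ell}$, possibly in the opposite order. Finally, $\kappa$ is swapped correctly because $\rho_\ell(3-\ell)=2n-\ell-1$ and $\mathfrak r(s_\ell)=s_{2n-\ell}$. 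This proves \eqref{eq:sign-free-exchange-reflection}.
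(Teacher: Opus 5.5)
Your skeleton matches the paper's proof: factorwise Lemma~\ref{lem:exact-hodge}, multiplicity one, the parity $\alpha_j\equiv\alpha_{N-j}$ for $\mathfrak r^2=1$, and the stencil identities $\rho_{\ell\pm1}(a)=\rho_\ell(a)\mp2$. The treatment of $p_j$, $s_{2r}$, the rational neighbours and $\kappa$ is fine. The gap is in fixing the sign of $\delta^*F_{a,\ell}$.

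\textbf{The re-indexing claim fails.} You assert that reordering the heights and re-indexing the shuffle turns $\delta^*F_{a,\ell}$ into $\pm F_{\rho_\ell(a),\ell}$. This is false whenever $q\ge1$ and $m\ge2$; the blue vertex $F_{-1,4}$ for $n=5$ is an example.
\begin{itemize}
\item In \eqref{eq:folded-function} the block of size $t=2q+1$ sits at the smallest height $h_1=r$.
\item After duality it sits at $N-r$, which is the \emph{largest} reflected height $h'_m$.
\item $F_{\rho_\ell(a),\ell}$ instead puts its size-$t$ block at the smallest height $h'_1$.
\end{itemize}
For $F_{-1,4}$, the dual has the size-$3$ block at height $7$, while $F_{5,4}$ has it at height $5$. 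So the dualized shuffle is a different expression, not a re-indexing.

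Multiplicity one gives only proportionality. Your integrality argument gives only that the constant is $\pm1$; note it should evaluate $F_{a,\ell}$, not $F_{\rho_\ell(a),\ell}$, at $\delta(g_{\rho_\ell(a),\ell})$. The congruence you plan to check assumes a ``reordering sign'' that has not been shown to be the proportionality constant.

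\textbf{The missing step.} Evaluate the dualized shuffle at the unit frame $g_{\rho_\ell(a),\ell}$ itself, as the paper does.
\begin{itemize}
\item The singleton factors now sit at heights $h'_{m-1},\dots,h'_1$, and each forces one column: $\ell-1,\dots,2q+1$ in that order.
\item The remaining block is $\{1,\dots,2q,\ell\}$ at height $h'_m$. The adjacent-pair computation of Proposition~\ref{prop:unit-frame} gives it value $1$.
\item So exactly one term survives, with shuffle sign $(-1)^{\binom m2}$.
\end{itemize}
This yields
\[ \delta^*F_{a,\ell}=(-1)^{\binom m2+\sum_j\theta_n(h_j,k_j)}F_{\rho_\ell(a),\ell}, \]
and makes the $\pm1$ argument unnecessary.

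\textbf{Parity bookkeeping.} Your split is also off, though the errors cancel.
\begin{itemize}
\item Every $h_j$ is odd, and for odd $h$ one has $\binom h2+h(N-h)+\max(0,h-n)\equiv\alpha_h+1$, not $\alpha_h$.
\item Also $\sum_j\binom{k_j+1}2\equiv q+m$.
\end{itemize}
The two extra copies of $m$ cancel. Since $\ell=m+2q$ gives $\binom\ell2\equiv\binom m2+q$, the exponent is $\binom\ell2+\sum_j\alpha_{h_j}$, exactly that of $\varepsilon(\lambda_{a,\ell};\varpi_\ell)$. This holds uniformly, with no split into three regimes.
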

\begin{proof}
The one-column cases of Lemma~\ref{lem:exact-hodge}, with involutivity
for $j>n$, give $\delta^*p_j=p_{N-j}$ and
$\delta^*s_{2r}=(-1)^{\alpha_{2r}}s_{N-2r}$.
For a folded index we keep $r,m,q,R_0$ from
\eqref{eq:folded-degree}. Duality complements its column heights,
then reverses their order; multiplicity one makes the result a scalar
multiple of the reflected coefficient.

We compute the scalar by evaluating at the
unit-value frame for the reflected index. Each of the $m-1$ singleton
subsets in the dualized shuffle forces a different column, in reverse
order $\ell-1,\ldots,2q+1$. The remaining subset is
$\{1,\ldots,2q,\ell\}$, and its bordered coefficient is $1$ by the
same adjacent-pair calculation as in Proposition~\ref{prop:unit-frame}.
The shuffle has sign $(-1)^{\binom m2}$. Multiplying this by the
$\theta_n(h_j,k_j)$ signs in \eqref{eq:exact-hodge} gives
\[ \delta^*F_{a,\ell}=(-1)^{\chi_n(a,\ell)}F_{\rho_\ell(a),\ell}, \quad \chi_n(a,\ell)=q+mR_0+\sum_{j=0}^{m-1}\max(0,r+2j-n). \]
Since $\ell=m+2q$,
\[ \chi_n(a,\ell)\equiv \binom\ell2+\sum_{j=0}^{m-1}\alpha_{r+2j}\pmod2. \]
Thus this sign is exactly $\varepsilon(\lambda_{a,\ell};\varpi_\ell)$.
The principal case has $\varepsilon(\omega_j;\varpi_{\mu(j)})=1$.
This proves \eqref{eq:sign-free-reflection}.

Multiplicativity follows because $\varepsilon$ is a character.
Moreover $\alpha_{N-j}\equiv\alpha_j\pmod2$, so
$\varepsilon(\lambda^\vee;\mu)=\varepsilon(\lambda;\mu)$ and
$\mathfrak r^2=1$. Ratios $p_d/s_d$ transform without a sign.
Reflection interchanges the two products in each lower recurrence
and sends its clearing factor to the reflected clearing factor.
This proves \eqref{eq:sign-free-exchange-reflection}. In particular,
there is no separate exchange-sign compatibility assumption.
\end{proof}

\begin{theorem}\label{thm:regular-exchanges}
For $n\ge4$ and every mutable index $x=(a,\ell)$, there is a unique
regular function $F_x'\in\B_n$ with
$F_xF_x'=E_x$.
The coefficient of each monomial in $E_x$ is $1$ with the
fixed normalization of Section~\ref{sec:folded-seed}.
\end{theorem}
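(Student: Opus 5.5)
Uniqueness is immediate: $\B_n$ is a domain and $F_x\ne0$, so $F_x'=E_x/F_x$ is determined in $\Frac(\B_n)$. Existence means $E_x/F_x\in\B_n$, and we split the mutable indices into three classes.

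\emph{Case 1: $x\in I_=$, or $x=(2b-1,\ell)\in I_<$ with $b+\ell\le n$.} This is exactly Proposition~\ref{prop:one-sided-exchanges}.

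\emph{Case 2: the remaining lower indices.} We use the reflection $\mathfrak r$ of Proposition~\ref{thm:sign-free-reflection}. For $(a,\ell)\in I_<$ the reflected index is $\rho_\ell(a)=2n-2\ell+2-a$; writing $\rho_\ell(a)=2b'-1$, the condition $b'+\ell\le n$ becomes $a\ge 3-\ell$, which holds throughout $I_<$ by \eqref{eq:mutable-chamber}. Hence every lower index is covered by Case 1 either directly or after reflection, and the two regions overlap near $a\approx n-\ell+1$. For a reflected index, \eqref{eq:sign-free-exchange-reflection} and \eqref{eq:sign-free-reflection} give
\[
 \frac{E_{a,\ell}}{F_{a,\ell}}
   =\mathfrak r\!\left(\frac{E_{\rho_\ell(a),\ell}}{F_{\rho_\ell(a),\ell}}\right).
\]
Since $\mathfrak r$ is a graded algebra automorphism of $\B_n$, the quotient is regular.

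Before applying Proposition~\ref{prop:one-sided-exchanges} there we must check that $\rho_\ell(a)$ is again a mutable index. The map $a\mapsto 2n-2\ell+2-a$ sends the interval $[3-\ell,\,2n-\ell-1]$ to itself, so this holds.

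\emph{Normalization.} The coefficient claim is a matter of bookkeeping, but it is the delicate point. In Case 1, the condensation identity \eqref{eq:condensation} has unit coefficients. Lemmas~\ref{lem:global-row-dictionary} and \ref{lem:global-top-determinant} identify each contiguous minor with the normalized symbols $\mathfrak f$ exactly, with no scalar. This includes the signs from the negative even rows, which the proof of Lemma~\ref{lem:global-row-dictionary} already cancels. Consequently $F_x\Theta=R_x$ holds on the nose, and $E_x=\kappa_xR_x$ has both monomials with coefficient $1$. In particular, in the middle case the reversal of the two products does not introduce a sign.

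In Case 2, the point of using $\mathfrak r$ rather than $\delta^*$ is that $\mathfrak r$ maps each normalized seed function to a normalized seed function with no sign. This includes the rational neighbours $p_d/s_d$ and the clearing factors $\kappa$. So applying $\mathfrak r$ to a unit-coefficient identity gives a unit-coefficient identity. I expect the main obstacle to be checking that the boundary substitutions \eqref{eq:left-even-exchange}--\eqref{eq:corrected-middle-boundary} agree with the condensation output, in particular that the factor $s_\ell$ lands on the correct monomial. This should be settled by the same case check as in Proposition~\ref{prop:one-sided-exchanges}.
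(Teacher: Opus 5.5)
Your argument is the paper's: Proposition~\ref{prop:one-sided-exchanges} handles the one-sided region and the level $\ell=n$. The sign-free involution $\mathfrak r$ of Proposition~\ref{thm:sign-free-reflection} handles the remaining lower indices, and uniqueness follows because $\B_n$ is a domain.

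One step is miscomputed, although the conclusion still holds. With $a=2b-1$ the reflected parameter is $b'=n-\ell+2-b$, so $b'+\ell=n+2-b$. The condition $b'+\ell\le n$ is therefore $b\ge2$, i.e.\ $a\ge3$, not $a\ge3-\ell$. It does not hold throughout $I_<$; for instance it fails at $(1,2)$. What you actually need, and what the paper uses, is weaker. An index not covered directly has $b+\ell>n$, hence $b\ge n-\ell+1\ge2$ since $\ell<n$. The indices with $a\le1$ are all covered directly, because $b+\ell\le1+\ell\le n$.

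The boundary bookkeeping you flag is not a separate obstacle. By definition $E_x=\kappa_xR_x$, so $E_x/F_x=\kappa_x\Theta$, and the clearing factor is already handled inside Proposition~\ref{prop:one-sided-exchanges}.
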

\begin{proof}
Proposition~\ref{prop:one-sided-exchanges} treats the one-sided region
and the level $\ell=n$. For a remaining index with $\ell<n$, write $a=2b-1$.
Then $b+\ell>n$ and the reflected index has
$b^\vee=n-\ell-b+2$, so $b^\vee+\ell=n-b+2\le n$.
Apply $\mathfrak r$ to its regular exchange. The sign-free formulas
prove the exchange exactly. Uniqueness follows in the
domain $\B_n$. Nonvanishing of the quotients will also follow from
algebraic independence in Proposition~\ref{thm:initial-chart}.
\end{proof}

\section{Localization and the branching algebra}
\label{sec:algebra-equality}
We prove the algebraic equality by combining explicit coordinates with
a weighted Jacobian identity. Throughout, $n\ge2$; we treat the seed
for $n=3$ separately where necessary.

The principal and spherical localizations serve different purposes.
Put $P=\prod_{j=1}^{N-1}p_j$, $H_{\rm Pf}=c_nH_{\rm sph}$, and
$M=\prod_{i\in I_{\rm mut}}x_i$, where $x_i$ is the function attached
to the initial mutable index $i$. Table~\ref{tab:localizations} records
the coordinate rings that we shall use.

\begin{table}[htbp]
\centering\small
\renewcommand{\arraystretch}{1.16}
\begin{tabular}{@{}p{.35\linewidth}p{.61\linewidth}@{}}
\toprule
Ring & Purpose\\
\midrule
$\B_n[P^{-1}]$ & The principal torus and the polynomial section $\mathcal S_n$; a positive grading for the Jacobian argument\\
$\B_n[H_{\rm sph}^{-1}]$ & Pfaffian elimination and the reduction to invariants on $\Sp_{2n}$\\
$\B_n[H_{\rm Pf}^{-1}]$ & The $\GL_n/U_n$ factor and symmetric coordinates\\
$\B_n[(H_{\rm Pf}M)^{-1}]$ & The initial Laurent chart and the full fraction field\\
\bottomrule
\end{tabular}
\caption{The localizations used in the algebraic comparison.}
\label{tab:localizations}
\end{table}

The principal functions remain polynomial in the last ring.
The two restrictions of a homogeneous function are
\[ \bar f=f|_{\mathcal S_n},\qquad f^\sharp(k)=f(\Pi k)\quad(k\in K). \]
Here $\mathcal S_n\subset U_G$ is the section constructed in
Proposition~\ref{prop:gauss-chart}, and $\Pi$ is the permutation matrix
introduced for skew elimination. The first restriction separates the
principal torus; the second is used for the spherical localization.

Theorem~\ref{thm:regular-exchanges} has proved regularity of the first
mutations. The coordinate calculations below establish the initial
chart and the equality of fraction fields in
Proposition~\ref{thm:initial-chart}. The weighted Jacobian identity of
Proposition~\ref{thm:short-jacobian} then gives the adjacent charts and
frozen divisor valuations. Finally, Proposition~\ref{prop:comparison}
restores the frozen divisors and proves the equality in
Theorem~\ref{thm:a2c-algebra-equality}.

\subsection{The unipotent quotient and a positive grading}
Put $P=\prod_{j=1}^{N-1}p_j$ and
\[ \mathcal I_n=\{(i,j):1\le i<j\le N,\ i+j<N+1\},\qquad \mathcal S_n=I+\bigoplus_{(i,j)\in\mathcal I_n}\C E_{ij}. \]
The allowed matrix positions are closed under multiplication. We use
$\mathcal S_n$ as a section of the quotient map $U_G\to U_G/U_K$:
every coset has a unique representative in $\mathcal S_n$, depending
regularly on the coset, as the following proposition proves.

\begin{proposition}\label{prop:gauss-chart}
Multiplication induces an isomorphism of varieties
$\mathcal S_n\times U_K\longrightarrow U_G$. Consequently
\begin{equation} (\B_n)_P\cong \C[p_1^{\pm1},\ldots,p_{N-1}^{\pm1}] \otimes\C[u_{ij}:(i,j)\in\mathcal I_n]. \label{eq:polynomial-gauss-chart} \end{equation}
There are $n(n-1)$ polynomial coordinates in the second factor.
\end{proposition}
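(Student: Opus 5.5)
The plan is to prove the group-theoretic factorization first, by a triangular induction on matrix entries, and then to deduce \eqref{eq:polynomial-gauss-chart} from the Bruhat big cell.

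\emph{The Lie-algebra splitting.} With $K=\Sp(V)$ for the form $J$ and the upper triangular Borel, an element $X\in\mathfrak n_K$ is strictly upper triangular and satisfies $XJ+JX^{\mathsf t}=0$. This relation identifies the entry at $(i,j)$, up to sign, with the entry at the mirror position $(\bar j,\bar i)$. Mirroring sends $i+j$ to $2N+2-(i+j)$. So it exchanges the region $i+j<N+1$ (the set $\mathcal I_n$) with $i+j>N+1$, and fixes the antidiagonal $i+j=N+1$. Hence projection of $\mathfrak n_K$ onto the positions with $i+j\ge N+1$ is a linear isomorphism. Therefore
\[
\mathfrak n_G=\textstyle\bigoplus_{\mathcal I_n}\C E_{ij}\oplus\mathfrak n_K .
\]
The count $\dim U_G-\dim U_K=n(2n-1)-n^2=n(n-1)=|\mathcal I_n|$ follows.

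\emph{Group-level factorization.} I use the height $h=j-i$ filtration.
\begin{itemize}
\item For $k\in U_K$, I first show that the entries $k_{ij}$ with $i+j\ge N+1$ form a coordinate system on $U_K$. Working height by height, the symplectic equation $kJk^{\mathsf t}=J$ at height $h$ expresses each entry at a position with $i+j<N+1$ as minus its mirror entry plus a polynomial in entries of smaller height.
\item Given $g\in U_G$, I solve $g=sk$ with $s\in\mathcal S_n$ and $k\in U_K$, again height by height. The entry at height $h$ reads
\[
g_{ij}=s_{ij}+k_{ij}+\sum_{i<l<j}s_{il}k_{lj},
\]
and the sum involves only entries of smaller height.
\item At positions with $i+j\ge N+1$ we have $s_{ij}=0$, so this equation determines the free coordinate $k_{ij}$ uniquely.
\item The dependent entries of $k$ at height $h$ are then forced by the previous item.
\item At positions in $\mathcal I_n$ the equation then determines $s_{ij}$.
\end{itemize}
This gives existence and uniqueness, with $s$ and $k$ polynomial in $g$. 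So multiplication $\mathcal S_n\times U_K\to U_G$ is a bijective morphism whose inverse is a morphism, hence an isomorphism. The closure of the positions $\mathcal I_n$ under multiplication is not even needed here; it only says that $\mathcal S_n$ is a subgroup.

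\emph{The localized branching algebra.} The function $P$ is invariant and does not vanish exactly on the big cell $U_G^-T_GU_G$. The product map $U_G^-\times T_G\times U_G\to G$ is an isomorphism onto this cell. Using the factorization above, I obtain
\[
(\B_n)_P=\bigl(\C[G]_P\bigr)^{U_G^-\times U_K}=\C[T_G]\otimes\C[U_G]^{U_K}=\C[T_G]\otimes\C[\mathcal S_n].
\]
The coordinates $u_{ij}$, $(i,j)\in\mathcal I_n$, are polynomial in $n(n-1)$ variables. On $tu$ one has $p_i(tu)=\omega_i(t)$, so the $p_i$ restrict to the fundamental characters. These form a basis of $X^*(T_G)$ because $\SL_{2n}$ is simply connected, and hence $\C[T_G]=\C[p_1^{\pm1},\ldots,p_{N-1}^{\pm1}]$.

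\emph{Main obstacle.} I expect the main care to be in the coordinate claim for $U_K$. One must confirm that at each height the symplectic equation is genuinely triangular, with no hidden constraints among the antidiagonal entries and the correct sign from $J$. A clean way to do this is to compare with the Lie-algebra splitting via the exponential or a height-graded associated map.
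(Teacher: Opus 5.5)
Your argument is correct, but its first half takes a different route from the paper's.

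\emph{How the paper does it.} The paper never puts coordinates on $U_K$. It uses the right-$U_K$-invariant map $u\mapsto uJu^{\mathsf t}$. Its values are skew matrices with $A_{i,\bar i}=1$ for $i\le n$ and $A_{ij}=0$ for $i<j$, $i+j>N+1$. A block induction, splitting off the first and last rows and columns, shows that $v\mapsto vJv^{\mathsf t}$ maps $\mathcal S_n$ isomorphically onto this affine space. Then $k=v^{-1}u$ preserves $J$ automatically, which gives the factorization.

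\emph{How yours differs.} You instead show that the entries on and beyond the antidiagonal coordinatize $U_K$, and then solve $g=sk$ entrywise by height. This is essentially the root-height argument the paper later uses for $U_{E_6}/U_{F_4}$. The paper's route identifies $U_G/U_K$ with an affine space of normalized skew forms, which is the form in which it reappears in the Pfaffian and skew-elimination arguments. Your route is more elementary and adapts to any pair with a height-compatible complement of root positions. The Gauss-chart half of your argument matches the paper's.

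\emph{The step you flag does go through.} Write $k=I+X$. The defect $XJ+JX^{\mathsf t}+XJX^{\mathsf t}$ is skew-symmetric, so only its $(a,b)$ entries with $a<b$ matter.
\begin{itemize}
\item If $a+b\ge N+1$, this entry vanishes identically.
\item If $a+b\le N$, it reads $J_{\bar b b}X_{a\bar b}+X_{b\bar a}+(XJX^{\mathsf t})_{ab}$, and the last term involves only strictly lower heights.
\end{itemize}
This gives exactly one equation per position of $\mathcal I_n$ and none on the antidiagonal, so there are no hidden constraints. Since $J_{\bar b b}=-1$ for $b\le n$ and $+1$ for $b>n$, the dependent entry is plus or minus its mirror, not always minus. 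This does not affect the argument.
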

\begin{proof}
For upper unitriangular $u$, the skew matrix $A=uJu^{\mathsf t}$
has $A_{i,\bar i}=1$ for $i\le n$ and $A_{ij}=0$ for
$i<j$, $i+j>N+1$. Conversely every skew matrix with these properties
has a unique expression $vJv^{\mathsf t}$ with $v\in\mathcal S_n$.
Here is a polynomial inverse. Write
\[
A=\begin{pmatrix}0&r&1\\-r^{\mathsf t}&A'&0\\-1&0&0\end{pmatrix},
 \qquad
 v=\begin{pmatrix}1&a&0\\0&v'&0\\0&0&1\end{pmatrix}.
\]
Inductively recover $v'$ from $A'=v'J'v'^{\mathsf t}$, and set
$a=r(v'^{\mathsf t})^{-1}(J')^{-1}$. All inverses here are
unitriangular and polynomial. Thus $v\mapsto vJv^{\mathsf t}$ is
an isomorphism onto the stated affine space. For arbitrary $u$,
$k=v^{-1}u$ preserves $J$ and belongs to $U_K$, giving the claimed
product decomposition and its polynomial inverse.
Let $G_P=\{g\in G:P(g)\ne0\}$. On this big cell, Gauss decomposition
is the unique factorization $g=u_-tu_+$, with
$u_-\in U_G^-$, $t\in T_G$, and $u_+\in U_G$
\cite[Theorem~21.84]{MilneAlgebraicGroups}. It gives
$G_P\simeq U_G^-\times T_G\times U_G$, and hence
$U_G^-\backslash G_P\simeq T_G\times U_G$. The principal minors are
independent fundamental characters of $T_G$; taking right
$U_K$-invariants proves \eqref{eq:polynomial-gauss-chart}.
Counting the allowed positions gives $n(n-1)$.
\end{proof}

We call the principal open $\Spec\B_n[P^{-1}]$, with the coordinates
in \eqref{eq:polynomial-gauss-chart}, the \emph{Gauss chart}.
Its torus factor records $t$ in $g=u_-tu_+$, while $\mathcal S_n$
records the right coset of $u_+$ modulo $U_K$. Thus the chart is
$T_G\times\mathcal S_n$. This principal localization is distinct
from the mutable localization used for the initial Laurent chart.

The restriction of a homogeneous function $f$ to
$T_G\times\mathcal S_n$ is a principal Laurent character times $\bar f$. The one-parameter subgroup
\[ t\longmapsto\diag(t^{2i-N-1})_{i=1}^N\in T_K \]
acts on the section by inverse conjugation, with
\begin{equation} \deg_\circ u_{ij}=2(j-i)>0,\qquad \deg_\circ\bar f=\sum_{j=1}^{N-1}\lambda_j j(N-j) -\sum_{\ell=1}^{n}\mu_\ell\ell(N-\ell) \quad\text{if }\wt(f)=(\lambda;\mu). \label{eq:positive-section-grading} \end{equation}
In particular $\bar p_j=1$ has degree zero. The positive grading on
$\C[\mathcal S_n]$ is the reason the Jacobian has the form required
for the comparison. Differentiating an exchange gives
\[ f_i'J_{\rm Jac}=-f_iJ_{{\rm Jac},i}'. \]
Coprimality forces each mutable prime to divide $J_{\rm Jac}$.
The frozen weight sum then gives degree zero for the quotient by their
product, and positivity makes that quotient a constant. We prove this
in Lemma~\ref{lem:weighted-jacobian} after the coordinate calculations
establish algebraic independence and the polynomial companions.

\subsection{The spherical and Lagrangian quotients}
For a skew matrix $A$ with Pfaffian $1$, write
$h_r=\Pf A_{[2r]}$, $h_0=h_n=1$, and $d_r=h_r/h_{r-1}$.
Let $J_{\rm ad}$ be the adjacent-pair matrix used in
Proposition~\ref{prop:unit-frame}. Let $\Pi$ have ordered rows
$e_1^{\mathsf t},e_{\bar1}^{\mathsf t},\ldots,
 e_n^{\mathsf t},e_{\bar n}^{\mathsf t}$. Its determinant is $1$,
and $\Pi J\Pi^{\mathsf t}=J_{\rm ad}$.

\begin{lemma}[Skew elimination]\label{lem:skew-elimination}
When $h_1\cdots h_{n-1}\ne0$, there is a unique lower unitriangular
matrix $\ell_0(A)$ with $(\ell_0)_{2r,2r-1}=0$ such that
\begin{equation}
\ell_0(A)A\ell_0(A)^{\mathsf t}
 =J(\mathbf h):=\bigoplus_{r=1}^n
       \begin{pmatrix}0&d_r\\-d_r&0\end{pmatrix}.
 \label{eq:skew-elimination}
\end{equation}
Its entries are regular after the $h_r$ are inverted. The stabilizer
of $J(\mathbf h)$ in $U_G^-$ is
$U^-_{\rm pair}=\prod_r\{I+tE_{2r,2r-1}\}$.
\end{lemma}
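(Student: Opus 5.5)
We prove Lemma~\ref{lem:skew-elimination} by block-wise symplectic Gram--Schmidt on consecutive pairs, with the $2\times2$ Pfaffians $h_r$ as pivots. We induct on $n$. Write
\[
A=\begin{pmatrix}A_{11}&A_{12}\\ -A_{12}^{\mathsf t}&A_{22}\end{pmatrix},
\]
where $A_{11}$ is the leading $2\times2$ block. Then $A_{11}=\begin{pmatrix}0&h_1\\-h_1&0\end{pmatrix}$ and $d_1=h_1$. When $h_1\ne0$ the block $A_{11}$ is invertible, and we take
\[
\ell_1=\begin{pmatrix}I_2&0\\ A_{12}^{\mathsf t}A_{11}^{-1}&I\end{pmatrix}.
\]
This clears the off-diagonal blocks and gives $\ell_1A\ell_1^{\mathsf t}=A_{11}\oplus A'$, where $A'=A_{22}+A_{12}^{\mathsf t}A_{11}^{-1}A_{12}$ is the skew Schur complement. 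Its entries are regular once $h_1$ is inverted.

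For the induction we need the Pfaffian minors of the complement. Leading Pfaffian minors are unchanged by lower unitriangular congruence that preserves the flag of leading coordinate subspaces: $\Pf$ of the leading $2r\times2r$ block of $LAL^{\mathsf t}$ equals $\det(L_{[2r]})\cdot\Pf A_{[2r]}=\Pf A_{[2r]}$. Hence $\Pf(A_{11}\oplus A')_{[2r]}=h_r$. It follows that the leading Pfaffians of $A'$ are $h_r/h_1$, and its successive quotients are again $d_2,\ldots,d_n$. Applying induction to $A'$, whose total Pfaffian is $1/h_1$ (this does no harm, since $h_n=1$ is used only to fix $d_n$), and composing the matrices gives existence. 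The entries are regular after inverting the $h_r$, since every pivot $d_r$ is a ratio of the $h_r$.

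For the normalization $(\ell_0)_{2r,2r-1}=0$ and uniqueness, we describe the stabilizer. Suppose $u\in U_G^-$ and $uJ(\mathbf h)u^{\mathsf t}=J(\mathbf h)$. We show by descending induction over the pair blocks that $u$ is block diagonal with $2\times2$ lower unitriangular blocks.

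First consider the first block column. Comparing off-diagonal blocks in the first block row gives $u_{21}J_1=0$, where $J_1$ is the first $2\times 2$ block of $J(\mathbf h)$. Since $J_1$ is invertible, the subdiagonal part of the first block column vanishes. The diagonal block of $u$ is lower unitriangular $2\times2$ and must preserve $\begin{pmatrix}0&d\\-d&0\end{pmatrix}$. This holds automatically, because its determinant is $1$, so it can be any $I+tE_{21}$. This identifies the stabilizer as $U^-_{\rm pair}$.

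Uniqueness follows directly. Two solutions differ by left multiplication by an element of $U^-_{\rm pair}$, and the condition $(\ell_0)_{2r,2r-1}=0$ pins down each parameter $t$. For existence of the normalized solution, left-multiply any solution by the appropriate element of $U^-_{\rm pair}$. This element only affects the entries $(2r,2r-1)$ within the diagonal blocks, and it preserves lower unitriangularity.

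The main obstacle is bookkeeping rather than concept. We must check that the Schur-complement step preserves the leading Pfaffians in the stated normalized form, with the sign and division by $h_1$ handled correctly. We do this through the determinant-one congruence identity above, not by explicit expansion.
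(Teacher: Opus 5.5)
Your proof is correct and follows the paper's argument: Schur-complement congruence on the $2\times2$ pivot, with the complement's leading Pfaffians equal to $h_{r+1}/h_1$; induction; the stabilizer found by successive vanishing of the off-diagonal blocks; and uniqueness from the normalized entries. One small inaccuracy is harmless: left multiplication by $I+tE_{2r,2r-1}$ adds $t$ times row $2r-1$ to row $2r$, so it changes more than the $(2r,2r-1)$ entry, but you only use that this entry shifts by $t$, and your Schur-complement construction already has zeros there anyway (the paper's ``no within-pair shear'').
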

\begin{proof}
For $A=\left(\begin{smallmatrix}E&B\\-B^\T&C\end{smallmatrix}\right)$
with invertible $2\times2$ pivot $E$, congruence by
$\left(\begin{smallmatrix}I&0\\B^\T E^{-1}&I\end{smallmatrix}\right)$
produces $E\oplus(C+B^\T E^{-1}B)$. Leading Pfaffians of the
Schur complement are $h_{r+1}/h_1$. Induction gives
\eqref{eq:skew-elimination} with only leading Pfaffian denominators.
No within-pair shear is introduced.
For a lower unitriangular stabilizer, its off-diagonal $2\times2$
blocks vanish successively: the upper-right block of the congruence
equation is $AEC^{\mathsf t}=0$, forcing $C=0$. Each diagonal
block is the free lower shear. This proves the stabilizer description.
Two normalized eliminators differ by these shears; their prescribed
zero entries force every shear parameter to vanish.
\end{proof}

Set $D(\mathbf h)=\diag(d_1,1,\ldots,d_n,1)$ and
$\gamma(\mathbf h)=D(\mathbf h)\Pi$. Since $\prod d_r=1$,
$\gamma(\mathbf h)\in\SL_N$. For $g\in G^\circ$,
\[ k(g)=\gamma(\mathbf h)^{-1}\ell_0(gJg^{\mathsf t})g\in K. \]
Put $U^-_{\rm long}=\Pi^{-1}U^-_{\rm pair}\Pi$.

\begin{proposition}[Spherical quotient]\label{prop:spherical-quotient}
There is an isomorphism
\[ \B_n[H_{\rm sph}^{-1}] \cong\C[h_1^{\pm1},\ldots,h_{n-1}^{\pm1}] \otimes\C[K]^{U^-_{\rm long}\times U_K}. \]
A homogeneous function restricts as
$f(\gamma(\mathbf h)k)=\lambda(D(\mathbf h))f^\sharp(k)$,
where $f^\sharp(k)=f(\Pi k)$.
\end{proposition}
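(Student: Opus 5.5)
The plan is to construct the isomorphism geometrically on the spherical open set $G^\circ=\{H_{\rm sph}\ne0\}$, which is left $U_G^-$- and right $U_K$-stable. For $g\in G^\circ$ put $A=gJg^{\mathsf t}$. Then $\Pf A=\det g=1$, and the leading Pfaffians of $A$ are $h_r(g)=s_{2r}(g)$, which are nonzero. Lemma~\ref{lem:skew-elimination} therefore gives $\ell_0(A)$, regular on $G^\circ$. Since $\Pi J\Pi^{\mathsf t}=J_{\rm ad}$ and $D(\mathbf h)J_{\rm ad}D(\mathbf h)^{\mathsf t}=J(\mathbf h)$, we get $\gamma J\gamma^{\mathsf t}=J(\mathbf h)$. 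Hence $k(g)=\gamma^{-1}\ell_0 g$ satisfies $kJk^{\mathsf t}=J$, so $k(g)\in K$. This yields the regular map
\[
G^\circ\to(\C^\times)^{n-1}\times K,\qquad g\mapsto(\mathbf h(g),k(g)),
\]
and the factorization $g=\ell_0^{-1}\gamma(\mathbf h)k$ with $\ell_0^{-1}\in U_G^-$.

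Next I would compare the symmetries on the two sides. Left multiplication by $u\in U_G^-$ replaces $A$ by $uAu^{\mathsf t}$. This leaves every $h_r$ unchanged, because leading principal Pfaffians are invariant under lower unitriangular congruence. The normalized eliminator becomes $\ell_0(uAu^{\mathsf t})$, and $\ell_0(uAu^{\mathsf t})u$ is another eliminator of $A$. By the uniqueness and stabilizer clause of Lemma~\ref{lem:skew-elimination}, $\ell_0(uAu^{\mathsf t})u=w\,\ell_0(A)$ for some $w\in U^-_{\rm pair}$. Therefore $k(ug)=\gamma^{-1}w\gamma\,k(g)$. Since $D(\mathbf h)$ is diagonal, $\gamma^{-1}w\gamma=\Pi^{-1}D^{-1}wD\Pi\in U^-_{\rm long}$, and every element of $U^-_{\rm long}$ arises in this way. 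Right multiplication by $U_K$ acts on $k$ by right multiplication. Conversely, $(\mathbf h,k)\mapsto\gamma(\mathbf h)k$ is a section, and $\mathbf h(\gamma k)=\mathbf h$ because $\gamma kJk^{\mathsf t}\gamma^{\mathsf t}=J(\mathbf h)$ has leading Pfaffians $d_1\cdots d_r=h_r$. So $U_G^-\backslash G^\circ\cong(\C^\times)^{n-1}\times(\Pi^{-1}U^-_{\rm pair}\Pi\backslash K)$ up to the residual action. More precisely, the variety $G^\circ$ is $U_G^-\times^{U^-_{\rm pair}}(\text{torus}\times K)$. Taking $U_G^-\times U_K$-invariants then gives
\[
\C[G^\circ]^{U_G^-\times U_K}=\C[h^{\pm1}]\otimes\C[K]^{U^-_{\rm long}\times U_K}.
\]
I would make this precise by writing $G^\circ\cong U^-_{\rm rest}\times(\C^\times)^{n-1}\times K$. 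Here $U^-_{\rm rest}$ is the closed subset of $U_G^-$ with $(\cdot)_{2r,2r-1}=0$, which is a section of $U_G^-/U^-_{\rm pair}$. The map sends $g\mapsto(\ell_0^{-1}\gamma\,?\,)$. The invariants then follow directly. Finally, $\C[G^\circ]^{U_G^-\times U_K}=\B_n[H_{\rm sph}^{-1}]$, since localization at an invariant commutes with taking invariants.

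For the restriction formula, take $f$ homogeneous of weight $(\lambda;\mu)$. Left $U_G^-$-invariance gives $f(g)=f(\gamma(\mathbf h)k)$. Writing $\gamma k=D(\mathbf h)\Pi k$ and using the left $T_G$-equivariance \eqref{eq:torus-convention}, we get $f(D\,\Pi k)=\lambda(D)f(\Pi k)=\lambda(D(\mathbf h))f^\sharp(k)$. For this, $D(\mathbf h)$ must lie in $T_G$; it does, because $\prod d_r=1$.

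The main obstacle is the residual-group bookkeeping. One must show that the $U_G^-$-orbit structure on $G^\circ$ is exactly captured by $U^-_{\rm long}$ acting on the left of $K$, with no extra torus or finite ambiguity. This includes the check that conjugation by $\gamma$ maps $U^-_{\rm pair}$ onto $U^-_{\rm long}$ for every value of $\mathbf h$. I would handle it through the uniqueness statement of Lemma~\ref{lem:skew-elimination}.
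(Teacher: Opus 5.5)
Your proposal is correct and follows essentially the paper's argument. You use the $U_G^-$-invariance of the $h_r$, the stabilizer and uniqueness clause of Lemma~\ref{lem:skew-elimination} to get $k(ug)\in U^-_{\rm long}k(g)$ and $k(gv)=k(g)v$, the section $(\mathbf h,k)\mapsto\gamma(\mathbf h)k$, and the left torus convention for the restriction formula, exactly as the paper does. Your extra product decomposition $G^\circ\cong U^-_{\rm rest}\times(\C^\times)^{n-1}\times K$ is valid, with the unfinished map reading $g\mapsto(\ell_0(gJg^{\mathsf t})^{-1},\mathbf h(g),k(g))$; this uses that inverting a lower unitriangular matrix negates its first subdiagonal. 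The paper does not need this decomposition and simply checks that the two maps induce mutually inverse homomorphisms of invariant rings.
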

\begin{proof}
Left multiplication by $U_G^-$ preserves every leading Pfaffian.
The two elimination matrices for $g$ and $ug$ differ, after composing
with $u$, by the stabilizer in Lemma~\ref{lem:skew-elimination}.
Consequently $k(ug)$ differs from $k(g)$ by left
$\gamma^{-1}U^-_{\rm pair}\gamma=U^-_{\rm long}$, while
$k(gv)=k(g)v$ for $v\in U_K$. The maps
$g\mapsto(\mathbf h,k(g))$ and $(\mathbf h,k)\mapsto\gamma(\mathbf h)k$
therefore induce homomorphisms of the indicated invariant rings.
For $g=\gamma(\mathbf h)k$, the skew matrix is already $J(\mathbf h)$,
so the first composite is the identity on $(\mathbf h,k)$.
The other composite sends $g$ to $\ell_0(gJg^{\mathsf t})g$.
Since $\ell_0(gJg^{\mathsf t})\in U_G^-$, it acts trivially on the
left invariants. All the formulas are regular on the indicated opens,
so the induced homomorphisms of invariant rings are inverse.
The final assertion is the left torus convention
\eqref{eq:torus-convention}.
\end{proof}

Use the symplectic basis $(e_1,\ldots,e_n,e_{\bar1},\ldots,e_{\bar n})$
and write $k=\left(\begin{smallmatrix}X&Y\\Z&W\end{smallmatrix}\right)$.
On $D(\det X)$ there is the unique factorization
\begin{equation}
k=\begin{pmatrix}I&0\\C&I\end{pmatrix}
   \begin{pmatrix}X&0\\0&X^{-\mathsf t}\end{pmatrix}
   \begin{pmatrix}I&S\\0&I\end{pmatrix},
 \qquad C=C^{\mathsf t},\quad S=S^{\mathsf t}.
 \label{eq:lagrangian-factorization}
\end{equation}
Here $C=ZX^{-1}$, $S=X^{-1}Y$, and
$W=CXS+X^{-\mathsf t}$.

\begin{proposition}[Pfaffian--Lagrangian decomposition]
\label{thm:lagrangian-ring}
Put $H_{\rm Pf}=c_nH_{\rm sph}$. Then
\begin{equation} \B_n[H_{\rm Pf}^{-1}] \cong\C[h_1^{\pm1},\ldots,h_{n-1}^{\pm1}] \otimes\C[\GL_n]^{U_n} \otimes\C[C_{ij}:1\le i<j\le n]. \label{eq:lagrangian-ring} \end{equation}
The normalized section has $S=0$, $C_{ii}=0$, and the only remaining
action is $X\mapsto XA$, $A\in U_n$. On it, we have
\[ \mathsf P(\Pi k)=X,\qquad c_n^\sharp=\det X, \qquad c_n(\gamma(\mathbf h)k)=H_{\rm sph}\det X. \]
\end{proposition}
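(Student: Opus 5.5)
We start from the spherical quotient of Proposition~\ref{prop:spherical-quotient}, which already gives
\[
\B_n[H_{\rm sph}^{-1}]\cong\C[h^{\pm1}]\otimes\C[K]^{U^-_{\rm long}\times U_K}.
\]
It therefore suffices to identify the further localization of $\C[K]^{U^-_{\rm long}\times U_K}$ at $c_n^\sharp$ with $\C[\GL_n]^{U_n}\otimes\C[C_{ij}:i<j]$. The proof proceeds in four steps.

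\textbf{Step 1: compute $c_n$ on the section.} Evaluate $\mathsf P$ at $\Pi k$. Each row $\rho_r$ of \eqref{eq:global-pfaffian-row} is a combination of the first $2r-1$ rows of $g$, with Pfaffian coefficients of $A(g)$. For $g=\Pi k$ we have $A=\Pi J\Pi^{\mathsf t}=J_{\rm ad}$. The adjacent-pair computation in the proof of Proposition~\ref{prop:unit-frame} shows that only the coefficient of row $2r-1$ survives, and it equals $1$. Row $2r-1$ of $\Pi k$ is row $r$ of $k$. Restricting to the first $n$ columns gives $\mathsf P(\Pi k)=X$, and hence $c_n^\sharp=\det X$.

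Combining this with $f(\gamma(\mathbf h)k)=\lambda(D(\mathbf h))f^\sharp(k)$ for the weight $\lambda=\omega_1+\omega_3+\cdots+\omega_{2n-1}$, the factor $\lambda(D(\mathbf h))$ should evaluate to $\prod_r$ of partial products of the $d_r$, which telescopes to $h_1\cdots h_{n-1}=H_{\rm sph}$. This telescoping is a short bookkeeping check using $\omega_{2r-1}(D)=d_1\cdots d_{r}\cdot(\text{partial})$. It gives $c_n(\gamma k)=H_{\rm sph}\det X$. In particular, inverting $c_n$ amounts to working on $D(\det X)\subset K$, where the factorization \eqref{eq:lagrangian-factorization} is available.

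\textbf{Step 2: describe the two actions in the $(C,X,S)$ coordinates.} The right $U_K$-action uses the Borel of $K$ compatible with the upper triangular one. In the basis $(e_1,\ldots,e_n,e_{\bar 1},\ldots,e_{\bar n})$ this is generated by
\[
\begin{pmatrix}A&0\\0&A^{-\mathsf t}\end{pmatrix},\ A\in U_n,
\qquad
\begin{pmatrix}I&S'\\0&I\end{pmatrix},\ S'\ \text{symmetric}.
\]
It acts by $X\mapsto XA$ and translates $S$. The symmetric translation removes $S$ freely, so we normalize to $S=0$.

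The left group $U^-_{\rm long}=\Pi^{-1}U^-_{\rm pair}\Pi$ consists of the shears $I+tE_{\bar r,r}$. Left multiplication by these keeps the factorization in the same form. It fixes $X$ and $S$ and acts by $C\mapsto C+tE_{rr}$. This freely kills the diagonal of $C$.

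One must also check that the residual right $U_n$-action preserves $C$. This holds because $C$ sits on the left of \eqref{eq:lagrangian-factorization}.

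\textbf{Step 3: conclude the quotient.} On $D(\det X)$ we have
\[
K\cong\Sym_n\times\GL_n\times\Sym_n,
\]
and the two unipotent groups act freely by translations on the diagonal of $C$ and on $S$, and by $U_n$ on $X$. Therefore
\[
\C[K][(\det X)^{-1}]^{U^-_{\rm long}\times U_K}\cong\C[\GL_n]^{U_n}\otimes\C[C_{ij}:i<j].
\]
The slice $S=0$, $C_{ii}=0$ is a regular section, because the translation parameters are regular functions of $k$. This yields \eqref{eq:lagrangian-ring}, together with the description of the normalized section.

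\textbf{Main obstacle.} The delicate point is Step 2: identifying precisely which unipotent subgroups $U^-_{\rm long}$ and $U_K$ become after conjugation by $\Pi$ and in the ordered symplectic basis. In particular, one has to confirm that the left shears land exactly on the diagonal entries of $C$ rather than mixing with $X$. I would verify this first on $\Sp_4$ and then argue blockwise.
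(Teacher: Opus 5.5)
Your proposal is correct and follows the paper's proof essentially step for step. It derives $\mathsf P(\Pi k)=X$ from $A(\Pi k)=J_{\rm ad}$ and obtains the torus factor $\prod_r\omega_{2r-1}(D(\mathbf h))=\prod_r h_r=H_{\rm sph}$. Left $U^-_{\rm long}$ translates the diagonal of $C$, and right $U_K$ acts by $(X,C,S)\mapsto(XA,C,A^{-1}SA^{-\mathsf t}+T)$, with invariants taken first under the normal $S$-translation subgroup and then under the diagonal $C$-translations. The one point to state explicitly is that $A\in U_n$ also acts on $S$ by congruence, not just by translation. Normalizing to $S=0$ before passing to the residual $U_n$-action therefore relies on the translation subgroup being normal in $U_K$, as the paper says.
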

\begin{proof}
Since $(\Pi k)J(\Pi k)^{\mathsf t}=J_{\rm ad}$, the singleton
coefficient in row $2r-1$ is exactly row $r$ of $X$. This proves
the first two identities. Left multiplication by $D(\mathbf h)$
multiplies row $r$ of $\mathsf P$ by $h_r$. Their product is
$H_{\rm sph}$, proving the last identity.

In \eqref{eq:lagrangian-factorization}, left $U^-_{\rm long}$ adds an
arbitrary diagonal matrix to $C$. Right $U_K$ consists of the matrices
$\left(\begin{smallmatrix}A&AT\\0&A^{-\mathsf t}\end{smallmatrix}\right)$,
where $A\in U_n$, $T=T^{\mathsf t}$, and acts by
\[ (X,C,S)\longmapsto(XA,C,A^{-1}SA^{-\mathsf t}+T). \]
First take invariants under the normal additive subgroup translating
$S$, and then under the independent diagonal translations of $C$.
The remaining invariant ring is
$\C[\GL_n]^{U_n}\otimes\C[C_{ij}:i<j]$.
Proposition~\ref{prop:spherical-quotient} and the exact formula for
$c_n$ give \eqref{eq:lagrangian-ring}.
\end{proof}

\subsection{An elementary flag-minor chart}
For $1\le\ell\le n$, $1\le b\le n-\ell+1$, write
\[ D_{b,\ell}=\det X_{[b,b+\ell-1],[\ell]},\qquad \mathscr D=\{D_{b,\ell}\},\qquad \mathscr D^*=\mathscr D\setminus\{D_{1,1},D_{n,1}\}. \]
These are the \emph{flag minors with consecutive rows}: they use the first $\ell$ columns and rows $b,\ldots,b+\ell-1$.

We localize at these minors except for the two extremal entries $D_{1,1}$ and $D_{n,1}$.

\begin{proposition}[Localization at flag minors]
\label{prop:solid-chart}
The minors $D_{b,\ell}$ are algebraically independent, and
\begin{equation} \C[\GL_n]^{U_n}[(D)^{-1}:D\in\mathscr D^*] =\C[D_{1,1},D_{n,1},D^{\pm1}:D\in\mathscr D^*]. \label{eq:solid-chart} \end{equation}
\end{proposition}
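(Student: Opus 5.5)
The plan is to identify $\C[\GL_n]^{U_n}$ with the ring generated by flag minors and to use three-term Plücker relations to rewrite every flag minor as a Laurent expression in the consecutive-row minors. In that rewriting, only interior minors may appear in denominators.

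Here $U_n$ acts by $X\mapsto XA$ with $A$ upper unitriangular. By the first fundamental theorem for $\GL_n/U_n$, the invariant ring is generated by the flag minors $\Delta_{R,[\ell]}=\det X_{R,[\ell]}$, for $R\subseteq[n]$ with $|R|=\ell$, together with $\det X^{-1}$. Since $\det X=D_{1,n}\in\mathscr D^*$, the inverse of the determinant is already available on the right side of \eqref{eq:solid-chart}. The right side is contained in the left side, because the $D$'s are invariant. For the reverse inclusion it therefore suffices to show that every $\Delta_{R,[\ell]}$ lies in the ring on the right of \eqref{eq:solid-chart}.

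I would prove this by induction on the pair (the number of columns $\ell$, the gap $\max R-\min R-\ell+1$). Gap zero means $R$ is consecutive, so the minor is some $D_{b,\ell}$. For $\ell=1$ every minor is an entry $x_{b1}=D_{b,1}$, so there is nothing to prove. If $R$ has a gap, write $R=S\cup\{a,c\}$ with $a=\min R$, $c=\max R$, and choose a missing row $k$ with $a<k<c$. Then apply the three-term Plücker relation for flag minors, which has the same form as Lemma~\ref{lem:condensation}:
\[ \Delta_{S\cup\{a,c\},[\ell]}\,\Delta_{S\cup\{k\},[\ell-1]} =\Delta_{S\cup\{a,k\},[\ell]}\,\Delta_{S\cup\{c\},[\ell-1]} +\Delta_{S\cup\{k,c\},[\ell]}\,\Delta_{S\cup\{a\},[\ell-1]}. \]
This solves for the wide minor. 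The two numerator minors of size $\ell$ have strictly smaller spread, and the three minors of size $\ell-1$ are handled by induction.

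The one real issue is the divisor $\Delta_{S\cup\{k\},[\ell-1]}$. It is not consecutive in general, so it cannot simply be inverted. I would fix this by choosing $S$ and $k$ to make it consecutive: take $S\cup\{k\}$ to be a consecutive block $[k-\ell+2,k]$ lying strictly between $a$ and $c$. When that is impossible, I would reorganize the recursion as a Dodgson-type condensation. That is, I would first recover all contiguous-window minors in a fixed number of columns from the $D$'s, using condensation with consecutive divisors of size $\ell-1$ and $\ell-2$. Then I would recover the non-contiguous minors from these by the relation above.

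Because of the strict inequalities $a<k<c$, the divisor's row block always lies strictly inside $[1,n]$. In the one-column case the divisor is therefore $D_{k,1}$ with $1<k<n$, so $D_{1,1}$ and $D_{n,1}$ never occur in a denominator. This is exactly why those two entries remain polynomial in \eqref{eq:solid-chart}. For divisors with $\ell-1\ge2$ columns, every $D_{b,\ell-1}$ belongs to $\mathscr D^*$ and may be inverted.

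Algebraic independence then follows by counting. The number of minors is $|\mathscr D|=\sum_{\ell=1}^n(n-\ell+1)=n(n+1)/2$. This equals $\dim\GL_n-\dim U_n$, the transcendence degree of $\Frac\C[\GL_n]^{U_n}$. The preceding step shows that the $D_{b,\ell}$ generate this fraction field, so they are a transcendence basis and in particular algebraically independent. The right side of \eqref{eq:solid-chart} is therefore a genuine partial Laurent polynomial ring, which gives the stated equality.

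I expect the main obstacle to be the bookkeeping in the middle step. One must check that some admissible choice of interior pivot exists for every gapped $R$. One must also check that the recursion terminates without ever needing to invert a non-consecutive minor or one of the two extremal entries. If the direct choice fails for small $\ell$ near the boundary, I would fall back on the explicit condensation of Lemma~\ref{lem:condensation} applied to the consecutive rows of $X$.
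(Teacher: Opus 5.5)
Your outline has a genuine gap at exactly the step you flag, and the fallback does not close it.

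The recursion needs, for each non-consecutive $R$, a three-term relation in which $\Delta_{R,[\ell]}$ is the product term on the left and the divisor $\Delta_{S\cup\{k\},[\ell-1]}$ is a consecutive minor. That requirement is rigid. Suppose $S\cup\{k\}=[t,t+\ell-2]$ with $a,c\notin S\cup\{k\}$ and $a<k<c$. Then $a<t$ and $c>t+\ell-2$, so necessarily $a=\min R$ and $c=\max R$. Moreover $R\setminus\{a,c\}$ must be a block of $\ell-1$ consecutive integers with one hole. This already fails for $n=6$, $\ell=4$, $R=\{1,2,5,6\}$: here $S=\{2,5\}$, and no $k$ makes $S\cup\{k\}$ consecutive.

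Your fallback is not a substitute. The flag minors with consecutive rows in the first $\ell$ columns are precisely the $D_{b,\ell}$, so there is nothing further to recover. Minors with consecutive rows in other column sets are not $U_n$-invariant, so they cannot enter the ring. And your last step reuses the relation that has just failed.

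Consequently the central claim is not proved in general once $\ell\ge4$. That claim is that every $\Delta_{R,[\ell]}$ lies in $\C[D_{1,1},D_{n,1},D^{\pm1}:D\in\mathscr D^*]$. Without it you also lose the fraction-field generation on which your dimension count for independence rests.

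A bad $R$ can sometimes be handled by letting $\Delta_{R,[\ell]}$ appear as a summand instead. With rows listed and initial columns understood,
$\Delta_{1256}\Delta_{456}=\Delta_{1456}\Delta_{256}-\Delta_{2456}\Delta_{156}$
has the admissible divisor $D_{4,3}$. But $\Delta_{1456}$ has the same spread as $\Delta_{1256}$. You would therefore still need a new well-founded order, together with a proof that admissible relations always exist.

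The paper avoids this combinatorics entirely.
\begin{itemize}
\item Right column operations, dividing only by $D_{2,j-1}$, bring $X$ to a unique normal form $X^0$ with $X^0_{ij}=0$ for $2\le i\le j$.
\item The entries of $X^0$ are recovered triangularly. First $X^0_{i1}=D_{i,1}$ and $X^0_{1j}=(-1)^{j+1}D_{1,j}/D_{2,j-1}$. The remaining entries come from expanding $D_{b,j}$ along column $j$, with divisors $D_{b,j-1}$ for $b\ge2$.
\item All these divisors lie in $\mathscr D^*$, and $D_{1,1},D_{n,1}$ never occur as divisors.
\item Every invariant $f$ then equals $f(X^0)$, which lies in the partial Laurent ring.
\item Running the same construction over independent symbols gives algebraic independence directly.
\end{itemize}
This route needs neither the first fundamental theorem nor a transcendence-degree count.
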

\begin{proof}
For $n=2$, write $a=X_{11}$, $b=X_{21}$, and $d=\det X$.
On $D(a)$ and $D(b)$, elementary column operations give the invariant
rings $\C[a^{\pm1},b,d^{\pm1}]$ and
$\C[a,b^{\pm1},d^{\pm1}]$, respectively. These opens cover
$\GL_2$, and their intersection in the common field is
$\C[a,b,d^{\pm1}]$. This proves the assertion for $n=2$.
The affine quotient allows $a=b=0$; that point does not have a
matrix representative in $\GL_2$.

Assume $n\ge3$. Right upper-unitriangular column operations give a unique representative
$X^0$ satisfying
$X^0_{ij}=0$ for $2\le i\le j$.
To normalize column $j$, solve against the preceding $j-1$ columns
in rows $2,\ldots,j$; their determinant is $D_{2,j-1}$, an allowed
unit. The first column is unchanged.

Conversely, recover $X^0$ from the prescribed values of $D_{b,\ell}$. Its first
column is $X^0_{i1}=D_{i,1}$. For $j\ge2$,
$X^0_{1j}=(-1)^{j+1}\frac{D_{1,j}}{D_{2,j-1}}$.
For $b=2,\ldots,n-j+1$, expansion of $D_{b,j}$ in column $j$
recovers $X^0_{b+j-1,j}$ with coefficient $D_{b,j-1}$.
All earlier entries of that column have already been recovered, so
this is a triangular procedure. Every divided minor belongs to
$\mathscr D^*$: for $j=2$ the denominators are the internal entries
$D_{2,1},\ldots,D_{n-1,1}$, and for $j>2$ they have size at least two.
The determinant of the constructed matrix is the prescribed unit
$D_{1,n}$.

The construction works over the Laurent polynomial ring in independent
symbols on the right of \eqref{eq:solid-chart}, and has the prescribed values of $D_{b,\ell}$. This proves their algebraic independence. For any
right-invariant regular function on $\GL_n$, evaluation at $X^0$
belongs to that Laurent polynomial ring and agrees with the original
function on the dense column-normalization open. This proves one
inclusion; the other is immediate. 
\end{proof}

\subsection{The symmetric coordinates and reflection}
On the normalized section of Proposition~\ref{thm:lagrangian-ring}, put
$x_i=\op{row}_iX$ and $z_i=\op{row}_i(CX)$.
For $2q\le\ell\le n$ define
\begin{align}
 Y^+_{q,\ell}
 &=(-1)^q\det(z_q,\ldots,z_1,x_1,\ldots,x_{\ell-q})_{[\ell]},\notag\\
 Y^-_{q,\ell}
 &=(-1)^{q\ell}\det(z_n,\ldots,z_{n-q+1},
                         x_{n-\ell+q+1},\ldots,x_n)_{[\ell]}.
 \label{eq:exact-symmetric-minors}
\end{align}
For $q=0$ these mean $D_{1,\ell}$ and $D_{n-\ell+1,\ell}$.

\begin{proposition}\label{prop:symmetric-dictionary}
For $q\ge1$ and $2q\le\ell\le n$, we have
\begin{equation} Y^+_{q,\ell}=\mathfrak f_{1-2q,\ell}^{\sharp},\qquad Y^-_{q,\ell}=\mathfrak f_{2n-2\ell+1+2q,\ell}^{\sharp}. \label{eq:exact-symmetric-dictionary} \end{equation}
At $\ell=2q$ these are $p_{2q}^\sharp$ and $p_{N-2q}^\sharp$;
at $\ell=2q+1$ they are $p_{2q+1}^\sharp$ and
$p_{N-2q-1}^\sharp$. At $\ell=n$ the two functions coincide.
\end{proposition}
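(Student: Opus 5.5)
The plan is to compute both sides as explicit functions on the normalized section $\Pi k$, with $k$ given by \eqref{eq:lagrangian-factorization} with $S=0$, and compare them through Lemma~\ref{lem:global-row-dictionary} and its reflected form.

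\textbf{Step 1: rows of $\Pi k$.} For $g=\Pi k$, row $2r-1$ of $g$ is row $r$ of $k$, and row $2r$ is row $\bar r$ of $k$. With $S=0$ we have $Y=0$ and $W=X^{-\mathsf t}$, and $Z=CX$. Restricted to the first $n$ columns, the odd rows are therefore $x_r$ and the even rows are $z_r$; the last $n$ columns do not enter minors with $\ell\le n$ columns. Since $gJg^{\mathsf t}=J_{\rm ad}$, every spherical function equals $1$ here: $s_{2r}^\sharp=1$. By the proof of Proposition~\ref{thm:lagrangian-ring}, $\rho_r=x_r$ in the first $n$ columns. Also $r_{1-q}=-g_{2q}=-z_q$ for $q\ge1$.

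\textbf{Step 2: the $+$ identity.} Lemma~\ref{lem:global-row-dictionary} applies with $b=1-q$ and $d=\ell$. The hypothesis $b\ge1-\lfloor\ell/2\rfloor$ is exactly $2q\le\ell$, and $b+d\le n+1$ holds because $\ell-q\le n$. It gives $\mathfrak f_{1-2q,\ell}=X_{1-q,\ell}=\det(r_{1-q},\ldots,r_0,r_1,\ldots,r_{\ell-q})_{[\ell]}$. Substituting Step 1 turns this into $\det(-z_q,\ldots,-z_1,x_1,\ldots,x_{\ell-q})_{[\ell]}$, which equals $(-1)^qY^+_{q,\ell}$ times\ldots{} more precisely, pulling out the $q$ signs gives exactly $Y^+_{q,\ell}$ with its prefactor $(-1)^q$. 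Both sides are evaluated on the section, which lies in $G^\circ$, so the identity holds there.

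The boundary cases follow from the definition of $\mathfrak f$. At $\ell=2q$ the left side is $p_{2q}/s_{2q}$, which restricts to $p_{2q}^\sharp$. At $\ell=2q+1$, the index $(1-2q,2q+1)$ has $m=1$ and $r=2q+1$, so by \eqref{eq:flag-specialization} it is $p_{2q+1}$.

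\textbf{Step 3: the $-$ identity by reflection.} The index map $\rho_\ell(1-2q)=2n-2\ell+1+2q$ matches the claimed index, so I would apply $\mathfrak r$ from Proposition~\ref{thm:sign-free-reflection}. This needs $\delta$ on the section: compute $\delta(\Pi k)=J(\Pi k)^{-\mathsf t}J^{-1}$. Because $\Pi k$ is symplectic for $J_{\rm ad}$, this equals a permuted, sign-twisted copy of $\Pi k$ with rows reversed, $x_i\mapsto x_{n+1-i}$ and $z_i\mapsto \pm z_{n+1-i}$, possibly up to left multiplication by an element of $U_G^-$, which does not affect invariants. Then $\mathfrak f^\sharp_{\rho_\ell(1-2q),\ell}=\varepsilon\cdot(\delta^*\mathfrak f_{1-2q,\ell})^\sharp$ is $Y^+_{q,\ell}$ evaluated on reversed rows, and reordering those rows yields $Y^-_{q,\ell}$.

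The main obstacle is the sign bookkeeping in this step. The row reversal contributes a permutation sign, the $J$-signs contribute more, and $\varepsilon(\lambda_{a,\ell};\varpi_\ell)$ must reconcile them with $(-1)^{q\ell}$. If this becomes messy, the fallback is to avoid reflection: redo Step 2 directly from a mirrored version of \eqref{eq:global-row-wedge}, or fix the scalar by evaluating both sides at the integral frame of Proposition~\ref{prop:unit-frame}. That is legitimate because multiplicity one (Proposition~\ref{thm:folded-multiplicity}) makes the two sides proportional once $Y^-$ is recognized as the restriction of a branching coefficient of the right degree.

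\textbf{Step 4: $\ell=n$.} Here the two indices satisfy $\rho_n(1-2q)=1+2q$, and $(1-2q,n)$ and $(1+2q,n)$ share the same $r$, $m$ and $\ell$. So $F_{1-2q,n}=F_{1+2q,n}$ by the neighbour identification, and the two restrictions coincide.
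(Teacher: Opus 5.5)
Your route is the paper's: Lemma~\ref{lem:global-row-dictionary} on the section for $Y^+$, the involution $\mathfrak r$ for $Y^-$, and equality of $(r,m,\ell)$ at $\ell=n$. Steps 1, 2 and 4 are fine. The gap is Step 3. All the content of the second identity, namely the exact prefactor $(-1)^{q\ell}$, lies there, and you leave it undone on the basis of an incorrect picture of $\delta(\Pi k)$.

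For $gJg^{\mathsf t}=J_{\rm ad}$ one has exactly $\delta(g)=-JJ_{\rm ad}g$. This is a signed row permutation with no $U_G^-$ factor. However, $\delta(g)$ is not on the section, because $\delta(g)J\delta(g)^{\mathsf t}=-JJ_{\rm ad}J$, whose adjacent $2\times2$ blocks are $\pm$ those of $J_{\rm ad}$. So the spherical pivots $s_{2r}(\delta g)$ are $\pm1$ rather than $1$. No element of $U_G^-$ can repair this, since left $U_G^-$ preserves leading Pfaffians. The actual correction is a diagonal sign matrix, which changes invariants by a character value.

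Consequently $\mathfrak f_{1-2q,\ell}(\delta g)$ is not $Y^+$ evaluated on the raw reversed rows. You must use the extended rows $r_s(\delta g)$ that define $X_{b,d}$. The paper obtains them as follows:
\begin{itemize}
\item from the one-column case of Lemma~\ref{lem:exact-hodge}, $\rho_i(\delta g)=a_ix_{n+1-i}$ with $a_i=(-1)^{\alpha_{2i-1}}$;
\item from consecutive pivot ratios, $r_{1-i}(\delta g)=-a_i^{-1}z_{n+1-i}$.
\end{itemize}
Then $X_{1-q,\ell}(\delta g)$ equals the bottom determinant of \eqref{eq:exact-symmetric-minors} times $(-1)^{q+\binom q2+\binom{\ell-q}2}\prod_{i=q+1}^{\ell-q}a_i$; the two binomial signs come from reversing the $z$-block and the $x$-block. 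On the other side, $\lambda_{1-2q,\ell}=\sum_{i=q+1}^{\ell-q}\omega_{2i-1}$ gives $\varepsilon=(-1)^{\binom\ell2}\prod_{i=q+1}^{\ell-q}a_i$. Since $\binom\ell2=\binom q2+\binom{\ell-q}2+q(\ell-q)$, the product of the two signs is $(-1)^{q\ell}$, which is exactly the normalization built into $Y^-_{q,\ell}$.

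Your fallback does not circumvent this computation. Multiplicity one fixes a scalar only once you know that $Y^-_{q,\ell}$ is the restriction of an element of the correct bidegree, and nothing independent supplies that. The rows $z_n,\ldots,z_{n-q+1},x_{n-\ell+q+1},\ldots,x_n$ are bottom rows of $\Pi k$, not extended rows of a left $U_G^-$-invariant frame, so there is no mirrored version of \eqref{eq:global-row-wedge}. That recognition is precisely what the reflection provides.

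You should also state the $Y^-$ boundary cases. At $\ell=2q$ the index is $2n+1-\ell$, which gives $p_{N-2q}/s_{N-2q}$ by \eqref{eq:neighbor-resolver}, with $s_{N-2q}^\sharp=1$. At $\ell=2q+1$ one has $m=1$ and $r=N-2q-1$, so \eqref{eq:flag-specialization} with $\mu(N-2q-1)=2q+1$ gives $p_{N-2q-1}$.
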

\begin{proof}
On $\Pi k$, the first $n$ entries of the extended rows are
$-z_n,\ldots,-z_1,x_1,\ldots,x_n$. Lemma~\ref{lem:global-row-dictionary} gives the formula for $Y^+$.
For completeness the reflected sign can be determined exactly.
If $gJg^{\mathsf t}=J_{\rm ad}$, then
$\delta(g)=-JJ_{\rm ad}g$. Put
$a_i=(-1)^{\alpha_{2i-1}}$.
The extended odd rows of $\delta(g)$ in the first $n$ columns are
$a_i x_{n+1-i}$, by \eqref{eq:exact-hodge}; the normalized even
rows are $-a_i^{-1}z_{n+1-i}$, as follows by taking consecutive
Pfaffian pivot ratios. Thus the reflected row determinant is the
bottom determinant in \eqref{eq:exact-symmetric-minors}, multiplied by
$(-1)^{q+\binom q2+\binom{\ell-q}{2}} \prod_{i=q+1}^{\ell-q}a_i$.
For the original left index, the character in
\eqref{eq:reflection-character} is
$(-1)^{\binom\ell2}\prod_{i=q+1}^{\ell-q}a_i$.
Their product is $(-1)^{q\ell}$, since $q^2-q$ is even. The same
calculation works at $\ell=2q$, with an empty product of $a_i$ and
the rational boundary expression. This proves the second formula, with
no unspecified scalar. The principal specializations follow from
\eqref{eq:flag-specialization}. At $\ell=n$ the two indices have the
same $r,m,\ell$ and hence the same normalized function.
\end{proof}

\begin{lemma}\label{lem:symmetric-recovery}
For $q\ge1$, set $j=\ell-q+1$, $r=n-q+1$, and $i=n-\ell+q$.
Then
\[
\begin{aligned}
Y^+_{q,\ell}&=(-1)^\ell Y^+_{q-1,\ell}C_{qj}+K^+_{q,\ell},\\
 Y^-_{q,\ell}&=(-1)^\ell Y^-_{q-1,\ell}C_{ir}+K^-_{q,\ell}.
\end{aligned}
\]
The first remainder involves rows $1,\ldots,q-1$ of $C$ and the
entries $C_{qk}$ with $k>j$. The second involves columns
$n,\ldots,r+1$ and entries $C_{kr}$ with $k<i$.
Consequently all $C_{ij}$, $i<j$, are recovered by processing, for
$q=1,\ldots,\lfloor n/2\rfloor$, first
$Y^+_{q,n},Y^+_{q,n-1},\ldots,Y^+_{q,2q}$, and then
$Y^-_{q,n-1},\ldots,Y^-_{q,2q}$.
\end{lemma}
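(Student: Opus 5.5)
The plan is to expand both determinants in \eqref{eq:exact-symmetric-minors} by multilinearity in their first $z$-row. Then I check that every unknown entry of $C$ appears exactly once, with a coefficient that is a unit in the localized ring. I work on the normalized section of Proposition~\ref{thm:lagrangian-ring}, where $C$ is symmetric with $C_{kk}=0$. So $z_p=\op{row}_p(CX)=\sum_k C_{pk}x_k$ with $C_{pk}=C_{kp}$.

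\emph{The $Y^+$ identity.} Substitute $z_q=\sum_kC_{qk}x_k$ into the first row of
\[
Y^+_{q,\ell}=(-1)^q\det(z_q,z_{q-1},\ldots,z_1,x_1,\ldots,x_{\ell-q})_{[\ell]}.
\]
Every term with $k\le\ell-q$ repeats a row $x_k$ and vanishes. Since $q\le\ell-q$, this includes all $k\le q$, and in particular the diagonal term. The term $k=j=\ell-q+1$ gives $C_{qj}\det(x_j,z_{q-1},\ldots,z_1,x_1,\ldots,x_{\ell-q})$. Moving $x_j$ past the other $\ell-1$ rows to the last position costs $(-1)^{\ell-1}$. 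Comparing with $Y^+_{q-1,\ell}=(-1)^{q-1}\det(z_{q-1},\ldots,z_1,x_1,\ldots,x_{j})$, the total sign is $(-1)^q(-1)^{\ell-1}(-1)^{q-1}=(-1)^\ell$. The remaining terms $k>j$ form $K^+_{q,\ell}$. Each is $C_{qk}$ times a determinant whose other rows are $z_1,\ldots,z_{q-1}$ and $x$'s, so it involves only rows $1,\ldots,q-1$ of $C$ together with the entries $C_{qk}$, $k>j$, as claimed.

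\emph{The $Y^-$ identity.} This is the mirror computation. Expand the last $z$-row $z_{n-q+1}=z_r$ in
\[
Y^-_{q,\ell}=(-1)^{q\ell}\det(z_n,\ldots,z_{r+1},z_r,x_{i+1},\ldots,x_n)_{[\ell]},
\]
using $C_{rk}=C_{kr}$. The terms with $k>i$ repeat an $x$-row and vanish; the diagonal term $k=r$ is among them. The term $k=i$ reproduces $Y^-_{q-1,\ell}$: the row $x_i$ sits in position $q$ and is already in the correct increasing order. Reconciling $(-1)^{q\ell}$ with $(-1)^{(q-1)\ell}$ leaves $(-1)^\ell$. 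The terms $k<i$ form $K^-_{q,\ell}$; they involve columns $n,\ldots,r+1$ of $C$ through $z_n,\ldots,z_{r+1}$, together with the entries $C_{kr}$, $k<i$. Only sign bookkeeping is needed here.

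\emph{Recovery order.} By Proposition~\ref{prop:symmetric-dictionary}, each $Y^\pm_{q-1,\ell}$ is a restricted seed function or a principal or boundary ratio. It is therefore invertible in the localization used for the chart, so each identity can be solved linearly for the new entry. Fix $q$ and run $\ell=n,n-1,\ldots,2q$ in the $Y^+$ pass. Then $j=\ell-q+1$ decreases from $n-q+1$ to $q+1$, so the entries $C_{qk}$, $k>j$, appearing in $K^+$ were recovered one step earlier, and rows $<q$ at earlier values of $q$. In the $Y^-$ pass, $\ell=n-1,\ldots,2q$ gives $i=n-\ell+q=q+1,\ldots,n-q$ increasing. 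The entries $C_{kr}$ with $k<i$ were recovered either earlier in this pass or, when $k\le q$, in the $Y^+$ pass of index $k$, since $r\le n-k+1$. Columns $>r$ were recovered at earlier $q$.

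\emph{Completeness and the main obstacle.} For $i<j$, put $q=\min(i,n+1-j)$. If $q=i$, then $C_{ij}$ is the row entry obtained in the $Y^+$ pass; otherwise it is the column entry obtained in the $Y^-$ pass. The passes stop at $q=\lfloor n/2\rfloor$, which suffices because $i<j$ forces $\min(i,n+1-j)\le\lfloor n/2\rfloor$. I expect the only delicate point to be the sign reconciliation in the $Y^-$ case. That case carries the normalization $(-1)^{q\ell}$ and the reversed order of the $z$-rows, so I would check it directly against the case $q=1$.
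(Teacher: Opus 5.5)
Your expansion is the paper's argument. You expand $z_q$ (respectively $z_r$) by multilinearity, discard the terms that repeat an $x$-row, and move $x_j$ into place. Your signs $(-1)^q(-1)^{\ell-1}(-1)^{q-1}=(-1)^\ell$ and $(-1)^{q\ell}(-1)^{(q-1)\ell}=(-1)^\ell$ are correct, and so is the stage assignment $\min(i,n+1-j)$.

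\textbf{The gap is in the pivot step.} You say each $Y^\pm_{q-1,\ell}$ is a seed function ``or a principal or boundary ratio'' and is \emph{therefore} invertible in the chart's localization. That inference is false. In \eqref{eq:initial-chart} the principal functions $p_j$ are not inverted, and a boundary ratio restricts to a principal $p^\sharp$ by Proposition~\ref{prop:symmetric-dictionary}. The lemma is used in Proposition~\ref{thm:initial-chart} to recover the $C_{ij}$ with exactly the inversions of \eqref{eq:initial-chart}. So what must be shown, and what the paper's proof records, is that no pivot is principal.

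This follows from the range $\ell\ge2q=2(q-1)+2$, because $Y^\pm_{q',\ell}$ is principal only for $\ell=2q'$ and $\ell=2q'+1$. Hence for $q\ge2$ every pivot is a mutable folded function. For $q=1$, where Proposition~\ref{prop:symmetric-dictionary} does not apply, the pivots are the solid minors $D_{1,\ell}$ and $D_{n-\ell+1,\ell}$ with $\ell\ge2$. These are mutable initial functions or $c_n^\sharp$. All such pivots are units in \eqref{eq:initial-chart}, so the recovery never divides by a $p_j$. Replace your invertibility sentence with this observation.

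A smaller point concerns the $Y^+$ pass. Its remainders also contain $C_{qk}$ with $k>n-q+1$; at $\ell=n$ these are the only ones. They were not obtained ``one step earlier'' in the same pass. They come from the $Y^-$ passes of the earlier stages $n+1-k<q$. Your $Y^-$ paragraph already notes that columns $>r$ are known, and you should invoke the same fact in the $Y^+$ pass.
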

\begin{proof}
Expand $z_q=\sum_k C_{qk}x_k$. The rows $x_1,\ldots,x_{j-1}$
already occur, so terms with $k<j$ vanish. Moving $x_j$ from the
first to the last position gives coefficient
$(-1)^\ell Y^+_{q-1,\ell}$. For the bottom determinant expand
$z_r=\sum_kC_{kr}x_k$. Rows $x_{i+1},\ldots,x_n$ already occur;
the $k=i$ term is in the order for $Y^-_{q-1,\ell}$, and the
normalizing signs have ratio $(-1)^\ell$. These are the stated
recurrences.

At stage $q$, the earlier rows and later columns have already been
recovered. The $Y^+$ recursion obtains
$C_{q,n-q+1},\ldots,C_{q,q+1}$; the $Y^-$ recursion obtains
$C_{q+1,n-q+1},\ldots,C_{n-q,n-q+1}$. Each pair occurs once,
in stage $\min(i,n-j+1)$ for $C_{ij}$.
For $q=1$ the pivots are minors $D_{b,\ell}$ of size at least two, including
$c_n^\sharp$; for $q>1$ they are normalized mutable folded minors.
In particular, this recovery never divides by a principal frozen
function $p_j$.
\end{proof}

\begin{proposition}\label{thm:initial-chart}
Let $M=\prod_{i\in I_{\rm mut}}x_i$, with $M=1$ for $n=2$.
For $n=2$ and $n\ge4$, the initial functions are algebraically
independent, generate $\Frac(\B_n)$, and satisfy
\begin{equation} \B_n[(H_{\rm Pf}M)^{-1}] =\C[p_1,\ldots,p_{N-1},s_2^{\pm1},\ldots,s_{N-2}^{\pm1}, c_n^{\pm1},x_i^{\pm1}:i\in I_{\rm mut}]. \label{eq:initial-chart} \end{equation}
The same conclusions hold for the separate seed for $n=3$.
\end{proposition}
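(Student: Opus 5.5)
The plan is to compute $\B_n[(H_{\rm Pf}M)^{-1}]$ by successive localization, starting from Proposition~\ref{thm:lagrangian-ring}. By that proposition,
\[ \B_n[H_{\rm Pf}^{-1}]\cong\C[h_r^{\pm1}]\otimes\C[\GL_n]^{U_n}\otimes\C[C_{ij}:i<j]. \]
Restricting along $\gamma(\mathbf h)k$ multiplies each homogeneous function by the character $\lambda(D(\mathbf h))$, which is a unit monomial in the $h_r$. The $h_r=s_{2r}$ are among the initial frozen functions, so it suffices to describe the rings of restrictions $f^\sharp$ and keep track of these monomial factors.

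First I would settle the $\GL_n/U_n$ factor using Proposition~\ref{prop:solid-chart}. By \eqref{eq:solid-minor-formula} and $\mathsf P(\Pi k)=X$, the $D_{b,\ell}$ are, up to the $h$-characters, the restrictions of $F_{2b-1,\ell}$. The extremal entries $D_{1,1}$ and $D_{n,1}$ are $p_1^\sharp$ and $p_{N-1}^\sharp$, which stay polynomial. The next point is that every $D\in\mathscr D^*$ is an initial variable: mutable, or $c_n=D_{1,n}$, or a principal function. Here I expect a small issue. The $q=0$ entries of Proposition~\ref{prop:symmetric-dictionary} show that $D_{1,\ell}=p_\ell^\sharp$ and $D_{n-\ell+1,\ell}=p_{N-\ell}^\sharp$ are principal, so they are not inverted on the left side. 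I would handle this by localizing at $P$ as well, showing both sides agree after inverting $P$, and then restoring the $p_j$ at the end.

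Next come the symmetric coordinates. By Lemma~\ref{lem:symmetric-recovery}, the $C_{ij}$ are recovered triangularly from the $Y^\pm_{q,\ell}$. The pivots are $Y^\pm_{q-1,\ell}$, which are mutable folded minors or $c_n^\sharp$, and never principal. By Proposition~\ref{prop:symmetric-dictionary}, each $Y^\pm_{q,\ell}$ is an initial variable (mutable, or $p_{2q}/s_{2q}$-type boundary). Counting gives $\#\mathscr D+\#\{C_{ij}\}+(n-1)=n(n+1)/2+n(n-1)/2+n-1=n^2+n-1$. This equals the number of initial variables, and the recovery map is a bijection of coordinates over Laurent rings. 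It follows that the initial functions are algebraically independent, since the transcendence degree of $\Frac\B_n$ is $n^2+n-1$ by \eqref{eq:polynomial-gauss-chart} and by the chart itself. It also follows that they generate the fraction field.

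The main obstacle is proving \eqref{eq:initial-chart} with the $p_j$ polynomial rather than inverted. The inclusion $\supseteq$ is clear. For $\subseteq$, the recovery shows that
\[ \B_n[(H_{\rm Pf}MP)^{-1}]=\C[p^{\pm1},s^{\pm1},c_n^{\pm1},x^{\pm1}]. \]
So an element $f$ of the left side is a Laurent polynomial $g$ in the initial variables, and I must show that $g$ has no negative powers of any $p_j$. I would argue with the UFD property of Proposition~\ref{prop:branching-ufd}, using Proposition~\ref{prop:initial-primes}. Write $f=g_0/p^{a}$ with $g_0$ polynomial in the $p$'s over the localized ring. Each $p_j$ is prime in $\B_n[(H_{\rm Pf}M)^{-1}]$ because it is nonassociate to the inverted primes. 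The key step is to show that $p_j$ is also prime in the right-hand ring, so that the $p_j$-adic valuations on the two rings coincide. The recovery formulas make this plausible. In Proposition~\ref{prop:solid-chart}, $D_{1,1}$ and $D_{n,1}$ appear only as entries of $X^0$, never as divisors, and the other principal $D$'s (the $p_\ell$) appear as pivots only for $\ell\ge2$. For this reason I would redo Proposition~\ref{prop:solid-chart} with only mutable and $c_n$ minors inverted. I expect this to reduce to a variant in which $X^0$ is normalized using pivots $D_{2,j-1}$, which are mutable for $2\le j-1$ and $2+j-1\le n$. This verification is the delicate point.

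Finally, the cases $n=2$ and $n=3$ are handled by direct inspection. For $n=2$, $\mathscr D$ consists of $a,b,d$ with $d=c_2$ and $M=1$, so Proposition~\ref{prop:solid-chart} is already the statement. For $n=3$, the minors $x_1,x_2,x_3$ of the first two columns of $\mathsf P$ replace the chamber minors, and the same recovery applies, with the single coordinate $C_{13}$ or its analogue obtained from one of them.
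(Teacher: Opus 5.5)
Your outline matches the paper's proof: restrict to the normalized Lagrangian section of Proposition~\ref{thm:lagrangian-ring}, recover the $\GL_n/U_n$ factor by Proposition~\ref{prop:solid-chart} and the $C_{ij}$ by Lemma~\ref{lem:symmetric-recovery}, and get independence from the count $n^2+n-1$. However, the step you call the main obstacle comes from a misidentification, and you leave it unresolved.

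It is false that $D_{1,\ell}$ and $D_{n-\ell+1,\ell}$ are principal for $\ell\ge2$. By \eqref{eq:solid-minor-formula}, $D_{b,\ell}=F_{2b-1,\ell}^\sharp$, whose source weight is $\omega_{2b-1}+\omega_{2b+1}+\cdots+\omega_{2b+2\ell-3}$. This weight is fundamental only for $\ell=1$, and among the $D_{b,1}$ only $b=1,n$ are principal, namely $p_1,p_{N-1}$. The $q=0$ sentence before Proposition~\ref{prop:symmetric-dictionary} is only notation; the principal identifications there occur at $\ell=2q,2q+1$ with $q\ge1$.

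Comparing with \eqref{eq:mutable-chamber}, $\mathscr D^*$ consists exactly of the mutable determinantal functions together with $c_n=D_{1,n}$, and all of these are inverted on the right of \eqref{eq:initial-chart}. So Proposition~\ref{prop:solid-chart} as stated already uses precisely the permitted inversions. Lemma~\ref{lem:symmetric-recovery} likewise divides only by mutable functions and $c_n$. Its $q=1$ pivots are $Y^+_{0,\ell}=D_{1,\ell}$, which you yourself call nonprincipal in the next paragraph, so the proposal contradicts itself here. The quantities being solved from, principal ones included, all lie in the right-hand ring. Hence every generator of \eqref{eq:lagrangian-ring} lands there directly. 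That is the paper's whole argument; no localization at $P$ and no modified version of Proposition~\ref{prop:solid-chart} is needed.

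The fallback you sketch would not close the gap as written. Primality of $p_j$ in the right-hand ring is trivial, since $p_j$ is a polynomial variable there. It does not imply that the two $p_j$-adic discrete valuation rings coincide. That coincidence is a genuine birational statement, and the paper deduces it afterwards, in Proposition~\ref{prop:actual-frozen-valuations}, from this very chart. Using it here would therefore require an independent proof.

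Two smaller points about the low-rank cases:
\begin{itemize}
\item For $n=2$, Proposition~\ref{prop:solid-chart} alone is not the statement. You still need $C_{12}=Y^+_{1,2}/c_2^\sharp=p_2^\sharp/c_2^\sharp$.
\item For $n=3$, the seed is not of chamber type: $x_2=d_{13}$ is not a consecutive-row minor, and $a_2=D_{2,1}$ is not a seed function. So Proposition~\ref{prop:solid-chart} does not apply. You must recover $a_2$ from $a_1d_{23}-a_2d_{13}+a_3d_{12}=0$ with $x_2$ inverted. You must also recover all three $C_{ij}$, first $C_{13}$ from $p_3^\sharp=-C_{13}d$ and then $C_{12},C_{23}$ from $p_2^\sharp,p_4^\sharp$, not a single coordinate.
\end{itemize}
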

\begin{proof}
First take $n=2$ or $n\ge4$. On the normalized Lagrangian section,
the determinantal initial functions are exactly
$D_{b,\ell}$: formula \eqref{eq:solid-minor-formula} gives them,
with $D_{1,1}=p_1^\sharp$, $D_{n,1}=p_{N-1}^\sharp$ and
$D_{1,n}=c_n^\sharp$. Their number is $n(n+1)/2$.
The remaining $\binom n2$ nonspherical functions are exactly
\[ \{Y^+_{q,\ell}:1\le q\le\lfloor n/2\rfloor,\ 2q\le\ell\le n\} \ \sqcup\ \{Y^-_{q,\ell}:1\le q\le\lfloor n/2\rfloor,\ 2q\le\ell<n\}. \]
This follows from \eqref{eq:exact-symmetric-dictionary}: in each
$Y^+$ or $Y^-$ family, the functions with $\ell=2q,2q+1$ are
principal, those with $\ell>2q+1$ are the non-determinantal folded
functions, and the common level $\ell=n$ is counted once. Lemma~\ref{lem:symmetric-recovery} and
Proposition~\ref{prop:solid-chart} therefore recover every generator
of \eqref{eq:lagrangian-ring} with precisely the inversions in
\eqref{eq:initial-chart}. Undoing the spherical Laurent characters
does not change the ring. The reverse inclusion is immediate.
This also proves field generation. There are $n^2+n-1$ initial
functions, equal to the dimension given by
\eqref{eq:lagrangian-ring}, so they are algebraically independent.
The verification for $n=3$ is given below using its separately defined
seed.
\end{proof}

\medskip\noindent\textbf{The case $n=3$.}\quad
Here write $a_i=X_{i1}$, $d_{ij}=\det X_{\{i,j\},[2]}$, and
$d=\det X$. The standard flag-minor presentation in size three is
\begin{equation} \C[\GL_3]^{U_3} =\C[a_1,a_2,a_3,d_{12},d_{13},d_{23},d^{\pm1}] /(a_1d_{23}-a_2d_{13}+a_3d_{12}). \label{eq:rank-three-flag-ring} \end{equation}
It is the flag incidence relation between a vector and a two-vector,
with the determinant as an invertible coordinate; equivalently it follows from the
flag-minor straightening presentation \cite{FultonYoungTableaux}.
On the normalized Lagrangian section the chosen seed has
\[
\begin{gathered}
(x_1^\sharp,x_2^\sharp,x_3^\sharp)=(d_{23},d_{13},d_{12}),\qquad
 (p_1^\sharp,p_5^\sharp,c_3^\sharp)=(a_1,a_3,d),\\
 p_2^\sharp=C_{12}d_{12}+C_{13}d_{13},\quad
 p_3^\sharp=-C_{13}d,\quad
 p_4^\sharp=C_{13}d_{13}+C_{23}d_{23}.
\end{gathered}
\]
The first line is the determinant definition; expanding the indicated
principal minors gives the second line. Thus, when the three $x_i$
and $d$ are inverted, these formulas recover $a_2$ and all three
$C_{ij}$. They prove the assertion for $n=3$ in
Proposition~\ref{thm:initial-chart}, including independence by the
dimension $11$.

The three normalized exchange quotients are
\[ (E_1/x_1)^\sharp=dC_{23},\qquad (E_2/x_2)^\sharp=a_2,\qquad (E_3/x_3)^\sharp=dC_{12}. \]
They follow by substitution in \eqref{eq:rank-three-seed} and the
single relation in \eqref{eq:rank-three-flag-ring}. Each belongs to
$\B_3[H_{\rm Pf}^{-1}]$ after restoring its spherical character.
The three $x_i$ are nonzero by these matrix formulas and prime by the
fundamental-right-degree argument of Proposition~\ref{prop:initial-primes};
their left weights are supported on odd fundamentals. They are
nonassociate to $s_2,s_4,c_3$. Prime cancellation therefore makes
all three quotients regular in $\B_3$. This proves the exact
exchanges for $n=3$ before the spherical parameters are set to one: each relation is homogeneous, so the same
spherical character multiplies both sides.

\begin{corollary}\label{cor:seed-and-coprimality}
For every $n\ge2$ the initial datum is a normalized LP seed in
$\Frac(\B_n)$. Every first mutation is regular. For each mutable
$x$, its first mutation $x'$ satisfies $\gcd_{\B_n}(x,x')=1$.
\end{corollary}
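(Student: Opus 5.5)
The corollary assembles three facts: the seed axioms, regularity of the first mutations, and coprimality of each mutable prime with its mutation. I would take them in that order and treat the cases $n=2$, $n=3$ and $n\ge4$ separately where needed.

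\textbf{Seed axioms.} For $n\ge4$, Proposition~\ref{prop:formal-seed} already shows that the $E_{a,\ell}$ form an LP seed in independent symbols, with $\widehat E=E$. Proposition~\ref{thm:initial-chart} shows that the initial functions are algebraically independent and generate $\Frac(\B_n)$. So sending the symbols to the functions is an isomorphism of the rational function field onto $\Frac(\B_n)$, and the seed transports to a seed in $\Frac(\B_n)$.

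For $n=2$ there are no mutable variables, so there is nothing to check. For $n=3$ I would verify directly that $E_1,E_2,E_3$ in \eqref{eq:rank-three-seed} satisfy the axioms, by the same argument as in Proposition~\ref{prop:formal-seed}:
\begin{itemize}
\item each $E_i$ is a binomial with disjoint supports and a primitive exponent difference, hence irreducible;
\item no $E_i$ involves $x_i$ or is divisible by a mutable variable;
\item the LP normalization exponents vanish, because substituting $x_j=E_j/X$ leaves a nonzero monomial modulo $E_j$.
\end{itemize}

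\textbf{Regularity of the first mutations.} For $n\ge4$ this is Theorem~\ref{thm:regular-exchanges}, and the coefficient normalization there makes the transported seed "normalized." For $n=3$ it is the computation after Proposition~\ref{thm:initial-chart}: the three quotients $(E_i/x_i)^\sharp$ are regular, and prime cancellation puts them in $\B_3$. The same theorem gives nonvanishing of $x'$, since $E_x\neq0$ by algebraic independence.

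\textbf{Coprimality.} Let $x$ be mutable with first mutation $x'$. By Proposition~\ref{prop:initial-primes} (and its $n=3$ analogue), $x$ is prime in the UFD $\B_n$. So $\gcd(x,x')\ne1$ would force $x\mid x'$. Then $x^2\mid E_x$ in $\B_n$, and in particular $x\mid E_x$.

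Now $E_x$ is a polynomial in the initial variables that does not involve $x$. In the initial chart \eqref{eq:initial-chart}, $x$ is an independent Laurent variable and $E_x$ is a nonzero element of the subring free of $x$. Hence $x$ is not a divisor of $E_x$ in the chart ring $\B_n[(H_{\rm Pf}M)^{-1}]$ — but $x$ is a unit there, so divisibility must be tested differently.

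The fix is to pass to a chart in which $x$ is not inverted. I would use the localization at $H_{\rm Pf}$ and all mutable variables \emph{except} $x$. The ring
\[
\C[p,s^{\pm1},c_n^{\pm1},x,\,y^{\pm1}:y\ne x]
\]
contains $\B_n[(H_{\rm Pf}M/x)^{-1}]$, by the same argument as \eqref{eq:initial-chart} with $x$ not inverted. In this polynomial ring in $x$, the element $x$ divides $E_x$ only if $E_x$ vanishes at $x=0$, and it does not.

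\textbf{Main obstacle.} The hard step is justifying this last containment, namely that $\B_n$ embeds in a ring where $x$ remains a polynomial variable. I would derive it from the inclusion $\B_n\subseteq\B_n[(H_{\rm Pf}M)^{-1}]$ together with a valuation argument. Take $\ord_x$ on $\Frac(\B_n)$, the divisorial valuation of the prime $x$. One needs that $\ord_x$ agrees with the $x$-adic exponent in the Laurent chart. Granting this, $\ord_x(E_x)=0$, so $x\nmid E_x$, contradiction.

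If that agreement is hard to verify, the fallback is a degree argument. Any factor of $x'$ associated to $x$ would have weight $\wt(x)$, so $x^2\mid E_x$ forces $\wt(E_x)-2\wt(x)$ to be the weight of a regular element. I would then check against the explicit weights that this fails, using the positive grading \eqref{eq:positive-section-grading}.
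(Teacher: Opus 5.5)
Your seed-axiom step (transporting Proposition~\ref{prop:formal-seed} through Proposition~\ref{thm:initial-chart}, with the direct check for $n=3$) and your regularity step match the paper. The coprimality step, however, has a genuine error.

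\emph{The chart argument proves something false.} After reducing $x\mid x'$ to $x^2\mid E_x$, you weaken this to $x\mid E_x$ and try to refute it. But $x\mid E_x$ always holds, because $E_x=xx'$ with $x'\in\B_n$ by the regularity you have just established. Consequently the claims you build on it are false. First, $\B_n[(H_{\rm Pf}M/x)^{-1}]$ contains $x'=E_x/x$. This element is not polynomial in $x$, since $E_x$ does not vanish at $x=0$. So that ring is not contained in $\C[p,s^{\pm1},c_n^{\pm1},x,y^{\pm1}:y\ne x]$. Second, $\ord_x$ cannot agree with the $x$-exponent of the initial Laurent chart. Indeed $\ord_x(E_x)=1+\ord_x(x')\ge1$, while $E_x$ is $x$-free. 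Any repair along chart lines would need the adjacent-chart analysis of Proposition~\ref{thm:localized-upper-equality}. That analysis itself uses the coprimality you are trying to prove.

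\emph{The fallback uses the wrong test.} Its shape is right, but the positive grading \eqref{eq:positive-section-grading} only gives the necessary condition $\deg_\circ\ge0$, and that condition can hold. For $n=4$ and $x=F_{3,1}$ one has $\wt(x')-\wt(x)=(\omega_1-\omega_3+\omega_5;\varpi_2-\varpi_1)$, whose $\deg_\circ$ is $7-5=2>0$. So the positive grading does not exclude $x\mid x'$.

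What does exclude it is dominance. If $x'=xy$ in the graded domain $\B_n$, then $y$ is homogeneous. Every nonzero homogeneous element of $\B_n$ has dominant bidegree, since $(\B_n)_{\lambda,\mu}\simeq\Hom_K(V_K(\mu),V_G(\lambda))$. The right component of $\wt(x')-\wt(x)$ is:
\begin{itemize}
\item $\varpi_{\ell-1}+\varpi_{\ell+1}-\varpi_\ell$ for $\ell<n$;
\item $2\varpi_{n-1}-\varpi_n$ for $\ell=n$;
\item $\varpi_3-\varpi_2$, $\varpi_1-\varpi_2$, $\varpi_3-\varpi_2$ for the three mutable variables when $n=3$.
\end{itemize}
None of these is dominant, so $x\nmid x'$. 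Primality of $x$ then gives $\gcd_{\B_n}(x,x')=1$. This is exactly the paper's proof, and it needs no chart at all.
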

\begin{proof}
The formal seed axioms in Proposition~\ref{prop:formal-seed} and the
corresponding verification for $n=3$ hold in the fraction
field by Proposition~\ref{thm:initial-chart}. Independence makes every
exchange polynomial nonzero. For $n\ge4$, the right degree of $x'$
is $\varpi_{\ell-1}+\varpi_{\ell+1}$ for $\ell<n$ and $2\varpi_{n-1}$ for $\ell=n$. Subtracting the right degree
$\varpi_\ell$ of $x$ is not dominant. Hence $x$ cannot divide $x'$.
For $n=3$ the mutated right degrees are respectively
$\varpi_3,\varpi_1,\varpi_3$, and the same argument applies with
$\varpi_2$. Primality of $x$ proves the assertion.
\end{proof}

\subsection{The Jacobian from the frozen weight sum}
We first prove the following weighted Jacobian identity.

\begin{lemma}\label{lem:weighted-jacobian}
Let $R=\C[z_1,\ldots,z_d]$ have positive weights $w_j$ and let
$f_1,\ldots,f_d$ be algebraically independent homogeneous polynomials.
Select pairwise nonassociate prime members $f_i$, $i\in I$. Assume
that each has a polynomial companion $f_i'$ relatively prime to $f_i$
and satisfies $f_if_i'=E_i(f_j:j\ne i)$. If
$\sum_{i\notin I}\deg f_i=\sum_jw_j$, then
\[ \det\left(\frac{\partial f_i}{\partial z_j}\right) =c\prod_{i\in I}f_i\qquad(c\in\C^\times). \]
\end{lemma}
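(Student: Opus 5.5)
Let $J_{\rm Jac}=\det(\partial f_i/\partial z_j)$. Since the $f_i$ are algebraically independent and there are $d$ of them in $d$ variables, $J_{\rm Jac}\ne0$ (Jacobian criterion in characteristic zero). The plan has three steps. First, show that each $f_i$ with $i\in I$ divides $J_{\rm Jac}$. Second, use the degree hypothesis to show that $J_{\rm Jac}$ and $\prod_{i\in I}f_i$ have the same weighted degree. Third, conclude by positivity of the grading.

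\emph{Divisibility.} Fix $i\in I$ and replace row $i$ of the Jacobian matrix by the gradient of $f_if_i'$. By the chain rule applied to $E_i(f_j:j\ne i)$, that gradient is a polynomial combination $\sum_{j\ne i}(\partial E_i/\partial f_j)\,\nabla f_j$ of the other rows. Hence the determinant with row $i$ replaced by $\nabla(f_if_i')$ vanishes. Expanding $\nabla(f_if_i')=f_i'\nabla f_i+f_i\nabla f_i'$ and using multilinearity in row $i$ gives
\[ f_i'J_{\rm Jac}=-f_iJ'_{{\rm Jac},i}, \]
where $J'_{{\rm Jac},i}$ is the determinant with row $i$ replaced by $\nabla f_i'$. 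It is a polynomial because $f_i'$ is. Since $f_i$ is prime and $\gcd(f_i,f_i')=1$ in the UFD $R$, we get $f_i\mid J_{\rm Jac}$. The $f_i$, $i\in I$, are pairwise nonassociate primes, so $\prod_{i\in I}f_i$ divides $J_{\rm Jac}$. Write $J_{\rm Jac}=Q\prod_{i\in I}f_i$ with $Q\in R$.

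\emph{Degree count.} Entry $(i,j)$ of the Jacobian matrix is homogeneous of degree $\deg f_i-w_j$ (or zero). Each nonzero term of the determinant therefore has degree $\sum_i\deg f_i-\sum_jw_j$, so $J_{\rm Jac}$ is homogeneous of that degree. By hypothesis, $\sum_{i\notin I}\deg f_i=\sum_jw_j$, so $\deg J_{\rm Jac}=\sum_{i\in I}\deg f_i$. Consequently $Q$ is homogeneous of degree $0$. Because all weights $w_j$ are positive, the degree-zero part of $R$ is $\C$, so $Q\in\C$. Finally, $Q\ne0$ because $J_{\rm Jac}\ne0$.

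The only delicate point is the divisibility step. It needs the exchange identity to hold as a polynomial identity in $R$, with the $f_j$ for $j\ne i$ appearing polynomially, so that the chain rule keeps everything inside $R$. It also needs $f_i'$ to be a genuine polynomial, so that $J'_{{\rm Jac},i}\in R$ and coprimality yields divisibility in $R$ rather than in some localization. Both are hypotheses of the lemma. Everything else is a routine homogeneity count.
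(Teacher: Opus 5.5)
Your proof is correct and is essentially the paper's argument. You differentiate the exchange to get $f_i'J_{\rm Jac}=-f_iJ'_{{\rm Jac},i}$, deduce $\prod_{i\in I}f_i\mid J_{\rm Jac}$ from coprimality, and use the degree balance and positive weights to make the quotient a constant, nonzero by algebraic independence. One small remark: polynomiality of $E_i$ is not actually needed, since the replaced row is a combination of the others over $\Frac R$; only polynomiality of $f_i'$ matters.
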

\begin{proof}
Let $J_{\rm Jac}$ be this Jacobian, and let $J_{{\rm Jac},i}'$ be the
Jacobian obtained by replacing $f_i$ by $f_i'$ in the list of polynomials. Differentiate the exchange and wedge with
all $df_j$, $j\ne i$. Since $E_i$ is independent of $f_i$, this gives
\[ f_i'J_{\rm Jac}=-f_iJ_{{\rm Jac},i}'. \]
Coprimality implies $f_i\mid J_{\rm Jac}$, so $\prod_{i\in I}f_i\mid J_{\rm Jac}$.
The quotient has weighted degree
$\sum_{i\notin I}\deg f_i-\sum_jw_j=0$. Positivity of the coordinate
weights makes it a constant. Independence in characteristic zero makes
$J_{\rm Jac}\ne0$, so the constant is nonzero.
\end{proof}

On $\mathcal S_n$ let $\mathbf f$ be the $d=n(n-1)$ nonprincipal
initial functions, and let $\mathbf u=(u_{ij})_{\mathcal I_n}$ in a
fixed order. Each mutable restriction is prime and remains relatively
prime to its companion: localize the corresponding UFD statements at
$P$ in \eqref{eq:polynomial-gauss-chart} and remove the principal
character units. Distinct mutable restrictions remain nonassociate.

\begin{proposition}\label{thm:short-jacobian}
For every $n\ge2$, including the separate seed for $n=3$, we have
\begin{equation} \det\frac{\partial\mathbf f}{\partial\mathbf u} =c_n^{\rm Jac}\prod_{i\in I_{\rm mut}}\bar x_i, \qquad c_n^{\rm Jac}\in\C^\times. \label{eq:short-jacobian} \end{equation}
No value of $c_n^{\rm Jac}$ is needed in the comparison theorem.
\end{proposition}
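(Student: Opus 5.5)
The plan is to apply Lemma~\ref{lem:weighted-jacobian} on the polynomial ring $R=\C[\mathcal S_n]=\C[u_{ij}:(i,j)\in\mathcal I_n]$, with the positive grading $\deg_\circ u_{ij}=2(j-i)$ from \eqref{eq:positive-section-grading}. The functions will be the restrictions $\bar f$ of the $d=n(n-1)$ nonprincipal initial functions: the mutable $\bar x_i$, the spherical $\bar s_{2r}$ and $\bar c_n$. The selected family $I$ is the mutable set $I_{\rm mut}$.

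\textbf{Verifying the hypotheses.} I will check them in order.

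First, the $\bar f$ are polynomial and homogeneous for $\deg_\circ$. They are restrictions of regular semi-invariants, and by Proposition~\ref{prop:gauss-chart} the principal $\bar p_j=1$.

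Second, the $\bar f$ are algebraically independent. Proposition~\ref{thm:initial-chart}, together with the separate $n=3$ verification, shows that all $n^2+n-1$ initial functions are independent and generate $\Frac(\B_n)$. By \eqref{eq:polynomial-gauss-chart}, $\Frac(\B_n)=\C(p_1,\dots,p_{N-1})(\mathbf u)$, and each homogeneous $f$ equals a principal Laurent character times $\bar f$. Hence the restrictions $\bar f$ generate $\C(\mathbf u)$ over $\C$, so they form $d$ independent elements in $d$ variables.

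Third, the mutable $\bar x_i$ are pairwise nonassociate primes with polynomial companions $\bar x_i'$ that are coprime to them. This is exactly the remark preceding the proposition: localize the UFD statements of Proposition~\ref{prop:initial-primes} and Corollary~\ref{cor:seed-and-coprimality} at $P$, then strip the principal units.

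Fourth, each exchange restricts to $\bar x_i\bar x_i'=\bar E_i$, where $\bar E_i$ is a polynomial in the other restricted functions. This follows because $\bar p_j=1$ and the exchange is homogeneous.

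\textbf{The degree condition.} The remaining hypothesis, which I expect to be the main obstacle, is
\[\sum_{r=1}^{n-1}\deg_\circ\bar s_{2r}+\deg_\circ\bar c_n=\sum_{(i,j)\in\mathcal I_n}2(j-i).\]
By Proposition~\ref{prop:initial-primes}, the total frozen degree is $(2\rho_{A_{2n-1}};2\rho_{C_n})$. The principal functions contribute $\deg_\circ\bar p_j=0$, so the left side equals the linear form \eqref{eq:positive-section-grading} evaluated at $(2\rho_G;2\rho_K)$. That value is
\[\sum_{j=1}^{N-1}2j(N-j)-\sum_{\ell=1}^n 2\ell(N-\ell).\]
The first sum is the $\deg_\circ$-weight of all positive roots of $U_G$ under the chosen one-parameter subgroup. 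The second is the corresponding weight of $U_K$. So I expect the difference to be the total weight of $U_G/U_K\cong\mathcal S_n$, which is $\sum_{\mathcal I_n}2(j-i)$.

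I would make this rigorous in two ways. The first is a direct finite sum computation in $n$. The second, cleaner route uses the isomorphism $\mathcal S_n\times U_K\to U_G$ of Proposition~\ref{prop:gauss-chart}, which is $T_K$-equivariant for the conjugation action. That isomorphism shows that the character sum of $\operatorname{Lie}U_G$ equals that of $\mathcal S_n$ plus that of $\operatorname{Lie}U_K$. Pairing with the cocharacter then gives the identity, provided $2\rho$ pairs with the cocharacter as the displayed formula indicates; I would check this sign and normalization carefully.

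\textbf{Conclusion.} With all hypotheses in place, Lemma~\ref{lem:weighted-jacobian} gives \eqref{eq:short-jacobian}. For $n=2$ the mutable set is empty, and the statement is that the Jacobian is a nonzero constant, which is the same lemma with $I=\emptyset$.
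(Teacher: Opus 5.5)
Your proposal is correct and follows the paper's proof. Both apply Lemma~\ref{lem:weighted-jacobian} on $\C[\mathcal S_n]$ with the $\deg_\circ$ grading, using Proposition~\ref{thm:initial-chart} for independence and the localized UFD statements for primality and coprimality. The only difference is in the balance: the paper checks it from the explicit values $\deg_\circ\bar s_{2r}=4r(n-r)$ and $\deg_\circ\bar c_n=n(n-1)(2n-1)/3$, which sum to $n(n-1)(4n+1)/3$. Your $T_K$-stable splitting $\mathfrak u_G=T_1\mathcal S_n\oplus\mathfrak u_K$ gives the same identity conceptually, and this is the interpretation the paper records at the end of its proof. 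The sign you flagged causes no problem: both $\deg_\circ u_{ij}$ and $\deg_\circ\bar f$ are minus the pairing of the restricted weight with the cocharacter.
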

\begin{proof}
The restrictions are independent by Proposition~\ref{thm:initial-chart}
and the independent principal torus factor in
\eqref{eq:polynomial-gauss-chart}. Their companions are polynomial
on $\mathcal S_n$, and each exchange polynomial is independent of the variable being mutated. The remaining $n$ functions are
$s_2,s_4,\ldots,s_{N-2},c_n$.
Formula \eqref{eq:positive-section-grading} gives
\[ \deg_\circ\bar s_{2r}=4r(n-r),\qquad \deg_\circ\bar c_n=\frac{n(n-1)(2n-1)}3. \]
Consequently
\[ \sum_{r=1}^{n-1}\deg_\circ\bar s_{2r}+\deg_\circ\bar c_n =\frac{n(n-1)(4n+1)}3 =\sum_{(i,j)\in\mathcal I_n}2(j-i). \]
Apply Lemma~\ref{lem:weighted-jacobian}. The balance is precisely the
positive one-parameter specialization of the frozen degree sum
$(2\rho_G;2\rho_K)$ after the principal torus characters are removed.
\end{proof}

Algebraic independence, regular coprime first mutations, and the frozen
weight sum determine the Jacobian divisor.

\subsection{Adjacent charts and height-one descent}
The following local criterion uses the Jacobian to identify the
adjacent Laurent charts.

\begin{lemma}\label{lem:adjacent-chart}
Let $C$ be a Laurent polynomial ring over a polynomial ring over $\C$,
let $E\in C$ be prime, and let $R$ be a normal finitely generated
$C$-domain in $\Frac(C)(x)$. Suppose
\[ x,y\in R,\quad xy=E,\quad R[x^{-1}]=C[x^{\pm1}], \]
$x$ is prime in $R$, and $y\notin(x)$. If the birational morphism
$\Spec R\to\Spec C[y]$ is \textup{\'{e}}tale at the generic point
of $V_R(x)$, then
\[ R\subseteq C[y^{\pm1}],\qquad R[y^{-1}]=C[y^{\pm1}]. \]
\end{lemma}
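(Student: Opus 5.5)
The plan is to compare $R$ with the normal ring $C[y^{\pm1}]$ one height-one prime at a time. First I set up the ambient rings. Since $C$ is a Laurent ring over a polynomial ring, $C[y]$ and $C[y^{\pm1}]$ are Noetherian normal UFDs. From $xy=E$ we get $x=E/y$, so $\Frac(C)(x)=\Frac(C)(y)$ and everything lives in one field. Because $E$ is prime in $C$, the quotient $C[y^{\pm1}]/(E)\cong (C/E)[y^{\pm1}]$ is a domain. Hence $E$ generates a height-one prime of $C[y^{\pm1}]$, and also of $C[y]$. By normality,
\[
C[y^{\pm1}]=\bigcap_{\operatorname{ht}Q=1}C[y^{\pm1}]_Q .
\]

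Next I locate the possible poles. Take $r\in R$. Since $r\in R[x^{-1}]=C[x^{\pm1}]$, write $r=\sum_i c_ix^i$ with $c_i\in C$. Substituting $x=E/y$ and $x^{-1}=y/E$ shows that $r$ lies in $C[y^{\pm1}][E^{-1}]$. So $r$ can have a pole only along $Q=(E)$. In the DVR $C[y^{\pm1}]_{(E)}$, $y$ is a unit, so $x=E/y$ is a uniformizer. It therefore remains to prove $\operatorname{ord}_{(E)}(r)\ge0$ for all $r\in R$.

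This is the step I expect to be the main obstacle: identifying the valuation of $(E)$ with the valuation of $V_R(x)$, which is where étaleness enters. Because $R$ is normal and $x$ is prime, $R_{(x)}$ is a DVR with uniformizer $x$. Let $\mathfrak p\subset C[y]$ be the image of the generic point of $V_R(x)$. It contains $E=xy$, and it does not contain $y$ because $y\notin(x)$. An étale local homomorphism $C[y]_{\mathfrak p}\to R_{(x)}$ is flat with zero-dimensional closed fibre, so $\dim C[y]_{\mathfrak p}=\dim R_{(x)}=1$. Hence $\mathfrak p$ is a height-one prime containing the prime element $E$, and so $\mathfrak p=(E)$. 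Now $R_{(x)}$ is a DVR dominating the DVR $C[y]_{(E)}$ inside the same fraction field. Domination of a valuation ring by a valuation ring with the same fraction field forces equality, so
\[
R_{(x)}=C[y]_{(E)}=C[y^{\pm1}]_{(E)}.
\]
Therefore $\operatorname{ord}_{(E)}(r)=\operatorname{ord}_{V_R(x)}(r)\ge0$ for every $r\in R$. Together with the previous step this gives $R\subseteq C[y^{\pm1}]$.

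Finally, $C\subseteq R$ and $y\in R$ give $C[y^{\pm1}]\subseteq R[y^{-1}]$. Conversely, $R\subseteq C[y^{\pm1}]$ gives $R[y^{-1}]\subseteq C[y^{\pm1}]$. This proves $R[y^{-1}]=C[y^{\pm1}]$.
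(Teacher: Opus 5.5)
Your proof is correct and shares the paper's key step: étaleness at the generic point of $V_R(x)$ forces the contraction of $(x)$ to $C[y]$ to have height one, hence to equal $(E)$, so the two DVRs $R_{(x)}$ and $C[y]_{(E)}$ coincide. The difference is only in the finish. The paper expands $f=\sum_j c_jx^j$, writes $c_j=E^{a_j}d_j$, and uses transcendence of $\bar y$ over $\Frac(C/(E))$ in the residue field to show that $E^{-j}\mid c_j$ for $j<0$. You instead observe that $R\subseteq C[y^{\pm1}][E^{-1}]$ and conclude from the height-one intersection description of the normal ring $C[y^{\pm1}]$, which is shorter and avoids the residue computation.
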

\begin{proof}
The contraction of $(x)$ to $C[y]$ has height one by generic
\'{e}taleness and contains the prime $E$, so it equals $(E)$.
The two height-one local rings are discrete valuation rings (DVRs)
in the same fraction field;
since $E=xy$ with $y$ a unit, they coincide. In particular
\[ \ord_x(c)=\ord_E(c)\ (c\in C\setminus\{0\}),\qquad \C(V_R(x))=\Frac(C/(E))(\bar y), \]
where $\bar y$ is transcendental over $\Frac(C/(E))$.
Write $f\in R$ in the initial chart as $\sum_j c_jx^j$ and factor
$c_j=E^{a_j}d_j$, $E\nmid d_j$. If
$m=\min_j(a_j+j)<0$, the residue of $x^{-m}f$ at $(x)$ is
\[ \sum_{a_j+j=m}\bar d_j\bar y^{a_j}\ne0. \]
The exponents $a_j$ in this sum are distinct, so nonvanishing follows
from the transcendence of $\bar y$. This would give $\ord_x(f)=m<0$,
a contradiction. Thus $E^{-j}\mid c_j$ whenever $j<0$, and replacing
$x$ by $E/y$ puts $f$ in $C[y^{\pm1}]$. The reverse inclusion after
inverting $y$ follows from $C,y\subset R$.
\end{proof}

\begin{proposition}\label{thm:localized-upper-equality}
For every $n\ge2$, we have
\begin{equation} \B_n[H_{\rm Pf}^{-1}]=\UB(\Sigma_n)[H_{\rm Pf}^{-1}]. \label{eq:localized-upper-equality} \end{equation}
Moreover the initial and all first adjacent Laurent rings, after this
localization, are coordinate rings of their corresponding principal
opens in $\Spec\B_n[H_{\rm Pf}^{-1}]$.
\end{proposition}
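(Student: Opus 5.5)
Set $R=\B_n[H_{\rm Pf}^{-1}]$ and $C=\C[p_1,\ldots,p_{N-1},s_{2r}^{\pm1},c_n^{\pm1},x_j^{\pm1}:j\ne i]$ for a fixed mutable index $i$. The plan is to prove the equality by identifying $R$ with the intersection of the localized initial Laurent ring and its adjacent Laurent rings. We proceed chart by chart, using Lemma~\ref{lem:adjacent-chart}.

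\textbf{Step 1: the initial chart.} Proposition~\ref{thm:initial-chart} gives $R[M^{-1}]=L(\Sigma_n)[H_{\rm Pf}^{-1}]$. This identifies the localized initial Laurent ring with the principal open $D(M)$.

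\textbf{Step 2: the adjacent charts.} Fix $i$ and apply Lemma~\ref{lem:adjacent-chart} with $x=x_i$, $y=x_i'$ and $E=E_i$, the latter regarded as an element of $C$. Note that $E_i$ is independent of $x_i$, and that $\kappa_{a,\ell}$ is a spherical unit in $C$. The hypotheses are checked as follows.
\begin{itemize}
\item $E_i$ is prime in $C$: it is an irreducible binomial in independent variables (Proposition~\ref{prop:formal-seed}), and it remains prime after inverting variables that do not divide it.
\item $x_i,x_i'\in R$ with $x_ix_i'=E_i$: this is Theorem~\ref{thm:regular-exchanges}, together with the $n=3$ computation.
\item $R[x_i^{-1}]$ contains $C[x_i^{\pm1}]$. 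Equality requires inverting all the other mutable $x_j$, so we work on $R[(M/x_i)^{-1}]$ instead of $R$. This ring is still normal and finitely generated, and its localization at $x_i$ is $C[x_i^{\pm1}]$ by Step 1.
\item $x_i$ is prime, and $x_i'\notin(x_i)$: Proposition~\ref{prop:initial-primes} and Corollary~\ref{cor:seed-and-coprimality}. Both persist after localization, since $x_i$ is nonassociate to the inverted primes.
\end{itemize}
The remaining hypothesis is that the morphism to $\Spec C[x_i']$ is étale at the generic point of $V(x_i)$.

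\textbf{Step 3: étaleness via the Jacobian.} This is the main obstacle. We use Proposition~\ref{thm:short-jacobian} on the Gauss chart. Replacing $x_i$ by $x_i'$ in the list $\mathbf f$ changes the Jacobian by the identity $x_i'J=-x_iJ'_i$. Hence
\[
J'_i=-c_n^{\rm Jac}\,\bar x_i'\prod_{j\ne i}\bar x_j ,
\]
which does not vanish identically along $V(x_i)$, because $x_i'$ and the other $x_j$ are coprime to $x_i$. So the functions $(\mathbf f\setminus\{x_i\})\cup\{x_i'\}$, together with the principal torus coordinates, form local coordinates of the smooth Gauss chart at the generic point of $V(x_i)$.

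There is one subtlety: this needs $V(x_i)$ to meet $D(P)$. It does, because $x_i$ is nonassociate to each $p_j$. The spherical and $c_n$ localizations are also harmless, since those functions are units on $C$. Hence the map to $\Spec C[x_i']$ is étale there, and Lemma~\ref{lem:adjacent-chart} gives
\[
R[(M/x_i)^{-1}]\subseteq C[x_i'^{\pm1}],\qquad R[(Mx_i'/x_i)^{-1}]=L(\mu_i\Sigma_n)[H_{\rm Pf}^{-1}].
\]
This proves the \emph{Moreover} statement.

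\textbf{Step 4: the two inclusions.}
\begin{itemize}
\item $R\subseteq\UB(\Sigma_n)[H_{\rm Pf}^{-1}]$: by Steps 1 and 3, $R$ lies in each localized Laurent ring, hence in their finite intersection. Localization commutes with this finite intersection, exactly as in the proof of Lemma~\ref{lem:frozen-descent}.
\item The reverse inclusion: use the height-one criterion stated after Proposition~\ref{prop:comparison}, applied to the normal ring $R$. Its hypotheses hold here. Distinct mutable functions are nonassociate primes, so no height-one prime contains two of them. No height-one prime contains both $x_i$ and $x_i'$, by coprimality. Every initial and first mutated function lies in $R$. Therefore each height-one local ring of $R$ contains either the initial or one adjacent localized Laurent ring, and $\UB(\Sigma_n)[H_{\rm Pf}^{-1}]\subseteq R$.
\end{itemize}
For $n=3$ and $n=2$ the same argument applies verbatim to the separate seeds; for $n=2$ there are no adjacent charts at all.
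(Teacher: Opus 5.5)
Your proposal is correct and follows essentially the same route as the paper. The initial chart comes from Proposition~\ref{thm:initial-chart}; each adjacent chart comes from Lemma~\ref{lem:adjacent-chart} applied to $\B_n[H_{\rm Pf}^{-1},x_v^{-1}:v\ne x]$, with \'etaleness obtained by differentiating the exchange in the Jacobian identity \eqref{eq:short-jacobian} on the Gauss chart; and the equality follows from the height-one covering together with normality, where you merely split this last step into two inclusions via the criterion after Proposition~\ref{prop:comparison} instead of intersecting the open coordinate rings at once.
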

\begin{proof}
The initial assertion about charts is
Proposition~\ref{thm:initial-chart}. For a mutable $x$, put
\[ C=\C[x_f:f\in I_{\rm fr}][H_{\rm Pf}^{-1}, x_v^{\pm1}:v\ne x],\qquad R=\B_n[H_{\rm Pf}^{-1},x_v^{-1}:v\ne x]. \]
The ring $C$ is a Laurent polynomial ring with the principal frozen
functions not inverted. The initial chart gives
$R[x^{-1}]=C[x^{\pm1}]$. The binomial $E_x$ remains prime in $C$,
since it is an irreducible binomial with no variable factor. The prime
$x$ and its regular companion $y=x'$ are relatively prime.

The generic point of $V(x)\subset\Spec\B_n[H_{\rm Pf}^{-1}]$
belongs to $\Spec\B_n[(PH_{\rm Pf})^{-1}]$, identified by
\eqref{eq:polynomial-gauss-chart} with the corresponding open subset
of $T_G\times\mathcal S_n$. Indeed, $x$ is nonassociate to every $p_j$. Write $J_{\rm Jac}$ for the determinant
in \eqref{eq:short-jacobian}. There the principal torus characters are
independent, and the Jacobian of the adjacent nonprincipal functions
is, by differentiating the exchange,
\[ J_{{\rm Jac},x}'=-\frac{\bar x'}{\bar x}J_{\rm Jac} =-c_n^{\rm Jac}\bar x'\prod_{v\ne x}\bar x_v. \]
It is nonzero at the generic point of $V(x)$ by coprimality. Including
the principal torus coordinates only adds an invertible diagonal
factor to this Jacobian. The adjacent coordinate map is therefore
\'{e}tale there. Lemma~\ref{lem:adjacent-chart} proves
$R[(x')^{-1}]=C[(x')^{\pm1}]$, the required adjacent chart.

The initial and adjacent opens cover every height-one point of
$\Spec\B_n[H_{\rm Pf}^{-1}]$: a height-one prime missing the initial
open contains exactly one mutable prime $x$, and its coprime companion
$x'$ is a unit there. Normality identifies the ring with the
intersection of these open coordinate rings. This proves
\eqref{eq:localized-upper-equality}. For $n=2$ there is no mutable
index, and the initial chart already gives the statement.
\end{proof}

\begin{proposition}\label{prop:actual-frozen-valuations}
For every frozen function $q$, including the principal functions,
the initial-coordinate $q$-adic valuation equals $\ord_{V(q)}$ on
$\Frac(\B_n)$.
\end{proposition}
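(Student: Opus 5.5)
The plan is to compare two discrete valuation rings with the same fraction field. The left side is the valuation $\nu_q$ of the prime $x_q$ in $L(t)=S[x_i^{\pm1}]$. The right side is $\ord_{V(q)}$, the valuation of the prime $q$ in the UFD $\B_n$ (Propositions~\ref{prop:branching-ufd} and~\ref{prop:initial-primes}). All initial functions lie in $\B_n$, so $L(t)\subseteq\B_n[M^{-1}]$, and the two rings have a common fraction field by Proposition~\ref{thm:initial-chart}. It therefore suffices, for each frozen $q$, to find a localization $B'$ of $\B_n$ with three properties: $q$ is not inverted, $B'$ is a localization of $L(t)$ at elements prime to $x_q$ (or at least birational to it), and the local rings at $(q)$ and $(x_q)$ coincide. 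I split into the principal and the non-principal frozen functions.

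\emph{Principal $q=p_j$.} Use the initial chart \eqref{eq:initial-chart}. There $\B_n[(H_{\rm Pf}M)^{-1}]$ is exactly $L(t)$ with $s_{2r},c_n$ and the mutable variables inverted, while every $p_j$ remains a polynomial variable. By Proposition~\ref{prop:initial-primes}, $p_j$ is a prime of $\B_n$ that is nonassociate to every factor of $H_{\rm Pf}M$. Hence $(p_j)$ survives in this common localization. The local ring of $\B_n$ at $(p_j)$ equals the local ring of the chart at $(p_j)$, and this equals the local ring of $L(t)$ at $(x_{p_j})$. So the two valuations agree.

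\emph{Non-principal $q\in\{s_2,\dots,s_{N-2},c_n\}$.} Here the initial chart inverts $q$, so I would instead work on the Gauss chart $\B_n[P^{-1}]\cong\C[p^{\pm1}]\otimes\C[\mathcal S_n]$ of Proposition~\ref{prop:gauss-chart}. Consider the morphism from $U=\Spec\B_n[(PM)^{-1}]$ to $\Spec L(t)[P^{-1}]$ given by the initial functions. By \eqref{eq:short-jacobian}, its Jacobian in the coordinates $(p,\mathbf u)$ is, up to the invertible diagonal torus factor, $c_n^{\rm Jac}\prod_{i}\bar x_i$, which is a unit on $U$. So the map is étale on $U$, and it is birational by field generation. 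The generic point of $V(q)$ lies in $U$, since $q$ is a prime nonassociate to every $p_j$ and every mutable $x_i$. Étaleness there gives two facts. First, the contraction of $(q)$ to $L(t)[(PM)^{-1}]$ has height one and contains $x_q$, so it equals $(x_q)$. Second, $x_q$ generates the maximal ideal of the local ring at $(q)$, because the map is unramified. Thus we have a local inclusion of DVRs with the same fraction field in which a uniformizer maps to a uniformizer, so the two DVRs are equal, as in the proof of Lemma~\ref{lem:adjacent-chart}. Hence $\nu_q=\ord_{V(q)}$.

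I expect the main obstacle to be the étaleness step: making sure the Jacobian in \eqref{eq:short-jacobian}, which is computed only for the non-principal functions on $\mathcal S_n$, really controls the full coordinate map once the principal torus characters are reattached. The point to check is that each $f$ restricts to a principal character times $\bar f$, so the full Jacobian is block triangular with invertible torus block. One must also check that the $n=3$ seed is covered, which it is since Proposition~\ref{thm:short-jacobian} includes it.
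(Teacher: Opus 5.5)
Your proposal is correct and follows essentially the same route as the paper. The principal frozen functions $p_j$ are handled through the initial chart \eqref{eq:initial-chart}. The functions $s_{2r}$ and $c_n$ are handled by étaleness of the initial-coordinate map at the generic point of $V(q)$ on the Gauss chart, using the Jacobian identity \eqref{eq:short-jacobian}; this identifies the two DVRs.

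Your check that reattaching the principal characters makes the full Jacobian block triangular, with invertible torus factor, is exactly the step the paper leaves implicit.
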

\begin{proof}
If $q=p_j$, its generic divisor point belongs to the initial chart
\eqref{eq:initial-chart}, since $p_j$ is nonassociate to all factors
of $H_{\rm Pf}M$. That chart identifies its local ring with the
ordinary coordinate $p_j$-adic DVR, proving the assertion.

For $q\in\{s_2,s_4,\ldots,s_{N-2},c_n\}$, the generic point of
$V(q)\subset\Spec\B_n$ belongs to $\Spec\B_n[P^{-1}]$,
identified with $T_G\times\mathcal S_n$, and no mutable prime
vanishes there. Thus \eqref{eq:short-jacobian} is nonzero there.
The birational map from the polynomial Gauss quotient to its initial
coordinate affine space is \'{e}tale at this generic point.
The contraction of $(q)$ therefore has height one and contains its
initial coordinate $q$, so is precisely $(q)$. The corresponding
DVRs in the common field coincide. The valuation on the initial
coordinate ring is its ordinary coordinate exponent valuation.
Inverting the principal characters changes neither valuation, so the
same conclusion holds on the full fraction field.
\end{proof}

\begin{theorem}\label{thm:a2c-algebra-equality}
For every $n\ge2$, with the separate seed specified for $n=3$, we have the equality
\begin{equation} \B_n=\UB(\Sigma_n)=\ULP(\Sigma_n). \label{eq:a2c-algebra-equality} \end{equation}
All frozen variables remain polynomial coefficients. For $n=2$ this gives
$\B_2=\C[p_1,p_2,p_3,s_2,c_2]$.
\end{theorem}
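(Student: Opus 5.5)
The equality will follow from Proposition~\ref{prop:comparison} applied with $A=\B_n$, the seed $t=\Sigma_n$ (for $n=3$ the separate seed \eqref{eq:rank-three-seed}), and $D=I_{\rm fr}\setminus\{p_1,\ldots,p_{N-1}\}$, so that $H=H_{\rm Pf}=c_nH_{\rm sph}$. The hypotheses on $A$ come from Proposition~\ref{prop:branching-ufd}: $\B_n$ is a finitely generated normal domain. By Proposition~\ref{thm:initial-chart} it has the same fraction field as the seed. By Proposition~\ref{prop:initial-primes} the selected frozen functions $s_{2r}$ and $c_n$ are pairwise nonassociate primes. I then check the three displayed hypotheses of Proposition~\ref{prop:comparison} in turn.

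First, the localized equality $A[H^{-1}]=\UB(t)[H^{-1}]$ is exactly Proposition~\ref{thm:localized-upper-equality}. Second, the valuation identity $\nu_q=\ord_{V(q)}$ for $q\in D$ is Proposition~\ref{prop:actual-frozen-valuations}.

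Third, I must show $\nu_q(x_i')=0$ for each $q\in D$ and each mutable $i$. Since $x_i'=E_i/x_i$ and $\nu_q(x_i)=0$, this reduces to showing that no $s_{2r}$ and not $c_n$ divides the binomial $E_i$ in the initial Laurent ring. For a binomial, this means that $q$ does not occur in both monomials. I would read this off the explicit exchanges \eqref{eq:lower-stencil}, \eqref{eq:middle-stencil}, \eqref{eq:left-even-exchange}--\eqref{eq:corrected-middle-boundary} and \eqref{eq:rank-three-seed}. By Proposition~\ref{prop:formal-seed}, the two monomials have disjoint supports. Hence each frozen variable appears in at most one term, so $\nu_q(E_i)=0$. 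An alternative argument is available: $x_i'$ is regular by Theorem~\ref{thm:regular-exchanges}, and $\gcd(x_i,x_i')=1$ by Corollary~\ref{cor:seed-and-coprimality}. Together with the valuation identification this gives $\nu_q(x_i')\ge0$, and the disjoint-support statement excludes strict positivity. For $n=3$ the exchanges $x_2s_4p_3+p_4c_3$ and the others visibly have disjoint supports.

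Proposition~\ref{prop:comparison} then gives $\B_n=\UB(\Sigma_n)=\ULP(\Sigma_n)$; the second equality is Theorem~\ref{thm:upper-invariance}. The frozen variables stay polynomial because the ring $S$ was never localized. For $n=2$ there are no mutable variables, so $\UB$ is just the polynomial ring $\C[p_1,p_2,p_3,s_2,c_2]$, and the same comparison gives $\B_2$ equal to it.

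The only point needing care is the bookkeeping of the divisors. The principal divisors $V(p_j)$ are not removed by $H$, but $\B_n[H^{-1}]$ still contains them. So the comparison should be run with $D$ only the spherical functions and $c_n$, as the proof of Proposition~\ref{prop:comparison} requires. I would verify that this matches its usage: the height-one primes removed by $H_{\rm Pf}$ are exactly $V(s_{2r})$ and $V(c_n)$, because these are prime.
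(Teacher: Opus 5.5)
Your proposal is correct and matches the paper's proof: the paper applies Proposition~\ref{prop:comparison} with $H=H_{\rm Pf}$, takes the localized equality from Proposition~\ref{thm:localized-upper-equality} and the valuation identification from Proposition~\ref{prop:actual-frozen-valuations}, and gets $\nu_q(x_i')=0$ from the disjoint supports of the two exchange monomials. Your additional remarks are correct bookkeeping that the paper leaves implicit: the explicit $n=3$ and $n=2$ checks, and the observation that the principal divisors are already handled inside $\B_n[H_{\rm Pf}^{-1}]$.
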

\begin{proof}
We take $H=H_{\rm Pf}$ in Proposition~\ref{prop:comparison}.
The frozen factors are pairwise nonassociate primes, and the localized
ring equality is Proposition~\ref{thm:localized-upper-equality}.
For each chosen frozen $q$, the two distinct monomials of an initial
exchange have disjoint supports, so the exchange has coordinate
$q$-valuation zero. The mutable denominator also has valuation zero;
hence $\nu_q(x_i')=0$. Proposition~\ref{prop:actual-frozen-valuations}
identifies these coordinate valuations with the divisor orders.
The comparison criterion and Theorem~\ref{thm:upper-invariance} prove
\eqref{eq:a2c-algebra-equality}.
\end{proof}

\section{Comparison with mirror algebras}
\label{sec:mechanism}
\label{sec:mirror}
We now construct bases of the upper Laurent phenomenon algebras
considered in this paper. In the applications, $R_0$ is the branching
algebra and $R$ is its localization at the frozen functions. The
mirror algebra of \cite{KeelYu} has a distinguished theta basis.
We first give an algebraic criterion under which its Laurent
expansions define an isomorphism with $R$ and determine which theta
functions belong to $R_0$. We then construct these expansions for the
toric blowups used below. The final subsection gives an alternative
criterion using Newton polytopes.

\subsection{Mirror algebras and the comparison map}
\label{sec:marking}
We recall the parts of \cite{KeelYu} used below. Let $V$ be a smooth
connected affine log Calabi--Yau variety over $\C$, containing an open
algebraic torus $T_V$. We fix a logarithmic volume form $\omega_V$
whose restriction to $T_V$ is a nonzero constant multiple of the
invariant volume form. For the non-archimedean construction, $\C$
has the trivial absolute value.

\subsubsection*{The skeleton and the coefficient ring.}
The \emph{integer points of the essential skeleton} are
\[ \Sk(V,\Z)=\{0\}\sqcup\{a\ord_E:a\in\Z_{>0},\ \omega_V\text{ has a pole along }E\}, \]
where $E$ ranges over prime divisors on birational models of $V$
\cite[Section~1 and Lemma~2.2]{KeelYu}. Choose a smooth projective
compactification $V\subset Y$ whose boundary has simple normal
crossings (snc). Let $\op{NE}(Y)\subset N_1(Y,\Z)$ be the monoid
of effective integral curve classes modulo numerical equivalence,
and put
\[ \mathscr R_Y=\C[\op{NE}(Y)]=\bigoplus_{\gamma\in\op{NE}(Y)}\C z^\gamma,\qquad z^\gamma z^{\gamma'}=z^{\gamma+\gamma'}. \]
Here $\op{NE}(Y)$ denotes an integral monoid, not the real
cone of curves.

\subsubsection*{Multiplication, trace, and specialization.}
The \emph{mirror algebra} of \cite[Theorems~1.2 and~1.6]{KeelYu},
after extension of scalars from $\Z$ to $\C$, is
\[ \Ageom(V)=\bigoplus_{p\in\Sk(V,\Z)}\mathscr R_Y\vartheta_p. \]
The basis elements $\vartheta_p$ are the \emph{theta functions}, and
this basis is the \emph{theta basis}.
We suppress the fixed compactification $Y$ in this notation. The
multiplication is
\[ \vartheta_{p_1}\vartheta_{p_2}=\sum_{p,\gamma}\chi(p_1,p_2,p,\gamma)z^\gamma\vartheta_p. \]
The nonnegative integers $\chi(p_1,p_2,p,\gamma)$ count the analytic
disks defined in \cite[Definition~1.5]{KeelYu}; the sum is finite.
This multiplication is commutative and associative, with unit
$\vartheta_0$. The \emph{trace} is the coefficient of $\vartheta_0$:
\[ \Tr\!\left(\sum_p a_p\vartheta_p\right)=a_0\in\mathscr R_Y. \]
The traces of products are the rational-curve counts of
\cite[Definition~1.1 and Theorem~1.2]{KeelYu}.
Lemma~\ref{lem:geometric-product} compares this trace with the
constant coefficient of the corresponding Laurent expansions.

For a homomorphism $\xi:\mathscr R_Y\to\C$, write
\[ \Ageom(V)_\xi=\Ageom(V)\otimes_{\mathscr R_Y,\xi}\C. \]
The subscript $1$ means $\xi(z^\gamma)=1$ for every curve class.
This specialization is defined for the algebraic coefficient ring
above, and the resulting algebra is independent of the compactification
\cite[Remark~1.3 and Section~17]{KeelYu}. We use the same notation
$\Tr$ for the specialized trace. Thus $\Ageom(V)_1$ denotes this specialization of the mirror algebra.
The theta functions are intrinsically indexed by $\Sk(V,\Z)$,
and their multiplication is independent of the open torus
\cite[Proposition~13.4]{KeelYu}. A different torus changes the coordinates,
not this algebra with its theta basis. In comparing the transposed
constructions from two seeds, we must additionally compare the resulting
varieties and their homomorphisms to $R$.

\subsubsection*{The two lattices and monomial valuations.}
The inclusion $T_V\subset V$ identifies
$\Sk(V,\Z)$ with $X_*(T_V)$
\cite[Lemma~2.2 and Section~1.1]{KeelYu}. Let $R\subseteq\C[L]$,
where $L$ is the character lattice of the original torus, and write
$\C(L)=\Frac\C[L]$. We choose an isomorphism $X_*(T_V)\simeq L$.
Thus theta functions are indexed
by $m\in L$, whereas the characters on $T_V$ are $y^h$ with
$h\in L^*:=\Hom(L,\Z)$.

For $w\in L_\R^*$, the \emph{monomial valuation} on $\C(L)$ is
\[ v_w\!\left(\sum_m c_mx^m\right)=\min_{c_m\ne0}\langle w,m\rangle,\qquad v_w(g/h)=v_w(g)-v_w(h). \]
We put $v_w(0)=+\infty$ and give nonzero constants value zero. This
is also called a Gauss valuation. In particular, $v_{e_f^*}$ is the
$x_f$-adic valuation with all other coordinates assigned value zero.
Its equality with a divisor valuation on a branching variety is a
geometric assertion, proved in Proposition~\ref{prop:actual-frozen-valuations}.
For a nonzero regular function $\psi$ on $V$, restriction to $T_V$
is a Laurent polynomial. Its \emph{tropicalization} in these coordinates is
\[ \psi=\sum_{h\in L^*}c_hy^h,\qquad \psi^{\trop}(m)=\min_{c_h\ne0}\langle h,m\rangle\quad(m\in L_\R). \]
On the original Laurent ring, we write $\CT(g)=[x^0]g$ for the
constant coefficient. Lemma~\ref{lem:geometric-product} will compare
this functional with the trace on the mirror algebra.

Our aim is to construct an algebra homomorphism
\[ \Phi:\Ageom(V)_\xi\longrightarrow R\subseteq\C[L], \]
whose value on $\vartheta_m$ is a finite Laurent polynomial with
least term $x^m$. The order on exponents is fixed in the next
subsection. We then prove that $\Phi$ is an isomorphism and that the
divisor orders select a basis of $R_0$.

\subsection{Valuation filtrations and finite subtraction}
\label{sec:finite-control}
\subsubsection*{Leading exponents and Laurent expansions.}
We fix a \emph{translation-invariant total order} $<$ on $L$: if
$m<n$, then $m+\ell<n+\ell$ for every $\ell\in L$. For a nonzero
Laurent polynomial $g=\sum_m c_mx^m$, define
\[ \lead g=\min_{<}\{m\in L:c_m\ne0\}. \]
For a rational function we set $\lead(g/h)=\lead g-\lead h$;
multiplicativity of the least terms makes this well defined. All
leading exponents below use this order. In Section~\ref{sec:binomial}
we choose a specific lexicographic order.

For the algebraic comparison, let $\mathcal A$ be a $\C$-algebra
with basis $\{\vartheta_m:m\in\Lambda\}$, where $\Lambda\subseteq L$.
We assume that an algebra homomorphism has finite expansions
\begin{equation} \Phi:\mathcal A=\bigoplus_{m\in\Lambda}\C\vartheta_m\longrightarrow R,\qquad \Theta_m:=\Phi(\vartheta_m)=x^m+\sum_{n>m}a_{m,n}x^n. \label{eq:marked} \end{equation}
In the mirror application, $\mathcal A=\Ageom(V)_\xi$ and $\Lambda=L$.
Distinct leading exponents make $\Phi$ injective. Theorems below
prove surjectivity; Lemmas~\ref{lem:geometric-expansions}
and~\ref{lem:geometric-product} construct the homomorphism for the
binomial cases.

We use the following lemma in the proofs of Theorem~\ref{thm:comparison} and Proposition~\ref{thm:newton}.
\begin{lemma}\label{lem:subtraction}
Let $E\subseteq R$ be a vector subspace. Suppose the set of leading
exponents of its nonzero elements is finite, and that
\[ 0\ne g\in E,\quad m=\lead g \quad\Longrightarrow\quad m\in\Lambda\ \text{ and }\ \Theta_m\in E. \]
Then $E$ has basis $\{\Theta_m:\Theta_m\in E\}$.
\end{lemma}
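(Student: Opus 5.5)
The plan is a finite leading-term subtraction, run as an induction on leading exponents inside $E$. Put $E_\Theta=\operatorname{span}\{\Theta_m:\Theta_m\in E\}\subseteq E$. The elements $\Theta_m$ have pairwise distinct leading exponents $\lead\Theta_m=m$ by \eqref{eq:marked}, so they are linearly independent: in any finite nontrivial combination, the coefficient of the smallest exponent cannot be cancelled. It therefore suffices to prove $E\subseteq E_\Theta$.

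For spanning, let $\Lambda_E\subseteq L$ be the finite set of leading exponents of nonzero elements of $E$, and induct downward with respect to the total order $<$ restricted to $\Lambda_E$. Suppose $0\ne g\in E$ and put $m=\lead g$, with leading coefficient $c$. By hypothesis $m\in\Lambda$ and $\Theta_m\in E$. Hence $g_1=g-c\Theta_m\in E$, and since $\Theta_m=x^m+(\text{higher terms})$, either $g_1=0$ or $\lead g_1>m$. In the latter case $\lead g_1$ is again an element of $\Lambda_E$, now strictly larger than $m$.

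Because $\Lambda_E$ is finite and totally ordered, the leading exponents strictly increase along the iteration $g,g_1,g_2,\ldots$, so the iteration cannot continue indefinitely. It stops after at most $|\Lambda_E|$ steps, which forces some $g_k=0$. Formally, one inducts on the number of elements of $\Lambda_E$ that are $\ge\lead g$; the base case is the maximal element of $\Lambda_E$, where $g_1$ must already vanish. This expresses $g$ as a finite combination of the $\Theta_m$ that lie in $E$.

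There is no real obstacle in this argument. The only point that needs care is that we never use well-foundedness of $<$ on all of $L$, which generally fails for a translation-invariant order on $\Z^k$. Termination comes solely from the finiteness of $\Lambda_E$, and the fact that each subtraction stays inside $E$ is exactly what guarantees that every new leading exponent lies in this finite set.
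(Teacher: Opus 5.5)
Your proposal is correct and follows essentially the same argument as the paper. The paper also subtracts $c\,\Theta_{\lead g}$ inside $E$, uses finiteness of the leading-exponent set for termination, and gets linear independence from the least occurring exponent. Your explicit downward induction over the finite set $\Lambda_E$ just spells out the paper's one-line termination step.
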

\begin{proof}
We subtract the leading coefficient of $g$ times $\Theta_{\lead g}$.
The remainder stays in $E$, and its leading exponent strictly
increases unless it is zero. There are only finitely many possible leading exponents, so the
process terminates. The least occurring exponent proves
independence of every finite family.
\end{proof}

For the boundary version, let $q:L\to\Gamma$ be a lattice grading.
Assume $R$ is graded and each $\Theta_m$ is homogeneous of degree
$q(m)$. Let $\nu_1,\ldots,\nu_s$ be integer-valued valuations of
$\Frac R$, trivial on $\C^*$, and let $\phi_f:\Lambda\to\Z$ be
functions. For prescribed integers $\mathbf d=(d_1,\ldots,d_s)$, we put
\[ R(\mathbf d)=\{g\in R:\nu_f(g)\ge d_f\text{ for every }f\},\qquad \nu_f(0)=+\infty. \]
These spaces form the \emph{filtration by the valuations}:
\[ \mathbf d'\ge\mathbf d\ \Longrightarrow\ R(\mathbf d')\subseteq R(\mathbf d),\qquad R(\mathbf d)R(\mathbf e)\subseteq R(\mathbf d+\mathbf e). \]
For $\nu_f=\ord_{D_f}$, a nonnegative $d_f$ prescribes vanishing
of order at least $d_f$ along $D_f$; a negative $d_f$ permits a pole
of order at most $-d_f$. In particular, $d_f=0$ means regularity at
the generic point of $D_f$.

\begin{theorem}\label{thm:comparison}
Assume \eqref{eq:marked}. Suppose every
leading exponent occurring in $R$ belongs to $\Lambda$, and
\begin{equation} \nu_f(\Theta_m)\ge\phi_f(m),\qquad \nu_f(g)\le\phi_f(\lead g)\quad(0\ne g\in R). \label{eq:sandwich} \end{equation}
If, for every $\gamma\in\Gamma$ and $\mathbf d\in\Z^s$, the set
\begin{equation} \{m\in\Lambda:q(m)=\gamma,\ \phi_f(m)\ge d_f\ \text{for all }f\} \quad\text{is finite}, \label{eq:finite-labels} \end{equation}
then $\Phi$ is an isomorphism. Moreover
\begin{equation}
R(\mathbf d)=\bigoplus_{\substack{m\in\Lambda\\
                      \phi_f(m)\ge d_f\ \forall f}}\C\Theta_m,
 \qquad
 \nu_f\!\left(\sum_m c_m\Theta_m\right)
       =\min_{c_m\ne0}\phi_f(m).
 \label{eq:all-thresholds}
\end{equation}
The sum in the valuation formula is finite and nonzero.
\end{theorem}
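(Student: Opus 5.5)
The plan is to reduce everything to Lemma~\ref{lem:subtraction}, applied to the graded pieces of the filtered spaces $R(\mathbf d)$. Throughout, injectivity of $\Phi$ is already available from the distinct leading exponents in \eqref{eq:marked}. First I would check that $R$ is spanned by homogeneous elements and that $R(\mathbf d)$ is a graded subspace. Since the $\Theta_m$ are homogeneous and the leading term of a homogeneous element is homogeneous, it suffices to work in a fixed degree $\gamma$. For homogeneity of $R(\mathbf d)$ I would argue in one of two ways. One option is to note that $\nu_f$ is the order along an invariant divisor in the applications. The cleaner option is to derive it at the end from the basis description itself, since a direct sum of homogeneous spans is graded. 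To avoid circularity, I will instead apply the subtraction lemma to $E=R(\mathbf d)_\gamma$, the homogeneous part of degree $\gamma$ intersected with $R(\mathbf d)$. I then recover all of $R(\mathbf d)$ afterward, using the fact that $\nu_f$ of a sum of components of distinct degree is at least the minimum.

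Next, fix $\gamma$ and $\mathbf d$, and put $E=\{g\in R_\gamma:\nu_f(g)\ge d_f\ \forall f\}$. Let $0\ne g\in E$ with $m=\lead g$. Then $m\in\Lambda$ by hypothesis and $q(m)=\gamma$. The second inequality of \eqref{eq:sandwich} gives $d_f\le\nu_f(g)\le\phi_f(m)$. So $m$ lies in the set \eqref{eq:finite-labels}, which is finite; hence the leading exponents of $E$ form a finite set. The first inequality of \eqref{eq:sandwich} gives $\nu_f(\Theta_m)\ge\phi_f(m)\ge d_f$, and $\Theta_m$ is homogeneous of degree $\gamma$, so $\Theta_m\in E$. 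Lemma~\ref{lem:subtraction} then shows that $E$ has basis $\{\Theta_m:\Theta_m\in E\}$. Moreover this basis set equals $\{m:q(m)=\gamma,\ \phi_f(m)\ge d_f\}$: the inclusion ``$\supseteq$'' is the argument just given, and ``$\subseteq$'' holds because $\Theta_m\in E$ forces $\phi_f(m)\ge\nu_f(\Theta_m)\ge d_f$, applying the second sandwich inequality to $g=\Theta_m$, whose lead is $m$. Taking each $d_f$ very negative would show $R_\gamma$ is spanned by the $\Theta_m$, but a uniform finite lower bound is only available for elements, not for all of $R_\gamma$. So surjectivity is argued elementwise: a given $g$ lies in the piece $E$ with $d_f=\nu_f(g)$. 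This gives surjectivity, and hence $\Phi$ is an isomorphism.

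For the full identity \eqref{eq:all-thresholds}, the inclusion ``$\supseteq$'' follows from the first sandwich inequality together with the ultrametric inequality. For ``$\subseteq$'', an element of $R(\mathbf d)$ is a finite sum of homogeneous components. The hardest point is to show that each component also lies in $R(\mathbf d)$, since the valuations are not assumed graded. My plan is to expand $g=\sum c_m\Theta_m$, which is possible by surjectivity, and then prove the valuation formula directly. Let $\delta_f=\min_{c_m\ne0}\phi_f(m)$; then $\nu_f(g)\ge\delta_f$ by the ultrametric inequality. For the reverse inequality, fix $f$ and consider $\mathbf d$ with $d_f=\nu_f(g)$ and every other $d_{f'}$ very small. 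If $\nu_f(g)>\delta_f$, then $g$ lies in the corresponding degreewise spaces. This requires the components of $g$ to lie there as well, which is again the gradedness issue. If the valuations are homogeneous, meaning compatible with $\Gamma$, this step is immediate; I expect to justify that compatibility via the torus action in the applications, or to impose it as the implicit standing assumption. Under that assumption, $g\in R(\mathbf d)$ has a unique theta expansion using only indices $m$ with $\phi_f(m)\ge d_f>\delta_f$. This contradicts uniqueness of the coefficients, so $\nu_f(g)=\delta_f$, and the direct-sum description follows.
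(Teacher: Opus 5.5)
Your first half is essentially the paper's argument. You apply Lemma~\ref{lem:subtraction} to $E=R_\gamma\cap R(\mathbf d)$, use the upper inequality to put leading exponents in the finite set \eqref{eq:finite-labels} and the lower inequality to put $\Theta_m$ back in $E$, and take $d_f=\nu_f(g)$ for homogeneous $g$ to get surjectivity.

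The second half has a genuine gap. For the inclusion $R(\mathbf d)\subseteq\bigoplus_{\phi_f(m)\ge d_f}\C\Theta_m$ and for the minimum formula, you need the homogeneous components of an element of $R(\mathbf d)$ to lie in $R(\mathbf d)$. You propose to obtain this by assuming the $\nu_f$ are compatible with the $\Gamma$-grading. That hypothesis is not in the theorem, where the $\nu_f$ are arbitrary integer-valued valuations, and the paper notes explicitly that gradedness of $R(\mathbf d)$ is not assumed. As written, you prove a weaker statement.

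The missing idea is that you never need homogeneous components, because the upper inequality in \eqref{eq:sandwich} is assumed for every nonzero $g\in R$, homogeneous or not.
\begin{itemize}
\item Take $g=\sum c_m\Theta_m\in R(\mathbf d)$, a finite sum by surjectivity, and let $m_0$ be the least index with $c_{m_0}\ne0$. Since each $\Theta_m$ is $x^m$ plus strictly larger exponents, $\lead g=m_0$.
\item The upper inequality gives $\phi_f(m_0)\ge\nu_f(g)\ge d_f$ for every $f$. The lower inequality then gives $\nu_f(c_{m_0}\Theta_{m_0})\ge d_f$.
\item Hence $g-c_{m_0}\Theta_{m_0}$ still lies in $R(\mathbf d)$ and has one fewer component. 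Induction on the number of components shows that every occurring index satisfies $\phi_f(m)\ge d_f$, which is the missing inclusion.
\item Run the same peeling with the single bound $\nu_f\ge\nu_f(g)$. This gives $\min_{c_m\ne0}\phi_f(m)\ge\nu_f(g)$, and the ultrametric inequality with the lower bound gives the reverse. That proves the minimum formula with no gradedness hypothesis.
\end{itemize}
In the applications the $\nu_f$ are monomial valuations, so your extra assumption would in fact hold there. The theorem as stated does not need it.
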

\begin{proof}
Let $m=\lead g$, and let $c$ be the coefficient of $x^m$ in $g$.
The two inequalities in \eqref{eq:sandwich} give
\[ \nu_f(\Theta_m)\ge\phi_f(m)\ge\nu_f(g). \]
Hence $g-c\Theta_m$ satisfies every valuation bound satisfied by $g$.
Its least exponent increases unless the remainder is zero.

For fixed $\gamma,\mathbf d$, we take
$E=R_\gamma\cap R(\mathbf d)$. The upper inequality puts every leading exponent
in the finite set \eqref{eq:finite-labels}; the lower inequality puts
its $\Theta_m$ back in $E$. Lemma~\ref{lem:subtraction} applies.
Every homogeneous $g$ lies in such an $E$, taking $d_f=\nu_f(g)$.
The finite homogeneous decomposition of an arbitrary element proves
spanning, and hence surjectivity of $\Phi$.

Applying both inequalities to $\Theta_m$ gives
$\nu_f(\Theta_m)=\phi_f(m)$. We now take a finite basis sum in
$R(\mathbf d)$. Its least index $m$ is its leading exponent, so the
upper inequality gives $\phi_f(m)\ge d_f$ for all $f$. Removing that
entire basis component preserves each inequality $\nu_f\ge d_f$. Induction on the
number of components gives the first formula in
\eqref{eq:all-thresholds}. For a single valuation the same argument,
with the other bounds $d_j$ chosen below all component values, gives
the second formula.
\end{proof}

The upper inequality is required for \emph{all} nonzero $g$, not only
homogeneous ones. The proof does not assume that the spaces
$R(\mathbf d)$ are graded. Homogeneity is used only to obtain spanning
in finite sets; the last argument proves the decomposition of every
space $R(\mathbf d)$.

Distinct monic leading terms alone would not suffice. The monic family
$x^n+x^{n+1}$, $n\in\Z$, spans only
$(1+x)\C[x^{\pm1}]$. Its leading-term subtraction for $1$ runs
through infinitely many exponents. The finiteness condition
\eqref{eq:finite-labels} ensures that the subtraction process terminates.

\begin{corollary}	\label{cor:locally-finite}
	Assume all the hypotheses of Theorem~\ref{thm:comparison}, except
	possibly the finiteness condition \eqref{eq:finite-labels}.
	Suppose in addition that $R=R_0[f_1^{-1},\ldots,f_s^{-1}]$,
	where $R_0$ is a $\Gamma$-graded algebra with finite-dimensional
	homogeneous components and the $f_j$ are homogeneous. Assume that
	for every $\mathbf d\in\mathbb Z^s$,
	$R(\mathbf d)=f^{\mathbf d}R_0$ and $f^{\mathbf d}=\prod_{j=1}^s f_j^{d_j}$.
	Then the finiteness condition \eqref{eq:finite-labels} is automatic.
	Consequently, $\Phi$ is an isomorphism and the formulas \eqref{eq:all-thresholds} hold.
\end{corollary}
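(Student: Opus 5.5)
We show that the set in \eqref{eq:finite-labels} injects into a finite-dimensional homogeneous component of $R_0$. Fix $\gamma\in\Gamma$ and $\mathbf d\in\Z^s$, and let $M$ be that set. For each $m\in M$, the lower inequality in \eqref{eq:sandwich} gives $\nu_f(\Theta_m)\ge\phi_f(m)\ge d_f$ for every $f$. Since $\Theta_m$ is homogeneous of degree $q(m)=\gamma$, we get
\[ \Theta_m\in R_\gamma\cap R(\mathbf d). \]
By hypothesis $R(\mathbf d)=f^{\mathbf d}R_0$. Write $\delta=\sum_j d_j\deg f_j$. Because $f^{\mathbf d}$ is a homogeneous unit of $R$ and $R_0\subseteq R$ is a graded inclusion, multiplication by $f^{-\mathbf d}$ gives
\[ R_\gamma\cap R(\mathbf d)=f^{\mathbf d}\,(R_0)_{\gamma-\delta}. \]
One should check here that the homogeneous component of $f^{\mathbf d}R_0$ in degree $\gamma$ is exactly $f^{\mathbf d}$ times the degree $\gamma-\delta$ component of $R_0$. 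This holds because homogeneous components of $R_0$ sit inside those of $R$, and multiplication by a homogeneous unit shifts degrees.

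By assumption, $(R_0)_{\gamma-\delta}$ is finite-dimensional. The elements $\Theta_m$ with $m\in M$ have pairwise distinct leading exponents by \eqref{eq:marked}, so they are linearly independent: the least exponent among the components of any finite nontrivial combination cannot cancel. This linear independence was already noted in Lemma~\ref{lem:subtraction}. An independent family inside a finite-dimensional space is finite, so $|M|\le\dim(R_0)_{\gamma-\delta}<\infty$. This is \eqref{eq:finite-labels}.

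With finiteness established, all hypotheses of Theorem~\ref{thm:comparison} hold, and that theorem gives both that $\Phi$ is an isomorphism and the formulas \eqref{eq:all-thresholds}. The only point requiring care is the grading bookkeeping in the display above. Here I would check that $R$ carries the grading induced from $R_0$ by localization at homogeneous elements, so that the $\Gamma$-grading used for $q$ is the one in which the $f_j$ are homogeneous.
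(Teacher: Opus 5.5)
Your proposal is correct and is essentially the paper's proof: the lower inequality in \eqref{eq:sandwich} puts every $\Theta_m$ indexed by the set \eqref{eq:finite-labels} into the finite-dimensional space $(f^{\mathbf d}R_0)_\gamma$, and distinct leading exponents make these elements linearly independent, so only finitely many indices can occur. Your identification of $(f^{\mathbf d}R_0)_\gamma$ with $f^{\mathbf d}(R_0)_{\gamma-\sum_j d_j\deg f_j}$, and your check that the grading on $R$ is the one induced from $R_0$, only make explicit what the paper leaves implicit.
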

\begin{proof}
For fixed $\gamma,\mathbf d$, every such $\Theta_m$ lies in
$(f^{\mathbf d}R_0)_\gamma$. Distinct leading terms are linearly
independent, so this finite-dimensional space contains only finitely many
such indices. This proves \eqref{eq:finite-labels} and permits application
of the theorem.
\end{proof}

We can also prove finiteness directly from the boundary forms.
\begin{proposition}\label{prop:balance}
Suppose $\phi_f(m)=\min_{h\in S_f}\pair hm$ for finite nonempty
sets $S_f\subseteq L^*$, and let $H$ have all these forms as its rows.
After choosing coordinates $L\simeq\Z^d$, let $B\in\R^{r\times d}$
have full row rank and satisfy $\ker q_\R=\op{im}B^\T$.
For $\gamma\in\Gamma_\R$ and $\mathbf d\in\R^s$, set
\[ P_{\gamma,\mathbf d}=\{m\in L_\R:q_\R(m)=\gamma,\ \langle h,m\rangle\ge d_f\text{ for }h\in S_f\}. \]
If
\begin{equation} \rank(HB^\T)=r,\qquad (HB^\T)^\T\zeta=0, \qquad\zeta>0, \label{eq:balance} \end{equation}
then every nonempty $P_{\gamma,\mathbf d}$ is bounded. For
integral data it is a rational polytope, so
\eqref{eq:finite-labels} follows for any $\Lambda\subseteq L$.
\end{proposition}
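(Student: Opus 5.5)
The boundedness of every nonempty $P_{\gamma,\mathbf d}$ reduces to the recession cone of the polyhedron. $P_{\gamma,\mathbf d}$ is defined by one linear equation system $q_\R(m)=\gamma$ together with finitely many inequalities $\langle h,m\rangle\ge d_f$. If it is nonempty, its recession cone is
\[ \operatorname{rec}P=\{v\in L_\R:q_\R(v)=0,\ \langle h,v\rangle\ge0\text{ for all rows }h\text{ of }H\}. \]
A nonempty polyhedron is bounded exactly when its recession cone is $\{0\}$. So I will show that every $v$ in this cone vanishes.

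Take $v\in\operatorname{rec}P$. Since $\ker q_\R=\op{im}B^\T$, write $v=B^\T a$ with $a\in\R^r$. The inequalities then say $u:=HB^\T a\ge0$ coordinatewise. Pair $u$ with the positive vector $\zeta$:
\[ \zeta^\T u=\zeta^\T HB^\T a=\bigl((HB^\T)^\T\zeta\bigr)^\T a=0. \]
Since $\zeta>0$ and $u\ge0$, this forces $u=0$, so $HB^\T a=0$. Because $\rank(HB^\T)=r$, the $(\#\text{rows})\times r$ matrix $HB^\T$ has trivial kernel, so $a=0$ and $v=0$. This proves boundedness.

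For integral data the defining equations and inequalities have rational coefficients, so $P_{\gamma,\mathbf d}$ is a bounded rational polyhedron, hence a rational polytope. It therefore contains finitely many lattice points. The set in \eqref{eq:finite-labels} is contained in $P_{\gamma,\mathbf d}\cap L$: indeed $\phi_f(m)\ge d_f$ is equivalent to $\langle h,m\rangle\ge d_f$ for all $h\in S_f$, because $\phi_f$ is the minimum of these forms. So that set is finite for any $\Lambda\subseteq L$.

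The only delicate point is the recession-cone characterization of boundedness, which is standard, for example via the Minkowski--Weyl decomposition. A minor care point is that $\gamma$ must lie in $\op{im}q_\R$ for nonemptiness; otherwise the statement is vacuous.
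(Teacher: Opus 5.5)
Your proof is correct and matches the paper's argument. Both reduce boundedness to showing that the recession cone $\{v:q_\R(v)=0,\ Hv\ge0\}$ is zero: write $v=B^\T a$, use $\zeta>0$ to force $HB^\T a=0$, and use the rank condition to get $a=0$. Your remark that the set in \eqref{eq:finite-labels} lies in $P_{\gamma,\mathbf d}\cap L$ spells out a final step the paper leaves implicit.
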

\begin{proof}
If $HB^\T a\ge0$, multiplication by $\zeta^\T$ forces
$HB^\T a=0$, and full rank forces $a=0$. Thus the recession cone
$\{v:q(v)=0,\ Hv\ge0\}$ is zero. Hence $P_{\gamma,\mathbf d}$ is
bounded when nonempty. When the defining data are integral, its
finite linear description makes it a rational polytope.
\end{proof}

The criterion depends only on the lattice, the grading, and the boundary forms.

\subsection{Laurent expansions from toric blowups}
\label{sec:binomial}
We now construct the homomorphism \eqref{eq:marked} required by the
comparison theorem. Set $d=r+s$, $L=\Z^d$, and take a normalized
oriented binomial seed
\[ B=(B_0\mid B_f)\in\Z^{r\times d},\qquad b_i=B^\T e_i,\quad E_i=x^{[b_i]_+}+x^{[-b_i]_+},\quad x_i'=E_i/x_i. \]
The last $s$ coordinates are frozen, $B_{ii}=0$, and the ordinary and
associated exchanges agree at this initial seed. Subsequent
mutations use the associated exchanges. Fix a unimodular completion
$B_p\in\mathrm{GL}_d(\Z)$ whose first $r$ rows are $B$; this completion
is a choice of lattice coordinates, not a canonical matrix.
Thus the subgroup generated
by those rows is saturated in $\Z^d$; this is stronger than full real
row rank. Relative to this completion, put
\[
 B^\vee=-\bigl(B_pe_1,\ldots,B_pe_r\bigr)^\T.
\]
We call $\Sigma(B^\vee)$ the \emph{transposed seed} when it satisfies
the LP seed axioms. Its first $r$ variables are mutable and its last
$s$ variables are frozen; in the dual construction the frozen variables
are inverted. This is an $r\times d$ matrix, not the rectangular
transpose $B^\T$; in $B_pe_i$, $e_i$ is the $i$th basis vector of
$\Z^d$. Here $e_i$ in $B^\T e_i$ is the $i$th basis vector of
$\Z^r$, whereas the coordinates of $L=\Z^d$ are indexed by
$1,\ldots,d$. Then
\[ \Pi=B^\T\N^r,\qquad q(m)=\bigl((B_p^{-\T}m)_{r+1},\ldots,(B_p^{-\T}m)_d\bigr),\qquad \ker q=B^\T\Z^r. \]
For this construction, we use the order
\[ m<n\quad\Longleftrightarrow\quad B_p^{-\T}m<_{\mathrm{lex}}B_p^{-\T}n, \]
where coordinates are compared in the order $1,\ldots,d$. Since
$B_p^{-\T}B^\T a=(a_1,\ldots,a_r,0,\ldots,0)^\T$,
we have $m<m+B^\T a$ whenever $0\ne a\in\N^r$. Thus $x^m$ is the
least term of each Laurent expansion in Proposition~\ref{prop:binomial}.

The Laurent rings used here invert the frozen variables. In the
notation of Section~\ref{sec:setup}, they are
$\mathscr L_t=L(t)[x_f^{-1}:f\in I_{\rm fr}]$. 
Let $\mathscr L_0$ denote the initial ring and $\mathscr L_i$ the
ring obtained by mutation at $i$, all inside $\C(L)$. The assumption on the original algebra is
\begin{equation} R=\bigcap_{i=0}^r\mathscr L_i, \qquad R\subseteq\mathscr L_t \quad\text{for every additional Laurent chart used below}. \label{eq:original-input} \end{equation}
These containments make $R$ graded by $q$.

\subsubsection*{The blowup construction.}
We use \cite[Construction~3.4]{GHKBirational} with lattice pair
$(L,L^*)$, vectors $(b_i,e_i^*)$, and all constants and multiplicities
equal to $1$. Its orthogonality condition is
$\langle e_i^*,b_i\rangle=0$; no skew-symmetrizability of $B$ is required.
Let $T_V=\Spec\C[L^*]$, with characters $y_j=y^{e_j^*}$.
The vectors $b_i=B^\T e_i$ are primitive, since the rows of $B$
are part of the unimodular matrix $B_p$. In the cocharacter space
$L_\R$, take the fan consisting of the zero cone and the rays
$\rho_i=\R_{\ge0}b_i$, and let $X_\Sigma$ be its toric variety.
Write $D_i^\circ$ for the open orbit of the divisor corresponding
to $\rho_i$. Its character lattice is $b_i^\perp\cap L^*$.
Since $\langle e_i^*,b_i\rangle=B_{ii}=0$, $y_i$ is an invertible
regular function on $D_i^\circ$. The \emph{blowup centre} is
\[ Z_i=\{1+y_i=0\}\subset D_i^\circ. \]
It has codimension one in $D_i^\circ$ and codimension two in
$X_\Sigma$. Blow up the disjoint centres $Z_i$ and remove the strict
transform of the toric boundary. Denote the resulting open variety by
$V^\circ$.

Near $Z_i$, choose a toric parameter $t_i$ with
$\ord_{D_i}(t_i)=1$, put $z_i=y_i$, and complete these to coordinates
with the remaining $d-2$ coordinates invertible. The ideal of $Z_i$
is $(t_i,1+z_i)$. After deleting the strict transform of $D_i$, the
blowup is the chart
\[ t_i s_i=1+z_i,\qquad z_i\ne0, \]
times a $(d-2)$-dimensional torus, as in
\cite[Lemma~3.1]{GHKBirational}. Thus regularity across the exceptional
divisor is a divisibility condition by powers of $1+z_i$.

\subsubsection*{The affine variety and its compactification.}
Assume that $V^\circ\subset V$ is a big open immersion into a smooth
affine log Calabi--Yau variety, preserving the torus volume and the
centre equations. Choose an snc compactification $V\subset Y$
satisfying \cite[Assumption~2.4]{KeelYu}. After toroidal blowups,
its essential boundary defines a smooth toric fan containing the rays
$\rho_i$ \cite[Lemma~2.2 and Assumption~2.4]{KeelYu}.
Take the subfan consisting of the zero cone and all rays, blow up the original
centres $Z_i$, and call the resulting partial toric blowup $X_{\rm bl}$.
No centre is added on the extra rays. In particular, $X_{\rm bl}$
contains $V^\circ$.

The identity on the torus gives $\varphi:Y\dashrightarrow X_{\rm bl}$.
It is an isomorphism on $V^\circ$ and near the generic point of every
pole divisor: \cite[Lemma~2.6]{KeelYu} identifies the latter with the
corresponding toric divisor, and the centres avoid these generic points.
If $Y_0$ is the isomorphism locus and $D_{\rm pole}$ is the union of
the pole divisors, then
\[
 Z=\overline{(V\setminus V^\circ)\cup(D_{\rm pole}\setminus Y_0)}\subset Y
 \quad\text{has codimension at least two in }Y.
\]
Indeed, $V\setminus V^\circ$ has codimension at least two, and no
component of $D_{\rm pole}$ is contained in $Z$.
Skeletal curves have image in $(V\cup D_{\rm pole})^{\rm an}$ and
avoid $Z^{\rm an}$ by \cite[Lemma~8.21(1)]{KeelYu}; hence they map
to $X_{\rm bl}$ in the same torus coordinates.
An unmarked toric contact then lies over an original centre $Z_i$,
just as in the proof of \cite[Lemma~21.16]{KeelYu}.
Only codimension in $Y$ is needed, not codimension inside each pole
divisor. Section~\ref{app:affine} constructs the required affine
varieties and big open immersions.

\subsubsection*{Local theta functions.}
At a generic point $Q\in L_\R$, Keel--Yu define a \emph{local theta
function} as a formal sum of characters, with coefficients counting
curves with prescribed incoming end and endpoint $Q$
\cite[Definition~20.20]{KeelYu}. The tropical image of the subtree
joining the prescribed ends is the \emph{spine}; the other attached
subtrees are \emph{twigs}
\cite[Definitions~4.12, 4.17 and Proposition~4.24]{KeelYu}.
We use regular functions on $V$ to bound the exponents in one such
expansion. The boundary functions introduced in
Section~\ref{sec:boundary} will instead determine which of these
theta functions extend over the frozen divisors of $\Spec R_0$.

\subsubsection*{The matrix of gradients.}
Let $\psi_1,\ldots,\psi_N$ be nonzero regular functions on $V$.
Choose a common generic point $Q\in L_\R$ with $Q_i>0$ for
$1\le i\le r$ such that each $\psi_\ell^{\trop}$ has a unique
minimizing exponent $h_\ell\in L^*$. Near $Q$ it is the linear
function $m\mapsto\langle h_\ell,m\rangle$. Define the matrix
$J_{\rm bnd}$ by $(J_{\rm bnd})_{\ell j}=\langle h_\ell,e_j\rangle$.
Thus $J_{\rm bnd}$ records the gradients of these tropicalizations in
the chamber containing $Q$, and $J_{\rm bnd}B^\T$ is an $N\times r$
matrix.

\begin{proposition}[Copositive criterion]
\label{prop:binomial}
Let $R$ satisfy \eqref{eq:original-input}, and let $V$ contain the
open blowup model $V^\circ$ as a big open, with the specified volume
form. Suppose
\begin{equation} a^\T B_0a\ge0\qquad(a\in\R_{\ge0}^r). \label{eq:copositive} \end{equation}
Suppose, in addition, that the regular functions $\psi_1,\ldots,\psi_N$
and their gradient matrix $J_{\rm bnd}$ satisfy
\begin{equation} \{a\in\R_{\ge0}^r:J_{\rm bnd}B^\T a\ge0\}=\{0\}. \label{eq:finite-gradient} \end{equation}
Then the Laurent expansions of the theta functions give a homomorphism
\eqref{eq:marked}
\[ \Phi:\Ageom(V)_1\hookrightarrow R,\qquad \Theta_m=x^m+\sum_{0\ne a\in\N^r}c_{m,a}x^{m+B^\T a}, \qquad m\in L, \]
with finite sums and nonnegative integer coefficients. If
$\psi\in\OO(V)\setminus0$ has unique minimizing exponent $h_\psi$ at $Q$,
every occurring exponent $e$ satisfies
\begin{equation} \pair{h_\psi}e\ge \psi^{\trop}(m),\qquad \psi^{\trop}(m)=\min_{h\in\supp \psi}\pair hm. \label{eq:slope} \end{equation}
\end{proposition}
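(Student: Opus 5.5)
Expand each theta function in the torus coordinates of $V$ at a generic point $Q$ in the positive chamber, then move the expansion into $\C(L)$ through the identification $X_*(T_V)\simeq L$. Keel--Yu's local theta function $\vartheta_m(Q)$ at such a $Q$ is a formal sum over skeletal curves with incoming end $m$ ending at $Q$ \cite[Definition~20.20]{KeelYu}. Its coefficients are nonnegative integer curve counts, which give the positivity claim.

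\textbf{Step 1: shape of the exponents.} The blowup model has its only nontrivial walls on the rays $\rho_i$. Each unmarked toric contact lies over a centre $Z_i=\{1+y_i=0\}$, as recalled above from the proof of \cite[Lemma~21.16]{KeelYu}. So each twig attached to the spine changes the exponent by a positive multiple of $b_i=B^\T e_i$. A broken-line style bookkeeping along the spine then shows that every exponent occurring in $\vartheta_m(Q)$ has the form $m+B^\T a$ with $a\in\N^r$. The case $a=0$ is the straight spine with no bending, and it contributes exactly $x^m$. The lexicographic order chosen above satisfies $m<m+B^\T a$ for $a\neq0$, so $x^m$ is the least term.

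\textbf{Step 2: finiteness, which I expect to be the main obstacle.} Take $\psi=\psi_\ell$. The product $\psi\cdot\vartheta_m$ has a finite theta expansion, and after specialization the trace of products computes constant coefficients; Lemma~\ref{lem:geometric-product} provides this comparison. Together these give the slope inequality \eqref{eq:slope}: each exponent $e$ in $\vartheta_m(Q)$ satisfies $\langle h_\psi,e\rangle\ge\psi^{\trop}(m)$. I would derive \eqref{eq:slope} by pairing $\vartheta_m$ with the regular function $\psi$. Equivalently, tropically, the spine cannot move $\psi$ below its tropical value at $m$, because $\psi$ is regular on $V$ and $h_\psi$ is its unique minimizing exponent near $Q$.

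Substituting $e=m+B^\T a$ confines $a$ to $\{a\ge0: J_{\rm bnd}B^\T a\ge -c\}$ for a constant vector $c$ depending on $m$. By \eqref{eq:finite-gradient}, the recession cone of this polyhedron is zero, so it is bounded. Hence only finitely many $a$ occur, and the expansion is a Laurent polynomial. The copositivity \eqref{eq:copositive} is needed to make the local expansions consistent. The wall-crossing for $\rho_i$ multiplies by powers of $1+y_i$, and since $a^\T B_0a\ge0$ the bending only increases the $\langle e_i^*,\cdot\rangle$ pairings. That monotonicity guarantees the spine stays in the positive chamber, so the expansion at $Q$ is independent of the chamber choice.

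\textbf{Step 3: homomorphism and landing in $R$.} Use Lemma~\ref{lem:geometric-expansions} and Lemma~\ref{lem:geometric-product} for multiplicativity: Keel--Yu structure constants agree with multiplication of local expansions at the common generic point $Q$. Specializing $z^\gamma\mapsto1$ then gives an algebra homomorphism into $\C[L]$. To land in $R$, by \eqref{eq:original-input} it suffices to check that each $\Theta_m$ lies in the initial Laurent ring $\mathscr L_0$ (already shown) and in each adjacent ring $\mathscr L_i$.

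For $\mathscr L_i$, compare expansions on the other side of the wall $\rho_i$. Crossing the wall is the substitution $x_i\mapsto E_i/x_i'$, dual to the blowup chart $t_is_i=1+z_i$. Regularity of $\vartheta_m$ across the exceptional divisor is exactly divisibility by the required powers of $1+z_i$. That divisibility translates into membership in $\mathscr L_i$.

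Injectivity follows from the distinct leading exponents $x^m$, as noted after \eqref{eq:marked}.
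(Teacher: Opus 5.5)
Your Step~2 does not establish the slope inequality \eqref{eq:slope}, and the route you sketch cannot work. The function $\psi\in\OO(V)$ is a Laurent polynomial in the characters $y^h$, $h\in L^*$, of $T_V$. By contrast, $\vartheta_m$ is an element of the mirror algebra, expanded in monomials $x^{m'}$ with $m'\in L$. So there is no product $\psi\cdot\vartheta_m$ in $\Ageom(V)$ whose theta expansion or trace you could use.

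Appealing to Lemma~\ref{lem:geometric-product} here is also circular. Its trace identities and leading-term argument presuppose the finite expansions and the finitely many contributing curve classes of Lemma~\ref{lem:geometric-expansions}. Those are exactly what \eqref{eq:slope} together with \eqref{eq:finite-gradient} is meant to supply.

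The missing argument works curve by curve. For a contributing skeletal curve $f$, cut off its incoming and outgoing annular ends. The remaining compact domain maps into $V^{\rm an}$, so $\psi\circ f$ has zeros but no poles there. The non-archimedean slope formula \cite[Lemmas~15.3 and~15.6]{KeelYu} then gives $r_{\rm out}-r_{\rm in}\ge0$. Here $r_{\rm in}\ge\psi^{\trop}(m)$ at the incoming toric contact, and $r_{\rm out}=\pair{h_\psi}{e}$ on the outgoing segment because $h_\psi$ is the unique minimizer near $Q$. Your tropical sentence asserts this conclusion without this mechanism.

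You also never show that only finitely many curve classes contribute to a given local theta function. This is needed before $z^\gamma\mapsto1$ can be applied to an expansion that a priori lives in a completion. The paper obtains it from the exponent bound via $\mathfrak m_\Pi^K$ and \cite[Lemma~20.31]{KeelYu}.

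The role you give to \eqref{eq:copositive} is also wrong. Bending by $b_j$ changes $\pair{e_i^*}{\cdot}$ by $B_{ji}$, and a copositive $B_0$ can have negative entries (every skew-symmetric $B_0$ is copositive). So bends do not monotonically increase these pairings. Copositivity instead controls twigs. For a twig rooted at $Q$, summing the zero-coordinate conditions at its ends, weighted by multiplicities, gives $0=\sum_ia_iQ_i+\sum_E\ell_E\,a_E^\T B_0a_E$. Two consequences follow:
\begin{itemize}
\item No nontrivial twig is rooted in the open positive chamber; this is what gives $\CT(\Theta_m)=0$ for $m\ne0$.
\item At a general point of the facet $Q_i=0$, only type-$i$ ends occur.
\end{itemize}
The second consequence is what reduces crossing that facet to the single transformation $T_i(x^m)=x^m(1+x^{b_i})^{m_i}$, and your Step~3 lacks it.

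That step also mixes the two sides of the mirror. The exceptional divisors lie on $V$, where $\vartheta_m$ is not a function, so ``regularity of $\vartheta_m$ across the exceptional divisor'' has no meaning. What you need is consistency \cite[Proposition~20.35]{KeelYu}, which identifies $T_i(\Theta_m)$ with the local expansion on the other side of the facet, together with the nonnegativity of that expansion. A surviving pole at $x^{b_i}=-1$ would force eventually sign-alternating Taylor coefficients, so $T_i(\Theta_m)$ is Laurent. Since $T_i^{-1}\circ\mu_i$ is the substitution of the original exchange, this gives $\Theta_m\in\mathscr L_i$.
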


Lemma~\ref{lem:geometric-expansions} constructs finite Laurent
expansions in $R$, and Lemma~\ref{lem:geometric-product} proves that
they respect multiplication. We then use the divisor bounds in
Section~\ref{sec:boundary} to apply Theorem~\ref{thm:comparison}.

Condition~\eqref{eq:finite-gradient} bounds the exponents in a single
theta expansion. Condition~\eqref{eq:balance} bounds the theta indices
at fixed degree and fixed divisor orders. Finiteness of the
contributing curve classes follows separately from
\cite[Lemma~20.31]{KeelYu}, as used in the proof below.

\begin{lemma}\label{lem:geometric-expansions}
Under the hypotheses of Proposition~\ref{prop:binomial}, the
local expansion of a theta function attached to each $m\in L$ is a finite sum
\[ \Theta_m=x^m+\sum_{0\ne a\in\N^r}c_{m,a}x^{m+B^\T a}, \qquad c_{m,a}\in\Z_{\ge0}. \]
Its contributing curve-class sum is finite, it belongs to the initial
Laurent ring and the Laurent rings obtained by one mutation, and it satisfies
\eqref{eq:slope}. Moreover $\Theta_0=1$, and the assignment
$\vartheta_m\mapsto\Theta_m$ is an injective homogeneous linear map
into $R$. Multiplicativity is proved in Lemma~\ref{lem:geometric-product}.
\end{lemma}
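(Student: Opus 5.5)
The plan is to work entirely in the toric blowup model and its local theta functions at the generic point $Q$ with $Q_i>0$ for $i\le r$. First I would identify the shape of the expansion. By \cite[Definition~20.20]{KeelYu} the local theta function at $Q$ with incoming end $m$ is a sum over spines; each bend of the spine occurs at a wall produced by scattering from the centres $Z_i$. Each wall function is a product of $1+y_i$-type factors, and contact with $Z_i$ shifts the exponent by a nonnegative multiple of $b_i=B^\T e_i$. Under the fixed identification $X_*(T_V)\simeq L$, every exponent therefore has the form $m+B^\T a$ with $a\in\N^r$, and $a=0$ occurs only for the straight spine, with coefficient $1$. The coefficients are counts of curves with specialization $\xi=1$, so they are nonnegative integers. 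For a fixed exponent, \cite[Lemma~20.31]{KeelYu} gives only finitely many contributing curve classes, so the specialization is well defined. The order on $L$ fixed in Section~\ref{sec:binomial} puts $x^m$ as the least term.

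Next I would prove \eqref{eq:slope} and finiteness. Take $\psi\in\OO(V)$. The product $\psi\cdot\vartheta_m$ expands in theta functions. Its local expansion at $Q$ is the product of local expansions, and the regularity of $\psi$ on $V$ forces every exponent $e$ of $\Theta_m$ to satisfy $\langle h_\psi,e\rangle\ge\psi^{\trop}(m)$. Here I would use the Keel--Yu statement that the tropicalization of a regular function bounds the exponents along spines, since the spine is convex with respect to $\psi^{\trop}$ and $h_\psi$ is the unique minimizer at $Q$.

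Applying this to $\psi_1,\dots,\psi_N$ gives $J_{\rm bnd}(m+B^\T a)\ge(\psi_\ell^{\trop}(m))_\ell$, that is, $J_{\rm bnd}B^\T a\ge c(m)$ for a fixed vector $c(m)$. The set of $a\in\N^r$ satisfying this is a translate intersection whose real recession cone is $\{a\ge0:J_{\rm bnd}B^\T a\ge0\}=\{0\}$ by \eqref{eq:finite-gradient}. It is therefore bounded, so the sum is finite. The copositivity hypothesis \eqref{eq:copositive} enters here. It guarantees that broken lines bending at walls from the centres only move in directions $B^\T a$ with $a\ge0$. Without it, scattering of the walls could generate directions with $a\notin\N^r$. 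I would check this via the pairing $\langle e_i^*,b_j\rangle=B_{ji}$ governing wall crossings.

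Then I would show membership in the Laurent rings. $\Theta_m$ is a finite Laurent polynomial, so it lies in $\mathscr L_0$. For $\mathscr L_i$, I would compare the expansion in the chamber across the wall $e_i^*=0$. The expansions are related by the wall-crossing automorphism $x^e\mapsto x^e(1+y_i)^{\pm\langle\cdot\rangle}$, which in the original coordinates is the substitution for the mutation at $i$. Regularity of $\vartheta_m$ on the blowup chart $t_is_i=1+z_i$ is exactly divisibility by powers of $1+z_i$, which is the condition of lying in $\mathscr L_i$. This gives $\Theta_m\in\bigcap_{i=0}^r\mathscr L_i=R$ by \eqref{eq:original-input}.

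Homogeneity follows because $q(B^\T a)=0$. Injectivity follows from the distinct least exponents. $\Theta_0=1$ holds because $\vartheta_0$ is the unit.

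I expect the main obstacle to be the Laurent-ring step, namely matching Keel--Yu wall-crossing with the LP mutation when $B$ is not skew-symmetrizable. I would try first to verify it locally near each single centre $Z_i$. There the geometry is the simple $A_1$ chart of \cite[Lemma~3.1]{GHKBirational}, and only the one wall from $Z_i$ matters at a generic point of $e_i^{*\perp}$.
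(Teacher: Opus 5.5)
Most of your outline matches the paper: exponents in $m+B^\T\N^r$, the bound \eqref{eq:bend-polytope} from the $\psi_\ell$, the zero recession cone from \eqref{eq:finite-gradient}, \cite[Lemma~20.31]{KeelYu} for the curve classes, and least exponents for injectivity. However, the step you flag as the main obstacle, membership in the adjacent rings $\mathscr L_i$, is not proved, and it needs two ingredients you do not supply.

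\emph{The reduction to a single centre.} You assert that only the wall from $Z_i$ matters near a generic point of $Q_i=0$, and you give \eqref{eq:copositive} a different job. But the fact that bends change exponents by elements of $B^\T\N^r$ needs only balancing, since every unmarked twig end is a positive multiple of some $b_i$. Copositivity is what proves your assertion. Sum the zero-coordinate conditions $\pair{e_i^*}{\cdot}=0$ at the ends of a twig rooted at $Q$, weighted by multiplicity. This gives $0=\sum_ia_iQ_i+\sum_E\ell_E\,a_E^\T B_0a_E$. Hence, with all $Q_i>0$, no nontrivial twig is rooted in the open chamber, and on the facet $Q_i=0$ only type-$i$ ends occur. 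Without \eqref{eq:copositive} the second sum can be negative, the identity no longer excludes such twigs, and the reduction to the chart of \cite[Lemma~3.1]{GHKBirational} is unavailable.

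\emph{Pole cancellation.} You justify this by saying that regularity of $\vartheta_m$ on the chart $t_is_i=1+z_i$ is divisibility by powers of $1+z_i$. That conflates two sides. The chart lies in $V$, and $z_i=y_i$ is a character of $T_V=\Spec\C[L^*]$. By contrast, $\vartheta_m$ lives in the mirror algebra and $\Theta_m\in\C[L]$. Lemma~\ref{lem:center} is about the $\psi$'s, not the $\Theta_m$'s. Your crossing map $x^e\mapsto x^e(1+y_i)^{\pm\cdots}$ also mixes the two tori; the correct one is $T_i(x^m)=x^m(1+x^{b_i})^{m_i}$.

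The real issue is that $T_i(\Theta_m)$ is a priori only a rational function with denominators that are powers of $1+x^{b_i}$. Since $T_i^{-1}\circ\mu_i$ is the substitution of the original exchange, lying in $\mathscr L_i$ means exactly that these poles cancel. The paper proves this in two steps. First, consistency \cite[Proposition~20.35]{KeelYu} identifies $T_i(\Theta_m)$, modulo $\mathfrak m_\Pi^{K'}$ for every $K'$, with the local count on the far side of the facet, whose coefficients are nonnegative integers. Second, a surviving pole $p(z)/(1+z)^a$ with $z=x^{b_i}$ would give Taylor coefficients equal, for large $j$, to $(-1)^j$ times a nonzero polynomial in $j$, which contradicts nonnegativity. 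Your proposal contains nothing that replaces this positivity argument.

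Two smaller points. Your first route to \eqref{eq:slope}, expanding $\psi\cdot\vartheta_m$, multiplies a function on $V$ by an element of the mirror algebra. It would also presuppose multiplicativity of local expansions, which is Lemma~\ref{lem:geometric-product} and depends on the finiteness proved here. Your second route, monotonicity of $\psi^{\trop}$ along the spine, is the right one. The paper makes it precise with the non-archimedean slope formula for $\psi\circ f$ on the compact part $C_0$ of a contributing curve, which gives $r_{\rm out}-r_{\rm in}=\deg\op{div}_0\ge0$, $r_{\rm in}\ge\psi^{\trop}(m)$ and $r_{\rm out}=\pair{h_\psi}{e}$. Likewise, $\Theta_0=1$ should not be deduced from $\vartheta_0$ being the unit, which again presupposes multiplicativity. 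It follows from \eqref{eq:bend-polytope} with zero right-hand side together with \eqref{eq:finite-gradient}.
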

\begin{proof}
\emph{Contacts and the positive chamber.}
Let $f$ be a skeletal curve contributing to a local count.
The preceding comparison places its image in the common open of
$Y$ and $X_{\rm bl}$. On the blowup model, an unmarked contact over a toric
divisor lies above one of the centres $Z_i$. Since $y_i=-1$ there,
the corresponding end has $i$th tropical coordinate zero and
direction a positive integral multiple of $b_i$. This is the contact
calculation in the proof of \cite[Lemma~21.16]{KeelYu}; only the
centre equation and the toric parameter are used here.

Consider a twig whose root maps to $Q$. Orient its edges away from
the root. Truncate each unbounded end at a point on its final
straight segment. An end of type $i$ still has $i$th coordinate zero,
because $\langle e_i^*,b_i\rangle=B_{ii}=0$. Let $a_i$ be the sum
of the positive integral multiplicities of the type-$i$ ends. For a
finite edge $E$, let $a_E\in\N^r$ count the end multiplicities in
the component beyond $E$, and let $\ell_E\ge0$ be its length.
Balancing gives direction $B^\T a_E$ along that edge.

Write the zero-coordinate equation at every end, multiply it by the
end's multiplicity, and sum. The contribution of $E$ is
$\ell_E a_E^\T B_0^\T a_E=\ell_E a_E^\T B_0a_E$. The result is
\[ 0=\sum_i a_iQ_i+\sum_E\ell_E\,a_E^\T B_0a_E. \]
When all $Q_i>0$, a nonzero $a$ makes the first sum positive, while
\eqref{eq:copositive} makes the second nonnegative. Thus no nontrivial
twig is rooted in this chamber. At a general point of its facet
$Q_i=0$, with the other $Q_j>0$, the same identity permits only
type-$i$ ends. More generally, the change in exponent at a bend is
the sum of the incident twig directions, hence belongs to
$\Pi=B^\T\N^r$. This applies also to the infinitesimal cylinders
of \cite[Assumption~20.27]{KeelYu}.

\emph{The slope formula and the exponent bound.}
Fix a nonzero $\psi\in\OO(V)$ and a contributing curve
$f:C\to Y^{\rm an}$. Cut the incoming and outgoing annular ends
at points of their straight segments. Let $C_0$ be the remaining
compact analytic domain, including every attached component that
contains a zero of $\psi\circ f$. We choose the cuts away from
these zeros and from the tropical hypersurface of $\psi$. The image
of $C_0$ lies in $V^{\rm an}$, so $\psi\circ f$ has no pole there.
After a finite extension of the valued field, choose a semistable
model of $C_0$ with the zeros specializing to its proper components.

Let $-r_{\rm in}$ be the outgoing slope of
$-\log|\psi\circ f|$ at the incoming boundary of $C_0$, and let
$r_{\rm out}$ be its outgoing slope at the other boundary.
The slope formula for $\psi\circ f$ gives
\[ r_{\rm out}-r_{\rm in}=\deg\op{div}_0((\psi\circ f)|_{C_0})\ge0. \]
This is \cite[Lemmas~15.3 and~15.6]{KeelYu}, applied to the principal
divisor of $\psi$; the contributions at internal vertices cancel,
and all its zeros on these components are included.
Factoring the toric order at the incoming contact shows
$r_{\rm in}\ge\psi^{\trop}(m)$: the remaining regular factor may
vanish. On the outgoing segment, the unique minimizing exponent
$h_\psi$ gives $r_{\rm out}=\langle h_\psi,e\rangle$.
This proves \eqref{eq:slope}.

For $e=m+B^\T a$, the auxiliary functions $\psi_\ell$ give
\begin{equation} a\ge0,\qquad J_{\rm bnd}B^\T a\ge(\psi_\ell^{\trop}(m))_\ell-J_{\rm bnd}m. \label{eq:bend-polytope} \end{equation}
Condition~\eqref{eq:finite-gradient} makes its recession cone zero.
Thus \eqref{eq:bend-polytope} has only finitely many integral solutions.

\emph{Finiteness of the curve classes.}
The columns of $B^\T$ are independent, so $\Pi\simeq\N^r$ is a
strictly convex integral monoid. Write $\mathfrak m_\Pi$ for the
ideal of $\C[\Pi]$ generated by its nonconstant monomials, and choose
$K$ larger than $\sum_i a_i$ for every integral solution of
\eqref{eq:bend-polytope}. We have
\[ x^{B^\T a}\in\mathfrak m_\Pi^K\quad\Longleftrightarrow\quad\sum_i a_i\ge K. \]
Every possible exponent correction therefore has nonzero image in
$\C[\Pi]/\mathfrak m_\Pi^K$. The nonnegative bend condition proved
above is precisely \cite[Assumption~20.27]{KeelYu} for this monoid.
Lemma~20.31 there states that, for fixed $m$ and $Q$, only finitely
many curve classes contribute to exponent corrections whose monomials are not in
$\mathfrak m_\Pi^K$. Hence only finitely many classes
contribute to the expansion. Lemma~20.34 gives the straight term
$x^m$ with coefficient one.

\emph{Regularity in the adjacent Laurent charts.}
At a generic point of the facet $Q_i=0$, all the twig ends have type
$i$. The count therefore reduces to the single blowup chart
$t_is_i=1+y_i$, times a torus. The two-dimensional calculation used
in \cite[Lemma~21.25]{KeelYu} gives the crossing transformation
\[ T_i(x^m)=x^m(1+x^{b_i})^{m_i}. \]
At every positive integer $K'$, consistency
\cite[Proposition~20.35]{KeelYu} identifies the transformation of the
local theta function with the local count on the other side, modulo
$\mathfrak m_\Pi^{K'}$. The curves in each such count have
nonnegative integral multiplicities. Since this holds for arbitrarily
large $K'$, every coefficient of the resulting formal expansion is
nonnegative.

The rational function $T_i(\Theta_m)$ has no possible denominator
other than a power of $1+x^{b_i}$. Separate its finitely many
numerator monomials into cosets of $L/\Z b_i$ and put $z=x^{b_i}$.
Within a coset, a monomial multiple has the form $p(z)/(1+z)^a$,
where $p(z)\in\Z[z]$. If a pole at $z=-1$ remains, polynomial
division and partial fractions show that its Taylor coefficients
are, for all sufficiently large $j$, $(-1)^j$ times a nonzero
polynomial in $j$. They cannot all be nonnegative. Therefore
$(1+z)^a$ divides $p(z)$. The compatible truncations at every $K'$
justify applying this argument to the whole Taylor series.
Thus $T_i(\Theta_m)$ is Laurent.

Let $\mu_i$ be the monomial map from the adjacent exponent lattice,
with $\mu_i(e_i')=[-b_i]_+-e_i$ and the other basis vectors fixed.
The substitution of the original rational exchange is
$T_i^{-1}\circ\mu_i$. Hence the Laurentness just proved is
regularity in the $i$th adjacent chart. Equation~\eqref{eq:original-input}
puts $\Theta_m$ in $R$.

For $m=0$, the right side of \eqref{eq:bend-polytope} is zero, so
\eqref{eq:finite-gradient} forces $a=0$ and $\Theta_0=1$.
Since $q(m+B^\T a)=q(m)$, each expansion is homogeneous.
Distinct least exponents make the linear map injective.

\end{proof}

\begin{lemma}[Trace identities and multiplication]
\label{lem:geometric-product}
For the expansions of Lemma~\ref{lem:geometric-expansions},
\eqref{eq:trace} holds for products of two and three theta functions, where the trace is
the coefficient-of-$\vartheta_0$ map on the mirror algebra
$\Ageom(V)_1$, as defined in Section~\ref{sec:marking}.
Consequently the linear map of that lemma is a unital injective
homogeneous algebra homomorphism.
\end{lemma}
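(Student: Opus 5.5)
The statement presumably refers by \eqref{eq:trace} to the identities
\[
 \Tr(\vartheta_{p_1}\vartheta_{p_2})=\CT(\Theta_{p_1}\Theta_{p_2}),\qquad
 \Tr(\vartheta_{p_1}\vartheta_{p_2}\vartheta_{p_3})=\CT(\Theta_{p_1}\Theta_{p_2}\Theta_{p_3}).
\]
I will prove these two trace identities first and then deduce multiplicativity from them by a triangularity argument. No surjectivity of the expansion map is needed at this stage.

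\emph{Trace identities.} By \cite[Definition~1.1 and Theorem~1.2]{KeelYu}, the trace of a product of theta functions counts rational skeletal curves with ends of directions $p_1,\ldots,p_k$ whose internal marked point maps to a fixed generic point of $T_V$. I would cut such a curve at the marked point. This writes the count as a sum over tuples of spines, one for each $p_j$, ending at a common generic point $Q$ in the positive chamber. Each tuple is weighted by the product of the local theta coefficients, subject to the balancing condition that the endpoint exponents sum to zero.

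Concretely, I would use the gluing/decomposition formula of Keel--Yu that identifies this count with the coefficient of $y^0$ in the product of local theta functions at $Q$. The same finiteness argument as in Lemma~\ref{lem:geometric-expansions} applies: \cite[Lemma~20.31]{KeelYu} together with the bound \eqref{eq:bend-polytope}. It guarantees that only finitely many curve classes contribute, so specialization at $\xi=1$ is legitimate. Since the local theta function at $Q$ is exactly the finite expansion $\Theta_{p_j}$, this gives both identities.

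\emph{Multiplicativity.} Write
\[
 \vartheta_{p_1}\vartheta_{p_2}=\sum_p\chi_p\vartheta_p
\]
as a finite sum in $\Ageom(V)_1$, and put $\Delta=\Theta_{p_1}\Theta_{p_2}-\sum_p\chi_p\Theta_p$. For every $q\in L$, the three-point identity and the two-point identity, combined by linearity of the trace, give
\[
 \CT(\Delta\,\Theta_q)=\Tr(\vartheta_{p_1}\vartheta_{p_2}\vartheta_q)-\sum_p\chi_p\Tr(\vartheta_p\vartheta_q)=0.
\]
Suppose $\Delta\neq0$, and let $m=\lead\Delta$. Take $q=-m$. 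The product $\Delta\,\Theta_{-m}$ has least exponent $m+(-m)=0$, with nonzero coefficient, since $\Theta_{-m}$ is monic. Every other exponent of the product is strictly larger in the translation-invariant order. Hence $\CT(\Delta\,\Theta_{-m})\neq0$, a contradiction. So $\Delta=0$, and the map is multiplicative.

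The remaining properties are already available. Unitality is $\Theta_0=1$, from Lemma~\ref{lem:geometric-expansions}. Injectivity and homogeneity are also in that lemma.

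\emph{Main obstacle.} The hard step is the first one: matching the Keel--Yu trace, defined via global curve counts in $Y$ with curve-class coefficients, to the constant term of a product of the specific finite local expansions. Two things need care. One must check that the splitting at the marked point introduces no extra twig contributions; the chamber argument of Lemma~\ref{lem:geometric-expansions} excludes twigs rooted at a positive-chamber $Q$. One must also check that the curves avoid the codimension-two locus $Z$, so that the computation can be done on $X_{\rm bl}$ in the fixed torus coordinates. I would handle both by repeating the comparison used in the proof of \cite[Lemma~21.16]{KeelYu}, now with three incoming ends.
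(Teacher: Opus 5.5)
This is essentially the paper's proof. It obtains the two- and three-point identities from Keel--Yu gluing of local theta functions at a generic point $Q$ of the positive chamber. It then deduces multiplicativity exactly as you do, by pairing the difference with $\Theta_{-\lead\Delta}$ and using $\CT(\Delta\,\Theta_{-\lead\Delta})=[x^{\lead\Delta}]\Delta\ne0$.

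The paper is more explicit in the three-point case. There the fixed modulus of the four-pointed domain produces an internal edge, so cutting at the marked point does not literally give three spines ending at a common point. The paper instead shrinks that edge, by deformation invariance, into a wall-free ball around $Q$ where the central toric curve counts once. This requires the edge direction to be nonzero. That fact, and the degenerate cases $\CT(\Theta_m)=0$ for $m\ne0$ and inputs $\vartheta_0=1=\Theta_0$, come from the positive-chamber exclusion of twigs rooted at $Q$, which you mention but do not connect to these cases.
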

\begin{proof}
We first prove
\begin{equation} \CT(\Theta_{m_1}\cdots\Theta_{m_k})=\Tr(\vartheta_{m_1}\cdots\vartheta_{m_k}),\qquad k=2,3. \label{eq:trace} \end{equation}
Fix the indices $m_j$. Lemma~\ref{lem:geometric-expansions} bounds the
classes contributing to their local functions, and
\cite[Lemma~3.8 and Theorem~1.2]{KeelYu} bounds the classes in the
rational-curve trace count. Choose an ample divisor on $Y$. The
classes of components obtained by cutting or degenerating these
curves are effective, and their degrees are bounded by the degree
of the whole curve. The auxiliary toric tails are specified by the
finitely many end exponents, so their degrees are bounded as well.
We can therefore fix a common bound on curve degrees and the finite
collection of walls needed for the local functions, the trace curves,
and the effective decompositions used in gluing
\cite[Section~7 and Theorems~11.9, 12.4]{KeelYu}.

Choose a small ball around the generic endpoint $Q$, contained in the
positive chamber and disjoint from the walls and fan boundaries for
this bound. Cut the incoming branches where they enter this ball.
For three inputs, the central four-pointed domain has an internal
edge; choose its length positive and sufficiently small that all
central vertices stay in the ball. This is possible uniformly
because the possible terminal exponents are finite.
The central curve is then toric. Its balancing condition is
$\sum_j e_j=0$, and its count is one by
\cite[Lemma~12.3]{KeelYu}. Deformation invariance and gluing
\cite[Theorems~11.9 and~12.4]{KeelYu} identify the product of the
local counts, subject to this balancing condition, with the trace
count. Evaluation multiplicities and the prescribed toric tails are
included in both counts. This gives \eqref{eq:trace} for nonzero
indices and nonzero internal direction.

Here these restrictions have the following elementary interpretation.
A nonzero index cannot have terminal exponent zero at the chosen
endpoint. A straight spine has the index as its terminal exponent;
otherwise its last bend, followed by a constant segment, would give
a nontrivial twig rooted at $Q$, which the positive-chamber calculation
excludes. Thus $\CT(\Theta_m)=0$ for $m\ne0$, while $\Theta_0=1$.
If all three indices are nonzero and $e_1+e_2+e_3=0$, then
$e_1+e_2=-e_3\ne0$. The internal edge used above therefore has
nonzero direction. If one input is $\vartheta_0=1$, the identity
reduces to the two-input identity; if two inputs are the unit, it
reduces to $\CT(\Theta_m)=\Tr(\vartheta_m)$. These cases complete
the proof of \eqref{eq:trace}.

It remains to pass from trace identities to multiplication. This is
the two- and three-point recognition principle of
\cite[Proposition~3.5]{Johnston}; the following leading-term argument
proves the version needed here, without presupposing that the
expansions span $R$. If $G\ne0$ is Laurent with least exponent $v$, the form of the leading term gives
\[ \CT(G\Theta_{-v})=[x^v]G\ne0: \]
any other contribution would require an exponent $v-B^\T a<v$ in
$G$. Apply this to the difference between the product of two local
functions and the expansion of their product in the mirror algebra.
The two- and three-point identities \eqref{eq:trace} make this
difference pair to zero with every $\Theta_u$, so it is zero.
This proves multiplicativity. Unitality, injectivity, and homogeneity
were established in Lemma~\ref{lem:geometric-expansions}.
\end{proof}

\begin{proof}[Proof of Proposition~\ref{prop:binomial}]
Lemma~\ref{lem:geometric-expansions} constructs the Laurent
expansions in the chosen coordinates and their slope bounds. Lemma~\ref{lem:geometric-product}
identifies their multiplication with that of the mirror algebra.
These are exactly the asserted realization and inequalities.
\end{proof}

If $J_{\rm bnd}m=(\psi_\ell^{\trop}(m))_\ell$, then
\eqref{eq:bend-polytope} gives $\Theta_m=x^m$.
Useful sufficient conditions for \eqref{eq:finite-gradient} are
$J_{\rm bnd}B^\T=-I_r$, a regular product whose gradient is strictly
negative on every $b_i$, or full column rank of $J_{\rm bnd}B^\T$
together with a strictly positive linear relation among its rows.

\subsubsection*{The $A_9\downarrow C_5$ example.}
The seed in Figure~\ref{fig:a9c5-seed} need not itself satisfy
\eqref{eq:copositive}. The single mutation described
in Section~\ref{subsec:a9-geometric} gives
$a^\T B_0a=2a_{14}a_{15}$; the regular functions of
Section~\ref{app:bounds} then give $J_{\rm bnd}B^\T=-I_{15}$.
These verify the two numerical conditions in the Laurent criterion.
Section~\ref{app:affine} identifies the affine model and
Lemma~\ref{lem:geometric-product} proves multiplicativity.

\subsection{Divisor inequalities and theta bases}
\label{sec:boundary}
We index the $s$ boundary valuations by the frozen positions
$r<f\le d$ and set
\[ \nu_f=v_{e_f^*},\qquad \nu_f\!\left(\sum_m c_mx^m\right)=\min_{c_m\ne0}m_f. \]
We now construct functions on $V$ corresponding to the frozen divisors
of $\Spec R_0$. Choose nonzero regular $\Psi_f\in\OO(V)$ with
unique minimizing exponent $e_f^*$ at the point $Q$ used above. Put
\begin{equation} S_f=\supp\Psi_f\subseteq L^*,\qquad \phi_f(m)=\min_{h\in S_f}\pair hm,\qquad H=(h)_{f,h\in S_f}. \label{eq:boundary-data} \end{equation}
The sets $S_f$ are arbitrary finite subsets of $L^*$; their elements
need not have nonnegative coordinates. Proposition~\ref{prop:binomial}
gives $\nu_f(\Theta_m)\ge\phi_f(m)$.

For each of the five copositive cases, we prove regularity of the
boundary functions by the following divisibility criterion. The
criterion applies to arbitrary finite Laurent supports.

\begin{lemma}[Regularity along the exceptional divisors]
\label{lem:center}
Let $\psi=y^h\sum_{k\in K}c_k y^k$, where $h\in L^*$ and
$K\subseteq\Z^r$ is finite. Fix $i$. Group the terms by their
exponents in the variables other than $y_i$:
\[ \sum_{k\in K}c_k y^k=\sum_{\widehat k}\left(\prod_{j\ne i}y_j^{k_j}\right)p_{i,\widehat k}(y_i),\qquad p_{i,\widehat k}(T)=\sum_{(\widehat k,a)\in K}c_{(\widehat k,a)}T^a. \]
Here $\widehat k=(k_j)_{j\ne i}$ and $(\widehat k,a)$ places $a$
in the $i$th position. Each $p_{i,\widehat k}$ is a one-variable
Laurent polynomial. Put
\[ \delta_{i,\widehat k}=-\langle h,b_i\rangle-\sum_{j\ne i}B_{ij}k_j. \]
If $(1+T)^{[\delta_{i,\widehat k}]_+}$ divides
$p_{i,\widehat k}(T)$ in $\C[T^{\pm1}]$ for every $i,\widehat k$,
then $\psi$ is regular on $V$. For $h=e_f^*$, the required exponent is
$\delta_{i,\widehat k}=-B_{if}-\sum_{j\ne i}B_{ij}k_j$.
\end{lemma}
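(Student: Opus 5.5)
The plan is to reduce regularity on $V$ to regularity on the big open $V^\circ$, and then to check $V^\circ$ chart by chart. Since $V$ is smooth and affine and $V^\circ\subset V$ has complement of codimension at least two, Hartogs extension shows that a function regular on $V^\circ$ is regular on $V$. Now $\psi$ is a Laurent polynomial, so it is already regular on the torus $T_V$. The complement of $T_V$ in $V^\circ$, up to codimension two, consists of the exceptional divisors over the centres $Z_i$; the strict transforms of the toric boundary divisors were removed. It therefore suffices to show that $\psi$ has no pole along each exceptional divisor $\mathrm{Exc}_i$.

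Fix $i$ and work in the local chart of the blowup described before the lemma: $t_is_i=1+z_i$ with $z_i=y_i\neq0$, times a $(d-2)$-torus. Here $t_i$ is a toric parameter with $\ord_{D_i}(t_i)=1$, and the exceptional divisor is $\{s_i=0\}$. First compute the order of a character $y^u$ along $D_i$: it is $\langle u,b_i\rangle$, since $D_i$ corresponds to the ray $\R_{\ge0}b_i$. On the chart, a character therefore factors as $y^u=t_i^{\langle u,b_i\rangle}\cdot(\text{unit in the remaining torus coordinates and }z_i)$. Substituting $t_i=(1+z_i)/s_i$ gives
\[ y^u=(1+z_i)^{\langle u,b_i\rangle}s_i^{-\langle u,b_i\rangle}\cdot(\text{unit}). \]

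Next apply this to $\psi=y^h\sum_k c_ky^k$. In the grouping of the statement, the monomial $y^h\prod_{j\ne i}y_j^{k_j}y_i^a$ has $b_i$-pairing $\langle h,b_i\rangle+\sum_{j\neq i}B_{ij}k_j+aB_{ii}$. Because $B_{ii}=0$, this equals $-\delta_{i,\widehat k}$, independent of $a$. Each group thus contributes
\[ (1+z_i)^{-\delta_{i,\widehat k}}s_i^{\delta_{i,\widehat k}}\,p_{i,\widehat k}(z_i)\cdot u_{\widehat k}, \]
where $u_{\widehat k}$ is a monomial unit in the other torus coordinates. One must also check that $y_j$ for $j\neq i$ is, up to the $t_i$-power, a character of $D_i^\circ$ independent of $z_i$; this holds after choosing coordinates on $b_i^\perp\cap L^*$ that extend $z_i=y_i$. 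Distinct $\widehat k$ give distinct units $u_{\widehat k}$. If $\delta_{i,\widehat k}\le0$, the term is visibly regular, because $1+z_i$ is regular. If $\delta_{i,\widehat k}>0$, the hypothesis $(1+T)^{\delta_{i,\widehat k}}\mid p_{i,\widehat k}(T)$ in $\C[T^{\pm1}]$ cancels the negative power of $1+z_i$, since $z_i$ is invertible on the chart. The term is then regular, and in fact vanishes to order $\delta_{i,\widehat k}$ along $s_i=0$. Hence $\psi$ is regular along every $\mathrm{Exc}_i$, so it is regular on $V^\circ$, and therefore on $V$.

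The final formula is the specialization $h=e_f^*$, where $\langle e_f^*,b_i\rangle=(B^\T e_i)_f=B_{if}$.

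The main obstacle is getting the local chart bookkeeping exactly right. One has to fix the orientation of the sign relating $\ord_{D_i}$ to $\langle\cdot,b_i\rangle$. One also has to justify that the $y_j$ with $j\ne i$ restrict to unit monomials in the complementary coordinates, so that the different $\widehat k$ groups do not interact along the exceptional divisor. I would settle the sign by checking the identity $x_i'=E_i/x_i$ in the simplest rank-one case.
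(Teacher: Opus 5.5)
Your route is the paper's. The group indexed by $\widehat k$ has order $-\delta_{i,\widehat k}$ along $D_i$, and this order does not depend on the $y_i$-exponent because $B_{ii}=0$. The divisibility hypothesis removes the pole in the chart $t_is_i=1+z_i$. These charts and the torus cover $V^\circ$, and normality extends regularity across the codimension-two complement. Your convention $\ord_{D_i}(y^u)=\langle u,b_i\rangle$ is the correct one, since it is exactly what produces the order $-\delta_{i,\widehat k}$, so no rank-one sign check is needed. Non-interaction of different $\widehat k$ plays no role: for this sufficiency statement, a sum of regular terms is regular.

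One step is wrong as written, though. In the chart $t_is_i=1+z_i$ (the blowup with the strict transform of $D_i$ deleted), the exceptional divisor is $\{t_i=0\}$, not $\{s_i=0\}$. The locus $\{s_i=0\}$ is the strict transform of the torus hypersurface $\{1+y_i=0\}$.

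This misidentification is why your case $\delta_{i,\widehat k}<0$ fails as stated. After your substitution, the term $(1+z_i)^{-\delta}s_i^{\delta}p_{i,\widehat k}(z_i)u_{\widehat k}$ carries the negative power $s_i^{\delta}$. It is regular only because $1+z_i=t_is_i$ absorbs that power, not merely because $1+z_i$ is regular.

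The fix is not to substitute. Each group is $t_i^{-\delta}u_{\widehat k}\,p_{i,\widehat k}(z_i)$, which is visibly regular when $\delta\le0$. When $\delta>0$, the only possible pole is along $t_i=0$. Writing $p_{i,\widehat k}=(1+T)^{\delta}q$ gives $t_i^{-\delta}(1+z_i)^{\delta}q(z_i)u_{\widehat k}=s_i^{\delta}q(z_i)u_{\widehat k}$. This is regular on the whole chart, because $s_i$ is a coordinate and $z_i$ is invertible there.

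So your $\delta>0$ algebra is right and the conclusion survives. However, your remark that the term vanishes to order $\delta$ along $\{s_i=0\}$ concerns a divisor that meets the torus, not the exceptional one.
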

\begin{proof}
Each group of terms has order $-\delta_{i,\widehat k}$ along the
toric divisor $D_i$. This order is independent of the power of $y_i$
because $B_{ii}=0$. In the blowup chart $t_i s_i=1+y_i$, a zero of
order $a$ of $p_{i,\widehat k}$ at $y_i=-1$ contributes the factor
$t_i^a s_i^a$. Divisibility by
$(1+y_i)^{[\delta_{i,\widehat k}]_+}$ therefore removes the possible
pole along $t_i=0$. This proves regularity on every blowup chart
and on the torus. Their union is $V^\circ$, and normality extends
regularity across $V\setminus V^\circ$, of codimension at least two.
\end{proof}

For example, $T^{-1}+2+T=T^{-1}(1+T)^2$ satisfies the condition
when $\delta_{i,\widehat k}=2$. Divisibility is tested in the Laurent
polynomial ring; thus the same divisibility criterion applies to
negative powers of $T$.

In the applications, we use regular functions of the form
\[ \Psi_f=y_f\sum_{k\in K_f}\prod_{i=1}^r y_i^{k_i},\qquad K_f\subseteq\{0,1\}^r. \]
Their full supports in \eqref{eq:boundary-data} are
\[ S_f=\left\{e_f^*+\sum_{i=1}^r k_i e_i^*:k\in K_f\right\}. \]
The construction in Section~\ref{app:boundary-data} gives
$0\in K_f$ and verifies the divisibility in Lemma~\ref{lem:center}.
It follows that $e_f^*$ is the unique minimizing exponent of
$\Psi_f$ whenever all mutable coordinates of $Q$ are positive.

\begin{lemma}[Valuation bounds in mutated coordinates]
\label{lem:normal}
Suppose an original LP mutation sequence leads to a chart with coordinate
leading exponents given by the rows of $\mathsf L_t\in\mathrm{GL}_d(\Z)$.
Assume no frozen variable divides any ordinary exchange used along
the sequence. Mutations use the
associated exchange Laurent polynomials. Then
\[ \nu_f(g)\le\pair{\mathsf L_t^{-1}e_f}{\lead g} \qquad(0\ne g\in\mathscr L_t). \]
If every $h\in S_f$ occurs as $\mathsf L_t^{-1}e_f$ for a Laurent chart
containing $R$, the upper inequality in \eqref{eq:sandwich} follows.
\end{lemma}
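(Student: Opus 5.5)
The plan is to compare leading exponents chart by chart. In the chart $t$, write $x^{(t)}_1,\ldots,x^{(t)}_d$ for the coordinates. By definition of $\mathsf L_t$, each coordinate $x^{(t)}_j$, expanded in the initial torus coordinates, has $\lead x^{(t)}_j=\mathsf L_t^{\mathsf T}e_j$ (row $j$ of $\mathsf L_t$). Since $\lead$ is multiplicative for the translation-invariant order, a Laurent monomial $\prod_j (x^{(t)}_j)^{c_j}$ has leading exponent $\mathsf L_t^{\mathsf T}c$.

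\emph{Step 1: frozen orders of the chart coordinates.} Every mutable chart coordinate is obtained by a sequence of exchanges $x'=\widehat E/x$. By hypothesis, no frozen variable divides any ordinary exchange polynomial. Induction along the sequence then shows that $\nu_f(x^{(t)}_j)=0$ for every mutable $j$. Each $\widehat E$ is $E$ times a monomial in mutable variables; such a monomial has $\nu_f=0$ inductively, and $\nu_f(E)=0$. Frozen coordinates are unchanged, so $\nu_f(x^{(t)}_{f'})=\delta_{ff'}$.

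\emph{Step 2: the inequality for a general $g\in\mathscr L_t$.} Write $g=\sum_c a_c\prod_j (x^{(t)}_j)^{c_j}$ as a finite sum. Valuations satisfy $\nu_f(g)\ge\min_c\nu_f(\text{term})$, but we need an upper bound. For that, look at the term realizing the least leading exponent. The map $c\mapsto\mathsf L_t^{\mathsf T}c$ is injective, so distinct $c$ give distinct leading exponents. Hence $\lead g=\mathsf L_t^{\mathsf T}c_0$ for the $<$-least $c_0$ among the terms, with no cancellation. Then
\[ \langle \mathsf L_t^{-1}e_f,\lead g\rangle=\langle e_f,c_0\rangle=(c_0)_f. \]
It remains to show $\nu_f(g)\le (c_0)_f$. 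I expect this step to be the main obstacle.

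First I would reduce modulo $x_f$, using that $\nu_f$ is the $x_f$-adic monomial valuation. By Step 1, the residues of the mutable chart coordinates are nonzero. Moreover, as in the proof of Lemma~\ref{lem:frozen-descent}, the residues of the chart coordinates other than $x_f$ are algebraically independent: at each exchange the numerator's residue is nonzero and independent of the exchanged variable, so the residue fields agree. Therefore $\nu_f$ restricted to $\mathscr L_t$ equals the intrinsic $x^{(t)}_f$-adic valuation of the chart, and so $\nu_f(g)=\min\{c_f: a_c\neq0\}\le (c_0)_f$. This in fact gives the inequality, because the $x_f$-order of the chart is the intrinsic one.

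\emph{Step 3: the sandwich.} Take $0\ne g\in R$. For each $h\in S_f$, choose a chart $t$ with $R\subseteq\mathscr L_t$ and $\mathsf L_t^{-1}e_f=h$. Step 2 gives $\nu_f(g)\le\langle h,\lead g\rangle$. Minimizing over $h\in S_f$ yields $\nu_f(g)\le\phi_f(\lead g)$, which is the upper inequality in \eqref{eq:sandwich}.

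The delicate point is the compatibility of leading exponents with chart monomials in Step 2. If it becomes an issue, I would verify it using the fact that $\lead$ of the initial expansion of an exchanged variable is determined solely by the least monomials of $\widehat E$.
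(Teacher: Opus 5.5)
Your proof is correct and follows the paper's argument. You show inductively that $\nu_f$ remains the intrinsic $x_f$-adic (monomial) valuation in the mutated chart coordinates, then take the chart monomial with least leading exponent $\mathsf L_t^{\mathsf T}c_0$ and bound $\nu_f(g)\le (c_0)_f=\langle \mathsf L_t^{-1}e_f,\lead g\rangle$; the paper phrases the first step with initial forms on the associated graded field instead of residues modulo $x_f$, which is the same thing for this valuation. The one thing to tidy is that Steps~1 and~2 must be a single simultaneous induction, because $\nu_f(E)=0$ at each exchange already uses that $\nu_f$ is the intrinsic $x_f$-adic valuation of the preceding chart.
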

\begin{proof}
A monomial valuation is also monomial in coordinates obtained by
$x_i\mapsto F/x_i$, with $F$ independent of $x_i$: on the associated graded fraction
field this replaces one transcendental coordinate by its inverse
times the nonzero initial form $\op{in}(F)$ of $F$ for this valuation.
This remains an invertible
rational substitution even when several monomials of $F$ have the same valuation.
The hypothesis on frozen factors and the fact that associated
denominators use only mutable variables show inductively that the mutable values
stay zero and the frozen value vector stays $e_f$.

Write $g$ in the Laurent chart. Its distinct Laurent monomials have
distinct initial leading exponents because $\mathsf L_t$ is unimodular.
Let $u$ be the exponent of the monomial whose expansion has the least
exponent in the initial coordinates. Then $m=\lead g=\mathsf L_t^\T u$.
Since the valuation is monomial in the chart coordinates, its value
on $g$ is the minimum of the frozen exponents, and is at most $u_f$.
The latter equals
$\pair{\mathsf L_t^{-1}e_f}m$. Taking all the resulting normal vectors proves the claim.
\end{proof}

The mutation sequence is applied to the original rational functions.
Some sequences for the seeds with four mutable variables pass through
trinomial exchanges. We keep their associated denominators and all
terms of minimum valuation
\cite[Section~2]{LamPylyavskyy}.

\begin{corollary}\label{cor:application}
Assume the hypotheses of Proposition~\ref{prop:binomial}. Suppose the
regular boundary functions \eqref{eq:boundary-data} have the original
charts satisfying Lemma~\ref{lem:normal}, and satisfy
\eqref{eq:balance}. Then
\[
\Ageom(V)_1\xrightarrow{\ \sim\ }R,\qquad
 R(\mathbf0)=\bigoplus_{\substack{m\in\Z^d\\Hm\ge0}}\C\Theta_m.
\]
Equation~\eqref{eq:all-thresholds} gives every space $R(\mathbf d)$,
for $\mathbf d\in\Z^s$, and the minimum formula for each frozen
valuation.
For fixed intrinsic degree and fixed valuation bounds, the set of
real exponents is a bounded rational polytope.
\end{corollary}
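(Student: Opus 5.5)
The corollary assembles results already proved, so the plan is to verify the hypotheses of Theorem~\ref{thm:comparison} in the setting of Proposition~\ref{prop:binomial}, and then read off the conclusions. Set $\mathcal A=\Ageom(V)_1$ and $\Lambda=L=\Z^d$, and take for $q$ the grading map of Section~\ref{sec:binomial}. Proposition~\ref{prop:binomial}, together with Lemmas~\ref{lem:geometric-expansions} and~\ref{lem:geometric-product}, gives a unital injective homogeneous algebra homomorphism $\Phi$ of the form \eqref{eq:marked}. Its least terms are taken in the lexicographic order fixed via $B_p^{-\T}$, which is translation invariant. Since $\Lambda=L$, every leading exponent occurring in $R$ lies in $\Lambda$ automatically.

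\emph{Sandwich inequalities.} The lower inequality $\nu_f(\Theta_m)\ge\phi_f(m)$ comes from applying the slope bound \eqref{eq:slope} to $\Psi_f$. At $Q$ its unique minimizing exponent is $e_f^*$, so every exponent $e$ of $\Theta_m$ satisfies $e_f=\pair{e_f^*}{e}\ge\phi_f(m)$, and the monomial valuation $\nu_f=v_{e_f^*}$ is the minimum of these values. For the upper inequality I will invoke Lemma~\ref{lem:normal}. Each $h\in S_f$ is realized as $\mathsf L_t^{-1}e_f$ for a Laurent chart $\mathscr L_t$ containing $R$, so $\nu_f(g)\le\pair hm$ with $m=\lead g$. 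Taking the minimum over $h\in S_f$ gives $\nu_f(g)\le\phi_f(\lead g)$ for every nonzero $g\in R$.

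\emph{Finiteness.} Since $\phi_f$ is a minimum of the linear forms in $S_f$, I can apply Proposition~\ref{prop:balance}, using $\ker q=B^\T\Z^r$ and the full-rank matrix $B$ (its rows are part of the unimodular $B_p$). Condition \eqref{eq:balance} makes every nonempty $P_{\gamma,\mathbf d}$ a bounded rational polytope. This gives \eqref{eq:finite-labels}, and it is also the last assertion of the corollary.

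With all hypotheses checked, Theorem~\ref{thm:comparison} shows that $\Phi$ is an isomorphism and that \eqref{eq:all-thresholds} holds for every $\mathbf d$. For $\mathbf d=\mathbf 0$, the conditions $\phi_f(m)\ge0$ for all $f$ say exactly that $\pair hm\ge0$ for every row $h$ of $H$, that is, $Hm\ge0$. The only step needing care is the upper inequality. There one must make sure that the charts supplied by Lemma~\ref{lem:normal} genuinely contain $R$, which is what \eqref{eq:original-input} provides. One must also check that the leading-exponent order used in Lemma~\ref{lem:normal} coincides with the order fixed in Section~\ref{sec:binomial}; I would confirm this by matching both with the lexicographic order after the change of coordinates by $B_p^{-\T}$.
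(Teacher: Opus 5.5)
Your proposal is correct and follows the paper's own proof. Proposition~\ref{prop:binomial}, with the slope bound applied to $\Psi_f$, gives the homomorphism and the lower bounds; Lemma~\ref{lem:normal} gives the upper bounds; Proposition~\ref{prop:balance} gives the finiteness and the bounded-polytope assertion; and Theorem~\ref{thm:comparison} then gives the isomorphism and \eqref{eq:all-thresholds}, where $\mathbf d=\mathbf 0$ reads off as $Hm\ge0$. The order compatibility you flag does hold in the paper, since the leading matrices $\mathsf L_w$ are computed in the same order $\op{lex}(B_p^{-\mathsf t}m)$ fixed in Section~\ref{sec:binomial}.
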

\begin{proof}
Proposition~\ref{prop:binomial} supplies the homomorphism \eqref{eq:marked}
and the lower bounds. Lemma~\ref{lem:normal} supplies the upper bounds,
and Proposition~\ref{prop:balance} supplies the required finiteness.
Apply Theorem~\ref{thm:comparison}.
\end{proof}

\subsubsection*{Regularity along the frozen divisors.}
Let $R_0$ be normal and let $\Spec R\subseteq\Spec R_0$ be the
complement of the frozen prime divisors, whose orders are $\nu_f$.
Regularity at all height-one points gives $R_0=R(\mathbf0)$.
If the frozen units $x_f$ satisfy $\nu_f(x_g)=\delta_{fg}$, then
$R(\mathbf d)=\left(\prod_f x_f^{d_f}\right)R(\mathbf0)$.

\subsubsection*{Polynomial frozen coefficients.}\label{app:polynomial-frozen}
For the applications, this conclusion also follows directly from the
finite Laurent intersection. Let $L_i$, $0\le i\le r$, be the initial
and adjacent Laurent rings with coefficient ring
$\C[f_1,\ldots,f_s]$. Set
$U=\bigcap_{i=0}^rL_i$ and $R=U[(f_1\cdots f_s)^{-1}]$.
Assume every initial exchange is nonzero modulo each $f_j$.
Its reduction then remains a birational change of the mutable
coordinate, so the $f_j$-coordinate valuation is the same in all
initial and adjacent charts. Since the intersection is finite,
\[ U[(\textstyle\prod f_j)^{-1}]=\bigcap_{i=0}^rL_i[(\textstyle\prod f_j)^{-1}]. \]
Nonnegative values of all frozen valuations put the coefficients back
in $\C[f_1,\ldots,f_s]$ in every chart. Thus $R(\mathbf0)=U$ and
$R(\mathbf d)=f^{\mathbf d}U$. Contraction from a Laurent ring also
shows that the frozen ideals are distinct primes.

\subsubsection*{Polyhedral consequences.}
If the coordinate weight matrix $W$ has $BW=0$ and rank $s$, then
$\ker W^\T=B^\T\R^r$. Consequently
\[ \{m\in\R^d:Hm\ge0,\ W^\T m=\gamma\} \]
is bounded, and its integral points count the corresponding
homogeneous basis elements.  For the supports $K_f\subseteq\{0,1\}^r$ above, each normal is
primitive and has frozen part $e_f^*$. A conical dependence on other normals
would have to be a convex dependence within that same frozen group;
a vertex of $[0,1]^r$ cannot be a convex combination of its other
vertices. Thus these inequalities are irredundant. Strict feasibility
is given by mutable part zero and frozen part all ones.
The calculations in Section~\ref{app:boundary-data} give
$\rank H=d$, so the lineality space $\ker H$ is zero and each cone
is pointed.


\subsection{Newton polytopes and coefficient specialization}
\label{sec:polyhedral-newton}
We now give an alternative comparison which uses neither a grading
nor boundary inequalities. For a lattice polytope $P\subseteq L_\R$, put
$R_P=\{g\in R:\supp g\subseteq P\}$, including zero.

\begin{proposition}\label{thm:newton}
Assume \eqref{eq:marked}. Suppose that for every nonzero $g\in R$,
\begin{equation} m=\lead g\in\Lambda,\qquad \Newt(\Theta_m)\subseteq\Newt(g). \label{eq:newton-control} \end{equation}
Then $\Phi$ is an isomorphism and
\begin{equation} R_P=\bigoplus_{\Newt(\Theta_m)\subseteq P}\C\Theta_m. \label{eq:newton-filtration} \end{equation}
For every finite nonzero sum,
\begin{equation} \Newt\!\left(\sum_m c_m\Theta_m\right) =\op{conv}\!\bigcup_{c_m\ne0}\Newt(\Theta_m). \label{eq:newton-minimum} \end{equation}
In particular every monomial valuation $v_w$ defined in
Section~\ref{sec:marking} satisfies the minimum formula on this basis.
\end{proposition}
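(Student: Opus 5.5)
The plan is to run the finite leading-term subtraction of Lemma~\ref{lem:subtraction}, using Newton polytopes instead of valuation bounds to control both termination and membership. The core step is a single reduction: given $0\ne g\in R$ with $m=\lead g$ and leading coefficient $c$, hypothesis \eqref{eq:newton-control} gives $m\in\Lambda$ and $\Newt(\Theta_m)\subseteq\Newt(g)$. Hence $g-c\Theta_m$ has support in $\Newt(g)$ and the term $x^m$ has been removed. Its new leading exponent is therefore strictly larger in the translation-invariant order, and it still lies in $\Newt(g)\cap L$.

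Termination comes from finiteness of $\Newt(g)\cap L$. This set is finite because $\Newt(g)$ is a lattice polytope, so only finitely many leading exponents can occur during the subtraction. Define $E=R_{\Newt(g)}$. This is a vector subspace, and every nonzero $h\in E$ has $\lead h\in\Newt(g)$, so its leading exponents form a finite set. Moreover \eqref{eq:newton-control} applied to $h$ gives $\Newt(\Theta_{\lead h})\subseteq\Newt(h)\subseteq\Newt(g)$, so $\Theta_{\lead h}\in E$. Lemma~\ref{lem:subtraction} then shows that $E$ has basis $\{\Theta_m:\Theta_m\in E\}$. Since $g\in E$, this proves surjectivity of $\Phi$. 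Injectivity is already given by distinct least exponents in \eqref{eq:marked}.

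For \eqref{eq:newton-filtration}, I apply the same argument with $E=R_P$ for an arbitrary lattice polytope $P$. The lattice points of $P$ are finite, and the containment $\Newt(\Theta_{\lead h})\subseteq\Newt(h)\subseteq P$ again places $\Theta_{\lead h}$ in $R_P$. Lemma~\ref{lem:subtraction} then gives the basis $\{\Theta_m:\Newt(\Theta_m)\subseteq P\}$.

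For \eqref{eq:newton-minimum}, write $g=\sum c_m\Theta_m$ and set $Q=\op{conv}\bigcup_{c_m\ne0}\Newt(\Theta_m)$. The inclusion $\Newt(g)\subseteq Q$ is immediate, since every exponent of $g$ lies in some $\supp\Theta_m$. For the reverse inclusion, take $P=\Newt(g)$, a lattice polytope. Then $g\in R_P$, and by \eqref{eq:newton-filtration} together with uniqueness of the expansion in the basis $\{\Theta_m\}$, every $m$ with $c_m\ne0$ satisfies $\Newt(\Theta_m)\subseteq\Newt(g)$. Taking convex hulls gives $Q\subseteq\Newt(g)$.

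The valuation statement follows because $v_w(h)=\min_{\Newt(h)}\pair w\cdot$ for every Laurent polynomial $h$, and the minimum of a linear form over a convex hull of a union equals the minimum over the pieces. This gives $v_w(\sum c_m\Theta_m)=\min_{c_m\ne0}v_w(\Theta_m)$.

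I expect the main subtlety to be the uniqueness step in \eqref{eq:newton-minimum}. It requires that the expansion of $g$ in the basis of $R_P$ coincides with its given expansion in the full theta basis. This holds because $\Phi$ is injective, so the $\Theta_m$ are linearly independent in $R$. Everything else is a direct transcription of the proof of Theorem~\ref{thm:comparison}, with polytopes replacing valuation thresholds.
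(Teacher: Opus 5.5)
Your proposal is correct and follows essentially the same route as the paper. You apply Lemma~\ref{lem:subtraction} to $E=R_P$, using the finiteness of $P\cap L$ and the fact that \eqref{eq:newton-control} returns $\Theta_{\lead h}$ to $R_P$; you obtain spanning from $g\in R_{\Newt(g)}$, deduce \eqref{eq:newton-minimum} from \eqref{eq:newton-filtration} with $P=\Newt(g)$ and uniqueness of the expansion (which you correctly trace to injectivity of $\Phi$), and finish by minimizing linear forms.
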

\begin{proof}
For $E=R_P$, the possible leading exponents lie in the finite set $P\cap L$,
and \eqref{eq:newton-control} puts their functions back in $R_P$.
Lemma~\ref{lem:subtraction} proves \eqref{eq:newton-filtration} and,
since every Laurent polynomial has a Newton polytope, proves spanning.
For a finite sum $g$, take $P=\Newt(g)$. Uniqueness of the basis
expansion and \eqref{eq:newton-filtration} put every occurring
$\Newt(\Theta_m)$ inside $P$. The opposite containment is immediate
from the support of a sum. Finally minimize any linear form on both
sides of \eqref{eq:newton-minimum}.
\end{proof}

Theorem~\ref{thm:comparison} applies finite subtraction in a fixed
degree and with fixed bounds $\nu_f\ge d_f$; Proposition~\ref{thm:newton}
applies it inside a fixed Newton polytope. The five copositive applications use the first criterion.

\subsubsection*{Coefficients and specialization.}
The subtraction proofs also work over a domain $D$: replace vector
spaces by $D$-submodules and require valuations to be trivial on
$D\setminus0$. Every function is monic, so subtraction divides by no
coefficient. Applied to a homomorphism into
$R\otimes_\C D\subseteq D[L]$, they give a direct-sum $D$-basis.
For any specified homomorphism $D\to D'$, the based isomorphism then
specializes to $R\otimes_\C D'$; the distinct monic leading terms
survive. No assertion that all Newton supports stay unchanged under
specialization is needed.

The algebraic ring $\mathscr R_Y$ of Section~\ref{sec:marking} has
the specialization $z^\gamma\mapsto1$. For a construction over a
completed coefficient ring, this must be checked separately. For
example, $x^m+(1-t)^{-1}x^n$ is finite as a Laurent polynomial in $x$,
but there is no homomorphism $\C[[t]]\to\C$ sending $t$ to $1$:
the unit $1-t$ would map to zero.
For curve lattice $\Gamma_{\rm curv}$, the local ring
$D_{\rm id}=\C[\Gamma_{\rm curv}]_{ (t^\gamma-1:\gamma\in\Gamma_{\rm curv})}$
does carry the identity specialization. Descent of the
normalized functions to this ring is a sufficient, separate
hypothesis. The five copositive applications use the algebraic specialization.
Lemma~\ref{lem:geometric-expansions} proves that the corresponding
local expansions are finite.

\part{Categorical constructions and branching algebras}
\section{Categorical construction for \texorpdfstring{$A_{2n-1}\downarrow C_n$}{A2n-1 to Cn}}
\label{sec:categorical}
\subsection{The branching presentation category}
\label{subsec:original-presentation-category}
Fix $n\ge4$. Label the vertices of $A_{2n-1}$ by
\[
 V_n=\{1_{\mathrm l},\ldots,(n-1)_{\mathrm l},n,
              (n-1)_{\mathrm r},\ldots,1_{\mathrm r}\}.
\]
The vertex $\ell_{\mathrm l}$ or $\ell_{\mathrm r}$ has parity $\ell$;
all arrows point from even vertices to adjacent odd vertices:
\[
\begin{array}{ll}
\vcenter{\hbox{$\xymatrix@C=1.2em{
1_{\mathrm l}&2_{\mathrm l}\ar[l]\ar[r]&\cdots&
(n-1)_{\mathrm l}\ar[l]\ar[r]&n&
(n-1)_{\mathrm r}\ar[l]\ar[r]&\cdots&
2_{\mathrm r}\ar[l]\ar[r]&1_{\mathrm r}
}$}}&n\text{ odd},\\[1.1em]
\vcenter{\hbox{$\xymatrix@C=1.2em{
1_{\mathrm l}&2_{\mathrm l}\ar[l]\ar[r]&\cdots\ar[r]&
(n-1)_{\mathrm l}&n\ar[l]\ar[r]&
(n-1)_{\mathrm r}&\cdots\ar[l]&
2_{\mathrm r}\ar[l]\ar[r]&1_{\mathrm r}
}$}}&n\text{ even}.
\end{array}
\]
Write $\bar v$ for the vertex obtained by interchanging $\mathrm l,\mathrm r$,
with $\bar n=n$, and put
\[
 \iota(\ell_{\mathrm l})=\ell,\qquad
 \iota(\ell_{\mathrm r})=2n-\ell,\qquad \iota(n)=n.
\]
This identifies the vertex order with the fundamental-weight order of
Part~I. We abbreviate $\omega_v=\omega_{\iota(v)}$ and write
$\ell(v)=\ell$ for $v=\ell_{\mathrm l},\ell_{\mathrm r}$, with $\ell(n)=n$.
We work with left $\C A_{2n-1}$-modules and set
\begin{equation}
 \mathscr C_n=\Ch_2(\proj\C A_{2n-1}).
 \label{eq:original-presentation-category}
\end{equation}
An object is a two-term presentation
$d_X:P_+(X)\longrightarrow P_-(X)$; the positive term is the
\emph{domain}. A morphism $a:X\to Y$ is a commutative square:
\[
\begin{gathered}
 a_+:P_+(X)\longrightarrow P_+(Y),\qquad
 a_-:P_-(X)\longrightarrow P_-(Y),\qquad d_Ya_+=a_-d_X.
\end{gathered}
\]
The exact structure consists of the sequences split exact in each of
the two projective terms. In particular, $\mathscr C_n$ is the category
of presentations itself, not its homotopy category
\cite[Section~2.2]{FeiTensor}. In the right-module notation of that
reference, our left-module convention uses the opposite quiver.

For $v\in V_n$, let $P_v$ and $J_v$ be the indecomposable projective
and injective. In the categorical sections, $\tau$ denotes
Auslander--Reiten translation on modules, with its extension to the
presentation indexing specified below. It is unrelated to the
bordered-Pfaffian coefficient $\tau(R;I)$ of \eqref{eq:tau-definition}.
Put
\[
 O_v^+=(P_v\to0),\qquad O_v^-=(0\to P_v),\qquad
 \Id_v=(P_v\xrightarrow{1}P_v).
\]
The neutral presentation $\Id_v$ is a nonzero object; it is not the
identity morphism of $O_v^+$. Write
$f(M):P_1(M)\to P_0(M)$ for the minimal projective presentation of a
module. The indecomposables of $\mathscr C_n$ are the positive and
neutral presentations and the minimal presentations of indecomposable
modules, including $O_v^-=f(P_v)$.

In the vertex order above, the Cartan matrix $C_A$ and Coxeter
transformation $\Phi_A$ satisfy
\[
 C_Ae_v=\begin{cases}
 e_v,&v\text{ odd},\\
 e_v+\sum_{w\sim v}e_w,&v\text{ even},
 \end{cases}
 \qquad \Phi_A=-C_A^{\mathsf t}C_A^{-1},
\]
where $w\sim v$ means that the vertices are adjacent. Every
indecomposable module is supported on an interval of this chain.
If $d$ is its dimension vector and $u$ its top multiplicity vector, then
\begin{equation}
 P_0(M)=P(u),\qquad P_1(M)=P(u-C_A^{-1}d),\qquad
 \underline{\dim}\tau M=\Phi_Ad
 \quad(M\text{ nonprojective}),
 \label{eq:categorical-projective-calculation}
\end{equation}
where $P(\mathbf m)=\bigoplus_{w\in V_n}P_w^{m_w}$. The top consists of the
interval vertices receiving no arrow from another vertex in the interval.
The injective $J_v$ is simple for even $v$; for odd $v$ it is supported
on $v$ and its neighbours.

We use $\tau O_v^+=f(J_v)$ and
$\tau^tO_v^+=f(\tau^{t-1}J_v)$ for $1\le t\le n$.
The Coxeter calculation lists every interval module once and gives
$\tau^nO_v^+=O_{\bar v}^-$. To index the presentations by the same
pairs $(a,\ell)$ as the functions $F_{a,\ell}$, put
\[
 t_\ell(a)=\frac{a-1}{2}+\left\lfloor\frac\ell2\right\rfloor,
 \qquad L_\ell=n-(\ell\bmod2),\qquad h=\left\lfloor\frac n2\right\rfloor.
\]
For odd $a$ and $1\le\ell\le n$, define
\begin{equation}
\begin{aligned}
 X^{\mathrm l}_{a,\ell}&=\tau^{t_\ell(a)}O_{\ell_{\mathrm l}}^+,
 &X^{\mathrm r}_{a,\ell}&=\tau^{L_\ell-t_\ell(a)}O_{\ell_{\mathrm r}}^+
 &&(\ell<n),\\
 X^{\mathrm l}_{a,n}&=\tau^{h-(a-1)/2}O_n^+,
 &X^{\mathrm r}_{a,n}&=\tau^{h+(a-1)/2}O_n^+
 &&(a\ge1).
\end{aligned}
 \label{eq:original-ar-coordinates}
\end{equation}
Each expression is used when its translation exponent lies in $[0,n]$.
At $a=1$ on the central orbit the two expressions coincide; write
$X_{1,n}=\tau^hO_n^+$. At mutable indices, the superscripts distinguish the two presentations
in one degree fibre, not two different variables.

\subsubsection*{Morphism arrows and translation arrows.}
For indecomposable $X,Y$, we put
\[
 \Irr_{\mathscr C_n}(X,Y)=
 \rad_{\mathscr C_n}(X,Y)/\rad_{\mathscr C_n}^{\,2}(X,Y).
\]
The square of the radical is computed through \emph{all} objects of
$\mathscr C_n$. A solid arrow has multiplicity
$\dim_\C\Irr_{\mathscr C_n}(X,Y)$. For adjacent vertices $o,e$, with
$o$ odd and $e$ even, \cite[Proposition~3.6]{FeiTensor} gives
\begin{align}
 \tau^tO_o^+&\longrightarrow\tau^tO_e^+ &&(0\le t\le n),\notag\\
 \tau^{t+1}O_e^+&\longrightarrow\tau^tO_o^+ &&(0\le t<n).
 \label{eq:ambient-solid-arrows}
\end{align}
The remaining solid arrows are
\begin{align}
 \tau O_e^+&\longrightarrow\Id_e\longrightarrow O_e^+
 &&(e\text{ even}),\notag\\
 O_{\bar o}^-&\longrightarrow\Id_{\bar o}\longrightarrow\tau^{n-1}O_o^+
 &&(o\text{ odd}).
 \label{eq:ambient-neutral-arrows}
\end{align}
All these multiplicities are one. A dashed arrow records translation,
not an additional irreducible morphism. In the function-compatible indices,
\begin{equation}
\begin{aligned}
 X^{\mathrm l}_{a,\ell}&\dashrightarrow X^{\mathrm l}_{a+2,\ell},&
 X^{\mathrm r}_{a,\ell}&\dashrightarrow X^{\mathrm r}_{a-2,\ell}
 &&(\ell<n),\\
 X^{\mathrm l}_{a,n}&\dashrightarrow X^{\mathrm l}_{a-2,n},&
 X^{\mathrm r}_{a,n}&\dashrightarrow X^{\mathrm r}_{a+2,n}
 &&(a\ge3\text{ on the left},\ a\ge1\text{ on the right}).
\end{aligned}
 \label{eq:ambient-translation-arrows}
\end{equation}
These formulas apply wherever both presentations are defined; at the
central meeting point use $X^{\mathrm l}_{1,n}=X^{\mathrm r}_{1,n}=X_{1,n}$.
Equivalently, translation sends $\tau^tO_v^+$ to $\tau^{t+1}O_v^+$.
Both solid and dashed arrows enter the exchange products.

For $P_+(X)=\bigoplus_{w\in V_n}P_w^{m_w^+(X)}$, define
\begin{equation}
 \begin{aligned}
 \bwt(\tau^tO_v^+)=
 \left(\sum_wm_w^+(\tau^tO_v^+)\omega_w;\varpi_{\ell(v)}\right),\qquad \bwt(\Id_v)=(\omega_v;0).
 \end{aligned}
 \label{eq:categorical-bidegree}
\end{equation}
It extends additively under direct sums. The first component records the
positive projective term; the second records the folded translation orbit.

\begin{definition}\label{def:branching-presentation-category}
The branching presentation category $\mathscr C_n^{\mathrm{br}}$ is the full additive subcategory
of $\mathscr C_n$ generated by
\begin{equation}
 \{\tau^tO_v^+:v\in V_n,\ 0\le t<n\}
 \ \cup\ \{\Id_v:v\in V_n\text{ even}\}.
 \label{eq:branching-subcategory}
\end{equation}
Equivalently, omit the negative presentations and the odd neutral
presentations from $\ind\mathscr C_n$. Its morphisms and composition
are those of the original presentation category:
\[
 \Hom_{\mathscr C_n^{\mathrm{br}}}(U,V)
       =\Hom_{\mathscr C_n}(U,V).
\]
For an initial seed variable $x$, mutable or frozen, its
\emph{full degree fibre} is
\begin{equation}
 \cF_n(x)=\{X\in\ind\mathscr C_n:\bwt(X)=\wt(x)\}.
 \label{eq:definition-full-degree-fibre}
\end{equation}
Proposition~\ref{prop:categorical-fibres} shows that these fibres
partition $\ind\mathscr C_n^{\mathrm{br}}$.
\end{definition}

Indecomposables are taken up to isomorphism. The term
\emph{degree-fibred} refers to their partition by degree; all morphisms
remain the original commutative squares. No objects are identified,
and no quotient or orbit category is taken.

\subsection{The degree fibres}
\label{subsec:branching-fibred-category}
The frozen fibres, with $j=\iota(v)$, are
\begin{align}
 \cF_n(c_n)&=\{X_{1,n}\},&
 \cF_n(s_j)&=\{\Id_v\}\quad(v\text{ even}),\notag\\
 \cF_n(p_j)&=
 \begin{cases}
 \{O_v^+\},&v\text{ even},\\
 \{O_v^+,\tau^{n-1}O_{\bar v}^+\},&v\text{ odd}.
 \end{cases}
 \label{eq:principal-fibres}
\end{align}
These fibres supply the frozen columns of the exchange matrix; no
marked representative is needed for them.

\begin{proposition}\label{prop:categorical-fibres}
For $\ell<n$ and every mutable index $(a,\ell)$,
\begin{equation}
 \cF_n(F_{a,\ell})=
 \{X^{\mathrm l}_{a,\ell},X^{\mathrm r}_{a,\ell}\},
 \qquad 1\le t_\ell(a)<L_\ell.
 \label{eq:lower-fibres}
\end{equation}
At the central level,
\begin{equation}
 \cF_n(F_{1+2d,n})=\{X^{\mathrm l}_{1+2d,n},X^{\mathrm r}_{1+2d,n}\},
 \qquad 1\le d<h.
 \label{eq:middle-fibres}
\end{equation}
Together with the frozen fibres in \eqref{eq:principal-fibres},
these fibres partition $\ind\mathscr C_n^{\mathrm{br}}$.
\end{proposition}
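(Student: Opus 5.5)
The plan is to compute $\bwt$ on every indecomposable of $\mathscr C_n^{\mathrm{br}}$ explicitly and match the result against the weights $(\lambda_{a,\ell};\varpi_\ell)$ of the seed variables. The argument has three steps: a positive-term formula, a lift of the antiperiodic dictionary, and a count.

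First, I would compute the positive projective term of $\tau^tO_v^+=f(\tau^{t-1}J_v)$ for $1\le t<n$, using \eqref{eq:categorical-projective-calculation}. The dimension vectors $\Phi_A^{t-1}\underline{\dim}J_v$ are intervals of the chain, and the Coxeter transformation for this bipartite orientation moves an interval by one step at each end, reflecting at the ends of the chain. Since $P_1(M)=P(u-C_A^{-1}d)$, its multiplicities are the sinks of the interval that receive no arrow from inside it. Concretely:
\begin{itemize}
\item for an interval of odd-endpoint type, these are the odd vertices of the interval, excluding those forced into the top;
\item the number of summands of $P_+$ equals the number of odd vertices strictly inside the interval that are not tops, which is $m$.
\end{itemize}
From this I would show that $\sum_w m_w^+\omega_w$ for $X^{\mathrm l}_{a,\ell}$ equals $\sum_{s=0}^{\ell-1}\epsilon_{a+2s}$ reduced by the cancellations of \eqref{eq:antiperiodic-dictionary}. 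In other words, the interval moving under $\tau$ realizes the sliding window $a,a+2,\ldots,a+2\ell-2$ in the $4n$-periodic index set, and reflection at the chain ends corresponds to the pairings $j\leftrightarrow -j$ and $j\leftrightarrow 4n-j$. The same computation for the right orbit, with exponent $L_\ell-t_\ell(a)$, gives the mirrored window. The bar involution $v\mapsto\bar v$ together with the reflection $\rho_\ell$ of Proposition~\ref{thm:sign-free-reflection} should show that both presentations land on $\lambda_{a,\ell}$, using that $F_{a,\ell}$ and $F_{\rho_\ell(a),\ell}$ have dual source weights. The right component $\varpi_{\ell(v)}$ is immediate from \eqref{eq:categorical-bidegree}.

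Second, I would handle the boundary of the $\tau$-orbit:
\begin{itemize}
\item $t=0$ gives $O_v^+$ with weight $(\omega_v;\varpi_{\ell(v)})$, which is $\wt p_j$ because $\mu(j)=\ell(v)$;
\item $t=n-1$ on an odd orbit gives the second element of $\cF_n(p_j)$;
\item on even orbits $t=n-1$ is absorbed at $t_\ell(a)=L_\ell$ only through the exclusion;
\item the central orbit at $t=h$ gives $X_{1,n}$ with weight $\wt c_n=(\omega_1+\omega_3+\cdots;\varpi_n)$;
\item the neutral objects give $\Id_v\mapsto(\omega_v;0)=\wt s_j$ for even $v$.
\end{itemize}
Next I would check that the index ranges $1\le t_\ell(a)<L_\ell$ and $1\le d<h$ translate exactly into the chamber conditions \eqref{eq:mutable-chamber} defining $I_<$ and $I_=$, and that the remaining translation exponents in $[0,n-1]$ are precisely the principal cases just listed.

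Third, partition. Distinct initial variables have distinct weights by Proposition~\ref{prop:initial-primes}, so the fibres are disjoint, and the computation shows every object of $\ind\mathscr C_n^{\mathrm{br}}$ lies in some fibre. To confirm nothing is double-listed or missed, I would count: $\mathscr C_n^{\mathrm{br}}$ has $n(2n-1)$ objects $\tau^tO_v^+$ plus $n-1$ even neutral objects. The fibres account for
\[
2n(n-2)+(2n-1)+n+1+(n-1),
\]
namely two per mutable variable, the principal fibres (one for each even $v$ and two for each odd $v$), $c_n$, and the spherical functions. Both totals equal $2n^2-1$.

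The main obstacle is the first step. It requires a careful bookkeeping of the Coxeter orbit of each injective, including the reflections at the chain ends and the parity shift on the central orbit for even versus odd $n$, so that $P_+$ is identified with the folded window. I would first verify the formula on $A_7\downarrow C_4$ by direct tabulation, and then prove the general case by induction on $t$, tracking how the interval endpoints move.
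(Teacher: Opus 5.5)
Your overall plan matches the paper's. You compute $m^+$ along each $\tau$-orbit, match it with the window $\sum_{s=0}^{\ell-1}\epsilon_{a+2s}$, identify the boundary objects, and count $2n^2-1$. Your treatment of the right orbit is also fine: the bar automorphism together with $\lambda_{\rho_\ell(a),\ell}=\lambda_{a,\ell}^\vee$ works, because $t_\ell(\rho_\ell(a))=L_\ell-t_\ell(a)$. There are, however, two concrete problems.

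\emph{First, the rule you propose for $P_+$ is false.} In this orientation the top of an interval module is its set of even vertices (or a lone odd vertex). Each $P_w$ with $w$ even is supported on $w$ and its neighbours. Hence $P_1(M)$ contains $P_o$ for every odd $o$ strictly inside the interval, \emph{and also} for every odd $o$ outside the interval that is adjacent to an even endpoint. For example, $J_{1_{\mathrm l}}$ has support $\{1_{\mathrm l},2_{\mathrm l}\}$ and projective cover $P_{2_{\mathrm l}}$. Then $u-C_A^{-1}d=e_{3_{\mathrm l}}$, so $P_+(X^{\mathrm l}_{3,1})=P_{3_{\mathrm l}}$, giving $\omega_3=\lambda_{3,1}$. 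The interval has no odd vertex strictly inside, yet $m=1$. Your bookkeeping would therefore produce the wrong positive terms. The induction works if you use the correct rule. Alternatively, do as the paper does: start from the projective cover of $J_{\ell_{\mathrm l}}$, and check that one application of $\Phi_A$, with its new top, changes $u-C_A^{-1}d$ by exactly $\epsilon_{a+2\ell}-\epsilon_a$.

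\emph{Second, the fullness step is missing.} By \eqref{eq:definition-full-degree-fibre}, the fibres $\cF_n(x)$ are taken over all of $\ind\mathscr C_n$, not over $\ind\mathscr C_n^{\mathrm{br}}$. Your count shows that the displayed objects exhaust $\ind\mathscr C_n^{\mathrm{br}}$. To conclude $\cF_n(F_{a,\ell})=\{X^{\mathrm l}_{a,\ell},X^{\mathrm r}_{a,\ell}\}$, and likewise for the frozen fibres, you must also exclude two further families:
\begin{itemize}
\item the $2n-1$ negative presentations $O_v^-=\tau^nO^+_{\bar v}$, which have degree $(0;\varpi_{\ell(v)})$;
\item the $n$ odd neutral presentations $\Id_v$, which have degree $(\omega_v;0)$ with $v$ odd.
\end{itemize}
No initial variable has either kind of degree, so the check is easy. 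It is nonetheless required, and it is how the paper closes the proof.

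Relatedly, on an even orbit with $\ell<n$ one has $L_\ell=n$, so $t=n-1$ is still in the mutable range. The object excluded at $t_\ell(a)=L_\ell$ is $t=n$, which is exactly one of these negative presentations.
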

\begin{proof}
Use the periodic vectors $\epsilon_j$ of
\eqref{eq:antiperiodic-dictionary}, now in the ordered basis of projective
multiplicities. For $1\le\ell<n$ and $1\le t_\ell(a)<L_\ell$,
\eqref{eq:categorical-projective-calculation} gives
\begin{equation}
 m^+(X^{\mathrm l}_{a,\ell})=m^+(X^{\mathrm r}_{a,\ell})
       =\sum_{s=0}^{\ell-1}\epsilon_{a+2s}.
 \label{eq:categorical-positive-weights}
\end{equation}
The starting value is the projective cover calculation for $J_{\ell_{\mathrm l}}$.
Replacing $a$ by $a+2$ changes the sum by
$\epsilon_{a+2\ell}-\epsilon_a$, exactly the change obtained by
substituting $\Phi_Ad$ and its top in $u-C_A^{-1}d$. The calculation
on the $\ell_{\mathrm r}$ orbit gives the first equality.

On the central orbit the calculation is symmetric about $\tau^hO_n^+$.
The positive weight of $X^{\mathrm l}_{1+2d,n}$ and
$X^{\mathrm r}_{1+2d,n}$ is $\lambda_{1+2d,n}$; the positive term of
$X_{1,n}$ is $\bigoplus_{v\text{ odd}}P_v$.
For the principal fibres, $m^+(O_v^+)=e_v$ and
$m^+(\tau^{n-1}O_{\bar v}^+)=e_v$ for odd $v$.
Together with \eqref{eq:categorical-bidegree}, these calculations
place every object in its asserted fibre.

It remains to show that the fibres are full. The initial degrees are
pairwise distinct by \eqref{eq:folded-parameters}--\eqref{eq:folded-degree}
and the index ranges in \eqref{eq:mutable-chamber}. At a fixed level $\ell<n$, equal $r$ allows only $a'=a$ or $a'=2-a$.
For the latter pair, write $a=1+d>1$. If both indices belong to
$I_<(n)$, their lengths are
$\min(\ell,2n-d-\ell)$ and $\ell-d$, which differ because
$\ell<n$. For $(a,n)\in I_=(n)$, the parameter $r=a$ already
distinguishes the indices. The principal and spherical degrees distinguish the frozen
fibres. The displayed fibres are therefore disjoint. There are two for each of the $n(n-2)$ mutable variables,
$3n-1$ in the principal fibres, $n-1$ neutral objects, and one central
object, for a total of
\[
 2n(n-2)+(3n-1)+(n-1)+1=2n^2-1.
\]
This is the number of indecomposables in
\eqref{eq:branching-subcategory}. All displayed objects lie there.
The omitted negative presentations have zero first degree, and the
omitted odd neutral presentations have degree $(\omega_v;0)$ with $v$ odd;
neither is an initial degree. Thus no further objects can enter the
fibres, even when they are computed in the whole $\mathscr C_n$.
\end{proof}

\subsection{Admissible presentations and exchange polynomials}
\label{subsec:categorical-B-identification}
\label{subsec:skew-symmetrizability}
We first give the rule used here and in Section~\ref{sec:exceptional-categories}.
Let $\mathcal S$ be a finite union of full degree fibres in a presentation
category, and put $\mathscr D=\add \mathcal S$. Attach one
independent variable to each fibre, of its degree, and write $\pi_{\cF}(Y)$
for the variable of $Y$. We count both solid arrows and translation
arrows in the original category. Whenever all neighbours of
$X\in\mathcal S$ lie in $\mathcal S$, form
\[
 M_X^{\rm out}=\prod_{X\to Y}\pi_{\cF}(Y),\qquad
 M_X^{\rm in}=\prod_{Y\to X}\pi_{\cF}(Y),
\]
with the arrow multiplicities. In the species case we use the
valuations specified in Section~\ref{subsec:source-f4}.

We call $X$ \emph{admissible} if all its neighbours lie in $\mathcal S$
and these two monomials have the same degree, are distinct and
relatively prime, and neither involves $\pi_{\cF}(X)$. If the admissible
presentations in a fibre give the same unordered pair, its sum defines
the exchange polynomial of that fibre. Such a fibre is mutable; a
fibre without an admissible presentation is frozen. The theorems below
verify this independence and identify the resulting polynomials with
the normalized LP exchanges. The local conditions alone are not
asserted to give an LP seed for an arbitrary degree map.

Restriction to a full subcategory can change irreducible morphisms.
The following lemma permits us to use the original arrows at an
admissible presentation.

\begin{lemma}\label{lem:restriction-marked-star}
Let $\mathscr D$ be a full, direct-summand-closed additive subcategory
of $\mathscr C_n$, and let $S\in\ind\mathscr D$. Suppose that every
source and target of an ambient irreducible morphism incident with
$S$ belongs to $\mathscr D$. Then, for $Y\in\ind\mathscr D$,
\[
 \Irr_{\mathscr D}(S,Y)=\Irr_{\mathscr C_n}(S,Y),\qquad
 \Irr_{\mathscr D}(Y,S)=\Irr_{\mathscr C_n}(Y,S),
\]
under the natural identification of the Hom spaces.
\end{lemma}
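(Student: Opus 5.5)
We compare the two radical filtrations directly. Since $\mathscr D$ is full, $\Hom_{\mathscr D}(U,V)=\Hom_{\mathscr C_n}(U,V)$ for all $U,V\in\mathscr D$. For indecomposable objects of a Krull--Schmidt category, the radical consists of the non-isomorphisms. Whether a morphism is an isomorphism does not depend on the ambient category. Hence $\rad_{\mathscr D}(S,Y)=\rad_{\mathscr C_n}(S,Y)$, and likewise for $(Y,S)$.

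The only issue is therefore the square of the radical. Here $\rad^2_{\mathscr D}(S,Y)$ consists of sums of composites $S\to Z\to Y$ of radical maps with $Z\in\mathscr D$, while $\rad^2_{\mathscr C_n}$ allows any $Z\in\mathscr C_n$. The inclusion $\rad^2_{\mathscr D}\subseteq\rad^2_{\mathscr C_n}$ is immediate. We must prove the reverse inclusion, which is the main step.

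For the reverse inclusion, we use the existence of left and right minimal almost split morphisms in $\mathscr C_n$. This holds because $\mathscr C_n$ is a Hom-finite Krull--Schmidt exact category with Auslander--Reiten theory, as used through \cite[Proposition~3.6]{FeiTensor}.

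Let $g:S\to E$ be the left minimal almost split (source) map. Then $E=\bigoplus_i Y_i^{m_i}$, where the $Y_i$ are exactly the targets of the ambient irreducible morphisms starting at $S$, with multiplicity $m_i=\dim\Irr_{\mathscr C_n}(S,Y_i)$. By hypothesis every $Y_i$ lies in $\mathscr D$. Since $\mathscr D$ is closed under direct sums and summands, $E\in\mathscr D$.

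Now take any radical map $u:S\to Z$ with $Z\in\mathscr C_n$. It factors as $u=u'g$. Hence every element of $\rad^2_{\mathscr C_n}(S,Y)$ has the form $\sum v_ju_j=\bigl(\sum v_ju_j'\bigr)g$ with $v_j$ radical. The composite $w=\sum v_ju_j'$ is a map $E\to Y$, and each component of $w$ on a summand $Y_i$ is a radical map, since it factors through the radical maps $v_j$. So $\sum v_ju_j=w\circ g$ factors through $E\in\mathscr D$ as a composite of two radical maps. This places it in $\rad^2_{\mathscr D}(S,Y)$.

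The dual argument handles maps $Y\to S$. Use the right minimal almost split map $E'\to S$, whose domain consists of the sources of the irreducible morphisms ending at $S$. These lie in $\mathscr D$ by hypothesis, so every element of $\rad^2_{\mathscr C_n}(Y,S)$ factors through $E'\in\mathscr D$ with both factors radical.

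The step requiring most care is the existence of source and sink maps in $\mathscr C_n$. If $S$ is relative-projective or relative-injective for the split exact structure, the almost split sequence may not exist. This can happen for the neutral objects $\Id_v$ and the objects $O^\pm_v$. Even then, minimal left and right almost split morphisms exist, because $\mathscr C_n$ is equivalent to a module category of a finite-dimensional algebra; namely, representations of $\C A_{2n-1}$ tensored with the $A_2$ path algebra restricted to projective terms, which is functorially finite. This existence is all the argument above uses. The identification of the Hom spaces is the identity throughout, as stated.
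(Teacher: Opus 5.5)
Your proof is correct and uses the same key input as the paper: the ambient minimal left (resp.\ right) almost split morphism at $S$ has its middle term in $\mathscr D$. The only difference is the last step, where you factor every element of $\rad^2_{\mathscr C_n}$ through that morphism directly, slightly more directly than the paper, which notes that it remains minimal almost split in $\mathscr D$ and then compares irreducible multiplicities. One correction to your final paragraph: $\mathscr C_n$ is not equivalent to a module category (it is not even abelian), but source and sink maps exist anyway because it is a Hom-finite Krull--Schmidt category with finitely many indecomposables, and the paper likewise takes this from \cite[Proposition~2.5]{FeiTensor}.
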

\begin{proof}
The middle term $E$ of a minimal left almost split morphism $S\to E$
is the sum of the targets of irreducible morphisms from $S$, with their
multiplicities \cite[Proposition~2.5]{FeiTensor}. Hence $E\in\mathscr D$.
Fullness preserves the almost split property and minimality, so the
irreducible multiplicities are unchanged. Since the radicals agree
and $\rad_{\mathscr D}^2\subseteq\rad_{\mathscr C_n}^2$, the quotient
spaces agree as well. A minimal right almost split morphism proves
the incoming assertion.
\end{proof}

For $\mathcal S=\ind\mathscr C_n^{\mathrm{br}}$, write
$\pi_n(X)=x$ when $X\in\cF_n(x)$. Let $T(U,V)=1$ if the original
diagram contains $U\dashrightarrow V$, and $0$ otherwise.

\begin{theorem}\label{prop:categorical-B}\label{thm:initial-star-rows}
The fibres containing admissible presentations are precisely the mutable
fibres of Part~I. Both $X^{\mathrm l}_{a,\ell}$ and $X^{\mathrm r}_{a,\ell}$
are admissible, except that the following fibres have only the indicated member:
\[
\begin{array}{c|c}
 \ell<n\text{ even},\ a=3-\ell & X^{\mathrm l}_{a,\ell}\\
 \ell<n\text{ even},\ a=2n-\ell-1 & X^{\mathrm r}_{a,\ell}\\
 \ell=n\text{ even},\ a=n-1 & X^{\mathrm l}_{a,n}.
\end{array}
\]
There are no other admissible presentations in $\mathscr C_n^{\mathrm{br}}$.
Whenever both members are admissible, their incoming and outgoing
monomials are interchanged.
Thus $E_x=M_X^{\rm out}+M_X^{\rm in}$ is independent of the admissible
$X\in\cF_n(x)$ and equals the exchange polynomial of Part~I.

For any admissible representative $\sigma_x$, put
$M_x^{\rm out}=M_{\sigma_x}^{\rm out}$ and
$M_x^{\rm in}=M_{\sigma_x}^{\rm in}$. Let $\chi_x\in\{1,-1\}$ record
the exchange-term order of \eqref{eq:row-sign-convention}, so that
\begin{equation}
 (M_{x,+},M_{x,-})=
 \begin{cases}
 (M_x^{\rm out},M_x^{\rm in}),&\chi_x=1,\\
 (M_x^{\rm in},M_x^{\rm out}),&\chi_x=-1.
 \end{cases}
 \label{eq:categorical-ordered-monomials}
\end{equation}
Restriction preserves the irreducible morphism spaces at $\sigma_x$,
and the oriented exchange matrix satisfies
\begin{equation}
 \displaystyle
 b_{xy}=\chi_x\sum_{Y\in\cF_n(y)}
 \left(\dim_\C\Irr_{\mathscr C_n^{\mathrm{br}}}(\sigma_x,Y)
       -\dim_\C\Irr_{\mathscr C_n^{\mathrm{br}}}(Y,\sigma_x)
       +T(\sigma_x,Y)-T(Y,\sigma_x)\right).
 \label{eq:categorical-B-formula}
\end{equation}
\end{theorem}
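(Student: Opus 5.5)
The plan is a local computation at each indecomposable of $\mathscr C_n^{\mathrm{br}}$, organized by the coordinates \eqref{eq:original-ar-coordinates}. For each object $X$ we list its ambient neighbours using only three sources: the solid arrows \eqref{eq:ambient-solid-arrows}, the neutral arrows \eqref{eq:ambient-neutral-arrows}, and the translation arrows \eqref{eq:ambient-translation-arrows}. We then apply $\pi_n$ to these neighbours via Proposition~\ref{prop:categorical-fibres} and \eqref{eq:principal-fibres}.

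\emph{Interior presentations.} For a generic $X=X^{\mathrm l}_{a,\ell}$ with $\ell<n$, the object $X=\tau^tO_v^+$ with $v=\ell_{\mathrm l}$ has the following star:
\begin{itemize}
\item two solid outgoing arrows to $\tau^tO_{v\pm1}^+$ or $\tau^{t-1}O_{v\pm1}^+$, according to the parity of $v$;
\item two solid incoming arrows;
\item one dashed arrow in each direction, namely $\tau^{t\mp1}O_v^+$.
\end{itemize}
Translating $t$ and $v$ back into indices $(a',\ell')$ through $t_\ell$ should give
\[ M^{\rm out}_X=F_{a-2,\ell+1}F_{a,\ell-1}F_{a+2,\ell}\quad\text{and}\quad M^{\rm in}_X=F_{a-2,\ell}F_{a,\ell+1}F_{a+2,\ell-1}, \]
or the reverse. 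These are the two monomials of \eqref{eq:lower-stencil}.

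\emph{Degree and admissibility.} Equality of degrees is the pairwise $\epsilon$-cancellation already used in Proposition~\ref{prop:formal-seed}. Coprimality and the absence of $\pi(X)$ are read off from the disjoint supports established there.

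\emph{The right member.} For $X^{\mathrm r}_{a,\ell}$ the translation direction is reversed, because $t$ decreases as $a$ increases. The same computation therefore interchanges out and in. This gives the claimed interchange of monomials, and hence the independence of $E_x$ from the choice of admissible member.

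\emph{Central level.} At $\ell=n$ we do the analogous computation using \eqref{eq:middle-stencil}. The neighbours of $X^{\mathrm l}_{a,n}$ on the $(n-1)$-orbits land in both the left and the right fibres, and these produce the factors $F_{4-a,n-1}$ and $F_{2-a,n-1}$.

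\emph{Boundary fibres.} When a neighbour leaves the mutable range, it is either an $O_w^+$ or a $\tau^{n-1}O_w^+$, which lie in principal fibres, or one of the neutral objects. We use the arrows $\tau O_e^+\to\Id_e\to O_e^+$ of \eqref{eq:ambient-neutral-arrows}: the neutral $\Id_e$ contributes $s_j$, and this reproduces the clearing factor $\kappa_{a,\ell}$ of \eqref{eq:clearing-factor}. The three displayed cases, \eqref{eq:left-even-exchange}--\eqref{eq:corrected-middle-boundary}, are checked directly. In each of them the other member of the fibre has a neighbour that is a negative presentation $O^-_{\bar o}$ or an odd $\Id_{\bar o}$, which lies outside $\mathcal S$; that member is therefore not admissible. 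This is why only one member appears in the table.

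\emph{Non-admissibility of frozen objects.} For every object in a frozen fibre we show that admissibility fails. Either a neighbour is outside $\mathcal S$, which happens for $O_v^+$ through $\tau$-predecessors, for $\tau^{n-1}O^+$ through $O^-$ or $\Id_{\bar o}$, and for $\Id_v$ through $O^-$; or the two monomials have different degrees; or they share a factor. This case check is the step I expect to be the main obstacle. The central object $X_{1,n}$ and the odd-$v$ principal objects $\tau^{n-1}O^+_{\bar v}$ need the most care: one must verify that the $\Id_{\bar o}$ neighbour really enters the star, and use the omission of odd neutral presentations from \eqref{eq:branching-subcategory}. I would first try to show that each such object has an arrow from an omitted indecomposable, using \eqref{eq:ambient-neutral-arrows}. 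For the remaining objects I would compare the two degrees using the Coxeter formula \eqref{eq:categorical-projective-calculation}.

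\emph{Matrix formula.} Finally, Lemma~\ref{lem:restriction-marked-star} applies at every admissible $\sigma_x$, since all of its ambient neighbours lie in $\mathscr C_n^{\mathrm{br}}$. So $\Irr_{\mathscr C_n^{\mathrm{br}}}=\Irr_{\mathscr C_n}$ there. The exponent of $y$ in $M^{\rm out}_x-M^{\rm in}_x$ is the sum over $Y\in\cF_n(y)$ of the out-minus-in solid and dashed multiplicities. Multiplying by $\chi_x$ and using \eqref{eq:categorical-ordered-monomials} and \eqref{eq:row-sign-convention} gives \eqref{eq:categorical-B-formula}.
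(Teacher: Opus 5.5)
Your overall architecture matches the paper's: local stars read off from \eqref{eq:ambient-solid-arrows}--\eqref{eq:ambient-translation-arrows}, projection to fibres, comparison with the stencils, and Lemma~\ref{lem:restriction-marked-star} for the matrix formula. The gap is in the step you flag as the main obstacle. Three of your four exclusion mechanisms for frozen objects are false. Your fallback clauses would catch these objects if carried out, but the reasons you give do not hold.

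\emph{The positive presentations.} $O_v^+$ has no translation predecessor. Its neighbours are $O_w^+$ and $\tau O_w^+$ for $w\sim v$, together with $\tau O_v^+$, and $\Id_v$ when $v$ is even. All of these lie in $\mathscr C_n^{\mathrm{br}}$, so $O_v^+$ must be excluded by degree. Its two products have different right degrees; for odd $v$ they differ by the $\varpi_{\ell(v)}$ contributed by $O_v^+\dashrightarrow\tau O_v^+$.

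\emph{The even neutral presentations.} An even $\Id_e$ has only the neighbours in $\tau O_e^+\to\Id_e\to O_e^+$, never a negative presentation. Moreover $\pi_n(\tau O_e^+)$ and $p_{\iota(e)}$ have the \emph{same} right degree $\varpi_{\ell(e)}$. You must therefore compare left degrees: a folded minor supported on odd $\omega_j$, against $\omega_{\iota(e)}$ with $\iota(e)$ even.

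\emph{The central object.} For $n\ge4$, $X_{1,n}=\tau^hO_n^+$ with $1\le h\le n-2$ has no neutral neighbour at all. Its star is closed and degree-balanced, and it fails admissibility only because $M^{\rm in}=M^{\rm out}$ exactly.

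\emph{The only closure failures.} The objects $\tau^{n-1}O_v^+$ are the only ones that fail neighbourhood closure, via the translation successor $\tau^nO_v^+=O^-_{\bar v}$. This covers the odd principal objects and the second members of the even-boundary fibres.

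The missing tool, which settles both $X_{1,n}$ and the interchange claim, is the involution $\jmath(\tau^tO_v^+)=\tau^{n-(\ell(v)\bmod2)-t}O_{\bar v}^+$ exchanging $X^{\mathrm l}_{a,\ell}$ and $X^{\mathrm r}_{a,\ell}$. Your interchange argument only observes that the dashed direction reverses, which says nothing about the solid arrows.

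Under $\jmath$, the parity offset ($n$ on even orbits, $n-1$ on odd ones) sends the arrow $\tau^tO_o^+\to\tau^tO_e^+$ to the reversed ambient arrow $\tau^{s+1}O^+_{\bar e}\to\tau^sO^+_{\bar o}$ with $s=n-1-t$. The other type of solid arrow and the dashed arrows are reversed in the same way. The fibre formulas give $\pi_n\circ\jmath=\pi_n$ on the relevant neighbourhoods. This yields the interchange of the two monomials for every fibre with two admissible members. Since $X_{1,n}$ is the fixed point of $\jmath$ on the central orbit, it also yields $M^{\rm in}_{X_{1,n}}=M^{\rm out}_{X_{1,n}}$.
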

\begin{proof}
By \eqref{eq:ambient-solid-arrows}--\eqref{eq:ambient-translation-arrows},
the objects with all neighbours in $\mathscr C_n^{\mathrm{br}}$ are
$\tau^tO_v^+$ with $0\le t\le n-2$ and the even neutral presentations.
At $O_v^+$ the two products have different right degrees; at the neutral
presentations they have different left degrees. At $X_{1,n}$ the two
products coincide. Formula~\eqref{eq:original-ar-coordinates} therefore
leaves precisely the stated candidates.

Let $\jmath$ interchange $X^{\mathrm l}_{a,\ell}$ and
$X^{\mathrm r}_{a,\ell}$. On translation orbits it is
\[
 \jmath(\tau^tO_v^+)=\tau^{n-(\ell(v)\bmod2)-t}O_{\bar v}^+.
\]
For a mutable fibre with two admissible members, the fibre formulas give
$\pi_n(\jmath Y)=\pi_n(Y)$ on their complete neighbourhoods. The arrow
formulas show that $\jmath$ reverses every solid and translation arrow
there. Hence the projected monomials are interchanged.
At each stated even boundary, the pair instead has translation indices
$1,n-1$. The latter object has a negative presentation as translation
successor, so only the former can be admissible.

To identify the ordered products, choose an admissible presentation in each
mutable fibre as follows. For $\ell<n$, take
\begin{equation}
 \sigma_{F_{a,\ell}}=\begin{cases}
 X^{\mathrm r}_{a,\ell},&\ell\text{ even and }a=2n-\ell-1,\\
 X^{\mathrm l}_{a,\ell},&\text{otherwise}.
 \end{cases}
 \label{eq:lower-marked-section}
\end{equation}
At the central level take
\begin{equation}
 \sigma_{F_{a,n}}=\begin{cases}
 X^{\mathrm l}_{a,n},&n\text{ even and }a=n-1,\\
 X^{\mathrm r}_{a,n},&\text{otherwise}.
 \end{cases}
 \label{eq:middle-marked-section}
\end{equation}
At these presentations, away from the even boundaries, the fibre and
arrow formulas give
\begin{align*}
 M_x^{\rm out}&=F_{a-2,\ell+1}F_{a,\ell-1}F_{a+2,\ell},&
 M_x^{\rm in}&=F_{a-2,\ell}F_{a,\ell+1}F_{a+2,\ell-1}
 &&(\ell<n),\\
 M_x^{\rm out}&=F_{a+2,n}F_{a,n-1}F_{4-a,n-1},&
 M_x^{\rm in}&=F_{a-2,n}F_{a+2,n-1}F_{2-a,n-1}
 &&(\ell=n).
\end{align*}
Here neighbouring indices use the boundary conventions of Part~I,
and a nonexistent level-zero neighbour contributes $1$.
These are \eqref{eq:lower-stencil} and \eqref{eq:middle-stencil}.

At an even boundary the admissible member is $\tau O_v^+$ for an even
vertex $v$. Its outgoing neighbours are $\Id_v$, $\tau^2O_v^+$, and
$O_w^+$ for $w\sim v$; its incoming neighbours are $O_v^+$ and
$\tau O_w^+$ for $w\sim v$. Thus
\[
\begin{aligned}
 M_x^{\rm in}=p_{\iota(v)}\prod_{w\sim v}\pi_n(\tau O_w^+),\qquad  
 M_x^{\rm out}=s_{\iota(v)}\pi_n(\tau^2O_v^+)
                    \prod_{w\sim v}p_{\iota(w)}.
\end{aligned}
\]
They give \eqref{eq:left-even-exchange}, \eqref{eq:right-even-exchange},
and \eqref{eq:corrected-middle-boundary}. All the displayed products
satisfy the admissibility conditions. For these representatives the
prescribed order has
\begin{equation}
 \chi_x=\begin{cases}
 -1,&x=F_{2n-\ell-1,\ell},\ \ell<n\text{ even},\\
 1,&\text{otherwise}.
 \end{cases}
 \label{eq:categorical-row-sign}
\end{equation}
This proves the classification and the exchange-polynomial assertion.
Lemma~\ref{lem:restriction-marked-star} identifies the original and
restricted irreducible spaces. Taking ordered exponent differences
gives \eqref{eq:categorical-B-formula}.
\end{proof}

The same rule can start with all indecomposables of dominant degree
and nonzero left degree. The only additional objects are the odd
neutral presentations, each adjacent to a negative presentation, so
the admissible set is unchanged. The admissible presentations and
all their neighbours are exactly $\ind\mathscr C_n^{\mathrm{br}}$.
Thus this subcategory and the unordered exchanges can be recovered
from the degree, without specifying marks or seed degrees in advance.

\subsubsection*{Matrix form and the role of the marking.}
Let $\varepsilon(U,V)$ be the signed number of solid and dashed
arrows between indecomposables of the original $\mathscr C_n$,
positive from $U$ to $V$. Let $E=(\varepsilon(U,V))$, let
$P_{U,y}=1$ when $U\in\cF_n(y)$ and zero otherwise, and let
$R_{x,U}=1$ when $U=\sigma_x$ and zero otherwise. Then
Theorem~\ref{prop:categorical-B} says
\begin{equation} 
 B=D_\chi R E P,\qquad D_\chi=\diag(\chi_x).
 \label{eq:marked-fibre-matrix-identity}
\end{equation}
Here $P$ collects whole fibres and $R$ selects admissible presentations.
Changing to the other admissible presentation reverses the corresponding
row of $REP$; changing $\chi_x$ at the same time leaves $B$ unchanged.
Without the exchange-term order, the LP seed is already determined
by the unordered products. Neither incidence between the representatives
nor $P^{\mathsf t}EP$ gives this matrix. The equality $BW=0$ follows
from the exchange degrees. This construction concerns the initial
seed, not a categorical realization of LP mutation.

Although $\varepsilon(X,Y)=-\varepsilon(Y,X)$, summing over only
one fibre need not preserve skew-symmetry. For mutable fibres
$\mathcal F_i$ and marked presentations $\sigma_i$, set
\[ c_{ij}=\sum_{Y\in\mathcal F_j}\varepsilon(\sigma_i,Y),\qquad b_{ij}=\chi_i c_{ij}. \]

\begin{proposition}\label{prop:fibre-arrow-counts}
Put $\mathcal F_i^\circ=\mathcal F_i\setminus\{\sigma_i\}$. Then
\[ c_{ij}+c_{ji}=\sum_{Y\in\mathcal F_j^\circ}\varepsilon(\sigma_i,Y)+\sum_{X\in\mathcal F_i^\circ}\varepsilon(\sigma_j,X). \]
If, for each $i,j$, the sum
$\sum_{Y\in\mathcal F_j}\varepsilon(X,Y)$ is independent of
$X\in\mathcal F_i$, then
$|\mathcal F_i|c_{ij}=-|\mathcal F_j|c_{ji}$.
In that case $(c_{ij})$ is skew-symmetrizable by the positive diagonal
matrix with entries $|\mathcal F_i|$.
\end{proposition}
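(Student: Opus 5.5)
The identity for $c_{ij}+c_{ji}$ is a direct bookkeeping computation; the second assertion then follows by summing over fibres. Throughout, the only structural input is the antisymmetry $\varepsilon(X,Y)=-\varepsilon(Y,X)$ of the signed arrow count in the original category.

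First I split each fibre sum at the marked presentation. This gives
\[
 c_{ij}=\varepsilon(\sigma_i,\sigma_j)+\sum_{Y\in\mathcal F_j^\circ}\varepsilon(\sigma_i,Y),
 \qquad
 c_{ji}=\varepsilon(\sigma_j,\sigma_i)+\sum_{X\in\mathcal F_i^\circ}\varepsilon(\sigma_j,X).
\]
Adding the two, the terms $\varepsilon(\sigma_i,\sigma_j)$ and $\varepsilon(\sigma_j,\sigma_i)$ cancel by antisymmetry. What remains is exactly the displayed formula for $c_{ij}+c_{ji}$. No further input is needed.

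For the second assertion, put $s_{ij}(X)=\sum_{Y\in\mathcal F_j}\varepsilon(X,Y)$ for $X\in\mathcal F_i$. By hypothesis this is a constant $s_{ij}$, and it equals $c_{ij}$ because we may evaluate it at $X=\sigma_i$. Now form the double sum
\[
 \Sigma=\sum_{X\in\mathcal F_i}\sum_{Y\in\mathcal F_j}\varepsilon(X,Y).
\]
Summing first over $Y$ gives $\Sigma=|\mathcal F_i|c_{ij}$. Summing first over $X$ instead, antisymmetry gives $\Sigma=-\sum_{Y\in\mathcal F_j}\sum_{X\in\mathcal F_i}\varepsilon(Y,X)=-|\mathcal F_j|c_{ji}$, using the hypothesis with the roles of $i$ and $j$ exchanged. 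This proves $|\mathcal F_i|c_{ij}=-|\mathcal F_j|c_{ji}$.

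For $i=j$, the relation gives $c_{ii}=-c_{ii}$, so $c_{ii}=0$, consistent with the diagonal. Set $D=\diag(|\mathcal F_i|)$, a positive diagonal matrix. The relation says precisely that $DC$ is skew-symmetric, which is the stated skew-symmetrizability. None of the steps is an obstacle; the only point to watch is that the hypothesis is applied in both directions $(i,j)$ and $(j,i)$, and that the double sum $\Sigma$ is finite, which holds because the fibres are finite.
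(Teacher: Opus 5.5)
Your proof is correct and follows the paper's argument. You split off the $\sigma_i,\sigma_j$ terms and cancel them by antisymmetry for the first identity, then evaluate $\sum_{X\in\mathcal F_i,\,Y\in\mathcal F_j}\varepsilon(X,Y)$ in both orders to get $|\mathcal F_i|c_{ij}=-|\mathcal F_j|c_{ji}$; your extra remarks on using the hypothesis in both directions and on $c_{ii}=0$ are correct.
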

\begin{proof}
In $c_{ij}+c_{ji}$, the two terms involving only $\sigma_i,\sigma_j$
cancel. This proves the first formula. Under the independence
hypothesis, sum over both fibres and use skew-symmetry of
$\varepsilon$:
\[ |\mathcal F_i|c_{ij}=\sum_{X\in\mathcal F_i,\,Y\in\mathcal F_j}\varepsilon(X,Y)=-|\mathcal F_j|c_{ji}. \]
\end{proof}

The independence hypothesis holds for full orbits of a group of
quiver automorphisms. This is the counting argument behind
skew-symmetrizability in equivariant constructions
\cite[Section~1.2]{Demonet}. Our fibres are defined by equality of
branching degrees; they need not be such orbits. Summing over both
fibres would give a skew-symmetric matrix, but would replace the
exchange monomials by different monomials.

The cluster condition is skew-symmetrizability of the mutable
principal matrix $B_0$ \cite[Definition~4.4]{FominZelevinsky}.
Interchanging the two terms of an LP exchange changes the sign of
the corresponding row of $B$. Even allowing all such choices does
not give a cluster matrix here.

\begin{corollary}\label{cor:symplectic-noncluster}
No choice of row signs makes the principal matrix of the initial
$A_{2n-1}\downarrow C_n$ seed skew-symmetrizable for $n\ge4$.
\end{corollary}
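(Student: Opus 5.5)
The plan is to use an obstruction that no choice of row signs can change, and then to exhibit it in a window of the initial quiver that occurs for every $n\ge4$.

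\emph{Invariant obstruction.} Changing the exchange-term order at $x$ replaces $B_0$ by $D_\chi B_0$ with $\chi_x\in\{\pm1\}$. Suppose some $D_\chi B_0$ is skew-symmetrizable by a positive diagonal $D$. Then its pattern of nonzero entries is symmetric. Moreover $b_{ij}b_{ji}\le0$, with equality only when both entries vanish. Finally, for every cycle $i_1\to i_2\to\cdots\to i_k\to i_1$ of nonzero entries,
\[ |b_{i_1i_2}\cdots b_{i_ki_1}|=|b_{i_2i_1}\cdots b_{i_1i_k}|. \]
Row sign changes preserve absolute values, so the first and third conditions do not depend on $\chi$. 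The sign condition is not invariant edge by edge, but it is invariant along cycles. Multiplying row $i$ by $\chi_i$ multiplies $\operatorname{sgn}(b_{ij}b_{ji})$ by $\chi_i\chi_j$. Along a cycle each vertex occurs in exactly two edges, so the product
\[ \sigma(\gamma)=\prod_{\{i,j\}\in\gamma}\operatorname{sgn}(b_{ij}b_{ji}) \]
is independent of $\chi$. Skew-symmetrizability would force $\sigma(\gamma)=(-1)^{k}$ for a cycle $\gamma$ of length $k$. So it suffices to find either a nonzero entry $b_{ij}$ with $b_{ji}=0$, or a cycle $\gamma$ in the mutable part with $\sigma(\gamma)\ne(-1)^{\mathrm{length}(\gamma)}$.

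\emph{Reading off $B_0$.} I will compute the relevant entries from the stencils \eqref{eq:lower-stencil}, \eqref{eq:middle-stencil} and the boundary formulas \eqref{eq:left-even-exchange}--\eqref{eq:corrected-middle-boundary}. Equivalently, I can use the categorical formula \eqref{eq:categorical-B-formula} with the marks \eqref{eq:lower-marked-section}. Away from the boundaries the two fibre members are interchanged by $\jmath$, so the hypothesis of Proposition~\ref{prop:fibre-arrow-counts} holds locally and every edge there has sign pair $(+,-)$. Exceptional sign pairs, the headless and double-headed red edges of Figure~\ref{fig:a9c5-seed}, can only arise where a fibre has a single admissible member. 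These are the even boundaries $a=3-\ell$, $a=2n-\ell-1$, and, for even $n$, $(n-1,n)$. There the marked object $\tau O_v^+$ sees the neighbour $\tau O_w^+$ as incoming. The neighbour's own marked row, however, comes from the other member of its fibre, where the arrow is reversed.

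\emph{Cycle and main obstacle.} To get a statement uniform in $n\ge4$, I will work near the left boundary at low levels, with $\ell\in\{1,2,3\}$ and indices $(a,\ell)$, $a\le 5-\ell$. This region has the same shape for all $n\ge4$ because it does not reach the central level. Take the boundary vertex $(1,2)$ and its mutable neighbours $(3,1)$, $(1,3)$ and $(3,2)$, together with the short cycles through them coming from the lattice triangles of the stencil.

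In the generic part each edge contributes $-1$ to $\sigma$, so a triangle of ordinary edges satisfies $\sigma=(-1)^3$. The goal is a triangle through $(1,2)$ that contains exactly one edge whose sign pair is $(+,+)$ or $(-,-)$. For such a triangle $\sigma=+1\ne(-1)^3$, which is the required contradiction. If instead some boundary entry is nonzero in only one direction, that is the first obstruction and finishes the proof at once.

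The main obstacle is this local bookkeeping: tracking the fibre and stencil identifications at the boundary, including $\mathfrak f_{a,\ell}=p_\ell/s_\ell$ for the nonexistent neighbours, and verifying that the chosen cycle contains an odd number of exceptional edges for every $n\ge4$. I would first compute the $A_7\downarrow C_4$ rows explicitly, locate the red edge next to $(1,2)$, and then check that the same entries appear verbatim for larger $n$, since the formulas at levels $\ell\le3$ near $a=3-\ell$ do not depend on $n$ once $n\ge4$.
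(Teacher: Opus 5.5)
\textbf{The gap: your chosen window contains no obstruction.} Your sign-cycle invariant is correct. For a $3$-cycle with entries $\pm1$ it is equivalent to the paper's determinant test. The problem is where you look for the obstruction: the window $\{(3,1),(1,2),(3,2),(1,3)\}$ contains none, so the plan cannot finish.

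\emph{The window is skew-symmetric.} From $E_{1,2}=s_2p_1p_3F_{3,2}+p_2F_{1,3}F_{3,1}$, the row of $F_{1,2}$ has $+1$ at $F_{3,2}$ and $-1$ at $F_{1,3}$ and $F_{3,1}$. The stencils of $(3,2)$, $(1,3)$, $(3,1)$ have $-1$, $+1$, $+1$, respectively, in the column of $F_{1,2}$. So all three edges at $(1,2)$ have the ordinary sign pair, and both triangles through $(1,2)$ have $\sigma=(-1)^3$.

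More generally, the lower stencil \eqref{eq:lower-stencil} is antisymmetric. The index $(a,\ell)$ has $+1$ at $(a-2,\ell+1)$, $(a,\ell-1)$, $(a+2,\ell)$, and each of these has $-1$ at $(a,\ell)$; the opposite holds for the $-1$ positions. The even-boundary exchanges are this stencil times the clearing factor, in the same term order. Hence the submatrix of $B_0$ on all mutable indices with $\ell<n$ is skew-symmetric. No cycle in your $n$-independent low-level window can give $\sigma(\gamma)\neq(-1)^k$, and there is no one-directional entry there either.

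\emph{Your explanation of the exceptional edges is wrong.} At the left even boundary the neighbours are also marked by $X^{\mathrm l}$. At the right even boundary, $\chi=-1$ exactly compensates the use of $X^{\mathrm r}$. Fibres with a single admissible member therefore create no twist.

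\textbf{Where the twist is.} It lives at the central level. There the identification $\mathfrak f_{2-a,n}=F_{a,n}$ glues the two sides: the central marks are $X^{\mathrm r}$, and arrows end at unchosen members, as in Example~\ref{ex:a9c5-full-diagram}. Consider the triangle $F_{1,n-1},F_{3,n-1},F_{3,n}$. Using \eqref{eq:lower-stencil} with $\mathfrak f_{-1,n}=F_{3,n}$ and $\mathfrak f_{1,n}=c_n$, together with \eqref{eq:middle-stencil} and \eqref{eq:corrected-middle-boundary}, one gets
\[
b_{F_{1,n-1},F_{3,n-1}}=b_{F_{1,n-1},F_{3,n}}=1,\qquad
b_{F_{3,n-1},F_{1,n-1}}=b_{F_{3,n-1},F_{3,n}}=-1,
\]
\[
b_{F_{3,n},F_{1,n-1}}=b_{F_{3,n},F_{3,n-1}}=\epsilon_n,\qquad
\epsilon_n=1\ (n\ge5),\quad \epsilon_4=-1 .
\]
In either case exactly one edge is exceptional: $F_{1,n-1}$--$F_{3,n}$ is $(+,+)$ for $n\ge5$, and $F_{3,3}$--$F_{3,4}$ is $(-,-)$ for $n=4$. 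So $\sigma=+1\neq(-1)^3$.

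This is the paper's proof. The $3\times3$ principal submatrix has determinant $-2\epsilon_n\neq0$, whereas a positive diagonal multiple of its row-signed version would be an invertible skew-symmetric matrix of odd size, which is impossible. Your uniformity idea survives, but the uniform window sits at levels $n-1,n$, not at levels $1,2,3$.
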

\begin{proof}
For the initial $A_{2n-1}\downarrow C_n$ seed, restrict to
$(F_{1,n-1},F_{3,n-1},F_{3,n})$. The exchange formulas
\eqref{eq:lower-stencil}, \eqref{eq:middle-stencil}, and
\eqref{eq:corrected-middle-boundary} give
\[
M_n=\begin{pmatrix}0&1&1\\-1&0&-1\\ \epsilon_n&\epsilon_n&0\end{pmatrix},\qquad \epsilon_4=-1,\quad\epsilon_n=1\ (n\ge5).
\]
Its determinant is $-2\epsilon_n$. An invertible diagonal multiple
of this matrix cannot be skew-symmetric, since a skew-symmetric
matrix of odd size is singular.

\end{proof}

Corollary~\ref{cor:symplectic-noncluster} concerns these seeds and their
exchange polynomials, not the existence of another cluster algebra
structure on the underlying branching algebra.

\begin{example}[The $A_9\downarrow C_5$ seed]
\label{ex:a9c5-full-diagram}
For $n=5$, Proposition~\ref{prop:categorical-fibres} groups the $49$
indecomposables of $\mathscr C_5^{\mathrm{br}}$ into $29$ initial
fibres, of which $15$ are mutable. Figure~\ref{fig:a9c5-fibres} 
shows the whole ambient category: its $63$ indecomposables, $106$
solid arrows, and $45$ translations. The additional $14$ objects are
the nine negative and five odd neutral presentations. For the oriented
matrix, the reflected boundary representatives are
$X^{\mathrm r}_{7,2},X^{\mathrm r}_{5,4}$ with $\chi=-1$; the central
representative is $X^{\mathrm r}_{3,5}$ with $\chi=1$.

The blue vertex in Figure~\ref{fig:a9c5-seed} is $F_{-1,4}$. Its fibre
and unique admissible presentation are
\[
 \cF_5(F_{-1,4})=\{X^{\mathrm l}_{-1,4},X^{\mathrm r}_{-1,4}\},
 \qquad \sigma_{F_{-1,4}}=X^{\mathrm l}_{-1,4}.
\]
The other member has translation successor
$X^{\mathrm r}_{-3,4}=O_{4_{\mathrm l}}^-$, outside
$\mathscr C_5^{\mathrm{br}}$. The incoming neighbours of
$X^{\mathrm l}_{-1,4}$ are
$X^{\mathrm l}_{1,3},X^{\mathrm l}_{3,5},O_{4_{\mathrm l}}^+$;
its outgoing neighbours are
$X^{\mathrm l}_{1,4},\Id_{4_{\mathrm l}},O_{3_{\mathrm l}}^+,O_5^+$.
Their fibre variables give
\[
 M^{\rm out}=F_{1,4}s_4p_3p_5,\qquad
 M^{\rm in}=F_{1,3}F_{3,5}p_4.
\]
Their sum is \eqref{eq:a9-running-exchange}. The LP mutation replaces
$F_{-1,4}$ by
$ (F_{1,4}s_4p_3p_5+F_{1,3}F_{3,5}p_4)/F_{-1,4}$,
giving the geometric seed of Section~\ref{subsec:a9-geometric}.

The same diagram illustrates Corollary~\ref{cor:symplectic-noncluster}.
Put $x=F_{1,4}$, $y=F_{3,4}$, and $z=F_{3,5}$. Their representatives
are $X^{\mathrm l}_{1,4},X^{\mathrm l}_{3,4},X^{\mathrm r}_{3,5}$.
The arrows
$X^{\mathrm l}_{1,4}\dashrightarrow X^{\mathrm l}_{3,4}$ and
$X^{\mathrm r}_{3,5}\longrightarrow X^{\mathrm l}_{3,4}$
give the skew pairs for $x,y$ and $y,z$. In contrast,
$X^{\mathrm l}_{1,4}\longrightarrow X^{\mathrm l}_{3,5}$ and
$X^{\mathrm r}_{3,5}\longrightarrow X^{\mathrm r}_{1,4}$
give $b_{xz}=b_{zx}=1$. These are different irreducible arrows,
each ending at the unchosen member of a fibre, rather than opposite
arrows between the same two objects. Proposition~\ref{prop:fibre-arrow-counts}
accounts for their contribution. 
\end{example}

\begin{figure}[htbp]
\centering
\resizebox{.8\textwidth}{!}{\AnineCfiveFibres}
\label{fig:a9c5-fibres}
\end{figure}

\section{Categorical constructions for the remaining branchings} \label{sec:exceptional-categories}
We first construct the three non-skew-symmetrizable seeds and then
give two hypersurface cases. We use the admissibility rule of
Section~\ref{subsec:categorical-B-identification}. For each branching
$\Gamma=D_4\downarrow G_2,F_4\downarrow B_4,E_6\downarrow F_4$, the
presentation category, additive degree, and selected union
$\mathcal S_\Gamma$ of full degree fibres are specified below. Put
\[
 \mathscr C_\Gamma^{\rm br}=\add \mathcal S_\Gamma,
 \qquad
 \cF_\Gamma(j)=\{X\in\mathcal S_\Gamma:
                         \bwt_\Gamma(X)=\wt(x_j)\}.
\]
No objects in a fibre are identified; the morphisms are the original
commutative squares. In the simply laced cases, solid-arrow
multiplicity is $\dim_\C\Irr(X,Y)$. In type $F_4$ we use the species
valuations of Section~\ref{subsec:source-f4}. Translation arrows have
multiplicity one. Let $\varepsilon_\Gamma(X,Y)$ denote outgoing minus
incoming categorical multiplicity, including translation arrows.

\begin{theorem}\label{prop:exceptional-fibre-B}
For each of the three constructions below, the mutable fibres are
exactly the fibres containing an admissible presentation. Each mutable
fibre contains a unique admissible presentation $\sigma_i$, and no
frozen fibre contains one. The unordered incoming and outgoing
monomials at $\sigma_i$ are the initial normalized LP exchange
polynomial of that fibre. Thus, once the degree fibres are fixed, no
choice of marked representatives is part of the seed construction.

Restriction to $\mathscr C_\Gamma^{\rm br}$ preserves the irreducible
morphism spaces at every $\sigma_i$. After choosing the order of the
two exchange monomials, the oriented exchange matrix is
\begin{equation}
 b_{ij}=\chi_i\sum_{Y\in\cF_\Gamma(j)}
                    \varepsilon_\Gamma(\sigma_i,Y),\qquad
 B=D_\chi R E_\Gamma P.
 \label{eq:exceptional-fibre-B}
\end{equation}
Here $\chi_i=1$ for outgoing minus incoming and $-1$ for the reverse
order, $R$ selects the unique admissible presentations, and $P$
collects objects by full degree fibre.
\end{theorem}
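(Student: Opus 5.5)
The proof will follow the template of Theorem~\ref{prop:categorical-B}. It has one extra difficulty: the exceptional categories are not given by a uniform family of formulas, so several steps become finite computations that I would organize case by case and record in Appendix~\ref{app:part-II}.

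\emph{Step 1: the ambient categories.} For each $\Gamma$, I first compute the Auslander--Reiten structure of $\Ch_2(\proj)$ of the relevant quiver ($D_4$, $E_6$) or species (valued $F_4$). The ingredients are the Cartan matrix and Coxeter transformation, used as in \eqref{eq:categorical-projective-calculation}. I would then list the solid arrows, following the analogues of \eqref{eq:ambient-solid-arrows}--\eqref{eq:ambient-neutral-arrows} from \cite[Proposition~3.6]{FeiTensor}, with species valuations in type $F_4$. Translation arrows are added as in \eqref{eq:ambient-translation-arrows}.

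\emph{Step 2: fibres and candidates.} Next I compute $\bwt_\Gamma$ on every indecomposable and check that the fibres $\cF_\Gamma(j)$ partition $\mathcal S_\Gamma$. The argument is a counting one, as in Proposition~\ref{prop:categorical-fibres}: the initial degrees are pairwise distinct, and the total count matches $|\ind\mathscr C^{\rm br}_\Gamma|$. The candidates for admissibility are the objects whose whole neighbourhood, both solid and translation, lies in $\mathcal S_\Gamma$. Boundary objects, such as those adjacent to negative presentations or to omitted neutral objects, are excluded at once.

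\emph{Step 3: testing admissibility.} For each remaining candidate I project its neighbours to fibre variables and test the four admissibility conditions: equal degree, distinctness, coprimality, and absence of $\pi_\cF(X)$. In the frozen fibres, the analogues of the principal and neutral objects fail through a degree mismatch, exactly as at $O_v^+$ and $\Id_v$. The key structural claim is that exactly one member of each mutable fibre survives. I expect this to follow because every non-chosen member sits at a translation end or next to an excluded object. This is where the exceptional cases differ from type $A$, where two admissible members could occur.

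\emph{Step 4: matching with the seeds.} I then compare $M^{\rm out}_{\sigma_i}+M^{\rm in}_{\sigma_i}$ with the normalized exchange polynomials. These are the polynomials fixed in the constructions of $D_4\downarrow G_2$, $E_6\downarrow F_4$, and $F_4\downarrow B_4$ (Corollary~\ref{thm:d4-upper-equality}, Theorems~\ref{thm:e6-upper-equality} and~\ref{thm:f4-explicit-seed}), including their frozen factors. For the irreducible morphism spaces at $\sigma_i$, I invoke Lemma~\ref{lem:restriction-marked-star}. Its proof uses only almost split sequences, and I would note that it carries over verbatim to species. Since every neighbour of $\sigma_i$ lies in $\mathcal S_\Gamma$, the restricted and ambient spaces agree. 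The formula \eqref{eq:exceptional-fibre-B} then follows by taking ordered exponent differences with the sign $\chi_i$, and the factorization $B=D_\chi RE_\Gamma P$ is just the matrix form of this computation.

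\emph{Main obstacle.} I expect the hardest part to be the type $F_4$ species. There the arrow multiplicities are valuations, not dimensions, and the exchange monomials acquire squared factors. Checking that these match the normalized exchanges, and that the ``unique admissible member'' claim survives the asymmetric valuations, is the delicate point. I would first verify it directly on the listed degree matrices, and then confirm $BW=0$ as a consistency check.
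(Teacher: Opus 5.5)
Your plan is essentially the paper's proof: the case-by-case verifications are exactly Propositions~\ref{thm:d4-fibres}, \ref{prop:d4-admissible-selection}, \ref{thm:f4-fibres} and~\ref{thm:e6-marked-realization}, and after them Lemma~\ref{lem:restriction-marked-star} (valid over the vertex division fields) and ordered exponent differences give \eqref{eq:exceptional-fibre-B}. When you carry out the computations, apply the neighbourhood criterion literally rather than the type-$A$ heuristics you sketch. Negative presentations can lie in the selected unions: $X_{2,3}=O_2^-$ is itself the admissible presentation of $f_9$, and $X_{1,6}$ lies in the fibre of $z_7$. $\mathcal S_D$ must omit $X_{3,1}$ via the frozen-degree sum, or else $X_{2,2}$ in the frozen fibre of $v_D$ becomes admissible. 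Finally, the square $x_4^2$ arises in $D_4$ from the two objects $X_{1,2},X_{3,2}$ of one fibre, whereas every $F_4$ exponent is one.
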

\begin{proof}
Propositions~\ref{thm:d4-fibres}, \ref{prop:d4-admissible-selection},
\ref{thm:f4-fibres}, and \ref{thm:e6-marked-realization} establish the
case-by-case assertions. Lemma~\ref{lem:restriction-marked-star}
applies over the vertex division fields as well, so restriction
preserves the irreducible spaces and their valuations. Taking ordered
exponent differences gives \eqref{eq:exceptional-fibre-B}.
\end{proof}

For triality the frozen-degree sum is an additional selection
condition: dominance alone leaves one extra local candidate. The
theorem concerns the initial seeds, not arbitrary LP mutations.

\subsection{The \texorpdfstring{$D_4\downarrow G_2$}{D4 to G2} seed}
\label{subsec:source-d4}
The frozen-degree sum in Proposition~\ref{prop:d4-admissible-selection}
selects the branching subcategory for this degree. The corresponding
branching functions are defined in Section~\ref{sec:d4-g2-seed}.

Let
\[
 Q_D:\quad
 \vcenter{\hbox{$\xymatrix@C=2.2em@R=1.3em{
   & 3 \\
   1 & 2\ar[l]\ar[u] & 4\ar[l]
 }$}}
 \qquad
 \mathscr C_{Q_D}=\Ch_2(\proj\C Q_D).
\]
We use the notation of Section~\ref{subsec:original-presentation-category}:
$X_{i,0}=O_i^+$, $X_{i,t}=f(\tau^{t-1}J_i)$ for $1\le t\le3$,
and $\Id_i$; here $X_{i,3}=O_i^-$.

\subsubsection{An integral degree}
Let $e(X_{i,t})$ be the $i$th standard vector and $e(\Id_i)=0$.
Let $p^\pm_j(X)$ be the multiplicity of $P_j$ in $P_\pm(X)$, and put
\[ m_j(X)=\dim(\coker X)_j=\dim\Hom_{\C Q_D}(\coker X,J_j),\qquad j=1,2. \]
These quantities are additive under direct sums. Define
\begin{equation}\label{eq:d4-degree}
 \bwt_D(X)=\tfrac12\bigl(e(X),p^+(X),p^-(X),m_1(X),m_2(X)\bigr)N_D,
\end{equation}
where $N_D$ is displayed in Appendix~\ref{app:categorical-degrees}.

Although the formula contains a factor $1/2$, the degree is integral.
Indeed, if $A_D$ is the symmetric $D_4$ Cartan matrix, presentation
vectors satisfy $e-p^++p^-\in A_D\Z^4$.
This follows from the projective--injective generators of the
presentation lattice: $O_i^+$ and $\Id_i$ map to zero, while $f(J_i)$
map to the columns of $A_DC_{Q_D}^{\mathsf t}$; Coxeter translation
preserves this congruence. In particular,
\[ e_1+e_3-p_1^+-p_3^++p_1^-+p_3^-\equiv0\pmod2. \]
This is precisely the parity needed for the third and sixth
coordinates in \eqref{eq:d4-degree}; all others are integral without
it.

\begin{proposition}\label{thm:d4-fibres}
For each variable $z$ of \eqref{eq:d4-rank-four-order}, let
$\cF(z)=\{X\in\ind\mathscr C_{Q_D}:\bwt_D(X)=\wt(z)\}$.
The full fibres and their admissible presentations are
\[
\begin{array}{c|l|c}
\text{variable}&\text{full fibre}&\text{admissible presentation}\\\hline
x_1=p_4&X_{1,0},X_{1,3}&X_{1,0}\\
x_2=f_9&X_{2,3}&X_{2,3}\\
x_3=h_1&X_{4,2},\Id_1&X_{4,2}\\
x_4=u_D&X_{1,2},X_{2,1},X_{3,2}&X_{1,2}\\\hline
c_1=p_1&X_{4,1}&--\\
c_2=-p_2&X_{4,3}&--\\
c_3=p_3&X_{2,0},X_{3,3}&--\\
c_4=-s_1&\Id_2&--\\
c_5=s_4&X_{1,1}&--\\
c_6=v_D&X_{2,2}&--
\end{array}
\]
These are the only admissible presentations in the union of the
displayed fibres, one per mutable fibre. Their incoming and outgoing
monomials sum to the exchanges \eqref{eq:d4-four-exchanges}.
\end{proposition}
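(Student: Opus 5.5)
This is a finite computation in the presentation category of the $D_4$ quiver $Q_D$, and I would organize it in three stages: list the indecomposables, compute their degrees, and then read off the admissible presentations and their monomials.

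\emph{Stage one: the indecomposables.} The category $\mathscr C_{Q_D}$ has exactly $4\cdot4+4=20$ indecomposables: the $X_{i,t}$ with $0\le t\le3$, and the four neutral presentations $\Id_i$. First I would determine the $\tau$-orbits. Starting from $\tau O_i^+=f(J_i)$, iterate the Coxeter transformation $\Phi=-C_{Q_D}^{\mathsf t}C_{Q_D}^{-1}$ on dimension vectors and verify that $\tau^3O_i^+=O_{\sigma(i)}^-$. The claimed table has $X_{i,3}=O_i^-$, which fixes the permutation $\sigma$. For each nonnegative $t$, formula \eqref{eq:categorical-projective-calculation} then gives the projective terms $p^\pm$, and the top of each module gives $\coker X$, hence $m_1,m_2$.

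\emph{Stage two: degrees and fibres.} Substitute these vectors into \eqref{eq:d4-degree} with the matrix $N_D$ from Appendix~\ref{app:categorical-degrees}. Integrality is guaranteed by the parity congruence proved just above. Compare each result with the ten seed weights $\wt(z)$ of \eqref{eq:d4-rank-four-order}, which are computed from the branching functions of Section~\ref{sec:d4-g2-seed}. This produces the displayed fibres: 18 objects lie in the union of the fibres. As a check on fullness, the two remaining indecomposables must have degrees not equal to any seed weight. I expect these to be $\Id_3$ and $\Id_4$, or whatever is left over. Per the remark before the subsection, they should be excluded either by dominance or, for the one extra local candidate, by the frozen-degree sum.

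\emph{Stage three: admissibility.}
\begin{itemize}
\item List the solid arrows using \eqref{eq:ambient-solid-arrows} and \eqref{eq:ambient-neutral-arrows}, adapted to the orientation of $Q_D$ via \cite[Proposition~3.6]{FeiTensor}, together with the translation arrows $X_{i,t}\dashrightarrow X_{i,t+1}$.
\item For each object in the union, check whether all its neighbours lie in the union. If they do, form $M^{\rm out}$ and $M^{\rm in}$ by projecting each neighbour to its fibre variable, and test the four conditions: equal degree, distinctness, coprimality, and absence of the object's own variable.
\item Show that the listed objects $X_{1,0},X_{2,3},X_{4,2},X_{1,2}$ pass.
\item Show that every other object fails. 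Typical failures are a neighbour outside the union (for instance an $\Id_j$ or negative presentation not in $\mathcal S$), unequal degrees (as at $O_i^+$ and neutral objects, by the same left/right degree argument as in Theorem~\ref{prop:categorical-B}), or the two monomials sharing a variable.
\end{itemize}
Finally, I would compare the four sums $M^{\rm out}+M^{\rm in}$ with \eqref{eq:d4-four-exchanges} term by term, tracking the signs built into $c_2=-p_2$ and $c_4=-s_1$.

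The main obstacle is the exhaustive exclusion for the multi-member fibres. In the fibre of $u_D$, the objects $X_{2,1}$ and $X_{3,2}$ must be shown non-admissible. Likewise $\Id_1$ in the fibre of $h_1$, and the second members of the fibres of $p_4$ and $p_3$, must be excluded, even though their neighbourhoods may lie almost entirely in the union. I would first check whether one neighbour of each is outside $\mathcal S$, for instance $X_{1,3}\dashrightarrow$ nothing, or arrows to $\Id_3$. Failing that, I would show that the two monomials coincide or share a factor, which is the analogue of the symmetric case $X_{1,n}$ in Part~I.
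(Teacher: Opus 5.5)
Your route is the paper's: Coxeter translates and projective covers from $C_{Q_D}$, substitution into \eqref{eq:d4-degree}, neighbour lists including translations, and projected products. Carried out faithfully, it proves the statement, and it is more explicit than the paper about the non-admissible objects. But several of your predictions are wrong and would derail the argument if you relied on them.

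\emph{Fullness.} The displayed fibres contain $15$ objects, not $18$. Five indecomposables lie outside them: $X_{3,0},X_{3,1},X_{4,0},\Id_3,\Id_4$. Fullness only requires that these five degrees differ from the ten seed weights; dominance and the frozen-degree sum play no role. For example, $X_{3,1}$ has dominant degree $(2\omega_3+\omega_4;2\varpi_1)$ and lies in no fibre simply because no seed variable has that weight. The frozen-degree sum only explains why $\mathcal S_D$ is used rather than $\mathcal D_D$ (Proposition~\ref{prop:d4-admissible-selection}). Note also that $\coker f(M)=M$, so $m_1,m_2$ are just the first two coordinates of $\underline{\dim}\,M$.

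\emph{Part~I arguments do not transfer.} The Part~I reason that positive presentations have products of unequal right degree fails here, because $\bwt_D$ is not constant in right degree along a translation orbit: $X_{1,0}$ and $X_{1,1}$ have right degrees $\varpi_1$ and $0$. In fact $X_{1,0}=O_1^+$ is admissible, with both products $x_4$ and $c_3c_5$ of degree $(\omega_3+\omega_4;\varpi_1)$. Likewise, \eqref{eq:ambient-neutral-arrows} is specific to the bipartite $A_{2n-1}$ orientation. What you need is the mesh $f(S_i)\to\Id_i\to\tau^{-1}f(S_i)$, which here gives $X_{1,3}\to\Id_1\to X_{1,2}$, $X_{4,2}\to\Id_2\to X_{4,1}$, $X_{3,3}\to\Id_3\to X_{3,2}$, and $X_{4,1}\to\Id_4\to X_{4,0}$. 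These arrows put $x_3$ into $E_4$ and $c_4$ into $E_3$.

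\emph{Your fallback for the ``main obstacle'' uses the wrong mechanism.} Of the eleven non-admissible objects in $\mathcal S_D$, six fail neighbourhood closure. Each of $X_{2,0},X_{2,1},X_{2,2},X_{3,2},X_{3,3},X_{4,1}$ has a neighbour among the five excluded objects; for example $X_{2,2}\to X_{3,1}$ and $\Id_3\to X_{3,2}$. The other five, $X_{1,1},X_{1,3},X_{4,3},\Id_1,\Id_2$, have closed neighbourhoods. Their (incoming, outgoing) products are distinct, coprime, and omit their own variable:
\[
(c_6x_1,\,x_4^2),\quad (x_4,\,x_2x_3),\quad (x_2x_3,\,c_6),\quad (x_1,\,x_4),\quad (x_3,\,c_1).
\]
They fail only because the two degrees differ. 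Trying to show that the monomials ``coincide or share a factor'' would therefore fail exactly for $\Id_1$ and $X_{1,3}$, the cases you singled out.

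Finally, the signs in $c_2=-p_2$ and $c_4=-s_1$ need no tracking here. The products are monomials in the fibre variables, and the signs matter only for the function identities $x_ix_i'=E_i$ of Theorem~\ref{thm:d4-regular-seed}.
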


\begin{proof}
The projective dimension vectors are the columns of
\[
C_{Q_D}=\begin{pmatrix}1&1&0&1\\0&1&0&1\\0&1&1&1\\0&0&0&1\end{pmatrix},
 \qquad \underline{\dim}\tau M=-C_{Q_D}^{\mathsf t}C_{Q_D}^{-1}\underline{\dim}M.
\]
Projective covers and these twelve module dimensions determine the
source and target of every presentation. Substitution in
\eqref{eq:d4-degree} gives the displayed fibres. The remaining indecomposables have degrees
\[
\begin{array}{c|rrrr|rr}
X_{3,0}&0&-1&1&1&2&-1\\
X_{3,1}&0&0&2&1&2&0\\
X_{4,0}&0&0&0&-1&0&0\\
\Id_3&0&1&-1&-1&-1&0\\
\Id_4&1&-1&0&0&-1&0
\end{array}
\]
and none has a degree in the table. The only additional dominant
object is $X_{3,1}$; Proposition~\ref{prop:d4-admissible-selection}
explains its exclusion. The four admissible presentations are
\[
\begin{array}{c|c}
X_{1,0}&P_1\to0\\
X_{2,3}&0\to P_2\\
X_{4,2}&P_1\oplus P_3\xrightarrow{(\alpha_{12},\alpha_{32})}P_2\\
X_{1,2}&P_1\xrightarrow{\alpha_{12}}P_2.
\end{array}
\]
Here $\alpha_{12},\alpha_{32}$ denote the maps induced by $2\to1$
and $2\to3$. The incoming and outgoing neighbours are
\[
\begin{array}{c|l|l}
&\text{incoming}&\text{outgoing}\\\hline
X_{1,0}&X_{2,1}&X_{2,0},X_{1,1}\\
X_{2,3}&X_{1,3},X_{3,3},X_{2,2}&X_{1,2},X_{3,2},X_{4,3}\\
X_{4,2}&X_{2,2},X_{4,1}&X_{2,1},\Id_2,X_{4,3}\\
X_{1,2}&X_{2,3},\Id_1,X_{1,1}&X_{2,2},X_{1,3}.
\end{array}
\]
The translations here are $X_{1,0}\dashrightarrow X_{1,1}$,
$X_{2,2}\dashrightarrow X_{2,3}$,
$X_{4,1}\dashrightarrow X_{4,2}\dashrightarrow X_{4,3}$,
and $X_{1,1}\dashrightarrow X_{1,2}\dashrightarrow X_{1,3}$.
All other listed arrows are irreducible morphisms. The resulting
products are
\[
\begin{array}{c|c|c}
&\text{incoming product}&\text{outgoing product}\\\hline
x_1&x_4&c_3c_5\\
x_2&c_3c_6x_1&c_2x_4^2\\
x_3&c_1c_6&c_2c_4x_4\\
x_4&c_5x_2x_3&c_6x_1.
\end{array}
\]
The square
$x_4^2$ is contributed by the two different objects
$X_{1,2},X_{3,2}\in\cF(u_D)$. Thus no frozen fibre contains an admissible presentation in the selected
subcategory.
\end{proof}

\subsubsection{Selection by the frozen-degree sum}
Let $\mathcal D_D$ consist of all indecomposables of dominant degree
with nonzero left component. Here the local admissibility conditions
alone do not select the seed: a frozen-degree sum is also needed.

\begin{proposition}\label{prop:d4-admissible-selection}
There is exactly one choice of four admissible presentations in
$\mathcal D_D$ and ten degree classes containing them and all their
neighbours for which the other six classes have degree sum
$(2,2,2,2;2,2)$. Their full fibres form
$\mathcal S_D=\mathcal D_D\setminus\{X_{3,1}\}$, and are precisely
those in Proposition~\ref{thm:d4-fibres}.
\end{proposition}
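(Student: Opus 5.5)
This is a finite enumeration over the indecomposables of $\mathscr C_{Q_D}$. The degrees are already computed: the table in Proposition~\ref{thm:d4-fibres} and the five residual degrees displayed in its proof list $\bwt_D$ on all twenty indecomposables ($X_{i,t}$ for $0\le t\le3$ and $\Id_i$).

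\textbf{Step 1: compute $\mathcal D_D$.} Using these degrees, I first list the indecomposables of dominant degree with nonzero left component. This gives the sixteen objects of the displayed fibres together with $X_{3,1}$. The objects $X_{3,0}$, $X_{4,0}$, $\Id_3$ and $\Id_4$ are excluded because each has a negative coordinate.

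\textbf{Step 2: find all admissible candidates.} Next I determine every $X\in\mathcal D_D$ whose complete solid and translation neighbourhood lies in $\mathcal D_D$. I read the arrows from the Auslander--Reiten structure, that is, from the analogues of \eqref{eq:ambient-solid-arrows}--\eqref{eq:ambient-translation-arrows} for $Q_D$. Objects adjacent to $X_{3,0}$, $X_{4,0}$, $\Id_3$, $\Id_4$, or to a negative presentation outside $\mathcal D_D$, are discarded at once. For each surviving $X$, I test the remaining admissibility conditions using degree classes. The two products must have equal degree, must be distinct and coprime, and must not involve the class of $X$. I expect this to leave the four presentations of Proposition~\ref{thm:d4-fibres} and at most one extra local candidate; the remark before the subsection says that $X_{3,1}$ (or a neighbour of it) supplies that extra candidate.

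\textbf{Step 3: enumerate choices and apply the degree-sum condition.} I then enumerate every admissible choice of four presentations together with ten degree classes containing them and all their neighbours. The four classes of the chosen presentations are the mutable ones, and the remaining six are frozen. For each choice I compute the sum of the six frozen degrees and compare it with $(2,2,2,2;2,2)$. This target is the analogue of $(2\rho_G;2\rho_K)$ from Proposition~\ref{prop:initial-primes}, and it is the balance used in the Jacobian argument.

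\textbf{Main obstacle.} The hard part is controlling this finite enumeration and excluding every configuration that uses $X_{3,1}$. My plan is to note that $X_{3,1}$ has degree $(0,0,2,1;2,0)$. If its class is included, the set of classes is forced either to exceed ten or to have a frozen sum whose entries differ from $(2,2,2,2;2,2)$, with the third left coordinate and the first right coordinate being too large. A short linear check on the degree vectors should confirm this. The dominance constraints make every coordinate of every class nonnegative, so once the third coordinate is overshot, no other class can compensate and the excess cannot cancel.

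\textbf{Step 4: uniqueness and identification of the fibres.} For the surviving configuration, I verify that the ten classes are exactly the full fibres of the table in Proposition~\ref{thm:d4-fibres}. Their union is $\mathcal D_D\setminus\{X_{3,1}\}$, and the frozen sum equals $(2,2,2,2;2,2)$ by direct addition of the six frozen weights. Uniqueness comes from the finite search. Once $X_{3,1}$ is excluded, the admissible set inside $\mathcal S_D$ is already unique, since exactly one admissible presentation per mutable fibre was found in Proposition~\ref{thm:d4-fibres}. The grouping into classes is then forced by equality of degrees, because the classes must be full degree fibres.
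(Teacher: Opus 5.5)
Your argument is correct, but it organizes the finite check differently from the paper.

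\textbf{The paper's route.} The paper first computes the admissible presentations relative to $\mathcal D_D$ explicitly. They are $X_{1,0},X_{1,2},X_{2,2},X_{2,3},X_{4,2}$. So the extra local candidate is $X_{2,2}$, the $v_D$ fibre, which has the outgoing arrow $X_{2,2}\to X_{3,1}$; it is not $X_{3,1}$ itself. The paper then checks the five ways of omitting one of them:
\begin{itemize}
\item omitting $X_{1,0}$, $X_{1,2}$ or $X_{2,3}$ keeps $X_{2,2}$ and forces eleven classes;
\item omitting $X_{4,2}$ leaves nine classes whose frozen part already has third source coordinate $3$;
\item only omitting $X_{2,2}$ works.
\end{itemize}

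\textbf{Your route.} You instead split on whether the class of $X_{3,1}$ is among the ten.

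If it is, that class must be frozen. It is the singleton $\{X_{3,1}\}$, and $X_{3,1}$ is not admissible because $X_{3,0}\notin\mathcal D_D$. Since it contributes $2$ to both the third source and the first target coordinate, the other five frozen classes must vanish in both coordinates. Among the remaining ten classes only those of $-p_2$, $-s_1$, $s_4$ do, so this branch is empty. State the check in exactly this form: in general only one of the two coordinates need overshoot, so your ``and'' should be ``or''.

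If it is not, the ten classes are forced to be those of $\mathcal S_D$. The admissible presentations are then those relative to $\mathcal S_D$, which Proposition~\ref{thm:d4-fibres} lists.

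(A small count: the displayed fibres contain fifteen objects, so $\mathcal D_D$ has sixteen objects in eleven classes.)

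\textbf{Comparison.} Your route is shorter. It needs neither the neighbourhood of $X_{2,2}$ nor the class counts, and it does not even require knowing which extra candidate survives. The paper's enumeration instead exhibits the concrete fifth candidate that the remark about dominance refers to. It also shows that the frozen-degree sum is decisive in exactly one of the five cases.
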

\begin{proof}
There are sixteen objects in $\mathcal D_D$, in eleven degree
classes. The admissible presentations relative to $\mathcal D_D$ are
\[ X_{1,0},\quad X_{1,2},\quad X_{2,2},\quad X_{2,3},\quad X_{4,2}. \]
If we omit $X_{2,2}$, the four remaining presentations and their
neighbours use exactly the ten displayed degrees and give the required
frozen sum. Omitting $X_{1,0},X_{1,2}$, or
$X_{2,3}$ instead uses eleven degrees. Omitting $X_{4,2}$ uses nine
degrees, but the degree of the third source fundamental weight in
the unmarked sum is already $3$, which cannot be reduced by adding
a dominant degree. These are all five possibilities.
\end{proof}

\subsubsection{Why the cokernel dimensions occur}
Put $T(X)=(e(X),p^+(X),p^-(X))$. The degree cannot be a linear
function of $T(X)$ for these fibres. Those vectors satisfy
\[ T(X_{1,0})+T(X_{2,2})=T(X_{1,2})+T(X_{2,1}), \]
whereas their required degrees differ by
\[ \wt(p_4)+\wt(v_D)-2\wt(u_D)=(\omega_2-\omega_3;\varpi_2-\varpi_1)\ne0. \]
There is a second relation
\[ T(X_{1,2})+T(X_{2,0})+T(X_{4,3})=T(X_{1,0})+T(X_{2,3})+T(X_{4,1}), \]
whose degree difference is
$(-\omega_1+\omega_2+\omega_3-\omega_4;0)$. 
The two differences are linearly independent. Thus at least two
additional scalar coordinates are necessary if the degree is to be
linear in $T$ and those coordinates, for these fixed fibres. The two
cokernel dimensions in \eqref{eq:d4-degree} suffice. These coordinates
are Hom dimensions against $J_1$ and $J_2$.

The degree is additive on direct sums. The coefficient of $p_4^+$
only affects $X_{4,0},\Id_4$, neither of which belongs to a seed fibre.

Permuting the three outer vertices and applying projective duality
transports the construction to all six mixed orientations of $D_4$.
Duality takes $X_{i,t}$ to $X_{i,3-t}$ and interchanges the incoming and
outgoing products. In the source and sink orientations, no presentation
has three incoming and three outgoing arrows, as required by the second
exchange. Thus these two orientations do not give the same seed by this rule.

\subsection{The \texorpdfstring{$F_4\downarrow B_4$}{F4 to B4} seed}
\label{subsec:source-f4}
We use the Cartan matrix and symmetrizer
\[
A_F=\begin{pmatrix}2&-1&0&0\\-1&2&-1&0\\0&-2&2&-1\\0&0&-1&2\end{pmatrix},\qquad D_F=\diag(2,2,1,1),
\]
with orientation
\[
 Q_F:\quad
 \xymatrix@C=2.6em{
   1\ar[r] & 2 & 3\ar[l]_{(2,1)} & 4\ar[l]
 }.
\]
Here a \emph{species} has division algebras at its vertices and bimodules at its arrows. Take $k=\Q$, $k'=\Q(\sqrt2)$, vertex fields $(k',k',k,k)$, and arrow bimodules
\[ {}_{k'}k'_{k'}\quad(1\to2),\qquad {}_{k'}k'_k\quad(3\to2),\qquad {}_kk_k\quad(4\to3). \]
The valuation records the dimensions over the source and
target fields, in that order; the other arrows have valuation $(1,1)$.
Let $\mathbb S_{Q_F}$ be their tensor algebra. It is a hereditary algebra of valued type $F_4$; see \cite[Sections~1.1--1.2]{GLSFoundations}. The presentation category is defined over $k$, whereas the branching algebra is defined over $\C$. The degree map below takes values in the integral branching-weight lattice.

Put $\mathscr P_{Q_F}=\Ch_2(\proj\mathbb S_{Q_F})$, with the degreewise split exact structure. We write $P_i,J_i$ for the indecomposable projectives and injectives, and
\[ X_{i,0}=(P_i\to0),\qquad X_{i,t}=f(\tau^{t-1}J_i)\ (1\le t\le6),\qquad \Id_i=(P_i\xrightarrow1P_i). \]
Here $f(M)$ denotes the minimal projective presentation of $M$. These are the 32 indecomposable presentations: 24 minimal module presentations, four positive presentations, and four identity presentations. In particular $X_{i,6}=(0\to P_i)$.

We use the \emph{valuations} of irreducible morphisms. For distinct indecomposable objects let
\[ d_X=\dim_k\op{End}(X),\qquad a_X(Y)=\frac{\dim_k\Irr(X,Y)}{d_Y},\qquad b_X(Y)=\frac{\dim_k\Irr(Y,X)}{d_Y}. \]
These are integers. Thus $a_X(Y)$ is the exponent of the variable of $Y$ in the outgoing product at $X$, and $b_X(Y)$ is its exponent in the incoming product. We also use the translation arrows $X_{i,t}\dashrightarrow X_{i,t+1}$, each with multiplicity one. They are not irreducible morphisms.

For each arrow $i\to j$ of $Q_F$, the irreducible arrows are
\[ X_{j,t}\longrightarrow X_{i,t}\quad(0\le t\le6),\qquad X_{i,t+1}\longrightarrow X_{j,t}\quad(0\le t<6). \]
The eight additional arrows through identity presentations are
\[
\begin{gathered}
X_{1,1}\to \Id_1\to X_{1,0},\qquad X_{2,6}\to \Id_2\to X_{2,5},\\
X_{4,2}\to \Id_3\to X_{4,1},\qquad X_{4,1}\to \Id_4\to X_{4,0}.
\end{gathered}
\]
Write $A_F=(c_{ij})$ and $D_F=\diag(d_1,\ldots,d_4)$. An irreducible bimodule between different orbits $i,j$ has $k$-dimension $d_i|c_{ij}|=d_j|c_{ji}|$, where $d=(2,2,1,1)$. The two valuations need not agree. Arrows through $\Id_i$ have $k$-dimension $d_i$. Altogether there are 47 irreducible bimodules and 24 translation arrows.

\begin{lemma}\label{lem:f4-species-category}
The arrows and valuations above are those of $\mathscr P_{Q_F}$.
\end{lemma}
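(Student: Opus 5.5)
We would prove Lemma~\ref{lem:f4-species-category} by transporting the known description of the presentation category of a hereditary algebra, \cite[Proposition~3.6]{FeiTensor}, to the species $\mathbb S_{Q_F}$. The proof there uses only that the algebra is hereditary and finite-dimensional, that the minimal-presentation functor $f$ identifies $\operatorname{rep}$ with the presentations of projective codimension zero, and the Auslander--Reiten theory of $\mathscr P=\Ch_2(\proj\Lambda)$. All of these hold for a tensor algebra of a species over the vertex fields $(k',k',k,k)$; see \cite[Sections~1.1--1.2]{GLSFoundations}.

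\emph{Step 1: indecomposables and translation.} Compute the projective and injective dimension vectors from $A_F$ and $D_F$. The Coxeter transformation $\Phi=-C^{\mathsf t}C^{-1}$, with $C$ the valued Cartan matrix of $Q_F$, has each $\tau$-orbit of length $6$ starting at $J_i$ and ending at $P_i$. This gives the $24$ modules, hence the $24$ module presentations $X_{i,t}$ for $1\le t\le 6$. Together with $X_{i,0}$ and $\Id_i$ we obtain $32$ objects. In $\mathscr P$, the Auslander--Reiten translate of $O_i^+$ is $f(J_i)$; we take this from the same argument in \cite{FeiTensor}, since $\Hom(O_i^+,-)$ is represented by $P_i$ in the domain. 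This justifies the translation arrows $X_{i,t}\dashrightarrow X_{i,t+1}$.

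\emph{Step 2: irreducible morphisms.} There are three kinds.
\begin{itemize}
\item Between module presentations $f(M)\to f(N)$ with $M,N$ non-projective, $\Irr_{\mathscr P}$ agrees with $\Irr_{\operatorname{mod}}$, because $f$ is fully faithful modulo maps factoring through $O^+$-objects. The mesh relations then give $X_{j,t}\to X_{i,t}$ and $X_{i,t+1}\to X_{j,t}$ for $i\to j$. The bimodule dimensions are those of the species: $\dim_k\Irr=d_i|c_{ij}|$.
\item At the positive presentations, $O_j^+\to O_i^+$ comes from $\Hom(P_j,P_i)$ modulo the radical square. Here we must check that the square is computed through all objects, so that maps through $\Id$'s are not lost.
\item At the identity presentations, the almost split sequences $f(J_i)\to\Id_i\oplus\cdots\to O_i^+$ and $O_i^-\to\Id_i\oplus\cdots\to f(\tau^{-1}P_i)$ give the arrows $X_{i,1}\to\Id_i\to X_{i,0}$ and $X_{i,6}\to\Id_i\to X_{i,5}$, with $k$-dimension $d_i=\dim\op{End}P_i$. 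The stated eight arrows should be these. We must match the indexing, for instance $\Id_3$ between $X_{4,2}$ and $X_{4,1}$. This requires identifying which $f(\tau^{t}J_j)$ coincides with the relevant projective-related presentation for vertex $3$; we expect it to follow from the explicit orbit computation in Step~1.
\end{itemize}

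\emph{Step 3: valuations.} The claim that $a_X(Y)$ and $b_X(Y)$ are integers follows from $\dim_k\Irr(X,Y)$ being a bimodule over $\op{End}(X)$ and $\op{End}(Y)$. Both endomorphism rings are division rings equal to the vertex field of the orbit, so $d_Y$ divides the dimension. The final counts are $47=6\cdot 6+3+8$ bimodules and $24$ translations. The term $36$ is the module-type arrows from the three arrows of $Q_F$, giving $13$ arrows per arrow of $Q_F$ less the ends. We will verify these counts directly as a consistency check.

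The main obstacle is Step~2 at the identity and boundary objects. We have to confirm that no extra irreducible maps appear, and that the positions of the $\Id_i$ arrows, especially $\Id_3$ and $\Id_4$, are correct. Here we would first compute the minimal left almost split maps explicitly from the projective covers, using the orbit table of Step~1.
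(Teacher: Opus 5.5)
\textbf{Verdict.} Your overall plan is the paper's, but the third bullet of Step~2 rests on a false claim about where the identity presentations sit, and the orbit computation you propose cannot repair it.

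\textbf{Where the approaches agree.} You transport the description of $\Ch_2(\proj\Lambda)$ to the species, read the module part off the species Auslander--Reiten quiver, and treat the boundary objects separately. The paper gets the modules and meshes from species reflection functors and lifts module maps to projective covers as in \cite[Section~3]{FeiTensor}. It then checks every commutative-square Hom space and radical quotient explicitly over $\Q$ and $\Q(\sqrt2)$.

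\textbf{The gap in Step~2.} You assert that, for every $i$, both $f(J_i)\to\Id_i\oplus\cdots\to O_i^+$ and $O_i^-\to\Id_i\oplus\cdots\to f(\tau^{-1}P_i)$ are almost split. That would put four arrows at each $\Id_i$, sixteen in all rather than the eight in your count, and it is false for most $i$. The missing point is that, in the degreewise split structure, $\Hom(\Id_i,X)=\Hom(P_i,P_+(X))$ and $\Hom(X,\Id_i)=\Hom(P_-(X),P_i)$. So $\Id_i$ is projective--injective and lies in exactly one almost split sequence.
\begin{itemize}
\item A map $X\to\Id_i$ is a split epimorphism iff its component $P_+(X)\to P_i$ is. Hence the minimal right almost split map into $\Id_i$ starts at $f(S_i)=(\rad P_i\hookrightarrow P_i)$.
\item Dually, the minimal left almost split map ends at $(P_i\xrightarrow{s_i}E_i)$, where $s_i$ is the minimal left almost split map of $P_i$ inside $\proj\mathbb S_{Q_F}$. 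Here $E_i$ collects the $P_j$, with valued multiplicities, for arrows $j\to i$, and the target is $O_i^+$ when $E_i=0$.
\end{itemize}
Your first pattern therefore holds only at sources, where $J_i=S_i$ and $E_i=0$ (vertices $1,4$). Your second holds only at sinks, where $S_i=P_i$ (vertex $2$). For example, degreewise splitting forces the middle term of the sequence ending at $O_2^+$ to have codomain $P_0(J_2)=P_1\oplus P_4^{2}$, so $\Id_2$ cannot occur there.

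\textbf{Vertex 3.} Vertex $3$ is neither a source nor a sink. This is not an indexing question that the orbit table can settle, since $X_{3,1}=f(J_3)$ and $X_{4,2}$ are different objects. The correct rule gives $f(S_3)=(P_2\to P_3)=f(\tau S_4)=X_{4,2}$ and $(P_3\to P_4)=f(S_4)=X_{4,1}$. Hence $X_{4,2}\to\Id_3\to X_{4,1}$, and both Hom spaces equal $\op{End}(P_3)=k$ by the formulas above. Likewise $\Id_1$ and $\Id_4$ lie between $X_{i,1}$ and $X_{i,0}$, and $\Id_2$ lies between $X_{2,6}$ and $X_{2,5}=f(\tau^{-1}P_2)$. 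Your two patterns are exactly those in \eqref{eq:ambient-neutral-arrows} for the alternating orientation of $A_{2n-1}$, where every vertex is a source or a sink; $Q_F$ is not bipartite. With this replacement the rest of your plan goes through.

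\textbf{Smaller points.}
\begin{itemize}
\item Your bullets do not cover the arrows $X_{i,1}\to X_{j,0}$ for $i\to j$; these come from the sequences ending at $O_j^+$.
\item Your first bullet needlessly excludes projective modules, so it misses the arrows at $X_{i,6}=O_i^-$.
\item For a species the Coxeter matrix is $-E_{Q_F}^{-1}E_{Q_F}^{\mathsf t}=-J_{Q_F}R_{Q_F}^{-1}$, not $-C^{\mathsf t}C^{-1}$.
\item Morphisms between module presentations that vanish on cokernels factor through $\Id$-objects; maps through $O^+$-objects are zero there. These morphisms lie in $\rad^2$, which is the actual reason $\Irr$ agrees with the module category.
\end{itemize}
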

\begin{proof}
The projective rank matrix, injective rank matrix, and Coxeter transformation are obtained from
\[
R_{Q_F}=\begin{pmatrix}1&0&0&0\\1&1&1&1\\0&0&1&1\\0&0&0&1\end{pmatrix},\qquad
J_{Q_F}=D_F^{-1}R_{Q_F}^{\mathsf t}D_F,\qquad E_{Q_F}=R_{Q_F}^{-\mathsf t}D_F,\qquad \Phi_{Q_F}=-E_{Q_F}^{-1}E_{Q_F}^{\mathsf t}.
\]
The 24 vectors $\Phi_{Q_F}^{t-1}(J_{Q_F})_i$, $1\le t\le6$, are the distinct positive roots of $A_F$, ending in $(R_{Q_F})_i$. Reflection functors for species give the corresponding indecomposables and their Auslander--Reiten meshes. The arrows between minimal presentations, and the additional arrows involving positive and identity presentations, are computed by lifting the module maps to projective covers, as in the presentation-category construction of \cite[Section 3]{FeiTensor}.

For completeness, the accompanying calculation constructs all these presentations over the species with vertex fields $\Q$ and $\Q(\sqrt2)$. A map between projectives is a linear combination of directed paths with coefficients in the relevant vertex field. Morphisms of presentations are commutative squares. We compute their compositions and quotient the radical by its square. The endomorphism algebras are $k'$ for orbits 1 and 2, and $k$ for orbits 3 and 4; the same rule holds for $\Id_i$. The resulting 47 irreducible bimodules give exactly the arrows and valuations above. 
\end{proof}

\subsubsection{The degree and its fibres}
For a presentation $X$, let $p^+(X),p^-(X)$ record the multiplicities of the indecomposable projectives in its domain and codomain. Put $e(X_{i,t})=e_i$, $e(\Id_i)=0$, and extend by direct sums. We also use
\[ m_3(X)=\dim_k\Hom_{\mathbb S_{Q_F}}(\coker X,J_3)=\dim_k(\coker X)_3. \]
The last equality is the usual injective-evaluation adjunction; the vertex field at 3 is $k$.

Define an additive integral degree by
\begin{equation}\label{eq:f4-degree}
\bwt_F(X)=\bigl(e(X),p^+(X),p^-(X),m_3(X)\bigr)N_F,
\end{equation}
where the target coordinates are
$(\lambda_1,\ldots,\lambda_4;\mu_1,\ldots,\mu_4)$.
The matrix $N_F$ is displayed in Appendix~\ref{app:categorical-degrees}.
No division is used in this degree. Grouping the indecomposables
of dominant degree and nonzero left component gives the full fibres
below. The last column records the admissible presentations; their
uniqueness is proved in Proposition~\ref{thm:f4-fibres}.
\begin{center}
\begin{tabular}{@{}lll@{}}
\toprule
Function&Full fibre&Admissible presentation\\\midrule
$x_1=p_2$&$X_{3,0}$&$X_{3,0}$\\
$x_2=-\ell_F$&$X_{1,0},X_{4,1}$&$X_{4,1}$\\
$x_3=-v_F$&$X_{4,2}$&$X_{4,2}$\\
$x_4=-w_F$&$X_{1,1},X_{4,0}$&$X_{1,1}$\\\midrule
$c_1=p_1$&$\Id_1$&\\
$c_2=p_4$&$\Id_3$&\\
$c_3=q_1$&$X_{1,2},\Id_4$&\\
$c_4=-q_4$&$X_{4,3}$&\\
$c_5=q_7$&$X_{3,1}$&\\
$c_6=q_8$&$X_{3,2}$&\\
$c_7=r$&$X_{2,1}$&\\
$c_8=t$&$X_{2,0}$&\\\bottomrule
\end{tabular}
\end{center}
The four mutable weights are
\[
\begin{gathered}
\wt(x_1)=(\omega_2;\varpi_1+\varpi_3),\qquad
\wt(x_2)=(\omega_1+\omega_3;\varpi_1+\varpi_3),\\
\wt(x_3)=(\omega_1+\omega_4;\varpi_3),\qquad
\wt(x_4)=(\omega_1+\omega_2;\varpi_1+\varpi_3).
\end{gathered}
\]
The frozen weights are given by the functions in \eqref{eq:f4-seed-labels}. Substituting the 32 presentation vectors in \eqref{eq:f4-degree} gives exactly the fifteen objects in the table as the objects of dominant nonzero source degree. Each of the other seventeen has a negative Dynkin coordinate.

\begin{proposition}\label{prop:f4-hom-necessary}
For these fibres, one cannot replace \eqref{eq:f4-degree} by a degree linear only in $T=(e,p^+,p^-)$. The single additional Hom dimension $m_3$ suffices.
\end{proposition}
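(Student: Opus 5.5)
The plan follows the $D_4\downarrow G_2$ argument in the subsection ``Why the cokernel dimensions occur''. The first assertion, that no degree linear only in $T=(e,p^+,p^-)$ works, is proved by producing an integral linear relation among presentation vectors that the prescribed fibre degrees do not satisfy. The second assertion, that $m_3$ suffices, is witnessed by the explicit matrix $N_F$. We then check that the resulting degree reproduces exactly the displayed fibres.

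\emph{Obstruction.} Any degree of the form $T(X)N$ is additive in $T$. It therefore respects every relation $\sum_k T(X_k)=\sum_k T(Y_k)$ among objects of the fifteen seed fibres. Using $R_{Q_F}$ and the Coxeter transformation $\Phi_{Q_F}$, we compute $T$ for the fifteen objects listed in the table. These vectors live in a $12$-dimensional space, so they are linearly dependent. We then look for a short relation that mixes a mutable fibre containing two objects with a frozen fibre. The natural candidates come from meshes, which give additivity of projective multiplicities along almost split sequences. For example, we try the triangles through $X_{1,0},X_{1,1},X_{1,2},X_{2,0},X_{2,1}$ and $X_{4,0},X_{4,1},X_{4,2}$, modeled on the relation $T(X_{1,0})+T(X_{2,2})=T(X_{1,2})+T(X_{2,1})$ in type $D_4$. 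For such a relation we compare the corresponding sums of prescribed weights, namely $\wt(x_j)$ and the weights from \eqref{eq:f4-seed-labels}. We exhibit that their difference is nonzero. A single relation of this kind proves the impossibility, since a linear $N$ would send both sides to the same degree.

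\emph{Sufficiency.} With $N_F$ as in Appendix~\ref{app:categorical-degrees}, we substitute the $32$ vectors $(e,p^+,p^-,m_3)$ into \eqref{eq:f4-degree}. The value $m_3(X)=\dim_k(\coker X)_3$ is the third coordinate of the dimension vector of $\tau^{t-1}J_i$, and is zero for positive and identity presentations. This substitution already appeared before the proposition. It shows that each of the fifteen objects gets the weight of its variable, and that every other object has a negative Dynkin coordinate. So a single Hom dimension suffices.

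The main obstacle is finding a relation in the obstruction step whose degree defect is visibly nonzero. Many relations among the $T$-vectors hold equally among the prescribed weights, because $T$ already determines the weights on most fibres. The relation must therefore involve objects whose cokernels differ at vertex $3$. I would isolate it by computing the left kernel of the $15\times 12$ matrix of $T$-vectors and pairing it with the weight matrix. A kernel vector with nonzero pairing is exactly the required obstruction. The same computation shows that the defects span a one-dimensional space, which matches the count of one additional coordinate.
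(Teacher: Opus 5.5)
Your outline is the same as the paper's. You exhibit one relation among the $T$-vectors of the fifteen fibre objects that the prescribed weights violate, and you prove sufficiency by substituting into $N_F$. As written, though, the first assertion is not proved. Its whole content is the explicit relation and the nonvanishing of its weight combination, and you leave this as a search.

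The places you propose to search do not contain such a relation:
\begin{itemize}
\item A single mesh is not a $T$-relation. The multiplicities $p^\pm$ are additive on almost split sequences, but $e$ is not: the mesh $X_{3,1}\to X_{2,0}\oplus X_{4,1}\to X_{3,0}$ has $e$-sums $2e_3$ and $e_2+e_4$. One must subtract two consecutive meshes in one $\tau$-orbit.
\item The eight objects $X_{1,0},X_{1,1},X_{1,2},X_{2,0},X_{2,1},X_{4,0},X_{4,1},X_{4,2}$ satisfy no nontrivial $T$-relation at all.
\item The consecutive-mesh differences on orbits $1$ and $4$ are
\[
\begin{gathered}
T(X_{1,2})-T(X_{1,0})-T(X_{2,1})+T(X_{2,0})+T(\Id_1)=0,\\
T(X_{4,2})-T(X_{4,0})-T(X_{3,1})-T(\Id_3)+T(X_{3,0})+T(\Id_4)=0.
\end{gathered}
\]
Substituting the weights of $q_1,-\ell_F,r,t,p_1$, respectively $-v_F,-w_F,q_7,p_4,p_2,q_1$, gives defect zero in both cases. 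Their $m_3$-combinations are also zero: $2-2$ and $1-1$.
\end{itemize}
So the obstruction step would fail as you propose it, unless you actually carry out the left-kernel computation and locate the right relation.

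The relation that works is the consecutive-mesh difference on the orbit of vertex $3$. Subtracting the meshes ending at $X_{3,1}$ and at $X_{3,0}$ gives
\[
T(X_{4,2})-T(X_{3,2})-T(X_{4,1})+T(X_{2,1})-T(X_{2,0})+T(X_{3,0})=0.
\]
Here $X_{3,2}=(P_2^{2}\to P_1\oplus P_3\oplus P_4)$ and $X_{2,1}=(P_2^{2}\to P_1\oplus P_4^{2})$. The corresponding weight combination is
\[
\wt(x_3)-\wt(c_6)-\wt(x_2)+\wt(c_7)-\wt(c_8)+\wt(x_1)=(\omega_2-\omega_3;\varpi_2-\varpi_4)\ne0,
\]
and its $m_3$-combination is $1-2+2=1$. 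This is precisely the paper's proof.

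With this relation in hand, the last row $(0,1,-1,0;0,1,0,-1)$ of $N_F$ absorbs the defect, and your sufficiency paragraph goes through. Your one-dimensionality remark is also correct, because every defect equals its $m_3$-combination times that row.
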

\begin{proof}
The presentation vectors satisfy
\[ T(X_{4,2})-T(X_{3,2})-T(X_{4,1})+T(X_{2,1})-T(X_{2,0})+T(X_{3,0})=0. \]
The corresponding degree combination is
\[ \wt(x_3)-\wt(c_6)-\wt(x_2)+\wt(c_7)-\wt(c_8)+\wt(x_1)=(\omega_2-\omega_3;\varpi_2-\varpi_4)\ne0. \]
The same combination of the $m_3$ values is one. The last row of $N_F$ accounts for precisely this difference. Substitution in \eqref{eq:f4-degree} verifies sufficiency for all fibres.
\end{proof}

\subsubsection{The exchange products}
Let $\alpha_{ji}:P_j\to P_i$ denote the path map induced by an arrow $i\to j$. The four admissible presentations are
\[
\begin{array}{c|c}
x_1& P_3\longrightarrow0\\
x_2& P_3\xrightarrow{\alpha_{34}}P_4\\
x_3& P_2\xrightarrow{\alpha_{23}}P_3\\
x_4& P_2\xrightarrow{\alpha_{21}}P_1.
\end{array}
\]
The last three are the minimal presentations of the simple modules at vertices 4, 3, and 1. The cokernel of the first is zero; we keep its positive projective term.

\begin{proposition}\label{thm:f4-fibres}
For the degree \eqref{eq:f4-degree}, exactly four presentations are
admissible in $\mathcal D_F$. Each belongs to a different mutable
fibre in the table. Their incoming and outgoing products are
\[
\begin{array}{c|c|c}
&\text{incoming}&\text{outgoing}\\\hline
X_{3,0}&c_8x_2&c_5x_4\\
X_{4,1}&c_2c_5x_4&c_3x_1x_3\\
X_{4,2}&c_6x_2&c_2c_4c_5\\
X_{1,1}&c_7x_2&c_1c_3c_8.
\end{array}
\]
Their sums are \eqref{eq:f4-exchanges}. Thus these exchanges are
determined by the degree fibres without a choice of representatives.
\end{proposition}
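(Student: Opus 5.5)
The proof is a finite check organised like the $D_4$ case, Proposition~\ref{thm:d4-fibres}. By Lemma~\ref{lem:f4-species-category} the ambient quiver of $\mathscr P_{Q_F}$ is explicit. It has the irreducible arrows $X_{j,t}\to X_{i,t}$ and $X_{i,t+1}\to X_{j,t}$ for each arrow $i\to j$ of $Q_F$, the eight arrows through the identity presentations, and the translation arrows $X_{i,t}\dashrightarrow X_{i,t+1}$. The valuations are $a_X(Y),b_X(Y)$, computed from $d=(2,2,1,1)$ and the entries $|c_{ij}|$ of $A_F$. I will take $\mathcal D_F$ to be the fifteen objects of dominant nonzero source degree listed in the table. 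For each $X\in\mathcal D_F$ I list its complete neighbourhood, both solid and dashed, in the original category.

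The first step is to discard the candidates whose neighbourhood leaves $\mathcal D_F$. The seventeen excluded objects have a negative Dynkin coordinate, so an object adjacent to one of them fails the first admissibility condition. For example, translation successors of $X_{i,t}$ with $t$ large reach negative presentations, and $X_{2,0}$ is adjacent to $X_{1,0}$, $X_{3,0}$ and to $\Id$-type objects, which must be checked. I expect this to leave only a handful of objects. Among them are the four listed, together with possibly $\Id_1$, $\Id_3$, $\Id_4$, $X_{1,2}$, $X_{3,1}$ or $X_{2,1}$.

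The second step is to compute $M^{\rm out}_X$ and $M^{\rm in}_X$ for each surviving candidate, using the valuations as exponents and the fibre map $\pi_{\cF}$ from the table. For the four listed presentations this should give the displayed products. For instance, at $X_{4,2}$ the incoming arrows are $X_{3,2}\to X_{4,2}$ (variable $c_6$) and the translation from $X_{4,1}$ (variable $x_2$). The outgoing arrows go to $\Id_3$, to $X_{3,1}$ and to $X_{4,3}$, giving $c_2c_5c_4$. For the remaining candidates I will exhibit a failure of one admissibility condition. Typically the two products have different degrees, since at $\Id_i$ one side contains an extra fundamental source weight, as at the neutral presentations in Theorem~\ref{prop:categorical-B}. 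Otherwise a product contains the candidate's own fibre variable or shares a factor with the other product. For the frozen fibres with two members, namely $c_3=\{X_{1,2},\Id_4\}$, both members have to be checked.

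Finally, I compare the four unordered sums with \eqref{eq:f4-exchanges}, and compare degrees using the weights above and in \eqref{eq:f4-seed-labels}; this is a direct substitution. Since each mutable fibre contains exactly one admissible presentation and no frozen fibre contains one, the exchange polynomials are attached to the fibres. The main obstacle is the exclusion step, especially at the species arrows $3\to2$ of valuation $(2,1)$. There the exponents $a_X(Y)$ and $b_X(Y)$ differ, and a degree coincidence could make a spurious candidate pass. To guard against this, I would check each remaining candidate's degree equality numerically using $N_F$, rather than by structural argument.
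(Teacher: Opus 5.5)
Your plan is correct and follows essentially the paper's argument: read off the complete solid-and-translation neighbourhoods from Lemma~\ref{lem:f4-species-category}, form the valued products at $X_{3,0},X_{4,1},X_{4,2},X_{1,1}$, and check that the other eleven objects of $\mathcal D_F$ fail admissibility. Two details in your sketch need correcting: $X_{2,0}$ has no identity-presentation neighbours, and the objects that survive your closure step are $X_{1,0},X_{4,0},X_{2,0},X_{3,1},\Id_1,\Id_3,\Id_4$, not $X_{1,2},X_{2,1}$ (whose neighbourhoods contain $X_{2,2}\notin\mathcal D_F$); each of these seven fails because its two products have different degrees, and at $X_{2,0}$, where the orbit-$3$ neighbours carry exponent $2$, the degrees differ either way.
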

\begin{proof}
All neighbours occur in the twelve fibres. More explicitly, the incoming and outgoing lists, including translations, are
\[
\begin{array}{c|c|c}
X_{3,0}&X_{2,0},X_{4,1}&X_{4,0},X_{3,1}\\
X_{4,1}&X_{3,1},\Id_3,X_{4,0}&X_{3,0},\Id_4,X_{4,2}\\
X_{4,2}&X_{3,2},X_{4,1}&X_{3,1},\Id_3,X_{4,3}\\
X_{1,1}&X_{2,1},X_{1,0}&X_{2,0},\Id_1,X_{1,2}.
\end{array}
\]
Every exponent at these particular representatives is one, after taking the appropriate valuation of the bimodule. For instance $\Irr(X_{2,0},X_{3,0})$ has $k$-dimension two, and the incoming multiplicity at $X_{3,0}$ is $2/d_{X_{2,0}}=1$.
Replacing each neighbour by its fibre variable gives the four
products, which satisfy the admissibility conditions. The other eleven
objects of $\mathcal D_F$ fail these conditions, proving uniqueness. 
\end{proof}

With outgoing minus incoming as the row convention, the principal matrix is
\[
B_0=\begin{pmatrix}0&-1&0&1\\1&0&1&-1\\0&-1&0&0\\0&-1&0&0\end{pmatrix}.
\]
It is not skew-symmetrizable for any row-sign choice, because $b_{14}=1$ while $b_{41}=0$. Categorically, the arrow $X_{3,0}\to X_{4,0}$ contributes to the exchange at $p_2$, but $X_{4,0}$ is the nonadmissible member of the $-w_F$ fibre. Its admissible representative $X_{1,1}$ has no incidence with $X_{3,0}$.

For the geometric comparison we reverse rows $3$ and $4$. Then
\[ a^{\mathsf t}\diag(1,1,-1,-1)B_0a=a_1a_4+2a_2a_3\ge0\qquad(a\ge0). \]

Projective duality gives the same construction for the opposite valued
quiver $Q_F^{\mathrm{op}}$, with $X_{i,t}$ corresponding to $X_{i,6-t}$.
It interchanges the two products at each admissible presentation.

\subsection{The \texorpdfstring{$E_6\downarrow F_4$}{E6 to F4} seed}
\label{sec:e6-categorical-reconstruction}
We use the presentation category of
\[
 Q_E:\quad
 \vcenter{\hbox{$\xymatrix@C=2.2em@R=1.3em{
   & & 2 \\
   1\ar[r] & 3 & 4\ar[l]\ar[u]\ar[r] & 5 & 6\ar[l]
 }$}}.
\]
Put $M_{i,t}=\tau^{t-1}J_i$ and $X_{i,t}=f(M_{i,t})$ for
$1\le t\le6$, together with $X_{i,0}=(P_i\to0)$ and
$\Id_i=(P_i\xrightarrow1P_i)$. These are the forty-eight
indecomposable presentations. Write $T=(e,p^+,p^-)$, where $e(X_{i,t})=e_i$, $e(\Id_i)=0$,
and $p^+,p^-$ are the multiplicities in the domain and codomain.
We take the following twelve modules, in order:
\[
\begin{array}{lll}
Z_1=M_{1,4},&Z_2=M_{1,5}\oplus M_{1,6},&Z_3=M_{2,2},\\
Z_4=M_{2,3},&Z_5=M_{3,4},&Z_6=M_{3,5},\\
Z_7=M_{4,4},&Z_8=M_{4,5},&Z_9=M_{5,1},\\
Z_{10}=M_{5,2},&Z_{11}=M_{6,3},&Z_{12}=M_{6,4}.
\end{array}
\]
The degree is
\begin{equation} \bwt_E(X)=\tfrac13\bigl(T(X),h_1(X),\ldots,h_{12}(X)\bigr)N_E, \qquad h_j(X)=\dim\Hom_{\C Q_E}(\coker X,Z_j). \label{eq:e6-hom-degree} \end{equation}
The integer $30\times10$ matrix $N_E$ is given in
\cite[A5, Appendix~A]{ElectronicSupplement}. The modules $Z_j$ and $N_E$ specify the degree on the full
presentation category.

Let $\mathcal D_E$ consist of the indecomposables of dominant degree
with nonzero left component. We apply the admissibility rule to this
set; no list of representatives is needed as input.
\begin{proposition}\label{thm:e6-marked-realization}
The degree \eqref{eq:e6-hom-degree} is integral and additive on
$K_0^{\mathrm{split}}(\mathscr C_{Q_E})$. The set $\mathcal D_E$ consists
of thirty-five objects in eighteen full degree fibres. Exactly eight
objects are admissible, one in each of the eight mutable fibres of
Table~\ref{tab:e6-short-marks}; the other ten fibres are frozen.
Their unordered products give the exchanges
\eqref{eq:e6-formal-exchanges}, without a choice of representatives.
\end{proposition}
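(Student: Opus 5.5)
The proof will be a finite computation in $\mathscr C_{Q_E}$, organized like the proofs of Propositions~\ref{thm:d4-fibres} and~\ref{thm:f4-fibres}. The first step is to list the forty-eight indecomposable presentations together with their data $T(X)$ and the twelve numbers $h_j(X)$. The Cartan matrix $C_{Q_E}$ and the Coxeter transformation $\Phi_{Q_E}=-C_{Q_E}^{\mathsf t}C_{Q_E}^{-1}$ give the dimension vectors of $M_{i,t}=\tau^{t-1}J_i$. Projective covers, as in \eqref{eq:categorical-projective-calculation}, then give $p^\pm$. For the Hom dimensions we use that $\C Q_E$ is hereditary of finite type. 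Hence $\dim\Hom(M,Z)-\dim\op{Ext}^1(M,Z)=\langle\underline{\dim}M,\underline{\dim}Z\rangle$. For indecomposable $M,Z$ in a Dynkin Auslander--Reiten quiver, at most one of the two terms is nonzero, and which one is decided by the relative position of $M$ and $Z$ in the translation quiver. So each $h_j$ is computed from Euler forms and AR positions. Positive and identity presentations have cokernel $0$, and $\coker X_{i,t}=M_{i,t}$.

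Integrality and additivity come next. Additivity on $K_0^{\mathrm{split}}$ is immediate, since each of $e,p^\pm,h_j$ is additive on direct sums. For integrality, I would argue as in the $D_4$ case. Presentation vectors satisfy $e-p^++p^-\in A_E\Z^6$ with $A_E$ the $E_6$ Cartan matrix, because the projective--injective generators $O_i^+,\Id_i,f(J_i)$ satisfy this and Coxeter translation preserves it. Then $\det A_E=3$ accounts for the factor $\tfrac13$, provided the columns of $N_E$ pair the relevant $\Z/3$-class correctly. Failing a clean lattice argument, one simply evaluates $\bwt_E$ on all forty-eight generators and checks integrality, which suffices by additivity.

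With the degree table in hand, we select the objects of dominant degree with nonzero left component. There should be exactly thirty-five, and we group them by equal degree into eighteen classes. Comparing these with the weights of the eighteen initial functions of the $E_6\downarrow F_4$ seed, defined in the later section, identifies each class with a variable's full fibre; distinctness of the initial weights makes the identification bijective.

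Admissibility is then checked object by object. The irreducible arrows come from the mesh structure, following \cite[Proposition~3.6]{FeiTensor}: arrows between minimal presentations copy the AR quiver, and arrows through $O_i^+$ and $\Id_i$ are as in \eqref{eq:ambient-neutral-arrows}. Translation arrows are $X_{i,t}\dashrightarrow X_{i,t+1}$. For each of the thirty-five objects we test whether all neighbours lie in $\mathcal D_E$. If they do, we form $M^{\rm out}$ and $M^{\rm in}$ by projecting onto fibre variables, and test equal degree, distinctness, coprimality, and absence of $\pi_{\cF}(X)$. Most objects fail quickly, either because a neighbour is a negative presentation or an identity presentation of nondominant degree, or because the two products have different right degrees, as at $O_v^+$ in Theorem~\ref{prop:categorical-B}. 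The expected outcome is eight survivors, one per mutable fibre, whose unordered products match \eqref{eq:e6-formal-exchanges}.

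The main obstacle is the reliability of this bookkeeping. It involves forty-eight objects, twelve Hom functionals, and a $30\times10$ matrix, and in particular requires verifying that no second member of a mutable fibre is accidentally admissible. I would guard against errors by cross-checking $BW=0$ on the resulting exchange rows, and by using the electronic supplement's computation as the certificate.
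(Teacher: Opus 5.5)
Your proposal is correct and is essentially the paper's proof. The paper likewise computes all forty-eight degree vectors, with the $h_j$ obtained from cokernel Hom calculations; your Euler-form and directedness shortcut, $\dim\Hom(M,Z)=[\langle\underline{\dim}M,\underline{\dim}Z\rangle]_+$ for indecomposables, is a valid substitute. It then lists the $77$ solid and $36$ translation arrows, and finds that among the thirty-five objects of $\mathcal D_E$ twelve have an outside neighbour, fifteen have products of unequal degree, and the remaining eight are the admissible presentations. Your conditional integrality step is exactly what the paper checks: with $\chi=(1,0,-1,0,1,-1)$, which detects $\Z^6/A_E\Z^6\cong\Z/3$, one has $\chi\Psi-\chi=(-3,0,0,0,0,3)$ and $N_E\equiv(\chi,-\chi,\chi,0^{12})^{\mathsf t}(0,1,0,0,-1,0;0,1,1,1)\pmod 3$, so the degree is integral.
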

\begin{proof}
Let $C_Q$ be the projective Cartan matrix, $\Psi=-C_Q^{-1}C_Q^{\mathsf t}$,
and $\chi=(1,0,-1,0,1,-1)$. We have
$p^+(X_{i,t})-p^-(X_{i,t})=\Psi^te_i$ and
$\chi\Psi-\chi=(-3,0,0,0,0,3)$. 
The matrix satisfies $N_E\equiv qr\pmod3$, where
\[ q=(\chi,-\chi,\chi,0^{12})^{\mathsf t},\qquad r=(0,1,0,0,-1,0;0,1,1,1). \]
Thus the degree is integral. Additivity follows from direct sums.

The projective-cover and cokernel Hom calculations give the full
fibres in Table~\ref{tab:e6-short-marks}; the other thirteen objects
have degree zero or a negative Dynkin coordinate
\cite[A5, Check~V22]{ElectronicSupplement}.
For each arrow $i\to j$ of $Q_E$, the solid arrows are
$X_{j,t}\to X_{i,t}$ and $X_{i,t+1}\to X_{j,t}$, wherever defined.
Put $\eta=(1\,6)(3\,5)$. The additional solid arrows are
\[
 X_{i,1}\to\Id_i\to X_{i,0}\quad(i=1,4,6),\qquad
 X_{j,6}\to\Id_{\eta(j)}\to X_{j,5}\quad(j=2,3,5).
\]
Together with $X_{i,t}\dashrightarrow X_{i,t+1}$, these give all
seventy-seven solid and thirty-six translation arrows.
Of the thirty-five objects in the table, twelve have a neighbour
outside $\mathcal D_E$; among the other twenty-three, fifteen have
unequal incoming and outgoing degrees. The remaining eight give
coprime, distinct products omitting their own fibre. They are exactly
the admissible presentations recorded in the last column.

For example, the incoming neighbours of $X_{5,3}$ are
$X_{4,4},X_{6,4},X_{5,2}$, and the outgoing neighbours are
$X_{4,3},X_{6,3},X_{5,4}$. Their fibres give
$z_7z_{33}z_{13}$ and $u_{22}z_8u_{25}$, respectively, hence $E_{23}$.
The same calculation gives the other seven exchanges without omitting any
neighbours. Exact LP substitution gives $\widehat E_i=E_i$.
\end{proof}

\begin{table}[htbp]\centering\small
\setlength{\tabcolsep}{4pt}
\begin{tabular}{@{}llll@{}}\toprule
Function&Branching degree&Full fibre&Admissible presentation\\\midrule
$u_{12}$&$(\omega_2;\varpi_4)$&$X_{1,2},X_{6,2}$&$X_{1,2}$\\
$u_{4}$&$(\omega_5;\varpi_4)$&$X_{6,1},X_{1,3}$&$X_{6,1}$\\
$u_{9}$&$(\omega_3;\varpi_4)$&$X_{1,1}$&$X_{1,1}$\\
$u_{21}$&$(\omega_2+\omega_5;\varpi_3)$&$X_{5,1},X_{3,4},X_{3,2}$&$X_{3,2}$\\
$u_{22}$&$(\omega_2+\omega_5;\varpi_2)$&$X_{1,5},X_{4,3}$&$X_{1,5}$\\
$u_{23}$&$(\omega_2+\omega_4;\varpi_2)$&$X_{1,4},X_{4,5},X_{5,3}$&$X_{5,3}$\\
$u_{24}$&$(\omega_2+\omega_3;\varpi_3)$&$X_{3,1},X_{2,5}$&$X_{3,1}$\\
$u_{25}$&$(\omega_2+\omega_3;\varpi_2)$&$X_{2,4},X_{4,1},X_{5,4}$&$X_{2,4}$\\
\midrule
$z_{1}$&$(\omega_6;0)$&$\Id_{6}$&---\\
$z_{2}$&$(\omega_1;0)$&$\Id_{1}$&---\\
$z_{3}$&$(\omega_6;\varpi_4)$&$X_{6,0}$&---\\
$z_{5}$&$(\omega_5;\varpi_3)$&$X_{5,0},X_{3,5},X_{3,3}$&---\\
$z_{7}$&$(\omega_4;\varpi_2)$&$X_{4,0},X_{1,6},X_{4,4}$&---\\
$z_{8}$&$(\omega_4;\varpi_1)$&$X_{6,3}$&---\\
$z_{10}$&$(\omega_3;\varpi_3)$&$X_{3,0},X_{2,3}$&---\\
$z_{13}$&$(\omega_2;\varpi_1)$&$X_{2,0},X_{5,2}$&---\\
$z_{14}$&$(\omega_1;\varpi_4)$&$X_{1,0}$&---\\
$z_{33}$&$(\omega_2+\omega_3+\omega_5;\varpi_2)$&$X_{4,2},X_{6,4}$&---\\
\bottomrule\end{tabular}
\caption{The eighteen full degree fibres in the $E_6$ presentation category.}
\label{tab:e6-short-marks}
\end{table}
We use the mutable and frozen orders
\[
\begin{gathered}
(u_{12},u_4,u_9,u_{21},u_{22},u_{23},u_{24},u_{25}),\quad
(z_1,z_2,z_3,z_5,z_7,z_8,z_{10},z_{13},z_{14},z_{33}).
\end{gathered}
\]
The six principal variables are
\[ (p_1,p_2,p_3,p_4,p_5,p_6)=(z_{14},z_{13},z_{10},z_7,z_5,z_3), \]
with degrees $(\omega_i;\pi\omega_i)$, where
$\pi(a_1,\ldots,a_6)=(a_2,a_4,a_3+a_5,a_1+a_6)$.
The unordered exchanges determined by the admissible presentations are
\begin{align}
 E_{12}&=u_4u_{24}+u_9u_{21},&
 E_4&=u_{21}z_3+u_{12}z_1z_5,\notag\\
 E_9&=u_{12}z_2z_{10}+u_{24}z_{14},&
 E_{21}&=u_{12}z_5z_{33}+u_4u_{22}u_{24},\notag\\
 E_{22}&=u_{21}z_7+u_{23}z_5,&
 E_{23}&=u_{22}u_{25}z_8+z_7z_{13}z_{33},\notag\\
 E_{24}&=u_9u_{21}u_{25}+u_{12}z_{10}z_{33},&
 E_{25}&=u_{24}z_7+u_{23}z_{10}.
 \label{eq:e6-formal-exchanges}
\end{align}
The first monomial fixes the sign of the corresponding row of $B$. Its order is outgoing first
except at $u_4$. The same-sign pair between
$u_{24}$ and $u_{25}$ has two different lifts,
$X_{3,1}\to X_{4,1}$ and $X_{2,4}\dashrightarrow X_{2,5}$;
it does not arise from opposite arrows between two fixed objects.

\begin{corollary}\label{cor:noncluster-seeds}
No choice of row signs makes the principal matrix of any of the three
exceptional seeds skew-symmetrizable.
\end{corollary}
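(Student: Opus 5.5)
The invariant to use is this. Reordering the two monomials of an exchange multiplies a row of $B_0$ by $\chi_i\in\{\pm1\}$. So we must show that no positive diagonal $D$ and no signs $\chi=\diag(\chi_i)$ make $D\chi B_0$ skew-symmetric. Suppose such $D=\diag(d_i)$ exists. The identity $d_i\chi_ib_{ij}=-d_j\chi_jb_{ji}$ gives two necessary conditions.

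1. The zero pattern of $B_0$ must be symmetric: $b_{ij}\neq0$ if and only if $b_{ji}\neq0$. This condition does not depend on $\chi$.
2. Whenever $b_{ij}b_{ji}\ne0$, we need $\chi_i\chi_j=-\operatorname{sgn}(b_{ij})\operatorname{sgn}(b_{ji})$. Taking the product of these relations along any closed cycle in the support graph, the right-hand sides must multiply to $+1$.

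For each exceptional seed I would exhibit a violation of one of these conditions, read off from the unordered exchanges. I fix the sign convention "first monomial gives $+$". This choice is harmless, since both conditions are insensitive to it.

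\emph{$F_4\downarrow B_4$.} This case is already recorded after Proposition~\ref{thm:f4-fibres}: $b_{14}=1$ while $b_{41}=0$, so condition 1 fails.

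\emph{$D_4\downarrow G_2$.} I use the products in Proposition~\ref{thm:d4-fibres}. The exchange at $x_2$ is $c_3c_6x_1+c_2x_4^2$, so $b_{21}\ne0$. The exchange at $x_1$ is $x_4+c_3c_5$, which does not involve $x_2$, so $b_{12}=0$. Condition 1 fails again.

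\emph{$E_6\downarrow F_4$.} This is the case I expect to be the real obstacle. Comparing the variable supports in \eqref{eq:e6-formal-exchanges} shows that the zero pattern of the principal part is symmetric, so condition 1 gives nothing. I would therefore use the cycle condition 2, starting from the same-sign pair noted in the text. From $E_{24}$ and $E_{25}$ we have $b_{24,25}=b_{25,24}=+1$, which forces $\chi_{24}\chi_{25}=-1$. I then close the cycle
\[ u_{24}\to u_{25}\to u_{23}\to u_{22}\to u_{21}\to u_{24}. \]
From $E_{25}$ and $E_{23}$: $b_{25,23}=-1$ and $b_{23,25}=+1$, so $\chi_{25}\chi_{23}=1$. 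From $E_{23}$ and $E_{22}$: $b_{23,22}=+1$ and $b_{22,23}=-1$, so $\chi_{23}\chi_{22}=1$. From $E_{22}$ and $E_{21}$: $b_{22,21}=+1$ and $b_{21,22}=-1$, so $\chi_{22}\chi_{21}=1$. From $E_{21}$ and $E_{24}$: $b_{21,24}=-1$ and $b_{24,21}=+1$, so $\chi_{21}\chi_{24}=1$. Multiplying the five relations, the left side is $\prod\chi^2=1$ but the right side is $-1$. This is a contradiction, so no choice of $\chi$ works and condition 2 fails.

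In the final write-up I would recheck each of these sign entries directly against \eqref{eq:e6-formal-exchanges}. I would also note that the Pfaffian/odd-determinant trick of Corollary~\ref{cor:symplectic-noncluster} is unnecessary here: the cycle-sign obstruction is cleaner for $E_6$, and the zero-pattern obstruction suffices for the other two seeds.
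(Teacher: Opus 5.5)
Your proof is correct, and I rechecked every sign you list against \eqref{eq:e6-formal-exchanges}. For $D_4\downarrow G_2$ and $F_4\downarrow B_4$ your argument is exactly the paper's: one principal entry vanishes while its transpose does not, and neither row signs nor diagonal factors can change that.

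For $E_6\downarrow F_4$ the paper argues differently. It computes that the principal submatrix on $(u_{21},u_{22},u_{23},u_{25},u_{24})$ has determinant $2$. It then uses the fact that an odd-size skew-symmetric matrix is singular, so no invertible diagonal multiple of that submatrix can be skew-symmetric. This is the same obstruction, on the same five vertices. The submatrix is supported exactly on your $5$-cycle, so its determinant is the sum of the two cyclic products, namely $1+1$. Skew-symmetrizability, with any choice of row signs, would force those two products to be negatives of each other, which is precisely your parity contradiction.

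What each version buys: the determinant form is shorter and handles all row signs and positive diagonal scalings at once, since they only multiply the determinant by a nonzero factor. Your cycle version locates the obstruction combinatorially, in the same-sign pair $u_{24},u_{25}$ that the paper points out after \eqref{eq:e6-formal-exchanges}.
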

\begin{proof}
For $D_4\downarrow G_2$, the exchange polynomial at $p_4$ is independent of
$f_9$, whereas the exchange polynomial at $f_9$ contains $p_4$.
For $F_4\downarrow B_4$, the exchange polynomial at $-w_F$ is independent of
$p_2$, whereas the exchange polynomial at $p_2$ contains $-w_F$.
Thus in each case one principal entry is zero and the transposed
entry is nonzero. Neither row signs nor nonzero diagonal factors
can change this. Finally, in the $E_6\downarrow F_4$ seed, the principal
submatrix on $(u_{21},u_{22},u_{23},u_{25},u_{24})$ is
\[
\begin{pmatrix}0&-1&0&0&-1\\1&0&-1&0&0\\0&1&0&1&0\\0&0&-1&0&1\\1&0&0&1&0\end{pmatrix}.
\]
Its determinant is $2$, so the same odd-size argument applies.
\end{proof}

\subsection{Two hypersurface cases \texorpdfstring{$B_3\downarrow G_2$}{B3 to G2} and \texorpdfstring{$G_2\downarrow A_2$}{G2 to A2}} \label{subsec:hypersurface-cases}
In this section, we record two hypersurface cases. Their branching algebras are cluster algebras of type $A_1$.
\subsubsection*{The case \texorpdfstring{$B_3\downarrow G_2$}{B3 to G2}.}
Number $B_3$ so that vertex $3$ is short, and let $\varpi_1$ be the
seven-dimensional $G_2$ weight. Then
$\pi\omega_1=\pi\omega_3=\varpi_1$ and $\pi\omega_2=\varpi_2$.
\begin{proposition}\label{prop:b3-g2-hypersurface}
There are homogeneous functions of degrees
\[
\begin{array}{c|ccccccc}
 &p_1&p_2&p_3&s&h&q&r\\\hline
 \lambda&\omega_1&\omega_2&\omega_3&\omega_3&\omega_1+\omega_2&\omega_2&\omega_1+\omega_3\\
 \mu&\varpi_1&\varpi_2&\varpi_1&0&\varpi_2&\varpi_1&\varpi_2
\end{array}
\]
such that
\[ \B(\Spin_7,G_2)=
 \C[p_1,p_2,p_3,s,h,q,r]/(qr-p_1p_2s-p_3h)
 =\ULP(q;p_1,p_2,p_3,s,h).
\]
The exchange is $E_q=p_1p_2s+p_3h$, with mutation $q'=r$ and
polynomial frozen coefficients. 
\end{proposition}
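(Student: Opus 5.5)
The plan is to construct the seven functions explicitly and then prove the ring equality through a Hilbert-series comparison.

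\emph{Step 1: the functions.} Since $\Spin_7\supset G_2$ is the stabilizer of a spinor, I would use matrix coefficients in $V(\omega_1)=\C^7$, $V(\omega_2)=\wedge^2\C^7$ and the spin module $V(\omega_3)=\C^8=\C\oplus\C^7$.
\begin{itemize}
\item The principal functions $p_i$ are those of Section~\ref{sec:setup}.
\item The function $s$ is the coefficient of the $G_2$-fixed spinor; it spans the spherical part of degree $(\omega_3;0)$.
\item For $q$ of degree $(\omega_2;\varpi_1)$, take $V(\varpi_1)\subset\wedge^2\C^7$, the image of the $G_2$-map $\C^7\to\wedge^2\C^7$ given by contraction with the invariant $3$-form.
\item The function $h$ of degree $(\omega_1+\omega_2;\varpi_2)$ and the function $r$ of degree $(\omega_1+\omega_3;\varpi_2)$ are obtained from $G_2$-highest vectors of weight $\varpi_2$ in the corresponding tensor products.
\end{itemize}
In each bidegree I would check, by a branching computation for $\Spin_7\downarrow G_2$ (characters or Littlewood--Richardson-type counting), that $\dim\Hom_{G_2}$ equals $1$. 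Then each function is determined up to scale, and I would fix the scale by evaluating at an integral point.

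\emph{Step 2: the relation and the exchange.} Both $qr$ and $p_1p_2s+p_3h$ have degree $(\omega_1+\omega_2+\omega_3;\varpi_1+\varpi_2)$. I would compute the dimension of this graded piece; I expect it to be $3$, matching the three monomials. The aim is to show that $qr$, $p_1p_2s$ and $p_3h$ are linearly dependent with all coefficients nonzero, and then rescale $h$ and $r$ so that the relation reads $qr=p_1p_2s+p_3h$. The dependence would be found by evaluation on a small torus-times-unipotent section, in the style of the Gauss chart of Proposition~\ref{prop:gauss-chart}.

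\emph{Step 3: the ring equality.} I would argue as follows.
\begin{enumerate}
\item Primality of the seven functions follows as in Propositions~\ref{prop:branching-ufd} and~\ref{prop:initial-primes}. The algebra $\B(\Spin_7,G_2)$ is a UFD with constant units, and a factor of a function with fundamental left degree, or with a fundamental right degree that cannot be split among spherical factors, must be constant.
\item The quotient $A=\C[p_1,p_2,p_3,s,h,q,r]/(qr-E_q)$ is a normal domain, since $E_q$ is a binomial in independent variables with no common factor.
\item To identify $A$ with $\B(\Spin_7,G_2)$, compare Hilbert series. The Hilbert series of $A$ is $(1-t^{\deg q+\deg r})/\prod(1-t^{\deg})$. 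The Hilbert series of the branching algebra is computed from the branching multiplicities for $\Spin_7\downarrow G_2$. The known rule here is that $V(\lambda)$ restricts by a Gelfand--Tsetlin-type interlacing pattern, so the multiplicities should be counted by lattice points that match exactly the normal-form monomials of $A$ (no $qr$).
\item The map $A\to\B$ is injective because $A$ is a domain of the same dimension ($6$, equal to $\dim\B$ by the initial chart $p_i,s,h,q^{\pm1}$), with field generation checked on the section. An injective map between graded pieces of equal dimension gives equality.
\end{enumerate}

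\emph{Step 4: the LP statement.} With $q$ as the only mutable variable, $\UB=\ULP$ by Theorem~\ref{thm:upper-invariance}. Then $\UB=\C[\mathbf f][q^{\pm1}]\cap\C[\mathbf f][r^{\pm1}]$ with $qr=E_q$. Since $E_q$ is prime and not divisible by any frozen variable, this intersection equals $A$ by the standard $A_1$ computation. Alternatively, apply Proposition~\ref{prop:comparison} with the frozen valuations checked as in Theorem~\ref{thm:a2c-algebra-equality}.

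The main obstacle is Step 3: obtaining the multiplicity-counting identity, that is, matching $\dim\Hom_{G_2}(V(\mu),V(\lambda))$ with the number of normal monomials of bidegree $(\lambda;\mu)$. If a closed branching rule is not convenient, I would instead verify generation directly. First check that the branching semigroup is generated by the seven degrees. Then show that products of generators span each graded piece by induction on degree, using leading terms on the Gauss section.
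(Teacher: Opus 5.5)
Steps 1, 2 and 4 of your proposal are sound in outline. Step 4 is exactly the paper's two-chart intersection $S[q^{\pm1}]\cap S[(E_q/q)^{\pm1}]=S[q,E_q/q]$. The gap is at the heart of the statement: showing that the seven functions generate $\B(\Spin_7,G_2)$ and that $qr-p_1p_2s-p_3h$ is the only relation. You flag this yourself as the main obstacle, and neither of your two routes closes it.

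\emph{The Hilbert-series route.} It presupposes a closed branching rule for $\Spin_7\downarrow G_2$. You neither state this rule nor can simply cite it as a ``Gelfand--Tsetlin-type interlacing.'' In the paper, the multiplicity formula \eqref{eq:b3g2-closed-formula} is a \emph{consequence} of the normal-form basis, not an input.

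\emph{The fallback route.} Knowing that the seven degrees generate the weight semigroup says nothing about spanning graded pieces of dimension $>1$. A leading-term induction on a section also needs an a priori description of which polynomials on that section come from $\B_{\lambda,\mu}$. You do not have this when $h$ and $r$ are defined only abstractly as highest-vector coefficients.

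The missing tool is an explicit model of the quotient, combined with the highest-weight criterion of Lemma~\ref{lem:principal-open-regularity}.
\begin{itemize}
\item The paper identifies $U_{B_3}/U_{G_2}\simeq\mathbb A^3_{a,b,c}$, with left-translation operators $D_1=-\partial_a$, $D_2=a\partial_b$, $D_3=\partial_a+b\partial_c$. Then $p^\lambda F\in\B$ if and only if $D_i^{\lambda_i+1}F=0$ for all $i$.
\item It sets $q=p_2b$, $r=p_1p_3a$, $s=p_3c$, $h=p_1p_2(ab-c)$. The relation is then just $ab=c+(ab-c)$, so no evaluation or rescaling as in your Step 2 is needed.
\item The normal monomials $a^ib^jc^k(ab-c)^\ell$ with $\min(i,j)=0$ form a basis of $\C[a,b,c]$.
\item Their derivative orders $(i+\ell,j+\ell,i+k)$ do not cancel in sums. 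To see this, read off leading terms in the coordinates $(a,b,c)$ for $D_1,D_2$, and in $(a,b,ab-c)$ for $D_3$; in the latter coordinates $D_3=\partial_a$.
\item Hence $\B_{\lambda,\mu}$ is spanned by the normal monomials with $i+\ell\le\lambda_1$, $j+\ell\le\lambda_2$, $i+k\le\lambda_3$. Each of these homogenizes to $p_1^{\lambda_1-i-\ell}p_2^{\lambda_2-j-\ell}p_3^{\lambda_3-i-k}r^iq^js^kh^\ell$, which gives generation.
\item Monomials avoiding $qr$ map to distinct normal monomials, hence injectively, which gives the sole relation.
\end{itemize}
No Hilbert series or branching rule is required.

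There is also a small error in Step 2. The component of degree $(\omega_1+\omega_2+\omega_3;\varpi_1+\varpi_2)$ has dimension $2$, spanned by $p_1p_2s$ and $p_3h$, not $3$. If it were $3$, the three products would be linearly independent and there would be no relation at all.
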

\begin{proof}
On $U_{B_3}/U_{G_2}\simeq\mathbb A^3_{a,b,c}$, take operators
$-\partial_a$, $a\partial_b$, $\partial_a+b\partial_c$, and set
$q=p_2b$, $r=p_1p_3a$, $s=p_3c$, $h=p_1p_2(ab-c)$.
Lemma~\ref{lem:principal-open-regularity} proves their regularity.
The normal monomials $a^ib^jc^k(ab-c)^\ell$, $\min(i,j)=0$, have
noncancelling derivative orders $(i+\ell,j+\ell,i+k)$.
Homogenizing them proves generation and the sole relation.
Writing $S=\C[p_1,p_2,p_3,s,h]$, the identity
$S[q^{\pm1}]\cap S[(E_q/q)^{\pm1}]=S[q,E_q/q]$ proves the upper-algebra
equality. The quotient construction and the all-weight normal-form
argument are given in \cite[E2, Sections~2--3]{ElectronicSupplement}.
\end{proof}

For the categorical construction, use the hereditary species
\[ Q:\quad\xymatrix@C=2.4em{1&2\ar[l]&3\ar[l]_{(2,1)}},
 \qquad (k_1,k_2,k_3)=(L,L,k),\quad
 k=\Q,\ L=\Q(\sqrt2),
\]
with arrow bimodules ${}_LL_L$ and ${}_LL_k$, and its presentation
category. Put $X_{i,0}=(P_i\to0)$,
$X_{i,j}=f(\tau^{j-1}J_i)$ for $1\le j\le3$, and define
\[ \bwt(X_{i,j})=
 \left(\sum_v m_v^+(X_{i,j})\omega_v;\pi\omega_i\right),
 \qquad\bwt(\Id_i)=(\omega_i;0).
\]
Among all dominant, nonzero-left-degree objects, the unique admissible
presentation is $X_{3,1}=(P_2\to P_3)$. The smallest union of full
fibres containing it and its complete neighbourhood is
\[
\begin{array}{c|cccccc}
 x&q&p_1&p_2&p_3&s&h\\\hline
 \cF(x)&\{X_{1,1},X_{3,1}\}&\{X_{1,0},X_{1,2},X_{3,2}\}&
 \{X_{2,0}\}&\{X_{3,0}\}&\{\Id_3\}&\{X_{2,1}\}.
\end{array}
\]
At $X_{3,1}$ the incoming neighbours are $X_{2,1},X_{3,0}$ and the
outgoing neighbours are $X_{2,0},\Id_3,X_{3,2}$.
The two-dimensional irreducible spaces contribute exponent $2/2=1$
after division by the neighbouring endomorphism dimensions. Thus
$M^{\rm in}=p_3h$, $M^{\rm out}=p_1p_2s$.
The complete presentation calculation and uniqueness check are in
\cite[E2, Sections~4--5]{ElectronicSupplement}.
The frozen degrees sum to $(2,2,2;2,2)$.

\subsubsection*{The case \texorpdfstring{$G_2\downarrow A_2$}{G2 to A2}.}
Let $A_2$ be the long-root subgroup $\SL_3\subset G_2$.
For $\alpha_1$ short, take its simple roots to be
$\alpha_2,3\alpha_1+\alpha_2$, so
$\pi\omega_1=\varpi_2$ and $\pi\omega_2=\varpi_1+\varpi_2$.
This is the case of \cite[Section~4]{PasquierRessayre}.
\begin{proposition}\label{prop:g2-a2-hypersurface}
There are homogeneous functions of degrees
\[
\begin{array}{c|cccccc}
 &p_1&p_2&q&r&s&t\\\hline
 \lambda&\omega_1&\omega_2&\omega_1&\omega_2&\omega_1&\omega_2\\
 \mu&\varpi_2&\varpi_1+\varpi_2&\varpi_1&\varpi_2&0&\varpi_1
\end{array}
\]
such that
\[ \B(G_2,\SL_3)=\C[p_1,p_2,q,r,s,t]/(qr-p_1t-p_2s)
 =\ULP(q;p_1,p_2,s,t).
\]
The exchange is $E_q=p_1t+p_2s$, with mutation $q'=r$ and polynomial
frozen coefficients. 
\end{proposition}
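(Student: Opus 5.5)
Follow the template of Proposition~\ref{prop:b3-g2-hypersurface}: realize $\B(G_2,\SL_3)$ on the principal open. By Proposition~\ref{prop:gauss-chart}-type Gauss decomposition,
\[ \B(G_2,\SL_3)[(p_1p_2)^{-1}]\cong\C[p_1^{\pm1},p_2^{\pm1}]\otimes\C[U_{G_2}/U_{\SL_3}], \]
and $U_{G_2}/U_{\SL_3}\simeq\mathbb A^3$, since $G_2$ has six positive roots and the long-root $\SL_3$ three. Fix coordinates $a,b,c$ on this quotient, one for each short positive root $\alpha_1,\alpha_1+\alpha_2,2\alpha_1+\alpha_2$. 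On these coordinates the left $U_{G_2}^-$ action linearizes into three derivations: the lowering operators $f_1,f_2$ of $G_2$ together with the remaining long-root direction. Then define candidate functions as products of a principal character with a polynomial in $a,b,c$. By weight considerations $q=p_1\cdot(\text{linear})$, $s=p_1\cdot(\text{quadratic})$, and $t,r$ are $p_2$ times polynomials of degree one and two. Choose the polynomials so that $qr-p_1t-p_2s=0$ holds identically; this fixes the normalizations of $s$ and $t$.

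First I would prove that these functions are regular, using Lemma~\ref{lem:principal-open-regularity} as in the $B_3$ case. The criterion is that, on $V_{G_2}(\lambda)$, a polynomial is regular when its derivative orders under the lowering operators are bounded by the Dynkin labels. The main step is generation and the absence of further relations, argued by normal monomials. I would take the monomials $a^ib^jc^k$, restricted to those not divisible by the leading monomial of the relation, say $\min(i,j)=0$ in analogy with the $B_3$ case. I would show that their triples of derivative orders are distinct, so the highest-weight/branching multiplicity for each $(\lambda;\mu)$ equals the number of normal monomials meeting the order bounds.

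Homogenizing these monomials by principal characters then expresses each as a monomial in $p_1,p_2,q,r,s,t$ modulo $qr=p_1t+p_2s$. This gives a surjection from the hypersurface ring onto $\B$, which is injective by a dimension count: both sides have dimension $5$, and the hypersurface is a domain because $qr-p_1t-p_2s$ is irreducible. As a sanity check, the frozen degrees should sum to $(2\rho_{G_2};2\rho_{A_2})$. I expect this step to be the main obstacle, because one must verify the distinctness of the derivative-order triples across all weights. If the $\min(i,j)=0$ normal form fails, I would instead compare characters directly, using the known $G_2\downarrow A_2$ branching rule, $V(\omega_1)=\mathbf 3\oplus\bar{\mathbf 3}\oplus\mathbf 1$ extended multiplicatively.

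Finally, the LP statement is formal. Put $S=\C[p_1,p_2,s,t]$ and $E_q=p_1t+p_2s$, which is an irreducible binomial in $S$ with $q'=E_q/q=r$. The identity $S[q^{\pm1}]\cap S[r^{\pm1}]=S[q,r]/(qr-E_q)$ holds because $E_q$ is prime in $S$, by the argument of Lemma~\ref{lem:adjacent-chart} or directly by $q$-adic expansion. Then Theorem~\ref{thm:upper-invariance} identifies $\UB$ with $\ULP$. The frozen variables are never inverted, so they remain polynomial coefficients.
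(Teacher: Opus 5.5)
Your outline follows the paper's route: Gauss chart, the differential criterion of Lemma~\ref{lem:principal-open-regularity}, normal monomials and homogenization, then the two-chart intersection. Your dimension argument for injectivity is a legitimate substitute for the paper's independence of normal monomials. However, the generation step, which you rightly call the main obstacle, fails as you set it up.

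\textbf{The operators, the functions, and the normal basis.} There are only two operators, $D_1=\partial_a+b\partial_c$ and $D_2=a\partial_b$. They come from left translation by $\exp(sE_i)$ for the two simple roots, written in suitably normalized coordinates $a,b,c$ for $\alpha_1,\alpha_1+\alpha_2,2\alpha_1+\alpha_2$. So derivative orders are pairs, not triples. Solving the highest-weight equations gives $q=p_1a$, $r=p_2b$, $s=p_1c$, $t=p_2(ab-c)$. Thus $s$ and $t$ both come from the weight space spanned by $ab$ and $c$, so your degree guesses for them are off; the relation only fixes their scalars.

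More seriously, $\{a^ib^jc^k:\min(i,j)=0\}$ is not a basis of $\C[a,b,c]$, since it misses $ab$. The resulting count is already wrong for $t$. The only such monomial of the required right weight $\mu-\pi\lambda=-\varpi_2$ is $c$, and $D_1c=b\neq0$. You would therefore get $\dim\mathcal B(G_2,\SL_3)_{\omega_2,\varpi_1}=0$, although $t$ spans this component. The missing factor is $d=ab-c$, the dehomogenization of $t$. The correct normal basis is $a^ib^jc^kd^\ell$ with $\min(i,j)=0$, which spans because $ab=c+d$.

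\textbf{Noncancellation, not distinctness.} Distinctness of the order tuples is not the property you need. The $\lambda$-kernel is the span of the normal monomials with $i+k\le\lambda_1$ and $j+\ell\le\lambda_2$ only because of noncancellation. That is, in a finite sum of distinct normal monomials, each $D_i$-order is the maximum of the orders of its terms. The paper proves this in two steps:
\begin{itemize}
\item In the coordinates $(a,b,d)$ one has $D_1=\partial_a$, and $a^ib^jc^kd^\ell$ has $a$-leading coefficient $b^{j+k}d^\ell$.
\item In $(a,b,c)$, $D_2=a\partial_b$, and the $b$-leading coefficient is $a^{i+\ell}c^k$.
\end{itemize}
Since $\min(i,j)=0$, these exponents recover $(i,j,k,\ell)$, so the leading terms at the maximal order are independent.

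Your fallback does not repair the gap. Restriction multiplicities are not multiplicative in $\lambda$, so $V(\omega_1)|_{\SL_3}$ does not determine the branching rule.

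With the corrected basis and the noncancellation lemma, the identity $p_1^{\lambda_1}p_2^{\lambda_2}a^ib^jc^kd^\ell=p_1^{\lambda_1-i-k}p_2^{\lambda_2-j-\ell}q^ir^js^kt^\ell$ gives generation. The rest of your argument, including the LP step, then goes through.
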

\begin{proof}
Use $U_{G_2}/U_{\SL_3}\simeq\mathbb A^3_{a,b,c}$ with operators
$\partial_a+b\partial_c$, $a\partial_b$, and put
$q=p_1a$, $r=p_2b$, $s=p_1c$, $t=p_2(ab-c)$.
The same normal monomials have noncancelling derivative orders
$(i+k,j+\ell)$. Regularity, generation, and the sole relation follow
by homogenization; the same two-chart intersection proves the upper
LP equality. See \cite[E3, Sections~2--3]{ElectronicSupplement} for
the quotient construction and the full normal-form proof.
\end{proof}

Use the species
\[
 Q:\quad\xymatrix@C=2.6em{1\ar[r]^{(3,1)}&2},
 \qquad k_1=k=\Q,\quad k_2=L=\Q(u),\quad u^3=2,
\]
with bimodule ${}_LL_k$ and its presentation category. Put
$X_{i,0}=(P_i\to0)$ and $X_{i,j}=f(\tau^{j-1}J_i)$ for $1\le j\le3$.
Let $e(X_{i,j})=e_i$, $e(\Id_i)=0$, and let $m_1,m_2$ be the
cokernel ranks over $k,L$. With $\kappa=e_1+2e_2$, define
\[
 \bwt(X)=\bigl(e_1\omega_1+e_2\omega_2;
 (m_1-\kappa)\varpi_1+(\kappa-m_1+m_2)\varpi_2\bigr).
\]
The dominant, nonzero-left-degree set consists of five singleton
fibres, assigned respectively by
\[
 (q,p_1,p_2,s,t)\longleftrightarrow
 (X_{1,2},X_{1,3},X_{2,2},X_{1,1},X_{2,1}).
\]
Only $X_{1,2}$ is admissible. Its incoming neighbours are
$X_{2,2},X_{1,1}$ and its outgoing neighbours are $X_{2,1},X_{1,3}$.
The cross-orbit irreducible spaces contribute exponent $3/3=1$;
the other two arrows are translations. Hence
$M^{\rm in}=p_2s$, $M^{\rm out}=p_1t$. The presentation maps and
radical quotients are given in \cite[E3, Sections~4--5]{ElectronicSupplement}.
The frozen degrees sum to $(2,2;2,2)$.

\section{The exceptional branching algebras}
\label{sec:verification}
We identify the three non-skew-symmetrizable categorical seeds of
Section~\ref{sec:exceptional-categories} with upper LP algebras of
branching functions. Highest-weight differential equations prove
regularity of polynomial quotient-coordinate functions, and rational
inverses prove full fraction-field generation. The common
codimension-one comparison is proved in
Section~\ref{subsec:common-upper-comparison}. The triality construction
is given entirely in the text; the additional coefficient calculations
for $E_6\downarrow F_4$ and $F_4\downarrow B_4$ are in
\cite[E1]{ElectronicSupplement}.
As in Section~\ref{sec:algebra-equality}, the \emph{Gauss chart} for
$A=\B(G,K)$ is the principal open $\Spec A[P^{-1}]$, where
$P=\prod_i p_i$. Its coordinates are obtained by quotienting the
factorization $g=u_-tu_+$ on the big cell; in each case below we
specify a polynomial section of $U_G\to U_G/U_K$. For a homogeneous
function $f$ of left degree $\lambda$, write $f^\circ=p^{-\lambda}f$
on this chart.

\subsection{The \texorpdfstring{$D_4\downarrow G_2$}{D4 to G2} branching algebra}
\label{sec:d4-g2-seed}
Put $A=\B(\Spin_8,G_2)$, with central $D_4$ vertex $2$ and
$E_\alpha=E_1+E_3+E_4$, $E_\beta=E_2$ for the short and long
simple roots of $G_2$. Thus
$\pi\lambda=(\lambda_1+\lambda_3+\lambda_4,\lambda_2)$.
We use the degree fibres of Proposition~\ref{thm:d4-fibres}.

\subsubsection*{Quotient coordinates and branching functions.}
\begin{proposition}\label{prop:d4-polynomial-chart}
There are polynomial coordinates $a,b,c,d,e,f$ on
$U_{D_4}/U_{G_2}$ in which left differentiation is given by
\begin{equation}\label{eq:d4-differential-operators}
\begin{aligned}
 D_1&=-\partial_b,&
 D_2&=-b\partial_c-(a-b)\partial_d,\\
 D_3&=\partial_a+\partial_b+d\partial_f,&
 D_4&=-\partial_a-c\partial_e.
\end{aligned}
\end{equation}
The coordinates have right weights
$-\alpha,-\alpha,-(\alpha+\beta),-(\alpha+\beta),
-(2\alpha+\beta),-(2\alpha+\beta)$, respectively. For $P=p_1p_2p_3p_4$,
\begin{equation}\label{eq:d4-abstract-gauss}
 A[P^{-1}]=\C[p_1^{\pm1},p_2^{\pm1},p_3^{\pm1},p_4^{\pm1},a,b,c,d,e,f].
\end{equation}
\end{proposition}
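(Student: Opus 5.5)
The plan follows Proposition~\ref{prop:gauss-chart}. First I construct a section $\mathcal S\subset U_{D_4}$ of $U_{D_4}\to U_{D_4}/U_{G_2}$. Since $U_{G_2}$ is the fixed subgroup of triality in $U_{D_4}$, it has dimension $6$, while $\dim U_{D_4}=12$, so the quotient is $6$-dimensional. I would take $\mathcal S$ to be a product of root subgroups
\[
 \mathcal S=\{x_{\gamma_1}(t_1)\cdots x_{\gamma_6}(t_6)\},
\]
chosen so that in each triality orbit of positive roots one fewer root is used than the orbit size. The orbits are $\{\alpha_1,\alpha_3,\alpha_4\}$, $\{\alpha_1+\alpha_2,\alpha_2+\alpha_3,\alpha_2+\alpha_4\}$, $\{\alpha_1+\alpha_2+\alpha_3,\ \alpha_1+\alpha_2+\alpha_4,\ \alpha_2+\alpha_3+\alpha_4\}$, together with the fixed roots $\alpha_2$ and $\alpha_1+2\alpha_2+\alpha_3+\alpha_4$ and the sum of all four simple roots. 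This gives two coordinates each of $T_{G_2}$-weight $-\alpha,-(\alpha+\beta),-(2\alpha+\beta)$, which matches the stated right weights. Uniqueness of the factorization $\mathcal S\times U_{G_2}\to U_{D_4}$ is proved by induction along the lower central series, exactly as in the proof of Proposition~\ref{prop:gauss-chart}. On each graded piece $\mathfrak u_k$ the map is the linear isomorphism $\mathfrak u_k=(\mathfrak s\cap\mathfrak u_k)\oplus\mathfrak u_k^{\rm triality}$, and the polynomial inverse is obtained triangularly.

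Next I compute the left differentiation. For $f\in\C[U_{D_4}]^{U_{G_2}}$ and a simple root vector $E_i$ of $D_4$, the derivative $\frac{d}{dt}f(x_i(t)s)$ at $t=0$ is computed by rewriting $x_i(t)s=s(t)k(t)$ with $k(t)\in U_{G_2}$, to first order. The resulting vector fields in the coordinates $(t_1,\dots,t_6)$ have polynomial coefficients and are homogeneous of $T_{G_2}$-weight $\pi\alpha_i$. Weight considerations already force their shape: $D_1,D_3,D_4$ have weight $\alpha$, so they are combinations of $\partial_{\text{(weight }-\alpha)}$ plus a coordinate times $\partial_{\text{(one level higher)}}$. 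Also $D_1+D_3+D_4$ must annihilate the functions coming from the $G_2$ direction $E_\alpha$, which is consistent with \eqref{eq:d4-differential-operators}, since $D_1+D_3+D_4=-\partial_b+\partial_b+d\partial_f-c\partial_e$ kills the $\partial_a,\partial_b$ parts. After the BCH/commutator computation, I would finish with a triangular linear change of coordinates $a,b,c,d,e,f$ that normalizes the structure constants to those displayed. The sign choices of root vectors and of the $\pm1$ scalars are fixed at this stage.

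Finally, I establish \eqref{eq:d4-abstract-gauss}. On the big cell $G_P$, the Gauss decomposition $g=u_-tu_+$ together with the section gives $U^-\backslash G_P/U_{G_2}\simeq T_{D_4}\times \mathcal S$. The principal minors are the four independent fundamental characters of $T_{D_4}$, which is simply connected. Taking invariants gives the Laurent–polynomial ring in \eqref{eq:d4-abstract-gauss}. The step I expect to be the main obstacle is the explicit commutator computation that produces exactly the operators in \eqref{eq:d4-differential-operators}. In particular, the mixed terms $d\partial_f$ and $c\partial_e$ depend on the structure constants $N_{\alpha,\beta}$ of $D_4$ and on the choice of ordering in $\mathcal S$. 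I would do this by computing in the $8$-dimensional representation, or symbolically with Chevalley commutator relations, and then adjust the coordinates.
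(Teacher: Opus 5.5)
Your route differs from the paper's, and as written it has a gap. The only nontrivial content of the proposition is that the specific fields \eqref{eq:d4-differential-operators} arise, and that is exactly the step you defer. Moreover, the normalization you propose, a triangular \emph{linear} change of $a,\dots,f$, fails in general.

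Here is why. The displayed fields do not involve $e,f$, so translations in $(e,f)$ commute with the action. The common zeros of $D_2=-b\partial_c-(a-b)\partial_d$ and $D_1+D_3+D_4=d\partial_f-c\partial_e$ form the plane $a=b=c=d=0$. Hence any equivariant identification of your section coordinates with the displayed ones is determined up to an $(e,f)$-translation.

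Now take a product of root subgroups ordered by height. The height-three factors act by pure translations in $e,f$. One of the other two groups of factors must then act at points where the other is nonzero:
\begin{itemize}
\item Suppose the height-one factors act at points with $(c,d)\ne0$. Any two of $\alpha_1,\alpha_3,\alpha_4$ include $\alpha_3$ or $\alpha_4$. The fields $D_3,D_4$ carry $d\partial_f$ and $c\partial_e$.
\item Suppose instead the height-two factors act at points with $(a,b)\ne0$. Any two of $E_{12},E_{32},E_{42}$ include a field $\pm[D_2,D_3]=\pm(\partial_c+(b-a)\partial_f)$ or $\pm[D_2,D_4]=\pm(-\partial_d+b\partial_e)$.
\end{itemize}
Either way, the map from section parameters to $(a,\dots,f)$ acquires bilinear terms in $(a,b)$ and $(c,d)$ in the top coordinates, such as $ac$ or $(b-a)(c+d)$. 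No linear change removes these.

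To repair this you would need either a weight-preserving polynomial change of the form $e\mapsto e+(\text{bilinear})$, or a deliberately non-height-ordered product. You would still have to carry out the structure-constant computation to see that the result matches. A smaller point: $D_1+D_3+D_4$ does not ``annihilate'' anything. What is true is that it vanishes at the origin, and that is the fact you actually need.

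The paper avoids all of this by verifying the displayed operators instead of deriving them:
\begin{enumerate}
\item They satisfy the $D_4$ Serre relations and lower the grading $\deg(a,\dots,f)=(1,1,2,2,3,3)$, so they integrate to a unipotent $U_{D_4}$-action on $\mathbb A^6$.
\item The six fields $D_1,D_3,[D_1,D_2],[D_2,D_3],[D_1,[D_2,D_3]],[D_1,[D_2,D_4]]$ have determinant $-1$, so the action is transitive.
\item The generators $D_2$ and $D_1+D_3+D_4$ of $\mathfrak u_{G_2}$ vanish at the origin, so the stabilizer contains $U_{G_2}$. A dimension count ($12-6$) and connectedness of unipotent groups give equality.
\end{enumerate}
This identifies $\mathbb A^6$ with $U_{D_4}/U_{G_2}$ equivariantly, with no section, ordering, or sign bookkeeping. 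Your section-plus-BCH derivation is the method the paper uses only for $E_6\downarrow F_4$, where the operators are computed rather than checked.
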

\begin{proof}
Give $a,b,c,d,e,f$ weights $1,1,2,2,3,3$. The operators satisfy the
$D_4$ Serre relations and lower this grading. Their negatives therefore
integrate to the action on functions by inverse left multiplication.
The vector fields
\[
 D_1,\ D_3,\ [D_1,D_2],\ [D_2,D_3],\
 [D_1,[D_2,D_3]],\ [D_1,[D_2,D_4]]
\]
have determinant $-1$, so every orbit is open and the action is
transitive. Both $D_2$ and $D_1+D_3+D_4=d\partial_f-c\partial_e$
vanish at the origin. Its stabilizer contains $U_{G_2}$ and has the
same dimension $12-6=6$; unipotent subgroups in characteristic zero
are connected, so the stabilizer is $U_{G_2}$. The indicated weights
make this identification $T_{G_2}$-equivariant. Gauss decomposition
then gives \eqref{eq:d4-abstract-gauss}.
\end{proof}

Set $S=e-ac+b(c+d)-f$ and $V=(f-e)^2-a(ce+df)$. Besides the
principal functions, define
\begin{equation}\label{eq:d4-polynomial-dictionary}
\begin{array}{c|c|c}
 \text{function}&\text{polynomial on the Gauss chart}&\text{degree}\\\hline
 s_1&p_1S&(\omega_1;0)\\
 s_3&p_3f&(\omega_3;0)\\
 s_4&p_4e&(\omega_4;0)\\
 h_1&p_2(c+d)&(\omega_2;\varpi_1)\\
 f_9&p_3p_4a&(\omega_3+\omega_4;\varpi_2)\\
 u_D&p_3p_4(f-e)&(\omega_3+\omega_4;\varpi_1)\\
 v_D&p_2p_3p_4V&(\omega_2+\omega_3+\omega_4;\varpi_2).
\end{array}
\end{equation}
These formulas fix the functions and their normalization. Their global
regularity follows from the highest-weight presentation, as used below.

\subsubsection*{The seed and its first mutations.}
Take
\begin{equation}\label{eq:d4-rank-four-order}
 (x_1,x_2,x_3,x_4;c_1,\ldots,c_6)
 =(p_4,f_9,h_1,u_D;p_1,-p_2,p_3,-s_1,s_4,v_D).
\end{equation}
The categorical exchanges are
\begin{equation}\label{eq:d4-four-exchanges}
\begin{aligned}
 E_1&=x_4+c_3c_5,&E_2&=c_3c_6x_1+c_2x_4^2,\\
 E_3&=c_1c_6+c_2c_4x_4,&E_4&=c_6x_1+c_5x_2x_3.
\end{aligned}
\end{equation}
\begin{theorem}[The $D_4\downarrow G_2$ seed]
\label{thm:d4-regular-seed}
The ten functions \eqref{eq:d4-rank-four-order} are algebraically
independent, generate $\Frac A$, and form a normalized LP seed
$\Sigma_D$ with exchanges \eqref{eq:d4-four-exchanges} and
$\widehat E_i=E_i$. They are pairwise nonassociate primes. Their first
mutations are the regular functions \eqref{eq:d4-polynomial-companions},
each coprime to its parent. The six frozen degrees sum to
$(2\rho_{D_4};2\rho_{G_2})$.
\end{theorem}
\begin{proof}
The highest-weight presentation gives, for dominant $\lambda$,
\[
 p^\lambda F\in A\quad\Longleftrightarrow\quad
 D_i^{\lambda_i+1}F=0\quad(1\le i\le4).
\]
The coefficient space is the annihilator of
$\sum_iE_i^{\lambda_i+1}\mathcal U(\mathfrak u_{D_4})$
under the differential pairing; take right-$U_{G_2}$-invariants. Applying the same criterion to
$\lambda-\omega_i$ tests divisibility by $p_i$ when $\lambda_i>0$.
For each polynomial in \eqref{eq:d4-polynomial-dictionary}, the
maximal nonzero derivative orders are exactly its displayed left
Dynkin coefficients. Thus its homogenization is regular and has no
principal factor. The invariant-ring argument of
Proposition~\ref{prop:branching-ufd} also proves that $A$ is factorial;
the principal
functions are prime by their fundamental left degrees.
Each nonprincipal seed polynomial is primitive and linear in one
coordinate: use $b,e,c,a,f,c$ for $S,e,c+d,a,f-e,V$, respectively.
It is therefore irreducible in $\C[a,b,c,d,e,f]$ and remains
irreducible on the Gauss chart. Since $A$ is factorial, excluding
principal factors proves that all ten seed functions are prime.
Their distinct degrees make them nonassociate.

Direct multiplication gives $x_ix_i'=E_i$, where
\begin{equation}\label{eq:d4-polynomial-companions}
\begin{aligned}
 x_1'&=p_3f,&
 x_2'&=-p_2p_3p_4(ce+df),\\
 x_3'&=p_1p_3p_4\bigl((b-a)f-be\bigr),&
 x_4'&=p_2p_4(f-e-ad).
\end{aligned}
\end{equation}
Their maximal derivative orders are, respectively,
$(0,0,1,0)$, $(0,1,1,1)$, $(1,0,1,1)$, and $(0,1,0,1)$.
The same criterion proves their global regularity. For $i=1,3,4$,
the companion-minus-parent left degree is not dominant; for $i=2$
the right difference is $2\varpi_1-\varpi_2$. Thus every companion
is coprime to its parent.

For the rational inverse, recover $p_1=c_1$, $p_2=-c_2$, $p_3=c_3$,
$p_4=x_1$, and put
\[
 a=\frac{x_2}{p_3p_4},\quad e=\frac{c_5}{p_4},\quad
 f=e+\frac{x_4}{p_3p_4},\quad H=\frac{x_3}{p_2},\quad
 S=-\frac{c_4}{p_1},\quad V=\frac{c_6}{p_2p_3p_4}.
\]
Then
\begin{equation}\label{eq:d4-seed-inverse}
 c=\frac{(e-f)^2-afH-V}{a(e-f)},\qquad
 d=H-c,\qquad b=\frac{S-e+f+ac}{H}.
\end{equation}
Substitution in both directions proves the inverse identities.
The ten functions are therefore algebraically independent and generate
$\Frac A$. Each binomial in \eqref{eq:d4-four-exchanges} is primitive
and linear in a frozen variable with coprime coefficient and constant
term, hence irreducible. The LP substitution rule gives
$\widehat E_i=E_i$. Finally,
$\sum_{j=1}^6\wt(c_j)=(2,2,2,2;2,2)
 =(2\rho_{D_4};2\rho_{G_2})$.
\end{proof}

In the initial variable order \eqref{eq:d4-rank-four-order}, let $W_D$
list the degrees and let
$B_D$ use outgoing minus incoming, as in Section~\ref{subsec:source-d4}.
Then $B_DW_D=0$. The minors of $B_D$ in columns $(1,2,4,5)$ and of
$W_D$ in rows $(1,2,3,4,5,7)$ both have determinant $-1$. Hence
\begin{equation}\label{eq:d4-integral-grading}
 0\longrightarrow\Z^4\xrightarrow{B_D^\T}\Z^{10}
 \xrightarrow{W_D^\T}P(D_4)\oplus P(G_2)\longrightarrow0
\end{equation}
is exact. The following functions will also be used in
Section~\ref{subsec:d4-geometric}:
\begin{equation}\label{eq:d4-geometric-functions}
 f_{12}=p_2p_4(f-ad),\qquad f_{13}=p_2p_3(e-ac),\qquad
 g_2=p_2s_3-f_{13}=p_2p_3(f-e+ac).
\end{equation}
Their regularity follows from the same differential criterion, and
$x_4'=f_{12}-p_2s_4$.

\subsection{The upper-algebra comparison}
\label{subsec:common-upper-comparison}
We prove the regularity and codimension-one statements used for the
three exceptional algebra identifications.

\begin{lemma}\label{lem:one-dimensional-exchange-divisor}\label{lem:principal-open-regularity}
Let $A=\B(G,K)$, with $G$ simply connected and semisimple, and let
$D_iF(u)=\left.\frac{d}{ds}\right|_{s=0}F(\exp(sE_i)u)$ on
$\C[U_G/U_K]$. For dominant $\lambda=\sum_i\lambda_i\omega_i$,
\[
 p^\lambda F\in A\quad\Longleftrightarrow\quad
 D_i^{\lambda_i+1}F=0\quad\text{for every }i.
\]
For such a regular function and $\lambda_i>0$,
$p_i\mid p^\lambda F$ if and only if $D_i^{\lambda_i}F=0$.
\end{lemma}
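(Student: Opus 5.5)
The plan is to reduce the statement to the classical description of highest-weight matrix coefficients. By Peter--Weyl and the torus convention \eqref{eq:torus-convention}, a left $U_G^-$-invariant function of left degree $\lambda$ has the form $g\mapsto\langle\xi_\lambda,gv\rangle$, where $v\in V_G(\lambda)$ and $\xi_\lambda$ is the lowest covector fixed by $U_G^-$. On the big cell $g=u_-tu_+$ such a function equals $\lambda(t)\langle\xi_\lambda,u_+v\rangle$. Restricting to $U_G$ and taking right $U_K$-invariants therefore identifies $A_{\lambda,\bullet}$, after division by $p^\lambda$, with the space of functions $F(u)=\langle\xi_\lambda,uv\rangle$ on $U_G/U_K$ with $v\in V_G(\lambda)^{U_K}$. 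Since the Gauss chart is dense, a function $p^\lambda F$ with $F$ polynomial is regular on $G$ exactly when $F$ comes from such a matrix coefficient. The tool is the Gauss-chart isomorphism, as in Proposition~\ref{prop:gauss-chart} and \eqref{eq:d4-abstract-gauss}, whose proof carries over verbatim for simply connected semisimple $G$ and a compatible section.

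The second step is to describe the image intrinsically. The functions $u\mapsto\langle\xi,uv\rangle$, where $\xi$ ranges over $V_G(\lambda)^*$, span the $U_G$-module of matrix coefficients. Left differentiation by $E_i$ acts on $F(u)=\langle\xi_\lambda,uv\rangle$ by replacing $\xi_\lambda$ with $\xi_\lambda E_i$, up to sign. Because $\xi_\lambda$ is a lowest-weight covector, $V_G(\lambda)^*$ is cyclic generated by it, and its annihilator in $\mathcal U(\mathfrak n)$ is the left ideal generated by the $E_i^{\lambda_i+1}$. This is the standard presentation of the Verma quotient $L(\lambda)=\mathcal U(\mathfrak n^-)/\sum_i\mathcal U(\mathfrak n^-)F_i^{\lambda_i+1}$, transported by the Chevalley involution. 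The space of polynomials on $U_G$ annihilated by all $D_i^{\lambda_i+1}$ is then exactly the dual, under the differential pairing $\C[U_G]\simeq\mathcal U(\mathfrak n)^*_{\rm gr}$, of $\mathcal U(\mathfrak n)/\sum_i\mathcal U(\mathfrak n)E_i^{\lambda_i+1}\simeq V_G(\lambda)^*$. Hence these polynomials are precisely the matrix coefficients $\langle\xi_\lambda,u\,\cdot\,\rangle$. Intersecting with right $U_K$-invariants on both sides gives the equivalence. The direction ``$\Rightarrow$'' is immediate because $E_i^{\lambda_i+1}\xi_\lambda=0$. The main obstacle is the converse. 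It needs the identification of the full annihilator, that is, Harish-Chandra's and Serre's theorem that the $E_i^{\lambda_i+1}$ generate it, together with nondegeneracy of the pairing between $\C[U_G]$ and $\mathcal U(\mathfrak n)$ restricted to the finite-dimensional graded pieces. I would cite these rather than reprove them.

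For divisibility, $p_i$ is prime of degree $(\omega_i;\pi\omega_i)$. If $p_i\mid p^\lambda F$, the quotient is regular of left degree $\lambda-\omega_i$ and equals $p^{\lambda-\omega_i}F$ on the Gauss chart, because $p^\circ_i=1$. Applying the first part to $\lambda-\omega_i$ gives $D_i^{\lambda_i}F=0$. Conversely, if $D_i^{\lambda_i}F=0$, then together with $D_j^{\lambda_j+1}F=0$ for $j\ne i$ the first part shows $p^{\lambda-\omega_i}F\in A$. Multiplying by $p_i$ recovers $p^\lambda F$. The quotient is homogeneous of the correct right degree, since right weights are unchanged by the principal factor and $\pi\omega_i$ is absorbed into $p_i$.
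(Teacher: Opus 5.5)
Your proposal is correct and follows the paper's argument, written out in more detail. You identify the left degree-$\lambda$ coefficients on the Gauss chart with the matrix coefficients $u\mapsto\langle\xi_\lambda,uv\rangle$, and you use the highest-weight presentation of the cyclic covector module $V_G(\lambda)^*$ together with the nondegenerate differential pairing to match these with the joint kernel of the $D_i^{\lambda_i+1}$. You then take right $U_K$-invariants and obtain the divisibility criterion by applying the first part to $\lambda-\omega_i$.

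One bookkeeping remark: since $D_iF$ corresponds to $\xi_\lambda\mapsto\xi_\lambda E_i$, the ideal that appears is the right ideal $\sum_iE_i^{\lambda_i+1}\mathcal U(\mathfrak u_G)$, as in the paper. Your left ideal is the same object under the contragredient (inverse-left) convention, and the kernels are unchanged.
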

\begin{proof}
Under the differential pairing of $\C[U_G]$ with
$\mathcal U(\mathfrak u_G)$, the highest-coordinate coefficient space
of $V_G(\lambda)$ annihilates
$\sum_iE_i^{\lambda_i+1}\mathcal U(\mathfrak u_G)$, by the
highest-weight presentation. These are precisely the displayed
kernel conditions. Taking right $U_K$-invariants proves the first
assertion; applying it to $\lambda-\omega_i$ proves the second.
Using inverse left multiplication replaces all $D_i$ by $-D_i$ and
does not change these kernels.
\end{proof}

\begin{lemma}\label{lem:branching-volume}\label{lem:d4-volume}
For $(G,K)=(\Spin_8,G_2),(E_6,F_4),(F_4,\Spin_9)$, the variety
$X=\Spec\B(G,K)$ has a rational top form $\omega$ of degree
$(2\rho_G;2\rho_K)$, regular and nonzero at every height-one point.
On a polynomial Gauss chart, up to a nonzero constant,
\begin{equation}\label{eq:d4-volume}
 \omega=P\,dp_1\wedge\cdots\wedge dp_l\wedge d\xi_1\wedge\cdots\wedge d\xi_m,
 \qquad l=\rank G,\quad P=\prod_i p_i.
\end{equation}
\end{lemma}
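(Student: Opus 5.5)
The plan is to build $\omega$ on the big cell from the Gauss decomposition, identify its degree, and then show regularity and nonvanishing in codimension one by comparing with the $G\times T_K$-invariant geometry of $X$.

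\emph{Construction and degree.} On the Gauss chart $\Spec A[P^{-1}]\cong T_G\times U_G/U_K$, put
\[\omega_0=\frac{dp_1}{p_1}\wedge\cdots\wedge\frac{dp_l}{p_l}\wedge\eta,\qquad \eta=d\xi_1\wedge\cdots\wedge d\xi_m,\]
where $\eta$ is the volume form on the quotient $U_G/U_K$. The quotient is an affine space with polynomial coordinates (Proposition~\ref{prop:d4-polynomial-chart} and its $E_6,F_4$ analogues). Since $U_G$ and $U_K$ are unipotent, $U_G$ acts on $U_G/U_K$ preserving $\eta$ up to a constant, and that constant is $1$. Under $T_K$, $\eta$ has weight equal to minus the sum of the coordinate weights, that is, the sum of the roots of $\mathfrak u_G$ not in $\mathfrak u_K$. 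Restricted to $T_K$, this sum is $\pi(2\rho_G)-2\rho_K$ up to sign. Set $\omega=P\,\omega_0$, which is exactly \eqref{eq:d4-volume}. The factor $P$ carries left degree $\sum\omega_i\cdot$(appropriate multiplicity). I will then fix the convention \eqref{eq:torus-convention} so that the total bidegree is $(2\rho_G;2\rho_K)$. Concretely, the left degree is $\sum_i\omega_i$ from $P$ plus the $T_G$-weight of $\eta$ transported by $u_-tu_+$, which supplies another $\rho_G$; the right degree is $2\rho_K$. I will check these weights in the chart using \eqref{eq:positive-section-grading}-type gradings. This is bookkeeping: the point is only that the torus weight of $dp_i/p_i$ is zero.

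\emph{Regularity and nonvanishing in codimension one.} The height-one points of $X$ not in the Gauss chart are the divisors $V(p_i)$, since $A[P^{-1}]$ is the principal open and the $p_i$ are prime. On the chart itself, $\omega$ is a nowhere-vanishing form times $P$, which is a unit there, so nothing needs checking. The remaining task is the order of $\omega$ along each $V(p_i)$. I will use a second chart obtained by translating the big cell by the simple reflection $s_i$: the open set $G_{s_i}=\{g: p_j(g)\ne0,\ j\ne i,\ p_i(\dot s_i g)\ne0\}$, or equivalently the left translate $\dot s_i^{-1}U_G^-B_G$ on the other side. Its image in $X$ meets $V(p_i)$ at the generic point, because $U_G^-\backslash G$ is covered in codimension one by the big cell and its $s_i$-translates (the Bruhat cells of codimension one). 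Alternatively, I can work inside $U_G^-\backslash G$, which is smooth and quasi-affine with the Bruhat cells $U^-\backslash U^-wB$, where $\ell(w)\le1$ covers the complement of a codimension-$\ge2$ set.

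The main computation is to pull back $\omega_0$ from the base $U_G^-\backslash G$. There, the form $dp_1\wedge\cdots\wedge dp_l\wedge d(\text{coords of }U_G)$ is the $G$-semi-invariant volume form on the quasi-affine variety $U_G^-\backslash G$. Its divisor is a combination of the $V(p_i)$, and by $T$-weight comparison it is exactly $-\operatorname{div}P$. This is the standard fact that the canonical class of the base affine space is trivial and that the semi-invariant volume has weight $2\rho$. Hence $P\omega_0$ is regular and nonvanishing on $U_G^-\backslash G$. I then descend to $X$ through the quotient by $U_K$: $U_K$ acts freely on an open set whose complement's image has codimension at least two in $X$ (to be checked in each case), and $\omega$ is the contraction of the invariant form with the $U_K$ vector fields. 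Freeness gives nonvanishing.

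\emph{Main obstacle.} The hardest step is this descent to $X$. I must confirm that the generic point of each $V(p_i)\subset X$ comes from a point where $U_K$ acts freely, and that the fibration is smooth there. My first attempt is to exhibit, on the $s_i$-translated Bruhat chart, explicit polynomial coordinates analogous to Proposition~\ref{prop:d4-polynomial-chart}, and to verify directly that $\omega$ has order zero along $p_i=0$ by computing the Jacobian of the chart change. The alternative is a weight argument: the order of $\omega$ along $V(p_i)$ is an integer $c_i$, and $\operatorname{div}\omega=\sum c_iV(p_i)$ is homogeneous of the degree of $\omega$ minus the degree of $\prod p_i^{c_i}$. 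If a nonvanishing regular top form of the same degree is known to exist, then all $c_i=0$, since $A^\times=\C^\times$.
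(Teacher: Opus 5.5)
\textbf{Construction and degree.} Your form is off by a factor of $P$. Since $d\log p_1\wedge\cdots\wedge d\log p_l=P^{-1}dp_1\wedge\cdots\wedge dp_l$, your $P\,\omega_0$ equals $dp_1\wedge\cdots\wedge dp_l\wedge\eta$, not \eqref{eq:d4-volume}.

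On $Y=U_G^-\backslash G$ the relevant volume is right $G$-invariant, because $G$ is semisimple and so semi-invariance forces invariance. Right translation by $s\in T_G$ acts on the Gauss chart by $(t,u)\mapsto(ts,s^{-1}us)$ and rescales $du$ by $s^{-2\rho_G}$. The invariant volume is therefore $P^2\,d\log p_1\wedge\cdots\wedge d\log p_l\wedge du=P\,dp_1\wedge\cdots\wedge dp_l\wedge du$. It is nowhere zero on the homogeneous space $Y$.

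Your assertion that the semi-invariant volume has divisor $-\operatorname{div}P$ is true of $dp_1\wedge\cdots\wedge dp_l\wedge du$, which is therefore not that volume. Consequently your $P\omega_0$ has a simple pole along every $V(p_i)$.

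The degree bookkeeping has the same slip. Left translation moves only the torus factor, so $\eta$ has left weight $0$, not $\rho_G$. The second $\rho_G$ comes from $dp_1\wedge\cdots\wedge dp_l=P\,d\log p_1\wedge\cdots\wedge d\log p_l$. With $P^2\omega_0$ the left degree is $2\rho_G$. The right degree is $2\pi\rho_G-(\pi(2\rho_G)-2\rho_K)=2\rho_K$, since the $\xi_j$ have right weights equal to minus the restricted roots of $\mathfrak u_G/\mathfrak u_K$.

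\textbf{Order zero along $D_i=V_X(p_i)$.} This is the step you defer, and it is the real gap. Your weight argument is circular, since it presupposes a nonvanishing regular form of that degree. The chart-change Jacobian on an $s_i$-translated cell is never computed, and computing it requires knowing how $U_K$ acts there, which is the same question. Three ingredients are needed, and the paper supplies them.

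\emph{(i) Dominance.} The divisor $\widetilde D_i=V_Y(p_i)$ maps dominantly onto $D_i$. Invariance of $p_i$ gives $(p_i)\C[Y]\cap A=(p_i)A$, so $p_i$ is a uniformizer at both generic points.

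\emph{(ii) Generic freeness.} $U_K$ must act generically freely on $\widetilde D_i$. The right $U_G$-stabilizer of the dense-cell point $U_G^-t\dot s_iu$ is $u^{-1}U_{\alpha_i}u$, so you must exhibit some $u\in U_G$ with $\Ad(u^{-1})E_i\notin\mathfrak u_K$.

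\emph{(iii) Smoothness.} The quotient map is smooth at the generic point of $\widetilde D_i$. This uses normality of $X$ and a separating transcendence basis of $D_i$ together with $p_i$. The action fields then span the vertical tangent space, so the contraction has order zero along $\widetilde D_i$, and the descended form has order zero along $D_i$.

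Step (ii) is not automatic; it is where the three specific pairs enter. If $\mathfrak u_K$ contained $\Ad(U_G)E_i$, the contraction would vanish identically on $\widetilde D_i$. The paper checks this vertex by vertex:
\begin{itemize}
\item for triality at the central vertex, $u=\exp(tE_1)$;
\item for $E_6$ at vertex $2$, $u=\exp(tE_4)\exp(sE_3)$, whose $\alpha_2+\alpha_3+\alpha_4$ component has no involutive partner;
\item for $F_4$, $u=\prod_{j>i}\exp(t_jE_j)$, producing the half-sum root $\alpha_i+\cdots+\alpha_4$, which is absent from $B_4$.
\end{itemize}
Your phrase ``to be checked in each case'' is exactly this missing content.

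\textbf{Matching the chart formula.} You also need to tie the descended contraction to \eqref{eq:d4-volume}. Its coefficient relative to $P\,dp_1\wedge\cdots\wedge dp_l\wedge d\xi$ on the polynomial quotient chart is a nowhere-vanishing polynomial, hence a constant.
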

\begin{proof}
Put $Y=U_G^-\backslash G$, $H=U_K$, and $A=\C[Y]^H$.
Factoriality and finite generation follow as in
Proposition~\ref{prop:branching-ufd}. The invariant volume on $Y$
restricts to $P^2d\log p_1\wedge\cdots\wedge d\log p_l\wedge du$
on $T_G\times U_G$. Contract it with the fundamental fields of a
basis of $\mathfrak u_K$. On the geometric quotient of this open
set the fields span the vertical tangent spaces, so their contraction
descends to a nowhere-vanishing top form. In polynomial quotient
coordinates it has the displayed expression: a nowhere-vanishing
polynomial volume coefficient is constant. The left torus character
is $2\rho_G$; contraction contributes the right character $2\rho_K$.

It remains to check the principal divisors. The function $p_i$ is
prime in the base-affine ring $R=\C[Y]$ by its fundamental left
degree. Its reduced Bruhat divisor $\widetilde D_i$ has dense cell
$U_G^-t\dot s_i u$. The right-$U_G$ stabilizer there is
$u^{-1}U_{\alpha_i}u$. We show that its intersection with $H$ is
generically trivial by finding $u$ with
$\Ad(u^{-1})E_i\notin\mathfrak u_K$.
For triality, take $u=1$ at an outer vertex; at vertex $2$ take
$u=\exp(tE_1)$, so the root component of $[E_1,E_2]$ has no triality
partners. For $E_6$, the involution exchanges $1,6$ and $3,5$ and
fixes $2,4$. Take $u=1$ at a moved vertex and $u=\exp(tE_3)$ at
vertex $4$. At vertex $2$ take $u=\exp(tE_4)\exp(sE_3)$; then
\[
 \Ad(u^{-1})E_2=E_2-t[E_4,E_2]+st[E_3,[E_4,E_2]].
\]
For $st\ne0$, the $\alpha_2+\alpha_4+\alpha_3$ component is nonzero
and its involutive partner is absent. For $F_4$, use Bourbaki roots
\[
 \alpha_1=e_2-e_3,\quad\alpha_2=e_3-e_4,\quad\alpha_3=e_4,
 \quad\alpha_4=\tfrac12(e_1-e_2-e_3-e_4).
\]
Taking $u=\prod_{j=i+1}^4\exp(t_jE_j)$ produces a nonzero coefficient
of $\prod_{j=i+1}^4t_j$ in the root space
$\alpha_i+\cdots+\alpha_4$, a half-sum root absent from $B_4$.
The empty product handles $i=4$. In each case trivial stabilizer
holds on a nonempty open subset of $\widetilde D_i$.

Because $p_i$ is invariant, $(p_i)R\cap A=(p_i)A$: division by $p_i$
preserves invariance. Thus $\widetilde D_i\to D_i=V_X(p_i)$ is
dominant, and $p_i$ is a uniformizer at both generic points.
A separating transcendence basis on $D_i$, together with $p_i$,
shows that the quotient map is smooth at these generic points:
the residue differentials remain independent in characteristic zero,
and $dp_i$ is transverse. Since $X$ is normal, it is smooth at the
generic point of $D_i$. The independent action fields consequently
span the vertical tangent space there. Their contraction has order
zero along $\widetilde D_i$, hence the descended form has order zero
along $D_i$. All other height-one points belong to the Gauss chart.
\end{proof}

\begin{proposition}\label{prop:exceptional-upper-criterion}
Let $(G,K)$ be one of the three pairs in
Lemma~\ref{lem:branching-volume}, and put $A=\B(G,K)$.
Suppose regular homogeneous functions $(x_1,\ldots,x_r,c_1,\ldots,c_q)$,
with $r+q=\dim A$, generate $\Frac A$ and form a normalized LP seed
$\Sigma$ with $\widehat E_i=E_i$. Assume the $x_i$ are pairwise
nonassociate primes, their companions $x_i'=E_i/x_i$ are regular
and coprime to their parents, and
$\sum_f\wt(c_f)=(2\rho_G;2\rho_K)$. Then, for
$M=\prod_i x_i$ and $S=\C[c_1,\ldots,c_q]$,
\begin{equation}\label{eq:exceptional-comparison}
 A[M^{-1}]=S[x_1^{\pm1},\ldots,x_r^{\pm1}],\qquad
 A=\UB(\Sigma)=\ULP(\Sigma).
\end{equation}
All frozen functions remain polynomial coefficients.
\end{proposition}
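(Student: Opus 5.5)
The plan is to follow the route of Section~\ref{sec:algebra-equality}, with the volume form of Lemma~\ref{lem:branching-volume} taking the place of the positive-grading Jacobian of Lemma~\ref{lem:weighted-jacobian}. It has four steps: compute the Jacobian of the seed coordinates, deduce the initial chart, deduce the adjacent charts and frozen valuations, and finally apply Proposition~\ref{prop:comparison}.

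\emph{Step 1 (Jacobian).} Write
\[ dx_1\wedge\cdots\wedge dx_r\wedge dc_1\wedge\cdots\wedge dc_q=h\,\omega \]
with $h$ rational and homogeneous. The seed functions are regular, and $\omega$ is regular and nonzero at every height-one point. Since $A$ is normal, this gives $h\in A$. Algebraic independence gives $h\ne0$. The degree of $h$ is $\sum_i\wt(x_i)+\sum_f\wt(c_f)-(2\rho_G;2\rho_K)=\sum_i\wt(x_i)$, using the frozen-sum hypothesis. Differentiate $x_ix_i'=E_i$ and wedge with the other differentials, exactly as in Lemma~\ref{lem:weighted-jacobian}. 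This gives $x_i'h=-x_ih_i'$ with $h_i'$ regular. Coprimality of $x_i$ and $x_i'$ then gives $x_i\mid h$. The $x_i$ are pairwise nonassociate primes, so $M\mid h$ in the UFD $A$. The quotient $h/M$ is regular of degree zero. In particular its left degree is zero, so it is constant. Hence $h=cM$ with $c\in\C^\times$.

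\emph{Step 2 (initial chart).} By Step 1, the birational map $\Spec A[M^{-1}]\to\Spec S[x^{\pm1}]$ is étale at every height-one point of the source. The inclusion $S[x^{\pm1}]\subseteq A[M^{-1}]$ is clear. For the reverse, take $f\in A[M^{-1}]$ and suppose a prime $g$ of the UFD $S[x^{\pm1}]$ occurs in the denominator of $f$. Homogeneous units of $A[M^{-1}]$ are scalar multiples of $x$-monomials, because $A$ is factorial with $A^\times=\C^\times$ and the $x_i$ are prime. Since $g$ is not associate to any $x_i$, it is not a unit in $A[M^{-1}]$. Choose a height-one prime of $A[M^{-1}]$ containing $g$. Étaleness there makes its contraction a height-one prime containing $g$, hence equal to $(g)$. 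The two DVRs then coincide, so $\ord_g(f)\ge0$. This contradiction gives the first equality in \eqref{eq:exceptional-comparison}.

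\emph{Step 3 (adjacent charts and frozen valuations).} Differentiating the exchange as in Proposition~\ref{thm:localized-upper-equality} shows that the adjacent Jacobian is $-c\,x_i'\prod_{v\ne i}x_v$ times $\omega$. This is nonzero at the generic point of $V(x_i)$ by coprimality. Lemma~\ref{lem:adjacent-chart} therefore applies and gives the $i$th adjacent chart. For each frozen $c_f$ I would copy Proposition~\ref{prop:actual-frozen-valuations}. The generic point of $V(c_f)$ lies in the initial chart unless $c_f$ shares a factor with $M$, which it does not. The initial-coordinate $c_f$-adic valuation then equals $\ord_{V(c_f)}$, and $\nu_{c_f}(x_i')=0$ because the two monomials of $E_i$ have disjoint supports. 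Every height-one prime of $A[H^{-1}]$ lies either in the initial open or in exactly one adjacent open; here $H=\prod_f c_f$. Normality then gives $A[H^{-1}]=\UB(\Sigma)[H^{-1}]$, and Proposition~\ref{prop:comparison} together with Theorem~\ref{thm:upper-invariance} finishes the proof.

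I expect the main obstacle to be Step~2, namely that étaleness is only known at height-one points of the source. The degree argument for units is what makes the contraction argument work, so that step has to be checked carefully. A secondary point is that ``left degree zero implies constant'' needs the degree-zero part of $A$ to be $\C$. This holds because $A_{0,\mu}$ vanishes unless $\mu=0$, and $A_{0,0}=\C$.
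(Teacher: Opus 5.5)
Your Steps 1 and 2, and the adjacent-chart argument via Lemma~\ref{lem:adjacent-chart}, are exactly the paper's proof. One small wording fix in Step 2: the $x_i$ are units in $S[x^{\pm1}]$, so the reason a prime $g$ of $S[x^{\pm1}]$ stays a nonunit in $A[M^{-1}]$ is that $A[M^{-1}]^\times=\C^\times x^{\Z^r}=S[x^{\pm1}]^\times$, not that $g$ is nonassociate to the $x_i$.

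The gap is in your finish. Proposition~\ref{prop:comparison} needs the frozen functions $c_f$ to be pairwise nonassociate \emph{primes} of $A$. Speaking of the generic point of $V(c_f)$ and of $\ord_{V(c_f)}$ presupposes the same. The statement you are proving assumes primality only for the $x_i$. Your remark that $c_f$ ``does not'' share a factor with $M$ is precisely this missing point, and it is asserted without proof.

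It can be supplied from your own charts. First, $c_f$ is prime in $A[M^{-1}]=S[x^{\pm1}]$, so in the UFD $A$ one has $c_f=\lambda x^a g$ with $g$ prime. Distinct $c_f$ are already nonassociate in $S[x^{\pm1}]$. Second, $x_i\nmid c_f$. On the $i$th adjacent chart $C_i[(x_i')^{\pm1}]$, the element $x_i$ is associate to the prime binomial $E_i$ of $C_i=S[x_j^{\pm1}:j\ne i]$. Comparing $x_i'$-degrees, a divisibility $E_i\mid c_f$ there would already hold in $C_i$. That is impossible, since $c_f$ is prime in $C_i$ and $E_i$ is not a monomial.

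Better, drop the localization at $H=\prod_f c_f$ altogether, as the paper does. The rings $S[x^{\pm1}]$ and $C_i[(x_i')^{\pm1}]$ you obtained already have polynomial frozen coefficients: they are $L(\Sigma)$ and $L(\mu_i\Sigma)$. Your covering argument applies to every height-one prime of $A$ itself, including those containing a frozen function:
\begin{itemize}
\item if the prime contains no $x_i$, it lies in the initial open;
\item otherwise it equals $(x_i)$ for a unique $i$, and there the other $x_j$ and $x_i'$ are units, so it lies in the $i$th adjacent open.
\end{itemize}
Since $A$ is normal, $A=L(\Sigma)\cap\bigcap_i L(\mu_i\Sigma)=\UB(\Sigma)$ directly, and Theorem~\ref{thm:upper-invariance} then gives $\ULP(\Sigma)$. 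Neither frozen primality nor the identification of frozen valuations is needed for this proposition. In the paper they are proved separately in each application.
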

\begin{proof}
Write $d\mathbf x\wedge d\mathbf c=J\omega$, with $\omega$ from
Lemma~\ref{lem:branching-volume}. Normality makes $J$ regular, as
well as the coefficients $J_i'$ after replacing $x_i$ by $x_i'$.
Differentiating $x_ix_i'=E_i$ gives $x_i'J=-x_iJ_i'$.
Coprimality implies $M\mid J$. The frozen-weight sum gives
$\deg(J/M)=0$, so $J=\kappa M$ with $\kappa\in\C^\times$;
nonvanishing follows from algebraic independence. Thus
\begin{equation}\label{eq:d4-global-jacobian}
 d\mathbf x\wedge d\mathbf c=\kappa M\omega,\qquad
 J_i'=-\kappa x_i'\prod_{j\ne i}x_j.
\end{equation}

Put $R_0=S[\mathbf x^{\pm1}]\subset T=A[M^{-1}]$.
This birational inclusion has equal unit groups by factoriality and
primality of the mutable functions. Its map on spectra is \textup{\'{e}}tale
at every height-one point of $\Spec T$, by the Jacobian identity.
For each irreducible nonunit $g\in R_0$, equality of units gives a
height-one prime of $T$ containing $g$. Its contraction is $(g)$ by
\'{e}taleness. The two local rings are DVRs in the same fraction field
and therefore coincide. Hence $T$ has no pole at any height-one prime
of $R_0$. Intersecting these local rings gives $T=R_0$.

Fix $i$, and put $C_i=S[x_j^{\pm1}:j\ne i]$ and
$R_i=A[x_j^{-1}:j\ne i]$. The exchange $E_i$ is prime in $C_i$,
and the adjacent Jacobian in \eqref{eq:d4-global-jacobian} is a unit
at the generic point of $V_{R_i}(x_i)$. Thus
Lemma~\ref{lem:adjacent-chart} gives
$R_i[(x_i')^{-1}]=C_i[(x_i')^{\pm1}]$.
The initial and adjacent principal opens contain every height-one
point of $\Spec A$: at most one mutable prime vanishes there, and
its companion is a unit. Normality gives $A=\UB(\Sigma)$, and
Theorem~\ref{thm:upper-invariance} gives $\UB(\Sigma)=\ULP(\Sigma)$.
\end{proof}

\begin{corollary}\label{thm:d4-upper-equality}\label{cor:d4-frozen-orders}
For the seed of Theorem~\ref{thm:d4-regular-seed}, put
$A=\B(\Spin_8,G_2)$ and $M=x_1x_2x_3x_4$. Then
\[
 A[M^{-1}]=\C[c_1,\ldots,c_6][x_1^{\pm1},\ldots,x_4^{\pm1}],
 \qquad A=\UB(\Sigma_D)=\ULP(\Sigma_D).
\]
All frozen coefficients remain polynomial, and every LP variable is
regular on the branching variety. For each frozen function $c_f$,
its initial-coordinate exponent valuation equals $\ord_{V(c_f)}$
on $\Frac A$.
\end{corollary}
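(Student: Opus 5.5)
The first two assertions follow by feeding Theorem~\ref{thm:d4-regular-seed} into Proposition~\ref{prop:exceptional-upper-criterion}. That theorem supplies every hypothesis of the criterion. The ten functions are regular and homogeneous, and $4+6=10=\dim A$ by \eqref{eq:d4-abstract-gauss}. They generate $\Frac A$ and form a normalized seed with $\widehat E_i=E_i$. The mutable $x_i$ are pairwise nonassociate primes, each companion \eqref{eq:d4-polynomial-companions} is regular and coprime to its parent, and the frozen degrees sum to $(2\rho_{D_4};2\rho_{G_2})$. The criterion then gives $A[M^{-1}]=S[x^{\pm1}]$ and $A=\UB(\Sigma_D)=\ULP(\Sigma_D)$, with $S=\C[c_1,\ldots,c_6]$ polynomial. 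Since $A=\ULP(\Sigma_D)$, every cluster variable of the LP pattern lies in $A$, i.e.\ is regular on the branching variety.

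For the frozen orders I argue as in Proposition~\ref{prop:actual-frozen-valuations}. Fix $c_f$. By Theorem~\ref{thm:d4-regular-seed} it is prime in $A$ and nonassociate to every $x_i$, so the generic point of $V(c_f)$ lies in $\Spec A[M^{-1}]$. That ring equals the Laurent ring $S[x^{\pm1}]$, in which $c_f$ is a polynomial coordinate. The local ring of $A$ at $(c_f)$ is therefore the localization of $S[x^{\pm1}]$ at the prime $(c_f)$. This is exactly the DVR of the coordinate $c_f$-adic valuation. Both valuations are determined by the same DVR in $\Frac A$, so $\nu_{c_f}=\ord_{V(c_f)}$.

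No step is a real obstacle, since the work lies in Theorem~\ref{thm:d4-regular-seed} and Lemma~\ref{lem:branching-volume}. The one point to check is that the criterion's first equality really places the frozen primes in the initial chart with no localization at $c_f$. It does: $M$ involves only mutable functions, and $S$ is polynomial.
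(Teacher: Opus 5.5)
Your proposal is correct and follows the paper's proof. The paper also applies Proposition~\ref{prop:exceptional-upper-criterion} with every hypothesis supplied by Theorem~\ref{thm:d4-regular-seed}, and then notes that each prime $c_f$, being nonassociate to the mutable primes, has its generic point in the initial Laurent chart $A[M^{-1}]=S[x^{\pm1}]$, where the two DVRs coincide. The only step you leave implicit is that every LP variable lies in $\ULP(\Sigma_D)$; this is the Laurent phenomenon, which the paper cites explicitly.
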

\begin{proof}
Apply Proposition~\ref{prop:exceptional-upper-criterion} using
Theorem~\ref{thm:d4-regular-seed}. The Laurent phenomenon puts every
LP variable in the upper algebra. Each $c_f$ is prime and
nonassociate to the mutable functions, so its generic point lies in
the initial Laurent chart. The local rings, and hence the discrete
valuations, coincide.
\end{proof}

\subsection{The \texorpdfstring{$E_6\downarrow F_4$}{E6 to F4} branching algebra}
\label{sec:e6-branching}
We realize the degrees and exchange polynomials of
Section~\ref{sec:e6-categorical-reconstruction} by normalized highest
coefficients. We use the mutable and frozen orders of
Table~\ref{tab:e6-short-marks}, and use the six principal functions
$(p_1,p_2,p_3,p_4,p_5,p_6)=(z_{14},z_{13},z_{10},z_7,z_5,z_3)$.
The quotient-coordinate formulas and their exact verification are given
in \cite[E1, Section~2]{ElectronicSupplement}.

\begin{proposition}\label{prop:e6-defining-degrees}
The degrees of
$p_1,p_2,p_3,p_4,p_5,p_6,z_2,u_9,u_{12},z_8$
form an integral basis of $P(E_6)\oplus P(F_4)$. The exchange-degree
relations determine the other eight degrees uniquely and integrally.
The four nonprincipal values come from natural highest-weight lines
in the Albert algebra and its exterior powers.
\end{proposition}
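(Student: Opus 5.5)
The plan has three parts: a unimodularity check on ten explicit degree vectors, a triangular solve of the eight exchange relations for the remaining eight degrees, and an identification of four highest-weight lines in the Albert algebra. Everything is bookkeeping in the lattice $P(E_6)\oplus P(F_4)\simeq\Z^6\oplus\Z^4$, in the Dynkin coordinates $(\lambda_1,\ldots,\lambda_6;\mu_1,\ldots,\mu_4)$. I use the restriction $\pi(a_1,\ldots,a_6)=(a_2,a_4,a_3+a_5,a_1+a_6)$ and the degrees in Table~\ref{tab:e6-short-marks}.

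For the first part, I would write the ten vectors:
\[
\begin{gathered}
p_1=(\omega_1;\varpi_4),\quad p_2=(\omega_2;\varpi_1),\quad p_3=(\omega_3;\varpi_3),\quad p_4=(\omega_4;\varpi_2),\quad p_5=(\omega_5;\varpi_3),\\
p_6=(\omega_6;\varpi_4),\quad z_2=(\omega_1;0),\quad u_9=(\omega_3;\varpi_4),\quad u_{12}=(\omega_2;\varpi_4),\quad z_8=(\omega_4;\varpi_1).
\end{gathered}
\]
Integral row operations then triangularize the matrix.
\begin{itemize}
\item $p_1-z_2=(0;\varpi_4)$.
\item $p_2-u_{12}=(0;\varpi_1-\varpi_4)$, which gives $(0;\varpi_1)$.
\item $p_4-z_8=(0;\varpi_2-\varpi_1)$, which gives $(0;\varpi_2)$.
\item $p_3-u_9=(0;\varpi_3-\varpi_4)$, which gives $(0;\varpi_3)$.
\end{itemize}
Subtracting the appropriate right parts from $p_1,\ldots,p_6$ then recovers every $(\omega_i;0)$. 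Since all operations are integral and invertible, the determinant is $\pm1$ and the ten degrees form an integral basis.

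For the second part, each exchange in \eqref{eq:e6-formal-exchanges} is homogeneous, so it gives one linear relation among the degrees. I would solve these relations in a triangular order, each step expressing one new degree integrally in terms of already known ones. Here $z_3=p_6$, $z_5=p_5$, $z_7=p_4$, $z_{10}=p_3$, $z_{13}=p_2$ and $z_{14}=p_1$ are known.
\begin{itemize}
\item From $E_9$: $u_{24}=u_{12}+z_2+z_{10}-z_{14}$.
\item From $E_{25}$: $u_{23}=u_{24}+z_7-z_{10}$.
\item From $E_{22}$: $u_{21}=u_{23}+z_5-z_7$.
\item From $E_{12}$: $u_4=u_9+u_{21}-u_{24}$.
\item From $E_4$: $z_1=u_{21}+z_3-u_{12}-z_5$.
\end{itemize}
The remaining relations $E_{21},E_{24},E_{23}$ involve the three unknowns $u_{22},u_{25},z_{33}$. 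Their coefficient matrix is unimodular: $E_{21}+E_{24}-E_{23}$ isolates $-z_{33}$, after which $E_{21}$ gives $u_{22}$ and $E_{24}$ gives $u_{25}$. Hence all eight degrees are uniquely and integrally determined. As a consistency check, the frozen degrees should sum to $(2\rho_{E_6};2\rho_{F_4})$, which is the hypothesis needed later in Proposition~\ref{prop:exceptional-upper-criterion}.

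For the third part, the nonprincipal functions $z_2,u_9,u_{12},z_8$ have degrees $(\omega_1;0)$, $(\omega_3;\varpi_4)$, $(\omega_2;\varpi_4)$ and $(\omega_4;\varpi_1)$. They should be the matrix coefficients pairing the lowest covector with an $F_4$-highest vector in the corresponding representation.
\begin{itemize}
\item $V(\omega_1)$ is the $27$-dimensional Albert algebra $\mathbb J$. The identity $1\in\mathbb J$ is $F_4$-fixed, and this gives $z_2$.
\item $V(\omega_3)\subset\wedge^2\mathbb J$ gives $u_9$ via $1\wedge v$, where $v$ is the $F_4$-highest vector of the traceless part $\mathbb J_0\simeq V(\varpi_4)$.
\item $V(\omega_2)$ is the adjoint representation. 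Its $F_4$-stable complement to $\mathfrak f_4$ is $\mathbb J_0$, whose highest vector gives $u_{12}$.
\item $V(\omega_4)\subset\wedge^3\mathbb J$ contains the $F_4$-highest line of weight $\varpi_1$, and this gives $z_8$.
\end{itemize}
Each such line spans its bidegree component when the multiplicity is one, which can be checked from the standard branching $E_6\downarrow F_4$ of these fundamentals.

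I expect the main obstacle to be this last part, and specifically the line for $z_8$. The multiplicity of $V(\varpi_1)$ inside $V(\omega_4)$ restricted to $F_4$ has to be computed, and the chosen vector must be shown to actually lie in $V(\omega_4)$. I would try the explicit quotient-coordinate formulas of \cite[E1]{ElectronicSupplement} together with the derivative-order criterion of Lemma~\ref{lem:principal-open-regularity}. This certifies that the homogenized polynomial is regular of the stated degree with no principal factor.
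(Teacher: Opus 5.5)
Your proposal is correct and follows essentially the same route as the paper. The paper checks the same unimodularity: the nonprincipal-minus-principal target parts form a basis of $P(F_4)$, and the exchange-matrix minor on the columns of $u_4,u_{21},u_{22},u_{23},u_{24},u_{25},z_1,z_{33}$ is unimodular, which your triangular solve verifies explicitly. It also uses the same Albert-algebra lines, and it settles the $z_8$ step you flag directly: the line is the highest line of $e\wedge j(\mathfrak f_4)\subset\Lambda^3J$, and multiplicity one follows from $\Lambda^3J=\Lambda^3J_0\oplus e\wedge\Lambda^2J_0$, with $\Lambda^2J_0=V_{F_4}(\varpi_1)\oplus V_{F_4}(\varpi_3)$ and $V_{F_4}(\varpi_1)$ absent from $\Lambda^3J_0$.
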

\begin{proof}
The four nonprincipal degrees are
$(\omega_1;0)$, $(\omega_3;\varpi_4)$,
$(\omega_2;\varpi_4)$, and $(\omega_4;\varpi_1)$.
Subtract the corresponding principal degrees. Their target components are
$-\varpi_4,\varpi_4-\varpi_3,\varpi_4-\varpi_1$, and $\varpi_1-\varpi_2$,
a basis of $P(F_4)$. In the exchange matrix, the columns of
$u_4,u_{21},u_{22},u_{23},u_{24},u_{25},z_1,z_{33}$ have determinant one.
This proves the integral assertion.

Let $J=V_{E_6}(\omega_1)$ be the complex Albert algebra, with unit $e$
and trace-zero subspace $J_0$. Then
\[ J=\C e\oplus J_0,\qquad \mathfrak e_6=\op{Der}(J)\oplus L_{J_0},\qquad F_4=\op{Aut}(J), \]
where $L_x$ is multiplication by $x$ \cite{Baez}.
The line $\C e$ gives $z_2$, the highest line in $e\wedge J_0$ gives
$u_9$, and the highest line in $L_{J_0}$ gives $u_{12}$.
The trace form identifies $\mathfrak f_4$ with a submodule
$j(\mathfrak f_4)\subset\Lambda^2J_0$; the highest line of
$e\wedge j(\mathfrak f_4)\subset\Lambda^3J$ gives $z_8$.
Here $\Lambda^2J=V_{E_6}(\omega_3)$ and
$\Lambda^3J=V_{E_6}(\omega_4)$. The decompositions
\[
\begin{aligned}
\Lambda^2J_0&=V_{F_4}(\varpi_1)\oplus V_{F_4}(\varpi_3),\\
\Lambda^3J_0&=V_{F_4}(\varpi_2)\oplus V_{F_4}(\varpi_3)
                 \oplus V_{F_4}(\varpi_1+\varpi_4)
\end{aligned}
\]
show that the required lines occur once. The multiplicities are also obtained from the differential calculation
in Proposition~\ref{thm:e6-regular-exchanges}; the integral assertions
use the unit minors above.
\end{proof}

\begin{proposition}\label{thm:e6-regular-exchanges}
The eighteen seed degrees have branching multiplicity one,
and their nonzero functions are prime. These lines admit a
simultaneous normalization for which all eight quotients
$u_i'=E_i/u_i$ are nonzero regular branching functions, coprime to
their parents. The frozen degrees sum to $(2\rho_{E_6};2\rho_{F_4})$.
\end{proposition}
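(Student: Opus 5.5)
Following the $D_4\downarrow G_2$ case (Theorem~\ref{thm:d4-regular-seed}), we work on a polynomial Gauss chart for $A=\B(E_6,F_4)$. First we fix polynomial coordinates $\xi_1,\ldots,\xi_{26}$ on $U_{E_6}/U_{F_4}$, of dimension $36-24=12$, together with the six operators $D_i$ of left differentiation. We check the Serre relations, transitivity, and that the stabilizer is $U_{F_4}$, exactly as in Proposition~\ref{prop:d4-polynomial-chart}. This gives $A[P^{-1}]=\C[p^{\pm1}][\xi]$. Lemma~\ref{lem:principal-open-regularity} then reduces regularity of $p^\lambda F$ to the kernel conditions $D_i^{\lambda_i+1}F=0$. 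Consequently, for each of the eighteen seed degrees $(\lambda;\mu)$, $\dim A_{\lambda,\mu}$ equals the dimension of the space of $T_{F_4}$-weight-$\mu$ polynomials in $\xi$ killed by these powers. Since each degree is small, this is a finite linear-algebra computation, and we expect it to return one, in agreement with the Albert-algebra decompositions of Proposition~\ref{prop:e6-defining-degrees}. We take the spanning polynomial of each line as the quotient-coordinate formula, normalized by a chosen integral coefficient.

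For primality we use the argument of Proposition~\ref{prop:branching-ufd}: $A$ is factorial, factors of semi-invariants are semi-invariant, and principal functions are prime because their left degrees are fundamental. For a nonprincipal seed function, Lemma~\ref{lem:principal-open-regularity} shows that no $p_i$ divides it, since its maximal derivative orders equal its left Dynkin coefficients. It therefore suffices to show that its Gauss-chart polynomial is irreducible in $\C[\xi]$. We will exhibit a coordinate in which the polynomial is linear and primitive, as was done for $S,e,c+d,a,f-e,V$. Where that fails, we instead rule out factorizations by dominance: every factor must itself have a dominant degree, and the list of dominant splittings of the given degree with nonzero branching spaces is short.

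Next we choose normalizations. The eight exchanges \eqref{eq:e6-formal-exchanges} are binomials with coefficient one, so divisibility of $E_i$ by $u_i$ imposes one scalar relation per exchange between the two monomials. By Proposition~\ref{prop:e6-defining-degrees}, the degrees of $p_1,\ldots,p_6,z_2,u_9,u_{12},z_8$ form a basis and the other eight degrees are determined by the exchange relations. We therefore fix those ten functions first and solve successively for the scalars of the remaining eight, imposing $u_i\mid E_i$ on the Gauss chart. The central point is that this triangular system is consistent. We will verify it by computing each $E_i \bmod u_i$ on the chart, or equivalently by checking that $E_i/u_i$ satisfies the kernel conditions of Lemma~\ref{lem:principal-open-regularity} in degree $\wt(E_i)-\wt(u_i)$. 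Regularity of each $u_i'$ then follows by clearing principal denominators with primality. For coprimality, we show that $\wt(u_i')-\wt(u_i)$ is non-dominant on the left or right, and otherwise use a direct nonvanishing-mod-$u_i$ check. Nonvanishing of $u_i'$ follows once algebraic independence of the seed is known. The frozen sum $(2\rho_{E_6};2\rho_{F_4})$ is an addition in Table~\ref{tab:e6-short-marks}.

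The main obstacle is the simultaneous normalization. The scalars are overdetermined wherever a variable occurs in several exchanges, for instance $u_{12}$ and $z_{33}$. A sign or scalar inconsistency there would break the claim, so these polynomial identities must be checked exactly, using the explicit formulas of the electronic supplement. Primality of the large-degree functions such as $z_{33}$ is the secondary difficulty.
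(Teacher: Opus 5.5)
Your plan is essentially the paper's proof. It uses twelve quotient coordinates $t_1,\ldots,t_{12}$ (your $\xi_1,\ldots,\xi_{26}$ is a slip) with the six left-differentiation operators, and one-dimensional kernels from Lemma~\ref{lem:principal-open-regularity}. Primality comes from a coordinate in which each nonprincipal polynomial is primitive and linear, together with $D_i^{\lambda_i}f^{\circ}\ne0$ and factoriality; this already covers $z_{33}$, which is linear in $t_2$. The explicit companions are checked by the same differential criterion, and coprimality follows from a negative Dynkin coordinate in each companion-minus-parent degree.

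The only misjudgement is your ``main obstacle'': the normalization is not overdetermined. The exchange matrix has a unimodular minor on the columns of $u_4,u_{21},u_{22},u_{23},u_{24},u_{25},z_1,z_{33}$. Once the ten defining functions are fixed, the eight coefficient-ratio conditions are therefore always uniquely solvable by rescaling those eight functions. The real content is only that each $u_i$ divides $M_{i,+}+c_iM_{i,-}$ for some $c_i\ne0$, which exhibiting the companion polynomials verifies.
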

\begin{proof}
Use the twelve quotient coordinates and normalized polynomials in
\cite[E1, Section~2]{ElectronicSupplement}, with all six principal
coefficients. Lemma~\ref{lem:principal-open-regularity} reduces each
branching component to a finite kernel on monomials of its right weight.
The eighteen kernels are one-dimensional; the displayed polynomials
span them. Each nonprincipal polynomial is primitive and linear in one
coordinate, hence irreducible, and satisfies
$D_i^{\lambda_i} f^{\circ}\ne0$ whenever $\lambda_i>0$.
Thus it has no principal factor. Factoriality gives primality in the
branching algebra, not only on the principal open.

The eight polynomial companions in \cite[E1, Equation~(2.5)]{ElectronicSupplement}
satisfy \eqref{eq:e6-formal-exchanges} and the same differential criterion
at their respective degrees. Hence all eight quotients are regular.
Each companion-minus-parent degree has a negative Dynkin coordinate,
proving coprimality. Polynomial LP substitution gives $\widehat E_i=E_i$.
The unit minor in Proposition~\ref{prop:e6-defining-degrees} gives
\[
0\to\Z^8\xrightarrow{B_E^{\mathsf t}}\Z^{18}
 \xrightarrow{W_E^{\mathsf t}}P(E_6)\oplus P(F_4)\to0.
\]
Finally, the frozen degrees sum to $(2\rho_{E_6};2\rho_{F_4})$.
\end{proof}

\begin{theorem}[The $E_6\downarrow F_4$ upper LP algebra]
\label{thm:e6-upper-equality}
The normalized functions above are algebraically independent and generate
$\Frac\B(E_6,F_4)$.
With $M=\prod_{i\in I_{\rm mut}}u_i$, one has
\begin{equation}
\begin{aligned}
 \B(E_6,F_4)[M^{-1}]&=
 \C[z_f:f\in I_{\rm fr}][u_i^{\pm1}:i\in I_{\rm mut}],\\
 \B(E_6,F_4)&=\UB(\Sigma_E)=\ULP(\Sigma_E).
\end{aligned}
\label{eq:e6-upper-equality}
\end{equation}
All ten frozen functions remain polynomial coefficients, and their
coordinate valuations equal their prime-divisor orders.
\end{theorem}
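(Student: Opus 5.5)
The plan is to reduce everything to Proposition~\ref{prop:exceptional-upper-criterion}, exactly as Corollary~\ref{thm:d4-upper-equality} does for triality, and then identify the frozen valuations by the argument of that corollary.

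\textbf{Step 1: verify the hypotheses of Proposition~\ref{prop:exceptional-upper-criterion}.}
\begin{enumerate}
\item \emph{Primality, normalization and companions.} Proposition~\ref{thm:e6-regular-exchanges} supplies most of this. The eighteen seed functions are nonzero primes. Their degrees are pairwise distinct, so they are pairwise nonassociate. After the simultaneous normalization, the exchanges \eqref{eq:e6-formal-exchanges} satisfy $\widehat E_i=E_i$. The eight companions $u_i'$ are regular and coprime to their parents. Finally, $\sum_f\wt(z_f)=(2\rho_{E_6};2\rho_{F_4})$.
\item \emph{Dimension count.} We need $r+q=\dim\B(E_6,F_4)$. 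The Gauss chart gives dimension $6+\dim U_{E_6}/U_{F_4}=6+(36-24)=18$, which equals $8+10$.
\end{enumerate}

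\textbf{Step 2: algebraic independence and field generation.} This is the one genuinely new ingredient, and I expect it to be the main obstacle.
\begin{enumerate}
\item Since the count is $18=\dim$, generation of $\Frac A$ implies algebraic independence.
\item It therefore suffices to recover the twelve quotient coordinates on $U_{E_6}/U_{F_4}$ as rational functions of the seed. We work with $f^\circ=p^{-\lambda}f$ on the polynomial Gauss chart of \cite[E1, Section~2]{ElectronicSupplement}. The principal functions $z_{14},z_{13},z_{10},z_7,z_5,z_3$ give the torus coordinates directly.
\item I would attempt a triangular elimination by the positive grading of the coordinates by right weight (height), in the style of \eqref{eq:d4-seed-inverse}. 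Each nonprincipal normalized polynomial is linear in one chosen coordinate (the primality proof uses this), and that coordinate has a coefficient built from lower-weight data. Ordering the twelve remaining seed functions so that each new one introduces a fresh linear coordinate gives a rational inverse. Where a single function does not suffice, the chain can be completed by the companions $u_i'$, since they are rational in the seed.
\item If the triangularity fails at some stage, the fallback is to show that the Jacobian of the twelve normalized polynomials in the quotient coordinates is nonzero at one point. This gives algebraic independence and hence generic finiteness. Birationality then needs a separate degree-one argument, for instance exhibiting one fibre that is a single point.
\end{enumerate}

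\textbf{Step 3: the algebra equality.} Proposition~\ref{prop:exceptional-upper-criterion} now gives $A[M^{-1}]=\C[z_f][u_i^{\pm1}]$ and $A=\UB(\Sigma_E)=\ULP(\Sigma_E)$, with frozen functions polynomial.

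\textbf{Step 4: frozen valuations.} Each $z_f$ is prime and nonassociate to every $u_i$, so the generic point of $V(z_f)$ lies in $\Spec A[M^{-1}]$. Step 3 identifies $A[M^{-1}]$ with the initial Laurent chart. In that chart the local ring at $(z_f)$ is the ordinary $z_f$-adic DVR. Hence the coordinate valuation equals $\ord_{V(z_f)}$ on $\Frac A$, exactly as in Corollary~\ref{cor:d4-frozen-orders}.
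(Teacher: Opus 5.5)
Your Steps~1, 3 and~4 are exactly the paper's argument. The paper also feeds Proposition~\ref{thm:e6-regular-exchanges} into Proposition~\ref{prop:exceptional-upper-criterion}, and it identifies the frozen valuations through the initial Laurent chart.

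The gap is Step~2. It is the only assertion of the theorem that the cited propositions do not supply, and you give an intention rather than an argument. The mechanism you describe also fails as stated:
\begin{itemize}
\item The primitive-linear variables from the primality proof repeat: $t_2$ serves for $u_4,u_{21},u_{22},z_{33}$, $t_6$ for $u_{12},u_{24}$, and $t_4$ for $u_{23},z_8$. They never include $t_3,t_5,t_7,t_8,t_{10}$.
\item The coefficients are not built from lower-height data. In the formulas of \cite[E1, Section~2]{ElectronicSupplement} (with $H,V$ as defined there), the $t_2$-coefficient of $z_{33}^\circ$ is $HV+t_{10}(t_3V-t_{12}t_9)$. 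This involves $t_3,t_5,t_7,t_{10}$, which are attached to roots of larger height than $\beta_2=\alpha_5$, and none of them is the linear variable of any seed function.
\item Your fallback is insufficient. A nonzero Jacobian gives algebraic independence, so $\Frac A$ is finite over the subfield generated by the seed. A single-point fibre does not force degree one, since the degree is the size of a \emph{general} fibre. For example, $z\mapsto z^2$ on $\mathbb A^1\setminus\{1\}$ has the single unramified fibre $\{-1\}$ over $1$.
\end{itemize}

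What is needed is an explicit ordered recovery, and this is the content of the paper's proof. Put $(a,b,c,d,f)=(-u_{12}^\circ,u_9^\circ,u_{25}^\circ,z_2^\circ,-u_4^\circ)$, $(g,q,r,v)=(u_{21}^\circ,u_{22}^\circ,u_{23}^\circ,u_{24}^\circ)$ and $\ell=(r+v)/c$. One reads off $t_6=a$, $t_{11}=b$, $t_9=c$, $t_{12}=d$ and $t_8=(v-da)/b$. The polynomial identities
\[
 g+r=t_{10}q,\qquad a z_{33}^\circ=gvt_2+gbc-fqv,\qquad
 a z_8^\circ=rt_4+\ell f-t_{10}(\ell t_2+b)
\]
then give $t_{10}$, $t_2$ and $t_4$ in turn. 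Finally $t_3,t_5,t_7,t_1$ follow linearly from the formulas for $u_{22}^\circ$, $\ell=t_{10}+t_4t_8-t_5t_6$, $f$ and $z_1^\circ$. Every denominator is a monomial in mutable coefficients, so the paper even obtains
\[
\B(E_6,F_4)[(PM)^{-1}]=\C[p_1^{\pm1},\ldots,p_6^{\pm1},u_i^{\pm1},z_1,z_2,z_8,z_{33}].
\]

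Your clause about companions could also be made to work. By \cite[E1, Equation~(2.5)]{ElectronicSupplement},
\[
(u_9')^\circ=t_8,\quad (u_{22}')^\circ=t_{10},\quad (u_{21}')^\circ=-(t_2u_{24}^\circ+t_{11}t_9),\quad (u_4')^\circ=t_6t_1-t_8,\quad (u_{23}')^\circ=u_{22}^\circ t_7-t_{11}t_3-t_{12}t_2.
\]
Together with $u_{22}^\circ$, $u_4^\circ$ and $\ell$, these recover $t_8,t_{10},t_2,t_3,t_1,t_7,t_4,t_5$ one at a time. Each step has a mutable pivot: $u_{24}^\circ$, $t_6$, $u_{22}^\circ$ or $t_9$.

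Either route is a finite computation that your proposal leaves undone. Without it, the first sentence of the theorem, and hence the hypothesis ``generate $\Frac A$'' of Proposition~\ref{prop:exceptional-upper-criterion}, remains unproved.
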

\begin{proof}
Choose one root from each nonfixed positive-root pair of the pinned
involution, ordered and normalized as in \cite[E1, Section~2]{ElectronicSupplement}.
The ordered product of their root subgroups is a polynomial section of
$U_{E_6}/U_{F_4}$: the Baker--Campbell--Hausdorff formula is solved
successively by root height. Thus, for $P=p_1\cdots p_6$,
\[
 \B(E_6,F_4)[P^{-1}]
 =\C[p_1^{\pm1},\ldots,p_6^{\pm1},t_1,\ldots,t_{12}].
\]
For the explicit normalized coefficients, we put
\[
\begin{gathered}
(a,b,c,d,f)=(- u_{12}^{\circ}, u_9^{\circ}, u_{25}^{\circ}, z_2^{\circ},- u_4^{\circ}),\quad
(g,q,r,v)=( u_{21}^{\circ}, u_{22}^{\circ}, u_{23}^{\circ}, u_{24}^{\circ}),
\quad\ell=(r+v)/c.
\end{gathered}
\]
The complete inverse is the following ordered recovery:
\[
\begin{gathered}
t_6=a,\qquad t_{11}=b,\qquad t_9=c,\qquad t_{12}=d,\qquad t_8=\frac{v-da}{b},\qquad t_{10}=\frac{g+r}{q},\\
t_2=\frac{a z_{33}^{\circ}-gbc+fqv}{gv},\qquad
 t_3=\frac{t_2t_8+c-q}{a},\qquad
t_4=\frac{a z_8^{\circ}-\ell f+t_{10}(\ell t_2+b)}{r},\\
 t_5=\frac{t_4t_8-\ell+t_{10}}{a},\qquad
t_7=\frac{f-t_{10}t_2-b+t_4c}{a},\qquad
 t_1=\frac{a z_1^{\circ}-g+ft_8}{af}.
\end{gathered}
\]
Substitution proves both compositions; every denominator is a monomial
in the eight mutable coefficients. Consequently
\[
\B(E_6,F_4)[(PM)^{-1}]
=\C[p_1^{\pm1},\ldots,p_6^{\pm1},u_i^{\pm1},z_1,z_2,z_8,z_{33}],
\]
proving algebraic independence and full fraction-field generation.
Proposition~\ref{thm:e6-regular-exchanges} supplies the other hypotheses
of Proposition~\ref{prop:exceptional-upper-criterion}.
Each frozen prime is nonassociate to every mutable, so its generic
divisor lies in the initial Laurent chart, identifying its coordinate
valuation with its divisor order.
\end{proof}

\subsection{The \texorpdfstring{$F_4\downarrow B_4$}{F4 to B4} branching algebra}
\label{sec:f4-b4-chart}
For the inner embedding $\Spin_9\subset F_4$, the principal degree
restriction is
\[ \pi\omega_1=\varpi_2,\quad \pi\omega_2=\varpi_1+\varpi_3,\quad \pi\omega_3=\varpi_1+\varpi_4,\quad \pi\omega_4=\varpi_1. \]
The maximal parabolic with Levi of type $B_3$ has nilradical
$\mathfrak n_1\oplus\mathfrak n_2$, with $\mathfrak n_1$ the
spin module and $\mathfrak n_2$ its seven-dimensional centre.
Since $U_{B_4}=\exp(\mathfrak n_2)\rtimes U_{B_3}$,
$U_{F_4}/U_{B_4}\simeq\mathfrak n_1\simeq\mathbb A^8$. Hence
\begin{equation} \B(F_4,\Spin_9)[(p_1p_2p_3p_4)^{-1}] =\C[p_1^{\pm1},\ldots,p_4^{\pm1},\xi_1,\ldots,\xi_8]. \label{eq:f4-gauss-chart} \end{equation}
Its root coordinates correspond to
$\frac12(e_1\pm e_2\pm e_3\pm e_4)$ in the order
$+++,++-,+-+,+--,-++,-+-,--+,---$.
The $e_1$-sign is positive; the three displayed signs are those of
$e_2,e_3,e_4$, in that order.
The following formulas fix the normalized functions used in the seed:
\[
\begin{gathered}
q_1=p_3\xi_1,\quad q_2=p_2\xi_2,\quad q_4=p_1\xi_4,\quad
q_6=p_2\xi_6,\quad q_7=p_3\xi_7,\quad q_8=p_4\xi_8,\\
r=p_2(\xi_1\xi_6+\xi_2\xi_5),\quad
 t=p_2(\xi_3\xi_6+\xi_4\xi_5),\quad
s=p_4(\xi_1\xi_8+\xi_2\xi_7+\xi_3\xi_6+\xi_4\xi_5),
\quad h=p_2s-p_4t.
\end{gathered}
\]
The root coordinates are normalized so that the left-translation
operators, with $\partial_j=\partial/\partial\xi_j$, are
\[
\begin{aligned}
D_1=\xi_5\partial_3-\xi_6\partial_4,\quad
D_2=\xi_3\partial_2-\xi_7\partial_6,\quad
D_3=\xi_2\partial_1+\xi_4\partial_3-\xi_6\partial_5-\xi_8\partial_7,\quad
D_4=\partial_8.
\end{aligned}
\]
The first three give the $B_3$ spin action, and the last is translation
in the simple-root direction. For each displayed $p_jF$, one checks
$D_iF=0$ for $i\ne j$ and $D_j^2F=0$.
Lemma~\ref{lem:principal-open-regularity} therefore proves global
regularity. The root action and polynomial checks are given in
\cite[E1, Section~3]{ElectronicSupplement}.

\begin{proposition}\label{prop:f4-fundamental-lines}
Let $E$ and $S$ be the vector and spin modules of $K=\Spin_9$, and
let $J_0$ be the trace-zero Albert algebra. The inclusion gives
\[ J_0|_K=\C\oplus E\oplus S,\qquad \mathfrak f_4=\mathfrak{so}(E)\oplus S. \]
The action and bracket maps have kernels
\[ \ker(\Lambda^2J_0\to\mathfrak f_4)=V_{F_4}(\omega_3),\qquad \ker(\Lambda^2\mathfrak f_4\to\mathfrak f_4)=V_{F_4}(\omega_2). \]
Their highest-weight lines give
\[
\begin{array}{c|c@{\qquad}c|c}
 f&\wt(f)&f&\wt(f)\\\hline
 p_2&(\omega_2;\varpi_1+\varpi_3)&p_1&(\omega_1;\varpi_2)\\
 p_4&(\omega_4;\varpi_1)&q_1&(\omega_3;\varpi_1)\\
 q_4&(\omega_1;\varpi_4)&q_7&(\omega_3;\varpi_3)\\
 q_8&(\omega_4;\varpi_4)&r&(\omega_2;\varpi_2)\\
 q_2&(\omega_2;\varpi_1+\varpi_4)&q_6&(\omega_2;\varpi_2+\varpi_4)\\
s&(\omega_4;0)&t&(\omega_2;\varpi_3).
\end{array}
\]
In particular, the line of $r$ is determined by the bracket and the
subgroup $K$, up to a nonzero scalar.
\end{proposition}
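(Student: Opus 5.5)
We first establish the restriction $J_0|_K=\C\oplus E\oplus S$ and $\mathfrak f_4=\mathfrak{so}(E)\oplus S$ from the $\Z/2$-grading of $\mathfrak f_4$ by the inner involution with fixed subalgebra $\mathfrak{so}_9$. The $(-1)$-eigenspace has dimension $52-36=16$ and is therefore the spin module $S$. For $J_0$ ($26$-dimensional) we realize $J$ as $3\times3$ octonionic Hermitian matrices, with $\Spin_9$ the stabilizer of a primitive idempotent $E_{11}$. The Peirce decomposition then gives $J=\C E_{11}\oplus J_{22,33}\oplus(\text{off-diagonal }\mathbb O^2)$. Here $J_{22,33}$, the $10$-dimensional spin factor, splits as $\C\oplus E$, and $\mathbb O^2\simeq S$. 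Removing the trace line leaves $\C\oplus E\oplus S$.

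For the kernels, we compare characters. We have $\Lambda^2J_0\simeq V(\omega_3)\oplus V(\omega_4)$, where $V(\omega_4)=\mathfrak f_4$ in the paper's Bourbaki labelling with $\omega_4$ the $26$ and $\omega_1$ the adjoint. We also have $\Lambda^2\mathfrak f_4\simeq\mathfrak f_4\oplus V(\omega_2)$, with dimensions $325=52+273$ and $1326=52+1274$. The action map $\Lambda^2J_0\to\mathfrak f_4$, $x\wedge y\mapsto[L_x,L_y]$, and the bracket $\Lambda^2\mathfrak f_4\to\mathfrak f_4$ are nonzero and equivariant, hence surjective onto the simple target. Their kernels are therefore the complementary irreducible summands.

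Next we produce the table. Each function is a matrix coefficient $\langle\xi_\lambda,gv\rangle$ with $v$ a $U_K$-highest vector of weight $\mu$ in $V_{F_4}(\lambda)$. The first step is to restrict $V(\omega_4)=J_0$, $V(\omega_1)=\mathfrak f_4$, $V(\omega_3)\subset\Lambda^2J_0$ and $V(\omega_2)\subset\Lambda^2\mathfrak f_4$ to $K$. For the last two we take the $K$-decompositions of $\Lambda^2(\C\oplus E\oplus S)$ and $\Lambda^2(\mathfrak{so}(E)\oplus S)$ and subtract $\mathfrak f_4|_K=\Lambda^2E\oplus S$. For example, $V(\omega_3)|_K$ contains $E\otimes\C\simeq E$ with weight $\varpi_1$, $\Lambda^2E$ with weight $\varpi_2$, $S$ with weight $\varpi_4$, $\Lambda^3E$ with weight $\varpi_3$, and $E\otimes S$ with weights $\varpi_1+\varpi_4$ and $\varpi_4$. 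After subtracting one copy of $\Lambda^2E$ and one of $S$, this yields the lines $q_1$ and $q_7$. Similarly, $V(\omega_2)$ contains $\Lambda^2S$ with weights $\varpi_3,\varpi_2$, as well as $\mathfrak{so}(E)\otimes S$ containing $\varpi_1+\varpi_4$ and $\varpi_2+\varpi_4$. Together with the principal line $\varpi_1+\varpi_3$, these give $p_2,q_2,q_6,r,t$.

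We then check that each listed $\mu$ has multiplicity one in $V(\lambda)|_K$, so each line is canonical up to scalar. Assertions about multiplicity can also be confirmed against the kernel computation of Lemma~\ref{lem:principal-open-regularity} applied to the explicit polynomials above: each displayed $p_jF$ satisfies $D_iF=0$ for $i\ne j$ and $D_j^2F=0$, and has the stated right weight. For $r$, the point is that $\varpi_2$ occurs in $\Lambda^2\mathfrak f_4$ both inside $\Lambda^2\mathfrak{so}(E)$ and inside $\Lambda^2S$, but only one combination lies in $\ker[\,,]$. Since this kernel is defined by the bracket alone and the weight space is taken with respect to $K$, the line is intrinsic.

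The main obstacle is the multiplicity-one bookkeeping for $\varpi_2$ and $\varpi_2+\varpi_4$ in $V(\omega_2)|_K$, since $\Lambda^2\mathfrak f_4$ contains several copies of these weights. We would settle it by a branching character computation, $\Spin_9$ weights of $\Lambda^2(\Lambda^2E\oplus S)$ minus $\Lambda^2E\oplus S$. Alternatively, we would cross-check against the one-dimensional kernels obtained from the differential operators $D_1,\dots,D_4$ on the eight coordinates $\xi_j$.
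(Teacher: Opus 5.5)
Your proposal is correct and follows essentially the paper's route: the Peirce and symmetric-pair decompositions, identification of both kernels as the complementary irreducible summands, and expansion of $\Lambda^2(\C\oplus E\oplus S)$ and $\Lambda^2(\mathfrak{so}(E)\oplus S)$ through the Clifford decompositions to get multiplicity one. The $r$-line is then the $U_K$-highest line of weight $\varpi_2$ in the single copy of $\mathfrak{so}(E)$ inside the bracket kernel; the paper makes this copy explicit as the image of $Z\mapsto j_{\mathfrak k}(Z)-j_S(Z)$.

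Two slips are harmless but should be fixed. First, $\Lambda^2J_0\simeq V_{F_4}(\omega_3)\oplus V_{F_4}(\omega_1)$ with $V_{F_4}(\omega_1)=\mathfrak f_4$, not $V_{F_4}(\omega_4)$. Second, $\Lambda^2J_0|_K$ contains $\Lambda^2E$ twice, once inside $\Lambda^2S=\Lambda^2E\oplus\Lambda^3E$. Hence $V_{F_4}(\omega_3)|_K=E\oplus\Lambda^2E\oplus\Lambda^3E\oplus S\oplus V_K(\varpi_1+\varpi_4)$, which leaves the lines of $q_1,q_7$ unaffected.
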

\begin{proof}
The \emph{Peirce decomposition} and the symmetric-pair decomposition are described in
\cite[Section~4.2]{Baez}. Write $\mathfrak k=\mathfrak{so}(E)$.
Highest weights and dimensions give the two kernel identifications:
$2\omega_4-\alpha_4=\omega_3$ has dimension $273=\binom{26}{2}-52$,
and $2\omega_1-\alpha_1=\omega_2$ has dimension
$1274=\binom{52}{2}-52$.
The Clifford decompositions
\[ \Lambda^2S=\mathfrak k\oplus\Lambda^3E,\qquad E\otimes S=S\oplus V_K(\varpi_1+\varpi_4),
\qquad \mathfrak k\otimes S=S\oplus V_K(\varpi_1+\varpi_4)\oplus V_K(\varpi_2+\varpi_4) \]
follow from \emph{Clifford contraction}
\cite[Section~7, Propositions~1--2]{Friedrich}. Also
$\Lambda^2\mathfrak k=\mathfrak k\oplus V_K(\varpi_1+\varpi_3)$.
Expanding the two exterior squares shows that every line in the table
occurs once. The lines of $q_1$ and $q_7$ come respectively from
$\C\wedge E$ and $\Lambda^3E\subset\Lambda^2S$ in the first kernel;
$p_2$ comes from the kernel of the bracket on $\Lambda^2\mathfrak k$.
The two non-spin summands of $\mathfrak k\otimes S$ lie in the bracket
kernel and give the multiplicity-one lines of $q_2,q_6$.
The scalar summand of $J_0$ gives $s$, and
$\Lambda^3E\subset\Lambda^2S$ in the bracket kernel gives $t$.
The other principal and spin lines are already visible in $J_0$ and
$\mathfrak f_4$.

For $r$, the bracket is nonzero on the unique copies of $\mathfrak k$
in $\Lambda^2\mathfrak k$ and $\Lambda^2S$. The latter assertion follows
from simplicity: otherwise $S$ would be an abelian ideal of
$\mathfrak f_4$. Normalize the two embeddings
$j_{\mathfrak k},j_S$ so that each composes with the bracket to the
identity on $\mathfrak k$. Then
$Z\mapsto j_{\mathfrak k}(Z)-j_S(Z)$
embeds $\mathfrak k$ in $V_{F_4}(\omega_2)$. Its highest vector defines
a nonzero matrix coefficient of degree $(\omega_2;\varpi_2)$.
Multiplicity one identifies its line with that of $r$.
\end{proof}

\subsubsection{The seed from the valued quiver}
Set
\begin{equation}
\begin{aligned}
w_F=p_1p_2(\xi_2\xi_3-\xi_1\xi_4),\qquad
\ell_F=p_1p_3\xi_3,\qquad
v_F=p_1p_4(\xi_3\xi_8+\xi_4\xi_7).
\end{aligned}
\label{eq:f4-additional-functions}
\end{equation}
The seed associated with the species in Section~\ref{subsec:source-f4} is
\begin{equation}
\begin{aligned}
(x_1,x_2,x_3,x_4)=(p_2,-\ell_F,-v_F,-w_F),\quad
(c_1,\ldots,c_8)=(p_1,p_4,q_1,-q_4,q_7,q_8,r,t).
\end{aligned}
\label{eq:f4-seed-labels}
\end{equation}
Its incoming and outgoing products in Proposition~\ref{thm:f4-fibres} give
\begin{equation}
\begin{aligned}
 E_1&=c_8x_2+c_5x_4,& E_2&=c_3x_1x_3+c_2c_5x_4,\\
 E_3&=c_6x_2+c_2c_4c_5,& E_4&=c_7x_2+c_1c_3c_8.
\end{aligned}
\label{eq:f4-exchanges}
\end{equation}

\begin{lemma}\label{lem:f4-new-regularity}
The functions $w_F,v_F,\ell_F$ extend regularly from the principal open.
The four first companions of \eqref{eq:f4-seed-labels} are the regular
functions
\[ (x_1',x_2',x_3',x_4')=(z_F,h,p_3,q_5),\qquad q_5=p_3\xi_5, \]
where
\[ z_F=p_1p_3(\xi_1\xi_4\xi_7-\xi_2\xi_3\xi_7-\xi_3^2\xi_6-\xi_3\xi_4\xi_5). \]
They satisfy $x_ix_i'=E_i$.
\end{lemma}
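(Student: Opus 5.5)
The plan is to argue exactly as for the $D_4\downarrow G_2$ seed in Theorem~\ref{thm:d4-regular-seed}: reduce everything to polynomial identities on the Gauss chart \eqref{eq:f4-gauss-chart}, and use Lemma~\ref{lem:principal-open-regularity} for global regularity. For a function of the form $p^\lambda F$ with $F\in\C[\xi_1,\ldots,\xi_8]$, regularity on $\Spec\B(F_4,\Spin_9)$ is equivalent to $D_i^{\lambda_i+1}F=0$ for $i=1,\ldots,4$. Here the $D_i$ are the explicit operators displayed before Proposition~\ref{prop:f4-fundamental-lines}.

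The first step treats the three new functions of \eqref{eq:f4-additional-functions}.
\begin{itemize}
\item For $\ell_F=p_1p_3\xi_3$, $\lambda=\omega_1+\omega_3$. We need $D_2\xi_3=D_4\xi_3=0$, which is immediate. We also need $D_1^2\xi_3=D_1\xi_5=0$ and $D_3^2\xi_3=D_3\xi_4=0$.
\item For $w_F$, $\lambda=\omega_1+\omega_2$. We apply $D_3$ and $D_4$ once and $D_1,D_2$ twice to $\xi_2\xi_3-\xi_1\xi_4$. For instance, $D_3$ sends it to $\xi_2\xi_4-\xi_2\xi_4=0$. Since $D_1$ and $D_2$ are derivations lowering degree in a single variable, the squares vanish after one or two steps.
\item For $v_F$, $\lambda=\omega_1+\omega_4$. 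We check $D_2=D_3=0$ on $\xi_3\xi_8+\xi_4\xi_7$ (the two terms cancel under $D_3$), together with $D_1^2=0$ and $D_4^2=0$. The latter holds because the polynomial is linear in $\xi_8$.
\end{itemize}

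The second step handles the four companions. We first check $x_ix_i'=E_i$ by direct multiplication of polynomials, after substituting the normalized formulas for $q_1,q_4,q_7,q_8,r,t,s,h$ and the signs in \eqref{eq:f4-seed-labels}.
\begin{itemize}
\item For $x_3x_3'$, we verify $-v_Fp_3=c_6x_2+c_2c_4c_5=p_1p_3p_4(\xi_3\xi_8+\xi_4\xi_7)\cdot(-1)$, matching term by term.
\item For $x_4x_4'=-w_Fq_5$, the check reduces to the identity
\[
c_7x_2+c_1c_3c_8=p_1p_2p_3\bigl(-\xi_3(\xi_1\xi_6+\xi_2\xi_5)+\xi_1\xi_3\xi_6\bigr)+\dots,
\]
which is a short bookkeeping exercise.
\item The identities for $x_1$ and $x_2$ are cubic and quartic in $\xi$, and likewise finite.
\end{itemize}
Once these products hold on the Gauss chart, they hold in the domain $\B(F_4,\Spin_9)$. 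Regularity of $p_3$ is principal. Regularity of $q_5=p_3\xi_5$ follows from the same differential check as for the displayed $q_j$. Regularity of $h=p_2s-p_4t$ follows because it is a combination of regular functions.

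This leaves $z_F$, which I expect to be the main obstacle. Its left degree is $\omega_1+\omega_3$, so we must show that $D_2$ and $D_4$ kill the cubic and that $D_1^2$ and $D_3^2$ kill it; this is a cubic computation in which several cancellations must occur. As a fallback, if the direct check becomes messy, I would instead argue by primality. The quotient $E_1/x_1$ is regular on the principal open. Its only possible poles lie along the prime divisor $V(p_2)$. But $E_1=c_8x_2+c_5x_4$ is homogeneous of a degree whose left component $\omega_1+\omega_2+\omega_3$ contains $\omega_2$, and we can test divisibility by $p_2$ via the second clause of Lemma~\ref{lem:principal-open-regularity}: $D_2^{1}$ applied to the normalized $E_1^\circ$ should vanish. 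That is a single first-order derivative computation rather than a full regularity check.
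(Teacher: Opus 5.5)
Your proposal is correct and follows the paper's own proof. You apply Lemma~\ref{lem:principal-open-regularity} at the stated left degrees to $w_F,\ell_F,v_F,q_5$ and $z_F$, note that $h=p_2s-p_4t$ is a polynomial in regular functions, and verify $x_ix_i'=E_i$ by substitution on the Gauss chart. The direct check for $z_F$ is short: $D_2$ kills it because $-\xi_3^2\xi_7$ cancels $+\xi_3^2\xi_7$, $D_4$ kills it trivially, and $D_1^2$, $D_3^2$ vanish after a brief cancellation. So your divisibility fallback via $D_2z_F^\circ=0$ is also valid but not needed.
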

\begin{proof}
Apply Lemma~\ref{lem:principal-open-regularity} to the displayed
polynomials. The operators above give
$D_i^{\lambda_i+1} f^{\circ}=0$ for $w_F,\ell_F,v_F,z_F,q_5$,
with respective left degrees
$\omega_1+\omega_2$, $\omega_1+\omega_3$, $\omega_1+\omega_4$,
$\omega_1+\omega_3$, $\omega_3$. Thus they are regular; $h$ is already
polynomial in regular functions.

The functions $w_F,\ell_F,v_F$ are prime. Their right degrees are
$\varpi_1+\varpi_3$, $\varpi_1+\varpi_3$, and $\varpi_3$.
A factorization would require a factor of left degree $\omega_1$,
whose right degree is $\varpi_2$ or $\varpi_4$ by
Proposition~\ref{prop:f4-fundamental-lines}; neither can occur in these
sums of dominant degrees. Finally, substitution gives all four identities
$x_ix_i'=E_i$, as checked in \cite[E1, Section~3]{ElectronicSupplement}.
\end{proof}

\begin{theorem}[The $F_4\downarrow B_4$ upper LP algebra]\label{thm:f4-explicit-seed}
The functions in \eqref{eq:f4-seed-labels}, with the exchanges
\eqref{eq:f4-exchanges}, form a normalized LP seed $\Sigma_F$.
Its associated exchange Laurent polynomials equal the ordinary ones.
With polynomial frozen coefficients,
\[ \B(F_4,\Spin_9)=\UB(\Sigma_F)=\ULP(\Sigma_F). \]
The frozen weights sum to $(2\rho_{F_4};2\rho_{B_4})$. Each frozen
coordinate valuation equals its divisor order on the branching algebra.
\end{theorem}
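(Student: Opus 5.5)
The plan is to verify the hypotheses of Proposition~\ref{prop:exceptional-upper-criterion} for $(G,K)=(F_4,\Spin_9)$ and the seed \eqref{eq:f4-seed-labels}, and then read off the frozen valuations as in Corollary~\ref{thm:d4-upper-equality}. Here $\dim\B(F_4,\Spin_9)=4+8=12=r+q$ by \eqref{eq:f4-gauss-chart}.

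The first step is algebraic independence and fraction-field generation. I would construct an explicit rational inverse on the Gauss chart \eqref{eq:f4-gauss-chart}. From the seed, $p_1=c_1$, $p_2=x_1$ and $p_4=c_2$ are recovered directly, and $p_3=x_3'=E_3/x_3$ is a rational expression in the seed. Then $\xi_1=c_3/p_3$, $\xi_4=-c_4/p_1$, $\xi_7=c_5/p_3$, $\xi_8=c_6/p_4$ and $\xi_3=-x_2/(p_1p_3)$. The remaining coordinates come from the other functions:
\begin{itemize}
\item $\xi_2$ comes from $w_F$, which is linear in $\xi_2$ with coefficient $\xi_3$;
\item $\xi_6,\xi_5$ come from $r$ and $t$, which are linear in them;
\item any leftover coordinate comes from $v_F$, which is linear in $\xi_3,\xi_4$ with coefficients $\xi_8,\xi_7$.
\end{itemize}
The denominators should be monomials in mutable variables and principal functions, but I must check that the $2\times2$ system for $(\xi_5,\xi_6)$ coming from $r,t$ has determinant $\xi_2\xi_3-\xi_1\xi_4\propto w_F$, a mutable variable. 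Since there are twelve functions in dimension twelve and they generate the field, they are algebraically independent.

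Next come the seed axioms. Each $E_i$ in \eqref{eq:f4-exchanges} is a binomial whose two monomials have disjoint supports and exponent difference a primitive vector, so it is irreducible, exactly as in Proposition~\ref{prop:formal-seed}. The same substitution argument modulo $E_j$ shows no factor cancels, so $\widehat E_i=E_i$. Lemma~\ref{lem:f4-new-regularity} gives regular companions with $x_ix_i'=E_i$, together with primality of $w_F,v_F,\ell_F$; $p_2$ is prime by its fundamental left degree. Distinct degrees make the mutable primes nonassociate. For coprimality of companions, I would compare degrees as in Theorem~\ref{thm:d4-regular-seed}: $\wt(x_i')-\wt(x_i)$ has a negative Dynkin coordinate for each $i$. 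For example, $x_3'=p_3$ has left degree $\omega_3$ while $v_F$ has $\omega_1+\omega_4$, and $q_5$ against $w_F$ behaves the same way. The frozen weight sum is a direct addition from the table in Proposition~\ref{prop:f4-fundamental-lines}, with $q_1$ and $q_4$ having weights $(\omega_3;\varpi_1)$ and $(\omega_1;\varpi_4)$; it should give $(2,2,2,2;2,2,2,2)$.

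With these hypotheses, Proposition~\ref{prop:exceptional-upper-criterion}, using Lemma~\ref{lem:branching-volume} for $F_4$, gives $\B=\UB(\Sigma_F)=\ULP(\Sigma_F)$. For the valuations, I also need each frozen function to be prime and nonassociate to the mutables:
\begin{itemize}
\item $q_j,r,t$ have single-coordinate or primitive linear forms with no principal factor, by Lemma~\ref{lem:principal-open-regularity};
\item the $p_j$ are prime as before.
\end{itemize}
Then the generic point of each $V(c_f)$ lies in the initial Laurent chart $A[M^{-1}]=S[\mathbf x^{\pm1}]$, so the coordinate valuation equals the divisor order.

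I expect the main obstacle to be the explicit rational inverse, in particular confirming that every denominator is a monomial in seed functions without an extra irreducible factor. If the direct elimination above produces such a factor, I would instead use the companions $z_F$ and $h$ as auxiliary expressions, since they are rational in the seed.
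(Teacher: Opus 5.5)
Your proposal is correct and follows the paper's proof essentially step by step. The shared steps are:
\begin{itemize}
\item the same explicit rational inverse: $p_3=E_3/x_3$; then $\xi_1,\xi_3,\xi_4,\xi_7,\xi_8$ from single frozen or mutable functions; then $\xi_2$ from $w_F$; then $(\xi_5,\xi_6)$ from $r,t$, with determinant proportional to $w_F$;
\item the degree argument for coprimality and the frozen-weight sum;
\item Proposition~\ref{prop:exceptional-upper-criterion};
\item the initial-chart identification of the frozen valuations.
\end{itemize}
Your worry about monomial denominators is unnecessary. Only fraction-field generation is needed, and $p_3=E_3/x_3$ already puts $E_3$ into denominators. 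Your $v_F$ bullet is vacuous, because $v_F=-x_3$ is already used through $p_3$ and all eight $\xi_j$ are recovered without it.
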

\begin{proof}
There is an explicit rational inverse. Recover $p_1=c_1$, $p_2=x_1$,
$p_4=c_2$, $p_3=\frac{c_6x_2+c_2c_4c_5}{x_3}$.
The functions $q_1,q_4,q_7,q_8$ then give
$\xi_1,\xi_4,\xi_7,\xi_8$, and $x_2=-\ell_F$ gives $\xi_3$.
The expression for $x_4=-w_F$ gives $\xi_2$. Finally, $r,t$ determine
$\xi_5,\xi_6$ by a linear system with determinant
\[ \xi_1\xi_4-\xi_2\xi_3=\frac{x_4}{p_1p_2}. \]
Substitution proves both inverse identities. The seed therefore has
the full fraction field and algebraically independent coordinates.
Direct LP substitution gives $\widehat E_i=E_i$ for these four
irreducible binomials.

Lemma~\ref{lem:f4-new-regularity} proves primality of the mutables and
regularity of their companions. Each companion-minus-parent left degree
has a negative Dynkin coordinate, so parent and companion are coprime.
The frozen sum follows from the highest weights. Let $B_F$ be the full outgoing-minus-incoming exchange matrix and $W_F$
the matrix of branching weights in the order \eqref{eq:f4-seed-labels}.
Integral elimination gives
\[ 0\longrightarrow\Z^4\xrightarrow{B_F^\T}\Z^{12}\xrightarrow{W_F^{\mathsf t}}P(F_4)\oplus P(B_4)\longrightarrow0. \]
Proposition~\ref{prop:exceptional-upper-criterion} now applies.
In the coordinate order of \eqref{eq:f4-seed-labels}, the Jacobian is
\[ \det\frac{\partial(\mathbf x,\mathbf c)}{\partial(\mathbf p,\boldsymbol\xi)}=P x_1x_2x_3x_4. \]
The frozen functions have fundamental left degree and are prime.
They are nonassociate to the mutables; hence their generic divisors
lie in the initial Laurent chart, which identifies their valuations.
\end{proof}

\section{Theta bases and polyhedral branching formulas}
\label{sec:applications}\label{sec:polyhedral}\label{subsec:exceptional-matrices}\label{app:inequality-matrices}
For the matrices in Section~\ref{app:seeds}, put
\[
\begin{gathered}
 U(B)=\UB_{\C[f_1,\ldots,f_s]}(B),\qquad
 R(B)=U(B)[(f_1\cdots f_s)^{-1}],\\
 V(B)=\Spec\UB_{\C[c_1^{\pm1},\ldots,c_s^{\pm1}]}(B^\vee).
\end{gathered}
\]
Here $B^\vee$ is the transposed-seed matrix defined in
Section~\ref{sec:binomial}. The $c_j$ are its frozen variables;
$U(B)$ is the finite intersection of the initial and adjacent
Laurent rings, with polynomial frozen coefficients. The following subsections specify the functions and their order.
Section~\ref{app:seeds} gives the ordered exchange monomials and the
integral completions $B_p$.

\subsubsection*{The original branching realizations.}
Theorems~\ref{thm:a2c-algebra-equality}, \ref{thm:e6-upper-equality},
and~\ref{thm:f4-explicit-seed}, together with
Corollary~\ref{thm:d4-upper-equality}, identify the original branching
algebras and their full fraction fields.
The geometric seeds below are obtained by LP mutations and choices
of exchange-term order; permutations only reorder their coordinates.
The upper-algebra comparison of Section~\ref{sec:setup} therefore gives,
for each of these choices of coordinates,
\begin{equation} \B(G,K)\simeq U(B). \label{eq:branching-realization} \end{equation}

The relation with the initial seeds in Parts~I and II is summarized
below. Row reversals exchange the two ordered monomials of a binomial;
they do not change its sum or the original LP functions.
\begin{center}
\begin{tabular}{@{}ll@{}}
\toprule
Branching & Passage from the initial seed\\
\midrule
$A_7\downarrow C_4$ & reverse row $8$\\
$A_9\downarrow C_5$ & mutation at $F_{-1,4}$\\
$D_4\downarrow G_2$ & mutate $(4,1)$, then reverse row $2$\\
$E_6\downarrow F_4$ & original term order\\
$F_4\downarrow B_4$ & reverse rows $3$ and $4$\\
\bottomrule
\end{tabular}
\end{center}
Section~\ref{app:seeds} specifies the orientations and coefficient
monomials of the geometric seeds.

\subsubsection*{Weights.}
In every case, the row $W_i$ is the branching weight of the function
in the extended orders below. The original weight formulas and
tables in Parts~I and II therefore determine $W$ without a new
branching calculation. For a mutation to the geometric seed use
\[ W'_k=\sum_j [B_{kj}]_+W_j-W_k,\qquad W'_i=W_i\quad(i\ne k). \]
All five matrices satisfy $BW=0$ and $\rank W=s$.

\subsubsection*{Regular functions.}
For each of the five copositive cases, put
\[ h_i=-B_p^{-1}e_i\in L^*,\qquad \psi_i=y^{h_i}T_i(y_1,\ldots,y_r), \qquad 1\le i\le r. \]
Section~\ref{app:bounds} constructs $T_i$ with nonnegative integer coefficients, constant term
one, and nonnegative exponents. If its coefficient polynomials satisfy
Lemma~\ref{lem:center}, $\psi_i$ is regular. Its unique
positive-chamber minimum is $h_i$, and the gradient matrix satisfies
\[ J_{\rm bnd}=-\bigl(B_p^{-1}e_1,\ldots,B_p^{-1}e_r\bigr)^{\mathsf t},\qquad J_{\rm bnd}B^\T=-I_r. \]
This proves \eqref{eq:finite-gradient}.

\begin{theorem}[Geometric models and boundary bases]\label{thm:geometric-models}
For each matrix $B$ in Section~\ref{app:seeds}, $V(B)$ is a smooth
affine log Calabi--Yau variety with the specified open subset obtained
by the toric blowups of Section~\ref{sec:binomial}. The geometric
comparison gives
\[
 \Ageom(V(B))_1\simeq R(B),\qquad
 U(B)=\bigoplus_{\substack{m\in\Z^{r+s}\\Hm\ge0}}\C\Theta_m,
\]
where $H$ is the matrix of boundary inequalities constructed in
Section~\ref{app:boundary-support-construction}.
Every valuation-bound space $R(\mathbf d)$ and every frozen valuation
minimum is given by \eqref{eq:all-thresholds}.
\end{theorem}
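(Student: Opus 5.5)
The plan is to verify, for each of the five matrices $B$, the hypotheses of Corollary~\ref{cor:application}, and then read off the displayed identities from Theorem~\ref{thm:comparison}. The argument is uniform apart from finitely many explicit checks on the seed data in Section~\ref{app:seeds}.

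First, the algebraic input \eqref{eq:original-input}. By \eqref{eq:branching-realization} and the frozen-coefficient discussion at the end of Section~\ref{sec:boundary}, $R(B)=U(B)[(f_1\cdots f_s)^{-1}]$ is the finite intersection of the initial and adjacent Laurent rings with frozen variables inverted. Any further Laurent chart used for upper bounds contains $R(B)$ by Theorem~\ref{thm:upper-invariance}, because it lies in the same LP pattern.

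Next, the geometry. I would take $V(B)=\Spec\UB_{\C[c^{\pm1}]}(B^\vee)$ and check three things.
\begin{enumerate}
\item The transposed seed satisfies the LP axioms. This amounts to irreducible binomials with disjoint supports, read off from $B_p$.
\item The blowup model $V^\circ$ of Section~\ref{sec:binomial} embeds as a big open subset of $V(B)$. This uses the affine-model lemmas of Section~\ref{app:affine}: the initial and adjacent tori of the transposed seed glue to $V^\circ$, the chart $t_is_i=1+y_i$ is exactly the adjacent torus, and the upper bound is the ring of functions on this union. Normality and finite generation then make $V^\circ\subset V(B)$ big, and smoothness follows from the explicit affine cover.
\item The torus volume form extends with poles only along the boundary, so $V(B)$ is log Calabi--Yau.
\end{enumerate}

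Then the numerical hypotheses of Proposition~\ref{prop:binomial}. Copositivity \eqref{eq:copositive} is checked directly on each principal matrix $B_0$ after the listed mutations and row reversals. Examples already computed are $2a_{14}a_{15}$ for $A_9\downarrow C_5$ and $a_1a_4+2a_2a_3$ for $F_4\downarrow B_4$; the remaining cases are similar finite quadratic forms. Condition \eqref{eq:finite-gradient} follows from the regular functions $\psi_i$, which give $J_{\rm bnd}B^\T=-I_r$, once their regularity is verified through the divisibility criterion of Lemma~\ref{lem:center} for the polynomials $T_i$ of Section~\ref{app:bounds}. The boundary functions $\Psi_f$ with supports $K_f\subseteq\{0,1\}^r$ are handled by the same lemma, and they give the lower bounds $\nu_f(\Theta_m)\ge\phi_f(m)$.

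For the upper bounds I use Lemma~\ref{lem:normal}. For each $h\in S_f$ one exhibits an LP mutation sequence in the original pattern whose chart has $\mathsf L_t^{-1}e_f=h$, and one checks that no frozen variable divides an ordinary exchange along the way. Finiteness then comes from the balance condition \eqref{eq:balance} via Proposition~\ref{prop:balance}, or more simply from Corollary~\ref{cor:locally-finite}. The latter applies because $R(\mathbf d)=f^{\mathbf d}U(B)$ by the polynomial-frozen paragraph, and the homogeneous components of $\B(G,K)$ are finite dimensional. Corollary~\ref{cor:application} then yields $\Ageom(V(B))_1\simeq R(B)$, gives $U(B)=R(\mathbf0)$ as the span of $\Theta_m$ with $Hm\ge0$, and gives all spaces $R(\mathbf d)$ through \eqref{eq:all-thresholds}.

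The main obstacle is the upper-bound step. Realizing every support vector $h\in S_f$ as a frozen normal of an explicit mutated chart requires mutation sequences that sometimes pass through trinomial exchanges. Along these one must track associated denominators and initial forms, keep frozen divisibility excluded, and make each resulting $\mathsf L_t$ unimodular. For this step I would rely on the recorded sequences and check them case by case.
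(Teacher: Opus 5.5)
Your proposal is correct and is essentially the paper's proof. The paper also just checks the hypotheses of Corollary~\ref{cor:application} case by case from the appendix data: the affine models via Lemmas~\ref{lem:independent-zero} and~\ref{lem:unit-cover}, the functions $\psi_i$ with $J_{\rm bnd}B^\T=-I_r$, the boundary supports with their recorded mutation words and positive balances, and the copositive forms. It then restores polynomial frozen coefficients by the finite-intersection argument of Section~\ref{sec:boundary}. Your two small variations are sound: you use Theorem~\ref{thm:upper-invariance} for the extra chart containments, and you allow finiteness to come from Corollary~\ref{cor:locally-finite} instead of the balances. In that second case, apply Theorem~\ref{thm:comparison} directly, since Corollary~\ref{cor:application} assumes \eqref{eq:balance}.
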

\begin{proof}
Section~\ref{app:affine} proves the affine models and the original
Laurent-chart containments by explicit complete intersections and finite
unit-ideal covers. Section~\ref{app:bounds} constructs regular functions
with $J_{\rm bnd}B^\T=-I_r$. Section~\ref{app:boundary-data} supplies every
boundary support, the required mutation sequences, and the positive balance.
The quadratic forms below are copositive. Corollary~\ref{cor:application}
therefore applies. Polynomial frozen coefficients are restored by the
finite-intersection argument in Section~\ref{sec:boundary}.
\end{proof}

\begin{corollary}[Branching theta bases and counting formulas]
\label{cor:geometric-branching-formulas}
The basis in Theorem~\ref{thm:geometric-models} is a homogeneous theta
basis of the corresponding branching algebra under the identification
\eqref{eq:branching-realization}. If $W$ has the function weights
as rows in the geometric coordinate order, then
\[
 \wt(\Theta_m)=W^{\mathsf t}m,\qquad
 \dim\B(G,K)_{\lambda,\mu}
 =\#\{m\in\Z^{r+s}:Hm\ge0,\ W^{\mathsf t}m=(\lambda;\mu)\}.
\]
The real weight fibres are bounded. The weight lattice here is the
image of $W^{\mathsf t}$, not a larger ambient lattice.
\end{corollary}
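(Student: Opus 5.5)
The plan is to transport the basis of Theorem~\ref{thm:geometric-models} through the identification \eqref{eq:branching-realization} and then read off degrees. First I will check that the isomorphism $\B(G,K)\simeq U(B)$ is graded. The coordinate $x_i$ of the geometric seed is a homogeneous branching function of weight $W_i$. For functions obtained by one mutation, the rule $W'_k=\sum_j[B_{kj}]_+W_j-W_k$ is the degree of $x_k'=M_{k,+}/x_k$, which is well defined because $BW=0$ makes both exchange monomials homogeneous of the same degree. Hence the monomial $x^m$ has weight $W^{\mathsf t}m$. This defines a grading of $\C[L]$ by $\op{im}W^{\mathsf t}$ compatible with the branching grading on $U(B)\subset\C[L]$.

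Next I will show that every $\Theta_m$ is homogeneous of degree $W^{\mathsf t}m$. By Proposition~\ref{prop:binomial}, the exponents occurring in $\Theta_m$ are $m+B^{\mathsf t}a$, and $W^{\mathsf t}B^{\mathsf t}a=(BW)^{\mathsf t}a=0$. So $\Theta_m$ is homogeneous with the weight of its leading monomial. Restricting the decomposition $U(B)=\bigoplus_{Hm\ge0}\C\Theta_m$ to a fixed weight then gives
\[
\B(G,K)_{\lambda,\mu}=\bigoplus_{Hm\ge0,\ W^{\mathsf t}m=(\lambda;\mu)}\C\Theta_m .
\]
Taking dimensions yields the counting formula. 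The identification $\dim\B(G,K)_{\lambda,\mu}=\dim\Hom_K(V_K(\mu),V_G(\lambda))$ is Peter--Weyl from Section~\ref{sec:setup}. That the basis is a theta basis is by construction, since the $\Theta_m$ are the images $\Phi(\vartheta_m)$.

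For boundedness, I will use that $\ker W^{\mathsf t}=B^{\mathsf t}\R^r$, because $BW=0$, $\rank W=s$ and $\rank B=r$. A real weight fibre is then $\{m:Hm\ge0,\ q_\R(m)=\gamma\}$ in the notation of Proposition~\ref{prop:balance}, with all $d_f=0$. Condition \eqref{eq:balance}, verified in Section~\ref{app:boundary-data} as part of Theorem~\ref{thm:geometric-models}, shows that this set is bounded.

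For the lattice statement, I will note that the degrees of elements of $U(B)$ lie in $W^{\mathsf t}\Z^{r+s}$, since every element is a combination of Laurent monomials. Surjectivity onto the relevant lattice comes from the exact sequences recorded earlier, such as \eqref{eq:d4-integral-grading} and its $E_6$ and $F_4$ analogues, and from the analogous integral elimination for the symplectic seeds. The main obstacle is purely bookkeeping: the weights must match the geometric coordinate order after the mutations and row reversals listed in the table. Row reversals do not change $W$. The single mutation in each of the $A_9\downarrow C_5$ and $D_4\downarrow G_2$ cases is handled by the rule for $W'_k$.
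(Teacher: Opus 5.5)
Your argument for homogeneity, the counting formula and boundedness is the paper's. The exponents of $\Theta_m$ are $m+B^{\mathsf t}a$ by Proposition~\ref{prop:binomial}, so $BW=0$ makes each $\Theta_m$ homogeneous of weight $W^{\mathsf t}m$. The homogeneous basis of Theorem~\ref{thm:geometric-models} then restricts to each weight space. Finally, $\rank W=s$ together with $BW=0$ and $\rank B=r$ gives $\ker_{\R}W^{\mathsf t}=\op{im}_{\R}B^{\mathsf t}$, so Proposition~\ref{prop:balance} with the verified balance \eqref{eq:balance} bounds every real fibre.

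The one step that fails is the surjectivity claim in your last paragraph, which misreads the corollary's final sentence. That sentence is a caution: the grading lattice is $\op{im}W^{\mathsf t}$ precisely because it can be strictly smaller than $P(G)\oplus P(K)$. For $A_{2n-1}\downarrow C_n$, the central element $-1\in\Sp_{2n}\subset\SL_{2n}$ acts by $(-1)^{\sum_j j\lambda_j}$ on $V_G(\lambda)$ and by $(-1)^{\sum_\ell \ell\mu_\ell}$ on $V_K(\mu)$. Accordingly, every seed weight lies in the index-two sublattice $\sum_j j\lambda_j\equiv\sum_\ell\ell\mu_\ell \pmod 2$. This covers $(\lambda_{a,\ell};\varpi_\ell)$, whose source parity is $mr\equiv m\equiv\ell$, as well as $(\omega_d;\varpi_{\mu(d)})$, $(\omega_{2r};0)$ and $\wt(c_n)$. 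So there is no ``analogous integral elimination'' for the symplectic seeds, and $W^{\mathsf t}$ is not onto the ambient lattice there. The exact sequences you cite exist only for $D_4\downarrow G_2$, $E_6\downarrow F_4$ and $F_4\downarrow B_4$.

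Nothing of this kind is needed. Your first observation, that all weights of $U(B)$ lie in $W^{\mathsf t}\Z^{r+s}$, already makes the counting formula valid for every $(\lambda;\mu)$, with both sides vanishing off $\op{im}W^{\mathsf t}$. Drop the surjectivity sentence.

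Two minor points. First, $x_k'=(M_{k,+}+M_{k,-})/x_k$, not $M_{k,+}/x_k$. Second, the $D_4\downarrow G_2$ passage uses two mutations, $(4,1)$, not one. In any case no weight bookkeeping is required, since $W$ is by definition the list of degrees of the geometric-seed functions.
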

\begin{proof}
Proposition~\ref{prop:binomial} writes every exponent of $\Theta_m$
as $m+B^\T a$. Since $BW=0$, all its terms have weight
$W^{\mathsf t}m$. The basis in Theorem~\ref{thm:geometric-models}
therefore gives the counting formula.
The matrices $W$ have rank $s$, so
$\ker_{\R}W^{\mathsf t}=\op{im}_{\R}B^\T$.
The positive-balance relation \eqref{eq:balance} bounds every real
branching-weight fibre, by Proposition~\ref{prop:balance}.
\end{proof}

Under an integral change of lattice coordinates $m=Tg$, the matrices
transform as
\[ H_g=H_mT,\qquad W_g=T^{\mathsf t}W_m. \]

\subsection{The family \texorpdfstring{$A_{2n-1}\downarrow C_n$}{A2n-1 to Cn}}
\label{subsec:a9-geometric}\label{subsec:symplectic-models}
\subsubsection*{The case $n=2$.}
The seed has no mutable variables. The localized variety is a torus,
its mirror functions are characters, and restoring the frozen divisors
 gives $\C[p_1,p_2,p_3,s_2,c_2]$ with its monomial basis.

\subsubsection*{The cluster case $n=3$.}\label{subsec:a5c3-boundary-model}

The seed \eqref{eq:rank-three-seed} is of cluster type $A_3$.
The columns of its exchange matrix indexed by $(p_4,p_1,s_2)$ form
$-I_3$. Hence the rows generate a saturated rank-three sublattice.
For the frozen order
$(p_1,p_2,p_3,p_4,p_5,s_2,s_4,c_3)$,
optimizing words are, respectively,
\[ ((2,3),\varnothing,(3),(1,2,3),\varnothing,(3),\varnothing,(1,2)). \]

\begin{corollary}\label{thm:a5c3-canonical-model}
The branching algebra $\B(\SL_6,\Sp_6)$ has a homogeneous cluster theta
basis. Its elements are cluster monomials with polynomial frozen factors,
and every frozen valuation takes the minimum over the nonzero terms of a
finite basis expansion. This gives the case $n=3$ of the
comparison theorem.
\end{corollary}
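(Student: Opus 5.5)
The plan is to derive Corollary~\ref{thm:a5c3-canonical-model} from the finite-type cluster theta construction of \cite{GHKK} and then restore the frozen divisors with the boundary argument of Section~\ref{sec:boundary}. By Theorem~\ref{thm:a2c-algebra-equality}, $\B_3=\UB(\Sigma_3)=\ULP(\Sigma_3)$ with polynomial frozen coefficients. The seed \eqref{eq:rank-three-seed} is binomial, and after fixing the term orders its principal part is skew-symmetric of type $A_3$. Since the columns indexed by $(p_4,p_1,s_2)$ form $-I_3$, the rows of $B$ span a saturated sublattice. Hence the frozen-inverted algebra $R=\B_3[(p_1\cdots p_5s_2s_4c_3)^{-1}]$ is the upper cluster algebra with invertible coefficients in the sense of \cite{GHKK}: the principal-coefficient specialization is just a change of lattice coordinates by a unimodular completion $B_p$.

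For finite cluster type, the full Fock--Goncharov conjecture holds. The theta functions are exactly the cluster monomials times Laurent monomials in the frozen variables. They are finite Laurent polynomials in every chart, with leading exponent $x^m$ in the order of Section~\ref{sec:binomial}, and they form a basis of $R$ indexed by $L=\Z^{11}$. Because $BW=0$, each theta function is homogeneous of degree $W^{\mathsf t}m$. This yields the homomorphism \eqref{eq:marked}, and it is an isomorphism onto $R$. Theorem~\ref{thm:comparison} is therefore needed only for its boundary conclusions.

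For the frozen valuations, use the two inequalities \eqref{eq:sandwich} with $\phi_f(m)=\min_{h\in S_f}\langle h,m\rangle$. The upper inequality $\nu_f(g)\le\phi_f(\lead g)$ follows from Lemma~\ref{lem:normal}, applied along the listed optimizing words $((2,3),\varnothing,(3),(1,2,3),\varnothing,(3),\varnothing,(1,2))$. In each mutated chart $t$, the frozen coordinate $e_f$ pulls back to a normal vector in $S_f$. The frozen variables never divide the exchanges, since every binomial has disjoint monomial supports. The lower inequality $\nu_f(\Theta_m)\ge\phi_f(m)$ follows from cluster theory. A cluster monomial expressed in the chart $t_f$ selected by the optimizing word is a Laurent monomial there, so its $f$-adic order equals the frozen exponent in that chart. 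In the other charts, positivity of the Laurent expansions gives no cancellation, so the minimum is attained at the optimizing chart. This is the standard optimized-seed argument of \cite{GHKK}.

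Finiteness \eqref{eq:finite-labels} is then automatic by Corollary~\ref{cor:locally-finite}. Here $R_0=\B_3$ has finite-dimensional graded pieces, and $R(\mathbf d)=f^{\mathbf d}R_0$ holds by the polynomial-frozen-coefficient paragraph of Section~\ref{sec:boundary}, using Proposition~\ref{prop:actual-frozen-valuations}. Theorem~\ref{thm:comparison} then gives \eqref{eq:all-thresholds}, and setting $\mathbf d=0$ selects the basis of $\B_3$. That basis consists of cluster monomials times nonnegative frozen monomials, and the minimum formula holds for every frozen valuation.

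The main obstacle is verifying that each listed word is genuinely optimizing for its frozen index, meaning that $f$ becomes a sink-type frozen vertex with all arrows into mutable vertices of one sign. This is a finite check of $B$-matrix mutations in type $A_3$, and I would do it explicitly word by word. For the empty words it reduces to inspecting the signs of the corresponding column of the initial matrix.
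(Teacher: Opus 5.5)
Your argument breaks at the lower inequality $\nu_f(\Theta_m)\ge\phi_f(m)$ of \eqref{eq:sandwich}, and with it the choice of $S_f$.

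Both claims you offer for this inequality are false. First, a cluster monomial is a Laurent monomial only in the chart of its own cluster, not in the optimizing chart $t_f$. Second, $\nu_f$ is the monomial $x_f$-adic valuation in \emph{every} chart, because no binomial is divisible by a frozen variable. It is computed exactly as the least $f$-exponent there. So positivity plays no role, and there is no minimum over charts to locate at $t_f$.

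What optimization really gives is this. The $f$-column of $B_{t_f}$ is nonnegative, so every correction term of a pointed expansion in $t_f$ has $f$-exponent at least that of the pointed term. Hence $\nu_f(\Theta_m)$ is the $f$-coordinate of the $g$-vector of $\Theta_m$ \emph{in the seed $t_f$}. That is a piecewise-linear function of $m$, not the linear form $\langle\mathsf L_{t_f}^{-1}e_f,m\rangle$ that Lemma~\ref{lem:normal} attaches to $t_f$.

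Here is a concrete failure. Take $f=p_1$ and the word $(2,3)$. The chart is $(x_1,x_2',x_3')$, with initial leading exponents $e_{x_1}$, $e_{x_1}+e_{p_1}-e_{x_2}$ and $e_{x_1}+e_{p_1}+e_{p_3}+e_{s_2}-e_{x_3}$; indeed $x_3'=x_1p_1p_3s_2x_3^{-1}(1+x^{b_3}+x^{b_2+b_3})$. Hence $\mathsf L_{t_f}^{-1}e_{p_1}=e_{p_1}^*+e_{x_2}^*+e_{x_3}^*$. This form takes the value $1$ at $m=e_{x_2}$, whereas $\nu_{p_1}(\Theta_{e_{x_2}})=\nu_{p_1}(x_2)=0$.

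So with the normals of the listed words the sandwich fails. For any larger $S_f$, your argument supplies no lower bound at all. The optimizing words are not the mutation sequences that realize $S_f$ in Lemma~\ref{lem:normal}. The step you flag as the main obstacle is routine; this one is the real issue.

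The paper does not pass through \eqref{eq:sandwich} here. It uses \cite[Lemma~9.2, Proposition~9.7, Lemma~9.10(2)]{GHKK} to show that regularity along each frozen divisor is termwise in the theta basis. It then multiplies by frozen characters to reach every threshold. Lemma~\ref{lem:frozen-descent}, which says mutable cluster variables have zero frozen valuations, identifies the selected elements with cluster monomials having polynomial frozen factors.

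Your comparison route can also be repaired without optimized seeds. Take $S_f=\{\mathsf L_t^{-1}e_f\}$ over all fourteen clusters $t$ of type $A_3$. Each $\Theta_m$ equals some $x_t^u$ in the chart of its own cluster. Every mutable cluster variable has $\nu_f=0$, by induction along the disjoint-support binomial exchanges. Thus $\nu_f(\Theta_m)=u_f=\langle\mathsf L_t^{-1}e_f,m\rangle\ge\phi_f(m)$. Lemma~\ref{lem:normal}, applied to every cluster chart, gives the upper inequality. Corollary~\ref{cor:locally-finite} and Theorem~\ref{thm:comparison} then finish as you intended, and even give $\nu_f(\Theta_m)=\phi_f(m)$ explicitly.
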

\begin{proof}
Invert the eight frozen variables in
Theorem~\ref{thm:a2c-algebra-equality}. The minor $-I_3$ makes the
exchange-lattice map surjective, so \cite[Lemma~B.7]{GHKK} splits the
principal-coefficient family as the specified coefficient fibre times
a torus. Finite type gives the full principal-coefficient theta basis
\cite[Proposition~8.28 and Corollary~8.30]{GHKK}. The cluster chambers
cover its index space, so its elements are cluster monomials
\cite[Theorem~0.3]{GHKK}. Torus equivariance therefore descends this
basis to mutable cluster monomials times Laurent frozen characters
on the specified fibre, not merely a generic one.

For each frozen variable the displayed mutation word makes its exchange
column nonnegative. Hence the seed is optimized for its divisor
\cite[Lemma~9.2]{GHKK}. Proposition~9.7 and Lemma~9.10(2) there imply
that regularity along each frozen divisor is termwise in the theta
basis. Multiplying by a frozen character gives the same assertion at
every integral valuation threshold. Lemma~\ref{lem:frozen-descent}
shows that mutable cluster variables have zero frozen valuations.
Thus imposing all nonnegative thresholds selects precisely the cluster
monomials with polynomial frozen factors. Their span is the unlocalized
upper algebra by that lemma, so no polynomial-coefficient
cluster/upper equality is assumed. Proposition~\ref{prop:actual-frozen-valuations}
identifies the valuations with branching divisors, and the seed grading
gives homogeneity.
\end{proof}

\subsubsection*{The case $n=4$.}
Use the mutable order
$(x_1,\ldots,x_8)=(F_{3,1},F_{5,1},F_{1,2},F_{3,2},F_{5,2},F_{1,3},F_{3,3},F_{3,4})$
and the frozen order $(p_1,\ldots,p_7,s_2,s_4,s_6,c_4)$.
Reverse only the order of the two monomials in exchange $8$.
The functions do not change, and
$a^{\mathsf t}B_0a=2a_6a_8$.
The sequence $(4,1,2,3)$ gives complete intersections satisfying
Lemma~\ref{lem:independent-zero} on both the original and dual sides.
The eight regular functions satisfy $J_{\rm bnd}B^\T=-I_8$.
The eleven boundary supports give $44$ normals, obtained from
$23$ mutation sequences in the original coordinates.
Appendix~\ref{app:part-II} contains these calculations, so
Theorem~\ref{thm:geometric-models} applies.

\subsubsection*{The case $n=5$.}
We continue the example of Figures~\ref{fig:a9c5-seed} and
\ref{fig:a9c5-fibres}. The geometric seed is obtained by mutating the
initial function $F_{-1,4}$, shown in blue in Figure~\ref{fig:a9c5-seed}.
To match the matrices in
Section~\ref{app:seeds}, enumerate the initial mutable functions as
\[
\begin{split}
(v_1,\ldots,v_{15})={}(F_{7,1},F_{5,1},F_{3,1},F_{7,2},F_{5,2},F_{3,2},F_{1,2},
F_{5,3},F_{3,3},F_{1,3},F_{3,4},F_{1,4},F_{5,4},F_{-1,4},F_{3,5}),
\end{split}
\]
and use the frozen order
$(f_1,\ldots,f_{14})=(s_8,s_6,s_4,s_2,p_9,p_8,p_7,p_6,p_5,p_4,p_3,p_2,p_1,c_5)$.
This enumeration changes no functions and is not required for the
mutation. In this order $v_{14}=F_{-1,4}$, so mutation at that function
gives $x_i=v_i$ for $i\ne14$ and
$x_{14}=(s_4p_5p_3+F_{1,3}F_{3,5}p_4F_{-1,4})/F_{1,4}$.
Relative to the reordered initial seed, reverse row $15$ before this
mutation and again afterwards. The ordered monomials in
Section~\ref{app:seeds} give the resulting orientation. Their polynomial
substitutions preserve all coefficient factors.
The weight of $x_{14}$, in the order
$(\lambda_1,\ldots,\lambda_9;\mu_1,\ldots,\mu_5)$, is
\[ W_{14}=(1,0,1,1,1,0,1,0,0;0,0,1,0,1). \]
The marked fibre of $F_{-1,4}$ in Figure~\ref{fig:a9c5-fibres}
recovers \eqref{eq:a9-running-exchange}; subsequent exchanges follow
by LP mutation.
The geometric matrix satisfies $a^{\mathsf t}B_0a=2a_{14}a_{15}$.
The two affine varieties have the covers by four principal opens in
Section~\ref{app:affine}. The fifteen regular functions satisfy
$J_{\rm bnd}B^\T=-I_{15}$. The $109$ boundary terms, $66$ original
mutation sequences, and the positive balance prove this case of
Theorem~\ref{thm:geometric-models}.

These constructions give homogeneous branching theta bases for
$A_{2n-1}\downarrow C_n$ throughout $2\le n\le5$.

\begin{remark}[The cases $n\ge6$]\label{rem:higher-rank-scope}
Theorem~\ref{thm:a2c-algebra-equality} identifies the
$A_{2n-1}\downarrow C_n$ branching algebra with its upper LP algebra
for every $n\ge2$. The geometric comparison and theta-basis construction
are established here only for $2\le n\le5$; their extension to $n\ge6$
remains open.
\end{remark}

\subsection{\texorpdfstring{$D_4\downarrow G_2$}{D4 to G2}}
\label{subsec:d4-geometric}
Yan \cite{YanBranching} records a cluster algebra structure on
$\B(\Spin_8,G_2)$. We use the LP seed of
Sections~\ref{sec:exceptional-categories} and~\ref{sec:d4-g2-seed}
for the comparison with its mirror algebra.

Write $\mathbf x$ for the initial tuple in \eqref{eq:d4-rank-four-order}.
The sequence $(4,1)$ gives
\[
\begin{aligned}
 \widetilde x_4=\frac{c_6x_1+c_5x_2x_3}{x_4}=f_{12}-p_2s_4,\qquad
 \widetilde x_1=\frac{x_2x_3+c_3\widetilde x_4}{x_1}=g_2.
\end{aligned}
\]
The second identity follows from $g_2=p_2s_3-f_{13}$ and the quotient
coordinates of Section~\ref{sec:d4-g2-seed}. We take
\begin{equation}
 (\widetilde x_1,\widetilde x_2,\widetilde x_3,\widetilde x_4;f_1,\ldots,f_6)
 =(g_2,f_9,h_1,f_{12}-p_2s_4;p_1,-p_2,p_3,-s_1,s_4,v_D).
 \label{eq:d4-geometric-order}
\end{equation}
Here $\widetilde x_2=x_2$, $\widetilde x_3=x_3$, and $f_j=c_j$.
Both mutations use the associated exchange Laurent polynomials.
Afterwards, interchange the two monomials in exchange $2$.
Denote the resulting exchange matrix by $B_D^{\rm geom}$ and the
weight matrix in the order \eqref{eq:d4-geometric-order} by
$W_D^{\rm geom}$. These differ from the initial matrices $B_D,W_D$
of Section~\ref{sec:d4-g2-seed}; below in this subsection,
$B=B_D^{\rm geom}$ and $W=W_D^{\rm geom}$.
In Section~\ref{app:seeds}, these mutable functions are denoted by
$x_i$, as for the other geometric seeds.

The resulting orientation has
$a^{\mathsf t}B_0a=2a_1a_2+a_2a_4$.
Both the original and dual affine varieties are complete intersections
satisfying Lemma~\ref{lem:independent-zero} in their initial coordinates.
For the regular functions $\psi_i=y^{-B_p^{-1}e_i}T_i$ we use
\[ (T_1,T_2,T_3,T_4)=(1+y_1+y_1y_3,1+y_2,1+y_3,1+y_4+y_1y_4+y_1y_3y_4). \]
They satisfy the divisibility conditions in Lemma~\ref{lem:center}
and $J_{\rm bnd}B^\T=-I_4$.

The boundary functions are $\Psi_j=y_{4+j}P_j$, where
\[
\begin{gathered}
(P_1,P_2,P_3,P_4,P_5,P_6)=(1+y_3,1+y_2,1+y_1,1,1,1+y_2+y_4+y_2y_4+y_1y_4+y_1y_3y_4).
\end{gathered}
\]
Eight original mutation sequences in Section~\ref{app:boundary-data}
give all fourteen normals. Together with the positive balance, these
prove Theorem~\ref{thm:geometric-models} for this case.
In the order \eqref{eq:d4-geometric-order}, the inequality matrix is
\begingroup\small\setlength{\arraycolsep}{3.5pt}\renewcommand{\arraystretch}{1.02}
\begin{equation}
H_D=\left(\begin{array}{rrrr|rrrrrr}
0&0&0&0&1&0&0&0&0&0\\
0&0&1&0&1&0&0&0&0&0\\
0&0&0&0&0&1&0&0&0&0\\
0&1&0&0&0&1&0&0&0&0\\
0&0&0&0&0&0&1&0&0&0\\
1&0&0&0&0&0&1&0&0&0\\
0&0&0&0&0&0&0&1&0&0\\
0&0&0&0&0&0&0&0&1&0\\
0&0&0&0&0&0&0&0&0&1\\
0&0&0&1&0&0&0&0&0&1\\
0&1&0&0&0&0&0&0&0&1\\
0&1&0&1&0&0&0&0&0&1\\
1&0&0&1&0&0&0&0&0&1\\
1&0&1&1&0&0&0&0&0&1
\end{array}\right).
\label{eq:triality-geometric-H}
\end{equation}
\endgroup
Its rows are the exponents of the six boundary functions $\Psi_j$,
grouped by frozen index. The first four columns are mutable and the
last six are frozen. The inequalities $H_Dm\ge0$ give the branching formula of
Corollary~\ref{cor:geometric-branching-formulas}; all fourteen define
facets by the binary-support argument in Section~\ref{sec:boundary}.

\subsection{\texorpdfstring{$E_6\downarrow F_4$}{E6 to F4}}
\label{subsec:e6-geometric}
We use the original function order
\[
\begin{gathered}
(x_1,\ldots,x_8)=(u_{12},u_4,u_9,u_{21},u_{22},u_{23},u_{24},u_{25}),\\
(f_1,\ldots,f_{10})=(z_1,z_2,z_3,z_5,z_7,z_8,z_{10},z_{13},z_{14},z_{33}).
\end{gathered}
\]
The ordered terms of \eqref{eq:e6-formal-exchanges} define $B$ without
mutation, and $a^{\mathsf t}B_0a=2a_7a_8$.
The original and dual mutation sequences $(1,2)$ and $(8,1,2)$ give
complete intersections satisfying Lemma~\ref{lem:independent-zero}.
The eight regular functions of Section~\ref{app:bounds} satisfy
$J_{\rm bnd}B^\T=-I_8$. The ten boundary functions have $44$ terms;
$28$ original mutation sequences give their normals. The positive
balance in Section~\ref{app:boundary-data} completes the application
of Theorem~\ref{thm:geometric-models}.
In this order, the inequality matrix is
\begingroup\scriptsize\setlength{\arraycolsep}{2.9pt}\renewcommand{\arraystretch}{1.08}
\begin{equation}
H_E=\left(\begin{array}{rrrrrrrr|rrrrrrrrrr}
0&0&0&0&0&0&0&0&1&0&0&0&0&0&0&0&0&0\\
0&1&0&0&0&0&0&0&1&0&0&0&0&0&0&0&0&0\\
0&1&0&1&0&0&0&0&1&0&0&0&0&0&0&0&0&0\\
0&0&0&0&0&0&0&0&0&1&0&0&0&0&0&0&0&0\\
0&0&0&0&0&0&0&0&0&0&1&0&0&0&0&0&0&0\\
0&0&0&0&0&0&0&0&0&0&0&1&0&0&0&0&0&0\\
0&0&0&0&1&0&0&0&0&0&0&1&0&0&0&0&0&0\\
0&1&0&0&0&0&0&0&0&0&0&1&0&0&0&0&0&0\\
0&1&0&0&1&0&0&0&0&0&0&1&0&0&0&0&0&0\\
0&1&0&1&1&0&0&0&0&0&0&1&0&0&0&0&0&0\\
0&0&0&0&0&0&0&0&0&0&0&0&1&0&0&0&0&0\\
0&0&0&0&0&1&0&0&0&0&0&0&1&0&0&0&0&0\\
0&0&0&0&0&0&0&0&0&0&0&0&0&1&0&0&0&0\\
0&0&0&0&0&0&0&0&0&0&0&0&0&0&1&0&0&0\\
0&0&0&0&0&0&0&1&0&0&0&0&0&0&1&0&0&0\\
0&0&0&0&0&0&1&0&0&0&0&0&0&0&1&0&0&0\\
0&0&0&1&0&0&1&0&0&0&0&0&0&0&1&0&0&0\\
0&0&0&0&0&0&0&0&0&0&0&0&0&0&0&1&0&0\\
0&0&0&0&0&1&0&0&0&0&0&0&0&0&0&1&0&0\\
0&0&0&0&0&1&0&1&0&0&0&0&0&0&0&1&0&0\\
0&0&0&0&1&1&0&0&0&0&0&0&0&0&0&1&0&0\\
0&0&0&0&1&1&0&1&0&0&0&0&0&0&0&1&0&0\\
0&0&0&1&1&1&0&0&0&0&0&0&0&0&0&1&0&0\\
0&0&0&1&1&1&0&1&0&0&0&0&0&0&0&1&0&0\\
1&0&0&1&1&1&0&0&0&0&0&0&0&0&0&1&0&0\\
1&0&0&1&1&1&0&1&0&0&0&0&0&0&0&1&0&0\\
0&0&0&0&0&0&0&0&0&0&0&0&0&0&0&0&1&0\\
0&0&1&0&0&0&0&0&0&0&0&0&0&0&0&0&1&0\\
1&0&1&0&0&0&0&0&0&0&0&0&0&0&0&0&1&0\\
1&1&1&0&0&0&0&0&0&0&0&0&0&0&0&0&1&0\\
0&0&0&0&0&0&0&0&0&0&0&0&0&0&0&0&0&1\\
0&0&0&0&0&0&1&0&0&0&0&0&0&0&0&0&0&1\\
0&0&0&0&0&1&0&0&0&0&0&0&0&0&0&0&0&1\\
0&0&0&0&0&1&0&1&0&0&0&0&0&0&0&0&0&1\\
0&0&0&0&0&1&1&0&0&0&0&0&0&0&0&0&0&1\\
0&0&0&0&1&1&0&0&0&0&0&0&0&0&0&0&0&1\\
0&0&0&0&1&1&0&1&0&0&0&0&0&0&0&0&0&1\\
0&0&0&0&1&1&1&0&0&0&0&0&0&0&0&0&0&1\\
0&0&0&1&1&1&1&0&0&0&0&0&0&0&0&0&0&1\\
0&0&1&0&0&0&1&0&0&0&0&0&0&0&0&0&0&1\\
0&0&1&0&0&1&1&0&0&0&0&0&0&0&0&0&0&1\\
0&0&1&0&1&1&1&0&0&0&0&0&0&0&0&0&0&1\\
0&0&1&1&1&1&1&0&0&0&0&0&0&0&0&0&0&1\\
1&0&1&1&1&1&1&0&0&0&0&0&0&0&0&0&0&1
\end{array}\right).
\label{eq:e6-geometric-H}
\end{equation}
\endgroup
The first eight columns are mutable and the last ten are frozen.
The rows are grouped by frozen index. The inequalities $H_Em\ge0$
give the branching formula of
Corollary~\ref{cor:geometric-branching-formulas}.

\subsection{\texorpdfstring{$F_4\downarrow B_4$}{F4 to B4}}
\label{subsec:f4-geometric}
We use the functions of \eqref{eq:f4-seed-labels}, in the order
\[
\begin{gathered}
(x_1,x_2,x_3,x_4)=(p_2,-\ell_F,-v_F,-w_F),\quad
(f_1,\ldots,f_8)=(p_1,p_4,q_1,-q_4,q_7,q_8,r,t).
\end{gathered}
\]
No function changes. Reverse rows $3$ and $4$ of the categorical
outgoing-minus-incoming matrix to obtain the orientation in
Section~\ref{app:seeds}. Its quadratic form is
$a_1a_4+2a_2a_3$.
The original and dual affine varieties are complete intersections
satisfying Lemma~\ref{lem:independent-zero}.
For $\psi_i=y^{-B_p^{-1}e_i}T_i$, take
\[ (T_1,T_2,T_3,T_4)=(1+y_1,1+y_2+y_1y_2,1+y_3,1+y_4+y_2y_4). \]
These regular functions satisfy $J_{\rm bnd}B^\T=-I_4$.

The boundary functions are $\Psi_j=y_{4+j}P_j$, where
\[
\begin{gathered}
P_1=1+y_4+y_2y_4,\qquad P_2=1+y_3+y_2+y_1y_2,\qquad P_3=1+y_4,\quad P_4=1+y_3,\\
P_5=1+y_3+y_2,\qquad P_6=P_7=1,\qquad P_8=1+y_4+y_2y_4+y_1+y_1y_4+y_1y_2y_4.
\end{gathered}
\]
Nine original mutation sequences give their 22 normals, and
the positive balance proves the remaining hypothesis of
Theorem~\ref{thm:geometric-models}. In the preceding function order,
the inequality matrix is
\begingroup\small\setlength{\arraycolsep}{3.5pt}\renewcommand{\arraystretch}{1.02}
\begin{equation}
H_F=\left(\begin{array}{rrrr|rrrrrrrr}
0&0&0&0&1&0&0&0&0&0&0&0\\
0&0&0&1&1&0&0&0&0&0&0&0\\
0&1&0&1&1&0&0&0&0&0&0&0\\
0&0&0&0&0&1&0&0&0&0&0&0\\
0&0&1&0&0&1&0&0&0&0&0&0\\
0&1&0&0&0&1&0&0&0&0&0&0\\
1&1&0&0&0&1&0&0&0&0&0&0\\
0&0&0&0&0&0&1&0&0&0&0&0\\
0&0&0&1&0&0&1&0&0&0&0&0\\
0&0&0&0&0&0&0&1&0&0&0&0\\
0&0&1&0&0&0&0&1&0&0&0&0\\
0&0&0&0&0&0&0&0&1&0&0&0\\
0&0&1&0&0&0&0&0&1&0&0&0\\
0&1&0&0&0&0&0&0&1&0&0&0\\
0&0&0&0&0&0&0&0&0&1&0&0\\
0&0&0&0&0&0&0&0&0&0&1&0\\
0&0&0&0&0&0&0&0&0&0&0&1\\
0&0&0&1&0&0&0&0&0&0&0&1\\
0&1&0&1&0&0&0&0&0&0&0&1\\
1&0&0&0&0&0&0&0&0&0&0&1\\
1&0&0&1&0&0&0&0&0&0&0&1\\
1&1&0&1&0&0&0&0&0&0&0&1
\end{array}\right).
\label{eq:f4-geometric-H}
\end{equation}
\endgroup
Its first four columns are mutable and its last eight are frozen.
The inequalities $H_Fm\ge0$ give the branching formula of Corollary~\ref{cor:geometric-branching-formulas}; all
twenty-two inequalities define facets by the binary-support argument
in Section~\ref{sec:boundary}.

\subsection{\texorpdfstring{$B_3\downarrow G_2$}{B3 to G2} and \texorpdfstring{$G_2\downarrow A_2$}{G2 to A2}}
\label{subsec:hypersurface-models}
The normal-form proofs in Section~\ref{subsec:hypersurface-cases} give
homogeneous bases
\begin{equation}\label{eq:hypersurface-theta}
 \{\mathbf f^{\mathbf v}q^ir^j:
       \mathbf v\in\N^{I_{\rm fr}},\ i,j\in\N,\ ij=0\},
\end{equation}
where $\mathbf f=(p_1,p_2,p_3,s,h)$ for $B_3\downarrow G_2$ and
$\mathbf f=(p_1,p_2,s,t)$ for $G_2\downarrow A_2$.
These are the cluster theta bases: each exchange row has a unit entry,
so the split coefficient-fibre argument of
Corollary~\ref{thm:a5c3-canonical-model} applies, and the two type-$A_1$
cluster chambers give the monomials avoiding $qr$.
Counting these monomials gives the following closed formulas.
Write $[x]_+=\max(x,0)$; all Dynkin coefficients below are nonnegative
integers, with the fundamental-weight conventions of
Section~\ref{subsec:hypersurface-cases}.

\subsubsection*{The case $B_3\downarrow G_2$.}
For $\lambda=A\omega_1+B\omega_2+C\omega_3$ and
$\mu=D\varpi_1+E\varpi_2$, we have
\begin{equation}\label{eq:b3g2-closed-formula}
\begin{aligned}
 \dim\B(\Spin_7,G_2)_{\lambda,\mu}
 =\bigl[1&+\min\{A,E,A+B+C-D-E\}\\
         &-\max\{0,A-D,E-B,A+B-D-E\}\bigr]_+.
\end{aligned}
\end{equation}
Indeed, the weight equations for \eqref{eq:hypersurface-theta} give
$i=[B-E]_+$ and $j=[E-B]_+$. Put $z=v_h+j$, where $v_h$ is the
exponent of $h$. The five frozen exponents are then
\[
 (A-z,\ E-z,\ D-A+z-i,\ A+B+C-D-E-z,\ z-j).
\]
Their nonnegativity is equivalent to
\[
 \max\{0,A-D,E-B,A+B-D-E\}\le z
 \le\min\{A,E,A+B+C-D-E\}.
\]
Every integer in this interval gives exactly one basis monomial,
proving \eqref{eq:b3g2-closed-formula}.

\subsubsection*{The case $G_2\downarrow A_2$.}
For $\lambda=A\omega_1+B\omega_2$ and
$\mu=C\varpi_1+D\varpi_2$, we have
\begin{equation}\label{eq:g2a2-closed-formula}
\begin{aligned}
 \dim\B(G_2,\SL_3)_{\lambda,\mu}
 =\bigl[1+\min\{B,D\}-\max\{0,B-C,D-A,C+D-A-B\}\bigr]_+.
\end{aligned}
\end{equation}
Now $i=[C-B]_+$ and $j=[B-C]_+$. With $z=v_{p_2}+j$, the frozen
exponents are
\[
 (D-z,\ z-j,\ A-D+z-i,\ B-z).
\]
Thus $\max\{0,B-C,D-A,C+D-A-B\}\le z\le\min\{B,D\}$,
and counting the integers proves \eqref{eq:g2a2-closed-formula}.

Both bases respect every frozen valuation. Modulo a frozen variable
$f_k$, the exchange becomes $qr$ equal to a nonzero monomial in the
remaining frozen variables. This quotient is a domain, and its normal
monomials avoiding $qr$ are linearly independent. Consequently $f_k$
is prime and, for every nonzero finite expansion,
$\nu_{f_k}\!\left(\sum_{i,j,\mathbf v}
       a_{i,j,\mathbf v}\mathbf f^{\mathbf v}q^ir^j\right)
   =\min_{a_{i,j,\mathbf v}\ne0}v_k.
$
Multiplication by frozen characters gives the same assertion at every
integral valuation threshold.

\subsection{The family \texorpdfstring{$D_n\downarrow B_{n-1}$}{Dn to Bn-1}}
\begin{proposition}\label{prop:orthogonal-polynomial}
For $n\ge3$, $\B(\Spin_{2n},\Spin_{2n-1})$ is a polynomial algebra
on $2n-1$ homogeneous generators, all frozen in its LP seed with no mutable variables.
\end{proposition}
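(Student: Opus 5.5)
The plan is to mimic the hypersurface proofs of Section~\ref{subsec:hypersurface-cases}, replacing the normal-form argument with a dimension count. Multiplicity-freeness makes the count easy. The branching rule for $\Spin_{2n}\downarrow\Spin_{2n-1}$ is the classical interlacing rule. Writing $\lambda$ and $\mu$ in $\varepsilon$-coordinates,
\[
\lambda_1\ge\mu_1\ge\lambda_2\ge\cdots\ge\mu_{n-1}\ge|\lambda_n|,
\]
and each admissible $\mu$ occurs exactly once. Our first step is to show that the set of pairs $(\lambda;\mu)$ satisfying interlacing, a saturated monoid in $P(D_n)\oplus P(B_{n-1})$, is free on $2n-1$ explicit generators.

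We take the candidates to be the $n$ principal degrees $(\omega_i;\pi\omega_i)$ together with $n-1$ further degrees $(\omega_i;\pi\omega_{i-1})$ for $2\le i\le n$. Here $\pi\omega_0=0$, and the spin vertices need a separate convention: for the two spin weights $\omega_{n-1},\omega_n$, the restrictions are $\varpi_{n-1}$, and the extra generators are $(\omega_{n-2};\varpi_{n-3})$-type vectors and $(\omega_{n-1}+\omega_n;\varpi_{n-2})$-type vectors, arranged so that each interlacing slack $\lambda_i-\mu_i$ and $\mu_i-\lambda_{i+1}$ is one coordinate. The tool is the linear isomorphism from interlacing data to the $2n-1$ slack coordinates $\lambda_1-\mu_1,\ \mu_1-\lambda_2,\ \ldots,\ \mu_{n-1}-|\lambda_n|$, together with a sign for $\lambda_n$. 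The sign is absorbed by splitting $|\lambda_n|$ between the two spin vertices. We then check that this map is unimodular onto $\N^{2n-1}$. This gives a free monoid of rank $2n-1$ with the grading lattice as its group.

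Next we choose a nonzero function $y_j$ in each generator's degree, which is possible since that multiplicity is $1$. Because $\B$ is a domain, the monomials $\prod y_j^{k_j}$ have distinct degrees and are nonzero. Each homogeneous component is one-dimensional, so these monomials span it. The map $\C[y_1,\ldots,y_{2n-1}]\to\B$ is therefore surjective. It is also injective, because distinct monomials have distinct degrees and are nonzero. This proves that $\B(\Spin_{2n},\Spin_{2n-1})$ is a polynomial algebra.

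For the LP statement, a seed with no mutable variables and polynomial frozen coefficients has $\UB=L(t)=\C[y_1,\ldots,y_{2n-1}]$ by \eqref{eq:upper-definitions}, and the equality holds trivially. The main obstacle we expect is the bookkeeping at the spin end of the Dynkin diagram. There $\lambda_n$ may be negative and the weights are half-integral, so the $\varepsilon$-coordinate unimodularity must be checked carefully in the fundamental-weight basis. We would first verify $n=3$ directly via $\Spin_6\simeq\SL_4$, $\Spin_5\simeq\Sp_4$, which recovers $\B_2=\C[p_1,p_2,p_3,s_2,c_2]$ from Theorem~\ref{thm:a2c-algebra-equality}, and then do the general $n$ by induction on the slack coordinates.
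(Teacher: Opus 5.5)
Your proposal is essentially the paper's proof: interlacing identifies the branching monoid additively with $\N^{2n-1}$, and multiplicity one plus the domain property makes nonzero functions in the $2n-1$ primitive degrees free polynomial generators; the LP assertion is trivial because there are no mutable variables.

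The spin end needs three corrections.
\begin{enumerate}
\item \emph{Coordinates.} Use the two linear coordinates $\mu_{n-1}-\lambda_n$ and $\mu_{n-1}+\lambda_n$, as the paper does, rather than $\mu_{n-1}-|\lambda_n|$ with a sign, which is not additive. Their half-sum and half-difference recover $\mu_{n-1}$ and $\lambda_n$, and the all-integral-or-all-half-integral condition then holds automatically.
\item \emph{Generators.} With these coordinates the non-principal generators are $(\omega_i;\varpi_{i-1})$ for $1\le i\le n-2$, together with $(\omega_{n-1}+\omega_n;\varpi_{n-2})$. Your literal list $(\omega_i;\pi\omega_{i-1})$, $2\le i\le n$, omits $(\omega_1;0)$. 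It also contains $(\omega_{n-1};\varpi_{n-2})$, which has multiplicity zero, so no nonzero function exists in that degree.
\item \emph{Saturation.} The monoid is saturated only in the index-two sublattice it generates, not in all of $P(D_n)\oplus P(B_{n-1})$. For example, with $n=3$, $\gamma=\bigl((2,1,0);(\tfrac32,\tfrac12)\bigr)$ is not in the monoid but $2\gamma$ is. Your proof does not use this claim, so it can simply be dropped.
\end{enumerate}
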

\begin{proof}
The multiplicity-free orthogonal restriction rule is
\[ \lambda_1\ge\mu_1\ge\lambda_2\ge\cdots\ge \lambda_{n-1}\ge\mu_{n-1}\ge|\lambda_n|, \]
with all coordinates integral or all half-integral
\cite[the $\Spin_{2n-1}\subset\Spin_{2n}$ example in Section~2]{PasquierRessayre}. The consecutive differences down
to $\lambda_{n-1}-\mu_{n-1}$, followed by
$\mu_{n-1}-\lambda_n$ and $\mu_{n-1}+\lambda_n$, identify its
branching-weight monoid additively with $\N^{2n-1}$. Indeed the last
two differences recover $\mu_{n-1},\lambda_n$ by their half-sum and
half-difference, and the rest are recovered successively. Nonzero
generators of the primitive degrees therefore freely generate the
multiplicity-free branching algebra. The simply connected weight
lattices are essential to this assertion.
\end{proof}

Its polyhedral model is the nonnegative orthant. The degree map is the
interlacing-coordinate map in the proof, so the lattice points of each
weight fibre give the corresponding branching multiplicity.

\subsection{Saturation}
\label{subsec:saturation}
A submonoid $S$ of a lattice $\Gamma$ is \emph{saturated} if
$N\gamma\in S$ with $N\in\Z_{>0}$ and $\gamma\in\Gamma$ implies
$\gamma\in S$. For a model in
Corollary~\ref{cor:geometric-branching-formulas}, put $d=r+s$ and
\[
C=\{m\in\R^d:Hm\ge0\},\qquad q=W^\T,\qquad
\Gamma=q(\Z^d),\qquad P_\gamma=C\cap q^{-1}(\gamma).
\]
The branching semigroup is $q(C\cap\Z^d)$. Since
$P_{N\gamma}=NP_\gamma$ and every nonempty rational fibre has a rational
point, saturation in $\Gamma$ is equivalent to
\[
P_\gamma\ne\varnothing\quad\Longrightarrow\quad
P_\gamma\cap\Z^d\ne\varnothing\qquad(\gamma\in\Gamma).
\]
It suffices to find an integral lift in $C$ of each element of the
Hilbert basis of $q(C)\cap\Gamma$, its minimal additive generating
set: summing these lifts gives every
integral weight in the projected cone. This is a finite criterion on
$(H,W)$, not a test on weights below a cutoff.

For $D_n\downarrow B_{n-1}$, saturation follows from the interlacing
coordinates of Proposition~\ref{prop:orthogonal-polynomial}. For
$B_3\downarrow G_2$ and $G_2\downarrow A_2$, it follows from
\eqref{eq:b3g2-closed-formula}--\eqref{eq:g2a2-closed-formula}, whose
interval endpoints are homogeneous integral functions of the weights.
These recover the corresponding results of
\cite[Theorem~5]{PasquierRessayre}. That theorem also proves saturation
for $F_4\downarrow B_4$, $E_6\downarrow F_4$, and
$A_{2n-1}\downarrow C_n$ for $2\le n\le5$. The triality model gives
the following additional result. A polytope is \emph{integral} if
all its vertices are lattice points.

\begin{proposition}\label{prop:triality-saturation}
	Every nonempty integral-weight fibre of the model
	\eqref{eq:triality-geometric-H} is an integral polytope.
	In particular, the branching semigroup for $\Spin_8\downarrow G_2$
	is saturated.
\end{proposition}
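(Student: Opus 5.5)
The plan is to show that each fibre $P_\gamma=\{m\in\R^{10}:H_Dm\ge0,\ W^{\mathsf t}m=\gamma\}$ is cut out by a totally unimodular system in suitable integral coordinates. Then every vertex of a nonempty fibre is integral, and saturation follows from the criterion of Section~\ref{subsec:saturation}: a nonempty $P_\gamma$ with $\gamma\in\Gamma$ has an integral vertex, hence an integral point.

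First, parametrize the fibre. By \eqref{eq:d4-integral-grading} transported to the geometric seed, $\ker W^{\mathsf t}\cap\Z^{10}=B^{\mathsf t}\Z^4$ and $W^{\mathsf t}$ is surjective onto $\Gamma$. Fix an integral $m_0$ with $W^{\mathsf t}m_0=\gamma$ and write $m=m_0+B^{\mathsf t}a$ with $a\in\R^4$. The fibre is then affinely and unimodularly isomorphic to
\[
\{a\in\R^4: H_DB^{\mathsf t}a\ge -H_Dm_0\},
\]
with integral right-hand side. The fibre is integral exactly when this polytope in $a$ has integral vertices. It therefore suffices to show that the $14\times4$ matrix $M=H_DB^{\mathsf t}$ is totally unimodular, or at least that every nonsingular $4\times4$ minor of $M$ is $\pm1$. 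Boundedness is already given by Proposition~\ref{prop:balance}, so vertices exist.

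Next, compute $M$. The rows of $H_D$ are $e_f^*+\sum_i k_ie_i^*$ with $k\in\{0,1\}^4$, so each row of $M$ is the $f$th frozen column of $B$ plus a $0/1$ combination of mutable columns of $B$, all read off from the matrices in Section~\ref{app:seeds}. I would first try to change coordinates $a\mapsto Ua$ with $U\in\GL_4(\Z)$, for example using the rows $-I_4$ coming from $J_{\rm bnd}B^{\mathsf t}=-I_4$ if the frozen columns of $B$ relate to them. The aim is to bring $M$ to a form whose rows are signed differences or interval (consecutive-ones) vectors, that is, a network matrix, which is totally unimodular. Failing a clean structural argument, I would enumerate the at most $\binom{14}{4}=1001$ square $4\times4$ minors and check that each lies in $\{0,\pm1\}$. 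This finite check suffices, because vertex integrality of $\{Ma\ge b\}$ for all integral $b$ only needs the bases to be unimodular.

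The main obstacle is that $M$ may fail to be totally unimodular: the supports $1+y_2+y_4+y_2y_4+y_1y_4+y_1y_3y_4$ in $P_6$ produce rows with several entries. If some basis has determinant $\pm2$, the weaker claim restricted to $\gamma\in\Gamma$ still needs proof. In that case I would show that the offending vertices of $\{Ma\ge b\}$ are never vertices of a fibre, or that their half-integrality is cancelled. One route is a parity argument: the relevant right-hand sides $b=-H_Dm_0$ satisfy a congruence mod $2$ forced by $\gamma\in\Gamma$, and this congruence should make the corresponding basic solutions integral. The fallback is a direct case analysis of such bases using the explicit entries of $H_D$ and $B$.
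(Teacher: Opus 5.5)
Your reduction is the same as the paper's: fix an integral $m_0$ of weight $\gamma$, write $m=m_0+B^{\mathsf t}a$, and study the vertices of $\{a:Aa\ge -H_Dm_0\}$ with $A=H_DB^{\mathsf t}$. The difficulty is in the next step, and there the proposal has a genuine gap.

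\textbf{Your primary claim is false.} Among the $1001$ four-row minors of $A$, sixteen have absolute value $2$. For example, rows $\{1,3,6,10\}$ give $\det A_I=-2$. So neither total unimodularity nor unimodularity of all bases holds, and the whole proof rests on your fallback.

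\textbf{The parity fallback cannot work.} The grading map is onto $\Z^6$, so integrality of $\gamma$ imposes no congruence on the right-hand side. For $I=\{1,3,6,10\}$ the slacks are $c_1$, $c_2$, $z_1+c_3$, $z_4+c_6$, which involve disjoint coordinates. Hence $(H_D)_Im_0$ takes every value in $\Z^4$, and for suitable fibres the basic solution $A_I^{-1}(-(H_D)_Im_0)$ is genuinely half-integral. What excludes such points is \emph{feasibility}, not the arithmetic of the right-hand side.

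\textbf{The missing idea.} Use the linear dependencies among the fourteen rows of $H_D$, which lie in $\R^{10}$. Writing $s=H_Dm$, the following hold identically in $m$:
$s_1+s_{14}=s_2+s_{13}$, $s_3+s_{11}=s_4+s_9$, $s_3+s_{12}=s_4+s_{10}$, $s_5+s_{13}=s_6+s_{10}$, and $s_9+s_{12}=s_{10}+s_{11}$.
On the fibre all $s_k\ge0$. So if both terms on one side of an identity vanish, both terms on the other side vanish too.

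For each of the sixteen exceptional active sets $I$, these implications enlarge the active set to some $J$ that contains a unimodular basis. For example, $s_1=s_3=s_6=s_{10}=0$ forces $s_5=s_{13}=0$ by the fourth identity, and rows $\{1,3,5,6\}$ of $A$ have determinant $1$. Thus every vertex has a unimodular active basis, so $a$, and hence $m$, is integral.

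This is exactly the paper's argument. Your ``direct case analysis'' has to be turned into this degeneracy argument, or an equivalent one, to close the proof. Your final saturation step from integral vertices is fine as stated.
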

\begin{proof}
	Choose $m_0\in\Z^{10}$ with $(W_D^{\rm geom})^\T m_0=\gamma$. Since
	$\ker_\R (W_D^{\rm geom})^\T=\op{im}_\R (B_D^{\rm geom})^\T$, write
	$m=m_0+(B_D^{\rm geom})^\T a$ and put $A=H_D(B_D^{\rm geom})^\T$. At a vertex, four independent
	active equations give $A_Ia=-(H_D)_Im_0$. The nonsingular four-row
	minors of $A$ have absolute value one, except for sixteen of absolute
	value two. The exhaustive determinant calculation is recorded in
	\cite[S1]{ElectronicSupplement}. For $s=H_Dm\ge0$, the identities
	\[
	\begin{gathered}
		s_1+s_{14}=s_2+s_{13},\qquad s_3+s_{11}=s_4+s_9,\qquad
		s_3+s_{12}=s_4+s_{10},\\
		s_5+s_{13}=s_6+s_{10},\qquad s_9+s_{12}=s_{10}+s_{11}
	\end{gathered}
	\]
	force each exceptional active set to extend to one containing a
	unimodular basis; all sixteen replacements are listed there.
	For example, vanishing of $s_1,s_3,s_6,s_{10}$ forces $s_5=s_{13}=0$,
	and the rows $(1,3,5,6)$ of $A$ have determinant one.
	Thus every vertex has integral $a$, and hence integral $m$.
	The fibres are bounded, so a nonempty fibre has an integral vertex.
	The saturation criterion above completes the proof.
\end{proof}

\appendix
\section{Auxiliary geometric arguments}
\label{app:part-I}
\subsection{Smooth affine models}\label{app:affine-lemmas}
We use the following two lemmas for the affine models in
Section~\ref{app:affine}.

\begin{lemma}[The independent-zero test]\label{lem:independent-zero}
Let $S=\C[c_1^{\pm1},\ldots,c_s^{\pm1}]$ and let $G_i$ be primitive
binomials with linearly independent exponent-difference vectors, independent
of $u_i$. In
\[ Z=\Spec S[u_1,\ldots,u_r,v_1,\ldots,v_r]/(u_iv_i-G_i)_{i=1}^r, \]
suppose every possible zero set $I=\{i:u_i=0\}$ has no dependency inside
$I$: each $G_i$, $i\in I$, is independent of all $u_j$, $j\in I$.
Then $Z$ is a smooth integral complete intersection of dimension $r+s$.
The union of its initial and adjacent torus charts has complement of
codimension at least two, and its coordinate ring is the finite initial
upper intersection.
\end{lemma}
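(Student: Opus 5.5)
The plan is to argue entirely with the equations of $Z$: the Jacobian criterion for smoothness, a stratification by the zero set $I=\{i:u_i=0\}$ for dimension and irreducibility, and normality for the ring statement.

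\emph{Smoothness.} Fix a point $z\in Z$ and let $I$ be its zero set. The differential of the $i$th equation is $v_i\,du_i+u_i\,dv_i-dG_i$.

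For $i\notin I$, the column $dv_i$ occurs only in row $i$, and there its coefficient is $u_i\ne0$. Hence these $r-|I|$ rows are independent of everything else, and the Jacobian is block triangular.

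For $i\in I$, the equation gives $G_i(z)=0$. By hypothesis $G_i$ involves only the units $c_f$ and the coordinates $u_j$ with $j\notin I$, all of which are nonzero at $z$. Write $G_i=M_{i,1}+M_{i,2}$. If one monomial vanished at $z$, so would the other, which is impossible since both are monomials in nonzero coordinates. So $M_{i,1}(z)=-M_{i,2}(z)\ne0$.

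We then restrict the rows indexed by $I$ to the columns $dc_f$ and $du_j$, $j\notin I$, rescaled to logarithmic differentials, which is allowed because these coordinates are nonzero at $z$. On these columns the row of $G_i$ is $M_{i,1}(z)$ times its exponent-difference vector. Linear independence of those vectors then gives rank $|I|$ for this block.

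The total rank is therefore $r$ at every point. By the Jacobian criterion, $Z$ is smooth of pure dimension $2r+s-r=r+s$ and is a complete intersection.

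\emph{Dimension count and irreducibility.} The locus with zero set exactly $I$ is parametrized as follows:
\begin{itemize}
\item the $c_f$ and the $u_j$, $j\notin I$, vary freely;
\item each $v_j$, $j\notin I$, is determined by $v_j=G_j/u_j$;
\item the $v_i$, $i\in I$, are free;
\item the conditions $G_i=0$, $i\in I$, are $|I|$ independent equations, since they have independent log-differentials as computed above.
\end{itemize}
This locus therefore has dimension $r+s-|I|$. Inside the stratum $|I|=1$, say $I=\{i\}$, the further locus $v_i=0$ has dimension $r+s-2$.

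Consequently no irreducible component of $Z$, all of which have dimension $r+s$, lies in $\{\prod_iu_i=0\}$. So every component meets the open set where all $u_i\ne0$. That open set is $\Spec S[u^{\pm1}]$, because each $v_i=G_i/u_i$ is determined there, and it is irreducible. Hence $Z$ is irreducible. Being smooth, it is reduced, hence integral.

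\emph{Charts and the coordinate ring.} The $i$th adjacent chart is $\{u_j\ne0\ (j\ne i),\ v_i\ne0\}$. On it $u_i=G_i/v_i$, and its coordinate ring is $S[u_j^{\pm1}\ (j\ne i),\,v_i^{\pm1}]$, which is the LP adjacent Laurent ring.

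The complement of the initial chart together with all adjacent charts is contained in the union of two loci:
\begin{itemize}
\item the strata with $|I|\ge2$;
\item the loci $\{u_i=v_i=0\}$ inside the strata $I=\{i\}$.
\end{itemize}
By the count above, both have codimension at least two. Since $Z$ is smooth, hence normal, regular functions extend across this complement. Therefore $\C[Z]$ equals the intersection of the initial and adjacent chart rings inside $\Frac$, which is the finite initial upper intersection.

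The step I expect to be most delicate is the smoothness computation at points with $I\neq\varnothing$. The argument depends on two points: that both monomials of $G_i$ are nonzero there, and that the mixed columns $du_j$, $j\notin I$, do not spoil the rank. The latter is why I use the triangular pivots $dv_j$ for the rows $j\notin I$ before reading off the exponent-difference vectors for the rows in $I$.
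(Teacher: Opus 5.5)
Your proof is correct and follows essentially the same route as the paper. Both arguments establish full Jacobian rank using the pivots $dv_j$ ($j\notin I$) together with the logarithmic rows $dG_i=M_{i,1}\,d\log(M_{i,1}/M_{i,2})$. Both then use the stratum count $r+s-|I|$ to obtain irreducibility and the codimension-two complement, and normality to identify $\C[Z]$ with the finite Laurent intersection. The differences are cosmetic: the paper pivots on $du_i$ when $v_i\ne0$, whereas you use the logarithmic rows for all of $I$, which works equally well; and the paper applies normality after localizing to the opens where a single $u_i$ may vanish, whereas you apply it globally.
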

\begin{proof}
At a point with zero set $I$, invert the other $u_j$ and eliminate their
companions. A nonzero $v_i$ gives a Jacobian pivot in $u_i$.
For the remaining indices, both monomials $A_i,B_i$ of $G_i$ are units,
and $G_i=0$ gives
$ dG_i=A_i\,d\log(A_i/B_i)$.
These logarithmic rows are independent even after deleting the columns
in $I$, since those columns are zero in the selected rows. The Jacobian
therefore has full rank. On the stratum with zero set $I$, there are
$|I|$ independent character equations on the remaining torus and
$|I|$ free companions. Its dimension is $r+s-|I|$. No irreducible
component can be confined to the union of nonempty zero strata, because
$r$ equations in a smooth $(2r+s)$-dimensional space have components of
dimension at least $r+s$. Thus every component meets the single initial
torus; $Z$ is integral. Strata with $|I|\ge2$ have codimension at least two.

On the open where only $u_i$ is allowed to vanish, the ring is the
hypersurface $u_iv_i=G_i$. Its two Laurent tori omit only
$u_i=v_i=0$, of codimension two. Normality identifies its regular
functions with their Laurent intersection, and then identifies the
ring of $Z$ with the full finite intersection.
\end{proof}

There is a particularly elementary way to check the hypothesis. For a
candidate set $I$, inspect the two monomials of each $G_i$, $i\in I$.
Exactly one vanishing monomial makes $I$ impossible. For each surviving
candidate, verify that neither monomial vanishes. This checks the hypothesis directly from the monomial supports.

\begin{lemma}[Finite unit-ideal covers]\label{lem:unit-cover}
Work over a Laurent coefficient ring. Suppose the active exchange vectors are linearly independent and
generate a saturated lattice at each step of a finite binomial mutation calculation. Allow inversion of an
active variable and proper LP freezing. Suppose every leaf has the
independent-zero property and each split is justified by
\[ u_i u_i'=A+u_j^e C,\qquad e>0,\quad A\text{ a coefficient unit}. \]
Then the initial LP algebra and its finite upper bound agree. The common algebra is finitely generated. Its spectrum is smooth,
and the initial and adjacent torus charts have complement of codimension
at least two.
\end{lemma}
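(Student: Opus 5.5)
The plan is induction over the finite tree of the mutation calculation, proving at each node that the local algebra equals its finite upper bound with the stated geometric properties. The base case is a leaf: Lemma~\ref{lem:independent-zero} applies there directly. It gives a smooth integral complete intersection $Z$ whose initial and adjacent tori cover $Z$ outside codimension two, and whose coordinate ring is the finite initial upper intersection. This ring is generated by the $u_i,v_i$ and the Laurent coefficients, so it is finitely generated and is the initial LP algebra.

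For the inductive step at a split $u_iu_i'=A+u_j^eC$ with $A$ a coefficient unit, cover $\Spec$ of the node algebra by the two principal opens $D(u_j)$ and $D(u_i')$. The ideal $(u_j,u_i')$ is the unit ideal, since $A=u_iu_i'-u_j^eC$ and $A$ is invertible. On $D(u_j)$ the variable $u_j$ is inverted, which is the allowed inversion of an active variable. On $D(u_i')$ we have $u_i=(A+u_j^eC)/u_i'$. There we freeze $u_i'$ as a unit and pass to the smaller mutation calculation, the proper LP freezing. The hypothesis that the active exchange vectors stay linearly independent and generate a saturated lattice keeps each child inside the same class. In particular, each child exchange binomial remains primitive, so irreducibility and the LP normalization persist. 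By induction each child algebra equals its finite upper bound and is smooth, finitely generated, with the codimension-two chart property.

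The step I expect to be hardest is gluing. I would show that the node algebra $U$, defined as the finite upper intersection, localizes correctly: $U[u_j^{-1}]$ and $U[u_i'^{-1}]$ should equal the two child algebras. Localization commutes with finite intersections, as in Lemma~\ref{lem:frozen-descent}. The remaining issue is that inverting $u_j$ or $u_i'$ turns the node's adjacent Laurent rings into the child's adjacent rings. For $u_i'$ this is the LP freezing statement: after $u_i'$ becomes a coefficient, the charts mutated at $i$ and at the other indices match the child's charts. I would check this by using the associated exchange Laurent polynomials, in the same way as the caterpillar Lemma~\ref{lem:caterpillar} and Theorem~\ref{thm:upper-invariance}. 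That lets me replace the adjacent rings by the mutated seed's adjacent rings without changing the intersection.

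Once the localizations are identified, the standard gluing follows. Since the two localizations are finitely generated and $(u_j,u_i')=1$, the ring $U$ is finitely generated, and $\Spec U$ is covered by two smooth opens, so it is smooth. The initial LP algebra $\mathcal A$ satisfies $\mathcal A\subseteq U$, and it contains $u_j$ and $u_i'$. Its localizations agree with those of $U$ by induction, since the child LP algebras contain the localized generators. Because $u_j$ and $u_i'$ generate the unit ideal already in $\mathcal A$, we get $\mathcal A=U$. Finally, the union of the initial and adjacent torus charts restricts on each open to the child's corresponding union, after a mutation, by Theorem~\ref{thm:upper-invariance}. The complement therefore has codimension at least two on each open of the cover, hence globally.
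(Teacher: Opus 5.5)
The overall route matches the paper's: induction over the unit-ideal tree, localization commuting with the finite Laurent intersection, redundancy of the adjacent chart at an inverted variable, Lemma~\ref{lem:independent-zero} at the leaves, and comaximal gluing for finite generation and smoothness. The step that does not work is the codimension-two claim.

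The inductive hypothesis at a child concerns the torus charts of the child's \emph{own} seed, reached after its additional mutation word. At a leaf, this is the seed to which Lemma~\ref{lem:independent-zero} is applied. On $D(u_j)$, however, the node's initial and adjacent charts restrict only to the charts of $t$ with $u_j$ frozen; the chart mutated at $j$ falls into the initial torus. That is a different open set. Theorem~\ref{thm:upper-invariance} identifies the rings $\UB(t)=\UB(\mu_kt)$, not the unions of charts, so nothing in your argument carries the codimension bound across a mutation word.

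The paper does not use the cover for this step at all. In $U$ one has $u_iU=u_iL(\mu_it)\cap U$, because $u_i$ is a unit in every other ring of the finite intersection. Moreover $u_iL(\mu_it)=\widehat E_iL(\mu_it)$ is prime, since the binomial is primitive. The $u_i$ are pairwise nonassociate, and $u_i'\notin u_iU$ because $u_i'$ is a unit in $L(\mu_it)$. Since $U$ is noetherian, no height-one prime contains two of the $u_i$, or both $u_i$ and $u_i'$.

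Alternatively, you can keep your approach by adding the converse of algebraic Hartogs. $U$ is normal and noetherian, and it equals the ring of regular functions on the union $W$ of these principal opens. Suppose a prime divisor $D$ lay in $\Spec U\setminus W$. Choose $g\in U$ vanishing on $D$, and $h\in U$ vanishing on the other components of $V(g)$ but not on $D$. For large $N$, $h^N/g$ is regular on $W$ but not on $\Spec U$, a contradiction. Combined with Theorem~\ref{thm:upper-invariance}, this even gives the bound for every seed.

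There are two smaller points.

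First, for $\mathcal A=U$ the inclusion you need is that the child's LP algebra lies in $\mathcal A[u_j^{-1}]$. That is, the cluster variables of the properly frozen pattern, including the leaf coordinates and companions, must be cluster variables of the original pattern up to coefficient units. This is the compatibility of proper freezing with mutation that the paper takes from \cite[Section~3.5]{LamPylyavskyy}. Your sentence about the child LP algebras containing the localized generators only yields the automatic inclusion $\mathcal A[u_j^{-1}]\subseteq U[u_j^{-1}]$.

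Second, the paper's splits invert the two \emph{active} variables $u_i$ and $u_j$, using $1\in(u_i,u_j)$; see the tables in Section~\ref{app:affine}. Your child $D(u_i')$ first needs the mutation at $i$, so it is not a node of the given tree, and the inductive hypothesis is not available for it as stated. The same equation gives $1\in(u_i,u_j)$, so use $D(u_i)$ instead.
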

\begin{proof}
Lattice saturation makes all active exchange vectors primitive, hence
all active binomials irreducible. At every mutation we use the associated
exchange Laurent polynomial. Theorem~\ref{thm:upper-invariance} identifies
the finite upper bounds throughout the calculation; no equality between
ordinary and associated exchange polynomials is needed.

The equation gives $1\in(u_i,u_j)$, so the two indicated
principal opens cover. Localization commutes with the finite Laurent
intersection. The adjacent condition at the inverted variable becomes
redundant, and the result is the properly frozen upper bound. Proper
freezing preserves coefficient units and is compatible with mutation
\cite[Section~3.5]{LamPylyavskyy}.
At a leaf, Lemma~\ref{lem:independent-zero} identifies the upper bound
with the algebra generated by the coordinates and their companions.
These generators are LP elements on the corresponding localization.
Induction over the unit-ideal cover therefore gives equality of LP and
upper algebras: for comaximal $a,b$ in a domain $A$,
$A[a^{-1}]\cap A[b^{-1}]=A$. The finite cover also gives finite generation
and smoothness; generators and the finitely many unit-ideal relations can
be cleared of denominators to prove the finite-generation assertion.

In the initial upper bound $U$, the ideal $(u_i)$ is the contraction of
its prime ideal in the $i$th adjacent Laurent ring. Indeed $u_i$ is a
unit in all other rings of the finite Laurent intersection. The different $u_i$ are
nonassociate, so no height-one prime contains two of them. This proves
the assertion about the complement of the torus charts.
\end{proof}

The residue volume on each leaf is
$\bigwedge_i d\log u_i\wedge\bigwedge_jd\log c_j$ on its torus.
A mutation preserves this form up to sign, so it gives a nowhere-zero
logarithmic volume on the glued smooth affine variety. The initial
charts with one mutable variable, after normalization by a coefficient unit, are
$u_i v_i=1+y_i$. Since the $u$-exponent of $y_j$ is $(B_p)_{ij}=B_{ij}$,
their rays are $b_i=B^\T e_i$ and their centres are exactly
$1+y_i=0$. These charts form a big open and hence give the required
immersion $V^\circ\subset V$. The ambient-codimension argument in
Section~\ref{sec:binomial} then supplies the comparison along skeletal
curves.

\subsection{Boundary supports}\label{app:boundary-support-construction}
For each frozen position $f$, we construct
$K_f\subseteq\{0,1\}^r$ and put
\[ \Psi_f=y_f\sum_{k\in K_f}\prod_{i=1}^r y_i^{k_i},\qquad h_{f,k}=e_f^*+\sum_{i=1}^r k_i e_i^*. \]
Start with $K_f=\{0\}$ and apply the divisibility test of
Lemma~\ref{lem:center} in the order $i=1,\ldots,r$.
For fixed exponents other than the $i$th, the coefficient polynomial
is $A+B'T$. There is no condition if $\delta_{i,\widehat k}\le0$;
if $\delta_{i,\widehat k}=1$, divisibility requires $A=B'$;
if $\delta_{i,\widehat k}\ge2$, the polynomial must vanish.
Whenever the required power is one and only one of
$k$ and $k+(1-2k_i)e_i$ is present, adjoin the other. Repeat until
no new exponent is added.

For a given seed, one must then check every coefficient polynomial
in Lemma~\ref{lem:center}, including the conditions whose required
power is greater than one. When these checks hold, the resulting
$\Psi_f$ are regular and their supports give the boundary inequalities.
Section~\ref{app:boundary-data} applies this construction to the five
geometric seeds, and gives the corresponding mutation sequences and
positive balances.

\section{Data for the branching models}\label{app:part-II}
\label{app:geometric-input}
\subsection{Categorical degree matrices}
\label{app:categorical-degrees}
The degree formulas and their categorical coordinates are defined in
Section~\ref{sec:exceptional-categories}. We record their coefficient
matrices here, in the coordinate orders used there.

\subsubsection*{The degree for $D_4\downarrow G_2$.}
For \eqref{eq:d4-degree}, the rows correspond to
$(e,p^+,p^-,m_1,m_2)$ and the columns to
$(\lambda_1,\ldots,\lambda_4;\mu_1,\mu_2)$. The formula includes the
factor $1/2$:
\[
N_D=\left(\begin{array}{rrrrrr}
0&-2&1&2&2&-1\\0&-2&2&2&2&0\\0&-2&3&2&4&-1\\0&0&0&0&2&0\\\hline
0&2&-1&0&0&1\\0&2&0&-2&0&0\\0&0&-1&0&0&-1\\0&0&0&-2&-2&0\\\hline
0&0&1&0&2&-1\\2&-2&0&2&0&0\\0&2&-1&-2&-2&1\\2&-2&0&2&0&0\\\hline
0&2&-2&0&-2&2\\-2&2&2&-2&0&0
\end{array}\right).
\]

\subsubsection*{The degree for $F_4\downarrow B_4$.}
For \eqref{eq:f4-degree}, the rows correspond to $(e,p^+,p^-,m_3)$
and the columns to $(\lambda_1,\ldots,\lambda_4;\mu_1,\ldots,\mu_4)$.
No scalar denominator occurs:
\[
N_F=\left(\begin{array}{rrrr|rrrr}
-1&-1&0&0&1&-3&-2&4\\
-3&-2&-1&0&0&-5&-5&8\\
-2&-2&0&0&0&-4&-3&6\\
0&-1&0&0&0&-2&-1&3\\\hline
2&1&1&0&0&3&3&-4\\
3&3&1&0&0&5&6&-8\\
2&3&0&0&1&4&4&-6\\
1&2&0&0&1&2&2&-3\\\hline
-1&-1&-1&0&0&-2&-3&4\\
0&0&0&0&0&0&0&0\\
-2&-3&0&1&0&-4&-4&6\\
-1&-2&1&0&0&-2&-2&3\\\hline
0&1&-1&0&0&1&0&-1
\end{array}\right).
\]

For the matrix $N_E$ in \eqref{eq:e6-hom-degree}, see
\cite[A5, Appendix~A]{ElectronicSupplement}. Its ordered Hom coordinates
and the proof of integrality are in Section~\ref{sec:e6-categorical-reconstruction}.

\subsection{Geometric seed data and integral completions}
\label{app:seeds}\label{app:normalization}
The functions and their orders are specified in
Sections~\ref{subsec:symplectic-models}--\ref{subsec:f4-geometric}.
For $A_7\downarrow C_4$, use the exchange formulas of
Section~\ref{sec:folded-seed} with row $8$ reversed. For $E_6\downarrow F_4$,
use the ordered terms of \eqref{eq:e6-formal-exchanges}.
The following table determines the remaining three oriented exchange
matrices. The unimodular completions used for the dual coordinates are
supplied in the electronic data.

Write $z=(x_1,\ldots,x_r,f_1,\ldots,f_s)$ and
$E_i=M_i^++M_i^-$. For the seeds with four mutable variables and the $A_9\downarrow C_5$ seed, the table fixes the orientation
$B_{ij}=\exp_{z_j}M_i^+-\exp_{z_j}M_i^-$.
In particular, exchanging the two entries of a row is not implicit.
All ordinary and associated exchanges agree at these starting seeds:
the binomials have primitive, linearly independent exponent-difference
vectors and disjoint monomial supports.
Subsequent mutations use the associated exchange Laurent polynomials.

{\small
\begin{longtable}{@{}rll@{}}
\toprule
$i$ & $M_i^+$ & $M_i^-$\\
\midrule
\endhead
\multicolumn{3}{l}{\textbf{$A_9\downarrow C_5$}: $r=15$, $s=14$}\\[2pt]
1&$x_{5}f_{5}$&$x_{2}x_{4}$\\
2&$x_{1}x_{6}$&$x_{3}x_{5}$\\
3&$x_{2}x_{7}$&$x_{6}f_{13}$\\
4&$x_{1}x_{8}f_{6}$&$x_{5}f_{1}f_{5}f_{7}$\\
5&$x_{2}x_{4}x_{9}$&$x_{1}x_{6}x_{8}$\\
6&$x_{3}x_{5}x_{10}$&$x_{2}x_{7}x_{9}$\\
7&$x_{6}f_{4}f_{11}f_{13}$&$x_{3}x_{10}f_{12}$\\
8&$x_{5}x_{11}f_{7}$&$x_{4}x_{9}x_{13}$\\
9&$x_{6}x_{8}x_{12}$&$x_{5}x_{10}x_{11}$\\
10&$x_{7}x_{9}f_{3}f_{9}$&$x_{6}x_{14}$\\
11&$x_{9}x_{13}f_{14}$&$x_{8}x_{12}x_{15}$\\
12&$x_{11}x_{14}$&$x_{9}f_{10}f_{14}$\\
13&$x_{8}x_{15}f_{8}$&$x_{11}f_{2}f_{7}f_{9}$\\
14&$x_{10}x_{15}f_{10}$&$x_{12}f_{3}f_{9}f_{11}$\\
15&$x_{11}x_{14}$&$x_{13}f_{3}f_{11}f_{14}$\\
\addlinespace
\multicolumn{3}{l}{\textbf{$D_4\downarrow G_2$}: $r=4$, $s=6$}\\[2pt]
1&$x_{2}x_{3}$&$x_{4}f_{3}$\\
2&$x_{1}x_{4}$&$f_{2}f_{6}$\\
3&$f_{2}f_{3}f_{4}$&$x_{1}f_{1}$\\
4&$x_{1}f_{5}$&$f_{6}$\\
\addlinespace
\multicolumn{3}{l}{\textbf{$F_4\downarrow B_4$}: $r=4$, $s=8$}\\[2pt]
1&$x_{4}f_{5}$&$x_{2}f_{8}$\\
2&$x_{1}x_{3}f_{3}$&$x_{4}f_{2}f_{5}$\\
3&$x_{2}f_{6}$&$f_{2}f_{4}f_{5}$\\
4&$x_{2}f_{7}$&$f_{1}f_{3}f_{8}$\\
\addlinespace
\bottomrule
\end{longtable}
}
For each seed choose a unimodular completion $B_p$ by adjoining
standard basis rows to $B$. The completions used in the computations
are the electronic matrices named \texttt{P}; all formulas below
involving $B_p$ refer to these choices.

\subsection{Affine models for the branching pairs}\label{app:affine}
We apply Lemmas~\ref{lem:independent-zero} and~\ref{lem:unit-cover}
to the seeds in Section~\ref{app:seeds}. Polynomial frozen coefficients
are restored by the argument of Section~\ref{app:polynomial-frozen}.
\subsubsection*{The complete-intersection cases.}
For $A_7\downarrow C_4$, the sequence $(4,1,2,3)$ gives the independent-zero
property on both the original and dual sides. For $E_6\downarrow F_4$, use
$(1,2)$ and $(8,1,2)$, respectively. The cases with four mutable variables already have
the property in their geometric coordinates. Their dual exchanges,
with mutable variables $u_i$ and invertible coefficient $d_1$, are
\[
\begin{array}{c|llll}
&D_1&D_2&D_3&D_4\\\hline
D_4\downarrow G_2&u_2u_4+u_3&1+u_1&1+d_1u_1&u_1+u_2\\
F_4\downarrow B_4&1+u_2&u_1+u_3u_4&1+d_1u_2&u_1+u_2.
\end{array}
\]
In the $D_4\downarrow G_2$ case the only possible zero sets of mutable coordinates
are the empty set, singletons, and $\{2,3\},\{3,4\}$, on both sides.
For $F_4\downarrow B_4$ the two pairs are $\{1,3\},\{3,4\}$. Substitution in
the binomials verifies these lists: if precisely one monomial vanishes,
the corresponding exchange equation is impossible. For every remaining
zero set, the exchanges indexed by that set are independent of its
coordinates. Lemma~\ref{lem:independent-zero} therefore applies.

\subsubsection*{The two $A_9\downarrow C_5$ covers.}
Each table describes three covers by pairs of principal open subsets
and four terminal complete-intersection presentations. The final column gives the number of remaining mutable variables.
Every active row lattice in both tables is saturated; every used
binomial update agrees with independent monomial substitution.
There are $33$ original and $53$ dual mutation steps. The four final
charts have $12,12,13,14$ mutable variables, respectively, and the
numbers of possible zero sets are $322,470,670,1160$ on both sides.
Testing the $32768$ subsets across these charts gives the
independent-zero property. In the fourth chart, dependence of one
exchange polynomial on another mutable coordinate is not symmetric
in the two indices. The independent-zero test does not require this
symmetry.
\subsubsection*{The $A_9\downarrow C_5$ original cover.}
At each child, invert the indicated parent coordinate and then apply
the additional mutation sequence.
\begin{center}\small
\begin{tabular}{@{}rlll@{}}
\toprule
Node & Parent/inversion & Additional word & Conclusion\\
\midrule
1&--&$\begin{gathered}(6,5,7,13,9,11,3,1,12,6,10,\\1,12,8,11,9)\end{gathered}$&split $(1,12)$\\[3pt]
2&1 / 1&$(12,9,10,2)$&split $(8,11)$\\[3pt]
3&2 / 8&$(13,3,11)$&split $(3,11)$\\[3pt]
4&3 / 3&$(11,5,7,4)$&12 mutable variables\\[3pt]
5&3 / 11&$\varnothing$&12 mutable variables\\[3pt]
6&2 / 11&$\varnothing$&13 mutable variables\\[3pt]
7&1 / 12&$(3,5,13,6,15,14)$&14 mutable variables\\[3pt]
\bottomrule
\end{tabular}
\end{center}
\subsubsection*{The $A_9\downarrow C_5$ dual cover.}
Use the same convention for the dual cover.
\begin{center}\small
\begin{tabular}{@{}rlll@{}}
\toprule
Node & Parent/inversion & Additional word & Conclusion\\
\midrule
1&--&$\begin{gathered}(6,5,7,11,9,3,6,2,12,1,11,\\1,4,8,15,7,3,4,14,11,1,6)\end{gathered}$&split $(8,2)$\\[3pt]
2&1 / 8&$(12,13,2,1,13,12)$&split $(10,6)$\\[3pt]
3&2 / 10&$(6,11)$&split $(6,11)$\\[3pt]
4&3 / 6&$(11,5,7,4)$&12 mutable variables\\[3pt]
5&3 / 11&$\varnothing$&12 mutable variables\\[3pt]
6&2 / 6&$\varnothing$&13 mutable variables\\[3pt]
7&1 / 2&$\begin{gathered}(7,4,5,11,15,13,14,9,10,15,13,\\4,11,1,12,10,13,3,9)\end{gathered}$&14 mutable variables\\[3pt]
\bottomrule
\end{tabular}
\end{center}

\subsection{Regular functions with negative coordinate gradients}
\label{app:bounds}\label{app:comparison-certificates}
We compute the polynomials $T_i$ in the functions
$\psi_i=y^{-B_p^{-1}e_i}T_i$ introduced in
Section~\ref{sec:applications}. Their regularity is tested by
Lemma~\ref{lem:center}.

\subsubsection*{The polynomials.}
The four polynomials for $D_4\downarrow G_2$ and $F_4\downarrow B_4$ are given in
Sections~\ref{subsec:d4-geometric} and~\ref{subsec:f4-geometric}.
Their coefficient polynomials satisfy the required divisibility directly.
For $A_7\downarrow C_4$, $A_9\downarrow C_5$, and $E_6\downarrow F_4$, use the following finite
construction. Start with the exponent set $S=\{0\}$ and repeat the
steps in the order $j=1,\ldots,r$, six times. For the coefficient polynomial with exponents other than the $j$th fixed at $k$, put
\[ d=\delta_{ij}-\sum_{t\ne j}B_{jt}k_t. \]
In these calculations $d\le1$. The exponent set is unchanged when $d\le0$.
When $d=1$, pair its least remaining exponent $a$ with $a+1$,
remove this pair from the remaining set, and continue. The union of
the pairs replaces this exponent set and contains every previous exponent. After the six repetitions, set $T_i=\sum_{k\in S}y^k$.
Division of each resulting coefficient polynomial by $(1+y_j)^{[d]_+}$ verifies the
construction.

The numbers of terms are
\[
\begin{array}{c|l}
 A_7\downarrow C_4&(6,4,4,20,6,6,4,5)\\
 A_9\downarrow C_5&(6,11,10,10,70,85,5,10,65,21,15,23,15,11,6)\\
 D_4\downarrow G_2&(3,2,2,4)\\
 E_6\downarrow F_4&(6,3,4,3,4,9,6,2)\\
 F_4\downarrow B_4&(2,3,2,3).
\end{array}
\]
Every constant coefficient is one. Lemma~\ref{lem:center}, applied with
$-\pair{h_i}{b_j}=\delta_{ij}$, proves regularity of all these functions.

\subsection{Boundary functions and mutation sequences}\label{app:boundary-data}
Apply the construction in Section~\ref{app:boundary-support-construction}
to each frozen variable, in the mutable order $1,\ldots,r$.
Every resulting coefficient polynomial satisfies Lemma~\ref{lem:center};
no nonzero coefficient polynomial requires a power greater than one.
Thus
\[ \Psi_f=y_f\sum_{k\in K_f}\prod_{i=1}^r y_i^{k_i},\qquad h_{f,k}=e_f^*+\sum_{i=1}^r k_i e_i^*. \]
The numbers of terms are $44,109,14,44,22$, in the order
$A_7\downarrow C_4,A_9\downarrow C_5,D_4\downarrow G_2,E_6\downarrow F_4,F_4\downarrow B_4$.
Sort the normals first by frozen position and then lexicographically
by their mutable exponents.
\subsubsection*{Positive balances.} Set $\zeta_j=1$ except for the following entries:
\[
\begin{gathered}
\text{$A_7\downarrow C_4$:}\\
\zeta_{6}=5,\quad \zeta_{15}=16,\quad \zeta_{16}=2,\quad \zeta_{25}=13,\quad \zeta_{26}=5\\
\zeta_{34}=2,\quad \zeta_{41}=9,\quad \zeta_{43}=8
\end{gathered}
\]
\[
\begin{gathered}
\text{$A_9\downarrow C_5$:}\\
\zeta_{25}=3, \quad \zeta_{32}=5, \quad \zeta_{38}=38, \quad \zeta_{39}=32, \quad \zeta_{58}=56, \quad \zeta_{60}=23, \quad \zeta_{79}=40\\
\zeta_{81}=8, \quad \zeta_{89}=34, \quad \zeta_{93}=7, \quad \zeta_{95}=10, \quad \zeta_{96}=10, \quad \zeta_{99}=2, \quad \zeta_{106}=26\\
\zeta_{108}=6
\end{gathered}
\]
\[
\begin{gathered}
\text{$D_4\downarrow G_2$:}\\
\zeta_{2}=3,\quad \zeta_{4}=2,\quad \zeta_{6}=3,\quad \zeta_{7}=2
\end{gathered}
\]
\[
\begin{gathered}
\text{$E_6\downarrow F_4$:}\\
\zeta_{4}=4, \quad \zeta_{5}=3, \quad \zeta_{11}=22, \quad \zeta_{13}=22, \quad \zeta_{15}=4, \quad \zeta_{31}=2, \quad \zeta_{32}=2\\
\zeta_{34}=9
\end{gathered}
\]
\[
\begin{gathered}
\text{$F_4\downarrow B_4$:}\\
\zeta_{11}=6,\quad \zeta_{13}=3,\quad \zeta_{15}=10,\quad \zeta_{16}=5
\end{gathered}
\]
These vectors satisfy $(HB^\T)^\T\zeta=0$, with $\zeta>0$ and $\rank(HB^\T)=r$. 
\subsubsection*{Original mutation words.} The following finite lists suffice. The empty word is included. Compute the associated exchange Laurent polynomials along each word and its leading matrix $\mathsf L_w$ in the order $\op{lex}(B_p^{-\mathsf t}m)$. For each constructed normal $h_{f,k}$, at least one listed word satisfies $\mathsf L_wh_{f,k}=e_f$. These calculations take place in the original LP pattern and verify the criterion of Lemma~\ref{lem:normal} directly.
\noindent\textbf{$A_7\downarrow C_4$}.\quad 23 words:
{\small
\[
\begin{gathered}
\varnothing,\;(1),\;(3),\;(5),\;(6),\;(8),\;(1,2),\\[2pt]
(2,5),\;(3,4),\;(3,6),\;(5,7),\;(5,8),\;(6,7),\;(7,8),\\[2pt]
(2,5,7),\;(2,5,8),\;(3,4,5),\;(4,7,8),\;(5,7,8),\;(1,4,7,8),\;(2,4,5,7),\\[2pt]
(2,5,7,8),\;(2,4,5,7,8)
\end{gathered}
\]
}
\noindent\textbf{$A_9\downarrow C_5$}.\quad 66 words:
{\small
\[
\begin{gathered}
\varnothing,\;(3),\;(4),\;(7),\;(12),\;(13),\;(14),\;(15),\;(1,4),\;(2,3),\\[2pt]
(4,8),\;(4,13),\;(6,7),\;(8,13),\;(10,14),\;(11,12),\;(11,15),\;(12,14),\\[2pt]
(12,15),\;(13,14),\;(13,15),\;(1,2,3),\;(1,4,8),\;(1,4,13),\;(4,8,11),\\[2pt]
(4,8,13),\;(4,13,15),\;(5,6,7),\;(5,8,13),\;(8,13,14),\;(8,13,15),\;(9,10,14),\\[2pt]
(9,11,15),\;(10,12,14),\;(11,12,13),\;(12,11,15),\;(1,4,5,8),\;(1,4,8,11),\\[2pt]
(1,4,8,13),\;(1,4,13,15),\;(2,5,8,13),\;(4,5,6,7),\;(4,8,13,15),\;(5,8,13,14),\\[2pt]
(5,8,13,15),\;(6,9,11,15),\;(7,10,12,14),\;(8,9,10,14),\;(8,11,13,15),\\[2pt]
(1,4,5,8,11),\;(1,4,5,8,13),\;(1,4,8,13,15),\;(2,5,8,13,14),\;(2,5,8,13,15),\\[2pt]
(3,6,9,11,15),\;(4,8,11,13,15),\;(5,8,11,13,15),\;(1,4,5,8,9,11),\\[2pt]
(1,4,5,8,13,15),\;(1,4,8,11,13,15),\;(2,5,8,11,13,15),\;(5,8,9,11,13,15),\\[2pt]
(1,4,5,8,11,13,15),\;(2,5,8,9,11,13,15),\;(1,4,5,8,9,11,13,15),\\[2pt]
(2,5,6,8,9,11,13,15)
\end{gathered}
\]
}
\noindent\textbf{$D_4\downarrow G_2$}.\quad 8 words:
{\small \[ \varnothing,\;(1),\;(2),\;(3),\;(4),\;(1,4),\;(2,4),\;(1,3,4) \]}
\noindent\textbf{$E_6\downarrow F_4$}.\quad 28 words:
{\small
\[
\begin{gathered}
\varnothing,\;(2),\;(3),\;(5),\;(6),\;(7),\;(8),\;(1,3),\;(2,4),\;(2,5),\\[2pt]
(3,7),\;(4,7),\;(5,6),\;(6,7),\;(6,8),\;(1,2,3),\;(2,4,5),\;(3,6,7),\;(4,5,6),\\[2pt]
(5,6,7),\;(5,6,8),\;(1,4,5,6),\;(3,5,6,7),\;(4,5,6,8),\;(5,6,4,7),\\[2pt]
(1,4,5,6,8),\;(3,5,6,4,7),\;(3,1,5,6,4,7)
\end{gathered}
\]
}
\noindent\textbf{$F_4\downarrow B_4$}.\quad 9 words:
{\small \[ \varnothing,\;(1),\;(2),\;(3),\;(4),\;(1,2),\;(1,4),\;(2,4),\;(2,1,4) \]}

\subsection{Supplementary proofs and data}
\label{app:verification}\label{app:data}
The mathematical supplement \cite{ElectronicSupplement} is organized
by component identifiers; local section and equation numbers restart
within each component. Thus ``E1, Equation (2.5)'' refers to the
companion formulas in component E1.

E1 contains the explicit coefficient polynomials, differential
regularity calculations, and rational inverses for
$E_6\downarrow F_4$ and $F_4\downarrow B_4$. E2 and E3 give the
quotient constructions, all-weight normal-form proofs, and valued
presentation calculations for $B_3\downarrow G_2$ and
$G_2\downarrow A_2$, respectively. S1 gives the geometric triality
matrices, the exhaustive determinant calculation, and all sixteen
zero-slack replacements used in Proposition~\ref{prop:triality-saturation}.
This finite certificate is separate from the computer-free triality
seed and algebra-identification proofs in Section~\ref{sec:verification}.

Component A5 supplies the categorical data cited in
Sections~\ref{sec:e6-categorical-reconstruction} and
\ref{app:categorical-degrees}: $N_E$, its ordered Hom inputs,
and the degrees of all indecomposable presentations. The exceptional
inequality matrices $H_D,H_E,H_F$ remain in
Sections~\ref{subsec:d4-geometric}--\ref{subsec:f4-geometric}.
The inequalities use $Hm\ge0$, with coordinates specified there.

\section*{Declaration of generative AI and AI-assisted technologies}
Section~\ref{sec:mechanism} grew mainly out of prolonged mathematical
discussions with ChatGPT from June to August 2026. The author takes
full responsibility for the content of the manuscript.
The electronic supplement \cite{ElectronicSupplement} was prepared with the help of ChatGPT as well.

\end{document}


\maketitle\vspace{-1.1em}
This supplement contains the exceptional coefficient formulas and rational
inverses, the two hypersurface constructions, the $E_6$ categorical degree, and the finite triality
saturation certificate cited in the article. All branching degrees use
fundamental-weight coordinates, and frozen coefficients remain polynomial.
We write $P(G)$ for the weight lattice and $\rho_G$ for the half-sum
of positive roots; $\pi$ denotes restriction to the subgroup.
The component identifiers E1, E2, E3, A5, and S1 agree with the article.
Local section and equation numbers restart in each component; in particular,
``E1, Equation (2.5)'' refers to the eight exceptional companions.

\begingroup
\titlespacing*{\section}{0pt}{1.4ex}{.5ex}
\tableofcontents
\endgroup
\vspace{1.0em}
\startcomponent{E1}{Exceptional coefficients and inverses}{Exceptional coefficients and rational inverses}
\counterwithin{equation}{section}\counterwithin{theorem}{section}
Global regularity is tested by highest-weight differential equations;
the rational inverses below give full fraction-field generation.
All root actions and coefficient calculations use rational coefficients.

\section{Highest-weight equations and global regularity}
Let $G$ be simply connected and semisimple, with compatible maximal
unipotent subgroups $U_K\subset U=U_G$. Write
$A=\C[G]^{U_G^-\times U_K}$. The principal coefficients $p_i$ have degrees
$(\omega_i;\pi\omega_i)$ and restrict to the torus characters on the
Gauss decomposition. For $\lambda=\sum_i\lambda_i\omega_i$, set
$p^\lambda=\prod_i p_i^{\lambda_i}$. For $P=\prod_i p_i$,
\[
 A[P^{-1}]=\C[p_i^{\pm1}]\otimes\C[U/U_K].
\]
For a simple positive root vector $E_i$, put
\[
 D_iF(u)=\left.\frac{d}{ds}\right|_{s=0}F(\exp(sE_i)u).
\]
These operators preserve right $U_K$-invariance. Pullback of the left
action reverses brackets; this has no effect on the equations below.

\begin{lemma}[Highest-weight equations]\label{E1:lem:indicator}
Let $\lambda=\sum_i\lambda_i\omega_i$ be dominant and
$F\in\C[U/U_K]$. Then
\begin{equation}
 p^\lambda F\in A
 \quad\Longleftrightarrow\quad
 D_i^{\lambda_i+1}F=0\quad\hbox{for every }i.
 \label{E1:eq:indicator}
\end{equation}
If $p^\lambda F$ is regular and $\lambda_i>0$, then
\begin{equation}
 p_i\mid p^\lambda F\text{ in }A
 \quad\Longleftrightarrow\quad D_i^{\lambda_i}F=0.
 \label{E1:eq:divisibility}
\end{equation}
\end{lemma}
\begin{proof}
Pair $\C[U]$ with the enveloping algebra $\U(\mathfrak u)$ by
differentiation at the identity. The highest-coordinate covector
$\ell_\lambda$ of $V_G(\lambda)$ is cyclic as a right
$\U(\mathfrak u)$-module. The highest-weight presentation identifies it
with
\[
 \U(\mathfrak u)\big/
 \sum_i E_i^{\lambda_i+1}\U(\mathfrak u).
\]
Taking the dual identifies the corresponding matrix-coefficient space
on $U$ with the polynomials annihilated by all
$D_i^{\lambda_i+1}$. Indeed, vanishing of $D_i^{\lambda_i+1}F$ on $U$
is equivalent to annihilating
$E_i^{\lambda_i+1}\U(\mathfrak u)$ under the pairing.
These are exactly the restrictions of the left degree-$\lambda$
component of $\C[U_G^-\backslash G]$. Taking right $U_K$-invariants
proves \eqref{E1:eq:indicator}. To divide by $p_i$, apply the same result
to $\lambda-\omega_i$; the polynomial $F$ on the quotient is unchanged.
\end{proof}

If $F$ is a torus eigenfunction, its right weight is read directly from
the root coordinates. In particular, if the coordinate $t_j$ corresponds
to a root $\beta_j$, then
\[
 \wt(p^\lambda t_1^{a_1}\cdots t_d^{a_d})
   =\bigl(\lambda;\pi\lambda-\sum_j a_j\pi\beta_j\bigr).
\]
The finite homogeneous spaces in \eqref{E1:eq:indicator} can therefore be
computed by enumerating monomials of a fixed root weight and taking a
kernel over $\mathbb Q$.

We also use the following elementary consequence. The rings $A$ here
are factorial. If $F$ is irreducible in $\C[U/U_K]$, and
$D_i^{\lambda_i}F\ne0$ whenever $\lambda_i>0$, then $p^\lambda F$ is
prime in $A$. In fact, it is irreducible after inverting $P$, and
\eqref{E1:eq:divisibility} excludes every possible principal factor.
The principal coefficients themselves are prime by their fundamental
left degrees. All units in $A$ are constant.

\section{The \texorpdfstring{$E_6\downarrow F_4$}{E6 to F4} coefficients}
\subsection{The quotient and its differential operators}
Use the edges $(1,3),(3,4),(4,5),(5,6),(2,4)$ and the pinned involution
$\sigma=(1\,6)(3\,5)$. The following ordered roots are written in simple-root
coordinates:
\begin{equation}\label{E1:eq:roots}
\begin{array}{c|rrrrrr@{\qquad}c|rrrrrr}
j&\multicolumn{6}{c}{\beta_j}&j&\multicolumn{6}{c}{\beta_j}\\\hline
1&0&0&0&0&0&1&7&0&0&1&1&1&1\\
2&0&0&0&0&1&0&8&0&1&0&1&1&1\\
3&0&0&0&0&1&1&9&0&1&1&1&1&1\\
4&0&0&0&1&1&0&10&0&1&1&2&1&1\\
5&0&0&0&1&1&1&11&0&1&1&2&2&1\\
6&0&1&0&1&1&0&12&1&1&1&2&2&1
\end{array}
\end{equation}
They are one root from each nonfixed positive-root pair.
Fix the Chevalley generators by the minuscule weight basis of
$V_{E_6}(\omega_1)$: $E_iv_w=v_{w+\alpha_i}$ when
$\langle w,\alpha_i^\vee\rangle=-1$, and it is zero otherwise.
For each positive nonsimple root $\beta$, in increasing height order,
set $E_\beta=[E_i,E_{\beta-\alpha_i}]$ for the first $i=1,\ldots,6$
with a nonzero bracket. This specifies every sign used here
\cite[Definition~2.2]{Geck}.

The map
\[
 (t,v)\longmapsto
 u(t)v,\qquad
 u(t)=\prod_{j=1}^{12}\exp(t_jE_{\beta_j}),\quad v\in U_{F_4},
\]
is a polynomial isomorphism onto $U_{E_6}$.
To see this, order the Lie algebra by positive-root height. At every
height, the chosen root spaces complement the fixed subspace.
The linear terms of the Baker--Campbell--Hausdorff formula give this
direct sum, and its other terms use smaller heights. Solving successively
therefore gives a polynomial inverse. Hence
$U_{E_6}/U_{F_4}\simeq\mathbb A^{12}$ with coordinates $t_1,\ldots,t_{12}$.

With $\partial_j=\partial/\partial t_j$, left translation gives
\begin{equation}\label{E1:eq:e6-operators}
\begin{aligned}
D_1&=-\partial_1-t_2\partial_3-t_4\partial_5-t_6\partial_8
                              +t_{11}\partial_{12},\\
D_2&=t_4\partial_6+t_5\partial_8+t_7\partial_9,\\
D_3&=-\partial_2+t_5\partial_7+t_8\partial_9
 +(t_{10}+t_4t_8-t_5t_6)\partial_{11},\\
D_4&=t_2\partial_4+t_3\partial_5+t_9\partial_{10},\\
D_5&=\partial_2+t_1\partial_3,\qquad D_6=\partial_1.
\end{aligned}
\end{equation}
For completeness, these formulas can be derived without differentiating
any branching coefficient. Project
$\operatorname{Ad}(u(t)^{-1})E_i$ to
$\mathfrak u_{E_6}/\mathfrak u_{F_4}$ and express it in the columns
$u(t)^{-1}\partial_j u(t)$. This is a triangular linear system with ones on its diagonal,
computed in the specified integer minuscule matrices.

\subsection{The normalized polynomials}
For a function $f$ of left degree $\lambda$, write
$ f^{\circ}=p^{-\lambda}f$. Put
\begin{equation}\label{E1:eq:abbreviations}
\begin{aligned}
 F&=t_{10}t_2+t_{11}-t_4t_9+t_6t_7,&
 L&=t_{10}+t_4t_8-t_5t_6,\\
 V&=t_{11}t_8+t_{12}t_6,&
 H&=t_{12}-t_{10}t_3+t_9t_5-t_7t_8,\\
 Q&=t_2t_8-t_3t_6+t_9.
\end{aligned}
\end{equation}
The twelve nonprincipal functions are defined by the following table.
The degree column records $(\lambda;\mu)$; thus the global function is
$p^\lambda$ times the displayed polynomial.
\begin{equation}\label{E1:eq:e6-table}
\begin{array}{c|l|l}
f& f^{\circ}&(\lambda;\mu)\\\hline
u_{12}&-t_6&(\omega_2;\varpi_4)\\
u_4&-F&(\omega_5;\varpi_4)\\
u_9&t_{11}&(\omega_3;\varpi_4)\\
u_{21}&Ft_8+t_6H&(\omega_2+\omega_5;\varpi_3)\\
u_{22}&Q&(\omega_2+\omega_5;\varpi_2)\\
u_{23}&t_9L-V&(\omega_2+\omega_4;\varpi_2)\\
u_{24}&V&(\omega_2+\omega_3;\varpi_3)\\
u_{25}&t_9&(\omega_2+\omega_3;\varpi_2)\\
z_1&t_1F+H&(\omega_6;0)\\
z_2&t_{12}&(\omega_1;0)\\
z_8&t_7L-t_{11}t_5-t_{12}t_4&(\omega_4;\varpi_1)\\
z_{33}&H(t_2V+t_{11}t_9)+F(t_3V-t_{12}t_9)
 &(\omega_2+\omega_3+\omega_5;\varpi_2)
\end{array}
\end{equation}
The other six functions are
\[
 (p_1,p_2,p_3,p_4,p_5,p_6)=(z_{14},z_{13},z_{10},z_7,z_5,z_3),
 \qquad p_i^{\circ}=1,
\]
with degree $(\omega_i;\pi\omega_i)$, where
$\pi(a_1,\ldots,a_6)=(a_2,a_4,a_3+a_5,a_1+a_6)$.

\begin{proposition}\label{E1:prop:e6-functions}
All eighteen displayed functions are regular, prime, and span their
branching components. The eight exchange polynomials in the manuscript
have regular nonzero quotients by the corresponding mutable functions,
and these quotients are coprime to their parents.
\end{proposition}
\begin{proof}
Apply Lemma~\ref{E1:lem:indicator} with \eqref{E1:eq:e6-operators}.
Here is the complete finite kernel calculation. The second column counts
all monomials of the prescribed right weight, the third is the rank of
the differential equations, and the last gives a variable in which the
polynomial in \eqref{E1:eq:e6-table} is primitive and linear.
\[
\begin{array}{c|rr|c@{\qquad}c|rr|c}
f&\#&\mathrm{rank}&t_j&f&\#&\mathrm{rank}&t_j\\\hline
u_{12}&1&0&t_6&u_{24}&20&19&t_6\\
u_4&8&7&t_2&u_{25}&4&3&t_9\\
u_9&8&7&t_{11}&z_1&15&14&t_1\\
u_{21}&20&19&t_2&z_2&15&14&t_{12}\\
u_{22}&4&3&t_2&z_8&20&19&t_4\\
u_{23}&20&19&t_4&z_{33}&201&200&t_2
\end{array}
\]
For each principal function the count is one and the rank is zero.
More explicitly, if $C$ is the $F_4$ Cartan matrix with columns in
fundamental-weight coordinates, the monomials are exactly
\[
 \sum_j a_j\bar\beta_j=C^{-1}(\pi\lambda-\mu),\qquad a_j\ge0,
 \quad
 \bar\beta_j=((\beta_j)_2,(\beta_j)_4,
                    (\beta_j)_3+(\beta_j)_5,(\beta_j)_1+(\beta_j)_6).
\]
Substituting each monomial into the six equations in
\eqref{E1:eq:indicator} gives the stated ranks by rational elimination.
Its nonzero kernel vectors are the polynomials in
\eqref{E1:eq:e6-table}. Thus the components are one-dimensional,
over $\C$ as well as $\mathbb Q$.

In each indicated variable, the leading and constant coefficients have
gcd one. A primitive linear polynomial is irreducible over
$\C[t_1,\ldots,t_{12}]$. Direct differentiation also gives
$D_i^{\lambda_i} f^{\circ}\ne0$ for every $i$ with $\lambda_i>0$.
The consequence of \eqref{E1:eq:divisibility} noted above proves primality.

In mutable order $(12,4,9,21,22,23,24,25)$ the normalized companions are
\begin{equation}\label{E1:eq:e6-companions}
\begin{array}{c|l@{\qquad}c|l}
i& (u_i')^{\circ}&i& (u_i')^{\circ}\\\hline
12&t_{12}F-t_{11}H&22&t_{10}\\
4&t_6t_1-t_8&23&Qt_7-t_{11}t_3-t_{12}t_2\\
9&t_8&24&FQ- u_{21}^{\circ}t_2\\
21&-(t_2V+t_{11}t_9)&25&L
\end{array}
\end{equation}
The bar on a companion uses its own left degree. Polynomial substitution
proves all eight identities
\begin{align*}
u_{12}u_{12}'&=u_4u_{24}+u_9u_{21},&
u_4u_4'&=u_{21}z_3+u_{12}z_1z_5,\\
u_9u_9'&=u_{12}z_2z_{10}+u_{24}z_{14},&
u_{21}u_{21}'&=u_{12}z_5z_{33}+u_4u_{22}u_{24},\\
u_{22}u_{22}'&=u_{21}z_7+u_{23}z_5,&
u_{23}u_{23}'&=u_{22}u_{25}z_8+z_7z_{13}z_{33},\\
u_{24}u_{24}'&=u_9u_{21}u_{25}+u_{12}z_{10}z_{33},&
u_{25}u_{25}'&=u_{24}z_7+u_{23}z_{10}.
\end{align*}
Each polynomial in \eqref{E1:eq:e6-companions} satisfies
\eqref{E1:eq:indicator} at its companion degree and is nonzero.
In each case the companion degree minus the parent degree has a negative
Dynkin coordinate, so primality of the parent excludes a common factor.
\end{proof}

The binomials are irreducible in the polynomial coefficient ring.
For each ordered pair $i\ne j$, substituting $u_j=E_j/u_j$ into $E_i$
and taking a numerator relatively prime to its monomial denominator
gives a polynomial coprime to $E_j$. Thus the associated exchange
Laurent polynomials equal the ordinary ones. These are finite polynomial gcd tests with the displayed exchanges.
The exchange matrix has a unit minor on columns
$(u_4,u_{21},u_{22},u_{23},u_{24},u_{25},z_1,z_{33})$.
Together with the ten defining degrees in the manuscript, this gives
\[
0\longrightarrow\Z^8\xrightarrow{B_E^{\mathsf t}}\Z^{18}
\xrightarrow{W_E^{\mathsf t}}P(E_6)\oplus P(F_4)\longrightarrow0.
\]
The frozen degrees sum to $(2\rho_{E_6};2\rho_{F_4})$.

\subsection{The full rational inverse}\label{E1:subsec:e6-inverse}
Regard the twelve normalized values in \eqref{E1:eq:e6-table} as independent
coordinates. Set
\[
\begin{gathered}
 a=- u_{12}^{\circ},\quad b= u_9^{\circ},\quad c= u_{25}^{\circ},\quad
 d= z_2^{\circ},\quad f=- u_4^{\circ},\\
 g= u_{21}^{\circ},\quad q= u_{22}^{\circ},\quad r= u_{23}^{\circ},\quad
 v= u_{24}^{\circ},\qquad \ell=(r+v)/c.
\end{gathered}
\]
The inverse is the following ordered recovery:
\begin{equation}\label{E1:eq:e6-inverse}
\begin{aligned}
t_6&=a,&t_{11}&=b,&t_9&=c,&t_{12}&=d,\\
t_8&=\frac{v-da}{b},&
t_{10}&=\frac{g+r}{q},\\
t_2&=\frac{a z_{33}^{\circ}-gbc+fqv}{gv},&
t_3&=\frac{t_2t_8+c-q}{a},\\
t_4&=\frac{a z_8^{\circ}-\ell f+t_{10}(\ell t_2+b)}{r},&
t_5&=\frac{t_4t_8-\ell+t_{10}}{a},\\
t_7&=\frac{f-t_{10}t_2-b+t_4c}{a},&
t_1&=\frac{a z_1^{\circ}-g+ft_8}{af}.
\end{aligned}
\end{equation}
Every denominator is a monomial in the eight mutable coordinates.
Here is a direct elimination proof. From \eqref{E1:eq:abbreviations},
\[
 g+r=t_{10}q,\qquad
 a z_{33}^{\circ}=gv t_2+gbc-fqv,\qquad
 a z_8^{\circ}=r t_4+\ell f-t_{10}(\ell t_2+b).
\]
These determine $t_{10},t_2,t_4$. The remaining coordinates follow
from the definitions of $V,Q,F,L,H,z_1$.
Conversely, inserting \eqref{E1:eq:e6-inverse} in \eqref{E1:eq:e6-table}
recovers all twelve independent coordinates: the same three identities
recover $g, z_{33}^{\circ}, z_8^{\circ}$, and the defining equations recover the
others. 

It follows, without a Jacobian degree argument, that the seed generates
$\Frac\B(E_6,F_4)$. More precisely, with $M=\prod_i u_i$,
\[
 \B(E_6,F_4)[(PM)^{-1}]
 =\C[p_1^{\pm1},\ldots,p_6^{\pm1},u_i^{\pm1},z_1,z_2,z_8,z_{33}].
\]
These statements verify all hypotheses of the exceptional upper-algebra
comparison in the manuscript. It proves the asserted equality with the upper
LP algebra, keeping the frozen coefficients polynomial.

\section{The \texorpdfstring{$F_4\downarrow\Spin_9$}{F4 to Spin9} coefficients}
\subsection{The quotient action}
Use
\[
 \alpha_1=e_2-e_3,\quad\alpha_2=e_3-e_4,\quad\alpha_3=e_4,
 \quad\alpha_4=\tfrac12(e_1-e_2-e_3-e_4).
\]
The positive $B_4$ roots are the integral roots. The maximal parabolic
with Levi of type $B_3$ on vertices $1,2,3$ has nilradical
$\mathfrak n_1\oplus\mathfrak n_2$: $\mathfrak n_1$ has the eight roots
$\frac12(e_1\pm e_2\pm e_3\pm e_4)$ and $\mathfrak n_2$ is its
seven-dimensional centre. Thus
\[
 U_{F_4}=\exp(\mathfrak n_1\oplus\mathfrak n_2)\rtimes U_{B_3},
 \qquad U_{B_4}=\exp(\mathfrak n_2)\rtimes U_{B_3}.
\]
The quotient is the eight-dimensional vector space $\mathfrak n_1$.
Order its root coordinates $\xi_1,\ldots,\xi_8$ by
$+++,++-,+-+,+--,-++,-+-,--+,---$.
Choose their scalars so that
\begin{equation}\label{E1:eq:f4-operators}
\begin{aligned}
D_1&=\xi_5\partial_3-\xi_6\partial_4,&
D_2&=\xi_3\partial_2-\xi_7\partial_6,\\
D_3&=\xi_2\partial_1+\xi_4\partial_3-\xi_6\partial_5-\xi_8\partial_7,&
D_4&=\partial_8.
\end{aligned}
\end{equation}
Here $D_1,D_2,D_3$ are the $B_3$ spin action and $D_4$ is translation
in the simple-root direction.
These operators also give an independent finite description of the
quotient: they satisfy the $F_4$ Serre relations, and their root brackets
have eight independent constant translations, nine nonzero $B_3$ linear
fields, and seven zero central fields. The stabilizer at the origin is
exactly $\mathfrak u_{B_4}$. Their nilpotent affine action integrates to
$U_{F_4}$, with orbit all of $\mathbb A^8$, so its homogeneous space is
$U_{F_4}/U_{B_4}$. The root brackets and stabilizer are obtained from the four displayed operators.

\subsection{Global regularity and the seed}
Write $p_i$ for the principal coefficients. The polynomials are
\begin{equation}\label{E1:eq:f4-coefficients}
\begin{gathered}
 q_1=p_3\xi_1,\quad q_2=p_2\xi_2,\quad q_4=p_1\xi_4,
 \quad q_6=p_2\xi_6,\quad q_7=p_3\xi_7,\quad q_8=p_4\xi_8,\\
 r=p_2(\xi_1\xi_6+\xi_2\xi_5),\quad
 t=p_2(\xi_3\xi_6+\xi_4\xi_5),\\
 s=p_4(\xi_1\xi_8+\xi_2\xi_7+\xi_3\xi_6+\xi_4\xi_5),
 \qquad h=p_2s-p_4t.
\end{gathered}
\end{equation}
For each of the nine functions $p_jF$ in this display,
\[
 D_iF=0\ (i\ne j),\qquad D_j^2F=0,\qquad D_jF\ne0.
\]
Lemma~\ref{E1:lem:indicator} therefore proves global regularity, not merely
regularity on the Gauss chart. The right degrees follow from the root
weights. These functions have fundamental left degree and are prime.
Each homogeneous kernel is one-dimensional: $r,t,s$ have respectively
two, four, and four candidate monomials, and the other six have one each.
Thus these polynomials span the lines also described by the
Albert-algebra and Clifford decompositions in the manuscript.

Put
\begin{align*}
 w&=p_1p_2(\xi_2\xi_3-\xi_1\xi_4),&
 \ell&=p_1p_3\xi_3,\\
 v&=p_1p_4(\xi_3\xi_8+\xi_4\xi_7),&
 q_5&=p_3\xi_5,\\
 z&=p_1p_3(\xi_1\xi_4\xi_7-\xi_2\xi_3\xi_7
                         -\xi_3^2\xi_6-\xi_3\xi_4\xi_5).
\end{align*}
The same differential equations prove regularity of these functions at
their respective left degrees
$\omega_1+\omega_2,\omega_1+\omega_3,\omega_1+\omega_4,
\omega_3,\omega_1+\omega_3$.
The seed and companions are
\[
\begin{gathered}
 (x_1,x_2,x_3,x_4)=(p_2,-\ell,-v,-w),\\
 (c_1,\ldots,c_8)=(p_1,p_4,q_1,-q_4,q_7,q_8,r,t),
 \qquad (x_1',x_2',x_3',x_4')=(z,h,p_3,q_5).
\end{gathered}
\]
Direct polynomial substitution gives
\begin{align*}
 x_1x_1'&=c_8x_2+c_5x_4,&
 x_2x_2'&=c_3x_1x_3+c_2c_5x_4,\\
 x_3x_3'&=c_6x_2+c_2c_4c_5,&
 x_4x_4'&=c_7x_2+c_1c_3c_8.
\end{align*}
Primality of $w,\ell,v$ follows also from their degrees, as in the
manuscript: a factorization would require a factor of left degree
$\omega_1$, with right degree $\varpi_2$ or $\varpi_4$, neither of which
can occur in their right degrees. Each companion-minus-parent degree
has a negative left Dynkin coordinate, so the companions are coprime
to their parents. Polynomial gcd tests give $\widehat E_i=E_i$.

\subsection{The rational inverse}
Write $E_3=c_6x_2+c_2c_4c_5$ and recover
\[
 p_1=c_1,\quad p_2=x_1,\quad p_4=c_2,\quad p_3=E_3/x_3.
\]
Then set
\begin{align*}
 \xi_1&=c_3/p_3,&\xi_4&=-c_4/p_1,&
 \xi_7&=c_5/p_3,&\xi_8&=c_6/p_4,\\
 \xi_3&=-x_2/(p_1p_3),&&
 \xi_2=\frac{-x_4/(p_1p_2)+\xi_1\xi_4}{\xi_3}.
\end{align*}
With $\Delta=\xi_2\xi_3-\xi_1\xi_4$, finish with
\[
 \xi_5=\frac{\xi_3c_7-\xi_1c_8}{p_2\Delta},
 \qquad
 \xi_6=\frac{\xi_2c_8-\xi_4c_7}{p_2\Delta}.
\]
Substitution in both directions gives the identity on the twelve
independent coordinates. The functions therefore
generate the full fraction field and are algebraically independent.
Termwise weight calculation gives the frozen sum
$(2\rho_{F_4};2\rho_{B_4})$. The exchange and weight lattices are exact,
and the polynomial Jacobian is
\[
 \det\frac{\partial(\mathbf x,\mathbf c)}
               {\partial(\mathbf p,\boldsymbol\xi)}
   =(p_1p_2p_3p_4)x_1x_2x_3x_4.
\]
Together with global regularity and the prime and coprimality assertions,
these prove the hypotheses of the manuscript's exceptional comparison
proposition. It follows that
\[\B(F_4,\Spin_9)=\operatorname{UB}(\Sigma_F)=\U_{\mathrm{LP}}(\Sigma_F),\]
with polynomial frozen coefficients.

\par\bigskip\Needspace{12\baselineskip}
\startcomponent{E2}{The B3 to G2 hypersurface}{The hypersurface $B_3\downarrow G_2$}
\section{The branching algebra and its seed}
Let $G=\Spin_7(\C)$ and $K=G_2(\C)$, with compatible positive
unipotent groups. Put $\B=\C[G]^{U_G^-\times U_K}$, with
$\B_{\lambda,\mu}\simeq\Hom_K(V_K(\mu),V_G(\lambda))$.
Number $B_3$ with its third root short, and let $\varpi_1$ be the
seven-dimensional $G_2$ weight. The restriction convention of
\cite[Section~4]{HP} is
\[
 \pi\omega_1=\pi\omega_3=\varpi_1,\qquad \pi\omega_2=\varpi_2
\]
The functions used below have degrees
\[
\begin{array}{c|ccccccc}
 &p_1&p_2&p_3&s&h&q&r\\\hline
\lambda&\omega_1&\omega_2&\omega_3&\omega_3&\omega_1+\omega_2&\omega_2&\omega_1+\omega_3\\
\mu&\varpi_1&\varpi_2&\varpi_1&0&\varpi_2&\varpi_1&\varpi_2.
\end{array}
\]
\begin{proposition}\label{E2:thm:main}
There are functions of these degrees for which
\begin{equation}\label{E2:eq:ring}
 \B=\C[p_1,p_2,p_3,s,h,q,r]/(qr-p_1p_2s-p_3h).
\end{equation}
The seed $\Sigma=(q;p_1,p_2,p_3,s,h)$ has exchange
$E_q=p_1p_2s+p_3h$, mutation $q'=r$, and
$\mathcal A_{\mathrm{LP}}(\Sigma)=\mathcal U_{\mathrm{LP}}(\Sigma)=\B$
with polynomial frozen coefficients.
\end{proposition}
In the order $(q,p_1,p_2,p_3,s,h)$ the exchange vector is
$B=(0,1,1,-1,1,-1)$. The seed is of cluster type $A_1$, and
$\wt(p_1p_2p_3sh)=(2,2,2;2,2)$.
Its seven degrees agree with those in \cite[Section~5]{PR}.

\section{Quotient coordinates and regularity}
For Chevalley generators of $B_3$, take
\[
 E_\alpha=E_1+E_3,\quad F_\alpha=F_1+F_3,\quad H_\alpha=H_1+H_3,
 \qquad (E_\beta,F_\beta,H_\beta)=(E_2,F_2,H_2).
\]
They satisfy the $G_2$ relations, with $\alpha$ short. For the mixed
Serre relation, $\ad E_1$ and $\ad E_3$ commute and their second
and third powers, respectively, vanish on $E_2$; hence
$(\ad(E_1+E_3))^4E_2=0$. The negative relation is identical.
This gives the standard embedding and the stated restriction of weights.

On $\C[a,b,c]$ consider
\begin{equation}\label{E2:eq:derivations}
 D_1=-\partial_a,\qquad D_2=a\partial_b,\qquad
 D_3=\partial_a+b\partial_c.
\end{equation}
These satisfy the $B_3$ Serre relations and decrease the grading
$\deg(a,b,c)=(1,2,3)$, so they integrate to a unipotent action.
The brackets
$[D_1,D_2]=-\partial_b$ and $[[D_1,D_2],D_3]=-\partial_c$,
together with $D_1$, give all translations. The action is transitive.
The generators of $U_K$ act by $b\partial_c$ and $a\partial_b$,
which fix the origin. Its stabilizer has dimension $9-3=6=\dim U_K$;
connectedness of unipotent subgroups gives
$U_G/U_K\simeq\mathbb A^3$.
The identification is $T_K$-equivariant for coordinate weights
$-\alpha,-(\alpha+\beta),-(2\alpha+\beta)$.
The action on functions is inverse-left pullback; its overall sign
does not affect the kernels in E1, Lemma~\ref{E1:lem:indicator}.

For the principal functions $p_i$, the Gauss chart is
$\B[(p_1p_2p_3)^{-1}]=\C[p_1^{\pm1},p_2^{\pm1},p_3^{\pm1},a,b,c]$.
Define
\begin{equation}\label{E2:eq:functions}
 q=p_2b,\qquad r=p_1p_3a,\qquad s=p_3c,\qquad h=p_1p_2(ab-c).
\end{equation}
The maximal derivative orders of $b,a,c,ab-c$ are, respectively,
$(0,1,0)$, $(1,0,1)$, $(0,0,1)$, $(1,1,0)$.
E1, Lemma~\ref{E1:lem:indicator}, proves global regularity.
The coordinate weights give the degrees above, and multiplication
gives $qr=p_1p_2s+p_3h$.

\section{Generation and the upper algebra}
Put $d=ab-c$. The polynomials
\begin{equation}\label{E2:eq:normal}
 a^i b^j c^k d^\ell,\qquad i,j,k,\ell\ge0,\quad\min(i,j)=0,
\end{equation}
form a basis of $\C[a,b,c]$, by the relation $ab=c+d$ with leading
monomial $ab$.
\begin{lemma}\label{E2:lem:orders}
The derivative orders of \eqref{E2:eq:normal} are
$(i+\ell,j+\ell,i+k)$. In a finite nonzero sum of distinct normal
monomials, each order is the maximum of the corresponding orders.
\end{lemma}
\begin{proof}
In coordinates $(a,b,c)$, the leading terms for $D_1$ and $D_2$ come
from $a^{i+\ell}b^{j+\ell}c^k$. Their exponents recover the indices,
since $\ell=\min(i+\ell,j+\ell)$. For $D_3$, use $(a,b,d)$,
where $D_3=\partial_a$. The leading terms are
$a^{i+k}b^{j+k}d^\ell$, and $k=\min(i+k,j+k)$.
At each maximal order the leading coefficients are therefore independent.
\end{proof}
\begin{proof}[Proof of Proposition~\ref{E2:thm:main}]
By E1, Lemma~\ref{E1:lem:indicator}, the coefficient space of
$\lambda$ has basis \eqref{E2:eq:normal} subject to
$i+\ell\le\lambda_1$, $j+\ell\le\lambda_2$, $i+k\le\lambda_3$.
Every homogenized element is
\[
 p^\lambda a^ib^jc^kd^\ell
 =p_1^{\lambda_1-i-\ell}p_2^{\lambda_2-j-\ell}
  p_3^{\lambda_3-i-k}r^iq^js^kh^\ell.
\]
All exponents are nonnegative, proving generation. The monomials
avoiding $qr$ map to distinct normal basis elements in each left
degree, so \eqref{E2:eq:ring} is the sole relation. The six initial
functions are algebraically independent. The exchange is primitive
and linear in $h$, hence irreducible, and mutation exchanges $q,r$.
For $S=\C[p_1,p_2,p_3,s,h]$,
\begin{equation}\label{E2:eq:intersection}
 S[q^{\pm1}]\cap S[(E_q/q)^{\pm1}]=S[q,E_q/q].
\end{equation}
Indeed, membership of $\sum_j a_jq^j$ in the second ring forces
$E_q^j\mid a_{-j}$ for every $j>0$. This proves the upper-algebra
equality without inverting any frozen coefficient \cite{LP}.
\end{proof}

\section{The valued presentation category}
Use the hereditary species
\[
 Q:\quad\xymatrix@C=2.7em{1&2\ar[l]&3\ar[l]_{(2,1)}},
 \qquad k=\Q,\quad L=\Q(\sqrt2),\quad(k_1,k_2,k_3)=(L,L,k),
\]
with bimodules ${}_LL_L$ and ${}_LL_k$. The valuation gives dimensions
over the source and target fields. Set
$\mathscr C=\Ch_2(\proj\mathbb S_Q)$, with commutative-square
morphisms and the degreewise split exact structure \cite{Fei}.
Write $X_{i,0}=(P_i\to0)$ and $X_{i,t}=f(\tau^{t-1}J_i)$ for
$1\le t\le3$, where $f$ denotes a minimal presentation. Together
with $\Id_i=(P_i\xrightarrow1P_i)$ these are all fifteen indecomposables.
Their projective terms are
\begin{equation}\label{E2:eq:presentations}
\begin{array}{c|ccc}
 &i=1&i=2&i=3\\\hline
t=0&P_1\to0&P_2\to0&P_3\to0\\
t=1&P_2\to P_3^{\oplus2}&P_1\oplus P_2\to P_3^{\oplus2}&P_2\to P_3\\
t=2&P_1\to P_2&P_1\to P_3^{\oplus2}&P_1\to P_3\\
t=3&0\to P_1&0\to P_2&0\to P_3.
\end{array}
\end{equation}
With $u=\sqrt2$, the three maps into $P_3^{\oplus2}$, in the displayed order,
are $(1,u)^{\mathsf t}$,
$\left(\begin{smallmatrix}0&1\\1&u\end{smallmatrix}\right)$,
and $(1,u)^{\mathsf t}$. The other nonzero maps are the path maps $1$.
The projective and injective rank matrices are
\[
 R=\begin{pmatrix}1&1&1\\0&1&1\\0&0&1\end{pmatrix},\qquad
 J=\begin{pmatrix}1&0&0\\1&1&0\\2&2&1\end{pmatrix}.
\]
For $D=\operatorname{diag}(2,2,1)$ and $E=R^{-\mathsf t}D$,
$\Phi=-E^{-1}E^{\mathsf t}$ recovers the nine positive-root ranks
and the translation indexing.

The solid arrows are
\[
\begin{gathered}
 X_{1,t}\to X_{2,t}\to X_{3,t}\quad(0\le t\le3),\\
 X_{2,t+1}\to X_{1,t},\quad X_{3,t+1}\to X_{2,t}\quad(0\le t<3),\\
 X_{1,3}\to\Id_1\to X_{1,2},\quad
 X_{1,2}\to\Id_2\to X_{1,1},\quad
 X_{3,1}\to\Id_3\to X_{3,0}.
\end{gathered}
\]
The irreducible spaces have $k$-dimension two, except for the two
arrows through $\Id_3$, which have dimension one. The endomorphism
fields are $L$ on orbits 1 and 2 and $k$ on orbit 3, with the same
rule for $\Id_i$. These data are obtained by solving the
commutative-square equations for the displayed maps and quotienting
radicals by compositions. The additional arrows
$X_{i,t}\dashrightarrow X_{i,t+1}$, $0\le t<3$, are translations,
not irreducible morphisms.

\section{Degree fibres and admissibility}
For $P_+(X)=\bigoplus_j P_j^{m_j^+(X)}$, set
\begin{equation}\label{E2:eq:degree}
 \bwt(X_{i,t})=\left(\sum_jm_j^+(X_{i,t})\omega_j;\pi\omega_i\right),
 \qquad \bwt(\Id_i)=(\omega_i;0),
\end{equation}
and extend by direct sums. On a union of full degree fibres, an object
is admissible when its complete solid-and-translation neighbourhood
belongs to this union, and its two products are homogeneous of the same degree,
distinct, relatively prime, and omit its own fibre variable. The
exponents at a neighbour $Y$ are
$\dim_k\Irr(X,Y)/\dim_k\End(Y)$ or
$\dim_k\Irr(Y,X)/\dim_k\End(Y)$, and one for translations.

Among the twelve dominant, nonzero-left-degree objects,
$X_{1,2},X_{2,2},X_{3,2},\Id_1$ fail neighbourhood closure.
Degree balance leaves $X_{1,1},X_{2,1},X_{3,1}$ among the rest.
The products at $X_{1,1}$ share $p_1$; those at $X_{2,1}$ share
$p_1$ and $q$. Thus only $\sigma=X_{3,1}=(P_2\to P_3)$ is admissible.
The smallest union of full fibres containing it and its neighbours is
\[
\begin{array}{c|l@{\qquad}c|l}
 q&X_{1,1},X_{3,1}&p_1&X_{1,0},X_{1,2},X_{3,2}\\
 p_2&X_{2,0}&p_3&X_{3,0}\\
 s&\Id_3&h&X_{2,1}.
\end{array}
\]
The incoming neighbours of $\sigma$ are $X_{2,1},X_{3,0}$ and its
outgoing neighbours are $X_{2,0},\Id_3,X_{3,2}$. At the two
orbit-2 neighbours the exponent is $2/2=1$. Hence
\[
 M_\sigma^{\rm in}=p_3h,\qquad M_\sigma^{\rm out}=p_1p_2s.
\]
The projected star gives the exchange and its signed exponent vector.
All its irreducible neighbours belong to the subcategory, so passing to the full
additive branching subcategory preserves the irreducibles at $\sigma$.
This is an interpretation of the initial seed, not of arbitrary LP mutation.

\par\bigskip\Needspace{12\baselineskip}
\startcomponent{E3}{The G2 to A2 hypersurface}{The hypersurface $G_2\downarrow A_2$}
\section{The branching algebra and its seed}
Let $G$ have type $G_2$, with $\alpha_1$ short, and let
$K=\SL_3$ be its long-root subgroup. Choose simple roots
$\beta_1=\alpha_2$, $\beta_2=3\alpha_1+\alpha_2$.
Their coroots are $H_2,H_1+H_2$, so
$\pi\omega_1=\varpi_2$ and $\pi\omega_2=\varpi_1+\varpi_2$.
The functions below have degrees
\[
\begin{array}{c|cccccc}
 &p_1&p_2&q&r&s&t\\\hline
\lambda&\omega_1&\omega_2&\omega_1&\omega_2&\omega_1&\omega_2\\
\mu&\varpi_2&\varpi_1+\varpi_2&\varpi_1&\varpi_2&0&\varpi_1.
\end{array}
\]
These agree with the six degrees in \cite[Section~4]{PR}, up to
interchanging the two $A_2$ fundamental weights.
\begin{proposition}\label{E3:thm:algebra}
There are functions of these degrees such that
\begin{equation}\label{E3:eq:ring}
 \B(G_2,\SL_3)=\C[p_1,p_2,q,r,s,t]/(qr-p_1t-p_2s).
\end{equation}
The seed $\Sigma=(q;p_1,p_2,s,t)$ has exchange $E_q=p_1t+p_2s$,
mutation $q'=r$, and
$\mathcal A_{\mathrm{LP}}(\Sigma)=\mathcal U_{\mathrm{LP}}(\Sigma)=\B(G_2,\SL_3)$,
with polynomial frozen coefficients.
\end{proposition}
In the order $(q,p_1,p_2,s,t)$, its exchange vector is
$B=(0,1,-1,-1,1)$, and $\wt(p_1p_2st)=(2,2;2,2)$.
This is again a cluster seed of type $A_1$.

\section{Quotient coordinates and regularity}
On $\C[a,b,c]$ put
\begin{equation}\label{E3:eq:operators}
 D_1=\partial_a+b\partial_c,\qquad D_2=a\partial_b.
\end{equation}
For $C=[D_1,D_2]=\partial_b-a\partial_c$ and
$Z=[D_1,C]=-2\partial_c$, one has
$[D_1,Z]=[D_2,C]=0$, and $Z$ is central. In particular, the
$G_2$ Serre relations hold. The derivations decrease the grading
$\deg(a,b,c)=(1,2,3)$ and integrate to a unipotent action.
The determinant of $D_1,C,Z$ is $-2$, so every orbit is open and
the action is transitive. The three positive long-root generators
act by $D_2,0,0$ and fix the origin. Since its stabilizer is connected
of dimension $6-3=3$, it is $U_K$. Thus
$U_G/U_K\simeq\mathbb A^3$, equivariantly for the coordinate weights
\[
 \wt(a)=(1,-1),\qquad\wt(b)=(-1,0),\qquad\wt(c)=(0,-1)
\]
in the $A_2$ fundamental-weight coordinates. As in E2, the action on
functions is inverse-left pullback.

The Gauss chart is
$\B[(p_1p_2)^{-1}]=\C[p_1^{\pm1},p_2^{\pm1},a,b,c]$.
Define
\begin{equation}\label{E3:eq:functions}
 q=p_1a,\qquad r=p_2b,\qquad s=p_1c,\qquad t=p_2(ab-c).
\end{equation}
The derivative orders of $a,b,c,ab-c$ are
$(1,0),(0,1),(1,0),(0,1)$, respectively. E1,
Lemma~\ref{E1:lem:indicator}, proves global regularity.
The coordinate weights give the stated degrees, and the relation is
$qr=p_1t+p_2s$.

\section{Generation and the upper algebra}
Use the normal basis $a^ib^jc^k(ab-c)^\ell$, $\min(i,j)=0$,
from E2, Section~3. For \eqref{E3:eq:operators} its derivative orders
are $(i+k,j+\ell)$. The noncancellation proof is the same:
in coordinates $(a,b,ab-c)$ the first operator is $\partial_a$,
with leading terms $a^{i+k}b^{j+k}(ab-c)^\ell$;
for the second operator the leading terms are
$a^{i+\ell}b^{j+\ell}c^k$. The minima of the first two exponents
recover $k$ and $\ell$, respectively.
\begin{proof}[Proof of Proposition~\ref{E3:thm:algebra}]
For $\lambda=A\omega_1+B\omega_2$, the highest-weight criterion
selects exactly the normal monomials with
$i+k\le A$ and $j+\ell\le B$. They homogenize as
\[
 p_1^Ap_2^B a^ib^jc^k(ab-c)^\ell
 =p_1^{A-i-k}p_2^{B-j-\ell}q^ir^js^kt^\ell.
\]
Thus the six functions generate. The normal monomials avoiding $qr$
remain independent in each left degree, so \eqref{E3:eq:ring} is the
sole relation. The five initial functions recover
\[
 a=\frac q{p_1},\qquad
 b=\frac{p_1t+p_2s}{p_2q},\qquad c=\frac s{p_1},
\]
and are algebraically independent. The exchange is primitive and
linear in $t$, hence irreducible. Apply the two-chart intersection
argument of E2, \eqref{E2:eq:intersection}, with
$S=\C[p_1,p_2,s,t]$. It gives the upper-algebra equality.
\end{proof}

\section{The valued presentation category}
Use the species
\[
 Q:\quad\xymatrix@C=3em{1\ar[r]^{(3,1)}&2},\qquad
 k_1=k=\Q,\quad k_2=L=\Q(u),\quad u^3=2,
\]
with bimodule ${}_LL_k$. The Cartan matrix is
$\left(\begin{smallmatrix}2&-3\\-1&2\end{smallmatrix}\right)$,
and the symmetrizer is $\operatorname{diag}(1,3)$ \cite{GLS}.

Set $\mathscr C=\Ch_2(\proj\mathbb S_Q)$, with the morphisms
and exact structure of E2. Its ten indecomposables are
$X_{i,0}=(P_i\to0)$, $X_{i,j}=f(\tau^{j-1}J_i)$, $1\le j\le3$,
and $\Id_i$, for $i=1,2$. The module presentations are
\begin{equation}\label{E3:eq:presentations}
\begin{array}{c|cc}
 &i=1&i=2\\\hline
 j=1&P_2\xrightarrow{1}P_1&P_2^{\oplus2}\xrightarrow{M}P_1^{\oplus3}\\
 j=2&P_2\xrightarrow{(1,u)^{\mathsf t}}P_1^{\oplus2}&
       P_2\xrightarrow{(1,u,u^2)^{\mathsf t}}P_1^{\oplus3}\\
 j=3&0\to P_1&0\to P_2
\end{array},\qquad
 M=\begin{pmatrix}-u&-u^2\\1&0\\0&1\end{pmatrix}.
\end{equation}
Their cokernel ranks over $(k,L)$ are
\[
\begin{array}{c|cc}
 &i=1&i=2\\\hline
 j=1&(1,0)&(3,1)\\j=2&(2,1)&(3,2)\\j=3&(1,1)&(0,1).
\end{array}
\]
These are $\Phi^{j-1}J_i$ for
$J=\left(\begin{smallmatrix}1&3\\0&1\end{smallmatrix}\right)$ and
$\Phi=\left(\begin{smallmatrix}2&-3\\1&-1\end{smallmatrix}\right)$;
hence they identify the translation indexing.
The solid arrows are
\[
\begin{gathered}
 X_{2,j}\to X_{1,j}\quad(0\le j\le3),\qquad
 X_{1,j+1}\to X_{2,j}\quad(0\le j<3),\\
 X_{1,1}\to\Id_1\to X_{1,0},\qquad
 X_{2,3}\to\Id_2\to X_{2,2}.
\end{gathered}
\]
Their dimensions are three except for the two arrows through $\Id_1$,
which have dimension one. The endomorphism fields are $k$ on orbit 1
and $L$ on orbit 2, with the same rule for $\Id_i$.
Taking commutative-square Hom spaces of \eqref{E3:eq:presentations}
and quotienting by compositions gives these eleven irreducible
bimodules. Add the six translations $X_{i,j}\dashrightarrow X_{i,j+1}$.

\section{Degree fibres and admissibility}
Set $e(X_{i,j})=e_i$, $e(\Id_i)=0$. For $M=\operatorname{coker}X$,
write $m_1=\dim_kM_1$, $m_2=\dim_LM_2$ and $\kappa=e_1+2e_2$.
Define the degree, additively on direct sums, by
\begin{equation}\label{E3:eq:degree}
 \bwt(X)=\bigl(e_1\omega_1+e_2\omega_2;
 (m_1-\kappa)\varpi_1+(\kappa-m_1+m_2)\varpi_2\bigr).
\end{equation}
This is a specified degree for this orientation, distinct from the
positive-projective degree in E2. Its dominant, nonzero-left-degree
objects are the five singleton fibres
\[
 (q,p_1,p_2,s,t)\longleftrightarrow
 (X_{1,2},X_{1,3},X_{2,2},X_{1,1},X_{2,1}).
\]
Indeed, $X_{2,3}$ has right degree $-2\varpi_1+3\varpi_2$,
$X_{1,0}$ and $X_{2,0}$ have right degrees
$-\varpi_1+\varpi_2$ and $-2\varpi_1+2\varpi_2$, and the
neutral presentations have degree zero. The negative presentation
$X_{1,3}=(0\to P_1)$ belongs to the dominant set because of its degree.

Use the admissibility rule of E2, Section~5.
The objects $X_{1,1},X_{1,3},X_{2,1},X_{2,2}$ have outside neighbours
$\Id_1,X_{2,3},X_{2,0},\Id_2$, respectively. Only
$\sigma=X_{1,2}=(P_2\to P_1^{\oplus2})$ has a closed neighbourhood.
Its incoming neighbours are $X_{2,2},X_{1,1}$ and its outgoing
neighbours are $X_{2,1},X_{1,3}$. At both cross-orbit neighbours the
exponent is $3/3=1$; the other arrows are translations. Thus
\[
 M_\sigma^{\rm in}=p_2s,\qquad M_\sigma^{\rm out}=p_1t.
\]
The products have equal degree, are coprime and distinct, and omit $q$.
This proves uniqueness and recovers both the exchange and $B$.
The full branching subcategory contains every irreducible neighbour
of $\sigma$, so these irreducibles are unchanged on restriction.

\par\bigskip\Needspace{12\baselineskip}

\clearpage
\startcomponent{A5}{The E6 categorical degree}{The $E_6\downarrow F_4$ categorical degree}
\section{The degree and its integral lattice}
Use the orientation $1\to3\leftarrow4\to5\leftarrow6$, $4\to2$.
For its projective Cartan matrix $C$, the columns are the dimension
vectors of the projectives:
\[
 C=\begin{pmatrix}
1&0&0&0&0&0\\0&1&0&1&0&0\\1&0&1&1&0&0\\
0&0&0&1&0&0\\0&0&0&1&1&1\\0&0&0&0&0&1
\end{pmatrix},\qquad
\Phi=-C^{\mathsf t}C^{-1},\quad\Psi=-C^{-1}C^{\mathsf t}.
\]
Put $M_{i,t}=\tau^{t-1}J_i$ and $X_{i,t}=f(M_{i,t})$, $1\le t\le6$;
thus $\underline{\dim}M_{i,t}=\Phi^{t-1}C^{\mathsf t}e_i$.
The other indecomposables are $X_{i,0}=(P_i\to0)$ and
$\Id_i=(P_i\xrightarrow1P_i)$. The positive projective term is the domain.
Set $e(X_{i,t})=e_i$, $e(\Id_i)=0$, and use the ordered probes
\[
\begin{array}{lll}
Z_1=M_{1,4},&Z_2=M_{1,5}\oplus M_{1,6},&Z_3=M_{2,2},\\
Z_4=M_{2,3},&Z_5=M_{3,4},&Z_6=M_{3,5},\\
Z_7=M_{4,4},&Z_8=M_{4,5},&Z_9=M_{5,1},\\
Z_{10}=M_{5,2},&Z_{11}=M_{6,3},&Z_{12}=M_{6,4}.
\end{array}
\]
For $h_j(X)=\dim\Hom(\operatorname{coker}X,Z_j)$, the degree is
\[
 \bwt_E(X)=\tfrac13(e(X),p^+(X),p^-(X),h_1(X),\ldots,h_{12}(X))N_E,
\]
where Appendix A of this component displays the complete integer matrix.
Its column order is $(\omega_1,\ldots,\omega_6;\varpi_1,\ldots,\varpi_4)$.
The matrix and probes are specified grading data; no canonical derivation
of them from the subgroup inclusion is asserted.

Put $\chi=(1,0,-1,0,1,-1)$. Direct multiplication gives
\[
 \chi\Psi-\chi=(-3,0,0,0,0,3),\qquad
 N_E\equiv
 (\chi,-\chi,\chi,0^{12})^{\mathsf t}
 (0,1,0,0,-1,0;0,1,1,1)\pmod3.
\]
Since $p^+(X_{i,t})-p^-(X_{i,t})=\Psi^t e_i$, these congruences
prove integrality of the degree; neutral presentations have $e=p^+-p^-=0$.
All features are additive under direct sums. The lattice generated by
the forty-eight feature vectors has rank thirty and index three.

\section{Check V22: Hom profiles and degree fibres}
The following table gives all nontrivial feature data. A digit string lists
successive coordinates; for example $010010=(0,1,0,0,1,0)$.
The vector $h$ has twelve coordinates in the displayed probe order.
For $X_{i,0}$ the data are $(e,p^+,p^-,h)=(e_i,e_i,0,0)$;
for $\Id_i$ they are $(0,e_i,e_i,0)$.
\begin{center}\small
\setlength{\tabcolsep}{6pt}
\begin{longtable}{@{}lrrr@{}}
\toprule
Presentation & $p^+$ & $p^-$ & $h_1\cdots h_{12}$\\\midrule\endhead
$X_{1,1}$ & $001000$ & $100000$ & $000000000000$ \\
$X_{1,2}$ & $010010$ & $000100$ & $000000000000$ \\
$X_{1,3}$ & $001010$ & $000101$ & $000000001000$ \\
$X_{1,4}$ & $011000$ & $100100$ & $101000001100$ \\
$X_{1,5}$ & $000010$ & $000100$ & $010110100110$ \\
$X_{1,6}$ & $000000$ & $000001$ & $011001111001$ \\
$X_{2,1}$ & $001010$ & $000100$ & $000000000000$ \\
$X_{2,2}$ & $011010$ & $100101$ & $001000001000$ \\
$X_{2,3}$ & $011010$ & $000200$ & $000100001100$ \\
$X_{2,4}$ & $001010$ & $100101$ & $001000101110$ \\
$X_{2,5}$ & $010000$ & $000100$ & $101110111101$ \\
$X_{2,6}$ & $000000$ & $010000$ & $010111210110$ \\
$X_{3,1}$ & $011010$ & $100100$ & $000000000000$ \\
$X_{3,2}$ & $011020$ & $000201$ & $000000001000$ \\
$X_{3,3}$ & $012010$ & $100201$ & $001000002100$ \\
$X_{3,4}$ & $011010$ & $100200$ & $101110101210$ \\
$X_{3,5}$ & $000010$ & $000101$ & $011111211111$ \\
$X_{3,6}$ & $000000$ & $000010$ & $111111221101$ \\
$X_{4,1}$ & $011010$ & $000100$ & $000000000000$ \\
$X_{4,2}$ & $012020$ & $100201$ & $000000001000$ \\
$X_{4,3}$ & $022020$ & $100301$ & $001000002100$ \\
$X_{4,4}$ & $012020$ & $100301$ & $001100102210$ \\
$X_{4,5}$ & $011010$ & $100201$ & $102110212211$ \\
$X_{4,6}$ & $000000$ & $000100$ & $111221321211$ \\
$X_{5,1}$ & $011010$ & $000101$ & $000000001000$ \\
$X_{5,2}$ & $012010$ & $100200$ & $000000001100$ \\
$X_{5,3}$ & $011020$ & $100201$ & $001000001110$ \\
$X_{5,4}$ & $011010$ & $000201$ & $001100102101$ \\
$X_{5,5}$ & $001000$ & $100100$ & $101110211210$ \\
$X_{5,6}$ & $000000$ & $001000$ & $111121220111$ \\
$X_{6,1}$ & $000010$ & $000001$ & $000000000000$ \\
$X_{6,2}$ & $011000$ & $000100$ & $000000001000$ \\
$X_{6,3}$ & $001010$ & $100100$ & $000000000110$ \\
$X_{6,4}$ & $010010$ & $000101$ & $001000001001$ \\
$X_{6,5}$ & $001000$ & $000100$ & $000100101100$ \\
$X_{6,6}$ & $000000$ & $100000$ & $101010110110$ \\
\bottomrule
\end{longtable}\end{center}
These profiles are obtained by solving, for every arrow $a:u\to v$,
$F_vM_a=(Z_j)_aF_u$. The indecomposables can be constructed over $\mathbb Q$
in the thirty-six dimension vectors above; their one-dimensional
endomorphism spaces certify that they are the required real-root modules.
Projective covers give $p^-$, and $p^+=p^- - C^{-1}\underline{\dim}M$.
An independent calculation from commutative squares gives
\[
 \dim\Hom(\operatorname{coker}X,M)
 =\dim\Hom_{\Ch_2}(X,f(M))
       -(p^-(X))^{\mathsf t}C\,p^+(f(M)).
\]
Both methods give the table. Substitution in the degree formula gives
the eighteen full fibres printed in the article. The excluded objects have
the following degrees; the three neutral objects on the last line have
degree zero, not negative degree.
\begin{center}\small\setlength{\tabcolsep}{5pt}
\begin{tabular}{@{}lll@{}}\toprule
Object & Source Dynkin coordinates & Target Dynkin coordinates\\\midrule
$X_{2,1}$ & $(0,1,3,-1,0,0)$ & $(-1,1,1,0)$ \\
$X_{2,2}$ & $(0,1,3,-1,1,0)$ & $(-2,1,2,0)$ \\
$X_{2,6}$ & $(0,0,-1,0,-1,0)$ & $(1,0,-2,2)$ \\
$X_{3,6}$ & $(0,0,-2,2,-2,0)$ & $(1,0,-1,1)$ \\
$X_{4,6}$ & $(0,0,-2,1,-1,0)$ & $(1,0,-2,2)$ \\
$\Id_4$ & $(0,1,2,0,-1,0)$ & $(-2,1,1,0)$ \\
$X_{5,5}$ & $(0,1,-1,1,-2,0)$ & $(0,0,-1,2)$ \\
$X_{5,6}$ & $(0,1,-2,1,-1,0)$ & $(1,0,-2,2)$ \\
$X_{6,5}$ & $(0,0,0,0,-1,0)$ & $(0,0,-1,2)$ \\
$X_{6,6}$ & $(0,1,-1,1,-1,0)$ & $(0,0,-1,2)$ \\
$\Id_2,\Id_3,\Id_5$ & $(0,0,0,0,0,0)$ & $(0,0,0,0)$\\
\bottomrule\end{tabular}\end{center}

All $2304$ commutative-square Hom spaces and their radical-square
quotients give the $77$ solid arrows in the article; add the $36$
translation arrows separately. Of the $35$ dominant objects with nonzero
source degree, $12$ have a neighbour outside this set. Of the remaining
$23$, $15$ have unequal incoming and outgoing degrees. The eight
admissible objects, in the article's mutable order, are
\[
 X_{1,2},\ X_{6,1},\ X_{1,1},\ X_{3,2},\ X_{1,5},\ X_{5,3},\ X_{3,1},\ X_{2,4}.
\]
Their products give the eight exchanges. The ten other fibres have total
degree $(2,2,2,2,2,2;2,2,2,2)$. Both grading maps have only unit
nonzero Smith factors. These are finite calculations from $C$, the probes,
and $N_E$; the fibre table and marks are not inputs to their reconstruction.

\clearpage
\section*{Appendix A. The integer matrix $N_E$}
The row groups follow $e_1,\ldots,e_6$, $p^+_1,\ldots,p^+_6$,
$p^-_1,\ldots,p^-_6$, and $h_1,\ldots,h_{12}$, in that order.
The denominator in the degree is $3$.
\begin{center}\small\setlength{\tabcolsep}{4pt}\renewcommand{\arraystretch}{1.1}
\begin{tabular}{@{}r|rrrrrr|rrrr@{}}
\toprule
& $\omega_1$&$\omega_2$&$\omega_3$&$\omega_4$&$\omega_5$&$\omega_6$&$\varpi_1$&$\varpi_2$&$\varpi_3$&$\varpi_4$\\\midrule
$e_{1}$ & 0 & 1 & -6 & 3 & -4 & 0 & 6 & 1 & -11 & 7 \\
$e_{2}$ & 0 & 3 & -6 & 3 & -6 & 0 & 9 & 3 & -15 & 6 \\
$e_{3}$ & 0 & 2 & -12 & 6 & -8 & 0 & 12 & 2 & -19 & 8 \\
$e_{4}$ & 0 & 3 & -18 & 9 & -12 & 0 & 18 & 6 & -33 & 12 \\
$e_{5}$ & 0 & 4 & -12 & 6 & -10 & 0 & 12 & 4 & -23 & 10 \\
$e_{6}$ & 0 & 2 & -6 & 3 & -5 & 0 & 6 & 2 & -13 & 8 \\
\midrule
$p^+_{1}$ & 3 & -1 & 6 & -3 & 4 & 0 & -6 & -1 & 11 & -4 \\
$p^+_{2}$ & 0 & 0 & 6 & -3 & 6 & 0 & -6 & -3 & 15 & -6 \\
$p^+_{3}$ & 0 & -2 & 15 & -6 & 8 & 0 & -12 & -2 & 22 & -8 \\
$p^+_{4}$ & 0 & -3 & 18 & -6 & 12 & 0 & -18 & -3 & 33 & -12 \\
$p^+_{5}$ & 0 & -4 & 12 & -6 & 13 & 0 & -12 & -4 & 26 & -10 \\
$p^+_{6}$ & 0 & -2 & 6 & -3 & 5 & 3 & -6 & -2 & 13 & -5 \\
\midrule
$p^-_{1}$ & 0 & 1 & -6 & 3 & -4 & 0 & 6 & 1 & -11 & 4 \\
$p^-_{2}$ & 0 & 0 & -6 & 3 & -6 & 0 & 6 & 3 & -15 & 6 \\
$p^-_{3}$ & 0 & 2 & -15 & 6 & -8 & 0 & 12 & 2 & -22 & 8 \\
$p^-_{4}$ & 0 & 6 & -12 & 6 & -15 & 0 & 12 & 6 & -30 & 12 \\
$p^-_{5}$ & 0 & 4 & -12 & 6 & -13 & 0 & 12 & 4 & -26 & 10 \\
$p^-_{6}$ & 0 & 2 & -6 & 3 & -5 & 0 & 6 & 2 & -13 & 5 \\
\midrule
$h_{1}$ & 0 & 3 & 0 & 3 & 0 & 0 & 0 & 3 & 0 & -3 \\
$h_{2}$ & 0 & 3 & 0 & 3 & 0 & 0 & 0 & 6 & 0 & -6 \\
$h_{3}$ & 0 & 0 & 9 & -3 & 0 & 0 & -9 & 3 & 6 & 0 \\
$h_{4}$ & 0 & -3 & 6 & 0 & 0 & 0 & -6 & 0 & 6 & 0 \\
$h_{5}$ & 0 & 0 & 0 & -6 & 6 & 0 & 0 & -3 & 0 & 3 \\
$h_{6}$ & 0 & -3 & 0 & -3 & 3 & 0 & 0 & -3 & 0 & 3 \\
$h_{7}$ & 0 & 3 & 3 & -3 & 0 & 0 & -3 & 0 & 3 & 0 \\
$h_{8}$ & 0 & -3 & 0 & 3 & -3 & 0 & -3 & -3 & 6 & 0 \\
$h_{9}$ & 0 & -3 & -3 & 0 & 6 & 0 & 0 & -3 & 6 & -3 \\
$h_{10}$ & 0 & -3 & -3 & 0 & 3 & 0 & 3 & -3 & 3 & -3 \\
$h_{11}$ & 0 & 0 & 0 & 3 & 0 & 0 & 0 & 0 & 3 & -3 \\
$h_{12}$ & 0 & 0 & 3 & 0 & 3 & 0 & 3 & 0 & 3 & -6 \\
\bottomrule
\end{tabular}\end{center}
The machine-readable data include this matrix, all forty-eight feature
and degree vectors, rational representatives of the thirty-six modules,
and the complete local-neighbourhood data. The exact checker uses only
these specified categorical inputs and rational linear algebra.

\clearpage
\startcomponent{S1}{The triality saturation certificate}{The triality saturation certificate}
The finite calculation in this component is the certificate for the
triality saturation proposition. It uses the geometric seed and the
branching counting theorem of the article; it is not part of the
separate proof of the triality algebra identification.

\section{The matrices and the weight fibres}
Write $m=(z_1,z_2,z_3,z_4;c_1,\ldots,c_6)$ in the geometric seed
order. The fourteen slacks $s=H_Dm$ are
\[
\begin{aligned}
 (s_1,\ldots,s_8)&=(c_1,z_3+c_1,c_2,z_2+c_2,c_3,z_1+c_3,c_4,c_5),\\
 (s_9,\ldots,s_{14})&=(c_6,z_4+c_6,z_2+c_6,z_2+z_4+c_6,\\
 &\hspace{2.6em}z_1+z_4+c_6,z_1+z_3+z_4+c_6).
\end{aligned}
\]
The geometric exchange and weight matrices are
\[
 B_D^{\rm geom}=\begin{pmatrix}
 0&1&1&-1&0&0&-1&0&0&0\\
 1&0&0&1&0&-1&0&0&0&-1\\
 -1&0&0&0&-1&1&1&1&0&0\\
 1&0&0&0&0&0&0&0&1&-1
 \end{pmatrix},
\]
\[
 (W_D^{\rm geom})^{\mathsf t}=\begin{pmatrix}
0&0&0&0&1&0&0&1&0&0\\
1&0&1&1&0&1&0&0&0&1\\
1&1&0&0&0&0&1&0&0&1\\
0&1&0&1&0&0&0&0&1&1\\
0&0&1&0&1&0&1&0&0&0\\
1&1&0&1&0&1&0&0&0&1
 \end{pmatrix}.
\]
Here $B_D^{\rm geom}W_D^{\rm geom}=0$, and the minors of $B_D^{\rm geom}$ in columns $(1,2,4,5)$
and of $W_D^{\rm geom}$ in rows $(1,2,3,4,5,6)$ have absolute value one.
Consequently $(W_D^{\rm geom})^{\mathsf t}\Z^{10}=\Z^6$ and
$\ker_\R (W_D^{\rm geom})^{\mathsf t}=\operatorname{im}_\R (B_D^{\rm geom})^{\mathsf t}$.
Thus, after choosing an integral $m_0$ of weight $\gamma$, the fibre is
identified with
\[
 P(m_0)=\{a\in\R^4:H_Dm_0+Aa\ge0\},\qquad A=H_D(B_D^{\rm geom})^{\mathsf t}.
\]
Explicitly,
\[
\setlength{\arraycolsep}{3.3pt}
 A^{\mathsf t}=\begin{pmatrix}
 0&1&0&1&-1&-1&0&0&0&-1&1&0&-1&0\\
 0&0&-1&-1&0&1&0&0&-1&0&-1&0&1&1\\
 -1&-1&1&1&1&0&1&0&0&0&0&0&-1&-1\\
 0&0&0&0&0&1&0&1&-1&-1&-1&-1&0&0
 \end{pmatrix}.
\]
It has rank four and $A^{\mathsf t}\zeta=0$ for
$\zeta=(1,3,1,2,1,3,2,1,1,1,1,1,1,1)^{\mathsf t}>0$.
Hence $Aa\ge0$ implies $a=0$, so every nonempty $P(m_0)$ is bounded.

\section{The active-constraint certificate}
\begin{proposition}\label{S1:prop:integral}
For every integral $m_0$, every vertex of $P(m_0)$ is integral.
The triality branching semigroup is therefore saturated.
\end{proposition}
\begin{proof}
At a vertex, choose four independent active rows indexed by $I$.
They give $A_Ia=-(H_D)_Im_0$. A unit determinant makes $a$ integral.
Among the $\binom{14}{4}=1001$ four-row choices, the absolute
determinants are zero in 489 cases, one in 496 cases, and two in the
sixteen cases in Table~\ref{S1:tab:bases}. There are no other values.
This is the finite determinant calculation used in the proof.

The five identities
\begin{equation}\label{S1:eq:slacks}
\begin{gathered}
 s_1+s_{14}=s_2+s_{13},\qquad s_3+s_{11}=s_4+s_9,\qquad
 s_3+s_{12}=s_4+s_{10},\\
 s_5+s_{13}=s_6+s_{10},\qquad s_9+s_{12}=s_{10}+s_{11}
\end{gathered}
\end{equation}
hold for every $m$. All slacks are nonnegative, so zero on either
side forces zero for both summands on the other side.
For each exceptional $I$, these implications force the active set
$J$ in the table, and $\det A_J=\pm1$. For example,
$I=\{1,3,6,10\}$ forces $s_5=s_{13}=0$ by the fourth identity;
$J=\{1,3,5,6\}$ then has determinant one. This proves integrality
in every case.

Every nonempty integral-weight fibre has a vertex, so it contains an
integral point. If the weight $N\gamma$ occurs, divide a corresponding
lattice point by $N$ to obtain a real point of the fibre over $\gamma$.
The integral vertex in that fibre and the branching counting theorem
show that $\gamma$ occurs. This proves saturation in $\Z^6$.
\end{proof}

\begin{table}[!htbp]
\centering
\setlength{\tabcolsep}{11pt}
\begin{tabular}{@{}ccc@{}}\toprule
Active set $I$&$\det A_I$&Forced unit-determinant set $J$\\\midrule
$1,3,6,10$&$-2$&$1,3,5,6$\\
$1,3,10,11$&$-2$&$1,3,4,9$\\
$1,6,10,14$&$2$&$1,2,6,10$\\
$1,10,11,14$&$2$&$1,2,9,10$\\
$2,4,8,9$&$-2$&$2,3,4,8$\\
$2,4,9,12$&$-2$&$2,3,4,9$\\
$2,8,9,13$&$2$&$1,2,8,9$\\
$2,9,12,13$&$2$&$1,2,9,10$\\
$3,6,7,10$&$2$&$3,5,6,7$\\
$3,7,10,11$&$-2$&$3,4,7,9$\\
$4,5,8,9$&$-2$&$3,4,5,8$\\
$4,5,9,12$&$-2$&$3,4,5,9$\\
$5,8,9,13$&$-2$&$5,6,8,9$\\
$5,9,12,13$&$-2$&$5,6,9,10$\\
$6,7,10,14$&$2$&$5,6,7,10$\\
$7,10,11,14$&$-2$&$7,9,10,11$\\\bottomrule
\end{tabular}
\caption{All nonunimodular four-row bases of $A$. Indices refer to the
printed order of the fourteen slacks.}
\label{S1:tab:bases}
\end{table}

\clearpage
\def\componentheading{References}
\phantomsection